\documentclass[11pt,reqno]{amsart}
\usepackage{geometry} 
\usepackage{amsmath,amssymb,amsthm, mathrsfs}
\usepackage{xcolor}
\usepackage{graphicx} 

\usepackage{enumerate}
\usepackage{indentfirst}
\usepackage[colorlinks,
            linkcolor=black,
            anchorcolor=black,
            citecolor=black
            ]{hyperref}

\DeclareMathOperator{\diver}{\mathrm{div}}

\newtheorem{mydef}{Definition}
\newtheorem{thm}{Theorem}[section]
\newtheorem{lem}{Lemma}[section]

\newtheorem{rk}{Remark}
\newtheorem{cor}{Corollary}[section]
\numberwithin{rk}{section}
\numberwithin{prop}{section}
\numberwithin{mydef}{section}
\numberwithin{lem}{section}
\numberwithin{equation}{section}
\numberwithin{thm}{section}
\allowdisplaybreaks[4]

\title[Degenerate compressible Navier--Stokes equations]{Global Regular Solutions \\[0.5mm]
of the Compressible Navier-Stokes Equations \\[0.5mm]
with Nonlinear Density-Dependent Viscosities\\[0.5mm] 
and Large Initial Data of Spherical Symmetry}

\date{\today}

\author{Gui-Qiang G. Chen}
\address[Gui-Qiang G. Chen]{Mathematical Institute, University of Oxford, Oxford, OX2 6GG, UK.} \email{\tt gui-qiang.chen@maths.ox.ac.uk}

\author{Jiawen Zhang}
\address[Jiawen Zhang]{School of Mathematical Sciences, Shanghai Jiao Tong University,
Shanghai 200240, P. R. China} \email{\tt zhangjiawen317@sjtu.edu.cn}

\author{Shengguo Zhu }
\address[Shengguo Zhu]{School of Mathematical Sciences,  CMA-Shanghai,   and MOE-LSC,   Shanghai Jiao Tong University, Shanghai 200240, P. R. China.}
 \email{\tt  zhushengguo@sjtu.edu.cn}

\begin{document}
\begin{abstract}
For the physically important case in which the viscosity coefficients depend on the density $\rho$ through a power law  ({\it i.e.}, $\rho^\delta$ with some exponent $\delta \in (\frac{1}{2},1)$), we establish the global well-posedness of regular solutions of the compressible Navier-Stokes equations for barotropic flow with large initial data of spherical symmetry in two and three spatial dimensions. The initial density considered here is positive everywhere but vanishes in the far field, ensuring that the resulting solutions satisfy the conservation laws of total mass and momentum. 
The most crucial step in our analysis is to obtain a uniform upper bound for the density, which is challenging due to the combined difficulties of degeneracy near the far-field vacuum, coordinate singularity at the origin, and nonlinearity of viscosity coefficients. 
Furthermore, the methodology developed here can also be applied to the corresponding problem in which the density remains strictly away from the vacuum.

\end{abstract}

\subjclass[2020]{35A01, 35Q30, 76N10, 35B65, 35A09.}

\keywords{
Compressible Navier-Stokes equations,  Degenerate viscosities,   Far-field vacuum, General data, Global well-posedness, Spherical symmetry, regular solutions.}

\maketitle

\tableofcontents

\section{Introduction}

The motion of viscous and Newtonian barotropic 
fluids in $\mathbb{R}^n$ ($n=2$ or $3$) is governed by the following compressible Navier-Stokes system (\textbf{CNS}):
\begin{equation}\label{eq:1.1}
\begin{cases}
\displaystyle 
\rho_t+\diver(\rho \boldsymbol{u})=0,\\[4pt]
\displaystyle
(\rho \boldsymbol{u})_t+\diver(\rho \boldsymbol{u}\otimes \boldsymbol{u})
+\nabla P =\diver \mathbb{T},
\end{cases}
\end{equation}
where $t\geq 0$ denotes  the time coordinate, $\boldsymbol{x}=(x_1,\cdots\!,x_n)^\top\in \mathbb{R}^n$  the Eulerian spatial coordinate, $\rho\geq 0$  the  mass density of the fluid,  $\boldsymbol{u}=(u_1,\cdots\!,u_n)^\top\in \mathbb{R}^n$   the fluid velocity, and  $P$ the pressure of the fluid.  For the polytropic gases, the constitutive relation is given by
\begin{equation*}
P=A\rho^{\gamma},
\end{equation*}
where $A>0$ is  an entropy  constant and  $\gamma>1$ is the adiabatic exponent.  The viscous stress tensor $\mathbb{T}$ is of the form: 
\begin{equation}\label{eq:1.1t}
\mathbb{T}=2\mu(\rho)D(\boldsymbol{u})+\lambda(\rho)\diver\boldsymbol{u}\,\mathbb{I}_n,
\end{equation} 
where $D(\boldsymbol{u})=\frac{1}{2}(\nabla \boldsymbol{u}+(\nabla \boldsymbol{u})^\top)$ is the deformation tensor, $\mathbb{I}_n$ is the $n\times n$ identity matrix,
\begin{equation}\label{fandan}
\mu(\rho)=a_1  \rho^\delta,\quad \lambda(\rho)=a_2\rho^\delta,
\end{equation}
for the viscosity exponent  $\delta\geq 0$, $\mu(\rho)$ is the shear viscosity coefficient, $\lambda(\rho)+\frac{2}{n}\mu(\rho)$ is the bulk viscosity coefficient,  and $a_1$ and $a_2$ are both constants satisfying
\begin{equation}\label{kelaoxiusi}
a_1>0 ,\qquad \ \  2a_1+n a_2\geq 0.
\end{equation}

For rarefied gases, the full \textbf{CNS}  can be formally derived from the Boltzmann equation 
via the Chapman--Enskog expansion (see Chapman--Cowling \cite{chap}). 
Under proper physical assumptions,  the viscosity coefficients  and the thermal conductivity coefficient  are all functions 
of the absolute temperature $\theta$. In fact,  
for the cutoff inverse power force models, if the intermolecular potential varies as $\ell^{-\varkappa}$,
where $\ell$ is the intermolecular distance and $\varkappa$ is a positive constant, then   
\begin{equation}\label{eq:1.6g}
\mu(\theta)=b_1 \theta^{\frac{1}{2}+ b},\quad \lambda(\theta)=b_2 \theta^{\frac{1}{2}+ b}, \quad  \kappa(\theta)=b_3 \theta^{\frac{1}{2}+ b}\qquad\,\, 
\text{with $\,\,b=\frac{2}{\varkappa} \in [0,\infty)$},
\end{equation}
for some constants $b_i$ ($i=1,2,3$), where $\kappa(\theta)$ is the thermal conductivity coefficient.
In particular, $\varkappa=1$ for the ionized gas, $\varkappa=4$ for Maxwellian molecules, and $\varkappa=\infty$ for rigid elastic spherical molecules (see \S 10 of \cite{chap}). For polytropic fluids, such a dependence is inherited through the laws of Boyle and Gay-Lussac{\rm:}
\begin{equation*}
P=\hat{A}\rho \vartheta=A\rho^\gamma \qquad \text{for a constant } \hat{A}>0,
\end{equation*}
\textit{i.e.}, $\vartheta=A\hat{A}^{-1}\rho^{\gamma-1}$, 
and  the viscosity coefficients become functions of $\rho$ taking form $\eqref{fandan}$.  
Furthermore, there are other physical models satisfying the density-dependent viscosities  assumption \eqref{fandan}, including the Korteweg system, the shallow water equations,  the quantum Navier-Stokes system, among others; see \cite{bd3,Dunn,gpm} and the references therein.

In this  paper,  we   establish   the global well-posedness   of spherically symmetric (regular) solutions taking the form 
\begin{equation*}
\displaystyle
(\rho,\boldsymbol{u})(t,\boldsymbol{x})
=(\rho(t,|\boldsymbol{x}|),u(t,|\boldsymbol{x}|) \frac{\boldsymbol{x}}{|\boldsymbol{x}|})
\end{equation*}
of system 
\eqref{eq:1.1}--\eqref{kelaoxiusi} in $[0,T]\times \mathbb{R}^n$ ($n=2$ or $3$) with the initial  data:
\begin{equation}\label{eqs:CauchyInit}
(\rho,\boldsymbol{u})(0,\boldsymbol{x})=(\rho_0,\boldsymbol{u}_0)(\boldsymbol{x})=(\rho_0(|\boldsymbol{x}|),u_0(|\boldsymbol{x}|)\frac{\boldsymbol{x}}{|\boldsymbol{x}|}) 
\qquad \text{for $\,\boldsymbol{x} \in \mathbb{R}^n$},
\end{equation}
and the   far-field behavior:
\begin{equation}\label{e1.3}
\begin{aligned}
\displaystyle
(\rho,\boldsymbol{u})(t,\boldsymbol{x})
\to (\bar \rho,\boldsymbol{0})
\qquad \text{as $|\boldsymbol{x}|\to \infty\,$  for $\,t\ge 0$},
\end{aligned}
\end{equation}
where $\bar \rho \geq 0$ is a constant.
Our results hold for all adiabatic exponents $\gamma\in (1,\infty)$ and  physical  viscosity exponent $ \delta\in (\frac{1}{2},1)$ in $\mathbb{R}^2$, and for $\gamma\in (1,6\delta-3)$ and $\delta 
\in (7-2\sqrt{10},1)$ in $\mathbb{R}^3$, without restriction on the size of the initial data. Moreover, the viscosity coefficients $(\mu(\rho),\lambda(\rho))$ satisfy the Bresch--Desjardins (BD) relation (see \cite{bd3}):
\begin{equation}\label{coeff}
\lambda(\rho)=2(\rho \mu'(\rho)-\mu(\rho)).
\end{equation}
In particular, when $\bar{\rho}=0$ in \eqref{e1.3}, our  solutions have the conserved total mass and momentum, and the fluid density keeps positive in $\mathbb{R}^n$  but decays to zero in the far-field, which is consistent with the facts that the total mass and momentum are conserved, and  \textbf{CNS} is a model of non-dilute fluids.

When $(\mu, \lambda,\kappa)$  are all constants, there is a vast literature on the well-posedness theory for  \textbf{CNS}. In the absence of vacuum, the one-dimensional (1-D) problem   has been studied extensively (see \cite{Kanel, KN,KS, ZA1} and the references therein), and   the  local well-posedness  of the multi-dimensional (M-D) classical solutions  are known in \cite{nash, serrin}. 
However,   when vacuum appears,  the approaches used in the references  mentioned above do not work directly, owing to the degeneracy of the time evolution in the momentum equations, which makes it difficult to study the dynamics of the fluid velocity in the domain where the density vanishes. 
In general, a vacuum is required for the far-field 
under some physical requirements
such as finite total mass and total energy in  $\mathbb{R}^n$. 
By introducing some  compatibility conditions, the local well-posedness of  3-D regular solutions with vacuum was established successfully 
in Salvi--Stra\v skraba \cite{Salvi},  Cho--Choe--Kim  \cite{CK3},  and the references therein. 
On the other hand,  it  has been shown that,  in the small data regime (near constant density),  the M-D solutions  remain globally regular if the initial data are smooth; see \cite{danchin1,  MN} and the references therein.
For  M-D problems with general data, though many important results on the global existence of weak solutions have been obtained in \cite{BJ,fu3,H,HJ,JZ,lions}, the uniqueness problem  
It is widely open due to the fairly low regularity of the solutions.

In fact, when $(\mu, \lambda,\kappa)$  are all constants, no matter for the barotropic flow or the non-isentropic flow, 
the global regularity of M-D solutions with large initial data remains open until now, even when the initial data exhibit some form of symmetry (if the domain considered contains the origin). 
The main obstacle lies in obtaining uniform {\it a priori} bounds on the density, both above and below; see Lions \cite{lions},  Feireisl \cite{fu3}, Sun--Wang--Zhang \cite{zhangzhifei},  and the references therein. 
For the M-D problem, as far as we know,  such kind of bounds can only be obtained in the region excluding the origin for spherically symmetric flow; see \cite{ck,H,HJ,J} and the references therein.
Recently, for 3-D spherically symmetric flow, Merle--Rapha\"el--Rodnianski--Szeftel \cite{MPI}  proved that there exists a set of finite-energy smooth initial data with far-field vacuum for which the corresponding solutions to the barotropic   \textbf{CNS} implode  in a finite time $T$, \textit{i.e.},
for any $\varepsilon>0$,
\begin{equation}\label{densityimplosion}
\quad \ \ \ \ \lim_{t\to T}\rho(t,\boldsymbol{0})=\infty,
\qquad  \lim_{t\to T} \sup_{|\boldsymbol{x}|\leq \varepsilon}|\boldsymbol{u}(t,\boldsymbol{x})|=\infty.
\end{equation}  
It is worth noting that the case $\gamma = \frac{5}{3}$, which corresponds to a monatomic gas,  was excluded in \cite{MPI} 
due to a triple-point degeneracy in the underlying analysis. This gap was later filled   by Shao--Wang--Wei--Zhang \cite{shao} by introducing a novel renormalization
for some autonomous ODEs. 
Moreover, for 3-D barotropic flow,  in  \cite{buckmaster}, Buckmaster--Cao-Labora--Gómez-Serrano  proved the finite-time blow-up of \textbf{CNS} with strictly positive initial density and spherical symmetry when $\gamma=\frac{7}{5}$ (corresponding to a diatomic gas).
On the other hand,  for 3-D barotropic \textbf{CNS} without symmetry assumption, 
 Cao-Labora--Gómez-Serrano--Shi--Staffilani \cite{shijia}  constructed some smooth solutions 
 that remain strictly away from vacuum and nevertheless develop an imploding finite-time 
 singularity in $\mathbb{T}^3$ or $\mathbb{R}^3$.

When vacuum appears, 
some  singular and counterintuitive behaviors of solutions to \textbf{CNS} with constant viscosity and thermal conductivity coefficients have been observed.
In particular,  Hoff--Serre \cite{hoffserre} showed that  1-D weak solutions need not depend continuously on their initial data when the initial density contains an interval of vacuum states,  and Duan-Xin--Zhu \cite{dxz}  proved that the classical solutions with far-field vacuum  cannot preserve the conservation of the momentum in $\mathbb{R}^3$.
Such pathological phenomena may be traced back to the unphysical assumption that the viscosity coefficients are constants when modeling viscous fluids in the presence of vacuum, under which the vacuum exerts an artificial force on the fluid across the fluid--vacuum interface. Therefore, from  a physical standpoint, compressible viscous flows near vacuum are therefore more appropriately modeled by the degenerate  \textbf{CNS} with $(\mu(\theta),\lambda(\theta),\kappa(\theta))$ shown  in \eqref{eq:1.6g} for the nonisentropic flow or $(\mu(\rho),\lambda(\rho))$ shown in \eqref{fandan} for the barotropic flow, respectively.

In fact, the degenerate \textbf{CNS} \eqref{eq:1.1}--\eqref{kelaoxiusi} (with $\delta>0$ in \eqref{fandan})  has received extensive attention in recent years. 
Significant progress on the global existence of smooth solutions has been achieved when the initial density is strictly positive; see \cite{cons,HB,kv,vassu2} for one-dimensional flows with general initial data, and \cite{Sundbye2,weike} for two-dimensional flows with small initial data. Some significant progress has been made on the global existence of smooth solutions when the initial density is strictly positive; see \cite{cons,HB,kv,vassu2} for 1-D flow with general data, and \cite{Sundbye2,weike} for 2-D flow with small data.  
However, when  $\inf_{\boldsymbol{x}\in\mathbb{R}^n} {\rho_0(\boldsymbol{x})}=0$, the momentum equations become 
degenerate in both the 
time evolution and the spatial dissipation: 
\begin{equation}\label{doubled}
\displaystyle
 \underbrace{\rho(\boldsymbol{u}_t+\boldsymbol{u}\cdot \nabla \boldsymbol{u})}_{\circledast}+\nabla P= \underbrace{\diver(2\mu(\rho)D(\boldsymbol{u})+\lambda(\rho)\diver \boldsymbol{u} \mathbb{I}_n)}_{\Diamond},
\end{equation}
where $\circledast$ denotes the degenerate time evolution, and $\Diamond$ denotes the degenerate spatial dissipation.
Such a double degenerate structure shown in \eqref{doubled} makes it formidable to establish the propagation and mollification mechanisms of the regularity of solutions. A new mathematical entropy
function was first introduced in Bresch--Desjardins \cite{bd3}  for
 $(\mu(\rho),\lambda(\rho))$ satisfying \eqref{coeff}, which offers the well-known BD entropy estimate
\begin{equation}\label{BDentropye}
\mu'(\rho)\nabla \rho/\sqrt{\rho}\in L^\infty ([0,T];L^2(\mathbb{R}^n))
\end{equation}
provided that    $
\mu'(\rho_0)\nabla \rho_0/\sqrt{\rho_0}\in L^2(\mathbb{R}^n)$.
This observation plays a key role in the development of the global existence of M-D weak solutions with finite energy; see   Bresch--Vasseur--Yu \cite{bvy},  Li--Xin \cite{lz}, Vasseur--Yu \cite{vayu}, 
and the references therein. 

Recently, based on some elaborate analysis of the intrinsic degenerate-singular structures of the degenerate \textbf{CNS} \eqref{eq:1.1}--\eqref{kelaoxiusi}, the local well-posedness of classical solutions with far-field vacuum is known in  \cite{GG, sz3,sz333,zhuthesis} and the references therein. In stark contrast with the case that the viscosity coefficients are constants, some solid progress have been obtained on the global regularity of solutions of the degenerate \textbf{CNS} with large initial data of spherical symmetry.
In particular, when $\delta=1$, by making full use of the linear dependence of the viscosity coefficient on the density,
Chen--Zhang--Zhu \cite{CZZ1} have established global well-posedness of spherically symmetric smooth solutions  in $\mathbb{R}^n$ ($n=2$ or $3$) for the following two classes of initial density profiles{\rm:}
 \begin{enumerate}
\item[\rm{(i)}] the initial density $\rho_0(\boldsymbol{x})$ is strictly positive in the whole space $\mathbb{R}^n$: 
\begin{equation*}
\inf_{\boldsymbol{x}\in \mathbb{R}^n} \rho_0(\boldsymbol{x})>0;
\end{equation*}
\item[\rm{(ii)}] the initial density remains positive for all $\boldsymbol{x}\in \mathbb{R}^n$  but decays to zero in the far-field:
\begin{equation*}
\rho_0(\boldsymbol{x}) >0 \quad \text{for $\boldsymbol{x}\in \mathbb{R}^n$},  \qquad\quad\rho_0(\boldsymbol{x}) \to 0 \quad \text{as $|\boldsymbol{x}|\to \infty$}.      
\end{equation*}
\end{enumerate}
Furthermore, for the case  $\delta=1$, 
even when the initial density is compactly supported, 
under the setting of the vacuum free boundary problem, 
it is proved in  Chen--Zhang--Zhu \cite{CZZ2} that 
the solutions with large initial data of spherical symmetry remain globally regular and unique in two and three spatial dimensions. Moreover,  the physical vacuum boundary condition is allowed for the initial density profile considered in \cite{CZZ2}, and the solutions are smooth all the way up to the moving vacuum boundary.

On the other hand, for the degenerate \textbf{CNS} \eqref{eq:1.1}--\eqref{kelaoxiusi}, when $\delta>1$, it is shown in \cite{sz333} that,  for certain classes of 3-D initial data with vacuum in some open set, one can construct corresponding local smooth solutions in the inhomogeneous Sobolev spaces, which break down in finite time, regardless of the size of  the initial data. Moreover, in a very recent paper, for   $\gamma \in (1,\, 1+\frac{2}{\sqrt{3}})$ except for at most countably many points,  
Chen--Liu--Zhu \cite{GLZ}  identified  a value $\delta^*(\gamma)<\frac12$, \textit{i.e.},  
  \begin{equation}\label{def:delta*}
\begin{aligned}
\delta^*(\gamma)=\begin{cases}
\displaystyle
     \frac{\gamma+1}{4}-\frac{\sqrt{2(\gamma-1)}}{2}\quad &\displaystyle\text{for}\,\,\, 1 < \gamma <\frac{5}{3},\\[8pt]
     \displaystyle
     \frac{1-(2\sqrt{3}-3)\gamma}{2(3-\sqrt{3})}\quad &\displaystyle\text{for}\,\,\, \frac{5}{3} \leq \gamma <1+\frac{2}{\sqrt{3}},
\end{cases}
\end{aligned}
\end{equation}
such that, 
for every $0<\delta< \delta^*(\gamma)$ in \eqref{fandan}, 
there exists a class of smooth initial data for which the corresponding  smooth solutions 
 implode  in finite time at the origin (as shown in \eqref{densityimplosion}) in $\mathbb{T}^3$ (torus) or $\mathbb{R}^3$. 
For  ruling out 
the possibility that the corresponding implosion is an artifact of the vacuum, the  initial density profile considered in \cite{GLZ} is assumed to be  strictly positive.

These observations shown in \cite{buckmaster,shijia, GLZ,CZZ1,CZZ2,sz333, MPI,shao} for the global regularity of solutions with large data or finite time singularity formation indicate that the behavior   of   M-D  regular solutions to \textbf{CNS} \eqref{eq:1.1}--\eqref{kelaoxiusi} depends sensitively on the  viscosity exponent.
This strong dependence makes the analysis of global well-posedness for M-D regular solutions with general initial data highly intricate. 

In this paper, for the physically important case  $\delta \in (\frac{1}{2},1)$, we study  the global well-posedness of regular solutions of the degenerate \textbf{CNS} \eqref{eq:1.1}--\eqref{kelaoxiusi} with large initial data of spherical symmetry in two and three spatial dimensions. 
For this purpose, under assumption \eqref{coeff},
we first reformulate the Cauchy problem \eqref{eq:1.1}--\eqref{kelaoxiusi} with \eqref{eqs:CauchyInit}--\eqref{e1.3} in $[0,T]\times \mathbb{R}^n$ for some time $T>0$  into the following initial-boundary value problem ({\rm \textbf{IBVP}}) in $\Omega_T:=[0,T]\times [0,\infty)$:
\begin{equation}\label{e1.5hh}
\begin{cases}
\displaystyle 
\rho_t+u\rho_r+\rho\big(u_r+\frac{m u}{r}\big)=0,\\[3pt]
\displaystyle
\rho u_t+\rho uu_r+A(\rho^\gamma)_r=2a_1\delta\big(\rho^\delta u_r+\frac{m\rho^\delta u}{r} \big)_r-\frac{2a_1 m(\rho^\delta)_r u}{r},\\[7pt]
\displaystyle
(\rho, u)|_{t=0}=(\rho_0, u_0 )\qquad\qquad \qquad \qquad \qquad \text{for  $r\in I:=[0,\infty)$},\\[7pt]
\displaystyle
u|_{r=0}=0\qquad\qquad\qquad \qquad \qquad\qquad \
\qquad  \text{for $t\in(0,T]$},\\[7pt]
\displaystyle
(\rho,u)\to \left(0,0\right) \qquad\qquad \, \qquad \qquad  \qquad 
\qquad \text{as $r\to \infty$ \ for $t\in (0,T]$},
\end{cases}
\end{equation}
where $a_1>0$ is a constant,  $r=|\boldsymbol{x}|$, $m=n-1$, and the boundary condition $\eqref{e1.5hh}_4$ is derived from the continuity of $\boldsymbol{x}\mapsto \boldsymbol{u}(t,\boldsymbol{x})$ at $\boldsymbol{x}=\boldsymbol{0}$. Drawing upon the progress presented in \cite{GLZ,CZZ1,CZZ2,sz333}, our results will further refine  the global well-posedness theory of M-D smooth  solutions with general data for the degenerate \textbf{CNS}.

The corresponding study on the global regularity of solutions with general data  of \eqref{e1.5hh} is extremely difficult, since the structure of momentum equation $\eqref{e1.5hh}_2$ is full of degeneracy, singularity, and nonlinearity, \textit{i.e.},
\begin{equation}\label{cosingu}
\displaystyle
\underbrace{\rho (u_t+uu_r)}_{\circledast}+P_r-\underbrace{2a_1\delta(\rho^\delta u_r)_r}_{\Diamond}-\underbrace{2 a_1 m \Big(\delta \big(\frac{\rho^\delta u}{r} \big)_r-\frac{ (\rho^\delta)_r u}{r}\Big)}_{\star} =0,
\end{equation}
where $\star$ denotes the  coordinates singularity.
In fact, for the 1-D case ($m=n-1=0$), 
the so-called coordinate singularity vanishes. 
Hence, the analysis for M-D spherically symmetric flows is essentially different from that for 1-D flow.
Due to the compressibility of fluids, 
we encounter the following  two major obstacles:
\begin{itemize}
\item possible cavitation, {\it i.e.}, $\rho(t,r)\to 0$ for some $(t,r)\in (0,T]\times [0,\infty)$;
\vspace{3pt}
\item possible implosion, {\it i.e.}, $\rho(t,r)\to \infty$ for some $(t,r)\in (0,T]\times [0,\infty)$.
\end{itemize}
Overcoming these difficulties is highly nontrivial  because of three inherent issues:
\begin{itemize}
\item the coordinate singularity at the origin, manifested by the singular factor $\tfrac{1}{r}$ in equations $\eqref{e1.5hh}_1$--$\eqref{e1.5hh}_2$;

\vspace{3pt}
\item the degeneracies in both the time evolution ($\circledast$) and the spatial dissipation ($\Diamond$), arising  from the far-field vacuum 
in \eqref{e1.3};

\vspace{3pt}
\item the strong nonlinearity in the spatial dissipation caused by the nonlinear density-dependent viscosity coefficients.
\end{itemize}

Thus, new ideas and techniques are required to establish the global well-posedness of spherical symmetric regular solutions 
of the Cauchy problem \eqref{eq:1.1}--\eqref{kelaoxiusi} with \eqref{eqs:CauchyInit}--\eqref{e1.3}, especially when far-field vacuum appears.
Fortunately, in this paper,  based on some elaborate analysis of the intrinsic degenerate-singular structures of \eqref{eq:1.1} in the radial coordinates, we prove the global well-posedness of regular solutions of the degenerate   \textbf{CNS} for barotropic flow
with large initial data of spherical symmetry and nonlinear density-dependent viscosities in two and three spatial dimensions.  Our result holds without restrictions on the size of the initial data, and the solutions we obtained satisfy the conservation laws of total mass and momentum.
In our analysis, the most crucial step is to obtain the uniform upper bound of $\rho$,  which is challenging due to  the combined difficulties of degeneracy near the far-field vacuum, coordinate singularity at the origin, and nonlinearity of viscosity coefficients. Owing to the nonlinear dependence of the viscosity coefficients on the density (\textit{i.e.}, $\delta\in (0,1)$ in \eqref{fandan}) as well as the appearance of far-field vacuum,  the dynamics of the fluid velocity is governed by an intrinsic {\it singular parabolic} system. To handle such a singular structure, we introduce well-designed singular weighted estimates for the fluid velocity. Besides, in the simpler case where the density is strictly positive, our method can be applied  directly without major modification.

We now outline the organization of the rest of this paper.
In \S \ref{maintheorem}, the main theorems on the global well-posedness for spherically symmetric regular solutions of the Cauchy problem \eqref{eq:1.1}--\eqref{kelaoxiusi} with \eqref{eqs:CauchyInit}--\eqref{e1.3} are presented.
In \S \ref{Section2}, we introduce the notations, present an enlarged reformulation of \eqref{eq:1.1}, and then outline the main strategies of our analysis. 
In \S \ref{section-upper-density}--\S \ref{se46}, we provide a detailed proof of the global well-posedness under the far-field vacuum case stated in \S \ref{maintheorem}. 
On one hand, in \S \ref{section-upper-density}--\S \ref{section-global2}, 
we establish the global uniform estimates for the regular solutions in carefully designed  function spaces, which consist of:
\begin{enumerate}
\item[(i)] The global {\it a priori} upper bound for $\rho$ (\S \ref{section-upper-density});
\vspace{3pt}
\item[(ii)] The global  $L^\infty(\mathbb{R}^n)$-estimate for the effective velocity (\S \ref{section-effective});
\vspace{3pt}
\item[(iii)] The non-formation of vacuum  inside the fluids  in finite time (\S \ref{section-nonformation});
\vspace{3pt}
\item[(iv)] The global singular weighted  estimates for regular solutions (\S \ref{section-global2}).
\end{enumerate}
On the other hand, based on these uniform estimates, in \S \ref{se46}, we obtain the desired  global well-posedness 
of   regular solutions,
by using the method of continuity. In \S \ref{nonvacuumfarfield}, we sketch the proof for the global well-posedness of regular solutions with strictly positive initial density. Furthermore, we briefly outline the proof for the desired local well-posedness in \S \ref{section-local-regular}. Finally, we list some auxiliary lemmas and new Sobolev embedding theorems 
for spherically symmetric functions that are used frequently throughout this paper 
in  Appendices \ref{appA}--\ref{improve-sobolev}.

\section{Main Theorems}\label{maintheorem}
This section is devoted to stating our main theorems on the global well-posedness of the spherically symmetric regular solutions 
of the Cauchy problem \eqref{eq:1.1}--\eqref{kelaoxiusi} with \eqref{eqs:CauchyInit}--\eqref{e1.3}  for general data in $\mathbb{R}^n$ for $n=2,3$.  For simplicity, throughout this paper,
for any function space defined on $\mathbb{R}^n$, 
the following conventions are used for any $k\in \mathbb{N}$,
unless otherwise specified: 
\begin{equation} \label{eulerspace}
\begin{aligned}
&\|f\|_{L^p}=\|f\|_{L^p(\mathbb{R}^n)},\quad \|f\|_{H^k}=\|f\|_{H^k(\mathbb{R}^n)},\quad  \|f\|_{W^{k,p}}=\|f\|_{W^{k,p}(\mathbb{R}^n)},\\[2pt]
&D^{k,p}(\mathbb{R}^n)
=\big\{f\in L^1_{\mathrm{loc}}(\mathbb{R}^n):\,\|f\|_{D^{k,p}(\mathbb{R}^n)}=\|\nabla^k f\|_{L^p(\mathbb{R}^n)}<\infty\big\},\\[2pt] & D^k(\mathbb{R}^n)=D^{k,2}(\mathbb{R}^n),   \quad \|f\|_{D^{k,p}}=\|f\|_{D^{k,p}(\mathbb{R}^n)},\quad  \|f\|_{D^{k}}=\|f\|_{D^{k}(\mathbb{R}^n)},\\[2pt]
&  H^{-k}(\mathbb{R}^n)=(H^k(\mathbb{R}^n))^*,\quad \|(f,g)\|_X=\|f\|_X+\|g\|_X.\end{aligned}
\end{equation}

\subsection{Global Spherically Symmetric Solutions of the Degenerate \textbf{CNS} with Far-Field Vacuum}
We first address the case that  $\bar\rho=0$ in \eqref{e1.3}. Consider the following physical range of $(\delta,\gamma)$ in system \eqref{eq:1.1}:
\begin{equation}\label{cd1}
\begin{aligned}
\delta&\in (\frac{1}{2},1),\quad &&\gamma\in (1,\infty) \ \quad &&&\text{if $n=2$},\\
\delta&\in (7-2\sqrt{10},1),\quad &&\gamma\in (1,6\delta-3) \ \quad &&&\text{if $n=3$}.    
\end{aligned}
\end{equation}

Notice that,  if  $\rho>0$, the momentum equations $\eqref{eq:1.1}_2$ can be formally rewritten as
\begin{equation}\label{qiyi}
\begin{aligned}
\boldsymbol{u}_t+\boldsymbol{u}\cdot\nabla \boldsymbol{u} +\frac{A\gamma}{\gamma-1}\nabla\rho^{\gamma-1}+ \rho^{\delta-1} L\boldsymbol{u}=\frac{\delta}{\delta-1}\nabla \rho^{\delta-1} \cdot  Q(\boldsymbol{u}),
\end{aligned}
\end{equation}
where $L \boldsymbol{u}$ and $Q(\boldsymbol{u})$ are  given by
\begin{equation}\label{operatordefinition}
L\boldsymbol{u}=-a_1\Delta \boldsymbol{u}-(a_1+a_2)\nabla \diver\boldsymbol{u},\qquad Q(\boldsymbol{u})=2a_1 D(\boldsymbol{u})+a_2\diver\boldsymbol{u}\,\mathbb{I}_n.
\end{equation}
We find that in  \eqref{qiyi}, the degeneracies both in the time evolution and spatial dissipation have been transferred to the possible singularities of $(\nabla  \rho^{\delta-1},\rho^{\delta-1}L\boldsymbol{u})$.

It is worth pointing out that, when vacuum appears in the far field,   the dynamics of the velocity field $u$ is governed by the quasilinear singular parabolic system \eqref{qiyi}--\eqref{operatordefinition}.  Compared with  the case  $\delta= 1$ studied in \cite{clz1,CZZ1,sz3},
some new obstacles for the global well-posedness theory with large initial data and far field vacuum arise:
\begin{enumerate}
\item[(i)]
The source term on the right-hand side of \eqref{qiyi} contains some singularity as
\begin{equation*}
\nabla \rho^{\delta-1}=(\delta-1)\rho^{\delta-1}\nabla \log \rho,
\end{equation*}
whose behavior is more singular than that of $\nabla \log \rho $ in \cite{clz1,CZZ1,sz3} due to $\delta-1<0$ when $\rho \to 0$ as $|\boldsymbol{x}|\to \infty$;
\smallskip
\item[(ii)]
The coefficient $\rho^{\delta-1}$ in front of the Lam\'e operator $ L\boldsymbol{u}$  tends to $\infty$ as $\rho\to 0$ in the far filed instead of equaling to $1$  in \cite{clz1,CZZ1,sz3}. Then it is necessary to show that the term $\rho^{\delta-1} Lu$ is well defined in some smooth function space.
\end{enumerate}

Therefore, the two quantities
\begin{equation*}
(\rho^{\gamma-1}, \nabla \rho^{\delta-1})
\end{equation*}
will play significant roles in our analysis on the high-order regularity of the fluid velocity 
$\boldsymbol{u}$. 
Based on the above observation, we first introduce a proper class of solutions, called regular solutions, of the Cauchy problem \eqref{eq:1.1}--\eqref{kelaoxiusi} with \eqref{eqs:CauchyInit}--\eqref{e1.3}.

\begin{mydef}\label{cjk}
The vector function $(\rho, \boldsymbol{u})$ is called a regular solution of the Cauchy problem \eqref{eq:1.1}--\eqref{kelaoxiusi} with \eqref{eqs:CauchyInit}--\eqref{e1.3}  in $[0,T]\times \mathbb{R}^n$ $(n=2$ or $3)$ for $T>0$ if $(\rho,\boldsymbol{u})$ satisfy the following properties{\rm:}
\begin{equation*}\begin{aligned}
\mathrm{(i)}& \ (\rho, \boldsymbol{u}) \ \text{satisfies this problem in the sense of distributions};\\
\mathrm{(ii)}& \ 0<\rho\in C([0,T];L^1(\mathbb{R}^n))\cap L^\infty([0,T]\times\mathbb{R}^n), \quad \nabla \rho^{\gamma-1} \in C([0,T]; H^2(\mathbb{R}^n),\\
& \ \nabla\rho^{\delta-1}\in C([0,T];L^\infty(\mathbb{R}^n)\cap D^{1,n}(\mathbb{R}^n)\cap D^2(\mathbb{R}^n)),\ \ \nabla\rho^{\frac{\delta-1}{2}}\in C([0,T];L^{2n}(\mathbb{R}^n));\\[2pt]
\mathrm{(iii)}& \ \boldsymbol{u}\in C([0,T]; H^3(\mathbb{R}^n))\cap L^2([0,T]; D^{4}(\mathbb{R}^n)),\  \ \rho^\frac{\delta-1}{2}\nabla \boldsymbol{u} \in C([0,T]; L^2(\mathbb{R}^n)),\\[2pt]
&\ \boldsymbol{u}_t\in C([0,T]; H^1(\mathbb{R}^n))\cap L^2([0,T]; D^2(\mathbb{R}^n)),\ \ \boldsymbol{u}_{tt}\in L^2([0,T]; L^2(\mathbb{R}^n)),\\[2pt]  
& \ \rho^\frac{\delta-1}{2}\nabla \boldsymbol{u}_t \in  L^\infty([0,T];L^2(\mathbb{R}^n)),\quad  \rho^{\delta-1}\nabla^2 \boldsymbol{u} \in C([0,T]; L^2(\mathbb{R}^n)),\\[2pt] 
& \ \rho^{\delta-1}\nabla^3 \boldsymbol{u} \in C([0,T]; L^2(\mathbb{R}^n)),\quad \rho^{\delta-1}\nabla^4 \boldsymbol{u} \in L^2([0,T]; L^2(\mathbb{R}^n)).
\end{aligned}
\end{equation*}
\end{mydef}

\begin{rk}\label{regularsolution}
We first introduce some physical quantities to be used in this paper{\rm :}
\begin{equation*}
\begin{aligned}
\mathcal{M}(t)&=\int_{\mathbb{R}^n} \rho(t,\boldsymbol{x})\,\mathrm{d}\boldsymbol{x}&&\qquad (\textrm{total mass}),\\[2pt]
\mathcal{P}(t)&=\int_{\mathbb{R}^n} (\rho \boldsymbol{u})(t,\boldsymbol{x})\,\mathrm{d}\boldsymbol{x} &&\qquad (\textrm{momentum}). 
\end{aligned}
\end{equation*}
It will be shown later that the regular solutions defined here satisfy the conservation of $\mathcal{M}(t)$ and $\mathcal{P}(t)$. Next, the regularity of $\rho$ shown in {\rm Definition \ref{cjk}} 
implies that $\rho>0$ in $\mathbb{R}^n$ but decays to zero 
in the far-field, 
which is consistent with the facts 
that $\mathcal{M}(t)$ and $\mathcal{P}(t)$ are both conserved 
and {\rm\textbf{CNS}} is a model of non-dilute fluids. 
Thus, the definition of regular solutions is consistent with the physical background of {\rm\textbf{CNS}}.
\end{rk}

Our main theorem is on the global well-posedness of the spherically symmetric regular solutions 
of the Cauchy problem \eqref{eq:1.1}--\eqref{kelaoxiusi} with \eqref{eqs:CauchyInit}--\eqref{e1.3}  for general initial data. The regularity of this solution ensures uniqueness in both the 2-D and 3-D cases and demonstrates that it is a classical solution of this problem.

\begin{thm}\label{th1-high}
Let $n=2$ or $3$, $\bar\rho=0$ in \eqref{e1.3}, and \eqref{coeff} and \eqref{cd1} hold. Assume that the initial data  $(\rho_0, \boldsymbol{u}_0)(\boldsymbol{x})$ are spherically symmetric and satisfy 
\begin{equation}\label{id1-high}
\begin{aligned}
&0<\rho_0\in L^1(\mathbb{R}^n),\qquad \nabla \rho_0^{\delta-1}\in  D^{1,n}(\mathbb{R}^n)\cap D^{2}(\mathbb{R}^n),\qquad \nabla\rho_0^\frac{\delta-1}{2}\in L^{2n}(\mathbb{R}^n),\\
& \nabla\rho_0^{\gamma-1}\in H^2(\mathbb{R}^n),\qquad\boldsymbol{u}_0 \in H^3(\mathbb{R}^n), 
\end{aligned}
\end{equation}
and the initial compatibility conditions 
\begin{equation}\label{th78zx}
\nabla \boldsymbol{u}_0=\rho^{\frac{1-\delta}{2}}_0 \mathcal{G}_1,\qquad
\nabla (\rho^{\delta-1}_0L\boldsymbol{u}_0)=\rho^{\frac{1-\delta}{2}}_0\mathcal{G}_2,\qquad  L\boldsymbol{u}_0= \rho^{1-\delta}_0\boldsymbol{g}_*,
\end{equation}
for some matrices $(\mathcal{G}_1,\mathcal{G}_2) \in L^2(\mathbb{R}^n)$ and a vector function  $\boldsymbol{g}_*\in L^2(\mathbb{R}^n)$. Then, for any $T>0$,  the Cauchy problem \eqref{eq:1.1}{\rm--}\eqref{kelaoxiusi} with \eqref{eqs:CauchyInit}{\rm--}\eqref{e1.3}  admits a unique global regular solution $(\rho,\boldsymbol{u})(t,\boldsymbol{x})$
in $[0,T]\times\mathbb{R}^n$ that satisfies
\begin{equation}\label{er2-high}
\begin{aligned}
&\sqrt{t}\boldsymbol{u}\in L^\infty([0,T];D^4(\mathbb{R}^n)),\quad \sqrt{t}\boldsymbol{u}_t\in L^\infty([0,T];D^2(\mathbb{R}^n)),\\
&\sqrt{t}\boldsymbol{u}_{tt}\in L^\infty([0,T];L^2(\mathbb{R}^n))\cap L^2([0,T];D^1(\mathbb{R}^n)).
\end{aligned}
\end{equation}
Moreover,  $(\rho, \boldsymbol{u})$ is spherically symmetric with the form{\rm:}
\begin{equation}\label{2.8}
(\rho,\boldsymbol{u})(t,\boldsymbol{x})=(\rho(t,r),u(t,r)\frac{\boldsymbol{x}}{r}) \qquad\text{for $r=|\boldsymbol{x}|$}
\end{equation}
and satisfies the following properties{\rm:}
\begin{itemize}
\item[$\mathrm{(i)}$] The solution we obtain here 
is a classical solution{\rm :}
\begin{equation}\label{3dclassical2}
\begin{aligned}
(\rho,\nabla \rho, \rho_t, \boldsymbol{u},\nabla \boldsymbol{u})\in C([0,T]\times \mathbb{R}^n),\qquad ( \nabla^2 \boldsymbol{u},\boldsymbol{u}_t)\in C((0,T]\times \mathbb{R}^n).
\end{aligned}
\end{equation}
\item[$\mathrm{(ii)}$] The conservations of total mass and total momentum {\rm(}that remains zero{\rm)} hold{\rm :}
\begin{equation}\label{massconservation}
\mathcal{M}(t)=\mathcal{M}(0), \quad\, \mathcal{P}(t)\equiv \boldsymbol{0} \qquad \ 
\text{for $t\in [0,T]$};
\end{equation}
\item[$\mathrm{(iii)}$] For any $T>0$ and $(t,r)\in (0,T]\times [0,\infty)$,
\begin{equation}\label{decay-est}
\begin{gathered}
\frac{C(T)^{-1}\underline{\rho}(r)}{(r^{\frac{1}{2-2\delta}}+1)(\underline{\rho}(r)+1)}\leq \rho(t,r) \leq \min\big\{C(T),Cr^{-n+1}\big\},
\end{gathered}
\end{equation}
where $\underline{\rho}(r):=\inf_{z\in [0,r]} \rho_0(z)$,
$C\geq 1$ is a constant depending only on $(\rho_0,\boldsymbol{u}_0)$ and $(n,a_1,\delta,\gamma,A)$, and $C(T)\geq 1$ is a constant depending only on $(C,T)$. 
\end{itemize}
\end{thm}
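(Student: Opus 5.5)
The proof will go by local well-posedness plus a continuation argument driven by global \emph{a priori} estimates, following the outline of \S\ref{Section2}. First, local existence and uniqueness of a regular solution on some $[0,T_*]$ is taken from the local theory (sketched in \S\ref{section-local-regular}, relying on \cite{sz3,sz333}). Spherical symmetry is preserved by uniqueness and rotation invariance, so the problem reduces to the radial IBVP \eqref{e1.5hh}. It then suffices to show that, for any $T>0$, all norms in Definition \ref{cjk} together with the weighted quantities $\rho^{\frac{\delta-1}{2}}\nabla\boldsymbol{u}$, $\rho^{\delta-1}\nabla^k\boldsymbol{u}$ and the time-weighted bounds \eqref{er2-high} are bounded by a constant $C(T)$ depending only on the data and $T$. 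Extension by continuity then finishes the argument.

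\textbf{Basic estimates.} The first layer is the energy identity and the BD entropy \eqref{BDentropye}, which is available because \eqref{coeff} holds. Together they give $\sqrt\rho u\in L^\infty L^2$, $\rho^{\gamma/2}\in L^\infty L^2$, $\nabla\rho^{\delta-1/2}\in L^\infty L^2$, and $\rho^{\delta/2}\nabla u\in L^2L^2$.

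The radial Sobolev embeddings for spherically symmetric functions (Appendix \ref{improve-sobolev}) then give the far-field decay $\rho\le Cr^{-n+1}$. Concretely, $\rho^{\delta-1/2}(r)\le \int_r^\infty|\partial_s\rho^{\delta-1/2}|\,\mathrm ds\le Cr^{-m/2}\|\nabla\rho^{\delta-1/2}\|_{L^2}$, which after raising to a power yields the tail bound in \eqref{decay-est}.

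\textbf{The main obstacle: a uniform upper bound near $r=0$.} To control $\rho$ near the origin, introduce the effective velocity
\[
v=u+2a_1\frac{\delta}{\delta-1}\,\partial_r\rho^{\delta-1}\qquad(\text{i.e. } u+\varphi(\rho)_r,\ \varphi'=2\mu'(\rho)/\rho).
\]
Using \eqref{coeff}, $v$ satisfies a transport-type equation $\rho(v_t+uv_r)+P_r=$ (terms from the coordinate singularity, involving $m u/r$ and $(\rho^\delta)_r u/r$).

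The plan proceeds in two steps.
\begin{itemize}
\item[(a)] Obtain an $L^\infty$ bound for $v$ by a maximum-principle/De Giorgi type argument along particle paths, or by $L^p$ estimates with $p\to\infty$ (\S\ref{section-effective}). The singular terms are handled through $u(t,0)=0$ and weighted Hardy inequalities.
\item[(b)] Insert this bound into the mass equation written along particle paths. This gives a Riccati-type ODE for $\rho^{\delta-1}$ or $\log\rho$ in which the pressure term has a favorable sign, and which yields $\rho\le C(T)$.
\end{itemize}
I expect this to be the main obstacle. The coupling between the $m u/r$ terms and the nonlinear weight $\rho^{\delta}$ near the origin is delicate, and the restrictions \eqref{cd1} ($\delta>1/2$, and in 3-D $\gamma<6\delta-3$ with $\delta>7-2\sqrt{10}$) are exactly what should close the interpolation between the BD bound and the pressure integrability there. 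As a first attempt I would bound $\int_0^t\|u/r\|_{L^\infty}$ using $\rho^{\delta/2}\nabla u\in L^2L^2$, the BD control of $\rho$, and Gagliardo--Nirenberg on radial functions.

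\textbf{Lower bound and higher regularity.} With $v$ and $\rho$ bounded above, the lower bound in \eqref{decay-est} follows from $\partial_r\rho^{\delta-1}=\frac{\delta-1}{2a_1\delta}(v-u)$. Integrating from the origin and using $|u|\le C(T)$ (from energy estimates on $u$) gives $\rho^{\delta-1}(t,r)\le C(T)\big(1+r+\underline\rho(r)^{\delta-1}\big)$. A comparison along particle paths then produces the form $\underline\rho(r)/\big((r^{1/(2-2\delta)}+1)(\underline\rho+1)\big)$ stated in \eqref{decay-est}; this is where non-formation of interior vacuum (\S\ref{section-nonformation}) enters.

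For the higher-order estimates, work with the singular parabolic form \eqref{qiyi}. Multiply by $u_t$ and by $\rho^{\delta-1}Lu$, and differentiate in $t$ and $x$, to obtain the singular weighted estimates $\rho^{\frac{\delta-1}{2}}\nabla u$, $\rho^{\frac{\delta-1}{2}}\nabla u_t$ and $\rho^{\delta-1}\nabla^{2,3,4}u$, using the compatibility conditions \eqref{th78zx} for the initial values. The quantities $\nabla\rho^{\delta-1}$ and $\nabla\rho^{\gamma-1}$ are propagated by the transport equations obtained by differentiating the mass equation, closed with Gronwall. The $\sqrt t$-weighted bounds \eqref{er2-high} come from the same estimates multiplied by $t$.

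\textbf{Remaining properties.} Classical regularity \eqref{3dclassical2} follows by Sobolev embedding. Conservation of mass and momentum \eqref{massconservation} follows from the decay and integrability of $\rho$ and $\rho u$, using that the stress is integrable so the flux terms vanish; $\mathcal P\equiv0$ also follows from the radial symmetry. Uniqueness follows from an energy estimate for the difference of two solutions in the weighted class.
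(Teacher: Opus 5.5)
\textbf{The main gap is the uniform upper bound of $\rho$ near $r=0$.} Your skeleton (local existence, reduction to the radial problem, global \emph{a priori} bounds, continuation) is right, but this central step has no working mechanism, and the order (a)$\to$(b) cannot be realized.

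Under \eqref{coeff} the effective-velocity equation has \emph{no} coordinate-singular terms. Indeed \eqref{ess} is exactly $\rho(v_t+uv_r)+A(\rho^\gamma)_r=0$, i.e. $v_t+uv_r+\frac{A\gamma}{2a_1\delta}\rho^{\gamma-\delta}(v-u)=0$. Along particle paths $v$ is therefore a weighted average of $v_0$ and past values of $u$. This gives only $|v(t)|_\infty\le\max\{|v_0|_\infty,\sup_{s\le t}|u(s)|_\infty\}$, or $|v(t)|_\infty\le|v_0|_\infty+C\int_0^t|\rho|_\infty^{\gamma-1}|\rho^{1-\delta}u|_\infty\,\mathrm{d}s$. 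So a bound on $v$ presupposes a bound on $u$ or on $\rho$, and cannot come first.

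Letting $p\to\infty$ does not help. The $L^p$ inequality for $(r^m\rho)^{1/p}v$ is driven by weighted $L^p$ norms of $u$. The $L^p$ energy estimate for $u$ (testing $\eqref{e1.5}_2$ with $r^m|u|^{p-2}u$) has a positive-definite dissipation form $\delta(p-1)X^2-mp(1-\delta)XY+(\delta m^2-m^2+m)Y^2$ only for $p<\tilde p_m(\delta)<\infty$. Your step (b) is not specified. Also, $\int_0^t\|u/r\|_{L^\infty}$ is a pointwise bound on $\nabla u$ at the origin, so it is not controlled by $\rho^{\delta/2}\nabla u\in L^2L^2$ and BD, especially since $\rho$ has no bounds yet.

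The missing idea is to bound $\rho$ \emph{first}, using only finitely many weighted moments of $(u,v)$. The paper does this in four steps.
\begin{enumerate}
\item Hardy plus BD give $r$-weighted $L^q$ bounds of $\rho$ on $(0,1)$ (Lemma \ref{l4.3}).
\item One proves $|(r^m\rho)^{1/p}(u,v)|_p\le C(p,T)$ for $p\in[2,\tilde p_m(\delta))$ (Lemma \ref{lemma-uv-lp}). In 3-D the pressure term near the origin is absorbed by an iterated integration-by-parts/Hardy scheme, and this is where $\gamma<6\delta-3$ enters.
\item The $\delta$-range in \eqref{cd1} is precisely $\tilde p_m(\delta)>n$ (Lemma \ref{Lemma-pp}). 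Hence the 1-D bound $|\chi_1^\flat\rho|_\infty^{2\delta-1}\le C_0\big(|\chi_1^\flat\rho^{2\delta-1}|_1+|\chi_1^\flat\rho^\delta(u,v)|_1\big)$, which follows from $(\rho^{2\delta-1})_r\propto\rho^\delta(v-u)$, closes after H\"older with some $p_0\in(n,\min\{\frac{1}{1-\delta},\tilde p_m(\delta)\})$ (Lemma \ref{important2}).
\item Only afterwards is $|v|_\infty$ bounded, via $\int_0^t|\rho^{1-\delta}u|_\infty^2\,\mathrm{d}s$ and Gr\"onwall (Lemmas \ref{cru3}--\ref{l4.4}).
\end{enumerate}

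\textbf{Two further steps fail as written.}

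Far field: your inequality $\rho^{\delta-\frac12}(r)\le Cr^{-\frac m2}\|\nabla\rho^{\delta-\frac12}\|_{L^2}$ is not scale-invariant. Cauchy--Schwarz gives $\int_r^\infty|\partial_s\rho^{\delta-\frac12}|\,\mathrm{d}s\le\big(\int_r^\infty s^{-m}\,\mathrm{d}s\big)^{\frac12}\|r^{\frac m2}(\rho^{\delta-\frac12})_r\|_{L^2(I)}$. The prefactor is infinite for $m=1$ and equals $r^{-\frac12}$ for $m=2$. So you get nothing in 2-D, and only $\rho\lesssim r^{-1/(2\delta-1)}$ in 3-D, which is weaker than $r^{-2}$ when $\delta>\frac34$. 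The bound $\rho\le Cr^{-m}$ requires the conserved mass as well, through $|\chi^\sharp_\omega r^m\rho|_\infty\le|\chi^\sharp_\omega(r^m\rho)_r|_1$ combined with BD (Lemma \ref{far-p-infty}).

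Lower bound: integrating $(\rho^{\delta-1})_r=\frac{\delta-1}{2a_1\delta}(v-u)$ from the origin needs a bound on $\rho^{\delta-1}(t,0)$, which is exactly the unknown. Also, $|u|_\infty\le C(T)$ is not available from energy estimates at that stage. The paper instead tests the mass equation with $\chi_R^\flat\rho^{2\delta-3}$. This uses the unweighted bounds $\int_0^T\big(|\rho^{\frac{\delta-1}{2}}\mathrm{D}_ru|_2^2+|u|_\infty^4\big)\,\mathrm{d}t\le C(T)$ of Lemma \ref{ele}, which rest on the refined $\rho$-weighted estimates of Lemmas \ref{cru4}--\ref{cru5}. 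It is combined with $|\chi_R^\flat(\rho^{\delta-1})_r|_2\le C(T)(\sqrt R+1)$ and the 1-D embedding on $[0,R]$, which produces the factor $R^{\frac{1}{2-2\delta}}$ in \eqref{decay-est}. A particle-path comparison could replace this, but only after the higher-order bounds $|u|_\infty+\int_0^T|\mathrm{D}_ru|_\infty\,\mathrm{d}t\le C(T)$ of Lemma \ref{lemma66}, not where you place it.
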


As a direct consequence of Lemma \ref{initial3} in Appendix \ref{appA}, the initial assumption \eqref{id1-high} in Theorem \ref{th1-high} can be simplified when $\gamma\geq \delta+\frac{1}{2}$.
\begin{cor}
If the constraint on $(\delta,\gamma)$ in \eqref{cd1} is replaced by 
\begin{equation*} 
\begin{aligned}
\delta&\in (\frac{1}{2},1),\quad &&\gamma\in \big[\delta+\frac{1}{2},\infty\big) \ \quad &&&\text{if $n=2$},\\
\delta&\in (\frac{7}{10},1),\quad &&\gamma\in \big[\delta+\frac{1}{2},6\delta-3\big) \ \quad &&&\text{if $n=3$},    
\end{aligned}
\end{equation*}
then the initial condition \eqref{id1-high} in {\rm Theorem \ref{th1-high}} can be reduced to
\begin{equation*}
0<\rho_0\in L^1(\mathbb{R}^n),\quad \nabla \rho_0^{\delta-1}\in  D^{1,n}(\mathbb{R}^n)\cap D^{2}(\mathbb{R}^n),\quad \nabla\rho_0^\frac{\delta-1}{2}\in L^{2n}(\mathbb{R}^n),\quad\boldsymbol{u}_0 \in H^3(\mathbb{R}^n).   
\end{equation*}
\end{cor}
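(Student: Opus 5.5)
The corollary follows from Theorem \ref{th1-high} once we show two things under the new constraint. First, the new parameter range lies inside \eqref{cd1}. Second, the reduced initial assumptions already imply $\nabla\rho_0^{\gamma-1}\in H^2(\mathbb{R}^n)$. Presumably the second point is the content of Lemma \ref{initial3}; below I sketch how I would prove it directly.

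\emph{The parameter ranges.} For $n=2$ there is nothing to check. For $n=3$, the interval $[\delta+\frac12,6\delta-3)$ is nonempty exactly when $\delta>\frac{7}{10}$. Since $\frac{7}{10}>7-2\sqrt{10}\approx 0.675$ and $\delta+\frac12>1$, the new range is a subset of \eqref{cd1}.

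\emph{Bounds on $\rho_0$.} Set $\phi=\rho_0^{\delta-1}$ and $\psi=\rho_0^{\frac{\delta-1}{2}}$, so that $\phi=\psi^2$.

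1. Using the spherically symmetric Sobolev embeddings of the appendix together with $\nabla\phi\in D^{1,n}\cap D^2$, I would show $\nabla\phi\in L^\infty$ and that $\phi$ is continuous.
2. Since $\rho_0>0$ is continuous and $\rho_0\in L^1$, the decay of $\rho_0$ at infinity (equivalently $\phi\to\infty$, which follows from $\rho_0\in L^1$ and the regularity of $\phi$) gives $\rho_0\le C$, that is, $\phi\ge c>0$.
3. Consequently $\rho_0^{p}\in L^q$ whenever $p>0$ and $pq\ge1$.

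\emph{The estimates.} Write $\rho_0^{\gamma-1}=\phi^{-k}$ with $k=\frac{\gamma-1}{1-\delta}>0$ and differentiate.

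At first order,
\[
\nabla\rho_0^{\gamma-1}=c\,\rho_0^{\gamma-\delta}\nabla\phi=2c\,\rho_0^{\gamma-\frac{\delta+1}{2}}\nabla\psi .
\]
By Hölder's inequality with $\nabla\psi\in L^{2n}$, it suffices that $\rho_0^{\gamma-\frac{\delta+1}{2}}\in L^{\frac{2n}{n-1}}$. This holds if $\big(\gamma-\frac{\delta+1}{2}\big)\frac{2n}{n-1}\ge1$. The hypothesis $\gamma\ge\delta+\frac12$ gives $\gamma-\frac{\delta+1}{2}\ge\frac{\delta}{2}$, so the product is at least $\frac{n\delta}{n-1}$. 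This is $\ge 1$ since $\delta>\frac12$ when $n=2$ and $\delta>\frac{7}{10}>\frac23$ when $n=3$. This is where $\gamma\ge\delta+\frac12$ enters.

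For the second and third derivatives, the chain rule gives terms of three types:
\begin{itemize}
\item $\rho_0^{\gamma-\delta}\nabla^j\phi$;
\item $\rho_0^{\gamma-2\delta+1}\nabla\phi\,\nabla^{j-1}\phi$;
\item $\rho_0^{\gamma-3\delta+2}|\nabla\phi|^3$.
\end{itemize}
All the powers of $\rho_0$ here are positive: $\gamma-3\delta+2\ge\frac52-2\delta>0$. For each term I would do the following.
\begin{itemize}
\item Absorb the $|\nabla\phi|$ factors using $\nabla\phi\in L^\infty$. When an $L^2$ bound is needed, trade one factor back to $\nabla\psi\in L^{2n}$, as in the first-order step.
\item Put $\nabla^2\phi\in L^n$ or $\nabla^3\phi\in L^2$ in the remaining slot.
\item Place the leftover power of $\rho_0$ in the complementary Lebesgue space, using step 3 above.
\end{itemize}

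\emph{Main obstacle.} I expect the hardest part to be the bookkeeping for the second-derivative terms in $L^2$ when $n=3$. A term such as $\rho_0^{\gamma-\delta}\nabla^2\phi$ requires $\rho_0^{\gamma-\delta}\in L^6$, i.e.\ $6(\gamma-\delta)\ge1$, which is comfortably satisfied. The cross terms are the delicate ones, and I would keep the integrability exponents balanced via the identity $\nabla\phi=2\psi\nabla\psi$.

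Once $\nabla\rho_0^{\gamma-1}\in H^2$ is established, all hypotheses of Theorem \ref{th1-high} hold and its conclusions follow.
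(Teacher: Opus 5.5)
Your proposal is correct and follows the paper's route. The paper deduces the corollary from Lemma~\ref{initial3} with $f=\rho_0$, $\alpha=\delta-1\in(-\frac{1}{n},0)$ and $\beta=\gamma-1\geq\alpha+\frac{1}{2}$, and that lemma is proved by exactly your scheme: $\nabla\rho_0^{\delta-1}\in L^\infty$ from Lemma~\ref{Hk-Ck-vector}, then $\rho_0\in L^\infty$, then the chain rule with H\"older. There, $\gamma\geq\delta+\frac{1}{2}$ gives $\rho_0^{\gamma-\delta}\in L^2$ directly, so no trade to $\nabla\rho_0^{\frac{\delta-1}{2}}$ is needed.

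The only real difference is how $\rho_0\in L^\infty$ is obtained. The paper gets it quantitatively from Gagliardo--Nirenberg, using $\alpha>-\frac{1}{n}$ to absorb. Your continuity/decay argument is qualitative and tacitly assumes $\rho_0<\infty$ everywhere. For the Lipschitz representative of $\rho_0^{\delta-1}$, this is again secured precisely by $\delta>1-\frac{1}{n}$: a zero at $\boldsymbol{x}_0$ would give $\rho_0\gtrsim|\boldsymbol{x}-\boldsymbol{x}_0|^{-1/(1-\delta)}\notin L^1_{\rm loc}$.
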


We now make some remarks on the results of this paper.
\begin{rk}
On one hand, the constraints on the viscosity coefficients  given in \eqref{kelaoxiusi} and \eqref{coeff} automatically imply
\begin{equation*}
\delta\geq 1-\frac{1}{n}.
\end{equation*}

On the other hand, for technical reasons, in deriving the global uniform upper bound of density $\rho$ in {\rm\S \ref{section-upper-density}}, a crucial step is to establish the $L^{p}(I)$-energy estimates with some $p>n$ for both the velocity and the effective velocity. This requires
\begin{equation}
\delta>\frac{1}{2}\quad \text{when $n=2$},\qquad \delta>7-2\sqrt{10}\,(\approx 0.675>\frac{2}{3})\quad \text{when $n=3$}.
\end{equation}
Therefore, since we focus on the global well-posedness for {\rm\textbf{CNS}} \eqref{eq:1.1} when $\delta\in (0,1)$, the conditions of $\delta$ given in \eqref{cd1} are optimal under our methodology.
\end{rk}

\begin{rk}
The initial conditions \eqref{id1-high}--\eqref{th78zx} required  in {\rm Theorem \ref{th1-high}} identify a general class of spherically symmetric initial data, which ensures the unique solvability of the Cauchy problem \eqref{eq:1.1}--\eqref{kelaoxiusi} with \eqref{eqs:CauchyInit}--\eqref{e1.3}. For example, one can  choose $(\rho_0,\boldsymbol{u}_0)$ satisfying the following constraints{\rm:} 
$\boldsymbol{u}_0(\boldsymbol{x})=u_0(r)\frac{\boldsymbol{x}}{r} \in C_{\rm c}^\infty(\mathbb{R}^n)$, 
and $0<\rho_0(\boldsymbol{x})\in C^\infty(\mathbb{R}^n)$ 
with $\rho_0(\boldsymbol{x})=\rho_0(r)$ such that
\begin{equation*}
0<\lim_{r\to \infty}r^{\sigma}\rho_0(r)< \infty \qquad    
\text{for $\ \max\{n,\,\frac{n-2}{2\gamma-2}\}<\sigma<\frac{1}{1-\delta}$}.
\end{equation*}
\end{rk}

\begin{rk}\label{rk-nabla}
We give some comments on the  constraints for $\rho_0$ given 
in {\rm Theorem \ref{th1-high}}.
On one hand, it follows from \eqref{id1-high}--\eqref{th78zx} and {\rm Lemma \ref{initial3}} that $\rho_0\in L^\infty(\mathbb{R}^n).$
On the other hand, the boundedness of the effective velocity 
$\boldsymbol{v}=\boldsymbol{u}+\frac{2a_1\delta}{\delta-1} \nabla\rho^{\delta-1}$ plays a key role in our analysis, 
which requires that $\boldsymbol{v}(0,\boldsymbol{x})\in L^\infty(\mathbb{R}^n)$. 
Since $\boldsymbol{u}_0\in L^\infty(\mathbb{R}^n)$ holds by classical Sobolev embedding theorems, we still need 
$\nabla\rho_0^{\delta-1} \in L^\infty(\mathbb{R}^n)$. This follows directly from $\nabla \rho_0^{\delta-1}\in D^{1,n}(\mathbb{R}^n)$ in \eqref{id1-high} due to the critical Sobolev embedding $D^{1,n}(\mathbb{R}^n) \hookrightarrow L^{\infty}(\mathbb{R}^n)$ for spherically symmetric vector functions {\rm(}see {\rm Lemma \ref{Hk-Ck-vector}} 
in {\rm Appendix \ref{improve-sobolev}}{\rm)}.

\end{rk}

\begin{rk}
The distinction of the range of $\gamma$ for the $2$-{\rm D} case and $3$-{\rm D} case stems from the factor $r^\frac{m}{2}$ $(m=n-1)$ in the BD entropy estimates under the spherical coordinates $($see {\rm Lemma \ref{energy-BD}} in {\rm \S\ref{section-upper-density}}$)${\rm :} 
\begin{equation}\label{bd135}
\sup_{t\in[0,T]}\big\|r^{\frac{m}{2}}(\rho^{\delta-\frac{1}{2}})_r\big\|_{L^2(I)} \leq C_0, 
\end{equation}
where $I=[0,\infty)$ and $C_0>0$ is a constant depending only on $(\rho_0,u_0,a_1,\delta,\gamma,A,n)$. Notably, \eqref{bd135} is some $r$-weighted $L^2(I)$-estimate of $(\rho^{\delta-\frac{1}{2}})_r$, 
and the power of $r$ in the $3$-{\rm D} case is larger than 
that in the $2$-{\rm D} case.  

In fact, in {\rm \S\ref{section-upper-density}}, a crucial step in establishing the global uniform upper bound for $\rho$ is to derive some new $r$-weighted estimates for $\rho$ near the origin via the Hardy inequality $($see {\rm Lemma \ref{hardy}} in {\rm Appendix \ref{appA}}$)$ and the BD entropy estimates \eqref{bd135}{\rm :} 
\begin{equation}\label{314}
\sup_{t\in[0,T]}\|r^K\rho\|_{L^p(0,1)} \leq C \qquad\, \text{for $p\in [1,\infty]$},
\end{equation}
where $C>0$ is a constant depending on $(C_0,p,K)$, and exponent $K$ satisfies
\begin{equation*}
K > -\frac{1}{p} \quad \text{if $n=2$},\qquad K\geq \frac{1}{2\delta-1}-\frac{1}{p}\quad \text{if $n=3$}.
\end{equation*}
Clearly, the range of $K$ is smaller when $n=3$, and this is main reason that narrows the admissible range of $\gamma$ in this case. More detailed calculations can be found in {\rm \S \ref{subsection4.2}}.
\end{rk}

\smallskip
\subsection{Global Spherically Symmetric Solutions of the Degenerate \textbf{CNS} with Strictly Positive Initial Density} 
As a direct application of our approach, we can establish the global well-posedness of regular solutions with general data when $\bar\rho>0$ in \eqref{e1.3}. We first define the corresponding regular solutions of the Cauchy problem \eqref{eq:1.1}--\eqref{kelaoxiusi} with \eqref{eqs:CauchyInit}--\eqref{e1.3}.

\begin{mydef}\label{cjk-po}
Let  $\bar\rho>0$ in \eqref{e1.3} and $T>0$. 
The vector function $(\rho, \boldsymbol{u})$ is called a regular solution of the Cauchy problem \eqref{eq:1.1}{\rm--}\eqref{kelaoxiusi} with \eqref{eqs:CauchyInit}{\rm--}\eqref{e1.3}  in $[0,T]\times \mathbb{R}^n$ $(n=2$ or $3)$ if $(\rho,\boldsymbol{u})$ satisfy the following properties{\rm:}
\begin{equation*}\begin{aligned}
\mathrm{(i)}& \ (\rho, \boldsymbol{u}) \ \text{satisfies this problem in the sense of distributions};\\
\mathrm{(ii)}& \ 
\inf_{(t,\boldsymbol{x})\in [0,T]\times\mathbb{R}^n} \rho(t,\boldsymbol{x})>0,\quad  
\rho-\bar\rho\in C([0,T];H^3(\mathbb{R}^n));\\[2pt]
\mathrm{(iii)}& \ \boldsymbol{u}\in C([0,T]; H^3(\mathbb{R}^n))\cap L^2([0,T]; D^{4}(\mathbb{R}^n)),\\[2pt]
& \ \boldsymbol{u}_t\in C([0,T]; H^{1}(\mathbb{R}^n))\cap L^2([0,T]; D^{2}(\mathbb{R}^n)).
\end{aligned}
\end{equation*}
\end{mydef}

When the initial density is strictly positive, our main result on the global well-posedness 
of the  regular solutions of the Cauchy problem \eqref{eq:1.1}--\eqref{kelaoxiusi} with \eqref{eqs:CauchyInit}--\eqref{e1.3} is  stated as follows:
\begin{thm}\label{thm-positive}
Let $n=2$ or $3$, $\bar\rho>0$ in \eqref{e1.3}, and
\eqref{coeff} and\eqref{cd1} hold. Assume that the initial data  $(\rho_0, \boldsymbol{u}_0)(\boldsymbol{x})$ are spherically symmetric and satisfy 
\begin{equation}\label{id1-high-positive}
\begin{aligned}
&\inf_{\boldsymbol{x}\in \mathbb{R}^n}\rho_0(\boldsymbol{x})>0,\qquad (\rho_0-\bar\rho,\boldsymbol{u}_0) \in H^3(\mathbb{R}^n). 
\end{aligned}
\end{equation}
Then, for any $T>0$,  the Cauchy problem \eqref{eq:1.1}{\rm--}\eqref{kelaoxiusi} with \eqref{eqs:CauchyInit}{\rm--}\eqref{e1.3}  admits a unique global regular solution $(\rho,\boldsymbol{u})(t,\boldsymbol{x})$
in $[0,T]\times\mathbb{R}^n$ that satisfies \eqref{er2-high}, \eqref{3dclassical2}, and \begin{equation}\label{decay-est-positive}
C(T)^{-1}\leq \rho(t,\boldsymbol{x})\leq C(T) \qquad \text{for all $(t,\boldsymbol{x})\in [0,T]\times \mathbb{R}^n$},
\end{equation}
where $C(T)\geq 1$ is a constant depending only on $(T,\rho_0,\boldsymbol{u}_0,n,a_1,\delta,\gamma,A)$. Moreover, $(\rho, \boldsymbol{u})$ is spherically symmetric with form \eqref{2.8}.
\end{thm}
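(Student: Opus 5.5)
The plan is to follow the continuity method, with the far-field vacuum case (Theorem \ref{th1-high}) as the template and the strict positivity of $\rho$ removing most of the singular weights. First we obtain local well-posedness of regular solutions in the sense of Definition \ref{cjk-po}. Since $\inf\rho_0>0$ and $(\rho_0-\bar\rho,\boldsymbol{u}_0)\in H^3$, the system \eqref{qiyi} is a uniformly parabolic Lam\'e system for $\boldsymbol{u}$ coupled with a transport equation for $\rho$. The classical linearization and iteration scheme in $H^3$ (as in \cite{nash,serrin}) therefore gives a local solution on some $[0,T_*]$. Standard energy arguments give uniqueness and a blow-up criterion: the solution extends as long as $\rho$ stays bounded above and below and the relevant norms of $(\rho-\bar\rho,\boldsymbol{u})$ remain finite. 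Spherical symmetry is preserved by uniqueness, so we may work with the radial IBVP \eqref{e1.5hh}, with far-field state $(\bar\rho,0)$ in place of $(0,0)$.

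For global extension we derive a priori estimates on $[0,T]$ in the following order.
\begin{enumerate}
\item[(a)] The basic energy estimate, using the relative potential energy $\rho^\gamma-\bar\rho^\gamma-\gamma\bar\rho^{\gamma-1}(\rho-\bar\rho)$. Alongside it we prove the BD entropy estimate \eqref{bd135}, with $(\rho^{\delta-\frac12})_r$ now controlled in $r^{\frac m2}$-weighted $L^2$ and $\bar\rho$ causing no trouble.
\item[(b)] The uniform upper bound $\rho\le C(T)$, obtained exactly as in \S\ref{section-upper-density}. Away from the origin we use one-dimensional Sobolev embedding together with \eqref{bd135}. Near the origin we use the Hardy-type weighted bounds \eqref{314}, combined with $L^p$ estimates ($p>n$) for $u$ and for the effective velocity $v=u+\frac{2a_1\delta}{\delta-1}(\rho^{\delta-1})_r$. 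This step needs \eqref{cd1}.
\item[(c)] An $L^\infty$ bound on $v$ via the maximum principle, exactly as in \S\ref{section-effective}. Its transport-type equation is $\rho(v_t+uv_r)+P_r=$ (terms with a sign).
\item[(d)] The lower bound $\rho\ge C(T)^{-1}$, which is simpler here than in \S\ref{section-nonformation}. From (c) and the bound on $u$ we get $\|(\rho^{\delta-1})_r\|_{L^\infty}\le C(T)$. Integrating this from infinity, where $\rho^{\delta-1}\to\bar\rho^{\delta-1}$, bounds $\rho^{\delta-1}$ only up to a linear-in-$r$ term. To remove that term we instead propagate $\rho^{\delta-1}$ along particle paths: the continuity equation gives $(\rho^{\delta-1})_t+u(\rho^{\delta-1})_r=(1-\delta)\rho^{\delta-1}\diver\boldsymbol u$. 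We then control $\int_0^T\|\diver\boldsymbol u\|_{L^\infty}dt$ through the $H^2$ bounds on $\boldsymbol u$ available once $\rho\le C(T)$, using a Gronwall argument in which the viscosity $\rho^\delta$ is bounded below by a quantity that is itself estimated.
\end{enumerate}

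Once $\rho$ is bounded above and below, the equations are uniformly parabolic with bounded coefficients. Higher-order estimates for $\boldsymbol{u}$ in $H^3$ and for $\rho-\bar\rho$ in $H^3$ then follow by the standard procedure, which the paper carries out in \S\ref{section-global2} but which here needs no singular weights. We take energy estimates for $\boldsymbol{u}_t$, elliptic regularity for the Lam\'e operator, and propagation of $\nabla^k\rho$ via the continuity equation using $\int_0^T\|\nabla\boldsymbol{u}\|_{W^{1,\infty}}\,dt<\infty$. The time-weighted bounds \eqref{er2-high} come from multiplying the differentiated equations by $t$. The coordinate singularity at the origin is handled by the spherically symmetric Sobolev embeddings of Appendix \ref{improve-sobolev}, as in the vacuum case. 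The regularity \eqref{3dclassical2} then follows by embedding, and the continuity argument closes globally.

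I expect the main obstacle to be step (b), the uniform upper bound for $\rho$ near the origin. It couples the BD entropy, Hardy inequalities and the $L^p$ estimates for $(u,v)$, and the admissible $(\delta,\gamma)$ range comes precisely from this step. The strategy is taken verbatim from the far-field vacuum case. The only modification is that the pressure is replaced by $P-P(\bar\rho)$ in the energy and effective-velocity computations, so that all integrals over $I$ converge with a nonzero far-field density.
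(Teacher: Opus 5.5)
\textbf{Main gap: step (d).} Along particle paths the continuity equation gives $\rho(t,X(t))=\rho_0\exp\bigl(-\int_0^t\diver\boldsymbol{u}\,\mathrm{d}s\bigr)$. So the lower bound is equivalent to a bound on $\int_0^T\|\diver\boldsymbol{u}\|_{L^\infty}\,\mathrm{d}t$. You propose to get this from ``$H^2$ bounds on $\boldsymbol u$ available once $\rho\le C(T)$''. In your own scheme, however, the higher-order estimates come afterwards and rely on uniform parabolicity, i.e.\ on the lower bound.

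The bootstrap you sketch, where the viscosity $\rho^\delta$ is bounded below by a quantity that is itself being estimated, does not close. The quantity $\|\rho^{-1}\|_{L^\infty}$ enters the particle-path formula exponentially. Hence any bound of the type $\|\diver\boldsymbol{u}\|_{L^\infty}\le C\|\rho^{-1}\|_{L^\infty}^k$ yields $y'\le Ce^{ky}$ for $y=\log\|\rho^{-1}\|_{L^\infty}$, which controls $y$ only on a short time interval. In addition, the uniform bound on $u$ you invoke at the start of (d) is not available at this stage. Only $u\in L^4(0,T;L^\infty(I))$ is, and only after Lemma \ref{ele}.

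\textbf{The missing idea.} Because $\delta<1$, multiplying the momentum equation by $\rho^{-1}u$ produces the dissipation $a_1\delta\int\rho^{\delta-1}\bigl(2u_r^2+m\frac{u^2}{r^2}\bigr)\,\mathrm{d}r$. Its weight is bounded below by the upper bound on $\rho$ alone.

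The paper exploits this in Lemmas \ref{cru4}--\ref{ele}, restated for $\bar\rho>0$ as Lemma \ref{lemmauseful}. The singular term $\frac{(1-\delta)m}{2}\int(v-u)\frac{u^2}{r}$ is controlled through $\int_0^T|\rho^{\frac{1-\delta}{2}}u|_\infty^2\,\mathrm{d}t$. That quantity in turn requires the intermediate $\rho^{-\alpha}$-weighted $L^5$ estimate of Lemma \ref{ele-0}. The outcome is $\sup_t|u|_2+\int_0^T\bigl(|\rho^{\frac{\delta-1}{2}}\mathrm{D}_ru|_2^2+|u|_\infty^4\bigr)\,\mathrm{d}t\le C(T)$, with no lower bound on $\rho$ used.

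Lemma \ref{lemma-lowerbound} then proceeds in three moves. It writes $(\log\rho)_r=\frac{\rho^{1-\delta}}{2a_1\delta}(v-u)$. It estimates $\log(\rho/\bar\rho)$ in unweighted $L^2(I)$ from its transport equation, using exactly these integrals. It passes to $L^\infty$ via the $W^{1,1}(I)$ embedding applied to $|\log(\rho/\bar\rho)|^3$.

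Your particle-path route could be rescued only by first proving $\int_0^T\|\nabla\boldsymbol{u}\|_{L^\infty}^2\,\mathrm{d}t\le C(T)$ using only the upper bound on $\rho$ and the bound on $v$. The singular-weighted estimates of \S\ref{section-global2} (Lemma \ref{lemma66}) achieve this, but they too rest on Lemma \ref{ele}, which your proposal never mentions.

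\textbf{Secondary point: (b)--(c) are not verbatim with $P$ replaced by $P-P(\bar\rho)$.} Suppose you integrate the relative pressure by parts globally, either in the $L^p$ estimate or in the unweighted $L^2$ estimate of $\rho^{\frac12}u$ (Lemma \ref{rho u-L2}) on which the bound for $v$ rests. Then the term $\int r^m(\rho^\gamma-\bar\rho^\gamma)|u|^{p-2}\mathrm{D}_ru$ can only be absorbed into the dissipation at the cost of the weight $\rho^{-\delta}(\rho^\gamma-\bar\rho^\gamma)^2$. That weight is uncontrolled wherever $\rho$ is small, and there is no lower bound yet.

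The paper avoids this with a cutoff $\zeta$, integrating by parts only near the origin. Away from the origin, in the $L^p$ estimate it uses $A(\rho^\gamma)_r=\frac{A\gamma}{2a_1\delta}\rho^{\gamma-\delta+1}(v-u)$ together with the exterior bound of Lemma \ref{lemma991}. In the $L^2$ estimate it uses the BD dissipation $\int_0^T\!\int r^m\rho^{\gamma+\delta-3}\rho_r^2$.
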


\section{Notations, Reformulations, and Main Strategies}\label{Section2}
In this section, we first present some notations and conventions in \S\ref{section-notaions}, which are frequently used throughout this paper. Next, in \S\ref{see1}, we introduce an enlarged reformulation for the degenerate \textbf{CNS} \eqref{eq:1.1}   to deal with the degeneracy caused by the far-field vacuum. In \S\ref{subsection-strategy}, based on such a reformulation,  we show the main strategy and new ideas in our analysis.  

\subsection{Notations}\label{section-notaions}
Throughout the rest of this paper, 
unless otherwise specified, we adopt the notations in \eqref{eulerspace} and the following ones.
\subsubsection{Notations in M-D Eulerian coordinates} 
\begin{itemize}
\smallskip
\item We always let $n=2$ or $3$ be the dimension of the Euclidean  space $\mathbb{R}^n$, and denote  $m:=n-1$.
\smallskip
\item For a variable $\boldsymbol{y} \in \mathbb{R}^l$ ($l\geq 2$), its $i$-th component is denoted by $y_i$ ($1\leq i\leq l$), and $\boldsymbol{y} = (y_1, \cdots\!, y_l)^\top$.  We always let $\boldsymbol{x}=(x_1,\cdots\!,x_n)^\top$ be the spatial variable of $\mathbb{R}^n$.
\smallskip
\item For any vector function $\boldsymbol{f}: E\subset \mathbb{R}^l \to \mathbb{R}^q$ ($l,q\geq 2$, $E$ is a measurable set), its $i$-th component is denoted by $f_i$ ($1\leq i\leq q$), and $\boldsymbol{f} = (f_1, \cdots\!, f_q)^\top$.
\smallskip
\item For any function $f$ defined on a measurable subset of $\mathbb{R}^l$ ($l\geq 1$), if the independent variable of $f$ is $\boldsymbol{y}=(y_1,\cdots\!,y_l)^\top$, then 
\begin{align*}
&\qquad \partial_{\boldsymbol{y}}^{\boldsymbol{\varsigma}} f=\partial_{y_1}^{\varsigma_1}\cdots\partial_{y_l}^{\varsigma_l} f=f_{\underbrace{\text{\tiny$y_1\cdots y_1$}}_{\text{$\varsigma_1$-times}}\cdots\underbrace{\text{\tiny$y_l\cdots y_l$}}_{\text{$\varsigma_l$-times}}}= \frac{\partial^{\varsigma_1+\cdots+ \varsigma_l}}{\partial y_1^{\varsigma_1}\cdots \partial y_l^{\varsigma_l}}f \quad\ \text{for }\boldsymbol{\varsigma}=(\varsigma_1,\cdots\!,\varsigma_l)\in \mathbb{N}^l,\\[-7pt]
&\qquad \nabla_{\boldsymbol{y}} f=(\partial_{y_1} f,\cdots\!,\partial_{y_l} f)^\top,\qquad \Delta_{\boldsymbol{y}} f=\sum_{i=1}^l \partial_{y_i}^2 f,\\[-7pt]
&\qquad \nabla_{\boldsymbol{y}}^k f \text{ denotes one generic } \partial_{\boldsymbol{y}}^{\boldsymbol{\varsigma}} f \text{ with }|\boldsymbol{\varsigma}|=\sum_{i=1}^l \varsigma_i=k \text{ for integer }k\geq 2,\\[-7pt]
&\qquad |\nabla_{\boldsymbol{y}}^k f|=\Big(\sum_{|\varsigma|=k}|\partial_{y_1}^{\varsigma_1} \cdots\partial^{\varsigma_l}_{y_l}f|^2\Big)^\frac{1}{2} \quad\text{ for } k\in \mathbb{N}^*.
\end{align*}
In particular, for the derivatives with respect to the
variable $\boldsymbol{x}=(x_1,\cdots\!,x_n)^\top\in \mathbb{R}^n$, we use the notations 
$(\partial_i^{\varsigma_i},\partial^{\boldsymbol{\varsigma}},\nabla,\Delta,\nabla^k)=(\partial_{x_i}^{\varsigma_i}, \partial_{\boldsymbol{x}}^{\boldsymbol{\varsigma}},\nabla_{\boldsymbol{x}},\Delta_{\boldsymbol{x}},\nabla_{\boldsymbol{x}}^k)$.

\smallskip
\item If $\boldsymbol{f}: E\subset \mathbb{R}^l \to \mathbb{R}^q$ ($l,q\geq 2$, $E$ is a measurable set) is a vector function with the independent variable $\boldsymbol{y}=(y_1,\cdots\!,y_l)^\top$ and $X \in \{\partial_{y_i}^{\varsigma_i},\partial_{\boldsymbol{y}}^\varsigma,\Delta_{\boldsymbol{y}},\nabla_{\boldsymbol{y}}^k\}$, then 
\begin{align*}
&X\boldsymbol{f}=\big(Xf_1,\cdots\!,Xf_q\big)^\top, \ \quad \nabla_{\boldsymbol{y}}\boldsymbol{f}=\begin{pmatrix} 
\partial_{y_1} f_1 & \partial_{y_2} f_1 & \cdots & \partial_{y_l} f_1\\[2mm]
\partial_{y_1} f_2 & \partial_{y_2} f_2 & \cdots & \partial_{y_l} f_2 \\[2mm]
\vdots & \vdots & \ddots & \vdots \\[2mm]
\partial_{y_1} f_q & \partial_{y_2} f_q & \cdots & \partial_{y_l} f_q
\end{pmatrix}_{q\times l},\\
&|\nabla_{\boldsymbol{y}}^k \boldsymbol{f}|=\Big(\sum_{i=1}^q\sum_{|\varsigma|=k}\big|\partial_{y_1}^{\varsigma_1} \cdots\partial^{\varsigma_l}_{y_l}f_{i}\big|^2\Big)^\frac{1}{2} \quad \text{ for } k\in \mathbb{N}^*.
\end{align*}
Moreover, if $l=j+i$ with $j\geq 0$ and the independent variable $\boldsymbol{y}$ takes the form $\boldsymbol{y}=(\boldsymbol{s},\boldsymbol{\tilde{y}})^\top$ with $\boldsymbol{s}=(s_1,\cdots\!,s_j)^\top$ and $\boldsymbol{\tilde{y}}=(\tilde{y}_1,\cdots\!,\tilde{y}_i)^\top$, then
\begin{equation*}
\diver_{\boldsymbol{\tilde{y}}} \boldsymbol{f}=\sum_{k=1}^i \partial_{\tilde{y}_k}f_k.  
\end{equation*}
In particular, if $j=0,1$, $\,i=n$, and $\boldsymbol{\tilde{y}}=\boldsymbol{x}=(x_1,\cdots\!,x_n)^\top\in \mathbb{R}^n$, then $\diver=\diver_{\boldsymbol{x}}$.
\end{itemize}

\subsubsection{Notations in M-D spherical  coordinates}\label{Nor}
\begin{itemize}
\item We always let $r=|\boldsymbol{x}|$ be the radial distance in spherical coordinates, and let $r\in I:=[0,\infty)$. Moreover,  the following conventions are adapted:
\begin{align*}
|f|_p=\|f\|_{L^p(I)},\quad   \|f\|_k=\|f\|_{H^k(I)},\quad  \|f\|_{k,p}=\|f\|_{W^{k,p}(I)}
\qquad \,\,\text{for $k\in \mathbb{N}^*$}.
\end{align*}
In particular, for the Sobolev spaces defined on the open interval $(a,b)\subset I$, we use the abbreviation $X(a,b)=X((a,b))$ for $X=L^p$, $W^{k,p}$, and $H^k$.
\item To simplify the notations, in the rest of the paper, we define the operator $\mathrm{D}_r$ as 
\begin{equation*}
\qquad \begin{aligned}
\mathrm{D}_rf&:=(f_r,\frac{f}{r}),\qquad \mathrm{D}_r^2f:=(f_{rr},(\frac{f}{r})_r),\\
\mathrm{D}_r^3f&:=(f_{rrr},\frac{f_{rr}}{r},(\frac{f}{r})_{rr},\frac{1}{r}(\frac{f}{r})_r),\qquad
\mathrm{D}_r^4f :=(f_{rrrr},(\frac{f_{rr}}{r})_r,(\frac{f}{r})_{rrr},(\frac{1}{r}(\frac{f}{r})_r)_r).
\end{aligned}
\end{equation*}
Notice from Lemma \ref{lemma-initial}  in Appendix \ref{appb} that the operator $\mathrm{D}_r$ can be formally seen as a representative of the gradient $\nabla$ in the radial coordinate. Besides, the following useful properties will be used in later analysis: for $f=f(t,r)$,
\begin{equation*}
\begin{aligned}
&(\mathrm{D}_r f)_r=\mathrm{D}_r^2 f,\qquad (\mathrm{D}_r^3 f)_r=\mathrm{D}_r^4 f,\qquad \mathrm{D}_r^j f_t=(\mathrm{D}_r^j f)_t \quad \text{for $j=1,2,3,4$},\\
&|f_r|^2+\Big|\frac{f}{r}\Big|^2=|\mathrm{D}_r f|^2,\qquad |f_{rr}|^2+\Big|(\frac{f}{r})_r\Big|^2=|\mathrm{D}_r^2 f|^2,\\
&|(\mathrm{D}_r^2 f)_r|^2+\Big|\frac{\mathrm{D}_r^2 f}{r}\Big|^2=|\mathrm{D}_r^3 f|^2,\qquad |(\mathrm{D}_r^2 f)_{rr}|^2+\Big|\big(\frac{\mathrm{D}_r^2 f}{r}\big)_r\Big|^2=|\mathrm{D}_r^4 f|^2.
\end{aligned}
\end{equation*} 
\end{itemize}

\subsubsection{Other notations and conventions}\label{othernote}
\begin{itemize}\smallskip
\item $C^\ell(\overline\Omega)$ $(\ell\in \mathbb{N},\,C(\overline\Omega)=C^0(\overline\Omega))$ 
denotes the space of all functions $f$ for which $\nabla^j f$ $(0\leq j\leq \ell)$ is bounded and uniformly continuous in $\Omega\subset \mathbb{R}^n$, which is equipped with the norm:
\begin{equation*}
\|f\|_{C^\ell(\overline{\Omega})}:= \max_{0\leq j\leq \ell}\|\nabla^j f\|_{L^\infty(\Omega)}.
\end{equation*}
In particular, if $\Omega=\mathbb{R}^n$, $C^\ell(\overline{\mathbb{R}^n})$ denotes the space of all functions $f$ for which $\nabla^j f$ $(0\leq j\leq \ell)$ is bounded and uniformly continuous in $\mathbb{R}^n$.

\smallskip
\item $C^\infty_{\rm c}(\Omega)$ denotes the space of all functions $f$ for which $\nabla^j f$ $(j\in \mathbb{N})$ is continuous and compactly supported in $\Omega\subset \mathbb{R}^n$.

\smallskip
\item For any function spaces $(X,X_1,\cdots\!,X_k)$ and functions $(h,f,f_1,\cdots\!,f_k)$,
\begin{equation*}
\|f\|_{X_1\cap\cdots\cap X_k}:=\sum_{i=1}^k\|f\|_{X_i},\qquad \|h(f_1,\cdots\!,f_k)\|_{X}:=\sum_{i=1}^k\|hf_i\|_X.
\end{equation*}

\smallskip
\item For any $n\times n$ real matrix $\mathcal{S}$, $\mathcal{S}_{ij}$ denotes its $(i,j)$-th entry. $\mathcal{S}:\mathcal{N}:=\sum_{i,j=1}^n \mathcal{S}_{ij}\mathcal{N}_{ij}$ for any $n\times n$ matrices $(\mathcal{S},\mathcal{N})$.  Moreover, $\mathrm{SO}(n)$  denotes the set of all $n\times n$ real orthogonal matrices $\mathcal{O}$ such that $\det \mathcal{O}=1$, where $\det \mathcal{O}$ is the determinant of $\mathcal{O}$.

\smallskip
\item $\delta_{ij}$  is the Kronecker symbol satisfying $\delta_{ij} =1$ when $i = j$, 
and $\delta_{ij} =0$ otherwise.

\smallskip
\item $\langle\cdot,\cdot\rangle_{X^*\times X}$ denotes the pairing between the space $X$ and its dual space $X^*$. 
\smallskip

\item To integrate the estimates for both the $2$-D case and $3$-D case, we introduce the following parameter $n^*$:
\begin{equation*} 
n^*:=\frac{2n}{n-2}=\begin{cases}
\infty&\quad \text{if $n=2$},\\
6&\quad \text{if $n=3$}.
\end{cases}
\end{equation*}
\end{itemize}

\subsection{An Enlarged Reformulation}\label{see1}

We first make an enlarged  reformulation for the Cauchy problem \eqref{eq:1.1}--\eqref{kelaoxiusi} with \eqref{eqs:CauchyInit}--\eqref{e1.3}. In terms of variables
\begin{equation}\label{bianliang}
\phi=\frac{A\gamma}{\gamma-1} \rho^{\gamma-1},\qquad \boldsymbol{\psi}=\frac{\delta}{\delta-1}\nabla \rho^{\delta-1}=\frac{\delta}{\delta-1}\big(\frac{A\gamma}{\gamma-1}\big)^{\frac{1-\delta}{\gamma-1}}\nabla \phi^{\frac{\delta-1}{\gamma-1}}=(\psi_{1},\psi_{2},\psi_{3})^\top,
\end{equation}
and $\boldsymbol{u}$, system (\ref{eq:1.1})  can be rewritten as
\begin{equation}\label{eq:cccq}
\begin{cases}
\displaystyle
\phi_t+\boldsymbol{u}\cdot \nabla \phi+(\gamma-1)\phi \diver \boldsymbol{u}=0,\\[4pt]
\displaystyle
\boldsymbol{u}_t+\boldsymbol{u}\cdot\nabla \boldsymbol{u} +\nabla \phi+a\phi^{2\iota}L\boldsymbol{u}=\boldsymbol{\psi} \cdot Q(\boldsymbol{u}),\\[4pt]
\displaystyle
\boldsymbol{\psi}_t+\nabla (\boldsymbol{u}\cdot \boldsymbol{\psi})+(\delta-1)\boldsymbol{\psi}\diver \boldsymbol{u} +\delta a\phi^{2\iota}\nabla \diver \boldsymbol{u}=\boldsymbol{0},
 \end{cases}
\end{equation}
where $L \boldsymbol{u}$ and $Q(\boldsymbol{u})$ are  defined in \eqref{operatordefinition}, and 
\begin{equation} \label{xishu}
\begin{aligned}
& a=\big(\frac{A\gamma}{\gamma-1}\big)^{\frac{1-\delta}{\gamma-1}} \qquad \text{and} \quad \iota=\frac{\delta-1}{2(\gamma-1)}<0.
\end{aligned}
\end{equation}

We aim to establish global spherically symmetric solutions of (3.2) with the form:
\begin{equation}\label{2.}
(\phi, \boldsymbol{u}, \boldsymbol{\psi})(t, \boldsymbol{x})=(\phi(t,|\boldsymbol{x}|), u(t,|\boldsymbol{x}|) \frac{\boldsymbol{x}}{|\boldsymbol{x}|}, \psi(t,|\boldsymbol{x}|) \frac{\boldsymbol{x}}{|\boldsymbol{x}|}),
\end{equation}
with the initial data:
\begin{equation}\label{sfana1}
\begin{aligned}
(\phi, \boldsymbol{u}, \boldsymbol{\psi})(0, \boldsymbol{x}) =\left(\phi_0, \boldsymbol{u}_0, \boldsymbol{\psi}_0\right)(\boldsymbol{x}) :=(\phi_0(|\boldsymbol{x}|), u_0(|\boldsymbol{x}|) \frac{\boldsymbol{x}}{|\boldsymbol{x}|}, \frac{a\delta}{\delta-1}\nabla\phi_0^{2\iota}(|\boldsymbol{x}|) \frac{\boldsymbol{x}}{|\boldsymbol{x}|})  
\end{aligned}
\end{equation}
for $\boldsymbol{x} \in \mathbb{R}^n$ and the far-field asymptotic condition:
\begin{equation}\label{sfanb1}
(\phi, \boldsymbol{u})\to (0,\boldsymbol{0})\qquad \text{as $|\boldsymbol{x}|\to \infty$ \ for $t \geq 0$}.
\end{equation}

Then we define the corresponding regular solutions of the Cauchy problem \eqref{eq:cccq}--\eqref{sfanb1}.
\begin{mydef}\label{cjk-reform}
Let $T>0$. The vector function $(\phi, \boldsymbol{u},\boldsymbol{\psi})$ is called a regular solution of the Cauchy problem \eqref{eq:cccq}{\rm--}\eqref{sfanb1}  in $[0,T]\times \mathbb{R}^n$ $(n=2$ or $3)$ if $(\phi, \boldsymbol{u},\boldsymbol{\psi})$ satisfy the following properties{\rm:}
\begin{equation*}\begin{aligned}
\mathrm{(i)}& \ (\phi, \boldsymbol{u},\boldsymbol{\psi}) \ \text{satisfies this problem in the sense of distributions};\\
\mathrm{(ii)}& \ 0<\phi^\frac{1}{\gamma-1}\in C([0,T];L^1(\mathbb{R}^n))\cap L^\infty([0,T]\times \mathbb{R}^n), 
\quad   \phi \in C([0,T]; D^1(\mathbb{R}^n)\cap D^3(\mathbb{R}^n)),\\[2pt]
& \ \boldsymbol{\psi}\in C([0,T]; L^\infty(\mathbb{R}^n)\cap D^{1,n}(\mathbb{R}^n)\cap D^2(\mathbb{R}^n)),\ \ \nabla\phi^\iota\in C([0,T]; L^{2n}(\mathbb{R}^n));\\[2pt]
\mathrm{(iii)}& \ \boldsymbol{u}\in C([0,T]; H^3(\mathbb{R}^n))\cap L^2([0,T]; D^{4}(\mathbb{R}^n)),\ \  \phi^{\iota}\nabla \boldsymbol{u} \in C([0,T]; L^2(\mathbb{R}^n)),\\[2pt]
& \ \boldsymbol{u}_t\in C([0,T]; H^1(\mathbb{R}^n))\cap L^2([0,T]; D^2(\mathbb{R}^n)),\quad    \boldsymbol{u}_{tt} \in L^2([0,T]; L^2(\mathbb{R}^n)), \\[2pt]
& \ \phi^\iota\nabla \boldsymbol{u}_t \in  L^\infty([0,T];L^2(\mathbb{R}^n)),\quad   \phi^{2\iota}\nabla^2 \boldsymbol{u} \in C([0,T]; L^2(\mathbb{R}^n)),\\[2pt] 
& \ \phi^{2\iota}\nabla^3 \boldsymbol{u} \in C([0,T]; L^2(\mathbb{R}^n)),\quad \phi^{2\iota}\nabla^4 \boldsymbol{u} \in L^2([0,T]; L^2(\mathbb{R}^n)).
\end{aligned}
\end{equation*}
\end{mydef}

Since $\partial_{x_i} \psi_j=\partial_{x_j} \psi_i$ ($i,j=1,\cdots\!,n$), equations $\eqref{eq:cccq}_3$ can be rewritten as
\begin{equation}\label{kuzxc}
\boldsymbol{\psi}_t+\sum_{l=1}^n \mathcal{A}_l \partial_l\boldsymbol{\psi}+\mathcal{B}\boldsymbol{\psi}+\delta a\phi^{2\iota}\nabla \diver \boldsymbol{u}=\boldsymbol{0},
\end{equation}
where $\mathcal{A}_l=(a^l_{ij})_{n\times n}$ for  $i,j,l=1,\cdots\!,n$,
are symmetric matrices with $a^l_{ij}=u_l$ for $i=j$ and otherwise $a^l_{ij}=0$,  and $\mathcal{B}=(\nabla u)^\top+(\delta-1)\diver u\,\mathbb{I}_n$. This implies that the source term $\boldsymbol{\psi}$ can be controlled by the symmetric hyperbolic system \eqref{kuzxc}.

Note that  the enlarged system \eqref{eq:cccq}   consists of (up to leading order) 
\begin{itemize}
\item {\it scalar transport} equation $\eqref{eq:cccq}_1$ for $\phi$;
\smallskip
\item {\it singular parabolic}  system  $\eqref{eq:cccq}_2$ for the velocity $\boldsymbol{u}$;
\smallskip
\item {\it symmetric hyperbolic} system  $\eqref{eq:cccq}_4$ but with several singular source terms for $\boldsymbol{\psi}$,
\end{itemize}
such a structure enables us to  establish  the following local well-posedness  of the Cauchy problem \eqref{eq:cccq}{\rm--}\eqref{sfanb1}.

\begin{thm}\label{th1}
Let $n=2$ or $3$, and let $(\delta,\gamma)$ satisfy
\begin{equation}\label{canshu}
\gamma\in (1,\infty),\qquad\delta\in (1-\frac{1}{n},1).    
\end{equation}
Assume that  the initial data $(\phi_0, \boldsymbol{u}_0,\boldsymbol{\psi}_0)$ satisfy 
\begin{equation}\label{th78qq}
\begin{aligned}
&0<\phi_0^\frac{1}{\gamma-1}\in L^1(\mathbb{R}^n),\quad \nabla\phi_0\in H^2 (\mathbb{R}^n),\quad \boldsymbol{\psi}_0 \in  D^{1,n}(\mathbb{R}^n)\cap  D^2(\mathbb{R}^n),\\
&\nabla\phi_0^\iota\in L^{2n}(\mathbb{R}^n),\qquad \boldsymbol{u}_0\in H^3(\mathbb{R}^n),
\end{aligned}
\end{equation}
and the initial  compatibility conditions
\begin{equation}\label{th78zxq}
\nabla \boldsymbol{u}_0=\phi^{-\iota}_0 \mathcal{G}_1,\qquad
\nabla (\phi^{2\iota}_0L\boldsymbol{u}_0)=\phi^{-\iota}_0\mathcal{G}_2,\qquad  L\boldsymbol{u}_0= \phi^{-2\iota}_0\boldsymbol{g}_*,
\end{equation}
for some matrices $(\mathcal{G}_1,\mathcal{G}_2) \in L^2(\mathbb{R}^n)$ and a vector function  $\boldsymbol{g}_*\in L^2(\mathbb{R}^n)$. Then there exists $T_*>0$ such that the Cauchy problem \eqref{eq:cccq}{\rm--}\eqref{sfanb1} admits a unique regular solution $(\phi, \boldsymbol{u},\boldsymbol{\psi})(t,\boldsymbol{x})$  in $[0,T_*]\times \mathbb{R}^n$  satisfying \eqref{er2-high} with $T$ replaced by $T_*$, and
\begin{equation}
\boldsymbol{\psi}=\frac{a\delta}{\delta-1}\nabla \phi^{2\iota} \qquad\text{for \textit{a.e.} $(t,\boldsymbol{x})\in (0,T_*)\times \mathbb{R}^n$}.
\end{equation}
Moreover, $(\phi, \boldsymbol{u},\boldsymbol{\psi})$ is spherically symmetric with form \eqref{2.}.
\end{thm}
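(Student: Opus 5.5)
The local well-posedness in Theorem \ref{th1} will be proved by a linearization--iteration scheme for the enlarged system \eqref{eq:cccq}. The scheme treats $\eqref{eq:cccq}_1$ as a transport equation, $\eqref{eq:cccq}_3$ (in the form \eqref{kuzxc}) as a symmetric hyperbolic system, and $\eqref{eq:cccq}_2$ as a singular parabolic system with coefficient $a\phi^{2\iota}$. The singularity comes from the far-field vacuum, since $\phi^{2\iota}\to\infty$ as $|\boldsymbol{x}|\to\infty$.

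\emph{Step 1: the linearized problem.} Given a known velocity $\boldsymbol{w}$ in the target class, we solve
\[
\phi_t+\boldsymbol{w}\cdot\nabla\phi+(\gamma-1)\phi\diver\boldsymbol{w}=0,\qquad \boldsymbol{\psi}_t+\nabla(\boldsymbol{w}\cdot\boldsymbol{\psi})+(\delta-1)\boldsymbol{\psi}\diver\boldsymbol{w}+\delta a\phi^{2\iota}\nabla\diver\boldsymbol{w}=0,
\]
and then
\[
\boldsymbol{u}_t+\boldsymbol{w}\cdot\nabla\boldsymbol{w}+\nabla\phi+a\phi^{2\iota}L\boldsymbol{u}=\boldsymbol{\psi}\cdot Q(\boldsymbol{w}).
\]
For the parabolic equation, the degenerate coefficient $\phi^{2\iota}$ would first be regularized to $(\phi+\epsilon)^{2\iota}$, or alternatively the problem posed on balls $B_R$ with positive density. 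Uniform estimates would then be derived in $\epsilon$ (resp.\ $R$), and a limit taken.

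The key a priori estimates are the singular weighted energy estimates. Testing the $\boldsymbol{u}$-equation with $\boldsymbol{u}_t$ and with $L\boldsymbol{u}$, and the time-differentiated equation with $\boldsymbol{u}_t$, controls $\phi^{\iota}\nabla\boldsymbol{u}$, $\phi^{\iota}\nabla\boldsymbol{u}_t$ and $\phi^{2\iota}L\boldsymbol{u}$. Elliptic estimates for the Lam\'e operator $L$ then yield $\phi^{2\iota}\nabla^k\boldsymbol{u}$ for $k\le 4$. In these estimates, commutators of the form $\nabla\phi^{2\iota}\,\nabla^{k-1}\boldsymbol{u}$ are absorbed using $\boldsymbol{\psi}\in L^\infty\cap D^{1,n}\cap D^2$, and $\nabla\phi^{\iota}\in L^{2n}$ enters wherever the $\phi^\iota$ weight is differentiated. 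The compatibility conditions \eqref{th78zxq} are exactly what make the initial values of $\phi^\iota\nabla\boldsymbol{u}_t$ and $\phi^{2\iota}\nabla^3\boldsymbol{u}$ finite. The time weights in \eqref{er2-high} come from multiplying the higher-order estimates by $t$.

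\emph{Step 2: uniform bounds and contraction.} We fix constants $c_0<c_1<c_2<c_3$ depending on the data and choose $T_*$ small so that the map $\boldsymbol{w}\mapsto\boldsymbol{u}$ preserves a closed ball in the energy space. This requires checking that each estimate carries a factor of $T_*^{1/2}$ or a power of $\int_0^t$ in front of the highest constants. Contraction would be shown in a lower-order norm, namely differences of $\phi$ in $L^2$, differences of $\boldsymbol{\psi}$ in $L^2$, and differences of $\phi^{\iota}\boldsymbol{u}$ in $L^2$ with gradient control; compactness, i.e.\ weak-$*$ lower semicontinuity, then restores the full regularity.

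\emph{Step 3: recovering the original system.} The identity $\boldsymbol{\psi}=\frac{a\delta}{\delta-1}\nabla\phi^{2\iota}$ holds by uniqueness for the linear hyperbolic equation. Indeed, $\nabla\phi^{2\iota}$, computed from $\eqref{eq:cccq}_1$, satisfies the same equation \eqref{kuzxc} with the same initial data, so the difference solves a homogeneous symmetric hyperbolic system and vanishes by an $L^2$ energy estimate. Positivity of $\phi$ and $\phi^{1/(\gamma-1)}\in L^1$ are propagated along characteristics. Spherical symmetry follows from uniqueness together with the rotation invariance of the system under $\mathcal{O}\in\mathrm{SO}(n)$. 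The restriction $\delta>1-\frac1n$ is used to ensure that the weights and embeddings (e.g.\ $\nabla\phi^\iota\in L^{2n}$, and $D^{1,n}\hookrightarrow L^\infty$ for spherically symmetric fields) are compatible with the far-field decay of the data.

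The main obstacle is the singular parabolic estimate in Step 1, specifically the highest-order bounds on $\phi^{2\iota}\nabla^3\boldsymbol{u}$ and $\phi^{2\iota}\nabla^4\boldsymbol{u}$, which must hold uniformly as the regularization is removed. The difficulty is that the coefficient blows up at infinity. My plan is to write $a\phi^{2\iota}L\boldsymbol{u}=\boldsymbol{f}$ and estimate $L(\phi^{2\iota}\boldsymbol{u})$ through commutators involving $\boldsymbol{\psi}$ and $\nabla\boldsymbol{\psi}$. These commutators would then be bounded by the hyperbolic estimates for $\boldsymbol{\psi}$ in $D^{1,n}\cap D^2$, which in turn require $\phi^{2\iota}\nabla^3\diver\boldsymbol{u}\in L^2_tL^2$. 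This is a closed loop that the smallness of $T_*$ must break.
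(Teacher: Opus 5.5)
Your architecture matches the paper's: linearize, regularize the singular coefficient, close singular-weighted estimates through $L(\phi^{2\iota}\boldsymbol u)$ with commutators controlled by $\boldsymbol\psi$ and $\nabla\phi^\iota$, iterate, use compactness, and get symmetry from uniqueness plus rotation invariance. However, the contraction in Step~2 does not close in the norm you propose.

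The difference equation for $\bar{\boldsymbol u}^{k+1}$ contains $a\big((\phi^{k+1})^{2\iota}-(\phi^{k})^{2\iota}\big)L\boldsymbol u^k$. The one for $\bar{\boldsymbol\psi}^{k+1}$ contains $a\delta\big((\phi^{k+1})^{2\iota}-(\phi^{k})^{2\iota}\big)\nabla\diver\boldsymbol u^{k-1}$. Near the far-field vacuum this coefficient difference is $\approx 2\iota\,\phi^{2\iota-1}\bar\phi$, which is a factor $\phi^{-1}$ worse than the weight $\phi^{2\iota}$ you control on $\nabla^2\boldsymbol u$. So $\|\bar\phi\|_{L^2}$ and $\|\bar{\boldsymbol\psi}\|_{L^2}$ give no bound on these terms. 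You must carry the relative difference $h^{-1}\bar h$ (with $h\approx\phi^{2\iota}$), which has its own transport estimate \eqref{go64aah} and enters through \eqref{2Dxinfaxian}.

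There is a second problem. The $\bar{\boldsymbol\psi}$-equation has the source $a\delta\phi^{2\iota}\nabla\diver\bar{\boldsymbol u}^k$, one derivative more than the gradient control $\phi^\iota\nabla\bar{\boldsymbol u}$ you list. The paper recovers $\|h^k\nabla^2\bar{\boldsymbol u}^k\|_{L^2}$ elliptically from the momentum equation, in terms of $\bar{\boldsymbol u}^k_t$, $\nabla\bar\phi^k$, $\bar{\boldsymbol\psi}^k$, $(h^k)^{-1}\bar h^k$ and the $(k-1)$-st differences (see \eqref{hubarr}). That is why it also tests with $\bar{\boldsymbol u}_t$, keeps $\bar\phi$ in $H^1$, and obtains only the two-step recursion $\mathcal E^{k+1}\le\frac1{16}(\mathcal E^{k}+\mathcal E^{k-1})$.

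The paper also sets up the scheme so that these differences make sense.
\begin{itemize}
\item It never iterates at vacuum. It lifts the far-field state to $\eta>0$ (data $\phi_0+\eta$), so every iterate is a non-vacuum classical solution with bounded $h$ and differences in $H^2$. It removes $\eta$ only at the end, via $\eta$-uniform bounds, compactness, and a local lower bound for $\phi^\eta$.
\item It makes $h$ a separate unknown, transported with the previous iterate $g=h^k$ in its source. Then the given pair $(\boldsymbol w,g)$ carries its own weight in the ansatz \eqref{2.16}, and the $D^{1,n}\cap D^2$ bound for $\boldsymbol\psi=\frac{a\delta}{\delta-1}\nabla h$ uses only the known quantity $g\nabla^2\boldsymbol w$. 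The comparability $C^{-1}\le g/h\le C$ is proved along characteristics.
\end{itemize}
In your formulation the weights defining your ``closed ball'' change with the iterate, so you would need such a comparability argument explicitly.

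Finally, your Step~3 $L^2$ energy argument is not available as stated. The fields $\boldsymbol\psi$ and $\nabla\phi^{2\iota}$ lie in $L^\infty\cap D^{1,n}\cap D^2$ but generally not in $L^2$ (e.g.\ $\rho_0\sim r^{-\sigma}$ with $\sigma<\frac1{1-\delta}$). Argue along characteristics instead, as the paper does for the scalar $h-\phi^{2\iota}$.
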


The proofs for Theorem \ref{th1} are given in \S \ref{section-local-regular}. Moreover, at the end of \S \ref{section-local-regular}, we show that Theorem \ref{th78qq} indeed implies the local well-posedness of regular solutions of the Cauchy problem \eqref{eq:1.1}--\eqref{kelaoxiusi} and \eqref{eqs:CauchyInit}--\eqref{e1.3}  with general smooth, spherically symmetric initidata and far-field vacuum (\textit{i.e.}, $\bar{\rho}=0$), which are stated in Theorem \ref{thm-loc} below.
\begin{thm}\label{thm-loc}
Let $n=2$ or $3$, and \eqref{canshu} hold. If the initial data $(\rho_0, \boldsymbol{u}_0)(\boldsymbol{x})$ are spherically symmetric, and satisfy \eqref{id1-high}--\eqref{th78zx}, then there exists $T_*>0$ such that the Cauchy problem \eqref{eq:1.1}{\rm--}\eqref{kelaoxiusi} with \eqref{eqs:CauchyInit}{\rm--}\eqref{e1.3}  admits a unique regular solution $(\rho, \boldsymbol{u})(t, \boldsymbol{x})$ in $[0, T_*] \times \mathbb{R}^n$ satisfying \eqref{er2-high} with $T$ replaced by $T_*$, and
\begin{equation*}
(\rho^{\gamma-1})_{t t} \in C([0, T_*] ; L^2(\mathbb{R}^n)) \cap L^2([0,T_*];D^1(\mathbb{R}^n)), \quad(\nabla \rho^{\delta-1})_{tt} \in L^2([0, T_*];L^2(\mathbb{R}^n)) .    
\end{equation*}
Moreover, $(\rho, \boldsymbol{u})$ is spherically symmetric with form \eqref{2.8}.    
\end{thm}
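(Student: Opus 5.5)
We derive Theorem \ref{thm-loc} from Theorem \ref{th1} by translating the data and the solution between the original variables $(\rho,\boldsymbol{u})$ and the enlarged variables $(\phi,\boldsymbol{u},\boldsymbol{\psi})$ of \eqref{bianliang}. The proof has three steps: translate the data, return to $(\rho,\boldsymbol{u})$, and prove uniqueness.

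\emph{Step 1: data.} Set $\phi_0=\frac{A\gamma}{\gamma-1}\rho_0^{\gamma-1}$ and $\boldsymbol{\psi}_0=\frac{\delta}{\delta-1}\nabla\rho_0^{\delta-1}$. Since $\phi_0^{2\iota}$ is a constant multiple of $\rho_0^{\delta-1}$, this matches \eqref{sfana1}. We then check that \eqref{id1-high}--\eqref{th78zx} imply \eqref{th78qq}--\eqref{th78zxq}:
\begin{itemize}
\item $\phi_0^{1/(\gamma-1)}$ is a constant multiple of $\rho_0$, hence lies in $L^1$;
\item $\nabla\phi_0$ is a multiple of $\nabla\rho_0^{\gamma-1}$, hence lies in $H^2$;
\item $\phi_0^\iota$ is a multiple of $\rho_0^{(\delta-1)/2}$, so $\nabla\phi_0^\iota\in L^{2n}$;
\item the three compatibility conditions translate directly, since $\phi_0^{-\iota}$ and $\phi_0^{-2\iota}$ are constant multiples of $\rho_0^{(1-\delta)/2}$ and $\rho_0^{1-\delta}$.
\end{itemize}
Theorem \ref{th1} then gives a spherically symmetric regular solution $(\phi,\boldsymbol{u},\boldsymbol{\psi})$ on $[0,T_*]$ with $\boldsymbol{\psi}=\frac{a\delta}{\delta-1}\nabla\phi^{2\iota}$.

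\emph{Step 2: back to $(\rho,\boldsymbol{u})$.} Define $\rho=\big(\frac{\gamma-1}{A\gamma}\phi\big)^{1/(\gamma-1)}>0$. Multiplying $\eqref{eq:cccq}_1$ by $\frac{1}{\gamma-1}\big(\frac{\gamma-1}{A\gamma}\big)^{1/(\gamma-1)}\phi^{\frac{1}{\gamma-1}-1}$ gives the continuity equation. This is legitimate because $\phi>0$ and $\phi$ is $C^1$, using $\nabla\phi\in C([0,T_*];H^2)$ and the time regularity.

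For the momentum equation, $\boldsymbol{\psi}=\frac{\delta}{\delta-1}\nabla\rho^{\delta-1}$ turns $\eqref{eq:cccq}_2$ into \eqref{qiyi}. Multiplying \eqref{qiyi} by $\rho$ and using \eqref{coeff} recovers $\eqref{eq:1.1}_2$. Here one checks the identity
\[
\rho^\delta L\boldsymbol{u}-\frac{\delta}{\delta-1}\rho\nabla\rho^{\delta-1}\cdot Q(\boldsymbol{u})=-\diver(\rho^\delta Q(\boldsymbol{u})),
\]
with $a_2=2(\delta-1)a_1$. It holds pointwise since all terms are continuous, so $(\rho,\boldsymbol{u})$ solves the problem in the classical, hence distributional, sense.

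The regularity in Definition \ref{cjk} follows from Definition \ref{cjk-reform} by the same constant-multiple dictionary. In particular, $\rho^{\delta-1}\nabla^k\boldsymbol{u}$ corresponds to $\phi^{2\iota}\nabla^k\boldsymbol{u}$. The one point that is not immediate is $\rho\in C([0,T_*];L^1)$: this follows from $\phi^{1/(\gamma-1)}\in C([0,T_*];L^1)$. The bound \eqref{er2-high} carries over unchanged.

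\emph{Step 3: higher time regularity and uniqueness.} We expect this step to need the most care.

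For $(\rho^{\gamma-1})_{tt}=c\,\phi_{tt}$, differentiate $\eqref{eq:cccq}_1$ in time:
\[
\phi_{tt}=-\boldsymbol{u}_t\cdot\nabla\phi-\boldsymbol{u}\cdot\nabla\phi_t-(\gamma-1)(\phi_t\diver\boldsymbol{u}+\phi\diver\boldsymbol{u}_t).
\]
Every term lies in $C([0,T_*];L^2)$, using $\boldsymbol{u}_t\in C(H^1)$, $\nabla\phi\in C(H^2)$, $\phi_t\in C(D^1)$ (read off from $\eqref{eq:cccq}_1$), and $\phi\in L^\infty$. One spatial derivative lies in $L^2_tL^2$, using $\boldsymbol{u}_t\in L^2D^2$.

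Similarly, $(\nabla\rho^{\delta-1})_{tt}$ is a multiple of $\boldsymbol{\psi}_{tt}$, obtained by differentiating $\eqref{eq:cccq}_3$ in time. The delicate term is $\phi^{2\iota}\nabla\diver\boldsymbol{u}_t$. We control it by
\[
(\phi^{2\iota}\nabla^2\boldsymbol{u})_t=\phi^{2\iota}\nabla^2\boldsymbol{u}_t+2\iota\phi^{2\iota-1}\phi_t\nabla^2\boldsymbol{u},
\]
that is, we use the singular weighted estimate $\phi^{2\iota}\nabla^2\boldsymbol{u}_t\in L^2L^2$. Such an estimate is derivable from the momentum equation differentiated in time: it expresses $\phi^{2\iota}L\boldsymbol{u}_t$ through $\boldsymbol{u}_{tt}\in L^2L^2$ and lower-order terms, combined with elliptic estimates for $L$. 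This is the main obstacle. If it is not already recorded in the proof of Theorem \ref{th1}, it must be extracted from the local a priori estimates there.

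Finally, uniqueness. Any regular solution $(\rho,\boldsymbol{u})$ in the sense of Definition \ref{cjk} yields, through \eqref{bianliang}, a regular solution of \eqref{eq:cccq}--\eqref{sfanb1} in the sense of Definition \ref{cjk-reform}. The uniqueness part of Theorem \ref{th1} then applies.
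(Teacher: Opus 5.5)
Your proposal is correct and follows the paper's own route. The paper likewise invokes Theorem \ref{th1}, sets $\rho=\big(\frac{\gamma-1}{A\gamma}\phi\big)^{\frac{1}{\gamma-1}}$, and recovers \eqref{eq:1.1} by multiplying $\eqref{eq:cccq}_1$ by $\partial\rho/\partial\phi$ and $\eqref{eq:cccq}_2$ by $\rho$; your data dictionary, the identity for $\diver(\rho^\delta Q(\boldsymbol{u}))$, and the uniqueness transfer just spell out what the paper leaves implicit.

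The step you flag as the main obstacle is already settled. The uniform bounds \eqref{key1kk}, passed to the limit in the proof of Theorem \ref{th1}, contain $\int_0^{T_*}\big(\|\phi_{tt}\|_{H^1}^2+\|\boldsymbol{\psi}_{tt}\|_{L^2}^2+\|(\phi^{2\iota}\nabla^2\boldsymbol{u})_t\|_{L^2}^2\big)\,\mathrm{d}t<\infty$. More directly still, the curl-free identity behind \eqref{Lu=} gives $\phi^{2\iota}\nabla\diver\boldsymbol{u}=-(2\delta a_1)^{-1}\phi^{2\iota}L\boldsymbol{u}$, and its time derivative can be read off the momentum equation using $\boldsymbol{u}_{tt}\in L^2([0,T_*];L^2(\mathbb{R}^n))$, with no elliptic estimate needed.
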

  
\subsection{Main Strategies}\label{subsection-strategy}
We outline the proof for the global well-posedness of the Cauchy problem \eqref{eq:1.1}--\eqref{kelaoxiusi} with 
\eqref{eqs:CauchyInit}--\eqref{coeff},  including the global upper bound of the density, the $L^\infty(I)$-estimate of the effective velocity, the non-formation of vacuum inside the fluids, and the uniform estimates for the regular solutions. For brevity, we only sketch the proof for the $3$-D case; the $2$-D case can be treated in a similar way.

\subsubsection{Global uniform upper bound of $\rho$}
The first major task is to obtain the global uniform upper bound of density $\rho$. Due to the coordinate singularity at the origin, we divide the proof into two parts: the interior region $B_1=\{\boldsymbol{x}:\,|\boldsymbol{x}|<1\}$ and the exterior region $\mathbb{R}^3\backslash B_1$. 

In the exterior domain, by the Sobolev embedding $H^1(1,\infty)\hookrightarrow L^\infty(1,\infty)$, the conservation of total mass, and the BD entropy estimates (see Lemma \ref{energy-BD}), we obtain
\begin{equation}\label{01}
\|\rho\|_{L^\infty([0,T]\times [1,\infty))}\leq C_0.
\end{equation}

In the interior region $B_1$, the analysis is more subtle due to the geometric singularity. To overcome this difficulty, we introduce the effective velocity $\boldsymbol{v}=\boldsymbol{u}+\frac{2a_1\delta}{\delta-1}\nabla\rho^{\delta-1}$ (its radial projection is $v=u+\frac{2a_1\delta}{\delta-1}(\rho^{\delta-1})_r$) and employ a combination of the Hardy inequality and weighted estimates for $\rho$. First, due to  $W^{1,p}(\mathbb{R}^3)\hookrightarrow L^\infty(\mathbb{R}^3)$ for $p>3$, we observe
\begin{equation}\label{000}
\begin{aligned}
\|\rho^{\frac{1}{p}+\delta-1}\|_{L^\infty(B_1)}&\leq C(p)\|\rho^{\frac{1}{p}+\delta-1}\|_{W^{1,p}(B_1)} \leq C(p)\|\rho\|_{L^{p\delta-p+1}(B_1)}^{\delta-1+\frac{1}{p}}+C(p)\big\|\rho^{\frac{1}{p}}(u,v)\big\|_{L^p}\\
&\leq C(p)\|r^\frac{2}{p\delta-p+1}\rho\|_{L^{p\delta-p+1}(0,1)}^{\delta-1+\frac{1}{p}}+C(p)\big |(r^2\rho)^{\frac{1}{p}}(u,v)\big |_{p}:=J_*+J_{**}.
\end{aligned}   
\end{equation}

To control the right-hand side of \eqref{000}, we derive $r$-weighted $L^q$-estimates for $\rho$ via the Hardy inequality and the BD entropy estimate $\nabla\rho^{\delta-\frac{1}{2}}\in L^\infty([0,T];L^2(\mathbb{R}^n))$:
\begin{equation}\label{02}
\big\|r^{\frac{1}{2\delta-1}-\frac{1}{q}}\rho\big\|_{L^\infty([0,T];L^q(0,1))}\leq C(q) \qquad \text{for $q\in [2\delta-1,\infty]$}.
\end{equation}
Then, if $p\in (3,\frac{1}{1-\delta})$ (since $\delta>\frac{2}{3}$), $J_*$ can be controlled by using the H\"older inequality and \eqref{02}. To estimate $J_{**}$, we establish the $L^p(I)$-energy estimates for both $u$ and $v$ via multiplying $\eqref{e1.5hh}_2$ and equation \eqref{eq:effective2} by $r^2|u|^{p-2}u$ and $r^2\rho|v|^{p-2}v$, respectively. By handling the exterior part via \eqref{01}, and treating the interior part via \eqref{02} and an iterative scheme involving the Hardy inequality, we arrive at the estimates of the form:
\begin{equation}\label{--3}
\big\|(r^2\rho)^{\frac{1}{p}}(u,v)\big\|_{L^\infty([0,T];L^p(I))}+ \big\|(r^2\rho^\delta)^\frac{1}{2}|u|^\frac{p-2}{2}\mathrm{D}_ru\big\|_{L^p([0,T];L^p(I))} \leq C(p,T) 
\end{equation}
for any $p\in [2,\tilde{p}(\delta))$. Here, $\tilde{p}(\delta)$ is a parameter that depends only  on $\delta$, which is strictly larger than $3$ whenever $\delta>7-2\sqrt{10}$ (for $2$-D case, $\tilde{p}(\delta)>2$ requires $\delta>\frac{1}{2}$). Hence,  \eqref{000}, combined with \eqref{02}--\eqref{--3}, yields  the uniform upper bound of $\rho$ in  $B_1$.

\subsubsection{Global $L^\infty(I)$-estimate for the effective velocity.}
With the uniform upper bound of $\rho$ at hand, we proceed to control the $L^\infty(I)$-bound for $v$. From the evolution equation \eqref{eq:effective2} and the method of characteristics, we obtain  
\begin{equation}\label{05}
\|v\|_{L^\infty([0,T]\times I)}\leq |v_0|_{\infty}+\frac{A\gamma}{2a_1\delta}\|\rho^{\gamma-\delta}u\|_{L^1([0,T];L^\infty(I))} \leq C_0+C(T)\|\rho^{1-\delta}u\|_{L^2([0,T];L^\infty(I))}^\frac{1}{2}.
\end{equation}

To estimate the integral term above, we first consider the $L^2(I)$-estimate for $\rho^\frac{1}{2}u$. Multiplying $\eqref{e1.5hh}_2$ by $u$ instead of $r^2u$ and integrating over $I$, we obtain an energy inequality with an extra integral term involving $\frac{\rho v u^2}{r}$. This term can be further controlled by $|v|_{\infty}$, yielding
\begin{equation*} 
\|\rho^{\frac{1}{2}}u\|_{L^\infty([0,T];L^2(I))} +\|\rho^{\frac{\delta}{2}} \mathrm{D}_r u\|_{L^2([0,T];L^2(I))} \leq C(T)\big(\|v\|_{L^\infty([0,T]\times I)}^2+1\big),
\end{equation*}
which, combined with the Sobolev embedding $W^{1,1}(I)\hookrightarrow L^\infty(I)$, yields that, all $\varepsilon\in(0,1)$,
\begin{equation}\label{rhogamma1}
\|\rho^{1-\delta}u\|_{L^2([0,T];L^\infty(I))}\leq C(\varepsilon,T) \big(1+\|v\|_{L^1([0,T];L^\infty(I))}^2\big)+\varepsilon\|v\|_{L^\infty([0,T]\times I)}^2.
\end{equation}
Substituting \eqref{rhogamma1} into \eqref{05} and choosing $\varepsilon$ sufficiently small, the Gr\"onwall inequality gives the desired bound $\|v\|_{L^\infty([0,T]\times I)}\leq C(T)$.

\subsubsection{Non-formation of cavitation inside the fluids.}
Using the $L^\infty$ bounds for $\rho$ and $v$, we can further show that no vacuum forms inside the fluid in finite time, provided that no vacuum state occurs initially. The crucial step is to derive
\begin{equation}\label{07}
\|u\|_{L^\infty([0,T];L^2(I))}+\|\rho^\frac{\delta-1}{2}\mathrm{D}_r u\|_{L^2([0,T];L^2(I))}+\|u\|_{L^4([0,T];L^\infty(I))} \leq C(T).
\end{equation}
In fact, once \eqref{07} is established, we can derive from $v=u+\frac{2a_1\delta}{\delta-1}(\rho^{\delta-1})_r$ and $L^\infty(I)$-estimate of $v$ that $\|(\rho^{\delta-1})_r\|_{L^\infty([0,T];L^2(0,1))}\leq C(T)$, and obtain from the continuity equation $\eqref{e1.5hh}_1$ that $\|\rho^{\delta-1}\|_{L^\infty([0,T];L^2(0,1))}\leq C(T)$. Hence, the embedding $H^1(0,1)\hookrightarrow L^\infty(0,1)$ yields $\|\rho^{\delta-1}\|_{L^\infty([0,T];L^\infty(0,1))}\leq C(T)$, which implies the strictly positive lower bound for $\rho$ in $[0,T]\times[0,1]$. Similar estimates can be extended to any bounded interval $[0,R]$ with constants depending on $R$, which give the pointwise estimate for $\rho$ in $[0,T]\times I$.

To establish \eqref{07}, we can first derive from \eqref{rhogamma1} and the $L^\infty(I)$-estimate of $v$ that $\|\rho^{1-\delta}u\|_{L^2([0,T];L^\infty(I))}\leq C(T)$. Then, based on this, multiplying $\eqref{e1.5hh}_2$ by $\rho^{-\alpha} u$ with $\alpha=\max\{0,\frac{5\delta-3}{2}\}$ and integrating over $I$, we obtain an intermediate $L^5(I)$-estimate for $\rho^\frac{1-\alpha}{5}u$:
\begin{equation*}
\big\|\rho^\frac{1-\alpha}{5}u\big\|_{L^\infty([0,T];L^5(I))}
+\big\|\rho^\frac{\delta-\alpha}{2}|u|^\frac{3}{2}\mathrm{D}_r u\big\|_{L^2([0,T];L^2(I))}\leq C(T),
\end{equation*}
which is crucial and further leads to $\|\rho^\frac{1-\delta}{2}u\|_{L^2([0,T];L^\infty(I))}\leq C(T)$. Multiplying $\eqref{cosingu}_2$ by $\rho^{-1}u$ and integrating over $I$, together with $L^2([0,T];L^\infty(I))$-bound of $\rho^\frac{1-\delta}{2}u$ and the $L^\infty(I)$-estimates for $(\rho,v)$, give the desired estimate \eqref{07}.

\subsubsection{Global \textit{a priori} estimates of the regular solutions}
With the $L^\infty(I)$-bounds for $(\rho,v)$ established, we turn to the enlarged system \eqref{e2.2} to obtain the global estimates for the regular solutions. To simplify the calculations, we provide two auxiliary lemmas (see Lemmas \ref{im-1}--\ref{im-2} in \S\ref{section-global2}), indicating the equivalence between the $W^{k,p}(\mathbb{R}^3)$-norms and weighted $H^k(\mathbb{R}^3)$-norms of the gradient and the divergence of a spherically symmetric vector function, which are useful in establishing the elliptic estimates for $u$.

Another key step is to obtain the $L^2([0,T];L^\infty(I))$-estimate for $\mathrm{D}_r u$. To this end, we establish an inequality of the form: $|r\mathrm{D}_r u|_{\infty}\leq C(T)(|ru_t|_{2}+1)$ via equation $\eqref{e2.2}_2$ and the Hardy inequality. This allows us to close the first-order temporal estimate for $u$, thereby obtaining $\mathrm{D}_r u\in L^2([0,T];L^\infty(I))$. With this estimate at hand, together with Lemmas \ref{im-1}--\ref{im-2}, we can systematically establish the remaining global uniform estimates for $(\phi,u,\psi)$.

\section{Global Uniform Upper Bound of the Density}\label{section-upper-density}

This section is devoted to establishing the global upper bound of $\rho$. In \S\ref{section-upper-density}--\S \ref{se46}, we denote $C_0 \in[1, \infty)$ a generic constant depending only on $\left(\rho_0, u_0, n, a_1, A, \gamma,\delta\right)$, and $C\left(\nu_1, \cdots\!, \nu_k\right) \in[1, \infty)$ a generic constant depending on $C_0$ and parameters $\left(\nu_1, \cdots\!, \nu_k\right)$, which may be different at each occurrence. Besides, we define the characteristic functions $(\chi^\flat_\sigma,\chi^\sharp_\sigma)$ ($\sigma>0$) as
\begin{equation}\label{chi-sigma}
\chi^\flat_\sigma(r):=
\begin{cases}
1&\text{for $r\in[0,\sigma)$},\\
0&\text{for $r\in[\sigma,\infty)$},
\end{cases}\qquad \ \ \chi^\sharp_\sigma:=1-\chi^\flat_\sigma.
\end{equation}
We always let 
\begin{equation}\label{del-gam}
\begin{aligned}
&\delta\in (\frac{1}{2},1),\ \ &&\gamma\in (1,\infty)\qquad\,\, &&&\text{if $n=2$};\\
&\delta\in (7-2\sqrt{10},1),\ \ &&\gamma\in (1,6\delta-3)\qquad\,\, &&&\text{if $n=3$}.
\end{aligned}
\end{equation}

\subsection{Reformulation in the Spherical Coordinates}

Let $T>0$ be any fixed time and let the vector $(\rho,\boldsymbol{u})(t,\boldsymbol{x})$ be the unique regular solution of the Cauchy problem \eqref{eq:1.1}--\eqref{kelaoxiusi} with  \eqref{eqs:CauchyInit}--\eqref{e1.3} in $[0,T_*]\times\mathbb{R}^n$, which has the following form:
\begin{equation}\label{e1.4}
\rho(t,\boldsymbol{x})=\rho(t,r), \quad \boldsymbol{u}(t,\boldsymbol{x})=u(t,r)\frac{\boldsymbol{x}}{r}\qquad \, \text{for $r=|\boldsymbol{x}|$}.
\end{equation}
Then the Cauchy problem \eqref{eq:1.1}--\eqref{kelaoxiusi} with  \eqref{eqs:CauchyInit}--\eqref{e1.3} can be transformed to the following \textbf{IBVP} in $[0,T]\times I$:
\begin{equation}\label{e1.5}
\begin{cases}
\displaystyle 
\rho_t+u\rho_r+\rho\big(u_r+\frac{m u}{r}\big)=0,\\[3pt]
\displaystyle
\rho u_t+\rho uu_r+A(\rho^\gamma)_r=2a_1\delta\big(\rho^\delta u_r+\frac{m\rho^\delta u}{r} \big)_r-\frac{2a_1 m(\rho^\delta)_r u}{r},\\[3pt]
\displaystyle
(\rho, u)|_{t=0}=(\rho_0, u_0 )\qquad\text{for  $r\in I$},\\[3pt]
\displaystyle
u|_{r=0}=0\qquad\qquad\qquad \ \text{for $t\in(0,T]$},\\[3pt]
\displaystyle
(\rho,u)\to \left(0,0\right) \qquad\qquad \, \text{as $r\to \infty$ \ for $t\in (0,T]$}.
\end{cases}
\end{equation}

Next, by Lemma  \ref{lemma-initial} in Appendix B, we can transform the statements of Theorems \ref{th1}--\ref{thm-loc} into spherical coordinates.
\begin{lem}\label{rth1} Let $n=2$ or $3$ and \eqref{cd1} hold. Assume that the initial data $(\rho_0, u_0)(r)$ satisfy 
\begin{equation}\label{etm}
\begin{aligned}
&\rho_0>0,\qquad r^m\rho_0\in L^1(I),\qquad r^\frac{m}{2n}(\rho_0^\frac{\delta-1}{2})_r\in L^{2n}(I),\\[2pt]
&r^{\frac{m}{n}} \mathrm{D}_r(\rho_0^{\delta-1})_{r} \in L^n(I),\qquad r^{\frac{m}{2}}  \mathrm{D}_r^2(\rho_0^{\delta-1})_{r} \in L^2(I),\\[4pt]
&r^{\frac{m}{2}} ((\rho_0^{\gamma-1})_r,\mathrm{D}_r(\rho_0^{\gamma-1})_{r},\mathrm{D}_r^2(\rho_0^{\gamma-1})_{r},u_0,\mathrm{D}_ru_0,\mathrm{D}_r^2u_0,\mathrm{D}_r^3u_0)\in L^2(I),
\end{aligned}
\end{equation}
and the initial  compatibility conditions{\rm :}
\begin{equation}\label{xiangrongxing}
\mathrm{D}_r u_0=r^{-\frac{m}{2}}\phi^{-\iota}_0(g_1 ,g_2),\qquad \mathrm{D}_r(\phi^{2\iota}_0 \tilde Lu_0)=r^{-\frac{m}{2}}\phi^{-\iota}_0(g_3,g_4),\qquad \tilde Lu_0=r^{-\frac{m}{2}} \phi^{-2\iota}_0g_5,
\end{equation}
from some functions $g_i$ {\rm(}$1\leq i\leq 5${\rm)} belonging to $L^2(I)$, where $\tilde Lu_0:=((u_0)_r+\frac{m}{r}u_0)_r$ and the operator $\mathrm{D}_r$ is defined in {\rm\S\ref{Nor}}. Then there exist both $T_*>0$ and a unique regular solution $(\rho, u)(t,r)$ in $[0,T_*]\times I$ of problem \eqref{e1.5}  satisfying 
\begin{equation}\label{spd}
\begin{aligned}
&\rho>0,\quad r^m\rho \in C([0,T_*];L^1(I)),\quad (\rho^{\delta-1})_{r}\in C([0,T_*];L^\infty(I)),\\
&r^{\frac{m}{2}}((\rho^{\gamma-1})_r,\mathrm{D}_r(\rho^{\gamma-1})_r, \mathrm{D}_r^2(\rho^{\gamma-1})_r)\in C([0,T_*];L^2(I)),\\
&r^{\frac{m}{n}} \mathrm{D}_r(\rho^{\delta-1})_{r}\in C([0,T_*];L^n(I)), \quad  r^{\frac{m}{2}} \mathrm{D}_r^2(\rho^{\delta-1})_{r}\in C([0,T_*];L^2(I)),\\
&r^{\frac{m}{2}}\big(u,\mathrm{D}_ru, \mathrm{D}_r^2u, \mathrm{D}_r^3u,u_t,\mathrm{D}_ru_t\big)\in C([0,T_*];L^2(I)),\\
&r^{\frac{m}{2}}\rho^\frac{\delta-1}{2}\mathrm{D}_ru\in C([0,T_*];L^2(I)),\quad  r^{\frac{m}{2}}\rho^\frac{\delta-1}{2} \mathrm{D}_ru_t\in L^\infty([0,T_*];L^2(I)),\\
&r^{\frac{m}{2}}\rho^{\delta-1}(\mathrm{D}_r^2u,\mathrm{D}_r^3u)\in C([0,T_*];L^2(I)),\quad r^{\frac{m}{2}} (\mathrm{D}_r^4 u,\mathrm{D}_r^2 u_t,\rho^{\delta-1}\mathrm{D}_r^4 u)\in L^2([0,T_*];L^2),\\
&t^{\frac{1}{2}}r^{\frac{m}{2}}(u_{tt},\mathrm{D}_r^2 u_t,\mathrm{D}_r^4 u)\in L^\infty([0,T_*];L^2),\quad t^{\frac{1}{2}}r^{\frac{m}{2}} \mathrm{D}_r u_{tt} \in L^2([0,T_*];L^2).
\end{aligned}
\end{equation}
In addition, the following regularity properties hold{\rm :} 
\begin{equation}\label{spd2}
(\rho,\rho_r,\rho_t,u,\mathrm{D}_r u)\in C([0,T_*]\times C(\bar I)),\qquad (\mathrm{D}_r^2 u,u_t)\in C((0,T_*]\times C(\bar I)).
\end{equation}
\end{lem}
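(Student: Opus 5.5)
This lemma is a translation of Theorem \ref{thm-loc} (equivalently Theorem \ref{th1}) into radial variables, so the plan is to lift the radial data to $\mathbb{R}^n$, apply the Eulerian result, and pull the solution back. The dictionary is Lemma \ref{lemma-initial} in Appendix B. For a spherically symmetric scalar $f(|\boldsymbol{x}|)$ or vector field $g(|\boldsymbol{x}|)\frac{\boldsymbol{x}}{|\boldsymbol{x}|}$, it makes $\|\nabla^k\cdot\|_{L^p(\mathbb{R}^n)}$ comparable to $|r^{m/p}\mathrm{D}_r^k\cdot|_p$, with $\mathrm{D}_r$ playing the role of $\nabla$.

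First, set $\rho_0(\boldsymbol{x})=\rho_0(|\boldsymbol{x}|)$ and $\boldsymbol{u}_0=u_0(|\boldsymbol{x}|)\frac{\boldsymbol{x}}{|\boldsymbol{x}|}$. Then check \eqref{id1-high} term by term:
\begin{itemize}
\item $r^m\rho_0\in L^1(I)$ gives $\rho_0\in L^1(\mathbb{R}^n)$, since $\mathrm{d}\boldsymbol{x}=|\mathbb{S}^{m}|r^m\mathrm{d}r$;
\item $r^{m/(2n)}(\rho_0^{(\delta-1)/2})_r\in L^{2n}(I)$ gives $\nabla\rho_0^{(\delta-1)/2}\in L^{2n}$, since this gradient is the radial vector $(\cdot)_r\frac{\boldsymbol{x}}{r}$;
\item $\nabla\rho_0^{\delta-1}=(\rho_0^{\delta-1})_r\frac{\boldsymbol{x}}{r}$ is a radial vector field, and the $L^n$ and $L^2$ bounds on $r^{m/n}\mathrm{D}_r(\cdot)$ and $r^{m/2}\mathrm{D}_r^2(\cdot)$ give membership in $D^{1,n}\cap D^2$; the same argument applies to $\nabla\rho_0^{\gamma-1}\in H^2$ and $\boldsymbol{u}_0\in H^3$.
\end{itemize}
The compatibility conditions \eqref{xiangrongxing} are matched to \eqref{th78zx} by noting that, for radial fields, $L\boldsymbol{u}_0=-(2a_1+a_2)(\tilde Lu_0)\frac{\boldsymbol{x}}{r}$, because $\Delta\boldsymbol{u}=\nabla\diver\boldsymbol{u}$ when $\mathrm{curl}\,\boldsymbol{u}=0$. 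The $BD$ relation gives $2a_1+a_2=2a_1\delta>0$. Also, $|\nabla\boldsymbol{u}_0|$ is comparable to $|\mathrm{D}_ru_0|$, and $\phi_0^{\iota}$ equals $\rho_0^{(\delta-1)/2}$ up to a constant. Thus $\mathcal{G}_1,\mathcal{G}_2,\boldsymbol{g}_*$ can be built from the $g_i$ times bounded angular factors like $\frac{x_ix_j}{r^2}$, and the $L^2(I)$ bounds become $L^2(\mathbb{R}^n)$ bounds via the weight $r^{m/2}$. Finally, \eqref{cd1} implies \eqref{canshu}: $\delta>\frac12$ when $n=2$, and $\delta>7-2\sqrt{10}>\frac23$ when $n=3$.

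Next, Theorem \ref{thm-loc} yields $T_*>0$ and a unique regular solution $(\rho,\boldsymbol{u})$ on $[0,T_*]\times\mathbb{R}^n$ of form \eqref{2.8}, with the regularity of Definition \ref{cjk} and \eqref{er2-high}. Substituting \eqref{2.8} into \eqref{eq:1.1} and using \eqref{coeff} gives \eqref{e1.5}$_{1,2}$ pointwise for $r>0$. The condition $u|_{r=0}=0$ follows from continuity of $\boldsymbol{u}$ at the origin together with oddness. The far-field limits follow from $\rho\in L^1$ with bounded gradient-type quantities and from $\boldsymbol{u}\in H^3$. Reading the Eulerian regularity backwards through Lemma \ref{lemma-initial} gives each line of \eqref{spd}. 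The $L^\infty$ statement for $(\rho^{\delta-1})_r$ is just $|\nabla\rho^{\delta-1}|=|(\rho^{\delta-1})_r|$. The regularity \eqref{spd2} follows from \eqref{3dclassical2}-type classical regularity, which comes from $H^3$ embeddings. Time continuity is inherited because the correspondence of norms is linear and isometric up to constants.

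For uniqueness in the radial class, take any solution satisfying \eqref{spd} and lift it back to $\mathbb{R}^n$. It is then a regular solution in the sense of Definition \ref{cjk}: the equations hold distributionally across the origin because the solution is smooth enough there, as in \eqref{spd2}. Uniqueness in Theorem \ref{thm-loc} then forces agreement.

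The main obstacle is applying Lemma \ref{lemma-initial} correctly at the highest orders and for the weighted quantities. One must check that components such as $\frac1r(\frac{f}{r})_r$ in $\mathrm{D}_r^3$, together with the products with $\rho^{\delta-1}$, capture exactly $\nabla^k$ of the lifted field, with no loss at $r=0$. The weights $\rho^{\delta-1}$ and $\rho^{(\delta-1)/2}$ are radial scalars, so they commute with the norm equivalence pointwise. The task therefore reduces to the pointwise identity $|\nabla^k\boldsymbol{f}|\simeq|\mathrm{D}_r^kf|$, which I would take from Lemma \ref{lemma-initial}.
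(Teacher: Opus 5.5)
Your proposal is correct and follows the paper's route: lift the radial data to $\mathbb{R}^n$, apply Theorem \ref{thm-loc}, and read every norm back through Lemma \ref{lemma-initial}, with the compatibility conditions matched via the identity $L\boldsymbol{u}_0=-2a_1\delta\,(\tilde Lu_0)\frac{\boldsymbol{x}}{r}$ (the paper's \eqref{Lu=}). One small correction: the continuity of $(\mathrm{D}_r^2u,u_t)$ on $(0,T_*]$ in \eqref{spd2} does not come from $H^3$ embeddings but from the time-weighted bounds in \eqref{er2-high} together with Lemma \ref{triple} (as in Step 2 of \S\ref{se46}), which is why that interval is open at $t=0$.
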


\subsection{Some Basic Estimates and Radial Weighted Estimates for $\rho$}\label{subsection4.2}
First, the so-called effective velocity is defined as follows.
\begin{mydef}\label{def-effective}
Let $(\rho,u)$ be defined as in {\rm \S 1}. The effective velocity $v$ is defined as
\begin{equation}\label{V-expression}
v=u+2a_1\delta\rho^{\delta-2}\rho_r=u+\frac{2a_1\delta}{\delta-1}(\rho^{\delta-1})_r.
\end{equation}
Besides, define $v_0:=v|_{t=0}=u_0+2a_1\delta\rho_0^{\delta-2}(\rho_0)_r$.
\end{mydef}

Then we have the following energy estimates and the BD entropy estimates.
\begin{lem}\label{energy-BD}
For all $t\in [0,T]$,
\begin{equation*}\label{e-1.1}
\begin{aligned}
\int_0^\infty r^m(\rho u^2+ \rho^\gamma)(t,r)\,\mathrm{d}r+\int_0^t\int_0^\infty r^m \rho^\delta|\mathrm{D}_r u|^2\, \mathrm{d}r\mathrm{d}s&\leq C_0,\\
\int_0^\infty r^m\big(\rho v^2+|(\rho^{\delta-\frac{1}{2}})_r|^2+ \rho^\gamma\big)(t,r)\,\mathrm{d}r+ \int_0^t\int_0^\infty r^m\rho^{\gamma+\delta-3}\rho_r^2\,\mathrm{d}r\mathrm{d}s&\leq C_0,
\end{aligned}
\end{equation*}
where the operator $\mathrm{D}_r$ is defined in {\rm\S\ref{Nor}}.
\end{lem}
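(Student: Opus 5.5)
We prove the two estimates of Lemma \ref{energy-BD} by the classical energy method and by the Bresch--Desjardins entropy method, both carried out in radial form on the regular solution of Lemma \ref{rth1}. The boundary terms at $r=0$ vanish because $u|_{r=0}=0$ and $r^m$ carries the weight. At $r=\infty$ they vanish because of the decay built into the regular solution class (for instance $r^{m/2}(u,\mathrm{D}_ru)\in L^2$ and $\rho\to0$). These integrations by parts are justified first for the regular solution and then hold for all $t$ in the existence interval.

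\textbf{Step 1: basic energy.} Multiply $\eqref{e1.5}_2$ by $r^mu$ and integrate over $I$. Combined with $\eqref{e1.5}_1$ in the form $(r^m\rho)_t+(r^m\rho u)_r=0$, the convective terms give $\frac{d}{dt}\int r^m\frac12\rho u^2$. The pressure term gives $\frac{d}{dt}\int r^m\frac{A}{\gamma-1}\rho^\gamma$, using $\int r^m(\rho^\gamma)_r u=-\int r^m\rho^\gamma(u_r+\frac{m}{r}u)$ and the continuity equation. For the viscous terms we use the BD relation \eqref{coeff}, which makes the right side equal $\frac{1}{r^m}\big(r^m\,2a_1\rho^\delta u_r\big)_r-\frac{2a_1m\rho^\delta u}{r^2}+\big(a_2\rho^\delta(u_r+\frac{m u}{r})\big)_r$ with $a_2=2a_1(\delta-1)$. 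Testing against $r^mu$ then yields the dissipation
\[
-\int r^m\rho^\delta\Big(2a_1u_r^2+2a_1m\frac{u^2}{r^2}+a_2\big(u_r+\tfrac{m u}{r}\big)^2\Big).
\]
The condition $\delta>1-\frac1n$ gives $2a_1+na_2>0$. By Cauchy--Schwarz, $(u_r+\frac{mu}{r})^2\le n(u_r^2+m\frac{u^2}{r^2})$, so the quadratic form is bounded below by $c\,r^m\rho^\delta|\mathrm{D}_ru|^2$. Integrating in time gives the first estimate, with $C_0$ depending on the initial energy.

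\textbf{Step 2: BD entropy.} We derive the equation for $v$. From $\eqref{e1.5}_1$, $\varphi:=\frac{2a_1\delta}{\delta-1}(\rho^{\delta-1})_r$ satisfies $\rho(\varphi_t+u\varphi_r)=-2a_1\delta\big(\rho^\delta(u_r+\frac{mu}{r})\big)_r+(\text{terms matching the singular part})$. This is the standard computation $\rho^{\delta-1}_t+u(\rho^{\delta-1})_r+(\delta-1)\rho^{\delta-1}\mathrm{div}\,u=0$, differentiated in $r$. Adding it to $\eqref{e1.5}_2$ cancels the entire viscous right side; the coordinate term $-2a_1m(\rho^\delta)_ru/r$ is exactly absorbed. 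We obtain
\[
\rho(v_t+uv_r)+A(\rho^\gamma)_r=0.
\]
This identity is the main step to check carefully: one must verify that the $\frac{m}{r}$ terms cancel, using $\mathrm{div}\,u=u_r+\frac{mu}{r}$ and the fact that $(\frac{u}{r})_r$-type terms from differentiating $\mathrm{div}\,u$ match those in $\eqref{e1.5}_2$. Multiply by $r^mv$ and integrate. This gives $\frac{d}{dt}\int r^m\frac12\rho v^2+\int r^m(\rho^\gamma)_r u+\int r^m A(\rho^\gamma)_r\varphi=0$. The middle term equals $\frac{d}{dt}\int\frac{A}{\gamma-1}r^m\rho^\gamma$ as in Step 1. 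The last term equals $2a_1\delta A\gamma\int r^m\rho^{\gamma+\delta-3}\rho_r^2\ge0$.

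\textbf{Step 3: conclusion.} Integrating Step 2 in time bounds $\int r^m(\rho v^2+\rho^\gamma)$ and the dissipation $\int_0^t\int r^m\rho^{\gamma+\delta-3}\rho_r^2$ by the initial data. The initial data are finite by \eqref{etm}: $v_0\in L^\infty$, $\rho_0$ is bounded, and $r^m\rho_0\in L^1$. Finally,
\[
\rho\varphi^2=(2a_1\delta)^2\rho^{2\delta-3}\rho_r^2=c\,|(\rho^{\delta-\frac12})_r|^2,
\]
and $\rho\varphi^2\le2\rho v^2+2\rho u^2$. Together with Step 1 this controls $\int r^m|(\rho^{\delta-\frac12})_r|^2$, completing the second estimate.
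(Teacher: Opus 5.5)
Your proposal is correct and follows the paper's proof. The steps are the same: the energy identity from testing $\eqref{e1.5}_2$ with $r^m u$, the effective-velocity equation $\rho(v_t+uv_r)+A(\rho^\gamma)_r=0$ tested with $r^m v$, and recovery of $(\rho^{\delta-\frac12})_r$ from $\rho(v-u)^2\le 2\rho(u^2+v^2)$. The differences are minor: you obtain coercivity of the dissipation via Cauchy--Schwarz on $(u_r+\frac{m}{r}u)^2$, with explicit constant $2a_1(n\delta-n+1)$, where the paper computes the discriminant of the quadratic form; the paper makes your informal boundary-term argument rigorous by checking $r^m\mathrm{D}_r\mathcal{B}_1\in L^1(I)$ and applying Lemma \ref{calculus}(ii); and a factor $A$ is missing in front of $\int r^m(\rho^\gamma)_r u$ in your Step 2.
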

\begin{proof}
We divide the proof into two steps.

\smallskip
\textbf{1.} First, multiplying $\eqref{e1.5}_2$ by $r^m u$, along with $\eqref{e1.5}_1$, yields
\begin{equation}\label{408}
\begin{aligned}
\big(\frac{1}{2} r^m\rho u^2+\frac{A}{\gamma-1}\rho^\gamma\big)_t &=-2a_1 r^m\rho^\delta \underline{\Big(\delta u_r^2-2(1-\delta)m u_r\frac{u}{r}+(m-(1-\delta) m^2)\frac{u^2}{r^2}\Big)}_{:=I_1}\\
&\quad+\Big(r^m\big(2a_1 \rho^\delta u \big(\delta u_r+(\delta-1)m\frac{u}{r}\big) -\frac{1}{2}  \rho u^3-\frac{A\gamma}{\gamma-1} u\rho^\gamma\big)\Big)_r.
\end{aligned}
\end{equation}
Letting  $(X,Y)=(u_r,\frac{u}{r})$, then $I_1$ becomes a binary form:
\begin{equation*}
I_1=\delta X^2-2(1-\delta)m XY+(m-(1-\delta)m^2) Y^2    
\end{equation*}
and its discriminant $\mathscr{D}$ satisfies
\begin{equation*} 
\mathscr{D}=4(1-\delta)^2m^2-4\delta(m-(1-\delta)m^2)<0 \qquad \text{whenever} \ \ \delta>\frac{m}{m+1}=1-\frac{1}{n}.    
\end{equation*}
Hence, there exists a constant $c^*_\delta>0$, depending only on $\delta$, such that 
\begin{equation*}
I_1\geq c^*_\delta(X^2+Y^2)=c^*_\delta\big(u_r^2+\frac{u^2}{r^2}\big),
\end{equation*}
which, along with \eqref{408}, leads to
\begin{equation}\label{eap1}
\begin{aligned}
&\,\big( \frac{1}{2} r^m\rho u^2+\frac{A}{\gamma-1}r^m\rho^\gamma\big)_t+2a_1 c^*_\delta r^m\rho^\delta \big(u_r^2+\frac{u^2}{r^2}\big) \\
&\leq\Big(r^m\underline{\big(2a_1 \rho^\delta u \big(\delta u_r+(\delta-1)m\frac{u}{r}\big) -\frac{1}{2}  \rho u^3-\frac{A\gamma}{\gamma-1} u\rho^\gamma\big)}_{:=\mathcal{B}_1}\Big)_r.
\end{aligned}
\end{equation}

Next, we prove $r^m \mathrm{D}_r \mathcal{B}_1 \in L^1(I)$ for \textit{a.e.} $t\in (0,T)$, so that Lemma \ref{calculus} can be applied to obtain
\begin{equation}\label{int-B1}
\int_0^\infty (r^m\mathcal{B}_1)_r\,\mathrm{d}r=0.
\end{equation}
Indeed, it follows from \eqref{spd}--\eqref{spd2} that, for \textit{a.e.} $t\in (0,T)$,
\begin{equation*}
r^m\rho\in L^1(I),\quad (\rho,(\rho^{\delta-1})_r,u)\in L^\infty(I),\quad r^\frac{m}{2}(u,\mathrm{D}_r u,\mathrm{D}_r^2 u,(\rho^{\gamma-1})_r)\in L^2(I).
\end{equation*}
Then we obtain from the above that
\begin{align*}
&\begin{aligned}
|r^m \mathrm{D}_r \mathcal{B}_1|_1&\leq C_0\big|(r^{m}\rho^\gamma \mathrm{D}_r u,r^mu \rho(\rho^{\gamma-1})_r)\big|_1+C_0|r^{m-1}\rho^{\delta}u\mathrm{D}_r u|_1\\
&\quad +C_0\big|\big(r^m\rho(\rho^{\delta-1})_ru\mathrm{D}_r u,r^m\rho^{\delta} u_r\mathrm{D}_r u,r^m\rho^{\delta} u\mathrm{D}_r^2 u\big)\big|_1\\
&\quad+C_0\big|\big(r^{m-1}\rho u^3,r^m\rho^{2-\delta}(\rho^{\delta-1})_ru^3,r^m\rho u^2u_r\big)\big|_1
\end{aligned}\\ 
&\qquad\qquad \ \ \begin{aligned}
&\leq C_0\big(|\mathrm{D}_r u|_\infty|r^m\rho|_1|\rho|_\infty^{\gamma-1}+|\rho|_\infty|r^\frac{m}{2}(\rho^{\gamma-1})_r|_2|r^\frac{m}{2}u|_2\big)\\
&\quad +C_0|\rho|_\infty^{\delta} \big(|r^\frac{m}{2}\mathrm{D}_r u|_2^2+|r^\frac{m}{2}u|_2 |r^\frac{m}{2}\mathrm{D}_r^2 u|_2\big)\\
&\quad +C_0|\rho|_\infty|(\rho^{\delta-1})_r|_\infty|r^\frac{m}{2}u|_2 |r^\frac{m}{2}\mathrm{D}_r u|_2\\
&\quad +C_0|r^m\rho|_1|u|_\infty^2 |\mathrm{D}_r u|_\infty+C_0|\rho|_\infty^{2-\delta}|(\rho^{\delta-1})_r|_\infty|u|_\infty|r^\frac{m}{2}u|_2^2<\infty.
\end{aligned}
\end{align*}

Thus, integrating \eqref{eap1} over $[0,t]\times I$, along with \eqref{int-B1}, gives
\begin{equation*}\label{eap2}
\begin{aligned}
&\int_0^\infty \big(\frac{1}{2} r^m\rho u^2+\frac{A}{\gamma-1}r^m\rho^\gamma\big)(t,\cdot) \,\mathrm{d}r+ 2a_1 c^*_\delta\int_0^t\int_0^\infty r^m\rho^\delta|\mathrm{D}_r u|^2\,\mathrm{d}r\,\mathrm{d}s\\
&\leq C_0\big(|(r^m\rho_0)^\frac{1}{2}u_0|_2^2+|r^\frac{m}{\gamma}\rho_0|_\gamma^\gamma\big), 
\end{aligned}
\end{equation*}
where the initial data can be handled by Lemmas \ref{ale1}, \ref{initial3}, and \ref{lemma-initial}:
\begin{equation*}
\begin{aligned}
|(r^m\rho_0)^\frac{1}{2} u_0|_2&\leq C_0|r^m\rho_0|_1^\frac{1}{2}|u_0|_\infty \leq C_0 \|\rho_0\|_{L^1}^\frac{1}{2}\|\boldsymbol{u}_0\|_{L^\infty} \leq C_0,\\
|r^\frac{m}{\gamma}\rho_0|_\gamma&\leq C_0|r^m\rho_0|_1 |\rho_0|_\infty^{\gamma-1} \leq C_0 \|\rho_0\|_{L^1} \|\rho_0\|_{L^\infty}^{\gamma-1} \leq C_0.
\end{aligned}
\end{equation*}

\smallskip
\textbf{2.} Multiplying $\eqref{e1.5}_1$ by $2a_1\delta\rho^{\delta-1}$ and applying $\partial_r$ to the resulting equality give 
\begin{align*}
2a_1\delta\rho(\rho^{\delta-2}\rho_r)_{t}+2a_1\delta\rho u(\rho^{\delta-2}\rho_r)_{r}+2a_1\delta\big(\rho^{\delta}(u_r+\frac{mu}{r})\big)_r-\frac{2a_1 m (\rho^{\delta})_ru}{r}=0,
\end{align*}
which, together with $\eqref{e1.5}_2$, leads to
\begin{equation}\label{ess}
\rho(v_t+u v_r)+A(\rho^\gamma)_r=0.
\end{equation}

Then, multiplying \eqref{ess} by $r^m v$ and 
integrating the resulting equality over $[0,t]\times I$, we derive from $\eqref{e1.5}_1$ and the similar argument in Step 1 that
\begin{equation*}
\int_0^\infty r^m(\rho v^2+\rho^\gamma)(t,\cdot)\,\mathrm{d}r+ \int_0^t\int_0^\infty r^m\rho^{\gamma+\delta-3}\rho_r^2\,\mathrm{d}r\mathrm{d}s
\leq C_0\big(|(r^m\rho_0)^\frac{1}{2}v_0 |_2^2+1\big).
\end{equation*}
For the $L^2$-boundedness of $(r^m\rho_0)^{\frac{1}{2}}v_0$, it follows from Lemmas \ref{ale1}, \ref{initial3}, and \ref{lemma-initial} that
\begin{equation*}
|(r^m\rho_0)^\frac{1}{2} v_0|_2 \leq C_0|r^m\rho_0|_1^\frac{1}{2}|(u_0,(\rho_0^{\delta-1})_r)|_\infty \leq C_0 \|\rho_0\|_{L^1}^\frac{1}{2}\|(\boldsymbol{u}_0,\nabla(\rho_0^{\delta-1}))\|_{L^\infty} \leq C_0.
\end{equation*}

Finally, the $L^2$-estimate of $r^\frac{m}{2}(\rho^{\delta-\frac{1}{2}})_r$ follows directly from the $L^2$-estimates of $(r^m\rho)^\frac{1}{2}(u,v)$ and \eqref{V-expression}. This completes the proof.
\end{proof}

Clearly, by \eqref{V-expression} and \eqref{ess}, we have
\begin{cor}\label{cor-v}
The effective velocity $v$ satisfies the following equation{\rm:}
\begin{equation}\label{eq:effective2}
v_t+uv_r+ \frac{A\gamma}{2a_1\delta} \rho^{\gamma-\delta} (v-u)=0.
\end{equation}
\end{cor}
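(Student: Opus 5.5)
The plan is to derive \eqref{eq:effective2} directly from the identity \eqref{ess}, $\rho(v_t+uv_r)+A(\rho^\gamma)_r=0$, obtained in Step 2 of the proof of Lemma \ref{energy-BD}. The only remaining work is to rewrite the pressure term in terms of $v-u$ using the definition \eqref{V-expression}, and then divide by $\rho$.

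\textbf{Step 1 (division by $\rho$).} By Lemma \ref{rth1}, the regular solution satisfies $\rho>0$ on $[0,T]\times I$. Moreover, \eqref{spd2} gives $(\rho,\rho_r,\rho_t,u,\mathrm{D}_r u)$ continuous, and $(\rho^{\delta-1})_r$ is bounded. Hence \eqref{ess} holds pointwise, or at least almost everywhere, where the time derivative $v_t$ is understood in the same sense as in the derivation of \eqref{ess}. Dividing by $\rho$ is therefore legitimate and gives
\begin{equation*}
v_t+uv_r+A\gamma\rho^{\gamma-2}\rho_r=0 .
\end{equation*}

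\textbf{Step 2 (rewriting the pressure term).} From \eqref{V-expression} we have $v-u=2a_1\delta\rho^{\delta-2}\rho_r$, that is, $\rho_r=\frac{1}{2a_1\delta}\rho^{2-\delta}(v-u)$. Substituting this expression for $\rho_r$ into the pressure term yields
\begin{equation*}
A\gamma\rho^{\gamma-2}\rho_r=\frac{A\gamma}{2a_1\delta}\rho^{\gamma-\delta}(v-u),
\end{equation*}
which is exactly \eqref{eq:effective2}.

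Since the argument is purely algebraic, there is no real obstacle. The only point requiring care is that the identity must be interpreted at the regularity level provided by Lemma \ref{rth1}. This is already the level at which \eqref{ess} was derived, so no additional justification is needed.
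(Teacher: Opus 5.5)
Your proposal is correct and is essentially the paper's own argument: the paper obtains \eqref{eq:effective2} by dividing \eqref{ess} by $\rho>0$ and substituting $\rho_r=\frac{1}{2a_1\delta}\rho^{2-\delta}(v-u)$ from \eqref{V-expression}. Your computation $A\gamma\rho^{\gamma-2}\rho_r=\frac{A\gamma}{2a_1\delta}\rho^{\gamma-\delta}(v-u)$ is exactly the step the paper calls ``clear.''
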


Next, we establish the $L^p$-estimates, $p\in [1,\infty]$, of $r^m\rho$ away from the origin.
\begin{lem}\label{far-p-infty}
For any $\omega>0$, there exists a constant $C(\omega)>0$ such that 
\begin{equation*}
\begin{aligned}
|r^m \rho(t)|_1&\leq C_0&&\quad \text{for all $t\in[0,T]$},\\
|\chi^\sharp_\omega r^m \rho(t)|_p&\leq C(\omega)&&\quad \text{for all $p\in (1,\infty]$ and $t\in[0,T]$}.
\end{aligned}
\end{equation*}
\end{lem}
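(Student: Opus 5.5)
The $L^1$ bound is just conservation of mass. I will integrate $\eqref{e1.5}_1$ multiplied by $r^m$, which gives $(r^m\rho)_t+(r^m\rho u)_r=0$, over $[0,t]\times I$. The boundary flux $r^m\rho u$ vanishes at $r=0$ because $u|_{r=0}=0$ and $\rho$ is bounded there. At infinity it vanishes, or integrates to zero, by the regularity \eqref{spd}: since $r^m\rho\in L^1$ and $\rho,u\in L^\infty$, the function $(r^m\rho u)_r$ is integrable, so Lemma \ref{calculus} applies exactly as in Step~1 of Lemma \ref{energy-BD}. Hence $|r^m\rho(t)|_1=|r^m\rho_0|_1\le C_0\|\rho_0\|_{L^1}$.

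For the exterior bound, the key step is an $L^\infty$ estimate for $\rho$ on $[\omega,\infty)$ uniform in $t$. Given that, interpolation finishes the argument: for $p\in(1,\infty)$,
\[
|\chi^\sharp_\omega r^m\rho|_p^p\le |\chi^\sharp_\omega r^{m(p-1)}\rho^{p-1}|_\infty\,|r^m\rho|_1 .
\]
So it suffices to show $\sup_{r\ge\omega} r^m\rho(t,r)\le C(\omega)$, and this also covers $p=\infty$.

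To get the pointwise bound I will use $f:=\rho^{\delta-\frac12}$ together with the BD estimate $|r^{\frac m2}f_r|_2\le C_0$ from Lemma \ref{energy-BD}. On $[\omega,\infty)$, where $r^m\ge \omega^m$, this gives $|f_r|_{L^2(\omega,\infty)}\le C_0\omega^{-m/2}$. I also need an $L^q$ bound on $f$ there. Writing $f^{q}=\rho^{q(\delta-\frac12)}$ and choosing $q=\frac{1}{\delta-1/2}$ (which is finite since $\delta>\frac12$) gives $\int_\omega^\infty f^{q}\,\mathrm{d}r=\int_\omega^\infty \rho\,\mathrm{d}r\le \omega^{-m}|r^m\rho|_1\le C(\omega)$.

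Since $\rho\to0$ at infinity, $f$ decays as well, and the one-dimensional estimate
\[
f^{s}(r)=-\int_r^\infty (f^{s})_r\,\mathrm{d}r\le s\int_\omega^\infty f^{s-1}|f_r|\,\mathrm{d}r\le s\,|f^{s-1}|_{L^2(\omega,\infty)}\,|f_r|_{L^2(\omega,\infty)}
\]
applies. I will choose $s$ with $2(s-1)\ge q$, interpolate $|f^{s-1}|_{L^2}$ between $\int f^q$ and $|f|_\infty$, and absorb the sup, ending with $|f|_{L^\infty(\omega,\infty)}\le C(\omega)$. Because $\delta-\frac12>0$, this is exactly $\rho\le C(\omega)$ on $[\omega,\infty)$.

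Alternatively, one can use $H^1(\omega,\infty)\hookrightarrow L^\infty$ applied to a suitable power of $\rho$, which is the route indicated in \eqref{01}.

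The remaining factor $r^m$ for large $r$ is the main obstacle, since a bound on $\rho$ alone does not control $r^m\rho$ there. To handle it I will apply the same decay argument to $g:=r^{\frac m2}f$ instead of $f$. Its derivative $g_r=\frac m2 r^{\frac m2-1}f+r^{\frac m2}f_r$ is in $L^2(\omega,\infty)$ because $r^{\frac m2-1}f$ can be handled by the same mass/interpolation bound. If that fails, I will fall back on the decay $\rho\le Cr^{-m}$ stated in \eqref{decay-est}, derived via the weighted BD bound $\rho^{\delta-\frac12}(r)\le \big(\int_r^\infty s^{-m}\,\mathrm{d}s\big)^{1/2}|s^{\frac m2}f_r|_2$ when $m\ge 2$. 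In that case $r^{m}\rho$ is controlled by comparing the exponents, using $\delta$ in the range \eqref{del-gam}, with $C(\omega)$ absorbing the dependence near $r=\omega$.
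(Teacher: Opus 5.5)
The $L^1$ bound, the reduction to $\sup_{r\ge\omega}r^m\rho$ via $|\chi^\sharp_\omega r^m\rho|_p^p\le|\chi^\sharp_\omega r^m\rho|_\infty^{p-1}|r^m\rho|_1$, and the bound $\rho\le C(\omega)$ on $[\omega,\infty)$ via $f=\rho^{\delta-\frac12}$ are all fine. (For the flux term, integrability of $(r^m\rho u)_r$ uses $\mathrm{D}_r u,(\rho^{\delta-1})_r\in L^\infty$ from \eqref{spd}--\eqref{spd2}, not just $\rho,u\in L^\infty$.)

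\textbf{The gap is at the step you call the main obstacle; neither resolution works.}

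\emph{The weight $r^{\frac m2}$ is wrong.} For $g=r^{\frac m2}\rho^{\delta-\frac12}$, the decay/interpolation argument needs $\int_\omega^\infty g^{q'}\,\mathrm{d}r<\infty$ for some $q'$. But $g^{q'}=(r^m\rho)^{q'(\delta-\frac12)}\,r^{mq'(1-\delta)}$ always carries a growing weight, so the mass bound gives nothing. In fact $\sup_{r\ge\omega}g<\infty$ means $\rho\lesssim r^{-m/(2\delta-1)}$. This is strictly stronger than the claim and does not follow from the mass and BD bounds alone: widely separated bumps with suitable heights and widths satisfy both bounds while $\sup g=\infty$.

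\emph{The fallback is circular or too weak.} The upper bound $Cr^{-n+1}$ in \eqref{decay-est} is deduced from this very lemma. The pointwise BD bound $\rho^{\delta-\frac12}(r)\le Cr^{-\frac{m-1}{2}}$ only gives $r^2\rho\lesssim r^{2-\frac{1}{2\delta-1}}$ for $m=2$. That grows when $\delta\in(\frac34,1)$, which lies inside the range \eqref{del-gam}. For $m=1$ it gives nothing at all.

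\textbf{How the paper closes it.} The paper needs no pointwise decay. It writes $|\chi^\sharp_\omega r^m\rho|_\infty\le|\chi^\sharp_\omega(r^m\rho)_r|_1$ (Lemma \ref{calculus}) and splits
$(r^m\rho)_r=mr^{m-1}\rho+\tfrac{1}{\delta-\frac12}\,r^m\rho^{\frac32-\delta}(\rho^{\delta-\frac12})_r$.
Applying Cauchy--Schwarz to the factorization $(r^{\frac m2}\rho^{\frac12})\,\rho^{1-\delta}\,(r^{\frac m2}(\rho^{\delta-\frac12})_r)$, together with Lemma \ref{energy-BD}, gives
$|\chi^\sharp_\omega r^m\rho|_\infty\le C_0|\chi^\sharp_\omega\rho|_\infty^{1-\delta}+C(\omega)$.
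Your bound $\rho\le C(\omega)$ on $[\omega,\infty)$ finishes this immediately. The paper instead uses $|\chi^\sharp_\omega\rho|_\infty\le\omega^{-m}|\chi^\sharp_\omega r^m\rho|_\infty$ and absorbs by Young's inequality, since $1-\delta<1$.

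\textbf{How to keep your argument.} Use the weight $r^{m(\delta-\frac12)}$ instead of $r^{\frac m2}$. Set $\tilde g=(r^m\rho)^{\delta-\frac12}$. Then $\tilde g^{\frac{2}{2\delta-1}}=r^m\rho\in L^1$, and $\tilde g_r\in L^2(\omega,\infty)$ because $r^{m(\delta-\frac12)}(\rho^{\delta-\frac12})_r=r^{m(\delta-1)}\,r^{\frac m2}(\rho^{\delta-\frac12})_r$ and $\int_\omega^\infty r^{-2}\tilde g^2\,\mathrm{d}r\le C(\omega)$ by H\"older. Running your argument with $2(s-1)=\frac{2}{2\delta-1}$ then gives $\sup_{r\ge\omega}r^m\rho\le C(\omega)$ directly.
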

\begin{proof}
First, integrating $\eqref{e1.5}_1$ over $I$, we obtain 
from  Lemma \ref{calculus} and $r^m \mathrm{D}_r(\rho u) \in L^1(I)$ for \textit{a.e.} $t\in (0,T)$ due to \eqref{spd}--\eqref{spd2} that 
\begin{equation*} 
\frac{\mathrm{d}}{\mathrm{d}t}\int_0^\infty r^m\rho \,\mathrm{d}r=-\int_0^\infty (r^m \rho u)_r \,\mathrm{d}r=r^m \rho u|_{r=0}=0.
\end{equation*}
Then integrating the above over $[0,t]$ yields 
\begin{equation}\label{L1-rho}
|r^m\rho(t)|_1=|r^m\rho_0|_1\leq C_0 \qquad \text{for any $t\in [0,T]$}.
\end{equation}

Next, let $\omega>0$. It follows from \eqref{L1-rho}, Lemmas \ref{energy-BD} and \ref{calculus}, and the H\"older inequality that, for any $t\in [0,T]$,
\begin{equation*}
\begin{aligned}
|\chi^\sharp_\omega r^m\rho|_\infty&\leq |\chi^\sharp_\omega (r^m\rho)_r|_1\leq C_0\big(|\chi_\omega^\sharp r^m\rho^{\frac{3}{2}-\delta} (\rho^{\delta-\frac{1}{2}})_r|_1 + |\chi_\omega^\sharp r^{m-1}\rho|_1\big)\\
& \leq C_0\big(|\chi^\sharp_\omega\rho|_\infty^{1-\delta}|r^{m}\rho|_{1}^{\frac{1}{2}}|r^\frac{m}{2}(\rho^{\delta-\frac{1}{2}})_r|_2 +  |\chi_\omega^\sharp r^{-1}|_\infty |r^{m}\rho|_1\big)\\
&\leq C_0 |\chi^\sharp_\omega r^{m(\delta-1)}|_\infty |\chi^\sharp_\omega r^m\rho|_\infty^{1-\delta}+C(\omega)
\leq C(\omega)\big(|\chi^\sharp_\omega r^m\rho|_\infty^{1-\delta}+1\big),
\end{aligned}
\end{equation*}
which, along with the Young inequality, gives $|\chi^\sharp_\omega r^m\rho|_\infty \leq C(\omega)$ for all $t\in [0,T]$. 

Finally, it follows from the above and \eqref{L1-rho} that, 
for any $p\in(1,\infty)$ and $t\in [0,T]$,
\begin{equation*}
|\chi^\sharp_\omega r^m\rho|_p\leq |\chi^\sharp_\omega r^m\rho|_\infty^{1-\frac{1}{p}}|r^m\rho|_1^\frac{1}{p} \leq C(\omega),
\end{equation*}
where $C(\omega)\in [1,\infty)$ is a generic  constant depending on $C_0$ and $\omega$, which may be different at each occurrence. This completes the proof.
\end{proof}

The next lemma concerns the $r$-weighted $L^p$-estimates ($p\in[1,\infty]$) for $\rho$ near the origin.
\begin{lem}\label{l4.3}
For any $t\in[0,T]$,
\begin{enumerate}
\item[$\mathrm{(i)}$] if $n=2$, there exist two positive constants $C(\nu,p)$ and $C(\nu)$ such that 
\begin{equation*}
\begin{aligned}
&|\chi_1^\flat r^{\nu}\rho(t)|_p\leq C(p,\nu) &&\quad\text{for all $\nu>-\frac{1}{p}$ and $p\in (0,\infty)$},\\
&|\chi_1^\flat r^{\nu}\rho(t)|_\infty\leq C(\nu)&&\quad\text{for all $\nu>0$};
\end{aligned}
\end{equation*}
\item[$\mathrm{(ii)}$] if $n=3$, there exists two positive constants $C(p)$ and $C_0$ such that
\begin{equation*}
\big|\chi_1^\flat r^{\frac{1}{2\delta-1}-\frac{1}{p}}\rho(t)\big|_p \leq C(p)\quad\text{for all $p\in[2\delta-1,\infty)$},\qquad\big|\chi_1^\flat r^{\frac{1}{2\delta-1}}\rho(t)\big|_\infty \leq C_0.
\end{equation*}
\end{enumerate}
\end{lem}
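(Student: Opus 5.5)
The plan is to work with $f:=\rho^{\delta-\frac12}$ on $(0,1]$ and to use only three earlier ingredients. The first is the BD entropy bound of Lemma \ref{energy-BD}, namely $|r^{\frac m2}f_r(t)|_2\le C_0$. The second is a pointwise bound at $r=1$ from Lemma \ref{far-p-infty}. The third is the Hardy inequality (Lemma \ref{hardy}). For the value at $r=1$: Lemma \ref{far-p-infty} with $\omega=\frac12$ gives $|\chi^\sharp_{1/2}r^m\rho|_\infty\le C_0$, so $\rho(t,1)\le C_0$ and hence $f(t,1)\le C_0$ for all $t\in[0,T]$, because $\delta>\frac12$.

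\textbf{Pointwise bounds.} For $r\in(0,1)$ I write $f(r)=f(1)-\int_r^1 f_s\,\mathrm{d}s$. By Cauchy--Schwarz,
\begin{equation*}
|f(r)|\le C_0+|r^{\frac m2}f_r|_2\Big(\int_r^1 s^{-m}\,\mathrm{d}s\Big)^{\frac12}.
\end{equation*}
If $n=2$ ($m=1$), this gives $f(r)\le C_0(1+|\log r|^{\frac12})$, hence
\begin{equation*}
\rho(t,r)\le C_0(1+|\log r|)^{\frac{1}{2(2\delta-1)}}.
\end{equation*}
If $n=3$ ($m=2$), it gives $f(r)\le C_0 r^{-\frac12}$, hence $\rho(t,r)\le C_0 r^{-\frac{1}{2\delta-1}}$. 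The latter is exactly the $L^\infty$ claim in (ii). In case (i), $r^\nu$ times a power of $|\log r|$ is bounded on $(0,1)$ for every $\nu>0$, which gives the $L^\infty$ claim. Moreover $\int_0^1 r^{\nu p}(1+|\log r|)^{\frac{p}{2(2\delta-1)}}\,\mathrm{d}r<\infty$ precisely when $\nu p>-1$, which gives the $L^p$ claim for every $p\in(0,\infty)$. This completes case (i).

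\textbf{The $L^p$ bound for $n=3$.} Here the pointwise bound alone is insufficient: it yields $r^{\beta-\frac1p}\rho\le Cr^{-\frac1p}$ with $\beta:=\frac{1}{2\delta-1}$, and this is only logarithmically non-integrable in $L^p(0,1)$. I therefore first prove the endpoint case $p=2\delta-1=\beta^{-1}$, for which the weight exponent $\beta-\frac1p$ vanishes. For this I apply the one-dimensional Hardy inequality with weight $r^{m}$, $m=2$, to a cut-off of $f$:
\begin{equation*}
\int_0^1 f^2\,\mathrm{d}r\le C\Big(\int_0^1 r^2 f_r^2\,\mathrm{d}r+f(1)^2\Big)\le C_0,
\end{equation*}
that is, $\int_0^1\rho^{2\delta-1}\,\mathrm{d}r\le C_0$. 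The boundary term at $r=0$ is harmless, since $r f^2\to0$ by the pointwise bound $f\le C_0r^{-\frac12}$; alternatively one can use a smooth cutoff near $r=1$ and invoke Lemma \ref{hardy} directly.

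For general $p>q:=2\delta-1$, I interpolate with the pointwise bound $\rho^{p-q}\le Cr^{-\beta(p-q)}$:
\begin{equation*}
\int_0^1 r^{\beta p-1}\rho^p\,\mathrm{d}r\le C\int_0^1 r^{\beta p-1-\beta(p-q)}\rho^q\,\mathrm{d}r=C\int_0^1\rho^{q}\,\mathrm{d}r\le C(p),
\end{equation*}
where I used $\beta q=1$. This is the claim $\big|\chi_1^\flat r^{\beta-\frac1p}\rho\big|_p\le C(p)$ for $p\in[2\delta-1,\infty)$.

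The step I expect to be the main obstacle is the endpoint Hardy estimate for $n=3$: the naive pointwise argument fails there by a logarithm. The boundary terms in the Hardy inequality (at $r=0$ and $r=1$) have to be checked carefully using the regularity in Lemma \ref{rth1} and the pointwise bound above.
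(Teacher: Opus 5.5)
Your argument is correct, but it follows a different route from the paper.

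\textbf{The paper's route.} The paper never passes through pointwise profiles. It applies the weighted Hardy inequality of Lemma \ref{hardy} directly to $f=\rho^{\delta-\frac12}$ on $(0,1)$, choosing exponents so that the right-hand side is a weighted $L^2$ norm of $(f,f_r)$:
\begin{itemize}
\item for $n=2$ it takes $\ell=\nu(\delta-\frac12)$ and $p=\infty$, so the right-hand side is $\|r^{\ell+\frac12}(f,f_r)\|_{L^2(0,1)}$, and then uses H\"older to get the $L^p$ bounds;
\item for $n=3$ it takes target exponent $\frac{2p}{2\delta-1}$, so the right-hand side is $\|r(f,f_r)\|_{L^2(0,1)}$.
\end{itemize}
The zeroth-order term $\int_0^1 r^{m}\rho^{2\delta-1}\,\mathrm{d}r$ is controlled by H\"older and the conserved mass $|r^m\rho|_1$. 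The derivative term is controlled by the BD entropy. So one black-box inequality covers every $p$ at once, and the only inputs are mass and BD entropy.

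\textbf{Your route.} You anchor $f$ at $r=1$ via the exterior bound of Lemma \ref{far-p-infty} and integrate inward with Cauchy--Schwarz. This is more elementary and yields explicit profiles: $\rho\lesssim(1+|\log r|)^{\frac{1}{2\delta-1}}$ in 2-D and $\rho\lesssim r^{-\frac{1}{2\delta-1}}$ in 3-D. The price is that for $n=3$ you must supply the endpoint bound $\int_0^1\rho^{2\delta-1}\,\mathrm{d}r\le C_0$ separately and then interpolate. You do this correctly; the identity $\beta q=1$ makes the interpolation exact.

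\textbf{Two small slips, neither affecting the conclusion.}
\begin{itemize}
\item Since $\rho=f^{\frac{2}{2\delta-1}}$, the 2-D bound should read $\rho\le C_0(1+|\log r|)^{\frac{1}{2\delta-1}}$, not with exponent $\frac{1}{2(2\delta-1)}$. This is harmless, because any power of $|\log r|$ is absorbed by $r^\nu$, or by integrability when $\nu p>-1$.
\item The pointwise bound $f\le C_0r^{-1/2}$ only gives $rf^2\le C_0^2$, not $rf^2\to0$. The endpoint Hardy step is still fine. Integrating over $[\epsilon,1]$ produces the boundary term $-\epsilon f(\epsilon)^2\le 0$, so
\begin{equation*}
\int_\epsilon^1 f^2\,\mathrm{d}r\le f(1)^2+2\Big(\int_\epsilon^1 f^2\,\mathrm{d}r\Big)^{\frac12}\Big(\int_0^1 r^2f_r^2\,\mathrm{d}r\Big)^{\frac12},
\end{equation*}
which gives $\int_\epsilon^1 f^2\,\mathrm{d}r\le 2f(1)^2+4\int_0^1 r^2f_r^2\,\mathrm{d}r$ uniformly in $\epsilon$. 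Alternatively, $\rho(t,\cdot)$ is continuous and positive at $r=0$.
\end{itemize}
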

\begin{proof}
We divide the proof into two steps.

\smallskip
\textbf{1.} By Lemmas \ref{energy-BD}--\ref{far-p-infty} and \ref{hardy} 
and the H\"older inequality, we obtain that, for all $\nu>0$,
\begin{equation*}
\begin{aligned}
|\chi_1^\flat r^\nu \rho|_\infty^{2\delta-1}&=|\chi_1^\flat r^{\nu(\delta-\frac{1}{2})} \rho^{\delta-\frac{1}{2}}|_\infty^2 \leq C(\nu)\big|\chi_1^\flat r^{\nu(\delta-\frac{1}{2})+\frac{1}{2}}(\rho^{\delta-\frac{1}{2}},(\rho^{\delta-\frac{1}{2}})_r)\big|_2^2\\
&\leq C(\nu) |\chi_1^\flat r \rho^{2\delta-1}|_1+C(\nu)| r^{\frac{1}{2}} (\rho^{\delta-\frac{1}{2}})_r|_2^2 \\
&\leq C(\nu) |\chi_1^\flat r|_1^{2-2\delta}|r \rho|_1^{2\delta-1}+C(\nu)\leq C(\nu).
\end{aligned}
\end{equation*}

Next, for all $p\in(0,\infty)$ and $\nu>-\frac{1}{p}$, we can fix $\varepsilon$ such that $0<\varepsilon<\min\{p\nu +1,1\}$, and then obtain
\begin{equation*}
|\chi_1^\flat r^\nu \rho|_p^p=\int_0^\infty  r^{p\nu} \rho^p\,\mathrm{d}r \leq \Big(\int_0^\infty  r^{p\nu-\varepsilon} \,\mathrm{d}r\Big) |\chi_1^\flat r^\frac{\varepsilon}{p}\rho|_\infty^p\leq C(p,\nu).
\end{equation*}
This completes the proof of (i).

\smallskip
\textbf{2.} For any $p\in[2\delta-1,\infty)$, it follows from Lemmas \ref{energy-BD}--\ref{far-p-infty} and \ref{hardy} and the H\"older inequality that  
\begin{equation*}
\begin{aligned}
|\chi_1^\flat r^{\frac{1}{2\delta-1}-\frac{1}{p}}\rho|_p&=\big|\chi_1^\flat r^{\frac{p-2\delta+1}{2p}}\rho^{\delta-\frac{1}{2}}\big|_{\frac{2p}{2\delta-1}}^\frac{2}{2\delta-1}\leq C(p)\big|\chi_1^\flat r (\rho^{\delta-\frac{1}{2}},(\rho^{\delta-\frac{1}{2}})_r)\big|_2^\frac{2}{2\delta-1}\\
&\leq C(p)\big(|\chi_1^\flat r^2|_1^{\frac{2-2\delta}{2\delta-1}} |r^2 \rho|_1+\big|\chi_1^\flat r(\rho^{\delta-\frac{1}{2}})_r\big|_2^\frac{2}{2\delta-1}\big)\leq C(p).
\end{aligned}
\end{equation*}
Similarly, we can also obtain
\begin{equation*}
|\chi_1^\flat r^{\frac{1}{2\delta-1}}\rho|_\infty=\big|\chi_1^\flat r^\frac{1}{2}\rho^{\delta-\frac{1}{2}}\big|_{\infty}^\frac{2}{2\delta-1}\leq C_0\big|\chi_1^\flat r (\rho^{\delta-\frac{1}{2}},(\rho^{\delta-\frac{1}{2}})_r)\big|_2^\frac{2}{2\delta-1} \leq C_0.
\end{equation*}
This completes the proof of (ii). 
\end{proof}

\subsection{$L^p(I)$-Energy Estimates for $(u,v)$}

We aim to establish the following $L^{p}(I)$-estimates for $(r^m\rho)^{\frac{1}{p}}(u,v)$:
\begin{lem}\label{lemma-uv-lp}
Let $\tilde{p}_{m}(\delta)>2$ be a special parameter defined by
\begin{equation*}
\tilde{p}_m(\delta):=\frac{2\delta(m\delta-(m-1))+2\sqrt{\delta(m\delta-(m-1))((m+1)\delta-m)}}{m(1-\delta)^2}.
\end{equation*}
Then, for any $p\in [2,\tilde{p}_{m}(\delta))$, there exists a constant $C(p,T)$ such that, for all $t\in[0,T]$,
\begin{equation*}
\big|(r^m\rho)^\frac{1}{p}(u,v)(t)\big|_{p}^{p}+ \int_0^t\big(\big|(r^m\rho^\delta)^\frac{1}{2}|u|^\frac{p-2}{2}\mathrm{D}_r u\big|_{2}^{2}+\big|(r^m\rho^{\gamma-\delta+1})^\frac{1}{p} v\big|_{p}^{p}\big)\,\mathrm{d}s\leq C(p,T).
\end{equation*}
\end{lem}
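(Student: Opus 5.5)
The plan is to derive two coupled $L^p$-energy identities: one for $u$, by testing the momentum equation $\eqref{e1.5}_2$ with $r^m|u|^{p-2}u$, and one for $v$, by testing \eqref{eq:effective2} with $r^m\rho|v|^{p-2}v$. I would then close them together by a Gr\"onwall argument. The boundary terms at $r=0$ and $r=\infty$ are removed exactly as in Step 1 of the proof of Lemma \ref{energy-BD}, using \eqref{spd}--\eqref{spd2} and Lemma \ref{calculus}.

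\textbf{Step 1 (the $u$-identity).} Using $\eqref{e1.5}_1$, the time part becomes $\frac1p\frac{\mathrm d}{\mathrm dt}\int r^m\rho|u|^p$. For the viscous terms, I integrate $2a_1\delta(\rho^\delta(u_r+\frac{m u}{r}))_r$ by parts. In the term $-2a_1m(\rho^\delta)_r\frac ur$, I move the $r$-derivative off $\rho^\delta$ onto $r^{m-1}|u|^p$. The dissipation then reads
\begin{equation*}
2a_1\int r^m\rho^\delta|u|^{p-2}\Big(\delta(p-1)X^2-c_1(p,m,\delta)XY+c_2(p,m,\delta)Y^2\Big)\mathrm dr,\qquad (X,Y)=(u_r,\tfrac ur).
\end{equation*}
Here $c_1$ and $c_2$ are explicit, generalizing $I_1$ in Lemma \ref{energy-BD}. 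I would compute the discriminant of this binary form. Negativity of the discriminant should hold precisely for $p<\tilde p_m(\delta)$, where $\tilde p_m(\delta)$ is the larger root of the resulting quadratic in $p$; at $p=2$ it reduces to the condition $\delta>1-\frac1n$. This gives coercivity $\geq c\,r^m\rho^\delta|u|^{p-2}|\mathrm D_ru|^2$. For the pressure term I use \eqref{V-expression}, which gives
\begin{equation*}
(\rho^\gamma)_r=\frac{\gamma}{2a_1\delta}\rho^{\gamma-\delta+1}(v-u).
\end{equation*}
The contribution $-\int r^m\rho^{\gamma-\delta+1}|u|^p$ is then a good sign, and the cross term is bounded by $\frac12\int r^m\rho^{\gamma-\delta+1}(|u|^p+|v|^p)$ via Young.

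\textbf{Step 2 (the $v$-identity).} Multiplying \eqref{eq:effective2} by $r^m\rho|v|^{p-2}v$ and using $\eqref{e1.5}_1$ gives
\begin{equation*}
\frac1p\frac{\mathrm d}{\mathrm dt}\int r^m\rho|v|^p+c\int r^m\rho^{\gamma-\delta+1}|v|^p=c\int r^m\rho^{\gamma-\delta+1}u|v|^{p-2}v .
\end{equation*}
By Young, the right-hand side is at most $\frac c2\int r^m\rho^{\gamma-\delta+1}|v|^p+C\int r^m\rho^{\gamma-\delta+1}|u|^p$. Adding the two identities, with the $v$-identity multiplied by a small constant, everything reduces to controlling $\int r^m\rho^{\gamma-\delta+1}|u|^p$.

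\textbf{Step 3 (the main obstacle).} The difficulty is controlling $\int r^m\rho^{\gamma-\delta+1}|u|^p$, since $\rho$ is not yet known to be bounded near the origin. Set $w=|u|^{p/2}$, so that the dissipation controls $\int r^m\rho^\delta(w_r^2+w^2/r^2)$. On $[0,\eta)$, write
\begin{equation*}
r^m\rho^{\gamma-\delta+1}w^2=\big(r^2\rho^{\gamma-2\delta+1}\big)\,r^m\rho^\delta\frac{w^2}{r^2}.
\end{equation*}
For $n=3$, Lemma \ref{l4.3}(ii) gives $\rho\le C r^{-\frac1{2\delta-1}}$. When $\gamma-2\delta+1>0$, the strict inequality $\gamma<6\delta-3$ then gives $r^2\rho^{\gamma-2\delta+1}\le Cr^{\epsilon}\le C\eta^{\epsilon}$; otherwise I use the upper bound on $\rho$ trivially. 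For $n=2$, Lemma \ref{l4.3}(i) gives $\rho\le C(\nu)r^{-\nu}$ for every $\nu>0$, so no restriction on $\gamma$ arises. Choosing $\eta$ small absorbs this piece into the dissipation. On $[\eta,\infty)$, Lemma \ref{far-p-infty} bounds $\rho$ by $C(\eta)$, so this piece is at most $C(\eta)\int r^m\rho|u|^p$. Gr\"onwall then yields the claimed bound for $p\in[2,\tilde p_m(\delta))$. The initial quantities $|(r^m\rho_0)^{1/p}(u_0,v_0)|_p$ are bounded by $|r^m\rho_0|_1$ and the $L^\infty$ bounds on $u_0$ and $(\rho_0^{\delta-1})_r$ from Remark \ref{rk-nabla}.

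I expect the two delicate points to be the exact algebra of the quadratic form that produces $\tilde p_m(\delta)$, and the sign and absorption in the near-origin pressure term, where the constraint $\gamma<6\delta-3$ enters.
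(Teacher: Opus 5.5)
Your argument is correct in substance, but for the key pressure term it takes a different and shorter route than the paper. The paper (Lemma \ref{lem-u-lp}) integrates the pressure term by parts, obtaining $I_3=A\int r^m\rho^\gamma|u|^{p-2}\big((p-1)u_r+\frac{m}{r}u\big)\,\mathrm{d}r$. After Cauchy--Schwarz and H\"older, this leaves the pure density integral $I_{3,1}=\int_0^1 r^{p+m-2}\rho^{p\gamma-p\delta+\delta}\,\mathrm{d}r$, multiplied by a power of $\int r^{m-2}\rho^\delta|u|^p$. For $n=3$, Lemma \ref{l4.3}(ii) alone bounds $I_{3,1}$ only when $\gamma\le 3\delta-1-\frac{1-\delta}{p}$. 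The paper therefore runs the iteration in $(\mathfrak{a}_j,\mathfrak{b}_j)$: at each step it integrates by parts and trades $\rho_r$ for $\frac{1}{2a_1\delta}\rho^{2-\delta}(v-u)$, until \eqref{635} holds. This is how $\gamma<6\delta-3$ enters, at the cost of $\varepsilon$-terms $|\chi_1^\flat(r^2\rho^{\gamma-\delta+1})^{1/p}(u,v)|_p^p$.

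You make the substitution $(\rho^\gamma)_r=\frac{\gamma}{2a_1\delta}\rho^{\gamma-\delta+1}(v-u)$ at the outset. The pressure then contributes only $\int r^m\rho^{\gamma-\delta+1}(|u|^p+|v||u|^{p-1})$. A single factorization $r^m\rho^{\gamma-\delta+1}|u|^p=(r^2\rho^{\gamma-2\delta+1})\,r^{m-2}\rho^\delta|u|^p$, with $r^2\rho^{\gamma-2\delta+1}\le C_0r^{(6\delta-3-\gamma)/(2\delta-1)}$ on $(0,1)$, absorbs the near-origin part into the Hardy part of the dissipation. The paper uses the same weight in Lemma \ref{lem-v-lp} and in \eqref{g1,1-2}, where it only needs to be bounded; that already forces $\gamma\le 6\delta-3$. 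So the iteration gains nothing in the admissible range of $(\gamma,\delta,p)$. Your version reaches the same threshold in one step, and the final Gr\"onwall inequality has no constant source term. The paper itself uses your substitution away from the origin, for $G^*_{1,3}$ in the case $\bar\rho>0$.

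Three bookkeeping points need fixing.

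\textbf{The sign.} You have
$-A\int r^m(\rho^\gamma)_r|u|^{p-2}u\,\mathrm{d}r=\frac{A\gamma}{2a_1\delta}\int r^m\rho^{\gamma-\delta+1}|u|^p\,\mathrm{d}r-\frac{A\gamma}{2a_1\delta}\int r^m\rho^{\gamma-\delta+1}v|u|^{p-2}u\,\mathrm{d}r$.
So the diagonal term is a source, not a damping; the damping sits on $v$ in \eqref{eq:effective2}. This does not break the proof, but only because your Step 3 controls $\int r^m\rho^{\gamma-\delta+1}|u|^p$ with an arbitrary constant.

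\textbf{The Young weights.} With $\frac12(|u|^p+|v|^p)$ in Young and a \emph{small} multiplier on the $v$-identity, the $|v|^p$ piece of size $\frac{A\gamma}{4a_1\delta}$ cannot be absorbed. Either put a small weight on $|v|^p$ in that Young inequality, or take the multiplier of the $v$-identity larger than $1$. Both are legitimate because $\eta$ in Step 3 is chosen after all these constants are fixed.

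\textbf{The vacuous case.} The case $\gamma-2\delta+1\le 0$ cannot occur, since $\gamma>1>2\delta-1$. This matters: your fallback to ``the upper bound on $\rho$'' would be circular, because Lemma \ref{important2} is deduced from this very lemma.
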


The proof of Lemma \ref{lemma-uv-lp} is divided into the following several lemmas. First, we prove that $\tilde{p}_m(\delta)>n$ if $\delta$ satisfies conditions \eqref{del-gam}.
\begin{lem}\label{Lemma-pp}
Let $\tilde{p}_m(\delta)$ be defined as in {\rm Lemma \ref{lemma-uv-lp}}. Then $\tilde{p}_m(\delta)$ is the larger root of the following quadratic equation with respect to $p${\rm:}
\begin{equation*}
m^2(\delta-1)^2p^2-4\delta(\delta m^2-m^2+m)p+4\delta(\delta m^2-m^2+m)=0.
\end{equation*}
In particular, the following properties hold{\rm:}
\begin{equation*}
\begin{aligned}
2<\tilde{p}_1(\delta)&=\frac{2\delta^2+2\delta\sqrt{2\delta-1}}{(\delta-1)^2}\quad&&\text{if $m=1$ and $\delta\in (\frac{1}{2},1)$};\\
3<\tilde{p}_2(\delta)&=\frac{\delta(2\delta-1)+\sqrt{\delta(2\delta-1)(3\delta-2)}}{(\delta-1)^2}\quad&&\text{if $m=2$ and $\delta\in (7-2\sqrt{10},1)$}.
\end{aligned}
\end{equation*}
\end{lem}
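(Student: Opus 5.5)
This is an elementary algebraic verification, which I would carry out in three steps.

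\textbf{Step 1: $\tilde p_m(\delta)$ is the larger root.} Set $\beta:=\delta m^2-m^2+m=m(m\delta-(m-1))$. The quadratic reads
\begin{equation*}
m^2(1-\delta)^2p^2-4\delta\beta p+4\delta\beta=0.
\end{equation*}
Its quarter-discriminant is
\begin{equation*}
4\delta^2\beta^2-4m^2(1-\delta)^2\delta\beta=4\delta\beta\big(\delta\beta-m^2(1-\delta)^2\big).
\end{equation*}
A direct expansion gives
\begin{equation*}
\delta\beta-m^2(1-\delta)^2=m\big((m+1)\delta-m\big)-m^2(1-\delta)^2\cdot 0+\cdots;
\end{equation*}
more precisely, $\delta\beta-m^2(1-\delta)^2=m\big(\delta(m\delta-m+1)-m(1-\delta)^2\big)$. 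Expanding the bracket, $m\delta^2-m\delta+\delta-m+2m\delta-m\delta^2=(m+1)\delta-m$. Hence the discriminant equals $4\delta m^2(m\delta-(m-1))((m+1)\delta-m)$.

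This is nonnegative in the regime $\delta>1-\frac1n$, because $m\delta-(m-1)>(m+1)\delta-m>0$ there. The quadratic formula then gives the larger root
\begin{equation*}
\frac{2\delta\beta+2m\sqrt{\delta(m\delta-m+1)((m+1)\delta-m)}}{m^2(1-\delta)^2}.
\end{equation*}
After cancelling one factor of $m$, this is exactly $\tilde p_m(\delta)$.

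\textbf{Step 2: the explicit formulas.} Substituting $m=1$ gives $\beta=1$, so $\tilde p_1=\frac{2\delta^2+2\delta\sqrt{2\delta-1}}{(1-\delta)^2}$. Substituting $m=2$ gives $\beta=2(2\delta-1)$. In that case the numerator is $2\delta(2\delta-1)+2\sqrt{\delta(2\delta-1)(3\delta-2)}$ and the denominator is $2(1-\delta)^2$, which yields the stated $\tilde p_2$.

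\textbf{Step 3: the lower bounds.} Write $f(p)$ for the quadratic. Its leading coefficient is positive, so $p$ exceeds the larger root if and only if $f(p)>0$ and $p$ lies to the right of the vertex. Equivalently, $q<\tilde p_m$ holds if and only if $f(q)<0$, or $f(q)\ge0$ with $q$ below the vertex $2\delta\beta/(m^2(1-\delta)^2)$. It therefore suffices to show $f(q)<0$ for $q=m+1$.

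For $m=1$ and $q=2$, we have $f(2)=4(1-\delta)^2-8\delta+4\delta=4(\delta^2-3\delta+1)$. This is negative for $\delta\in(\frac{3-\sqrt5}{2},1)$, which contains $(\frac12,1)$ since $\frac{3-\sqrt5}{2}\approx0.38$. Therefore $\tilde p_1>2$.

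For $m=2$ and $q=3$, we have $\beta=2(2\delta-1)$, and
\begin{equation*}
f(3)=36(1-\delta)^2-24\delta(2\delta-1)=-12\delta^2-48\delta+36=-12(\delta^2+4\delta-3).
\end{equation*}
This is negative exactly when $\delta>-2+\sqrt7\approx0.646$.

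\textbf{Main obstacle.} The real difficulty is matching this threshold to the stated $7-2\sqrt{10}\approx0.675$. Since $0.675>0.646$, the stated range lies inside the region where $f(3)<0$, so $\tilde p_2>3$ follows regardless. The threshold $7-2\sqrt{10}$ presumably comes from a different constraint used later in the paper; I would simply note that it suffices here. I would double-check the arithmetic in this step, since it is the only delicate point.
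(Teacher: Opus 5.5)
Your Step 1 is correct, and the strategy of Step 3 is sound: showing $f(m+1)<0$ places $m+1$ strictly between the two real roots. That route is cleaner than the paper's, which squares $\tilde p_m>q$ in the explicit formula and splits into cases on the sign of $\delta^2-5\delta+3$. However, the arithmetic in Step 3 is wrong in both cases, so the lower bounds are not established by what you wrote.

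For $m=1$ one has $\beta=\delta m^2-m^2+m=\delta$, not $1$. With $\beta=1$ your own Step 2 numerator would be $2\delta+\cdots$, not $2\delta^2+\cdots$. The correct evaluation is
\[
f(2)=4(1-\delta)^2-8\delta^2+4\delta^2=4(1-2\delta),
\]
which is negative exactly when $\delta>\frac12$. For $m=2$ you dropped the constant term $4\delta\beta=8\delta(2\delta-1)$. The correct value is
\[
f(3)=36(1-\delta)^2-24\delta(2\delta-1)+8\delta(2\delta-1)=4(\delta^2-14\delta+9),
\]
whose roots are $7\pm2\sqrt{10}$. Hence, for $\delta<1$, $f(3)<0$ precisely when $\delta>7-2\sqrt{10}$.

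Your ``main obstacle'' paragraph therefore reaches a false conclusion. The threshold $7-2\sqrt{10}$ does not come from a later constraint; it is exactly the zero of $f(3)$, and it is sharp. For $\delta\in(\frac23,7-2\sqrt{10})$ one has $f(3)>0$, while the vertex $\delta(2\delta-1)/(1-\delta)^2$ lies in $[2,2.25]$, below $3$. So $3$ exceeds the larger root and $\tilde p_2(\delta)<3$; for instance $\tilde p_2(0.67)\approx2.53$. Your implied claim that $\tilde p_2>3$ for every $\delta>\sqrt7-2$ is false, and a verification whose key identity is wrong does not prove the bound, even though the stated conclusion happens to hold.

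With the two corrected evaluations, your argument goes through and gives a short, case-free proof. It lands on the same polynomial as the paper: the paper's squared inequality $\delta(2\delta-1)(3\delta-2)>(\delta^2-5\delta+3)^2$ is equivalent to $(1-\delta)^2(\delta^2-14\delta+9)<0$.
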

\begin{proof}
The first assertion follows directly from the formula for the roots of quadratic equations.

Next, if $m=1$, notice that
\begin{equation*}
\tilde p_1(\delta)>2\iff   \delta\sqrt{2\delta-1}+(2\delta-1)>0,
\end{equation*}
which automatically holds for $\delta\in (\frac{1}{2},1)$. 

Finally, if $m=2$, then
\begin{equation}\label{ifff}
\tilde p_2(\delta)>3\iff  \sqrt{\delta(2\delta-1)(3\delta-2)}>\delta^2-5\delta+3.  
\end{equation}
If $\delta^2-5\delta+3\leq 0$, we have
\begin{equation*}
\frac{5-\sqrt{13}}{2}\leq \delta<1,
\end{equation*}
so that the right-hand side of \eqref{ifff} automatically holds; 
while, if $\delta^2-5\delta+3>0$, 
\begin{equation*}
\eqref{ifff}\iff \delta(2\delta-1)(3\delta-2)>(\delta^2-5\delta+3)^2\impliedby 7-2\sqrt{10}<\delta<\frac{5-\sqrt{13}}{2}.
\end{equation*}
Hence, based on the above, we see that 
$\tilde p_2(\delta)>3$ for any $\delta\in (7-2\sqrt{10},1)$.
\end{proof}

Next, we concern the $L^p(I)$-estimate for $(r^m\rho)^{\frac{1}{p}} u$.

\begin{lem} \label{lem-u-lp}
Let $\tilde{p}_{m}(\delta)$ be defined as in {\rm Lemma \ref{lemma-uv-lp}}. Then, for any $p\in [2,\tilde{p}_{m}(\delta))$ and $\varepsilon\in (0,1)$, there exist $c_p>0$, depending only on $(n,\delta,p)$, and $C(p,\varepsilon)>0$ such that 
\begin{equation}\label{dt-u-p}
\begin{aligned}
&\,\frac{1}{p}\frac{\mathrm{d}}{\mathrm{d}t}\big|(r^m\rho)^{\frac{1}{p}} u\big|_{p}^{p} + a_1 c_p\big|(r^m\rho^\delta)^\frac{1}{2} |u|^\frac{p-2}{2}\mathrm{D}_r u\big|_2^2\\
&\leq C(p, \varepsilon)\big(1+\big|(r^m\rho)^\frac{1}{p}u\big|_{p}^{p}\big) + \varepsilon \big|(r^m\rho^{\gamma-\delta+1})^\frac{1}{p}v\big|_{p}^{p}.
\end{aligned}
\end{equation}
\end{lem}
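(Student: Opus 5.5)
The plan is to test the momentum equation $\eqref{e1.5}_2$ against $r^m|u|^{p-2}u$ and integrate over $I$. Combined with the continuity equation $\eqref{e1.5}_1$, the time-derivative and convection terms give $\frac1p\frac{\mathrm d}{\mathrm dt}|(r^m\rho)^{\frac1p}u|_p^p$. The boundary terms vanish by Lemma \ref{calculus}, using the regularity \eqref{spd}--\eqref{spd2} exactly as in Step 1 of Lemma \ref{energy-BD}.

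\textbf{Viscous term.} After integration by parts, the viscous terms produce
\[
2a_1 r^m\rho^\delta|u|^{p-2}\,\mathcal Q_p\big(u_r,\tfrac ur\big),
\]
where $\mathcal Q_p(X,Y)$ is a binary quadratic form. It generalizes $I_1$ in Lemma \ref{energy-BD}: the extra factor $(p-1)$ comes from differentiating $|u|^{p-2}u$, and it modifies the coefficients of $X^2$ and $XY$. I would compute its discriminant and show that it is negative precisely when
\[
m^2(\delta-1)^2p^2-4\delta(\delta m^2-m^2+m)p+4\delta(\delta m^2-m^2+m)<0 .
\]
By Lemma \ref{Lemma-pp}, this holds for $p\in[2,\tilde p_m(\delta))$. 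This yields $\mathcal Q_p\ge c_p(u_r^2+u^2/r^2)$, which is the dissipative term with $a_1c_p$ (keeping a factor $2$ in reserve for absorption later). Getting the algebra of this form exactly right is routine, but it is where the threshold $\tilde p_m$ comes from.

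\textbf{Pressure term.} I would not integrate the pressure by parts. Instead, I would use \eqref{V-expression} to write
\[
A(\rho^\gamma)_r=\frac{A\gamma}{2a_1\delta}\,\rho^{\gamma-\delta+1}(v-u).
\]
The pressure contribution is then bounded by
\[
C\int r^m\rho^{\gamma-\delta+1}|v||u|^{p-1}\,\mathrm dr+C\int r^m\rho^{\gamma-\delta+1}|u|^p\,\mathrm dr .
\]
Young's inequality turns the first integral into $\varepsilon|(r^m\rho^{\gamma-\delta+1})^{\frac1p}v|_p^p+C(\varepsilon)\int r^m\rho^{\gamma-\delta+1}|u|^p$. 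Everything therefore reduces to bounding $\int r^m\rho^{\gamma-\delta+1}|u|^p$, which is the main obstacle, since no upper bound for $\rho$ is available yet.

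\textbf{Main step.} I would split this integral with $\chi^\flat_\omega,\chi^\sharp_\omega$.

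On $r\ge\omega$, Lemma \ref{far-p-infty} gives $\rho\le C(\omega)$, so this part is at most $C(\omega)|(r^m\rho)^{\frac1p}u|_p^p$.

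On $r<\omega$, I would write
\[
r^m\rho^{\gamma-\delta+1}|u|^p=\Big(r^m\rho^\delta|u|^{p-2}\frac{u^2}{r^2}\Big)\,r^2\rho^{\gamma+1-2\delta}.
\]
The first factor is controlled by the dissipation, so it suffices to show that $r^2\rho^{\gamma+1-2\delta}\le Cr^{\kappa}$ on $(0,1)$ for some $\kappa>0$.

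For $n=3$, Lemma \ref{l4.3}(ii) gives $\rho\le Cr^{-\frac1{2\delta-1}}$, hence $r^2\rho^{\gamma+1-2\delta}\le Cr^{2-\frac{\gamma+1-2\delta}{2\delta-1}}$. The exponent is positive exactly when $\gamma<6\delta-3$, which explains that restriction in \eqref{del-gam}. (If $\gamma+1-2\delta\le0$, the lower bound on $\rho$ is not needed, since $\rho^{\gamma+1-2\delta}$ is then handled directly.)

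For $n=2$, Lemma \ref{l4.3}(i) gives $\rho\le C(\nu)r^{-\nu}$ for every small $\nu>0$, so $\kappa>0$ is available for every $\gamma>1$.

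Choosing $\omega$ so small that $C\omega^\kappa\le\frac{a_1c_p}{2}$ absorbs the near-origin part into the dissipation. Fixing this $\omega$ makes the far part $C(p,\varepsilon)|(r^m\rho)^{\frac1p}u|_p^p$, and this gives \eqref{dt-u-p}.
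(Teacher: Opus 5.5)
Your proof is correct, and it handles the pressure term by a genuinely different and shorter route than the paper.

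\textbf{The paper's route.} The paper integrates the pressure by parts, obtaining $I_3=A\int_0^\infty r^m\rho^\gamma|u|^{p-2}\big((p-1)u_r+\frac{m}{r}u\big)\,\mathrm{d}r$. It pairs $I_3$ with the dissipation by Cauchy--Schwarz and is left near the origin with $I_{3,1}=\int_0^1 r^{p+m-2}\rho^{p\gamma-p\delta+\delta}\,\mathrm{d}r$. For $n=2$, Lemma~\ref{l4.3}(i) bounds $I_{3,1}$ directly, and no $v$-term appears at all. For $n=3$, the weighted bounds of Lemma~\ref{l4.3}(ii) only control $I_{3,1}$ when $\gamma\le 3\delta-1+\frac{\delta-1}{p}$, which is strictly below $6\delta-3$. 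So the paper runs an iterated integration by parts along the sequences $(\mathfrak a_j,\mathfrak b_j)$; each step trades a power of $\rho$ for $(v-u)$ via \eqref{V-expression}, until the weights enter the admissible range. The leftover $u$-term is finally absorbed using $r^2\rho^{\gamma-2\delta+1}\lesssim 1$ near the origin, as in \eqref{g1,1-2}.

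\textbf{Your route.} You apply $A(\rho^\gamma)_r=\frac{A\gamma}{2a_1\delta}\rho^{\gamma-\delta+1}(v-u)$ once, before any integration by parts. The pressure then produces only $\int r^m\rho^{\gamma-\delta+1}|v||u|^{p-1}$ and $\int r^m\rho^{\gamma-\delta+1}|u|^p$. A single pointwise bound closes the argument in both dimensions: $r^2\rho^{\gamma+1-2\delta}\le Cr^{\kappa}$ with $\kappa=\frac{6\delta-3-\gamma}{2\delta-1}>0$ for $n=3$, and with $\kappa>0$ small for $n=2$. This is the same device the paper itself uses in Lemma~\ref{lem-v-lp}.

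\textbf{Comparison.} Your argument treats $n=2,3$ uniformly, avoids the iteration entirely, and shows transparently that $\gamma<6\delta-3$ enters only through $\kappa>0$. The paper's route buys only the absence of the $v$-term when $n=2$. That is immaterial, since the statement allows the $\varepsilon$-term and Lemma~\ref{lem-v-lp} is needed anyway for $n=3$.

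\textbf{Minor points.}
\begin{itemize}
\item Your $\omega$ must depend on $\varepsilon$, because the coefficient of $\int r^m\rho^{\gamma-\delta+1}|u|^p$ is $C(\varepsilon)$. This is harmless, since the far-field constant may be $C(p,\varepsilon)$.
\item Negativity of the discriminant on $[2,\tilde p_m(\delta))$ needs, besides Lemma~\ref{Lemma-pp}, the facts that $\mathscr{D}(2;\delta)<0$ and that the quadratic in $p$ opens upward.
\item Your parenthetical about $\gamma+1-2\delta\le 0$ is vacuous, since $\gamma>1>\delta$. It is also misstated: a negative exponent would require a lower bound on $\rho$, which is not available.
\end{itemize}
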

\begin{proof}
We divide the proof into four steps.

\smallskip
\textbf{1.} Let $\tilde{p}_{m}(\delta)$ be defined as in Lemma \ref{lemma-uv-lp}. Multiplying $\eqref{e1.5}_2$ by $r^m|u|^{p-2}u$ with $p\in [2,\tilde{p}_{m}(\delta))$ gives
\begin{equation}\label{4.17}
\begin{aligned}
&\,\frac{1}{p}(r^m\rho |u|^p)_t+2a_1 r^m \rho^\delta |u|^{p-2}\Big(\underline{\delta(p-1)u_r^2-mp(1-\delta)u_r\frac{u}{r}+(\delta m^2-m^2+m)\frac{u^2}{r^2}}_{:=I_2}\Big)\\
&=\Big(r^m\big(\underline{2a_1 \rho^\delta|u|^{p-2}u\big(\delta u_r+(\delta-1)m\frac{u}{r}\big)-A \rho^\gamma |u|^{p-2}u -\frac{1}{p} \rho u|u|^p\big)}_{\mathcal{B}_2}\Big)_r\\
&\quad +Ar^m\rho^\gamma |u|^{p-2}\big((p-1)u_r+\frac{m}{r}u\big).
\end{aligned}
\end{equation}

\smallskip
\textbf{2.} We show that, for any $p\in [2,\tilde{p}_{m}(\delta))$, there exists a constant $c_p>0$ depending only on $(p,\delta,n)$ such that
\begin{equation}\label{mianaim}
I_2\geq c_p \big(u_r^2+ \frac{u^2}{r^2}\big).
\end{equation}

First, note that $I_2$ can be written in the following simple form:
\begin{equation*} 
I_2=\delta(p-1)X^2-mp(1-\delta)XY+(\delta m^2-m^2+m)Y^2,
\end{equation*}
where $(X,Y)=(u_r,\frac{u}{r})$, and its discriminant $\mathscr{D}(p;\delta)$ is  
\begin{equation*}
\mathscr{D}(p;\delta)=m^2(1-\delta)^2p^2-4\delta(\delta m^2-m^2+m)p+4\delta(\delta m^2-m^2+m),
\end{equation*}
which takes the same form as the quadratic equation given in Lemma \ref{Lemma-pp}. 

Consequently, by Lemma \ref{Lemma-pp}, $\mathscr{D}(\tilde{p}_m(\delta);\delta)=0$, and $\tilde{p}_m(\delta)>n$ is the larger root of the equation $\mathscr{D}(p;\delta)=0$. Moreover, since $\mathscr{D}(2;\delta)<0$ (see the proof of Lemma \ref{energy-BD}),  
we obtain that $\mathscr{D}(p;\delta)<0$ for any $p\in [2,\tilde{p}_m(\delta))$, which implies claim \eqref{mianaim}.

\smallskip
\textbf{3.} We prove $r^m\mathrm{D}_r\mathcal{B}_2 \in L^1(I)$ for \textit{a.e.} $t\in (0,T)$. Then Lemma \ref{calculus} leads to 
\begin{equation}\label{int-B2}
\int_0^\infty (r^m\mathcal{B}_2)_r\,\mathrm{d}r =0.
\end{equation}
Indeed, since $|\mathrm{D}_r\mathcal{B}_2|\leq C(p)|u|^{p-2}|\mathrm{D}_r\mathcal{B}_1|$ and $u\in C(\bar I)$ for $t\in (0,T]$ due to \eqref{spd2}, we can obtain claim \eqref{int-B2} by following an argument similar to the proof of \eqref{int-B1}.

\smallskip
\textbf{4.} Integrating \eqref{4.17} over $I$, together with \eqref{mianaim}--\eqref{int-B2}, yields 
\begin{equation}\label{add}
\begin{aligned}
&\,\frac{1}{p} \frac{\mathrm{d}}{\mathrm{d}t}\big|(r^m\rho)^{\frac{1}{p}} u\big|_{p}^{p} +2a_1 c_p\int_0^\infty r^m \rho^\delta |u|^{p-2}|\mathrm{D}_r u|^2\,\mathrm{d}r\\
&\leq A\int_0^\infty r^m\rho^\gamma |u|^{p-2}\big((p-1)u_r+\frac{m}{r}u\big)\,\mathrm{d}r:=I_3.
\end{aligned}
\end{equation}

To estimate $I_3$, we see from Lemmas \ref{far-p-infty}--\ref{l4.3} and the H\"older and Young inequalities that
\begin{equation}\label{J1}
\begin{aligned}
I_3&\leq C(p)\big|(r^m\rho^\delta)^{\frac12}|u|^{\frac{p-2}{2}}\mathrm{D}_r u\big|_2\big|r^{\frac{m}{2}}\rho^{\gamma-\frac{\delta}{2}}|u|^{\frac{p-2}{2}}\big|_2\\
&\leq \frac{a_1 c_p}{32}\big|(r^m\rho^\delta)^{\frac12}|u|^{\frac{p-2}{2}}\mathrm{D}_r u\big|_2^2+C(p)\big(\big|\chi_1^\flat r^{\frac{m}{2}}\rho^{\gamma-\frac{\delta}{2}}|u|^{\frac{p-2}{2}}\big|_2^2+ \big|\chi_1^\sharp r^{\frac{m}{2}}\rho^{\gamma-\frac{\delta}{2}}|u|^{\frac{p-2}{2}}\big|_2^2\big)\\
&\leq  \frac{a_1 c_p}{32}\big|(r^m\rho^\delta)^{\frac12}|u|^{\frac{p-2}{2}}\mathrm{D}_r u\big|_2^2+C(p)\big|\chi_1^\flat r^\frac{p+m-2}{p\gamma-p\delta+\delta} \rho\big|_{p\gamma-p\delta+\delta}^\frac{2p\gamma-2p\delta+2\delta}{p}\big|(r^{m-2}\rho^\delta)^\frac{1}{p}u\big|_{p}^{p-2}\\
&\quad +C(p)|\chi_1^\sharp r^{m(1+\delta-2\gamma)}|_\infty |\chi_1^\sharp r^m\rho|_{\frac{2p\gamma-p(1+\delta)+2}{2}}^\frac{2p\gamma-p(1+\delta)+2}{p} \big|(r^m\rho)^\frac{1}{p}u\big|_{p}^{p-2}\\
&\leq \frac{a_1 c_p}{16}\big|(r^m\rho^\delta)^{\frac12}|u|^{\frac{p-2}{2}}\mathrm{D}_r u\big|_2^2+C(p)\Big(1+\big|(r^m\rho)^\frac{1}{p}u\big|_{p}^{p}+\underline{\big|\chi_1^\flat r^\frac{p+m-2}{p\gamma-p\delta+\delta} \rho\big|_{p\gamma-p\delta+\delta}^{p\gamma-p\delta+\delta}}_{:=I_{3,1}}\Big),
\end{aligned}
\end{equation}
where we have used Lemma \ref{far-p-infty} with the fact that $\frac{2p\gamma-p(1+\delta)+2}{2}>1$. 

\smallskip
\textbf{4.1. Estimate for $I_{3,1}$ when $n=2$.} Note that, if $n=2$ ($m=1$),
\begin{equation*}
\frac{p+m-2}{p\gamma-p\delta+\delta}=\frac{p-1}{p\gamma-p\delta+\delta}>0.
\end{equation*}
Thus, we obtain from Lemma \ref{l4.3} that
\begin{equation}\label{g1,1-1}
I_{3,1} \leq C(p).
\end{equation}

\smallskip
\textbf{4.2. Estimate for $I_{3,1}$ when $n=3$.} Set 
\begin{equation*}
\mathfrak{a}_1:=p,\qquad \mathfrak{b}_1:=p\gamma-p\delta+\delta,\qquad \ \mathcal{I}_{(l_1,l_2)}:=\int_0^1  r^{l_1}\rho^{l_2}\,\mathrm{d}r.
\end{equation*}
First, it follows from \eqref{V-expression}, Lemma \ref{far-p-infty} (with $\omega=\frac{1}{2}$), integration by parts, and the H\"older and Young inequalities that, for any $\varepsilon_1\in (0,1)$,
\begin{equation}\label{a1-b1}
\begin{aligned}
I_{3,1} &=\mathcal{I}_{(\mathfrak{a}_1,\mathfrak{b}_1)}=\frac{1}{\mathfrak{a}_1+1}\rho^{\mathfrak{b}_1}(1)-\frac{\mathfrak{b}_1}{2a_1\delta(\mathfrak{a}_1+1)}\int_0^1  r^{\mathfrak{a}_1+1} \rho^{\mathfrak{b}_1-\delta+1}(v-u)\,\mathrm{d}r\\
&\leq C(\mathfrak{a}_1,\mathfrak{b}_1)\Big(\big|\chi_\frac{1}{2}^\sharp \rho\big|_\infty^{\mathfrak{b}_1}+ \big(\int_0^1  r^\frac{p\mathfrak{a}_1+p-2}{p-1} \rho^{\frac{p\mathfrak{b}_1-\gamma}{p-1}+1-\delta} \,\mathrm{d}r\big)^\frac{p-1}{p}\big|\chi_1^\flat(r^2\rho^{\gamma-\delta+1})^\frac{1}{p}(u,v)\big|_{p}\Big)\\
&\leq C(p,\mathfrak{a}_1,\mathfrak{b}_1)\Big(1+\frac{1}{\varepsilon_1}\int_0^1  r^\frac{p\mathfrak{a}_1+p-2}{p-1} \rho^{\frac{p\mathfrak{b}_1-\gamma}{p-1}+1-\delta} \,\mathrm{d}r\Big)+\varepsilon_1 \big|\chi_1^\flat(r^2\rho^{\gamma-\delta+1})^\frac{1}{p}(u,v)\big|_{p}^{p}.
\end{aligned}
\end{equation}

Next, for $j\in \mathbb{N}^*$, define the following two sequences $\{\mathfrak{a}_j\}_{j=1}^\infty$ and $\{\mathfrak{b}_j\}_{j=1}^\infty$ 
as
\begin{equation*}
\mathfrak{a}_{j+1}=\frac{p}{p-1}\mathfrak{a}_j+\frac{p-2}{p-1},\quad\,\, \mathfrak{b}_{j+1}=\frac{p}{p-1}\mathfrak{b}_j-\frac{\gamma}{p-1}+1-\delta.
\end{equation*}
Certainly, we can solve for $(\mathfrak{a}_{j+1},\mathfrak{b}_{j+1})$ from the above that, for $j\in \mathbb{N}^*$,
\begin{equation}\label{tongxiang}
\begin{aligned}
\mathfrak{a}_{j+1}&=2(p-1)\big(\frac{p}{p-1}\big)^{j}-p+2,\\
\mathfrak{b}_{j+1}&=(\gamma-2\delta+1)(p-1)\big(\frac{p}{p-1}\big)^{j}+\gamma+(p-1)(\delta-1).    
\end{aligned}
\end{equation}
Then \eqref{a1-b1} becomes
\begin{equation*}
\mathcal{I}_{(\mathfrak{a}_1,\mathfrak{b}_1)}\leq C(p,\mathfrak{a}_1,\mathfrak{b}_1)\Big(1+\frac{1}{\varepsilon_1}\mathcal{I}_{(\mathfrak{a}_2,\mathfrak{b}_2)}\Big)+\varepsilon_1 \big|\chi_1^\flat(r^2\rho^{\gamma-\delta+1})^\frac{1}{p}(u,v)\big|_{p}^{p},
\end{equation*}
and we can follow a calculation similar to \eqref{a1-b1} to obtain that,
for all $j\in \mathbb{N}^*$ and $\varepsilon_j\in (0,1)$,
\begin{equation}\label{aj-bj}
\mathcal{I}_{(\mathfrak{a}_j,\mathfrak{b}_j)}\leq C(p,\mathfrak{a}_j,\mathfrak{b}_j)\Big(1+\frac{1}{\varepsilon_j}\mathcal{I}_{(\mathfrak{a}_{j+1},\mathfrak{b}_{j+1})}\Big)+\varepsilon_j \big|\chi_1^\flat(r^2\rho^{\gamma-\delta+1})^\frac{1}{p}(u,v)\big|_{p}^{p}.
\end{equation}
Define the constants
\begin{equation*}
\varepsilon_0:=1,\qquad C(p,\mathfrak{a}_0,\mathfrak{b}_0):=1.
\end{equation*}
\eqref{aj-bj} implies that, for $j\in \mathbb{N}^*$,
\begin{equation}\label{g11-po}
\begin{aligned}
I_{3,1}=\mathcal{I}_{(\mathfrak{a}_1,\mathfrak{b}_1)}
&\leq \sum_{k=1}^j\Big(\prod_{l=0}^{k-1}\frac{C(p,\mathfrak{a}_l,\mathfrak{b}_l)}{\varepsilon_{l}}\Big)C(p,\mathfrak{a}_k,\mathfrak{b}_k)
+\Big(\prod_{l=1}^j \frac{C(p,\mathfrak{a}_l,\mathfrak{b}_l)}{\varepsilon_l}\Big)\mathcal{I}_{(\mathfrak{a}_{j+1},\mathfrak{b}_{j+1})}\\
&\quad + \sum_{k=1}^j\Big(\prod_{l=0}^{k-1}\frac{C(p,\mathfrak{a}_l,\mathfrak{b}_l)}{\varepsilon_l}\Big) \varepsilon_{k} \big|\chi_1^\flat(r^2\rho^{\gamma-\delta+1})^\frac{1}{p}(u,v)\big|_{p}^{p}.    
\end{aligned}    
\end{equation}

To estimate $\mathcal{I}_{(\mathfrak{a}_{j+1},\mathfrak{b}_{j+1})}$, we see that, for each $p\in [2,\tilde{p}_m(\delta))$, $\delta\in (7-2\sqrt{10},1)$, and $\gamma\in (1,6\delta-3)$, 
both $\{\mathfrak{a}_j\}_{j=1}^\infty$ and $\{\mathfrak{b}_j\}_{j=1}^\infty$ given in \eqref{tongxiang} are strictly increasing  as $j\to\infty$. 
Moreover, for these $(p,\gamma,\delta)$, 
\begin{equation}\label{635}
\begin{aligned}
&\ \, \frac{\mathfrak{a}_{j+1}}{\mathfrak{b}_{j+1}}\geq \frac{1}{2\delta-1}-\frac{1}{\mathfrak{b}_{j+1}}\\
&\iff  \frac{2(p-1)(\frac{p}{p-1})^{j}-p+3}{(\gamma-2\delta+1)(p-1)(\frac{p}{p-1})^{j}+\gamma+(p-1)(\delta-1)}\geq \frac{1}{2\delta-1}\\
&\iff \gamma\leq 6\delta-3-\frac{(3\delta-2)p+1-\delta}{(p-1)(\frac{p}{p-1})^{j}+1}:=f_{p,\delta}(j).    
\end{aligned}
\end{equation}
Here, sequence $\{f_{p,\delta}(j)\}_{j=1}^\infty$ satisfies that, for each $p\in [2,\tilde{p}_m(\delta))$ and $\delta\in (7-2\sqrt{10},1)$,
\begin{equation*}
f_{p,\delta}(j) \text{ is strictly increasing as $j\to\infty$},\qquad\lim_{j\to\infty}f_{p,\delta}(j)=6\delta-3>1.   
\end{equation*}
Hence, for each $\gamma\in (1,6\delta-3)$, we can choose $j=j_0$ to be sufficiently large such that $\eqref{635}_3$ holds, 
which in turn yields $\eqref{635}_1$. 
As a consequence, fixing this constant $j_0$ (depending only on $(p,\gamma,\delta)$), 
we obtain from Lemma \ref{l4.3} that
\begin{align*}
\mathcal{I}_{(\mathfrak{a}_{j_0+1},\mathfrak{b}_{j_0+1})}=\int_0^\infty r^{\mathfrak{a}_{j_0+1}}\rho^{\mathfrak{b}_{j_0+1}}\,\mathrm{d}r \leq C(p).
\end{align*}
Substituting $\mathcal{I}_{(\mathfrak{a}_{j_0+1},\mathfrak{b}_{j_0+1})}$ into \eqref{g11-po} with $j=j_0$ yields
\begin{equation}\label{g11-po'}
\begin{aligned}
I_{3,1}&\leq \sum_{k=1}^{j_0}\Big(\prod_{l=0}^{k-1}\frac{C(p,\mathfrak{a}_l,\mathfrak{b}_l)}{\varepsilon_{l}}\Big)C(p,\mathfrak{a}_k,\mathfrak{b}_k)
+\Big(\prod_{l=1}^{j_0} \frac{C(p,\mathfrak{a}_l,\mathfrak{b}_l)}{\varepsilon_l}\Big)C(p)\\
&\quad + \sum_{k=1}^{j_0}\Big(\prod_{l=0}^{k-1}\frac{C(p,\mathfrak{a}_l,\mathfrak{b}_l)}{\varepsilon_l}\Big) \varepsilon_{k} \big|\chi_1^\flat(r^2\rho^{\gamma-\delta+1})^\frac{1}{p}(u,v)\big|_{p}^{p}.    
\end{aligned}    
\end{equation}

To further reduce inequality \eqref{g11-po'}, for $\tilde\varepsilon\in (0,1)$ sufficiently small, 
we set 
\begin{align*}
\varepsilon_{k}&=\frac{\tilde\varepsilon}{j_0}\,
\prod_{l=0}^{k-1}\frac{\varepsilon_l}{C(p,\mathfrak{a}_l,\mathfrak{b}_l)}=\big(\frac{\tilde\varepsilon}{j_0}\big)^{2^{k-1}}\prod_{l=0}^{k-1} \frac{1}{ C(p,\mathfrak{a}_l,\mathfrak{b}_l)^{2^{k-l-1}}} \qquad 
\text{for $1\leq k\leq j_0$}.
\end{align*}
Then it follows from the above, \eqref{g11-po'}, and Lemma \ref{l4.3} that
\begin{equation}\label{g1,1-2}
\begin{aligned}
I_{3,1}&\leq C(p,\tilde\varepsilon) + \Big(\sum_{k=1}^{j_0} \frac{\tilde\varepsilon}{j_0}\Big) \big|\chi_1^\flat(r^2\rho^{\gamma-\delta+1})^\frac{1}{p}(u,v)\big|_{p}^{p}\\[-2pt]
&\leq C(p,\tilde\varepsilon)+\tilde\varepsilon\big(\big|(r^2\rho^{\gamma-\delta+1})^\frac{1}{p}v\big|_{p}^{p}+\big|\chi_1^\flat r^\frac{2}{\gamma-2\delta+1}\rho\big|_\infty^{\gamma-2\delta+1}|\rho^\frac{\delta}{p} u|_{p}^{p}\big)\\[2pt]
&\leq C(p,\tilde\varepsilon)+ \tilde\varepsilon \big|(r^m\rho^{\gamma-\delta+1})^\frac{1}{p}v\big|_{p}^{p}+C_0\tilde\varepsilon\big|(r^{m-2}\rho^\delta)^\frac{1}{p} u\big|_{p}^{p},
\end{aligned}    
\end{equation}
where we have also used the fact that $\frac{2}{\gamma-2\delta+1}>\frac{1}{2\delta-1}$ whenever $\gamma\in (1,6\delta-3)$.

Thus, for both cases $n=2$ and $n=3$, collecting \eqref{J1}--\eqref{g1,1-1} and \eqref{g1,1-2}, then choosing $\tilde{\varepsilon}$ small enough such that
\begin{equation*}
0<\tilde{\varepsilon}< \min\big\{\frac{a_1 c_p}{16C(p)},\frac{\varepsilon}{C(p)},1\big\} \qquad\text{with $\varepsilon\in (0,1)$},
\end{equation*}
we conclude that, for all $\varepsilon\in (0,1)$,
\begin{equation}\label{g1''}
\begin{aligned}
I_3&\leq \frac{a_1 c_p}{8} \big|(r^m\rho^\delta)^{\frac12} |u|^{\frac{p-2}{2}} \mathrm{D}_r u\big|_2^2+C(p,\varepsilon)\big(1+\big|(r^m\rho)^\frac{1}{p}u\big|_{p}^{p}\big) + \varepsilon \big|(r^m\rho^{\gamma-\delta+1})^\frac{1}{p}v\big|_{p}^{p}.
\end{aligned}    
\end{equation}

Finally, substituting  \eqref{g1''} into \eqref{add} implies the desired estimate.
\end{proof}

In addition, we show the following $L^p(I)$-estimates for $(r^m\rho)^{\frac{1}{p}}v$.
\begin{lem}\label{lem-v-lp}
Let $\tilde{p}_m(\delta)$ be defined as in {\rm Lemma \ref{lemma-uv-lp}}. Then, for all $p\in [2,\tilde{p}_m(\delta))$, there exists a constant $C(p)>0$ such that
\begin{equation}\label{dt-v-p}
\frac{\mathrm{d}}{\mathrm{d}t}\big|(r^m\rho)^\frac{1}{p} v\big|_{p}^{p}+\frac{pA\gamma}{4a_1\delta}\big|(r^m\rho^{\gamma-\delta+1})^\frac{1}{p} v\big|_{p}^{p}\leq C(p)\big(\big|(r^{m-2}\rho^\delta)^\frac{1}{p} u\big|_{p}^{p}+\big|(r^m\rho)^\frac{1}{p} u\big|_{p}^{p}\big).
\end{equation}
\end{lem}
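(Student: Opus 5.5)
We multiply the effective-velocity equation \eqref{eq:effective2} by $r^m\rho|v|^{p-2}v$ and integrate over $I$. By the continuity equation $\eqref{e1.5}_1$, written as $(r^m\rho)_t+(r^m\rho u)_r=0$, the transport part becomes an exact derivative:
\begin{equation*}
r^m\rho|v|^{p-2}v(v_t+uv_r)=\frac{1}{p}(r^m\rho|v|^p)_t+\frac{1}{p}(r^m\rho u|v|^p)_r .
\end{equation*}
The boundary flux $\int_0^\infty (r^m\rho u|v|^p)_r\,\mathrm{d}r$ vanishes by Lemma \ref{calculus}. To apply it, we need $r^m\mathrm{D}_r(\rho u|v|^p)\in L^1(I)$ for a.e. $t$. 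This follows exactly as in the proof of \eqref{int-B1}, using the regularity \eqref{spd}--\eqref{spd2}, in particular $(\rho,u,v,(\rho^{\delta-1})_r)\in L^\infty(I)$, $r^{\frac m2}\mathrm{D}_r^2(\rho^{\delta-1})_r\in L^2$ and $r^m\rho\in L^1$. We also use $u|_{r=0}=0$.

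This leaves the identity
\begin{equation*}
\frac1p\frac{\mathrm d}{\mathrm dt}\big|(r^m\rho)^{\frac1p}v\big|_p^p+\frac{A\gamma}{2a_1\delta}\int_0^\infty r^m\rho^{\gamma-\delta+1}|v|^p\,\mathrm dr=\frac{A\gamma}{2a_1\delta}\int_0^\infty r^m\rho^{\gamma-\delta+1}|v|^{p-2}v\,u\,\mathrm dr .
\end{equation*}
Multiplying by $p$, the damping term on the left has coefficient $\frac{pA\gamma}{2a_1\delta}$, which is twice the target coefficient $\frac{pA\gamma}{4a_1\delta}$. So half of the damping is available to absorb the right-hand side.

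For the right-hand side we apply Young's inequality with exponents $\frac{p}{p-1}$ and $p$, weighted by $r^m\rho^{\gamma-\delta+1}$:
\begin{equation*}
\int_0^\infty r^m\rho^{\gamma-\delta+1}|v|^{p-1}|u|\,\mathrm dr\le \frac12\big|(r^m\rho^{\gamma-\delta+1})^{\frac1p}v\big|_p^p+C(p)\int_0^\infty r^m\rho^{\gamma-\delta+1}|u|^p\,\mathrm dr .
\end{equation*}
The first term is absorbed into the damping, leaving exactly the coefficient $\frac{pA\gamma}{4a_1\delta}$.

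It remains to bound $\int r^m\rho^{\gamma-\delta+1}|u|^p\,\mathrm dr$ by the two $u$-norms on the right of \eqref{dt-v-p}. This requires an $L^\infty$-control of a weighted power of $\rho$, and is the only genuinely delicate step. We split with $\chi_1^\flat,\chi_1^\sharp$.

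\textbf{Exterior part.} On $\{r\ge1\}$, Lemma \ref{far-p-infty} gives $|\chi_1^\sharp\rho|_\infty\le|\chi_1^\sharp r^m\rho|_\infty\le C_0$. Since $\gamma-\delta>0$, this yields $\rho^{\gamma-\delta+1}\le C\rho$, so this part is bounded by $C|(r^m\rho)^{\frac1p}u|_p^p$.

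\textbf{Interior part.} On $\{r<1\}$ we write
\begin{equation*}
r^m\rho^{\gamma-\delta+1}=\big(r^2\rho^{\gamma-2\delta+1}\big)\,r^{m-2}\rho^\delta .
\end{equation*}
So it suffices to have $|\chi_1^\flat r^2\rho^{\gamma-2\delta+1}|_\infty\le C$, and we argue according to the sign of $\gamma-2\delta+1$.
\begin{itemize}
\item If $\gamma-2\delta+1\le 0$, then $r^{m-2}\rho^\delta$ alone is not enough. We instead use $\rho^{\gamma-\delta+1}\le\rho^{\delta}\cdot\rho^{\gamma-2\delta+1}$ interpolated with $\rho$: the exponent $\gamma-\delta+1$ lies between $\delta$ and $1$ only when $\gamma<2\delta$, and then $\rho^{\gamma-\delta+1}\le\rho^\delta+\rho$ by convexity in the exponent. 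The interior part is then bounded by $|(r^{m-2}\rho^\delta)^{\frac1p}u|_p^p+|(r^m\rho)^{\frac1p}u|_p^p$, using $r^m\le r^{m-2}$ for $r<1$.
\item If $\gamma-2\delta+1>0$, it suffices that $|\chi_1^\flat r^{\frac{2}{\gamma-2\delta+1}}\rho|_\infty\le C$. For $n=2$ this is Lemma \ref{l4.3}(i), since $\frac{2}{\gamma-2\delta+1}>0$. For $n=3$ we use Lemma \ref{l4.3}(ii) together with $\frac{2}{\gamma-2\delta+1}>\frac1{2\delta-1}$, which holds since $\gamma<6\delta-3$. This is the same fact already used in \eqref{g1,1-2}.
\end{itemize}
Combining the exterior and interior bounds gives \eqref{dt-v-p}.
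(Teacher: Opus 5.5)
Your proposal is correct and is essentially the paper's proof: the same multiplier $r^m\rho|v|^{p-2}v$ combined with the continuity equation, the same Young absorption of half the damping term, and the same $\chi_1^\flat/\chi_1^\sharp$ split via $r^m\rho^{\gamma-\delta+1}=(r^2\rho^{\gamma-2\delta+1})\,r^{m-2}\rho^\delta$ with Lemma \ref{l4.3}, using $\frac{2}{\gamma-2\delta+1}>\frac{1}{2\delta-1}$ when $n=3$. For the flux term, the ingredient not covered by the proof of \eqref{int-B1} is $r^{\frac{m}{n}}(\rho^{\delta-1})_{rr}\in L^n(I)$ from \eqref{spd}, which you need because $v_r$ contains $(\rho^{\delta-1})_{rr}$; your first bullet should simply be deleted, since it is vacuous ($\gamma>1>\delta$ forces $\gamma-2\delta+1>2(1-\delta)>0$) and its interpolation argument would not work anyway, because in that case $\gamma-\delta+1\le\delta$.
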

\begin{proof}
Let $\tilde{p}_m(\delta)$ be defined as in Lemma \ref{lemma-uv-lp}. First, multiplying \eqref{eq:effective2} by $r^m\rho|v|^{p-2}v$ with $p\in[2,\tilde{p}_m(\delta))$, along with $\eqref{e1.5}_1$, gives
\begin{equation}\label{430}
\frac{1}{p}(r^m\rho |v|^p)_t+ \frac{1}{p}\big(r^m\underline{\rho u|v|^p}_{:=\mathcal{B}_3}\big)_r+\frac{A\gamma}{2a_1\delta} r^m\rho^{\gamma-\delta+1} |v|^p=\frac{A\gamma}{2a_1\delta} r^m\rho^{\gamma-\delta+1} uv|v|^{p-2}.
\end{equation}

Next, we show that $r^m\mathrm{D}_r  \mathcal{B}_3 \in L^1(I)$ for \textit{a.e.} $t\in (0,T)$ so that we can apply Lemma \ref{calculus} 
to obtain
\begin{equation}\label{int-B3}
\int_0^\infty (r^m\mathcal{B}_3)_r\,\mathrm{d}r =0.
\end{equation}
Indeed, based on \eqref{spd}--\eqref{spd2}, we have 
\begin{equation*}
r^m\rho\in L^1(I),\quad r^\frac{m}{2} u \in L^2(I),\quad r^\frac{m}{n} (\rho^{\delta-1})_{rr} \in L^n(I),
\quad (\rho,u,\mathrm{D}_r u,(\rho^{\delta-1})_r)\in L^\infty(I)
\end{equation*} 
for {\it a.e.} $t\in (0,T)$. Thus, we obtain from the H\"older inequality that 
\begin{align*}
&\begin{aligned}
|r^m \mathrm{D}_r \mathcal{B}_3|_1&\leq C(p)\big(\big|r^{m-1}\rho |u|^{p+1}\big|_1+\big|r^{m-1}\rho u|(\rho^{\delta-1})_r|^p\big|_1+\big|r^{m}\rho^{2-\delta}(\rho^{\delta-1})_r |u|^{p+1}\big|_1\big)\\
&\quad+C(p)\big(\big|r^{m}\rho^{2-\delta} u|(\rho^{\delta-1})_r|^{p+1}\big|_1+\big|r^{m}\rho |u|^pu_r\big|_1+\big|r^{m}\rho u_r|(\rho^{\delta-1})_r|^p\big|_1\big)\\
&\quad +C(p)\big|r^{m}\rho u|(\rho^{\delta-1})_r|^{p-1}|(\rho^{\delta-1})_{rr}|\big|_1
\end{aligned}\\
&\qquad \quad \quad \hspace{2mm} \begin{aligned}
&\leq C(p)|r^m\rho|_1 |\mathrm{D}_r u|_\infty  |(u,(\rho^{\delta-1})_r)|_\infty^p +C(p)|r^m\rho|_1|\rho^{1-\delta}|_\infty|(\rho^{\delta-1})_r|_\infty|u|_\infty^{p+1}\\
&\quad+C(p)|r^m\rho|_1 |\rho|_\infty^{1-\delta} |(\rho^{\delta-1})_r|_\infty^{p+1}|u|_\infty \\
&\quad +C(p)|(\rho^{\delta-1})_r|_\infty^{p-1}|r^\frac{m}{2}u|_2|r^m\rho|_{1}^\frac{n-2}{2n}|\rho|_{\infty}^\frac{n+2}{2n}|r^\frac{m}{n}(\rho^{\delta-1})_{rr}|_n <\infty.
\end{aligned}
\end{align*}

Now, integrating \eqref{430} over $I$, then we see  from \eqref{int-B3}, Lemmas \ref{far-p-infty}--\ref{l4.3}, and the H\"older and Young inequalities that
\begin{equation*}
\begin{aligned}
&\,\frac{1}{p}\frac{\mathrm{d}}{\mathrm{d}t}\big|(r^m\rho)^\frac{1}{p} v\big|_{p}^{p}+\frac{A\gamma}{2a_1\delta}\big|(r^m\rho^{\gamma-\delta+1})^\frac{1}{p} v\big|_{p}^{p}\leq \frac{A\gamma}{2a_1\delta} \int_0^\infty r^m\rho^{\gamma-\delta+1} uv|v|^{p-2}\,\mathrm{d}r\\
&\leq \frac{A\gamma}{4a_1\delta}\big|(r^m\rho^{\gamma-\delta+1})^\frac{1}{p} v\big|_{p}^{p}+\frac{A\gamma}{4a_1\delta}\big(\big|\chi_1^\flat (r^m\rho^{\gamma-\delta+1})^\frac{1}{p}u\big|_p^p+\big|\chi_1^\sharp (r^m\rho^{\gamma-\delta+1})^\frac{1}{p} u\big|_p^p\big)\\
&\leq \frac{A\gamma}{4a_1\delta}\big|(r^m\rho^{\gamma-\delta+1})^\frac{1}{p} v\big|_{p}^{p}+ \frac{A\gamma}{4a_1\delta}\big|\chi_1^\flat r^\frac{2}{\gamma-2\delta+1}\rho\big|_\infty^{\gamma-2\delta+1} \big|(r^{m-2}\rho^\delta)^\frac{1}{p} u\big|_{p}^{p}\\
&\quad +\frac{A\gamma}{4a_1\delta}|\chi_1^\sharp \rho|_\infty^{\gamma-\delta} \big|(r^m\rho)^\frac{1}{p} u\big|_{p}^{p}\\
&\leq \frac{A\gamma}{4a_1\delta}\big|(r^m\rho^{\gamma-\delta+1})^\frac{1}{p} v\big|_{p}^{p}+C(p)\big(\big|(r^{m-2}\rho^\delta)^\frac{1}{p} u\big|_{p}^{p}+\big|(r^m\rho)^\frac{1}{p} u\big|_{p}^{p}\big),
\end{aligned}
\end{equation*}
where we have also used Lemma \ref{l4.3} with the facts that
\begin{equation*}
\frac{2}{\gamma-2\delta+1}>
\begin{cases}
0 \quad&\text{if $n=2$},\\
\frac{1}{2\delta-1}\quad &\text{if $n=3$ \ and \  $\gamma\in(1,6\delta-3)$}.   
\end{cases}
\end{equation*}

This completes the proof of Lemma \ref{lem-v-lp}.
\end{proof}

Now, combining Lemmas \ref{lem-u-lp}--\ref{lem-v-lp}, we give the proof for Lemma \ref{lemma-uv-lp}.

\begin{proof}[Proof for Lemma \ref{lemma-uv-lp}]
Let $p\in [2,\tilde{p}_{m}(\delta))$. First, multiplying both sides of \eqref{dt-v-p} by $\frac{8a_1\delta\varepsilon}{A\gamma p}$ 
with $\varepsilon\in (0,1)$, we arrive at
\begin{equation*} 
\frac{8a_1\delta\varepsilon}{A\gamma p}\frac{\mathrm{d}}{\mathrm{d}t}\big|(r^m\rho)^\frac{1}{p} v\big|_{p}^{p}+2\varepsilon\big|(r^m\rho^{\gamma-\delta+1})^\frac{1}{p} v\big|_{p}^{p}\leq C(p)\varepsilon\big(\big|(r^m\rho)^\frac{1}{p} u\big|_{p}^{p}+\big|(r^{m-2}\rho^\delta)^\frac{1}{p} u\big|_{p}^{p}\big).
\end{equation*}
Then summing the above inequality with \eqref{dt-u-p} yields 
\begin{equation*}
\begin{aligned}
&\,\frac{\mathrm{d}}{\mathrm{d}t}\Big(\frac{1}{p}\big|(r^m\rho)^{\frac{1}{p}} u\big|_{p}^{p}+\frac{8a_1\delta\varepsilon}{A\gamma p} \big|(r^m\rho)^\frac{1}{p} v\big|_{p}^{p}\Big) + a_1 c_p \big|(r^m\rho^\delta)^{\frac{1}{2}}|u|^{\frac{p-2}{2}}\mathrm{D}_r u\big|_2^2 +\varepsilon\big|(r^m\rho^{\gamma-\delta+1})^\frac{1}{p} v\big|_{p}^{p}\\
&\leq C(p)\big(1+\big|(r^m\rho)^\frac{1}{p}u\big|_{p}^{p}\big)+ C(p,\varepsilon)+C(p)\varepsilon\big|(r^{m-2}\rho^\delta)^\frac{1}{p} u\big|_{p}^{p}.
\end{aligned}
\end{equation*}

As a consequence, we can set
\begin{equation*}
\varepsilon=\min\big\{\frac{a_1 c_p}{100C(p)},\frac{1}{2}\big\},
\end{equation*}
and then apply the Gr\"onwall inequality to the resulting inequality to obtain
the desired estimate. For the $L^{p}(I)$-boundedness of $(r^m\rho_0)^{\frac{1}{p}}(u_0,v_0)$, we see from Lemmas \ref{ale1}, \ref{initial3}, and \ref{lemma-initial} that
\begin{align*}
\big|(r^m\rho_0)^\frac{1}{p}(u_0,v_0)\big|_p&\leq |r^m\rho_0|_1^\frac{1}{p}|(u_0,(\rho_0^{\delta-1})_r)|_{\infty}\leq C_0\|\rho_0\|_{L^1}^\frac{1}{p}\|(\boldsymbol{u}_0,\nabla \rho_0^{\delta-1})\|_{L^\infty} \leq C(p).
\end{align*}

This completes the proof.
\end{proof}

\subsection{Uniform Upper Bound of the Density}
\begin{lem}\label{important2}
There exists a constant $C(T)>0$ such that
\begin{equation*}
|\rho(t)|_\infty\leq C(T)  \qquad \text{for all $t\in[0,T]$}.
\end{equation*}
\end{lem}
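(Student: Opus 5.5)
The plan is to split $I$ into the exterior part $[1,\infty)$ and the interior part $[0,1)$, as indicated in \S\ref{subsection-strategy}. For the exterior part, Lemma \ref{far-p-infty} with $\omega=1$ gives $|\chi_1^\sharp r^m\rho|_\infty\leq C_0$, hence $|\chi_1^\sharp\rho|_\infty\leq C_0$ since $r\geq1$ there. The work is therefore concentrated on $\rho$ in $[0,1)$, which in Eulerian terms is the unit ball $B_1\subset\mathbb{R}^n$.

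\textbf{Interior part.} Fix $p$ with $n<p<\min\{\tilde p_m(\delta),\frac{1}{1-\delta}\}$. This range is nonempty because $\tilde p_m(\delta)>n$ by Lemma \ref{Lemma-pp}, and $\frac{1}{1-\delta}>n$ since $\delta>1-\frac1n$ (indeed $\delta>\frac23$ when $n=3$).

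Set $w:=\rho^{\frac1p+\delta-1}$, whose exponent $\frac1p+\delta-1$ is strictly positive. Using the Sobolev embedding $W^{1,p}(B_1)\hookrightarrow L^\infty(B_1)$ for $p>n$, written in radial variables, I bound
\[
|\chi_1^\flat w|_\infty\leq C(p)\big(|\chi_1^\flat r^{\frac mp}w|_p+|\chi_1^\flat r^{\frac mp}w_r|_p\big).
\]
Alternatively, one can argue directly in 1-D on $[r_0,1]$ with weights; this is also acceptable, since only a bound on $\rho$ at every $r\in[0,1)$ is needed.

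The derivative term is handled through the effective velocity. By \eqref{V-expression},
\[
w_r=\frac{\frac1p+\delta-1}{\delta-1}\,\rho^{\frac1p}(\rho^{\delta-1})_r=c\,\rho^{\frac1p}\,\frac{\delta-1}{2a_1\delta}(v-u).
\]
Hence $|r^{\frac mp}w_r|_p\leq C|(r^m\rho)^{\frac1p}(u,v)|_p\leq C(p,T)$ by Lemma \ref{lemma-uv-lp}. This is exactly where $p>n$ and $p<\tilde p_m(\delta)$ are needed simultaneously.

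The zero-order term is $|\chi_1^\flat r^{\frac mp}w|_p^p=\int_0^1 r^m\rho^{1+p(\delta-1)}\,\mathrm dr$. Since $1+p(\delta-1)\in(0,1)$, Hölder's inequality on $[0,1]$ bounds this by $C|r^m\rho|_1^{1+p(\delta-1)}\leq C_0$, because the measure $r^m\,\mathrm dr$ is finite on $[0,1]$. Alternatively one can use Lemma \ref{l4.3}, but Hölder suffices.

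Combining the two terms gives $|\chi_1^\flat\rho|_\infty^{\frac1p+\delta-1}\leq C(p,T)$. Raising to the positive power $(\frac1p+\delta-1)^{-1}$ gives the bound on $[0,1)$, and together with the exterior bound this proves the lemma.

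\textbf{Main obstacle.} The delicate point is justifying the radial Sobolev inequality near the origin, where the weight $r^m$ degenerates. I would avoid the $n$-dimensional embedding and argue in 1-D as follows. For $0<r<1$, choose a point $s\in[\frac12,1]$ where $w(s)\leq C$; such a point exists by the exterior bound and the $L^1$ bound. Then write $w(r)\leq w(s)+\int_r^1|w_r|\,\mathrm dr'$. By Hölder,
\[
\int_0^1|w_r|\,\mathrm dr'\leq\Big(\int_0^1 r'^{-\frac{m}{p-1}}\,\mathrm dr'\Big)^{\frac{p-1}{p}}|r^{\frac mp}w_r|_p,
\]
and the first factor is finite precisely when $\frac{m}{p-1}<1$, i.e.\ $p>n$. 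This route avoids the zero-order term entirely and only uses Lemmas \ref{far-p-infty} and \ref{lemma-uv-lp}.
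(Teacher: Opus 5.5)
Your proof is correct, but it takes a somewhat different, slightly more direct route than the paper's proof of Lemma \ref{important2}.

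\textbf{The paper's route.} The paper works with the BD quantity $\rho^{2\delta-1}$ and uses $W^{1,1}(0,1)\hookrightarrow L^\infty(0,1)$. It controls the zero-order term $|\chi_1^\flat\rho|_{2\delta-1}$ by Lemma \ref{l4.3}, and writes $(\rho^{2\delta-1})_r$ as a multiple of $\rho^\delta(v-u)$. It then bounds $|\chi_1^\flat\rho^\delta(u,v)|_1$ by H\"older against $|(r^m\rho)^{1/p_0}(u,v)|_{p_0}$. Since $\rho^\delta$ is not exactly the weight in Lemma \ref{lemma-uv-lp}, a leftover factor $|\chi_1^\flat\rho|_\infty^{\delta-1/p_0}$ appears. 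This factor is absorbed by Young's inequality, and that absorption is where $p_0<\frac{1}{1-\delta}$ (equivalently $\delta-\frac{1}{p_0}<2\delta-1$) enters.

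\textbf{Your route.} You tune the exponent $\beta=\frac1p+\delta-1$ so that $w_r$ is exactly a constant multiple of $\rho^{1/p}(v-u)$. That is precisely the quantity Lemma \ref{lemma-uv-lp} controls in weighted $L^p$. The condition $p<\frac{1}{1-\delta}$ then enters only through $\beta>0$, and no absorption is needed.

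\textbf{Your 1-D variant.} You can simply take $s=1$, since Lemma \ref{far-p-infty} with $\omega=1$ already bounds $\rho(t,1)$. You then need neither Lemma \ref{l4.3} nor a zero-order term. The requirement $p>n$ appears, as in the paper, through the integrability of $r^{-m/(p-1)}$ on $(0,1)$.

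\textbf{Your $W^{1,p}(B_1)$ version.} This is exactly the argument sketched in \eqref{000} of the paper's strategy section. It is also valid here: $\rho>0$ and $\rho_r$ are continuous on $\bar I$ by Lemma \ref{rth1}, so $w$ is Lipschitz on $B_1$, including at the origin. Your H\"older treatment of the zero-order term, with exponent $1+p(\delta-1)\in(0,1)$, is correct.

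Both arguments need the same window $n<p<\min\{\tilde p_m(\delta),\frac{1}{1-\delta}\}$. Yours yields the bound as $C(p,T)^{1/\beta}$ and uses only Lemmas \ref{far-p-infty} and \ref{lemma-uv-lp}.
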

\begin{proof}
By Lemma \ref{far-p-infty}, it remains to establish the uniform upper bound of $\rho$ near the origin:
\begin{equation}\label{near}
|\chi_1^\flat \rho(t)|_\infty \leq C(T) \qquad \text{for all $t\in [0,T]$}.
\end{equation}

First, it follows from Lemma \ref{l4.3} that, for both the 2-D case and 3-D case,
\begin{equation}\label{2d-1}
|\chi_1^\flat \rho(t)|_{2\delta-1}\leq C_0 \qquad\text{for all $t\in [0,T]$}.
\end{equation}

Next, note that $\tilde{p}_{m}(\delta)>n$ due to Lemma \ref{Lemma-pp}. Thus, for any fixed $p_0$ satisfying
\begin{equation*}
n<p_0<\min\big\{\frac{1}{1-\delta},\tilde{p}_{m}(\delta)\big\},
\end{equation*}
we obtain from \eqref{V-expression}, \eqref{2d-1}, Lemmas  \ref{lemma-uv-lp} and \ref{ale1}, and the H\"older and Young inequalities that, for all $t\in [0,T]$,
\begin{equation*}
\begin{aligned}
|\chi_1^\flat \rho|_\infty^{2\delta-1}&\leq  C_0|\chi_1^\flat (\rho^{2\delta-1},\rho^\delta(\rho^{\delta-1})_r)|_1 \leq  C_0+C_0|\chi_1^\flat\rho^{\delta} (u,v)|_1\\
&\leq  C_0+C_0|\chi_1^\flat r^{-\frac{m}{p_0-1}}|_1^\frac{p_0-1}{p_0}|\chi_1^\flat\rho|_\infty^\frac{\delta p_0-1}{p_0}|(r^m\rho)^\frac{1}{p_0}(u,v)|_{p_0}\leq C(T)+\frac{1}{20}|\chi_1^\flat \rho|_\infty^{2\delta-1},
\end{aligned}
\end{equation*}
where we have also used the fact that $0<\frac{\delta p_0-1}{p_0}<2\delta-1$.

Therefore, we obtain \eqref{near}, and hence the global uniform upper bound for $\rho$.
\end{proof}

\section{Global Uniform Bound of the Effective Velocity}\label{section-effective}

This section is devoted to establishing the global uniform $L^\infty(I)$-estimate of the effective velocity $v$ in spherical coordinates.  We first establish the following $L^2(I)$-estimate for $\rho^\frac{1}{2}u$.
\begin{lem}\label{rho u-L2}
There exists a constant $C(T)>0$ such that
\begin{equation*}
|\rho^\frac{1}{2}u(t)|_2^2+ \int_0^t|\rho^\frac{\delta}{2}\mathrm{D}_r u|_2^2\,\mathrm{d}s\leq C(T)\Big(\sup_{s\in [0,t]}|v|_\infty^2+1\Big)\qquad \text{for all $t\in [0,T]$}.
\end{equation*}
\end{lem}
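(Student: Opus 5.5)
The plan is to derive an \emph{unweighted} energy identity, testing $\eqref{e1.5}_2$ against $u$ instead of $r^m u$. This loses the divergence structure: the continuity equation now reads $\rho_t+(\rho u)_r=-m\rho u/r$. Combined with the momentum equation, it gives
\begin{equation*}
\Big(\tfrac12\rho u^2\Big)_t+\Big(\tfrac12\rho u^3\Big)_r=-\frac{m}{2}\frac{\rho u^3}{r}+u\big(-A(\rho^\gamma)_r\big)+u\Big(2a_1\delta\big(\rho^\delta(u_r+\tfrac{m u}{r})\big)_r-\frac{2a_1 m(\rho^\delta)_r u}{r}\Big).
\end{equation*}
The cubic term $\rho u^3/r$ has no sign, and my aim is to cancel it using the effective velocity. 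By \eqref{V-expression}, $(\rho^\delta)_r=\delta\rho^{\delta-1}\rho_r=\frac{1}{2a_1}\rho(v-u)$, so the singular term splits as
\begin{equation*}
-2a_1m\int_0^\infty(\rho^\delta)_r\frac{u^2}{r}\,\mathrm{d}r=-m\int_0^\infty\frac{\rho v u^2}{r}\,\mathrm{d}r+m\int_0^\infty\frac{\rho u^3}{r}\,\mathrm{d}r.
\end{equation*}
I will apply this representation to exactly one half of that term, which cancels $-\frac m2\rho u^3/r$. The other half I integrate by parts to get $a_1m\int\rho^\delta(2uu_r/r-u^2/r^2)$; the boundary term at $r=0$ vanishes since $u|_{r=0}=0$ and $u\in C^1$ by \eqref{spd2}, and the one at infinity vanishes by the decay in \eqref{spd}. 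Boundary terms throughout are justified through Lemma \ref{calculus} exactly as for $\mathcal B_1$ in Lemma \ref{energy-BD}.

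What remains of the viscous part is $-2a_1\int\rho^\delta\big(\delta X^2+m(\delta-1)XY+\frac m2Y^2\big)$ with $(X,Y)=(u_r,u/r)$. Its discriminant is $m^2(1-\delta)^2-2m\delta$, which is negative for $\delta>\frac12$ when $m=1$ and for $\delta>\frac{3-\sqrt5}{2}$ when $m=2$; both hold under \eqref{del-gam}. This yields a coercive dissipation $c\,|\rho^{\delta/2}\mathrm{D}_ru|_2^2$, as in Step 2 of Lemma \ref{energy-BD}. The leftover term $-\frac m2\int\rho vu^2/r$ is controlled by $|v|_\infty\int\rho u^2/r$, split at $r=1$. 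On $r\geq1$ it is at most $|v|_\infty|r^{m/2}\rho^{1/2}u|_2^2\le C_0|v|_\infty$ by Lemma \ref{energy-BD}. On $r<1$, Cauchy--Schwarz gives
\begin{equation*}
|v|_\infty\int_0^1\frac{\rho u^2}{r}\,\mathrm{d}r\le\varepsilon|\rho^{\delta/2}u/r|_2^2+C(\varepsilon)|v|_\infty^2|\rho|_\infty^{1-\delta}|\rho^{1/2}u|_2^2,
\end{equation*}
where $|\rho|_\infty\le C(T)$ comes from Lemma \ref{important2}.

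The pressure term $A\int\rho^\gamma u_r$ is the delicate one, because the measure is unweighted and $\rho\le Cr^{-m}$ (Lemma \ref{far-p-infty}) does not by itself make $\rho^{2\gamma-\delta}$ integrable at infinity. I would first try the potential route: write $\rho^\gamma u_r$ through the equation for $\rho^\gamma$, namely $(\rho^\gamma)_t+(\rho^\gamma u)_r=-(\gamma-1)\rho^\gamma u_r-\gamma m\rho^\gamma u/r$. This adds $\frac{A}{\gamma-1}\int\rho^\gamma$ to the energy, leaving $\int\rho^\gamma|u|/r$. On $r\ge1$ I would bound that by the weighted energy and Lemma \ref{far-p-infty}; on $r<1$ by $\varepsilon|\rho^{\delta/2}u/r|_2^2+C(T)$ via Lemma \ref{l4.3}. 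The initial quantities $|\rho_0^{1/2}u_0|_2$ and $\int\rho_0^\gamma$ are bounded via Lemmas \ref{ale1}, \ref{initial3}, \ref{lemma-initial}. If the unweighted far-field integrability fails, I would fall back on a cutoff argument using the weighted bounds of Lemma \ref{energy-BD} for $r\geq1$.

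Gronwall's inequality then closes the estimate, with the multiplier $|v|_\infty^2$ appearing in the exponent. I expect the main obstacle to be obtaining the stated \emph{linear} dependence on $\sup|v|_\infty^2$ rather than an exponential one. My first attempt would be to avoid Gronwall on the $r<1$ contribution: I would bound $\int_0^1\rho^{2-\delta}u^2$ directly by $\int_0^1\rho^\delta u^2/r^2$ times a power of $|\chi_1^\flat r^{\nu}\rho|_\infty$ from Lemma \ref{l4.3}, which the dissipation absorbs for small $\varepsilon$. The estimate would then integrate directly in time, giving $C(T)(\sup|v|_\infty^2+1)$.
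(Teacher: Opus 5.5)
Your energy identity, the cancellation of $\frac m2\rho u^3/r$ obtained by rewriting half of the $(\rho^\delta)_r u^2/r$ term through $2a_1(\rho^\delta)_r=\rho(v-u)$, and the coercivity of the viscous quadratic form all reproduce the paper's identity \eqref{5-1}. The pressure term needs no potential argument. Since $2\gamma-\delta>1$, Lemmas \ref{far-p-infty} and \ref{important2} give $|\rho|_{2\gamma-\delta}\le C(T)$, hence directly $A\int\rho^\gamma u_r\le C(\varepsilon,T)+\varepsilon|\rho^{\delta/2}u_r|_2^2$.

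\textbf{The gap.} It is exactly the step you flag: the contribution of $-\frac m2\int_0^\infty\rho vu^2/r\,\mathrm{d}r$ near the origin. Your first route produces $C(\varepsilon)|v|_\infty^2|\rho|_\infty^{1-\delta}|\rho^{1/2}u|_2^2$. After Gronwall this gives a factor $\exp\big(C\int_0^t|v|_\infty^2\,\mathrm{d}s\big)$, which is strictly weaker than the lemma. It would also break the later argument: Lemma \ref{cru3} feeds a sublinear power of $\int_0^t|\rho^{\delta/2}u_r|_2^2$ into Young's inequality to reach $\varepsilon\sup|v|_\infty^2$, and Lemma \ref{l4.4} closes only because of that.

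Your proposed repair fails too. Bounding $\int_0^1\rho^{2-\delta}u^2$ by $C\int_0^1\rho^\delta u^2/r^2$ leaves the term $C(\varepsilon)\,C\,|v|_\infty^2\,|\rho^{\delta/2}u/r|_2^2$. Its coefficient contains $|v|_\infty^2$ and $1/\varepsilon$, so it cannot be absorbed into the dissipation $c_*|\rho^{\delta/2}\mathrm{D}_ru|_2^2$. No choice of $\varepsilon$, and no weight $r^\nu$ from Lemma \ref{l4.3}, supplies the needed smallness.

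\textbf{The missing idea.} Measure the bad term against the \emph{weighted} dissipation of the basic energy estimate, which is already time-integrable: Lemma \ref{energy-BD} gives $\int_0^t|(r^{m-2}\rho^\delta)^{1/2}u|_2^2\,\mathrm{d}s\le C_0$.
\begin{itemize}
\item For $n=2$: $\int_0^\infty\rho|v|u^2/r\,\mathrm{d}r\le|v|_\infty|\rho|_\infty^{1-\delta}\int_0^\infty r\rho^\delta(u/r)^2\,\mathrm{d}r$.
\item For $n=3$: Cauchy--Schwarz gives $|v|_\infty|\rho|_\infty^{1-\delta}|\rho^{\delta/2}u|_2|\rho^{\delta/2}u/r|_2\le\frac{c_*}{8}|\rho^{\delta/2}u/r|_2^2+C(T)|v|_\infty^2|\rho^{\delta/2}u|_2^2$, where $|\rho^{\delta/2}u|_2^2=\int r^2\rho^\delta(u/r)^2\,\mathrm{d}r$ is that weighted dissipation.
\end{itemize}
Your own splitting at $r=1$ also works once corrected. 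Since $r^2\le r^m$ on $(0,1)$, you have $\int_0^1\rho^{2-\delta}u^2\le|\rho|_\infty^{2-2\delta}\int_0^1r^m\rho^\delta(u/r)^2\,\mathrm{d}r$. This term should be time-integrated, not absorbed. In either form one arrives at
\begin{equation*}
\frac{\mathrm{d}}{\mathrm{d}t}|\rho^{1/2}u|_2^2+c_*|\rho^{\delta/2}\mathrm{D}_ru|_2^2\le C(T)+C(T)\big(1+|v|_\infty^2\big)\big|(r^{m-2}\rho^\delta)^{1/2}u\big|_2^2 .
\end{equation*}
Now integrate directly in time, with no Gronwall. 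The factor $|v|_\infty^2$ comes out as $\sup_{s\in[0,t]}|v|_\infty^2$ multiplied by $C_0$. This is precisely the linear dependence the statement asserts.
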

\begin{proof}
Multiplying both sides of $\eqref{e1.5}_2$ by $u$, along with $\eqref{e1.5}_1$ and \eqref{V-expression}, leads to
\begin{equation}\label{5-1}
\begin{aligned}
&\,\frac{1}{2}(\rho u^2)_t + 2a_1 \rho^\delta \Big(\delta u_r^2+ (\delta-1) m\frac{u u_r}{r}+  \frac{m}{2} \frac{u^2}{r^2}\Big)\\
&=\Big(\underline{2a_1\delta\rho^\delta uu_r+ a_1 (2\delta-1) m \frac{\rho^\delta u^2}{r}-\frac{1}{2}\rho u^3-A\rho^\gamma u}_{:=\mathcal{B}_4}\Big)_r+A\rho^\gamma u_r -\frac{m}{2}\frac{\rho vu^2}{r}.
\end{aligned}
\end{equation}
Here we need to show  that $\mathcal{B}_4\in W^{1,1}(I)$ and $\mathcal{B}_4|_{r=0}=0$ for {\it a.e.} $t\in (0,T)$, which allows us to apply Lemma \ref{calculus} to obtain
\begin{equation}\label{eq:B4}
\int_0^\infty (\mathcal{B}_4)_r\,\mathrm{d}r=-\mathcal{B}_4|_{r=0}=0.
\end{equation}
This can be seen as follows: 
On one hand, $\mathcal{B}_4|_{r=0}=0$ follows directly 
from \eqref{V-expression}, $u|_{r=0}=0$, 
and the fact that $(\rho,u,\mathrm{D}_r u)\in C(\bar I)$ for each $t\in (0,T]$ due to \eqref{spd2}. 
On the other hand,  if $n=2$ $(m=1)$, \eqref{spd}--\eqref{spd2} 
imply  
\begin{equation*}
\begin{aligned}
r\rho\in L^1(I),\quad \sqrt{r} (u,\mathrm{D}_r u,\rho^\frac{\delta-1}{2}\mathrm{D}_r u,\mathrm{D}_r^2 u)\in L^2(I),\quad (\rho,\mathrm{D}_r u,u_r,(\rho^{\delta-1})_r)\in L^\infty(I) 
\end{aligned}
\end{equation*}
for {\it a.e.} $t\in (0,T)$, so that 
\begin{equation*}
\begin{aligned}
|\rho|_1&\leq |\chi_1^\flat\rho|_1+|\chi_1^\sharp\rho|_1\leq |\rho|_\infty+|\chi_1^\sharp r^{-1}|_\infty|r\rho|_1<\infty, \\
|\rho^\frac{\delta}{2}\mathrm{D}_r u|_2&\leq |\chi_1^\flat\rho^\frac{\delta}{2}\mathrm{D}_r u|_2+ |\chi_1^\sharp\rho^\frac{\delta}{2}\mathrm{D}_r u|_2 \leq |\rho|_\infty^\frac{\delta}{2} |\mathrm{D}_r u|_\infty+|\chi_1^\sharp r^{-\frac{1}{2}}|_\infty|\rho|_\infty^\frac{1}{2} |\chi_1^\sharp \sqrt{r}\rho^\frac{\delta-1}{2}\mathrm{D}_r u|_2<\infty,
\end{aligned}
\end{equation*}
which, along with the H\"older inequality, yields 
\begin{align*}
&\quad \,\begin{aligned}
|\mathcal{B}_4|_1 &\leq  C_0\big(|\rho^\delta u\mathrm{D}_r u|_1+\big|\rho |u|^{3}\big|_1+|\rho^\gamma u|_1\big)\\
&\leq C_0\big(|\rho|_\infty^\delta |\sqrt{r}\mathrm{D}_r u|_2^2+|\rho|_\infty|u|_\infty|\sqrt{r}u|_2 |\sqrt{r}\mathrm{D}_r u|_2+|r\rho|_1|\rho|_\infty^{\gamma-1} |\mathrm{D}_r u|_\infty\big)<\infty,
\end{aligned}\\[1mm]
&\begin{aligned}
|(\mathcal{B}_4)_r|_1&\leq C_0\big|\big(\rho(\rho^{\delta-1})_ru\mathrm{D}_r u,\rho^\delta u\mathrm{D}_r^2 u,\rho^\delta u_r\mathrm{D}_r u\big)\big|_1\\
&\quad +C_0\big|\big(\rho^{2-\delta}(\rho^{\delta-1})_r u^3,\rho u^2u_r\big)\big|_1 + C_0\big|\big(\rho^{\gamma-\delta+1}(\rho^{\delta-1})_r u,\rho^{\gamma} u_r\big)\big|_1
\end{aligned}\\
&\qquad\quad \ \begin{aligned}
&\leq C_0|\rho|_\infty|(\rho^{\delta-1})_r|_\infty|\sqrt{r}\mathrm{D}_r u|_2^2+C_0|\rho|_\infty^\delta  |\sqrt{r}\mathrm{D}_r u|_2|\sqrt{r}\mathrm{D}_r^2 u|_2\\
&\quad +C_0|\rho^\frac{\delta}{2}u_r|_2 |\rho^\frac{\delta}{2}\mathrm{D}_r u|_2+C_0|\rho|_1\big(|\rho|_\infty^{1-\delta}|(\rho^{\delta-1})_r|_\infty|u|_\infty^3+|u|_\infty^2|u_r|_\infty\big)\\
&\quad+C_0|\rho|_1\big(|\rho|_\infty^{\gamma-\delta}|(\rho^{\delta-1})_r|_\infty|u|_\infty+|\rho|_\infty^{\gamma-1}|u_r|_\infty\big)<\infty.
\end{aligned}
\end{align*}
Furthermore, if $n=3$ $(m=2)$, \eqref{spd}--\eqref{spd2}  imply
\begin{equation*}
(u,r\mathrm{D}_r u,r\mathrm{D}_r^2 u)\in L^2(I),\quad (\rho,u,\mathrm{D}_r u,(\rho^{\delta-1})_r )\in L^\infty(I),
\quad r^2\rho\in L^1(I) 
\end{equation*}
for {\it a.e.} $t\in (0,T)$, so that
\begin{equation*}
\begin{aligned}
|\rho|_1&\leq |\chi_1^\flat\rho|_1+|\chi_1^\sharp\rho|_1\leq |\rho|_\infty+ |\chi_1^\sharp r^{-2}|_\infty |r^2\rho|_1<\infty,\qquad |\rho^\delta|_{2}\leq |\rho|_1^\frac{1}{2}|\rho|_\infty^{\delta-\frac{1}{2}}<\infty,\\
|\rho^\frac{\delta}{2}\mathrm{D}_r u|_2&\leq  |\chi_1^\flat\rho^\frac{\delta}{2}\mathrm{D}_r u|_2+ |\chi_1^\sharp\rho^\frac{\delta}{2}\mathrm{D}_r u|_2 \leq |\rho|_\infty^\frac{\delta}{2} |\mathrm{D}_r u|_\infty+|\chi_1^\sharp r^{-1}|_\infty|\rho|_\infty^\frac{1}{2} |\chi_1^\sharp r\rho^\frac{\delta-1}{2}\mathrm{D}_r u|_2<\infty,
\end{aligned}
\end{equation*}
which, along with the H\"older inequality, yields
\begin{align*}
&\quad \,\begin{aligned}
|\mathcal{B}_4|_1 &\leq C_0\big( |\rho^\delta u\mathrm{D}_r u|_1+\big|\rho |u|^3\big|_1+|\rho^\gamma u|_1\big)\\
&\leq C_0|\rho|_\infty^\frac{\delta}{2}|u|_2 |\rho^\frac{\delta}{2}\mathrm{D}_r u|_2+C_0|\rho|_1\big(|u|_\infty^3+|\rho|_\infty^{\gamma-1}|u|_\infty\big) <\infty,
\end{aligned}\\[1mm]
&\begin{aligned}
|(\mathcal{B}_4)_r|_1&\leq C_0\big|\big(\rho(\rho^{\delta-1})_ru\mathrm{D}_r u,\rho^\delta u\mathrm{D}_r^2 u,\rho^\delta u_r\mathrm{D}_r u\big)\big|_1\\
&\quad +C_0\big|\big(\rho^{2-\delta}(\rho^{\delta-1})_r u^3,\rho u^2u_r\big)\big|_1 + C_0\big|\big(\rho^{\gamma-\delta+1}(\rho^{\delta-1})_r u,\rho^{\gamma} u_r\big)\big|_1
\end{aligned}\\
&\qquad\quad \ \begin{aligned}
&\leq C_0|\rho|_2|(\rho^{\delta-1})_r|_\infty|u|_2 |\mathrm{D}_r u|_\infty+C_0|\rho^\delta|_2 |\mathrm{D}_r u|_\infty |r\mathrm{D}_r^2 u|_2\\
&\quad +C_0|\rho^\frac{\delta}{2}u_r|_2 |\rho^\frac{\delta}{2}\mathrm{D}_r u|_2+C_0|\rho|_1\big(|\rho|_\infty^{1-\delta}|(\rho^{\delta-1})_r|_\infty|u|_\infty^3+|u|_\infty^2|u_r|_\infty\big)\\
&\quad+C_0|\rho|_1\big(|\rho|_\infty^{\gamma-\delta}|(\rho^{\delta-1})_r|_\infty|u|_\infty+|\rho|_\infty^{\gamma-1}|u_r|_\infty\big)<\infty.
\end{aligned}
\end{align*}

Integrating \eqref{5-1} over $I$, we obtain from \eqref{eq:B4}, Lemmas \ref{far-p-infty} and \ref{important2}, and the H\"older and Young inequalities that
\begin{equation}\label{503}
\begin{aligned}
&\,\frac{1}{2}\frac{\mathrm{d}}{\mathrm{d}t}|\rho^\frac{1}{2}u|_2^2+ \underline{2a_1 \int_0^\infty\rho^\delta \Big(\delta u_r^2+ (\delta-1) m\frac{u u_r}{r}+  \frac{m}{2} \frac{u^2}{r^2}\Big)\,\mathrm{d}r}_{:=I_4}\\
&=A\int_0^\infty \rho^\gamma u_r\,\mathrm{d}r \ \underline{-\frac{m}{2}\int_0^\infty \frac{\rho v u^2}{r}\,\mathrm{d}r}_{:=I_5}\leq C_0|\rho|_{2\gamma-\delta}^{\gamma-\frac{\delta}{2}}|\rho^\frac{\delta}{2}u_r|_2+I_5\\
&\leq C_0\big(|\chi_1^\flat\rho|_{\infty}^{\gamma-\frac{\delta}{2}}+|\chi_1^\sharp r^{-m}|_\infty^{\gamma-\frac{\delta}{2}}\big|\chi_1^\sharp r^m\rho|_{2\gamma-\delta}^{\gamma-\frac{\delta}{2}}\big) |\rho^\frac{\delta}{2}u_r|_2+I_5\\
&\leq C(\varepsilon,T) +\varepsilon|\rho^\frac{\delta}{2}u_r|_2^2+I_5.
\end{aligned}
\end{equation}

First, for $I_4$, according to the Young inequality
\begin{equation*}
\Big|(\delta-1)mu_r \frac{u }{r}\Big|\leq m(\delta-1)^2u_r^2+\frac{m}{4}\frac{u^2}{r^2},
\end{equation*}
we can find a constant $c_*>0$, depending only on $(a_1,n,\delta)$, such that  
\begin{equation}
I_4\geq 2a_1 \int_0^\infty\rho^\delta \Big((-m\delta^2+(2m+1)\delta-m)u_r^2+ \frac{m}{4} \frac{u^2}{r^2}\Big)\,\mathrm{d}r \geq  c_* |\rho^\frac{\delta}{2}\mathrm{D}_r u|_2^2,
\end{equation}
where we have also used the fact that $-m\delta^2+(2m+1)\delta-m>0$ for $\delta\in (\frac{m}{m+1},1)$.

For $I_5$, it follows from Lemma \ref{important2} and the H\"older and Young inequalities that
\begin{equation*}
\begin{aligned}
\text{if $n=2$:}\quad I_5&\leq C_0|v|_\infty |\rho|_\infty^{1-\delta}\Big|(r\rho^\delta)^\frac{1}{2}\frac{u}{r}\Big|_2^2 \leq C(T)|v|_\infty \big|(r^{m-2}\rho^\delta)^\frac{1}{2}u\big|_2^2;\\
\text{if $n=3$:}\quad I_5&\leq C_0|v|_\infty |\rho|_\infty^{1-\delta}|\rho^\frac{\delta}{2} u|_{2}\Big|\rho^\frac{\delta}{2}\frac{u}{r}\Big|_2 \leq C(T)|v|_\infty^2\big|(r^{m-2}\rho^\delta)^\frac{1}{2}u\big|_2^2+\frac{c_*}{8}\Big|\rho^\frac{\delta}{2}\frac{u}{r}\Big|_2^2,
\end{aligned}
\end{equation*}
which yields that, for both $n=2$ and $n=3$,
\begin{equation}\label{R2-dim2}
I_5\leq C(T)(1+|v|_\infty^2)\big|(r^{m-2}\rho^\delta)^\frac{1}{2}u\big|_2^2+\frac{c_*}{8}\Big|\rho^\frac{\delta}{2}\frac{u}{r}\Big|_2^2.
\end{equation}

Thus, collecting \eqref{503}--\eqref{R2-dim2} and choosing $\varepsilon$ sufficiently small, we have 
\begin{equation*} 
\frac{\mathrm{d}}{\mathrm{d}t}|\rho^\frac{1}{2}u|_2^2+c_* |\rho^\frac{\delta}{2}\mathrm{D}_r u|_2^2 \leq C(T)+C(T)\big(1+|v|_\infty^2\big)\big|(r^{m-2}\rho^\delta)^\frac{1}{2}u\big|_2^2.
\end{equation*}

Finally, integrating the above over $[0,t]$, together with Lemma \ref{lemma-uv-lp}, yields the desired result. To check the $L^2(I)$-boundedness of $\rho_0^{\frac{1}{2}}u_0$, we see from Lemmas \ref{ale1}, \ref{initial3}, and \ref{lemma-initial} that
\begin{equation*}
\begin{aligned}
|\rho_0^\frac{1}{2}u_0|_2&\leq |\chi_1^\flat \rho_0^\frac{1}{2}u_0|_2+|\chi_1^\sharp \rho_0^\frac{1}{2}u_0|_2\leq |\rho_0|_\infty^\frac{1}{2}|u_0|_\infty+ |\rho_0|_\infty^\frac{1}{2}|\chi_1^\sharp r^{-\frac{m}{2}}|_\infty |r^\frac{m}{2}u_0|_2\\
&\leq C_0\|\rho_0\|_{L^\infty}^\frac{1}{2}\big(\|\boldsymbol{u}_0\|_{L^\infty}+\|\boldsymbol{u}_0\|_{L^2}\big)\leq C_0\|\boldsymbol{u}_0\|_{H^2}\leq C_0.
\end{aligned}
\end{equation*}

This completes the proof of Lemma \ref{rho u-L2}.
\end{proof}

Next, we aim to derive the $L^2([0,T];L^\infty(I))$-estimate for $\rho^{1-\delta} u$. We first need an auxiliary lemma associated with 
parameter $\tilde{p}_m(\delta)$.
\begin{lem}\label{lemma-ppp}
Let $\tilde{p}_m(\delta)$ be defined as in {\rm Lemma \ref{lemma-uv-lp}}, and let $(\delta,\gamma)$ satisfy \eqref{del-gam}.
\begin{itemize}
\item[{\rm(i)}] If $n=2$ $(m=1)$, then
\begin{equation}\label{510'}
\delta<f^\mathsf{a}(\tilde{p}_{1}(\delta)), \qquad\delta<f^\mathsf{b}(\tilde{p}_{1}(\delta)),
\end{equation}
with $f^\mathsf{a}(\tau):=\frac{2\tau-1}{2\tau+1}$ and $f^\mathsf{b}(\tau):=\frac{2\tau+3}{2\tau+7};$  

\item[{\rm(ii)}] If $n=3$ $(m=2)$, then
\begin{equation}\label{510}
\delta<f^\mathsf{c}(\tilde{p}_{2}(\delta)),\qquad \text{with } f^\mathsf{c}(\tau):=\frac{\tau+2}{\tau+4}.
\end{equation}
\end{itemize}
\end{lem}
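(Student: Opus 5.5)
The plan is to reduce each inequality in Lemma~\ref{lemma-ppp} to an explicit lower bound on $\tilde p_m(\delta)$, and then verify that bound from the closed forms in Lemma~\ref{Lemma-pp}.

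\textbf{Reduction.} Each of $f^\mathsf{a},f^\mathsf{b},f^\mathsf{c}$ is a Möbius function of $\tau$ with positive denominator for $\tau>0$. Since $\delta<1$, each inequality $\delta<f(\tau)$ can be solved linearly for $\tau$:
\begin{equation*}
\delta<f^\mathsf{a}(\tau)\iff \tau>\frac{1+\delta}{2(1-\delta)},\qquad
\delta<f^\mathsf{b}(\tau)\iff \tau>\frac{7\delta-3}{2(1-\delta)},\qquad
\delta<f^\mathsf{c}(\tau)\iff \tau>\frac{4\delta-2}{1-\delta}.
\end{equation*}
For example, $\delta(2\tau+1)<2\tau-1$ is equivalent to $2\tau(1-\delta)>1+\delta$, and the other two cases are identical in form. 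Lemma~\ref{Lemma-pp} gives $\tilde p_m(\delta)>2$, so $\tau=\tilde p_m(\delta)$ is a legitimate positive argument.

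\textbf{Case $n=2$.} We insert $\tilde p_1(\delta)=\frac{2\delta^2+2\delta\sqrt{2\delta-1}}{(1-\delta)^2}$ and multiply through by $(1-\delta)^2>0$. The two required inequalities become
\begin{equation*}
2\delta^2+2\delta\sqrt{2\delta-1}>\tfrac12(1+\delta)(1-\delta)
\quad\text{and}\quad
2\delta^2+2\delta\sqrt{2\delta-1}>\tfrac12(7\delta-3)(1-\delta).
\end{equation*}
We drop the nonnegative square-root term and use $\delta>\tfrac12$, which gives $2\delta^2>\tfrac12$.

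For the first inequality, $\tfrac12(1-\delta^2)<\tfrac12$ holds trivially.

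For the second, the quadratic $(7\delta-3)(1-\delta)$ attains its maximum at $\delta=\tfrac57$. Its maximum value is $2\cdot\tfrac27=\tfrac47$, so the right-hand side is at most $\tfrac27<\tfrac12$. Both parts of \eqref{510'} follow.

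\textbf{Case $n=3$.} We use $\tilde p_2(\delta)=\frac{\delta(2\delta-1)+\sqrt{\delta(2\delta-1)(3\delta-2)}}{(1-\delta)^2}$. The square root is real because $\delta>7-2\sqrt{10}>\tfrac23$. After multiplying by $(1-\delta)^2$, the claim \eqref{510} becomes
\begin{equation*}
\delta(2\delta-1)+\sqrt{\delta(2\delta-1)(3\delta-2)}>2(2\delta-1)(1-\delta).
\end{equation*}
We again drop the nonnegative root. It then suffices to have $\delta>2(1-\delta)$, since $2\delta-1>0$. This is equivalent to $\delta>\tfrac23$, which holds because $7-2\sqrt{10}\approx0.675$.

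\textbf{Main obstacle.} The argument is elementary. The only point needing care is the sign bookkeeping: we need $1-\delta>0$, the positivity of the denominators of $f^\mathsf{a},f^\mathsf{b},f^\mathsf{c}$, and the reality of the square root. I expect the single place where the range \eqref{del-gam} is genuinely used to be the comparison $\delta>\tfrac23$ in three dimensions.
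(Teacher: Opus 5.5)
Your proof is correct and follows essentially the same route as the paper: after clearing denominators, your three inequalities are, up to a positive constant factor, exactly the paper's conditions $5\delta^2-1+4\delta\sqrt{2\delta-1}>0$, $11\delta^2-10\delta+3+4\delta\sqrt{2\delta-1}>0$ and $(2\delta-1)(3\delta-2)+\sqrt{\delta(2\delta-1)(3\delta-2)}>0$, which you verify explicitly by dropping the nonnegative square root. One minor correction to your closing remark: the range \eqref{del-gam} is also genuinely used in the two-dimensional case, since $\delta>\tfrac12$ gives both $2\delta^2>\tfrac12$ and the reality of $\sqrt{2\delta-1}$.
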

\begin{proof}
We divide the proof into two steps.

\smallskip
\textbf{1. Proof for \eqref{510'}.} Since $\tilde{p}_1(\delta)>2$ in view of Lemma \ref{Lemma-pp},  a direct calculation gives
\begin{equation*}
\begin{aligned}
\delta<f^\mathsf{a}(\tilde{p}_{1}(\delta))&\iff 5\delta^2-1+4\delta\sqrt{2\delta-1}>0,\\
\delta<f^\mathsf{b}(\tilde{p}_{1}(\delta))&\iff 11\delta^2-10\delta+3+4\delta\sqrt{2\delta-1}>0,
\end{aligned}
\end{equation*}
which, of course, leads to \eqref{510'}, due to $\delta\in(\frac{1}{2},1)$.

\smallskip
\textbf{2. Proof for \eqref{510}.} Similarly, via a direct calculation, we have
\begin{equation*}
\delta<f^\mathsf{c}(\tilde{p}_{2}(\delta))\iff (2\delta-1)(3\delta-2)+\sqrt{\delta(2\delta-1)(3\delta-2)}>0,
\end{equation*}
and the right-hand sides of the above hold automatically, due to $1>\delta>\frac{2}{3} >\frac{1}{2}$.
\end{proof}

As a direct consequence of Lemma \ref{lemma-ppp}, we have
\begin{lem}\label{fffaa}
Let $\tilde{p}_m(\delta)$ be defined as in {\rm Lemma \ref{lemma-uv-lp}}, and let $(\delta,\gamma)$ satisfy \eqref{del-gam}. Then there exists a parameter $q\in (n,\tilde{p}_{m}(\delta))$  such that
\begin{itemize}
\item[{\rm(i)}] if $n=2$ $(m=1)$, then
\begin{equation} 
\qquad(2q-1)-(2q+1)\delta\geq 0,\quad(2q+3)-(2q+7)\delta\geq 0,\quad
(2q+1)-(2q+3)\delta> 0;
\end{equation}
\item[{\rm(ii)}] if $n=3$ $(m=2)$, then
\begin{equation} 
 (q+2)-(q+4)\delta\geq 0,\qquad q-(q+1)\delta\geq 0,\qquad
(q+1)-(q+2)\delta>0.
\end{equation}
\end{itemize}
\end{lem}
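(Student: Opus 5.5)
The plan is to rewrite each inequality as an upper bound on $\delta$ by an increasing rational function of $q$. We then reduce everything to the strict inequalities of Lemma \ref{lemma-ppp}, evaluated at $\tau=\tilde p_m(\delta)$. Finally, continuity lets us move $q$ slightly below $\tilde p_m(\delta)$ while keeping $q>n$, which is possible since $\tilde p_m(\delta)>n$ by Lemma \ref{Lemma-pp}.

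\textbf{Case $n=2$.} For $q>0$ the three inequalities are equivalent to
\[
\delta\le f^{\mathsf a}(q),\qquad \delta\le f^{\mathsf b}(q),\qquad \delta<\frac{2q+1}{2q+3}.
\]
The maps $f^{\mathsf a}(\tau)=\frac{2\tau-1}{2\tau+1}$, $f^{\mathsf b}(\tau)=\frac{2\tau+3}{2\tau+7}$ and $\tau\mapsto\frac{2\tau+1}{2\tau+3}$ are continuous and increasing on $(0,\infty)$. Moreover $\frac{2q+1}{2q+3}>\frac{2q-1}{2q+1}$, because $(2q+1)^2>(2q-1)(2q+3)$ reduces to $1>-3$. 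Hence the third inequality follows from the first. By \eqref{510'}, $\delta<\min\{f^{\mathsf a},f^{\mathsf b}\}(\tilde p_1(\delta))$ strictly, so by continuity there is $\eta>0$ such that $\delta<\min\{f^{\mathsf a},f^{\mathsf b}\}(q)$ for all $q\in(\tilde p_1(\delta)-\eta,\tilde p_1(\delta))$. Choosing such a $q$ with also $q>2$ gives (i).

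\textbf{Case $n=3$.} For $q>0$ the inequalities read
\[
\delta\le f^{\mathsf c}(q)=\frac{q+2}{q+4},\qquad \delta\le \frac{q}{q+1},\qquad \delta<\frac{q+1}{q+2}.
\]
All three functions are increasing. We compare them for $q\ge2$:
\begin{itemize}
\item $\frac{q}{q+1}\ge\frac{q+2}{q+4}$ is equivalent to $q^2+4q\ge q^2+3q+2$, that is, $q\ge2$;
\item $\frac{q+1}{q+2}>\frac{q}{q+1}$ always holds.
\end{itemize}
So for $q\ge 2$ the constraint $\delta<f^{\mathsf c}(q)$ implies all three. By \eqref{510}, $\delta<f^{\mathsf c}(\tilde p_2(\delta))$ strictly. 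Continuity and monotonicity then give some $q\in(3,\tilde p_2(\delta))$, close enough to $\tilde p_2(\delta)$, with $\delta<f^{\mathsf c}(q)$. This proves (ii).

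There is no real obstacle here: the substance lies in Lemma \ref{lemma-ppp}. The only points needing care are the two algebraic comparisons between the rational functions and the strictness at $\tilde p_m(\delta)$, which is what permits choosing $q<\tilde p_m(\delta)$.
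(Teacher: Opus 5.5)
Your proof is correct and follows essentially the same route as the paper: both choose $q<\tilde p_m(\delta)$ close to $\tilde p_m(\delta)$, using the strict inequalities of Lemma~\ref{lemma-ppp}, and then deduce the remaining conditions from $\delta\le f^{\mathsf a}(q)$, $\delta\le f^{\mathsf b}(q)$ (for $n=2$) or $\delta\le f^{\mathsf c}(q)$ (for $n=3$). The only difference is cosmetic: the paper gets the auxiliary inequalities from identities such as $q-(q+1)\delta=(q+2)-(q+4)\delta+(3\delta-2)$ together with the range of $\delta$ (e.g.\ $\delta<1$, $\delta>\tfrac{2}{3}$), whereas you compare the rational functions of $q$ using $q\ge 2$; both arguments are valid.
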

\begin{proof}
Based on Lemma \ref{lemma-ppp}, since $\tilde{p}_{m}(\delta)>n$ in view of Lemma \ref{Lemma-pp} and $(f^\mathsf{a},f^\mathsf{b},f^\mathsf{c})(\tau)$ are all strictly increasing with respect of $\tau$ for $\tau\in (1,\infty)$, we can find $q\in (n,\tilde{p}_{m}(\delta))$, sufficiently closing to $\tilde{p}_m(\delta)$, such that
\begin{equation*} 
\delta\leq g(q)<g(\tilde{p}_{m}(\delta)) \qquad \text{for function $g=f^{\mathsf{a}}$, $f^{\mathsf{b}}$, or $f^{\mathsf{c}}$}.
\end{equation*}
Therefore, we can derive from the above that such a parameter $q$ satisfies, if $n=2$ ($m=1$), 
\begin{equation*}
\begin{aligned}
&(2q-1)-(2q+1)\delta\geq 0,\quad(2q+3)-(2q+7)\delta\geq 0,\\[4pt]
&(2q+1)-(2q+3)\delta=(2q-1)-(2q+1)\delta+2(1-\delta)> 0;
\end{aligned}
\end{equation*}
if $n=3$ $(m=2)$,
\begin{equation*}
\begin{aligned}
&(q+2)-(q+4)\delta\geq 0,\\[4pt]
&q-(q+1)\delta=(q+2)-(q+4)\delta+(3\delta-2)>0,\\[4pt]
&(q+1)-(q+2)\delta=(q+2)-(q+4)\delta+(2\delta-1)>0.
\end{aligned}
\end{equation*}

This completes the proof.
\end{proof}

Now, we establish the $L^2([0,T];L^\infty(I))$-estimate for $\rho^{1-\delta} u$.

\begin{lem}\label{cru3}
For any $\varepsilon\in (0,1)$, there exists a constant $C(\varepsilon,T)>0$ such that
\begin{equation*}
\int_0^t |\rho^{1-\delta}u|_{\infty}^2\, \mathrm{d}s\leq C(\varepsilon,T)\Big(1+ \int_0^t |v|_\infty^2\,\mathrm{d}s\Big)+\varepsilon \sup_{s\in[0,t]}|v|_\infty^2\qquad\text{for all $t\in [0,T]$}.
\end{equation*}
\end{lem}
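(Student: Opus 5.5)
We will use the one-dimensional embedding $W^{1,1}(I)\hookrightarrow L^\infty(I)$, applied to a suitable power of $|u|$ times a power of $\rho$, as announced in the strategy section. The plan is to write, for each $s$,
\begin{equation*}
|\rho^{1-\delta}u|_\infty^{q_*}\le \int_0^\infty \big|\big(\rho^{(1-\delta)q_*}|u|^{q_*}\big)_r\big|\,\mathrm{d}r,
\end{equation*}
with an exponent $q_*$ tied to the parameter $q\in(n,\tilde p_m(\delta))$ from Lemma \ref{fffaa}. Expanding the derivative gives two terms. The first is $\rho^{(1-\delta)q_*}|u|^{q_*-1}u_r$. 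The second is $\rho^{(1-\delta)q_*-1}\rho_r|u|^{q_*}$, which by \eqref{V-expression} equals, up to constants, $\rho^{(1-\delta)q_*+1-\delta}(v-u)|u|^{q_*}$. The term carrying $v$ is where $|v|_\infty$ enters linearly. The terms without $v$ are absorbed using the dissipation $\int_0^t|(r^m\rho^\delta)^{1/2}|u|^{\frac{q-2}{2}}\mathrm{D}_ru|_2^2$ and the bound $|(r^m\rho)^{1/q}u|_q\le C(T)$ from Lemma \ref{lemma-uv-lp}, together with the upper bound $|\rho|_\infty\le C(T)$ from Lemma \ref{important2}.

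The hardest part is near the origin, where the weight $r^m$ in Lemma \ref{lemma-uv-lp} degenerates. We split every integral with $\chi_1^\flat,\chi_1^\sharp$. On $[1,\infty)$ the weight $r^m\ge1$, so Hölder with the $r^m$-weighted $L^q$ and dissipation bounds closes directly. On $[0,1]$ we trade powers of $r$ for spare powers of $\rho$, but we cannot use Lemma \ref{l4.3}, since $\rho$ is already bounded. Instead we use the point value $u(0)=0$ and Hardy's inequality (Lemma \ref{hardy}) to control $|u|/r$ by $u_r$, together with Hölder integrability of $r^{-m\theta}$ near $0$. This integrability is exactly where the inequalities of Lemma \ref{fffaa} enter: $q-(q+1)\delta\ge0$ and $(q+1)-(q+2)\delta>0$ for $n=3$, and their analogues for $n=2$. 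They guarantee that the resulting powers of $r$ are integrable and the powers of $\rho$ are nonnegative.

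Schematically, we aim for
\begin{equation*}
|\rho^{1-\delta}u|_\infty^2\le C(T)\Big(1+|(r^m\rho^\delta)^{\frac12}|u|^{\frac{q-2}{2}}\mathrm{D}_ru|_2^{2\theta_1}+|\rho^{\frac\delta2}\mathrm{D}_ru|_2^{2\theta_2}\Big)\big(1+|v|_\infty\big)^{\theta_3}
\end{equation*}
with $\theta_1,\theta_2\le1$ and $\theta_3<2$. Lemma \ref{rho u-L2} supplies $\int_0^t|\rho^{\delta/2}\mathrm{D}_ru|_2^2\le C(T)(1+\sup|v|_\infty^2)$. To finish, we integrate in time and apply Hölder and Young. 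The factor $|v|_\infty$ multiplying the time-integrable dissipation yields $C(\varepsilon,T)(1+\int_0^t|v|_\infty^2)$. The contribution routed through Lemma \ref{rho u-L2} carries $\sup|v|_\infty^2$ only with a total power strictly less than its full weight, so Young turns it into $\varepsilon\sup_{s\le t}|v|_\infty^2$ plus a constant.

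The main obstacle is the exponent bookkeeping on $[0,1]$. We must choose $q_*$ and the Hölder exponents so that three things hold at once: the $r$-singularities are integrable, the power of $\rho^{\delta/2}\mathrm{D}_ru$ is at most $2$, and the total homogeneity in $\sup|v|_\infty$ stays strictly below $2$. We would first try $q_*=2$, pairing the $u_r$ term with the weighted dissipation. If the $r$-powers fail to be integrable, we fall back to $q_*=q$ and use the constraints of Lemma \ref{fffaa} directly.
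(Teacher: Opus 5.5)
Your skeleton is the paper's: the bound $|\rho^{1-\delta}u|_\infty^q\le|(\rho^{q(1-\delta)}|u|^q)_r|_1$ with $q\in(n,\tilde p_m(\delta))$ from Lemma \ref{fffaa}, the substitution of $\rho_r$ via $v$, Lemmas \ref{lemma-uv-lp}, \ref{rho u-L2}, \ref{important2}, and Hölder/Young in time. But the step you single out as the main obstacle is handled by a mechanism that does not work, and the estimate itself is never carried out. Near the origin, an upper bound on $\rho$ gives no powers of $r$, so there is nothing to ``trade''. The Hardy step from $u_r$ to $u/r$ is redundant in 3D. In 2D it is false, because the needed weight is the critical one: $\int_0^1 r^{-1}w^2\,\mathrm{d}r$ is not controlled by $\int_0^1 r w_r^2\,\mathrm{d}r$, even when $w(0)=0$. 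The missing observation is that the dissipation in Lemma \ref{lemma-uv-lp} already contains the $u/r$-component of $\mathrm{D}_ru$. Hence
\[
\int_0^t\big|(r^{m-2}\rho^\delta)^{\frac1q}u\big|_q^q\,\mathrm{d}s\le C(T).
\]
For $n=3$ the weight $r^{m-2}\equiv1$, so there is no degeneracy at the origin at all. For $n=2$ one uses Cauchy--Schwarz:
\[
\int_0^\infty\rho^{b}|u|^q\,\mathrm{d}r\le|\rho|_\infty^{b-\frac{1+\delta}{2}}\big|(r^{-1}\rho^\delta)^{\frac1q}u\big|_q^{\frac q2}\big|(r\rho)^{\frac1q}u\big|_q^{\frac q2}.
\]
No cut-off at $r=1$, no Hardy inequality, and no Hölder with $r^{-m\theta}$ is needed.

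With this in hand, the paper bounds the right-hand side by three terms. $I_6$ is the $v$-term. $I_7=C|\rho^{1-\delta}u|_\infty|\rho^{q(1-\delta)}u^q|_1$. $I_8=C|\rho^{1-\delta}u|_\infty^{\frac{q-2}{2}}|\rho^{(q+2)-(q+3)\delta}u^q|_1^{\frac12}|\rho^{\frac\delta2}u_r|_2$. The factors of $|\rho^{1-\delta}u|_\infty$ in $I_7$ and $I_8$, which are absent from your schematic, are absorbed into the left side by Young. The $u_r$ factor is paired with the unweighted $|\rho^{\delta/2}u_r|_2$ of Lemma \ref{rho u-L2}, not with the $r^m$-weighted dissipation; in 3D the latter would leave $\int_0^1 r^{-2}\rho^{\cdot}|u|^q\,\mathrm{d}r$, which none of the available bounds controls. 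The inequalities of Lemma \ref{fffaa} say exactly that the leftover exponents of $|\rho|_\infty$ are nonnegative, after extracting $\rho^\delta|u|^q$ (3D) or the above geometric mean (2D). They have nothing to do with $r$-integrability. This is also why your first attempt $q_*=2$ fails. Once $\delta$ is close to $1$, the leftover $\rho$-exponents become too small, e.g.\ $|\rho^{2-\frac{5\delta}{2}}u|_2$ opposite $|\rho^{\frac\delta2}u_r|_2$, so one needs $q$ close to $\tilde p_m(\delta)$. That choice also makes the growth in $\sup_s|v|_\infty$ of the $u_r$-contribution subcritical: power $\frac{4}{q+2}$ for $n=2$ and $\frac43$ for $n=3$. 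Young then turns it into $\varepsilon\sup_s|v|_\infty^2+C(\varepsilon,T)$.
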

\begin{proof}
We divide the proof into three steps.

\smallskip
\textbf{1.} Let the parameter $q\in (n,\tilde{p}_m(\delta))$ be chosen as in Lemma \ref{fffaa}. First, by \eqref{V-expression}, Lemma \ref{calculus}, and the H\"older inequality, we have
\begin{equation}\label{5122}
\begin{aligned}
|\rho^{1-\delta}u|_\infty^{q}&\leq |(\rho^{q(1-\delta)} |u|^{q})_r|_1\leq C_0\big(\big|\rho^{(q+1)(1-\delta)}|u|^{q}(v,u)\big|_1+\big|\rho^{q(1-\delta)} |u|^{q-1}u_r\big|_1\big)\\[6pt]
&\leq C_0|v|_\infty\big|\rho^{(q+1)(1-\delta)}u^{q}\big|_1+C_0|\rho^{1-\delta}u|_\infty \big|\rho^{q(1-\delta)}u^{q}\big|_1\\[-4pt]
&\quad +C_0|\rho^{1-\delta}u|_\infty^\frac{q-2}{2}\big|\rho^{(q+2)-(q+3)\delta}u^q \big|_1^\frac{1}{2}|\rho^\frac{\delta}{2}u_r|_2 :=\sum_{i=6}^{8} I_i.
\end{aligned}
\end{equation}

\smallskip
\textbf{2.} If $n=2$ $(m=1)$,  by Lemmas \ref{lemma-uv-lp}, \ref{important2}, and \ref{fffaa}, and the Young inequality, we have
\begin{align}
&\begin{aligned}
I_6&\leq C_0|v|_\infty|\rho|_\infty^\frac{(2q+1)-(2q+3)\delta}{2}  \big|(r^{-1}\rho^\delta)^\frac{1}{q}u\big|_q^\frac{q}{2} \big|(r\rho)^\frac{1}{q}u\big|_q^\frac{q}{2}\leq C_0|v|_\infty\big|(r^{m-2}\rho^\delta)^\frac{1}{q}u\big|_q^\frac{q}{2},\notag 
\end{aligned}\\
&\begin{aligned}[b]
I_7&\leq C_0|\rho^{1-\delta}u|_\infty|\rho|_\infty^\frac{(2q-1)-(2q+1)\delta}{2} \big|(r^{-1}\rho^\delta)^\frac{1}{q}u\big|_q^\frac{q}{2} \big|(r\rho)^\frac{1}{q}u\big|_q^\frac{q}{2} \\
&\leq C(T)\big|(r^{m-2}\rho^\delta)^\frac{1}{q}u\big|_q^\frac{q^2}{2q-2}+\frac{1}{20}|\rho^{1-\delta}u|_\infty^{q},
\end{aligned}\label{5122'}\\
&\begin{aligned}
I_{8}&\leq C_0|\rho^{1-\delta}u|_\infty^\frac{q-2}{2}|\rho^\frac{\delta}{2}u_r|_2|\rho|_\infty^{\frac{(2q+3)-(2q+7)\delta}{4}} \big|(r^{-1}\rho^\delta)^\frac{1}{q}u\big|_q^\frac{q}{4}\big|(r\rho)^\frac{1}{q}u\big|_q^\frac{q}{4}\notag\\
&\leq C(T)|\rho^\frac{\delta}{2}u_r|_2^\frac{2q}{q+2}\big|(r^{m-2}\rho^\delta)^\frac{1}{q}u\big|_q^\frac{q^2}{2q+4} +\frac{1}{20}|\rho^{1-\delta}u|_\infty^{q}.
\end{aligned}
\end{align}

Combining \eqref{5122}--\eqref{5122'}, we arrive at the estimate of form:
\begin{equation*}
|\rho^{1-\delta}u|_\infty^2\leq C(T)\Big(|v|_\infty^\frac{2}{q}\big|(r^{m-2}\rho^\delta)^\frac{1}{q}u\big|_q+\big|(r^{m-2}\rho^\delta)^\frac{1}{q}u\big|_q^\frac{q}{q-1}+|\rho^\frac{\delta}{2}u_r|_2^\frac{4}{q+2}\big|(r^{m-2}\rho^\delta)^\frac{1}{q}u\big|_q^\frac{q}{q+2}\Big).
\end{equation*}
Since $q>2$, integrating the above over $[0,t]$, along with Lemmas \ref{lemma-uv-lp} and \ref{rho u-L2}, and the H\"older and Young inequalities, yields that, for all $\varepsilon\in (0,1)$,
\begin{equation*}
\begin{aligned}
\int_0^t |\rho^{1-\delta}u|_\infty^2\,\mathrm{d}s&\leq C(T)\int_0^t\Big(|v|_\infty^\frac{2}{q}\big|(r^{m-2}\rho^\delta)^\frac{1}{q}u\big|_q+\big|(r^{m-2}\rho^\delta)^\frac{1}{q}u\big|_q^\frac{q}{q-1}\Big)\,\mathrm{d}s\\
&\quad +C(T)\Big(\int_0^t |\rho^\frac{\delta}{2}u_r|_2^2\,\mathrm{d}s\Big)^\frac{2}{q+2}\Big(\int_0^t \big|(r^{m-2}\rho^\delta)^\frac{1}{q}u\big|_q \,\mathrm{d}s\Big)^\frac{q}{q+2}\\
&\leq C(\varepsilon,T)\int_0^t \big(|v|_\infty^2 +\big|(r^{m-2}\rho^\delta)^\frac{1}{q}u\big|_q^q+1\big)\,\mathrm{d}s +\varepsilon\sup_{s\in[0,t]}|v|_\infty^2\\
&\leq C(\varepsilon,T)\Big(1+ \int_0^t |v|_\infty^2\,\mathrm{d}s\Big)+\varepsilon \sup_{s\in[0,t]}|v|_\infty^2.
\end{aligned}
\end{equation*}

\smallskip
\textbf{3.} If $n=3$ $(m=2)$, we can similarly obtain from Lemmas \ref{important2} and \ref{fffaa}, and the Young inequality that
\begin{equation}\label{5123}
\begin{aligned}
I_6&\leq C_0|v|_\infty|\rho|_\infty^{(q+1)-(q+2)\delta}|\rho^{\frac{\delta}{q}}u|_{q}^{q}\leq C(T) |v|_\infty\big|(r^{m-2}\rho^\delta)^\frac{1}{q}u\big|_q^{q}, \\
I_7&\leq C_0|\rho^{1-\delta}u|_\infty|\rho|_\infty^{q-(q+1)\delta}|\rho^{\frac{\delta}{q}}u|_{q}^{q}\leq C(T)\big|(r^{m-2}\rho^\delta)^\frac{1}{q}u\big|_q^\frac{q^2}{q-1}+\frac{1}{20}|\rho^{1-\delta}u|_\infty^{q},\\
I_{8}&\leq C_0|\rho^{1-\delta}u|_\infty^\frac{q-2}{2}|\rho|_\infty^{\frac{(q+2)-(q+4)\delta}{2}}|\rho^{\frac{\delta}{q}}u|_{q}^\frac{q}{2}|\rho^\frac{\delta}{2}u_r|_2\\
&\leq C(T)\big(|\rho^\frac{\delta}{2}u_r|_2^{\frac{2q}{3}}+\big|(r^{m-2}\rho^\delta)^\frac{1}{q}u\big|_q^\frac{q^2}{q-1}\big) +\frac{1}{20}|\rho^{1-\delta}u|_\infty^{q}.
\end{aligned}
\end{equation}
 
Combining \eqref{5122} and \eqref{5123}, we arrive at the estimate of form:
\begin{equation*}
|\rho^{1-\delta}u|_\infty^2\leq C(T)\Big(|v|_\infty^\frac{2}{q} \big|(r^{m-2}\rho^\delta)^\frac{1}{q}u\big|_q^2+\big|(r^{m-2}\rho^\delta)^\frac{1}{q}u\big|_q^\frac{2q}{q-1}+ |\rho^\frac{\delta}{2}u_r|_2^{\frac{4}{3}}\Big).
\end{equation*}
Since $q>3$, integrating the above over $[0,t]$, along with Lemmas \ref{lemma-uv-lp} and \ref{rho u-L2}, and the H\"older and Young inequalities, yields that, for all $\varepsilon\in (0,1)$,
\begin{equation*}
\begin{aligned}
\int_0^t |\rho^{1-\delta}u|_\infty^2\,\mathrm{d}s&\leq C(T)\int_0^t\Big(|v|_\infty^\frac{2}{q} \big|(r^{m-2}\rho^\delta)^\frac{1}{q}u\big|_q^2+\big|(r^{m-2}\rho^\delta)^\frac{1}{q}u\big|_q^\frac{2q}{q-1}+ |\rho^\frac{\delta}{2}u_r|_2^{\frac{4}{3}}\Big)\,\mathrm{d}s\\
&\leq C(T)\int_0^t \big(|v|_\infty^2 +\big|(r^{m-2}\rho^\delta)^\frac{1}{q}u\big|_q^q+1\big)\,\mathrm{d}s +C(T)\Big(\int_0^t |\rho^\frac{\delta}{2}u_r|_2^{2} \,\mathrm{d}s\Big)^\frac{2}{3}\\
&\leq C(T)\Big(1+ \int_0^t |v|_\infty^2\,\mathrm{d}s\Big)+C(T)\Big(\sup_{s\in[0,t]}|v|_\infty^2\Big)^\frac{4}{3}\\
&\leq C(\varepsilon,T)\Big(1+ \int_0^t |v|_\infty^2\,\mathrm{d}s\Big)+\varepsilon \sup_{s\in[0,t]}|v|_\infty^2.
\end{aligned}
\end{equation*}

This completes the proof of Lemma \ref{cru3}.
\end{proof}

Finally, the uniform $L^\infty(I)$-estimate of $v$ can be derived as follows.
\begin{lem}\label{l4.4}
There exists a constant $C(T)>0$ such that
\begin{equation*}
|v(t)|_\infty\leq C(T) \qquad \text{for all $t\in [0,T]$}.
\end{equation*}
\end{lem}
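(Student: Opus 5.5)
The plan is to follow the strategy outlined in \S\ref{subsection-strategy}: integrate the effective velocity equation \eqref{eq:effective2} along particle paths, and close the resulting bound using Lemma \ref{cru3} and Gr\"onwall.

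\textbf{Step 1: integral representation along characteristics.} Let $X(s;t,r)$ solve $\frac{\mathrm{d}}{\mathrm{d}s}X=u(s,X)$ with $X(t;t,r)=r$. This flow is well defined and preserves $I$, since $u\in C$, $\mathrm{D}_r u$ is bounded by \eqref{spd2}, and $u|_{r=0}=0$. Along the flow, \eqref{eq:effective2} becomes the linear ODE
\[
\frac{\mathrm{d}}{\mathrm{d}s}v+\beta v=\beta u, \qquad \beta:=\frac{A\gamma}{2a_1\delta}\rho^{\gamma-\delta}\geq 0 .
\]
Solving it with the integrating factor $e^{\int\beta}$ and using $\beta\geq0$ gives
\[
|v(t,r)|\leq |v_0|_\infty+\frac{A\gamma}{2a_1\delta}\int_0^t|\rho^{\gamma-\delta}u|_\infty\,\mathrm{d}s .
\]
By Remark \ref{rk-nabla} and the assumptions on the initial data, $|v_0|_\infty\leq C_0$.

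\textbf{Step 2: reduction to $\rho^{1-\delta}u$.} Since $\gamma-\delta=(1-\delta)+(\gamma-1)$ with $\gamma-1>0$, Lemma \ref{important2} gives $\rho^{\gamma-\delta}\leq C(T)\rho^{1-\delta}$. Cauchy--Schwarz in time then yields
\[
\sup_{s\in[0,t]}|v|_\infty\leq C_0+C(T)\Big(\int_0^t|\rho^{1-\delta}u|_\infty^2\,\mathrm{d}s\Big)^{1/2}.
\]

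\textbf{Step 3: closing the estimate.} Square the last inequality and insert Lemma \ref{cru3}:
\[
\sup_{[0,t]}|v|_\infty^2\leq C(T)+C(\varepsilon,T)\int_0^t|v|_\infty^2\,\mathrm{d}s+C(T)\varepsilon\sup_{[0,t]}|v|_\infty^2 .
\]
Choose $\varepsilon$ so that $C(T)\varepsilon\leq\frac12$ and absorb the last term into the left-hand side. Setting $Y(t):=\sup_{[0,t]}|v|_\infty^2$, this leaves $Y(t)\leq C(T)+C(T)\int_0^tY\,\mathrm{d}s$. Gr\"onwall's inequality then gives $Y\leq C(T)$, which is the statement.

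\textbf{Main obstacle.} The delicate point is the absorption in Step 3: it is legitimate only if $Y(t)<\infty$ for each $t$ a priori. For the regular solution on $[0,T]$ this holds, since $u\in L^\infty$ and $(\rho^{\delta-1})_r\in C([0,T];L^\infty)$ by \eqref{spd}, so $v\in L^\infty([0,T]\times I)$ qualitatively. A secondary technical issue is justifying the characteristic representation near $r=0$ and as $r\to\infty$. If needed, this can be done by noting that $u(t,0)=0$ makes $r=0$ invariant, that $u$ is globally Lipschitz in $r$ for a.e.\ $t$, and that the solution is classical by \eqref{spd2}.
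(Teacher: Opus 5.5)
Your proposal is correct and follows essentially the same route as the paper: you integrate \eqref{eq:effective2} along particle paths, bound the source by $|\rho|_\infty^{\gamma-1}|\rho^{1-\delta}u|_\infty$ via Lemma \ref{important2}, apply Cauchy--Schwarz in time and Lemma \ref{cru3}, absorb the $\varepsilon$-term, and conclude with Gr\"onwall. Two details the paper leaves implicit are small refinements on your side: you use backward characteristics, so every point $(t,r)$ is reached directly, and you check explicitly that $\sup_{[0,t]}|v|_\infty^2$ is finite a priori, which the absorption step requires.
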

\begin{proof} 
First, define the flow map $X(t,r): [0,T]\times I \to I$ as 
\begin{equation*}
X_t(t,r)=u(t,X(t,r))\qquad\text{with} \quad X(0,r)=r.
\end{equation*}
Then \eqref{eq:effective2}, together with the above, implies the following ODE:
\begin{equation*}
\frac{\mathrm{d}}{\mathrm{d}t}v(t,X(t,r))+\frac{A\gamma}{2a_1\delta}(\rho^{\gamma-\delta}v)(t,X(t,r))=\frac{A\gamma}{2a_1\delta}(\rho^{\gamma-\delta}u)(t,X(t,r)),
\end{equation*}
which, along with the method of characteristic, yields
\begin{equation}\label{try}
\begin{aligned}
v(t,X(t,r))&=v_0(r)\exp\Big(-\int_0^t \frac{A\gamma}{2a_1\delta} \rho^{\gamma-\delta}(\tau,X(\tau,r))\,\mathrm{d}\tau\Big)\\
&\quad +\frac{A\gamma}{2a_1\delta}\int_0^t (\rho^{\gamma-\delta}u)(s,X(s,r))\ \exp\Big(\!-\!\int_s^t \frac{A\gamma}{2a_1\delta} \rho^{\gamma-\delta}(\tau,X(\tau,r))\,\mathrm{d}\tau\Big)\,\mathrm{d}s.
\end{aligned}
\end{equation}

Notice that $\rho\geq 0$, and  $v_0$ is uniformly bounded owing to Lemmas \ref{ale1}, \ref{initial3}, and \ref{lemma-initial}:
\begin{equation*}
|v_0|_\infty\leq C_0|(u_0,(\rho_0^{\delta-1})_r)|_\infty \leq C_0\|(\boldsymbol{u}_0,\nabla(\rho_0^{\delta-1}))\|_{L^\infty}\leq C_0(\|\boldsymbol{u}_0\|_{H^2}+1)\leq C_0.
\end{equation*}
Then \eqref{try}, together with the above, Lemmas \ref{important2} and \ref{cru3}, and the H\"older inequality, yields that, for any $\varepsilon\in (0,1)$,
\begin{equation*} 
\begin{aligned}
\sup_{s\in[0,t]}|v|_\infty&\leq  C_0\Big(|v_0|_\infty+\int_0^t|\rho|_\infty^{\gamma-1}|\rho^{1-\delta}u|_\infty\mathrm{d}s\Big) \leq  C_0+C(T)\Big(\int_0^t |\rho^{1-\delta}u|_\infty^2\mathrm{d}s\Big)^\frac{1}{2}\\
&\leq C_0+C(T)\Big(C(\varepsilon,T)\Big(1+\int_0^t |v|_\infty^2\,\mathrm{d}s\Big)+ \varepsilon\sup_{s\in[0,t]}|v|_\infty^2\Big)^\frac{1}{2},
\end{aligned}
\end{equation*}
which leads to
\begin{equation}\label{e-1.17} 
\sup_{s\in[0,t]}|v|_\infty^2 \leq  C(\varepsilon,T)\big(1+\int_0^t |v|_\infty^2\,\mathrm{d}s\big)+ C(T)\varepsilon\sup_{s\in[0,t]}|v|_\infty^2, 
\end{equation}
where we have used the fact that $\sup_{s\in[0,t]}|v|_\infty^2=\big(\sup_{s\in[0,t]}|v|_\infty\big)^2$. 

Finally, choosing $\varepsilon$ in \eqref{e-1.17} sufficiently small and then applying the Gr\"onwall inequality provide the desired result.
\end{proof}

\section{Non-Formation of Cavitation
inside the Fluids  in Finite Time}\label{section-nonformation}

This section is devoted to showing that the cavitation does not form inside the fluids in finite 
time and establishing the lower-bound estimate for the density.

\subsection{Refined $\rho$-Weighted Estimates for the Velocity}\label{sub-refine}

First, by Lemmas \ref{cru3}--\ref{l4.4}, we can directly obtain the $L^2([0,T];L^\infty(I))$-estimate of $\rho^{1-\delta}u$. 
\begin{lem}\label{cru4}
There exists a constant $C(T)>0$ such that
\begin{equation*}
\int_0^t |\rho^{1-\delta}u|_{\infty}^2\, \mathrm{d}s\leq C(T) \qquad\text{for all $t\in [0,T]$}.
\end{equation*}    
\end{lem}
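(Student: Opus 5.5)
This is an immediate combination of Lemma \ref{cru3} and Lemma \ref{l4.4}, and I would carry it out in two short steps.

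First, I fix $\varepsilon=\frac12$ (any fixed value in $(0,1)$ works) in Lemma \ref{cru3}. This gives, for all $t\in[0,T]$,
\begin{equation*}
\int_0^t |\rho^{1-\delta}u|_\infty^2\,\mathrm{d}s\leq C(T)\Big(1+\int_0^t|v|_\infty^2\,\mathrm{d}s\Big)+\frac12\sup_{s\in[0,t]}|v|_\infty^2 .
\end{equation*}

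Second, I insert the uniform bound $|v(s)|_\infty\leq C(T)$ for $s\in[0,T]$ from Lemma \ref{l4.4}. This yields $\int_0^t|v|_\infty^2\,\mathrm{d}s\leq T\,C(T)^2$ and $\sup_{s\in[0,t]}|v|_\infty^2\leq C(T)^2$. Hence the right-hand side is bounded by a constant depending only on $(C_0,T)$, which is the claimed estimate.

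There is no genuine obstacle here. The only point to check is that the constant in Lemma \ref{l4.4} is independent of $\varepsilon$. This holds because $\varepsilon$ was fixed inside the proof of Lemma \ref{l4.4} before the Gr\"onwall inequality was applied. So no circularity arises, and the constant $C(T)$ keeps the generic dependence on $C_0$ and $T$ used throughout this section.
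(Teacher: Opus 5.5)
Your proof is correct and is exactly the paper's argument: the paper obtains this lemma directly by combining Lemma \ref{cru3} (with any fixed $\varepsilon$) and the uniform bound on $|v|_\infty$ from Lemma \ref{l4.4}. Your remark on constants is also right: the bound in Lemma \ref{l4.4} does not depend on the $\varepsilon$ you choose here, since that lemma's proof fixes its own $\varepsilon$ before applying Gr\"onwall.
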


Next, we show the refined $\rho$-weighted estimates for $u$.
\begin{lem}\label{ele-0}
Let 
\begin{equation}\label{alphamp}
\alpha=\max\big\{0,\frac{5\delta-3}{2}\big\}. 
\end{equation}
There  exists  a constant $C(T)>0$ such that 
\begin{equation*}
|\rho^\frac{1-\alpha}{5} u(t)|_5^5 +\int_0^t |\rho^\frac{\delta-\alpha}{2}|u|^\frac{3}{2}\mathrm{D}_r u|_2^2\,\mathrm{d}s\leq C(T) \qquad\text{for any $t\in [0,T]$}.
\end{equation*}
\end{lem}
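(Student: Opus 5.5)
The plan is a weighted $L^5$-energy estimate. I would multiply $\eqref{e1.5}_2$ by $\rho^{-\alpha}|u|^3u$, with no $r^m$ weight, in the spirit of Lemma \ref{rho u-L2}, and integrate over $I$. Using $\eqref{e1.5}_1$ in the form $(\rho^{1-\alpha})_t+u(\rho^{1-\alpha})_r=-(1-\alpha)\rho^{1-\alpha}(u_r+\frac{m u}{r})$, the inertial part becomes
\begin{equation*}
\tfrac15\big(\rho^{1-\alpha}|u|^5\big)_t+\tfrac15\big(\rho^{1-\alpha}u|u|^5\big)_r-\tfrac{\alpha}{5}\rho^{1-\alpha}|u|^5\Big(u_r+\frac{mu}{r}\Big)+\tfrac{m}{5}\rho^{1-\alpha}\frac{u|u|^5}{r},
\end{equation*}
up to exact bookkeeping of the constants. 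As in \eqref{int-B1}, \eqref{int-B2} and \eqref{eq:B4}, I would justify that every total $r$-derivative integrates to zero. This uses Lemma \ref{calculus}, the regularity \eqref{spd}--\eqref{spd2}, $u|_{r=0}=0$, and the bounds on $\rho$ from Lemmas \ref{far-p-infty} and \ref{important2}. The initial quantity $|\rho_0^{\frac{1-\alpha}{5}}u_0|_5$ is finite by Lemmas \ref{ale1}, \ref{initial3} and \ref{lemma-initial}, exactly as in the earlier initial-data checks.

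For the viscous terms I would integrate by parts once. The main contribution is
\begin{equation*}
2a_1\rho^{\delta-\alpha}|u|^3\Big(4\delta u_r^2+c_1\frac{uu_r}{r}+c_2\frac{u^2}{r^2}\Big),
\end{equation*}
and I would check, by a discriminant computation like the one for $I_2$ and $I_4$, that this quadratic form in $(u_r,\frac ur)$ is bounded below by $c\,\rho^{\delta-\alpha}|u|^3|\mathrm{D}_ru|^2$ for $\delta\in(1-\frac1n,1)$. The derivative can also fall on $\rho^{-\alpha}$. There I would use \eqref{V-expression} to write $(\rho^{-\alpha})_r=-\frac{\alpha}{2a_1\delta}\rho^{1-\delta-\alpha}(v-u)$, which gives terms of the form
\begin{equation*}
\rho^{1-\alpha}(v-u)|u|^3u\Big(u_r,\frac ur\Big).
\end{equation*}
The part containing $v$ is controlled by $|v|_\infty\le C(T)$ (Lemma \ref{l4.4}) and Young's inequality, splitting into $\varepsilon\rho^{\delta-\alpha}|u|^3|\mathrm{D}_ru|^2$ plus $C\rho^{2-\delta-\alpha}|u|^5$. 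The pressure term would likewise be moved onto $u$ by integrating by parts, producing $\rho^{\gamma-\alpha}|u|^3\mathrm{D}_ru$ and $\rho^{\gamma-\alpha+1-\delta}(v-u)|u|^4$. These are absorbed by Young's inequality together with $\rho\le C(T)$ and the energy bound $\int r^m\rho u^2\le C_0$ from Lemma \ref{energy-BD}. Lemma \ref{far-p-infty} is used for $r\ge1$, and $\rho$ is bounded near the origin.

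The main obstacle is the group of cubic-in-$u$ remainders. These are the terms $\rho^{1-\alpha}|u|^5u_r$ and $\rho^{1-\alpha}|u|^5\frac ur$ coming from the transport of the weight, and the $-u$ part of $(\rho^{-\alpha})_r$, which gives $\rho^{1-\alpha}|u|^5\mathrm{D}_ru$ as well. After Young's inequality against the dissipation they leave terms of the form $\rho^{2-\delta-\alpha}|u|^7$. I would factor such a term as
\begin{equation*}
\big(\rho^{1-\alpha}|u|^5\big)\cdot\big(\rho^{1-\delta}|u|^2\big)
\end{equation*}
and try to bound the second factor by $|\rho^{1-\delta}u|_\infty^2$ times a nonnegative power of $\rho\le C(T)$. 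Since $|\rho^{1-\delta}u|_\infty^2\in L^1(0,T)$ by Lemma \ref{cru4}, Gr\"onwall's inequality would then close the estimate. Matching the powers of $\rho$ is precisely where I expect the choice $\alpha=\max\{0,\frac{5\delta-3}{2}\}$ to be dictated. If a direct match fails, the fallback is to interpolate between $\rho^{\delta-\alpha}|u|^3|\mathrm{D}_ru|^2$ and the basic bounds $|\rho^{\frac12}u|_2$ (Lemma \ref{rho u-L2}, now uniform since $v$ is bounded) and $\int_0^t|\rho^{\frac\delta2}\mathrm{D}_ru|_2^2$, using a one-dimensional estimate of the type $|\rho^{\theta}|u|^{5/2}|_\infty\lesssim|(\rho^{\theta}|u|^{5/2})_r|_1$ from Lemma \ref{calculus}. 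The coordinate-singular pieces carrying $\frac ur$ would be handled in the same way, since $\frac ur$ appears in $\mathrm{D}_ru$ and is already covered by the coercive dissipation.
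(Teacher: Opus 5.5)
The step you flag as the main obstacle does not close as proposed. If you apply Young's inequality to the $v$-free terms $\rho^{1-\alpha}|u|^5\mathrm{D}_ru$ against the dissipation, you are left with $\int\rho^{2-\delta-\alpha}|u|^7\,\mathrm{d}r$. But $\rho^{2-\delta-\alpha}|u|^7=(\rho^{1-\alpha}|u|^5)\,\rho^{\delta-1}(\rho^{1-\delta}u)^2$, so the leftover power $\delta-1$ is negative for \emph{every} $\alpha$, and no lower bound on $\rho$ is available at this stage. What this route really needs is $\int_0^t|\rho^{\frac{1-\delta}{2}}u|_\infty^2\,\mathrm{d}s<\infty$, which is Lemma \ref{cru5}, and that lemma is proved \emph{from} the present one. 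Your fallback, bounding $|\rho^{\frac{1-\delta}{2}}u|_\infty$ through $|(\rho^{\frac{5(1-\delta)}{2}}|u|^5)_r|_1$, only gives a superlinear inequality of the type $Y'\lesssim Y^{3/2}+\cdots$ for $Y=|\rho^{\frac{1-\alpha}{5}}u|_5^5$. That yields nothing on an arbitrary $[0,T]$.

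The missing idea is that these terms are not absorbed at all; they are eliminated by an exact identity (Appendix \ref{appd}).
\begin{itemize}
\item Rewrite $\frac{\alpha}{\ell}\rho^{1-\alpha}|u|^\ell u_r=\frac{\alpha}{\ell(\ell+1)}\rho^{1-\alpha}(|u|^\ell u)_r$ and integrate by parts, so the derivative lands on $\rho^{1-\alpha}$ and becomes $\frac{1-\alpha}{2a_1\delta}\rho^{2-\delta-\alpha}(v-u)$. Treat the corresponding viscous term the same way.
\item The $\frac{u^2}{r^2}$-coercivity of the quadratic form is itself obtained by integrating the cross term $\rho^{\delta-\alpha}|u|^{\ell-2}u\frac{u_r}{r}$ by parts.
\item After this, the singular terms $\frac{m(1-\alpha)}{\ell}\rho^{1-\alpha}|u|^\ell\frac{u}{r}$ coming from inertia and from viscosity cancel exactly.
\item What survives is $-\frac{(\ell-1)\alpha(1-\alpha)}{2\ell(\ell+1)a_1\delta}\rho^{2-\delta-\alpha}|u|^{\ell+2}\le0$, which is where $0\le\alpha<1$ is used.
\item The companion term $\rho^{2-\delta-\alpha}v\,u|u|^\ell$ carries only one extra power of $u$. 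It is bounded by $|v|_\infty|\rho^{1-\delta}u|_\infty\,\rho^{1-\alpha}|u|^\ell$, which is exactly what Lemma \ref{cru4} makes Gr\"onwall-admissible.
\end{itemize}
Consequently, the value of $\alpha$ is not dictated by matching powers here. This lemma only needs $0\le\alpha<\delta$ and the discriminant condition; the specific choice comes from the requirement $2\alpha-5\delta+3\ge0$ in Lemma \ref{cru5}.

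A second, smaller gap concerns the pressure term. Its $v$-part, $\int\rho^{\gamma-\delta+1-\alpha}|v||u|^{4}\,\mathrm{d}r$, appears whether or not you integrate by parts. In general it cannot be absorbed using Young's inequality, $\rho\le C(T)$ and the energy bounds. Interpolating $\rho^{\gamma-\delta+1-\alpha}|u|^4$ between $\rho^{1-\alpha}|u|^5$ and $\rho u^2$ (weighted or not) leaves a factor $\rho^{\gamma-\delta-\alpha/3}$. This is a negative power when, for example, $\gamma$ and $\delta$ are both close to $1$, and there is no lower bound on $\rho$ in the far field to compensate. The paper avoids this by proving the estimate successively for $\ell=2,3,4,5$. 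At level $\ell$ the term is bounded by $C(T)|\rho^{\frac{1-\alpha}{\ell-1}}u|_{\ell-1}^{\ell-1}$ from level $\ell-1$. The base case $\ell=2$ is closed jointly with an $L^2$-estimate of $\rho^{\gamma-\delta+\frac{1-\alpha}{2}}v$, obtained by testing \eqref{eq:effective2} with $\rho^{2(\gamma-\delta)+(1-\alpha)}v$.
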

\begin{proof}
Let $\alpha$ be given as in \eqref{alphamp}. We divide the proof into three steps.

\smallskip
\textbf{1. $L^2$-estimate for $\rho^\frac{1-\alpha}{2}u$.} We first see from $\eqref{e1.5}_1$ that
\begin{equation}\label{mass-alpha}
(\rho^{1-\alpha})_t+(u\rho^{1-\alpha})_r-\alpha\rho^{1-\alpha}u_r+m(1-\alpha)\frac{\rho^{1-\alpha}u}{r}=0.
\end{equation}

Next, multiplying $\eqref{e1.5}_2$ by $\rho^{-\alpha}u$, together with \eqref{V-expression} and \eqref{mass-alpha}, gives
\begin{equation}\label{6-3}
\!\!\!\!\begin{aligned}
&\,\big(\frac{1}{2}\rho^{1-\alpha}|u|^2\big)_t+2a_1\delta\rho^{\delta-\alpha}\Big(u_r^2-m\frac{1-\delta}{\delta-\alpha}u_r\frac{u}{r}+\frac{m}{2}\frac{1-\alpha}{\delta-\alpha}\frac{u^2}{r^2}\Big)\\
&=\Big(\underline{a_1\delta\rho^{\delta-\alpha}\big(2uu_r+m\frac{2\delta-\alpha-1}{\delta-\alpha}\frac{u^{2}}{r}\big)-\frac{\alpha+3}{6}\rho^{1-\alpha}u|u|^2}_{:=\mathcal{B}_5}\Big)_r\\
&\quad\!-\frac{A\gamma}{2a_1\delta}\rho^{\gamma-\delta+1-\alpha}(v-u)u+\rho^{1-\alpha}vu\Big(\alpha u_r-\frac{m(1-\alpha)}{2}\frac{u}{r}\Big)+\frac{\alpha(1-\alpha)}{12a_1\delta}\rho^{2-\delta-\alpha}(v-u)u^{3}\!.
\end{aligned}
\end{equation}
Here, for the detailed derivation of \eqref{6-3}, see Appendix \ref{appd}. We need to show that $\mathcal{B}_5\in W^{1,1}(I)$ and $\mathcal{B}_5|_{r=0}=0$ for \textit{a.e.} $t\in (0,T)$, so that we can apply Lemma \ref{calculus} to obtain
\begin{equation}\label{int-B5}
\int_0^\infty (\mathcal{B}_5)_r\,\mathrm{d}r=-\mathcal{B}_5|_{r=0}=0.
\end{equation}
Indeed, on one hand, $\mathcal{B}_5|_{r=0}=0$ follows from $u|_{r=0}=0$ and $(\rho,u,\mathrm{D}_r u)\in C(\bar I)$ for $t\in (0,T]$ due to \eqref{spd2}. On the other hand, we see from \eqref{spd}--\eqref{spd2} that, for \textit{a.e.} $t\in (0,T)$,
\begin{equation*}
(\rho,(\rho^{\delta-1})_r,u,\mathrm{D}_r u)\in L^\infty(I),\qquad r^\frac{m}{2}\big(u,\frac{\mathrm{D}_r^2 u}{r}\big)\in L^2(I),
\end{equation*}
which implies that, for \textit{a.e.} $t\in (0,T)$,
\begin{equation}\label{uur}
\begin{aligned}
|(u,\mathrm{D}_r u)|_2&\leq  |\chi_1^\flat (u,\mathrm{D}_r u)|_2+C_0|\chi_1^\sharp(u,u_r)|_2\\
&\leq |(u,\mathrm{D}_r u)|_\infty+C_0|\chi_1^\sharp r^{-\frac{m}{2}}|_\infty |r^\frac{m}{2}(u,u_r)|_2<\infty.
\end{aligned}
\end{equation}
Then it follows from \eqref{uur}, $\alpha<\delta<1$, and the H\"older inequality that
\begin{align*}
&\quad \, |\mathcal{B}_5|_1\leq C_0|\rho|_\infty^{\delta-\alpha}|u|_2 |\mathrm{D}_r u|_2+C_0|\rho|_\infty^{1-\alpha}|u|_\infty|u|_2^2<\infty,\\
&\begin{aligned}
|(\mathcal{B}_5)_r|_1&\leq C_0 |\rho^{1-\alpha}(\rho^{\delta-1})_ru\mathrm{D}_r u|_1+C_0 |\rho^{\delta-\alpha}u_r\mathrm{D}_r u|_1+C_0 |\rho^{\delta-\alpha}u\mathrm{D}_r^2 u|_1\\
&\quad+ C_0\big|\rho^{2-\delta-\alpha}(\rho^{\delta-1})_r u^3\big|_1+C_0|\rho^{1-\alpha}u^2u_r|_1
\end{aligned}\\
&\qquad\quad \ \begin{aligned}
&\leq C_0\big(|\rho|_\infty^{1-\alpha}|(\rho^{\delta-1})_r|_\infty |u|_2+ |\rho|_\infty^{\delta-\alpha}|u_r|_2\big) |\mathrm{D}_r u|_2+C_0|\rho|_\infty^{\delta-\alpha}|r^\frac{2-m}{2}u|_2 |r^\frac{m-2}{2}\mathrm{D}_r^2 u|_2\\
&\quad+ C_0|\rho|_\infty^{2-\delta-\alpha}|(\rho^{\delta-1})_r|_\infty|u|_\infty|u|_2^2+C_0|\rho|_\infty^{1-\alpha}|u|_2^2|u_r|_\infty<\infty.
\end{aligned}
\end{align*}

Based on \eqref{int-B5}, integrating \eqref{6-3} over $I$, we obtain
\begin{equation}\label{6.3'}
\begin{aligned}
&\,\frac{1}{2}\frac{\mathrm{d}}{\mathrm{d}t}|\rho^\frac{1-\alpha}{2}u|_2^2+ \underline{2a_1\delta\int_0^\infty\rho^{\delta-\alpha}\Big(u_r^2-m\frac{1-\delta}{\delta-\alpha}u_r\frac{u}{r}+\frac{m}{2}\frac{1-\alpha}{\delta-\alpha}\frac{u^2}{r^2}\Big)\,\mathrm{d}r}_{:=I_{9}}\\
&=-\frac{A\gamma}{2a_1\delta}\int_0^\infty\rho^{\gamma-\delta+1-\alpha}(v-u)u\,\mathrm{d}r+\int_0^\infty\rho^{1-\alpha}vu\Big(\alpha u_r+\frac{m(\alpha-1)}{2}\frac{u}{r}\Big)\,\mathrm{d}r\\
&\quad+\frac{\alpha(1-\alpha)}{12a_1\delta}\int_0^\infty\rho^{2-\delta-\alpha}(v-u)u^3\,\mathrm{d}r:=\sum_{i=10}^{12}I_i.
\end{aligned}
\end{equation}

To derive the lower bound for $I_{9}$, let $X=\sqrt{2a_1\delta} \rho^\frac{\delta-\alpha}{2}u_r$ and  $Y=\sqrt{2a_1\delta} \rho^\frac{\delta-\alpha}{2} \frac{u}{r}$. Then write $I_{9}$ in view of $(X,Y)$ as
\begin{equation*}
I_{9}=\int_0^\infty \Big(X^2-m\frac{1-\delta}{\delta-\alpha}XY+\frac{m}{2}\frac{1-\alpha}{\delta-\alpha}Y^2\Big)\,\mathrm{d}r.    
\end{equation*}
Clearly, the integrand in the above is a quadratic form with respect to $(X,Y)$ and its discriminant $\mathscr{D}$ is strictly negative:
\begin{equation*}
\mathscr{D}=m^2\frac{(1-\delta)^2}{(\delta-\alpha)^2}-2m\frac{1-\alpha}{\delta-\alpha}<0.
\end{equation*}
Hence, there exists a constant $c_{*1}>0$, which depends only on $(n,\delta)$, such that
\begin{equation}
I_{9}\geq c_{*1}\int_0^\infty (X^2+Y^2)\,\mathrm{d}r=2a_1\delta c_{*1} |\rho^\frac{\delta-\alpha}{2}\mathrm{D}_r u|_2^2.  
\end{equation}

We continue to estimate $I_{10}$--$I_{12}$. It follows from Lemmas \ref{far-p-infty}, \ref{important2}, and \ref{l4.4}, and the H\"older and Young inequalities that
\begin{equation}\label{6.8'}
\begin{aligned}
I_{10}&\leq C_0|\rho|_\infty^{\gamma-\delta}|\rho^\frac{1-\alpha}{2}u|_2^2+C_0\big|\rho^{\gamma-\delta+\frac{1-\alpha}{2}} v\big|_2|\rho^\frac{1-\alpha}{2}u|_{2}\leq C(T)\big(|\rho^\frac{1-\alpha}{2}u|_2^2+\big|\rho^{\gamma-\delta+\frac{1-\alpha}{2}} v\big|_2^2\big),\\
I_{11}&\leq C_0|\rho|_\infty^\frac{1-\delta}{2}|v|_\infty|\rho^\frac{1-\alpha}{2}u|_2  |\rho^\frac{\delta-\alpha}{2}\mathrm{D}_r u|_2\leq C(T)|\rho^\frac{1-\alpha}{2}u|_2^2+\frac{a_1\delta c_{*1}}{8} |\rho^\frac{\delta-\alpha}{2}\mathrm{D}_r u|_2^2,\\
I_{12}&\leq C_0|\rho^{2-\delta-\alpha}vu^3|_1\leq C_0|v|_\infty|\rho^{1-\delta}u|_\infty |\rho^\frac{1-\alpha}{2}u|_2^2\leq C(T)\big(1+|\rho^{1-\delta}u|_\infty^2\big) |\rho^\frac{1-\alpha}{2}u|_2^2,
\end{aligned}
\end{equation}
where, in $I_{12}$, we have also used the facts that $0\leq \alpha<1$ and 
\begin{equation*}
-\frac{\alpha(1-\alpha)}{12a_1\delta}\int_0^\infty \rho^{2-\delta-\alpha}u^4\,\mathrm{d}r\leq 0.
\end{equation*}

Therefore, collecting \eqref{6.3'}--\eqref{6.8'} yields
\begin{equation}\label{66}
\frac{\mathrm{d}}{\mathrm{d}t}|\rho^\frac{1-\alpha}{2}u|_2^2+a_1\delta c_{*1} |\rho^\frac{\delta-\alpha}{2}\mathrm{D}_r u|_2^2 \leq C(T)(1+|\rho^{1-\delta}u|_\infty^2) |\rho^\frac{1-\alpha}{2}u|_2^2+C(T)\big|\rho^{\gamma-\delta+\frac{1-\alpha}{2}}v\big|_{2}^{2}.
\end{equation}

\smallskip
\textbf{2. Estimate for $v$.} We multiply \eqref{eq:effective2} by $\rho^{2(\gamma-\delta)+(1-\alpha)}v$ and obtain from $\eqref{e1.5}_1$ that
\begin{equation}\label{duo-v}
\begin{aligned}
&\,\frac{1}{2}\big(\rho^{2(\gamma-\delta)+(1-\alpha)}v^{2}\big)_t+ \frac{1}{2} \big(\underline{u\rho^{2(\gamma-\delta)+(1-\alpha)}v^{2}}_{:=\mathcal{B}_6}\big)_r+\frac{A\gamma}{2a_1\delta}\rho^{3(\gamma-\delta)+(1-\alpha)} v^2\\
&=\big(\frac{\alpha}{2}-(\gamma-\delta)\big)\rho^{2(\gamma-\delta)+(1-\alpha)} v^2 u_r-\big((\gamma-\delta)+\frac{1-\alpha}{2}\big)m\rho^{2(\gamma-\delta)+(1-\alpha)} v^2\frac{u}{r}\\
&\quad +\frac{A\gamma}{2a_1\delta}\rho^{3(\gamma-\delta)+(1-\alpha)}  vu.
\end{aligned}
\end{equation}
We need to show that $\mathcal{B}_6\in W^{1,1}(I)$ and $\mathcal{B}_6|_{r=0}=0$ 
for {\it a.e.} $t\in (0,T)$, which allows us to apply Lemma \ref{calculus} to obtain
\begin{equation}\label{int-B6}
\int_0^\infty (\mathcal{B}_6)_r\,\mathrm{d}r=-\mathcal{B}_6|_{r=0}=0.   
\end{equation}
Indeed, on one hand,  to obtain $\mathcal{B}_6|_{r=0}=0$, we first note that $v=u+\frac{2a_1\delta}{\delta-1}(\rho^{\delta-1})_r$ and
\begin{equation*}
(u,(\rho^{\delta-1})_r)\in L^\infty(I)
\qquad\text{for {\it a.e.} $t\in(0,T)$}
\end{equation*}
due to \eqref{spd}--\eqref{spd2}, which implies  
\begin{equation}\label{psi,wuqiong0}
v\in L^\infty(I)  \qquad\text{for {\it a.e.} $t\in (0,T)$}.
\end{equation}
Then it follows from $u|_{r=0}$ and $(\rho,u) \in C(\bar I)$ for each $t\in (0,T]$ (due to \eqref{spd2}) that $\mathcal{B}_6|_{r=0}=0$. 
On the other hand, it follows from \eqref{spd}--\eqref{spd2} and \eqref{uur} that 
\begin{align*}
&(u,\mathrm{D}_r u)\in L^2(I),\quad  (\rho,u,u_r,(\rho^{\delta-1})_r)\in L^\infty(I),\\
&r^\frac{m}{2} (u,u_r,\mathrm{D}_r(\rho^{\gamma-1})_r)\in L^2(I),\quad r^\frac{m}{n} (\rho^{\delta-1})_{rr} \in L^n(I)
\end{align*}
for {\it a.e.} $t\in (0,T)$. Thus, we obtain from Lemma \ref{hardy} that
\begin{equation*}
\begin{aligned}
|(\rho^{\gamma-1})_r|_2&\leq  \big|\chi_1^\flat(\rho^{\gamma-1})_r\big|_2+\big|\chi_1^\sharp(\rho^{\gamma-1})_r\big|_2\\
&\leq |\chi_1^\flat r^\frac{2-m}{2}|_\infty\big|r^\frac{m-2}{2}(\rho^{\gamma-1})_r\big|_2 + |\chi_1^\sharp r^{-\frac{m}{2}}|_\infty \big|r^\frac{m}{2}(\rho^{\gamma-1})_r\big|_2<\infty,
\end{aligned}
\end{equation*}
which, along with \eqref{V-expression}, \eqref{psi,wuqiong0}, and the H\"older and Young inequalities, yields
\begin{align*}
&\quad\, |\mathcal{B}_6|_1 \leq C_0|u|_\infty\big(|\rho|_\infty^{2(\gamma-\delta)+(1-\alpha)}|u|_2^2+|\rho|_\infty^{1-\alpha}|(\rho^{\gamma-1})_r|_2^2\big)<\infty,\\[1mm]
&\begin{aligned}
|(\mathcal{B}_6)_r|_1&\leq C_0\big|\big(u_r\rho^{2(\gamma-\delta)+(1-\alpha)} v^2, u\rho^{\gamma-2\delta+2-\alpha}(\rho^{\gamma-1})_r v^2,u\rho^{2(\gamma-\delta)+(1-\alpha)} vv_r\big)\big|_1\\
&\leq C_0|u_r|_\infty\big(|\rho|_\infty^{2(\gamma-\delta)+(1-\alpha)}|u|_2^2+|\rho|_\infty^{1-\alpha}|(\rho^{\gamma-1})_r|_2^2\big)\\
&\quad +C_0|\rho|_\infty^{\gamma-2\delta+2-\alpha}|(\rho^{\gamma-1})_r|_2|u|_2|v|_\infty^2\\
&\quad+ C_0|\rho|_\infty^{2(\gamma-\delta)+(1-\alpha)}|v|_\infty\big( |\mathrm{D}_r u|_2|\chi_1^\flat r(u_r,(\rho^{\delta-1})_{rr})|_2+|u|_2|\chi_1^\sharp (u_r,(\rho^{\delta-1})_{rr})|_2\big)\\
&\leq C_0|u_r|_\infty\big(|\rho|_\infty^{2(\gamma-\delta)+(1-\alpha)}|u|_2^2+|\rho|_\infty^{1-\alpha}|(\rho^{\gamma-1})_r|_2^2\big)\\
&\quad +C_0|\rho|_\infty^{\gamma-2\delta+2-\alpha}|(\rho^{\gamma-1})_r|_2|u|_2|v|_\infty^2\\
&\quad+ C_0|\rho|_\infty^{2(\gamma-\delta)+(1-\alpha)} |\mathrm{D}_r u|_2|v|_\infty\big(|\chi_1^\flat r^\frac{2-m}{2}|_\infty |r^\frac{m}{2} u_r|_2+|\chi_1^\flat r^\frac{1}{n}|_{n^*}|r^\frac{m}{n} (\rho^{\delta-1})_{rr}|_n\big)\\
&\quad +C_0|\rho|_\infty^{2(\gamma-\delta)+(1-\alpha)}|u|_2|v|_\infty\big(|\chi_1^\sharp r^{-\frac{m}{2}}|_\infty |r^\frac{m}{2} u_r|_2+|\chi_1^\sharp r^{-\frac{m}{n}}|_{n^*} |r^\frac{m}{n} (\rho^{\delta-1})_{rr}|_n\big)<\infty,    
\end{aligned}
\end{align*}
where $n^*$ is the parameter defined in \S \ref{othernote}.

Now, integrating \eqref{duo-v} over $I$, we obtain from \eqref{int-B6}, Lemmas \ref{important2} and \ref{l4.4}, and the H\"older and Young inequalities that
\begin{equation}\label{new-v-0}
\begin{aligned}
&\,\frac{1}{2}\frac{\mathrm{d}}{\mathrm{d}t}\big|\rho^{\gamma-\delta+\frac{1}{2}(1-\alpha)}v\big|_2^2+ \frac{A\gamma}{2a_1\delta} \big|\rho^{\frac{3}{2}(\gamma-\delta)+\frac{1}{2}(1-\alpha)}v\big|_2^2\\
&\leq C_0 \int_0^\infty \rho^{2(\gamma-\delta)+(1-\alpha)} v^2 |\mathrm{D}_r u| \,\mathrm{d}r +C_0\int_0^\infty\rho^{3(\gamma-\delta)+(1-\alpha)} |v||u|\,\mathrm{d}r\\
&\leq  C_0\big|\rho^{\gamma-\delta+\frac{1}{2}(1-\alpha)}v\big|_2\Big(|\rho|_\infty^\frac{2\gamma+1-3\delta}{2}|v|_\infty |\rho^\frac{\delta-\alpha}{2}\mathrm{D}_r u|_2+ |\rho|_\infty^{2\gamma-2\delta} |\rho^\frac{1-\alpha}{2}u|_2\Big)\\
&\leq  C(T)\big(\big|\rho^{\gamma-\delta+\frac{1}{2}(1-\alpha)}v\big|_2^2+|\rho^\frac{1-\alpha}{2}u|_2^2\big)+\frac{a_1\delta c_{*1}}{8} |\rho^\frac{\delta-\alpha}{2}\mathrm{D}_r u|_2^2. 
\end{aligned}        
\end{equation}

Combining \eqref{new-v-0} with \eqref{66} gives
\begin{equation*} 
\begin{aligned}
&\,\frac{\mathrm{d}}{\mathrm{d}t}\big(|\rho^\frac{1-\alpha}{2}u|_2^2+\big|\rho^{\gamma-\delta+\frac{1}{2}(1-\alpha)}v\big|_2^2\big)+ \frac{a_1\delta c_{*1}}{2} |\rho^\frac{\delta-\alpha}{2}\mathrm{D}_r u|_2^2\\
&\leq C(T)\big(1+|\rho^{1-\delta}u|_\infty^2\big) |\rho^\frac{1-\alpha}{2}u|_2^2+C(T)\big|\rho^{\gamma-\delta+\frac{1-\alpha}{2}}v\big|_{2}^{2},
\end{aligned}
\end{equation*}
which, along with the Gr\"onwall inequality and Lemma \ref{cru4}, yields that, for all $t\in [0,T]$,
\begin{equation}\label{47-0}
|\rho^\frac{1-\alpha}{2}u(t)|_2^2+\big|\rho^{\gamma-\delta+\frac{1}{2}(1-\alpha)}v(t)\big|_2^2+ \int_0^t |\rho^\frac{\delta-\alpha}{2}\mathrm{D}_r u|_2^2\,\mathrm{d}s\leq  C(T),
\end{equation}
where the $L^2(I)$-bound of $(\rho_0^\frac{1-\alpha}{2}u_0,\rho_0^{\gamma-\delta+\frac{1}{2}(1-\alpha)}v_0)$ follows from Lemmas \ref{ale1}, \ref{initial3}, and \ref{lemma-initial}:
\begin{equation*}
\begin{aligned}
|\rho_0^\frac{1-\alpha}{2}u_0|_2&\leq |\rho_0|_\infty^\frac{1-\alpha}{2}\big(|\chi_1^\flat u_0|_2+|\chi_1^\sharp u_0|_2\big)\leq C_0\big(|u_0|_\infty+|\chi_1^\sharp r^{-\frac{m}{2}}|_\infty |r^\frac{m}{2}u_0|_2\big) \\
&\leq C_0\big(\|\boldsymbol{u}_0\|_{L^\infty}+\|\boldsymbol{u}_0\|_{L^2}\big)\leq C_0\|\boldsymbol{u}_0\|_{H^2}\leq C_0,\\
\big|\rho_0^{\gamma-\delta+\frac{1}{2}(1-\alpha)} v_0\big|_2&\leq |\rho_0|_\infty^{\gamma-\delta}|\rho_0^{\frac{1-\alpha}{2}}u_0|_2+C_0 |\rho_0|_\infty^\frac{1-\alpha}{2}|(\rho_0^{\gamma-1})_r|_2\\
&\leq C_0\big(1+|\chi_1^\flat(\rho_0^{\gamma-1})_r|_\infty+|\chi_1^\sharp r^{-\frac{m}{2}}|_\infty|r^\frac{m}{2}(\rho_0^{\gamma-1})_r|_2\big)\\
&\leq C_0 \big(1+\|\nabla\rho_0^{\gamma-1}\|_{L^\infty}+\|\nabla\rho_0^{\gamma-1}\|_{L^2}\big)\leq C_0\big(1+\|\nabla\rho_0^{\gamma-1}\|_{H^2}\big)\leq C_0.
\end{aligned}
\end{equation*}

\smallskip
\textbf{3. $L^5$-estimates for $\rho^\frac{1-\alpha}{5}u$.} Multiplying $\eqref{e1.5}_2$ by $\rho^{-\alpha}|u|^{\ell-2}u$ with $\ell=3,4,5$, together with \eqref{V-expression} and \eqref{mass-alpha}, gives
\begin{align}
&\,\big(\frac{1}{\ell}\rho^{1-\alpha}|u|^\ell\big)_t+2(\ell-1)a_1\delta\rho^{\delta-\alpha}|u|^{\ell-2}\Big(u_r^2-m\frac{1-\delta}{\delta-\alpha}u_r\frac{u}{r}+\frac{m}{\ell}\frac{1-\alpha}{\delta-\alpha}\frac{u^2}{r^2}\Big)\notag\\
&=\Big(\underline{a_1\delta\rho^{\delta-\alpha}|u|^{\ell-2}\big(2uu_r+2m\frac{\ell\delta-\ell\alpha- \ell+1}{\ell(\delta-\alpha)}\frac{u^{2}}{r}\big) -\frac{(\ell-1)\alpha+\ell+1}{\ell(\ell+1)}\rho^{1-\alpha}u|u|^\ell}_{:=\mathcal{B}_7}\Big)_r\label{uell}\\
&\quad -\frac{A\gamma}{2a_1\delta}\rho^{\gamma-\delta+1-\alpha}(v-u)|u|^{\ell-2}u+\rho^{1-\alpha}vu|u|^{\ell-2}\Big(\alpha u_r-\frac{m(1-\alpha)}{\ell}\frac{u}{r}\Big)\notag\\
&\quad +\frac{(\ell-1)\alpha(1-\alpha)}{2\ell(\ell+1)a_1\delta}\rho^{2-\delta-\alpha}(v-u)u|u|^{\ell}.\notag
\end{align}
The detailed derivation of \eqref{uell} is given in Appendix \ref{appd}. Then we need to show that $\mathcal{B}_7\in W^{1,1}(I)$ and $\mathcal{B}_7|_{r=0}=0$ for \textit{a.e.} $t\in(0,T)$, so that we can apply Lemma \ref{calculus} to obtain
\begin{equation}\label{int-B7}
\int_0^\infty (\mathcal{B}_7)_r\,\mathrm{d}r=-\mathcal{B}_7|_{r=0}=0.
\end{equation}
Note that, this proof can be derived similarly from that of \eqref{int-B5} in Step 1, since $u\in C(\bar I)$ for $t\in (0,T]$ due to \eqref{spd2}, and  $|\mathcal{B}_7|+|(\mathcal{B}_7)_r|\leq C(\ell)|u|^{\ell-2}(|\mathcal{B}_5|+|(\mathcal{B}_5)_r|)$.

Hence, based on \eqref{int-B7}, integrating \eqref{uell} over $I$, we have
\begin{equation}\label{6.3}
\begin{aligned}
&\,\frac{1}{\ell}\frac{\mathrm{d}}{\mathrm{d}t}|\rho^\frac{1-\alpha}{\ell}u|_\ell^\ell+ \underline{2(\ell-1)a_1\delta\int_0^\infty\rho^{\delta-\alpha}|u|^{\ell-2}\Big(u_r^2-m\frac{1-\delta}{\delta-\alpha}u_r\frac{u}{r}+\frac{m}{\ell}\frac{1-\alpha}{\delta-\alpha}\frac{u^2}{r^2}\Big)\,\mathrm{d}r}_{:=I_{13}}\\
&=-\frac{A\gamma}{2a_1\delta}\int_0^\infty\rho^{\gamma-\delta+1-\alpha}(v-u)|u|^{\ell-2}u\,\mathrm{d}r+\int_0^\infty\rho^{1-\alpha}vu|u|^{\ell-2}\Big(\alpha u_r+\frac{m(\alpha-1)}{\ell}\frac{u}{r}\Big)\,\mathrm{d}r\\
&\quad+\frac{(\ell-1)\alpha(1-\alpha)}{2\ell(\ell+1)a_1\delta}\int_0^\infty\rho^{2-\delta-\alpha}(v-u)u|u|^{\ell}\,\mathrm{d}r:=\sum_{i=14}^{16}I_i.
\end{aligned}
\end{equation}

To estimate $I_{13}$, we claim that there exists a constant $c_{*\ell}>0$, depending only on $(n,\delta,\ell)$, such that
\begin{equation}\label{panbie}
\begin{aligned}
I_{13}\geq c_{*\ell}\int_0^\infty (X^2+Y^2)\,\mathrm{d}r=2a_1\delta(\ell-1)c_{*\ell}\Big(\big|\rho^\frac{\delta-\alpha}{2}|u|^\frac{\ell-2}{2}u_r\big|_2^2+\Big|\rho^\frac{\delta-\alpha}{2}|u|^\frac{\ell-2}{2}\frac{u}{r}\Big|_2^2\Big),
\end{aligned} 
\end{equation}
where $(X,Y)$ are given by
\begin{equation*}
X=\sqrt{2(\ell-1)a_1\delta} \rho^\frac{\delta-\alpha}{2}|u|^\frac{\ell-2}{2}u_r,\qquad Y=\sqrt{2(\ell-1)a_1\delta} \rho^\frac{\delta-\alpha}{2}|u|^\frac{\ell-2}{2}\frac{u}{r}.
\end{equation*}
To this end, write $I_{13}$ in view of $(X,Y)$ as
\begin{equation*}
I_{13}=\int_0^\infty \Big(X^2-m\frac{1-\delta}{\delta-\alpha}XY+\frac{m}{\ell}\frac{1-\alpha}{\delta-\alpha}Y^2\Big)\,\mathrm{d}r.    
\end{equation*}
Since the integrand in the above is a quadratic form with respect to $(X,Y)$ and its discriminant $\mathscr{D}$ is strictly negative:
\begin{equation*}
\mathscr{D}=m^2\frac{(1-\delta)^2}{(\delta-\alpha)^2}-\frac{4m}{\ell}\frac{1-\alpha}{\delta-\alpha}<0,
\end{equation*}
we obtain claim \eqref{panbie}.

To estimate $I_{14}$--$I_{16}$, it follows from the fact that $0\leq \alpha<1$, Lemmas \ref{far-p-infty}, \ref{important2}, and \ref{l4.4}, and the H\"older and Young inequalities that
\begin{align}
&\begin{aligned}[b]
I_{14}&\leq C_0|\rho|_\infty^{\gamma-\delta}|\rho^\frac{1-\alpha}{\ell}u|_\ell^\ell+C_0|\rho|_\infty^{\gamma-\delta}|v|_\infty\big|\rho^\frac{1-\alpha}{\ell-1}u\big|_{\ell-1}^{\ell-1}\leq C(T)\big(|\rho^\frac{1-\alpha}{\ell}u|_\ell^\ell+|\rho^\frac{1-\alpha}{\ell-1}u|_{\ell-1}^{\ell-1}\big),\\
I_{15}&\leq C(\ell)|\rho|_\infty^\frac{1-\delta}{2}|v|_\infty|\rho^\frac{1-\alpha}{\ell}u|_\ell^\frac{\ell}{2}\big|\rho^\frac{\delta-\alpha}{2}|u|^\frac{\ell-2}{2}\mathrm{D}_r u\big|_2\\
&\leq C(\ell,T)|\rho^\frac{1-\alpha}{\ell}u|_\ell^\ell+\frac{a_1 \delta c_{*\ell}}{8}\big|\rho^\frac{\delta-\alpha}{2}|u|^\frac{\ell-2}{2}\mathrm{D}_r u\big|_2^2,
\end{aligned}\label{6.8}\\
&\begin{aligned}
I_{16}&\leq C(\ell)|\rho^{2-\delta-\alpha}v|u|^{\ell+1}|_1\leq C(\ell)|v|_\infty|\rho^{1-\delta}u|_\infty |\rho^\frac{1-\alpha}{\ell}u|_\ell^\ell \\
&\leq C(\ell,T)\big(1+|\rho^{1-\delta}u|_\infty^2\big) |\rho^\frac{1-\alpha}{\ell}u|_\ell^\ell,\notag
\end{aligned}
\end{align}
where we have used the following fact in $I_{16}$:
\begin{equation*}
-\frac{(\ell-1)\alpha(1-\alpha)}{2\ell(\ell+1)a_1\delta}\int_0^\infty\rho^{2-\delta-\alpha}|u|^{\ell+2}\,\mathrm{d}r\leq 0.
\end{equation*}

Therefore, collecting \eqref{6.3}--\eqref{6.8} yields
\begin{equation*}
\begin{aligned}
&\,\frac{\mathrm{d}}{\mathrm{d}t}|\rho^\frac{1-\alpha}{\ell}u|_\ell^\ell+a_1\delta c_{*\ell}\big|\rho^\frac{\delta-\alpha}{2}|u|^\frac{\ell-2}{2}\mathrm{D}_r u\big|_2^2\\
&\leq C(\ell,T)\big(1+|\rho^{1-\delta}u|_\infty^2\big) |\rho^\frac{1-\alpha}{\ell}u|_\ell^\ell+C(T)|\rho^\frac{1-\alpha}{\ell-1}u|_{\ell-1}^{\ell-1}.
\end{aligned}
\end{equation*}
Applying the Gr\"onwall inequality, we can iteratively obtain from \eqref{47-0} and Lemma \ref{cru4} that, for $\ell=3,4,5$ and any $t\in [0,T]$,
\begin{equation*}
|\rho^\frac{1-\alpha}{\ell}u(t)|_\ell^\ell+ \int_0^t\big|\rho^\frac{\delta-\alpha}{2}|u|^\frac{\ell-2}{2}\mathrm{D}_r u\big|_2^2\,\mathrm{d}s\leq  C(\ell,T).
\end{equation*}
Here, for the $L^\ell(I)$-boundedness of $\rho_0^\frac{1-\alpha}{\ell}u_0$, we see from Lemmas \ref{ale1}, \ref{initial3}, and \ref{lemma-initial} that
\begin{equation*}
\begin{aligned}
|\rho_0^\frac{1-\alpha}{\ell}u_0|_\ell&\leq |\rho_0|_\infty^\frac{1-\alpha}{\ell}\big(|\chi_1^\flat u_0|_\ell+|\chi_1^\sharp u_0|_\ell\big)\leq C(\ell)\big(|u_0|_\infty+|\chi_1^\sharp r^{-\frac{m}{\ell}}|_\infty |r^\frac{m}{\ell}u_0|_\ell\big) \\
&\leq C(\ell)\big(\|\boldsymbol{u}_0\|_{L^\infty}+\|\boldsymbol{u}_0\|_{L^\ell}\big)\leq C(\ell)\|\boldsymbol{u}_0\|_{H^2}\leq C(\ell).
\end{aligned}
\end{equation*}

This completes the proof.
\end{proof}

Based on Lemma \ref{ele-0}, we have the following $L^2([0,T];L^\infty(I))$-estimate for $\rho^\frac{1-\delta}{2}u$.
\begin{lem}\label{cru5}
There exists a constant $C(T)>0$ such that
\begin{equation*}
\int_0^t |\rho^\frac{1-\delta}{2}u|_{\infty}^2\, \mathrm{d}s\leq C(T) \qquad\text{for all $t\in [0,T]$}.
\end{equation*}    
\end{lem}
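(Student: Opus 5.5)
The estimate will follow the same one-dimensional Sobolev route as Lemma \ref{cru3}. The difference is that we now feed in the bounds already in hand: Lemmas \ref{important2}, \ref{l4.4}, \ref{cru4}, and \ref{ele-0}. We work with a power $q$ of $\rho^{\frac{1-\delta}{2}}|u|$. Since $\rho u\to0$ at infinity and $u$ is bounded, $w:=\rho^{q\frac{1-\delta}{2}}|u|^q$ vanishes as $r\to\infty$, so Lemma \ref{calculus} gives $|w|_\infty\le|w_r|_1$. Using \eqref{V-expression}, we write
\[
\rho_r=\frac{\rho^{2-\delta}}{2a_1\delta}(v-u).
\]
This yields
\[
|\rho^{\frac{1-\delta}{2}}u|_\infty^{q}\le C_0\Big(\big|\rho^{q\frac{1-\delta}{2}+1-\delta}|u|^{q}(|v|+|u|)\big|_1+\big|\rho^{q\frac{1-\delta}{2}}|u|^{q-1}u_r\big|_1\Big).
\]
The natural choice is $q=2$, i.e. estimating $|\rho^{1-\delta}u^2|_\infty$. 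With this choice the three terms are $\int\rho^{2-2\delta}|u|^2|v|$, $\int\rho^{2-2\delta}|u|^3$, and $\int\rho^{1-\delta}|u||u_r|$.

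We take the terms one at a time.
\begin{itemize}
\item[(a)] For the $v$-term we use $|v|_\infty\le C(T)$ (Lemma \ref{l4.4}). Then we need $\int\rho^{2-2\delta}u^2\,\mathrm{d}r$. Since $2-2\delta\ge1-\alpha$ fails in general, we split $\rho^{2-2\delta}u^2=\rho^{1-\alpha}u^2\cdot\rho^{1-2\delta+\alpha}$. When the exponent $1-2\delta+\alpha\ge0$, this is bounded by \eqref{47-0} and $|\rho|_\infty\le C(T)$. Otherwise we interpolate between $|\rho^{\frac{1-\alpha}{2}}u|_2$ and $|\rho^{1-\delta}u|_\infty$ (Lemma \ref{cru4}); the latter is square-integrable in time, which is all we need.
\item[(b)] For the cubic term, $\int\rho^{2-2\delta}|u|^3\le|\rho^{1-\delta}u|_\infty\int\rho^{1-\delta}u^2$. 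The remaining integral $\int\rho^{1-\delta}u^2$ is controlled in the same way as in (a).
\item[(c)] For the dissipative term we use Cauchy--Schwarz:
\[
\int\rho^{1-\delta}|u||u_r|\le|\rho^{\frac{\delta-\alpha}{2}}u_r|_2\,\big|\rho^{1-\frac{3\delta}{2}+\frac{\alpha}{2}}u\big|_2 .
\]
The first factor is $L^2$ in time by \eqref{47-0}. The second factor needs the weight $\rho^{2-3\delta+\alpha}$, which is the most singular power in the argument. Here $\alpha=\max\{0,\frac{5\delta-3}{2}\}$ enters. If $\delta\ge\frac35$, then $2-3\delta+\alpha=\frac{1-\delta}{2}\ge0$, and so it is a nonnegative power of a bounded density. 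It must, however, be paired with the $L^5$ bound of Lemma \ref{ele-0} to cover the far field, where we only have weighted control.
\end{itemize}

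The main obstacle is exactly this far-field integrability. Where $\rho\to0$ the velocity $u$ itself is not known to be in $L^2(I)$ uniformly, so each weighted integral must carry at least the power $\rho^{1-\alpha}$ (for $L^2$-type bounds) or $\rho^{1-\alpha}$ against $|u|^5$. We first try to distribute the powers using H\"older with exponents $(5,\frac54)$. That is,
\[
\int\rho^{\beta}u^2\le\Big(\int\rho^{1-\alpha}|u|^5\Big)^{\frac25}\Big(\int\rho^{\frac53(\beta-\frac25(1-\alpha))}\Big)^{\frac35},
\]
where the last factor is finite thanks to $\rho\in L^1(I)$ away from the origin (Lemma \ref{far-p-infty}) and $|\rho|_\infty\le C(T)$ near it, provided $\frac53(\beta-\frac25(1-\alpha))\ge1$. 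We check this power condition for $\beta=2-2\delta$, $1-\delta$, and $2-3\delta+\alpha$. If it fails for the $u_r$-term, we instead put $|u|^{\frac32}$ into the dissipative factor, using the $\int_0^t|\rho^{\frac{\delta-\alpha}{2}}|u|^{\frac32}\mathrm{D}_ru|_2^2$ bound of Lemma \ref{ele-0}. This means working with a higher power $q$, e.g.\ $q=\frac52$, so that the integrand contains $|u|^{q-1}u_r$ with $q-1=\frac32$.

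Finally, we collect the terms. We absorb the $|\rho^{\frac{1-\delta}{2}}u|_\infty$ factors by Young's inequality and raise to the power $2/q$. Integrating in time and using Lemma \ref{cru4}, \eqref{47-0}, and Lemma \ref{ele-0} then gives $\int_0^t|\rho^{\frac{1-\delta}{2}}u|_\infty^2\,\mathrm{d}s\le C(T)$.
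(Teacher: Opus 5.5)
Your proposal has a genuine gap. You correctly identify far-field integrability as the obstacle, but you never reach an exponent $q$ that overcomes it, and both exponents you try fail there.

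\textbf{Why $q=2$ fails.} At this stage the far-field information is
\begin{itemize}
\item $|\rho|_\infty\le C(T)$ and $\int_1^\infty\rho\,\mathrm{d}r\le C_0$;
\item $\int_0^\infty\rho u^2\,\mathrm{d}r\le C(T)$;
\item $\int_0^\infty\rho^{1-\alpha}|u|^\ell\,\mathrm{d}r\le C(T)$ for $\ell\le5$.
\end{itemize}
In particular $|u|_2$ is not yet available, since Lemma \ref{ele} uses the present lemma.

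Your own power condition $\frac53\big(\beta-\frac25(1-\alpha)\big)\ge1$ is equivalent to $\beta\ge1-\frac25\alpha$. It fails for all three exponents $\beta=2-2\delta$, $1-\delta$, $2-3\delta+\alpha$, for every $\delta\in(\frac12,1)$. For example, with $\beta=2-2\delta$ it reads $\frac25-\delta\ge0$ when $\delta\ge\frac35$, and $1-2\delta\ge0$ otherwise.

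In (a), the case $1-2\delta+\alpha\ge0$ never occurs: this quantity equals $\frac{\delta-1}{2}$ or $1-2\delta$. Interpolating $\rho^{2-2\delta}u^2$ between $\rho^{1-\delta}u\in L^\infty$ and $\rho^{\frac{1-\alpha}{2}}u\in L^2$ always leaves a factor $\rho^{\theta(1-2\delta+\alpha)}$ with a negative exponent. That factor is unbounded where $\rho\to0$.

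\textbf{Why $q=\frac52$ also fails.} Pairing $\rho^{\frac54(1-\delta)}|u|^{\frac32}|u_r|$ with $|\rho^{\frac{\delta-\alpha}{2}}|u|^{\frac32}u_r|_2$ leaves a factor $\big(\int_0^\infty\rho^{s}\,\mathrm{d}r\big)^{1/2}$ with $s<1$. For instance, $s=1-\delta$ when $\delta\ge\frac35$. This integral is not controlled by $\int_1^\infty\rho\,\mathrm{d}r$.

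\textbf{The missing idea: take $q=5$.} With $q=5$ the weights match the $L^5$ bound of Lemma \ref{ele-0} exactly. This is why $\alpha=\max\{0,\frac{5\delta-3}{2}\}$ was chosen: for $\delta\ge\frac35$ one has $\rho^{1-\alpha}|u|^5=(\rho^{\frac{1-\delta}{2}}|u|)^5$.

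Start from $|\rho^{\frac{1-\delta}{2}}u|_\infty^5\le|(\rho^{\frac{5(1-\delta)}{2}}|u|^5)_r|_1$ and use $2\alpha-5\delta+3\ge0$ to split the two resulting integrands:
\[
\rho^{\frac{5(1-\delta)}{2}}|u|^4|u_r|=\rho^{\frac{2\alpha-5\delta+3}{2}+\frac{1-\delta}{2}}\cdot\rho^{\frac{1-\alpha}{2}}|u|^{\frac52}\cdot\rho^{\frac{\delta-\alpha}{2}}|u|^{\frac32}|u_r|,
\]
and
\[
\rho^{\frac72(1-\delta)}|v-u||u|^5\le\rho^{\frac{2\alpha-5\delta+3}{2}}\,\rho^{1-\delta}(|v|+|u|)\cdot\rho^{1-\alpha}|u|^5 .
\]
All leftover powers of $\rho$ are nonnegative, so no far-field H\"older estimate against $\int\rho\,\mathrm{d}r$ is needed. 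Using $|\rho|_\infty+|v|_\infty\le C(T)$ and the $L^5$ bound, one gets
\[
|\rho^{\frac{1-\delta}{2}}u|_\infty^5\le C(T)\big(|\rho^{\frac{\delta-\alpha}{2}}|u|^{\frac32}u_r|_2+|\rho^{1-\delta}u|_\infty+1\big).
\]
Raise this to the power $\frac25$ and integrate in time. Lemma \ref{cru4} handles $|\rho^{1-\delta}u|_\infty$ and Lemma \ref{ele-0} handles the dissipative term, which gives the claim.
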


\begin{proof}
First, let $\alpha$ be given as in \eqref{alphamp}. Then we have
\begin{equation*}
2\alpha-5\delta+3\geq 0.
\end{equation*}

Next, we obtain from the above, \eqref{V-expression}, Lemmas \ref{important2}, \ref{l4.4}, \ref{ele-0}, and \ref{calculus}, and the H\"older inequality that
\begin{equation*}
\begin{aligned}
|\rho^\frac{1-\delta}{2}u|_\infty^{5}&\leq \big|(\rho^\frac{5(1-\delta)}{2} |u|^{5})_r\big|_1\leq C_0\big(\big|\rho^\frac{5(1-\delta)}{2}|u|^{3}uu_r\big|_1+\big|\rho^{\frac{7}{2}(1-\delta)} (v-u)u^{5}\big|_1\big)\\
&\leq C_0|\rho|_\infty^{\frac{2\alpha-5\delta+3}{2}}\big(|\rho|_\infty^\frac{1-\delta}{2}|\rho^\frac{1-\alpha}{5}u|_{5}^\frac{5}{2}\big|\rho^\frac{\delta-\alpha}{2}|u|^\frac{3}{2}u_r\big|_2+|v|_\infty |\rho|_\infty^{1-\delta}|\rho^\frac{1-\alpha}{5}u|_{5}^{5}\big)\\
&\quad +C_0|\rho|_\infty^\frac{2\alpha-5\delta+3}{2} |\rho^{1-\delta} u|_\infty|\rho^{\frac{1-\alpha}{5}}u|_{5}^{5}\leq C(T)\big(\big|\rho^\frac{\delta-\alpha}{2}|u|^\frac{3}{2}u_r\big|_2+|\rho^{1-\delta} u|_\infty+1\big).
\end{aligned}
\end{equation*}
Consequently, integrating the above over $[0,t]$, together with Lemmas \ref{cru4}--\ref{ele-0}, leads to the desired estimate.
\end{proof}

Finally, we can obtain the $L^\infty([0,T];L^2(I))$-estimate and $L^4([0,T];L^\infty(I))$-estimate for $u$.
\begin{lem}\label{ele}
There  exists  a constant $C(T)>0$ such that 
\begin{equation*}
|(\rho^{\gamma-\delta} v,u)(t)|_2^2 +\int_0^t ( |\rho^\frac{\delta-1}{2}\mathrm{D}_r u|_2^2+|u|_\infty^4 )\,\mathrm{d}s\leq C(T) \qquad\text{for any $t\in [0,T]$}.
\end{equation*}
\end{lem}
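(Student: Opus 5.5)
The plan is an energy estimate obtained by multiplying the momentum equation $\eqref{e1.5}_2$ by $\rho^{-1}u$, as announced in \S\ref{subsection-strategy}. This is the case $\alpha=1$ of the computation leading to \eqref{6-3}. With $\alpha=1$, the mass-type identity \eqref{mass-alpha} reduces to a trivial one: $\rho^{1-\alpha}=1$ and the $u/r$ term vanishes. We therefore obtain
\begin{equation*}
\big(\tfrac12 u^2\big)_t+2a_1\delta\rho^{\delta-1}\Big(u_r^2-m\tfrac{1-\delta}{\delta-1}u_r\tfrac{u}{r}\Big)+\dots=(\mathcal{B})_r-\tfrac{A\gamma}{2a_1\delta}\rho^{\gamma-\delta}(v-u)u+vu\,u_r+\dots .
\end{equation*}
Rather than re-deriving this identity, I would work directly: write $A(\rho^\gamma)_r/\rho=\frac{A\gamma}{2a_1\delta}\rho^{\gamma-\delta}(u-v)$ using \eqref{V-expression}, and expand the viscous term $\rho^{-1}\big(2a_1\delta(\rho^\delta(u_r+mu/r))_r-2a_1m(\rho^\delta)_ru/r\big)u$. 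Integrating by parts moves one derivative onto $\rho^{-1}u$. This produces the coercive part $\rho^{\delta-1}(\delta u_r^2+\dots+\frac{u^2}{r^2})$ together with terms containing $\rho^{\delta-2}\rho_r=\frac{1}{2a_1\delta}(v-u)$.

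For the coercive part I would again use the quadratic-form discriminant argument, as in Lemma \ref{energy-BD}, to get $c|\rho^{\frac{\delta-1}{2}}\mathrm{D}_ru|_2^2$. The boundary terms are justified via Lemma \ref{calculus}, using the regularity \eqref{spd}--\eqref{spd2}, exactly as was done for $\mathcal{B}_5$.

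The remaining terms have the form $\int \rho^{\delta-1}(v-u)u\,\mathrm{D}_ru\,\mathrm{d}r$ (from $\rho_r$), $\int\rho^{\gamma-\delta}(v-u)u$, and the transport contribution. I would bound $\int\rho^{\delta-1}|v||u||\mathrm{D}_ru|$ by
\begin{equation*}
|v|_\infty|\rho^{\frac{\delta-1}{2}}\mathrm{D}_ru|_2\,|\rho^{\frac{\delta-1}{2}}u|_2 ,
\end{equation*}
where Lemma \ref{l4.4} controls $|v|_\infty$. The term with $u^2\mathrm{D}_ru\,\rho^{\delta-1}$ is bounded by
\begin{equation*}
|\rho^{\frac{1-\delta}{2}}u|_\infty\,|\rho^{\frac{\delta-1}{2}}\mathrm{D}_ru|_2\,|\rho^{\frac{\delta-1}{2}}\cdots|.
\end{equation*}
Here the real issue is that $\rho^{\delta-1}$ is unbounded at the far-field vacuum. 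The key move is to pair it cleverly. The term $\rho^{\delta-1}u^2\mathrm{D}_ru$ is written as $(\rho^{\frac{1-\delta}{2}}u)\,u\,(\rho^{\frac{\delta-1}{2}}\mathrm{D}_ru)\rho^{\delta-1-\frac{1-\delta}{2}\cdot0}$. I would arrange the weights so that only $|\rho^{\frac{1-\delta}{2}}u|_\infty$ (which is in $L^2_t$ by Lemma \ref{cru5}), $|u|_2$ and $|\rho^{\frac{\delta-1}{2}}\mathrm{D}_ru|_2$ appear. Young's inequality then gives
\begin{equation*}
\tfrac{\mathrm d}{\mathrm dt}|u|_2^2+c|\rho^{\frac{\delta-1}{2}}\mathrm{D}_ru|_2^2\le C(T)\big(1+|\rho^{\frac{1-\delta}{2}}u|_\infty^2\big)|u|_2^2+C(T)\big(1+|\rho^{\gamma-\delta}v|_2^2\big).
\end{equation*}
The $v$-terms would be absorbed by pairing the estimate with an $L^2$ estimate of $\rho^{\gamma-\delta}v$. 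That estimate comes from multiplying \eqref{eq:effective2} by $\rho^{2(\gamma-\delta)}v$ as in \eqref{duo-v} with $\alpha=1$: the damping term is good, and the $u_r$ and $u/r$ terms are bounded using $|\rho|_\infty$, $|v|_\infty$ and $|\rho^{\frac{\delta-1}{2}}\mathrm{D}_ru|_2$ times $\rho$-powers that are nonnegative since $\gamma>1>\delta$. Gr\"onwall's inequality, together with Lemma \ref{cru5}, closes the estimate. The initial data are controlled via the compatibility condition \eqref{xiangrongxing} and Lemmas \ref{ale1}, \ref{lemma-initial}.

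Finally, $\int_0^t|u|_\infty^4$ follows from $|u|_\infty^2\le C|u|_2\,|u_r|_2$ (the 1-D Agmon inequality on $I$ with $u(0)=0$) and $|u_r|_2\le|\rho|_\infty^{\frac{1-\delta}{2}}|\rho^{\frac{\delta-1}{2}}\mathrm{D}_ru|_2$, using Lemma \ref{important2}. This gives
\begin{equation*}
|u|_\infty^4\le C(T)|u|_2^2\,|\rho^{\frac{\delta-1}{2}}\mathrm{D}_ru|_2^2 ,
\end{equation*}
which is integrable in time. I expect the main obstacle to be the weight bookkeeping in the cross terms generated by $\rho_r$ and by the transport, so that no uncontrolled negative power of $\rho$ survives; the identity $\rho^{\delta-2}\rho_r=(v-u)/(2a_1\delta)$ together with Lemma \ref{cru5} is the tool I would rely on there.
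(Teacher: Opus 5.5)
Your overall plan is the paper's: multiply $\eqref{e1.5}_2$ by $\rho^{-1}u$, couple it with the $L^2$ estimate of $\rho^{\gamma-\delta}v$ obtained from \eqref{eq:effective2}, close by Gr\"onwall using Lemma \ref{cru5}, and get $\int_0^t|u|_\infty^4$ from $|u|_\infty^2\le 2|u|_2|u_r|_2$. The $v$-part and the final step are fine. The gap is the coercivity step.

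The identity you display is \eqref{6-3} at $\alpha=1$. Its quadratic form is $2a_1\delta\rho^{\delta-1}(u_r^2+m u_r\frac{u}{r})$. The coefficient $\frac m2\frac{1-\alpha}{\delta-\alpha}$ of $\frac{u^2}{r^2}$ vanishes at $\alpha=1$, the discriminant is $m^2>0$, and the form is indefinite. The direct computation you describe lands on the very same form. Moving the derivative onto $\rho^{-1}u$ gives $-2a_1\delta\int\rho^{\delta-1}(u_r+\frac{m}{r}u)u_r$ plus $v$-terms, and since the multiplier has no $r^m$ factor, nothing generates a $u^2/r^2$ term (unlike Lemma \ref{energy-BD}). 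So there is no positive-definite form to which a discriminant argument can be applied.

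The missing step is one more integration by parts of the cross term, using $(\rho^{\delta-1})_r=\frac{\delta-1}{2a_1\delta}(v-u)$:
\[
2a_1\delta m\int_0^\infty\rho^{\delta-1}u_r\frac{u}{r}\,\mathrm{d}r=a_1\delta m\Big|\rho^{\frac{\delta-1}{2}}\frac{u}{r}\Big|_2^2+\frac{(1-\delta)m}{2}\int_0^\infty(v-u)\frac{u^2}{r}\,\mathrm{d}r .
\]
This produces the manifestly positive $a_1\delta\rho^{\delta-1}(2u_r^2+m\frac{u^2}{r^2})$ of \eqref{eq:6.14}, with no discriminant needed. The price is the new remainder $-\frac{(1-\delta)m}{2}\int(v-u)\frac{u^2}{r}$. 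Its cubic part satisfies $\int u^3/r\le|\rho^{\frac{1-\delta}{2}}u|_\infty|u|_2|\rho^{\frac{\delta-1}{2}}\frac{u}{r}|_2$, and this is exactly where Lemma \ref{cru5} enters. Before this integration by parts, the cubic terms combine to $-\frac23(u^3)_r$ and integrate to zero.

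Your weight bookkeeping is also off, in a way that would stop the estimate from closing as written. Since $\rho^{\delta-2}\rho_r$ and $(\rho^{\delta-1})_r$ are constant multiples of $v-u$, the actual remainders are $\int vuu_r$ and $\int(v-u)\frac{u^2}{r}$, with no factor $\rho^{\delta-1}$. For the terms you wrote, $\int\rho^{\delta-1}(v-u)u\,\mathrm{D}_ru$ and $\int\rho^{\delta-1}u^2\mathrm{D}_ru$, the bound through $|\rho^{\frac{\delta-1}{2}}u|_2$ is useless: that norm is not controlled, since $\rho^{\frac{\delta-1}{2}}$ blows up in the far field. Nor is there any splitting into $|\rho^{\frac{1-\delta}{2}}u|_\infty$, $|u|_2$ and $|\rho^{\frac{\delta-1}{2}}\mathrm{D}_ru|_2$, because a factor $\rho^{\delta-1}$ is always left over. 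With the correct weights, $\int vuu_r\le|\rho|_\infty^{\frac{1-\delta}{2}}|v|_\infty|u|_2|\rho^{\frac{\delta-1}{2}}u_r|_2$. Then your final differential inequality, with Gr\"onwall factor $1+|\rho^{\frac{1-\delta}{2}}u|_\infty^2$, is correct.

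Finally, the boundary flux now carries the unbounded weight $\rho^{\delta-1}$. It therefore cannot be handled ``exactly as for $\mathcal{B}_5$'', since that argument used $|\rho|_\infty^{\delta-\alpha}<\infty$. The paper instead shows $\mathcal{B}_8\in L^{\frac{n}{n-1}}(I)\cap D^{1,1}(I)$ using the weighted regularity $\rho^{\frac{\delta-1}{2}}\nabla\boldsymbol{u},\,\rho^{\delta-1}\nabla^2\boldsymbol{u}\in L^2$ together with Lemma \ref{lemma-L6}, and then applies Lemma \ref{calculus}.
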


\begin{proof}
We divide the proof into four steps.

\smallskip
\textbf{1.}
First, multiplying $\eqref{e1.5}_2$ by $\rho^{-1}u$, along with \eqref{V-expression}, gives
\begin{equation}\label{eq:6.14}
\begin{aligned}
&\,\frac{1}{2}(u^2)_t +a_1\delta\rho^{\delta-1}\big(2 u_r^2+ m\frac{u^2}{r^2}\big)\\
&=\Big(\underline{2a_1\delta\rho^{\delta-1} u_ru+a_1 \delta m\rho^{\delta-1}\frac{u^2}{r}}_{:=\mathcal{B}_8}\ \underline{-\frac{2}{3}u^3}_{:=\mathcal{B}_9}\Big)_r-\frac{A\gamma}{2a_1\delta}\rho^{\gamma-\delta}(v-u)u\\
&\quad + vu_ru-\frac{(1-\delta)m}{2}(v-u)\frac{u^2}{r}.
\end{aligned} 
\end{equation}

\smallskip
\textbf{2.} We show that $\mathcal{B}_8\in L^\frac{n}{n-1}(I)\cap D^{1,1}(I)$, $\mathcal{B}_9\in W^{1,1}(I)$, and $(\mathcal{B}_8+\mathcal{B}_9)|_{r=0}=0$ for {\it a.e.} 
$t\in (0,T)$, which allows us to apply Lemma \ref{calculus} to obtain
\begin{equation}\label{int-B8B9}
\int_0^\infty (\mathcal{B}_8)_r+(\mathcal{B}_9)_r\,\mathrm{d}r=-(\mathcal{B}_8+\mathcal{B}_9)|_{r=0}=0.
\end{equation}
$\mathcal{B}_9\in W^{1,1}(I)$ and $(\mathcal{B}_8+\mathcal{B}_9)|_{r=0}=0$ follow  from $u|_{r=0}=0$ 
and the facts that $\rho>0$, $u\in L^2(I)$, and $(\rho,u,u_r)\in C(\bar I)$ for \textit{a.e.} $t\in (0,T)$ due to \eqref{spd}--\eqref{spd2} and \eqref{uur}. Hence, it remains to prove $\mathcal{B}_8\in L^\frac{n}{n-1}(I)\cap D^{1,1}(I)$. Let 
\begin{equation*}
X_1:=\rho^{\delta-1}\nabla\boldsymbol{u}\cdot \boldsymbol{u},\qquad X_2:=\rho^{\delta-1}(\diver\boldsymbol{u})\boldsymbol{u}
\end{equation*}
On one hand, we see from Theorem \ref{thm-loc}, Lemma \ref{lemma-L6} (since $X_1$ and $X_2$ are spherically symmetric vector functions), and the H\"older inequality that, for $i=1,2$ and \textit{a.e.} $t\in (0,T)$,
\begin{equation*}
\begin{aligned}
\|X_i\|_{L^\frac{n}{n-1}}\leq C_0\|\nabla X_i\|_{L^1}&\leq C_0\big(\|\nabla\rho^{\delta-1}\|_{L^\infty}\|\nabla\boldsymbol{u}\|_{L^2}+\|\rho^{\delta-1}\nabla^2\boldsymbol{u}\|_{L^2}\big)\|\boldsymbol{u}\|_{L^2}\\
&\quad +C_0\|\rho^\frac{\delta-1}{2}\nabla\boldsymbol{u}\|_{L^2}^2<\infty.
\end{aligned}
\end{equation*}
This, along with Lemma \ref{lemma-initial}, gives
\begin{equation}\label{622}
\begin{aligned}
&\ r^\frac{m(n-1)}{n}\rho^{\delta-1}u u_r\in L^\frac{n}{n-1}(I),\quad r^\frac{m(n-1)}{n}\rho^{\delta-1}u \big(u_r+\frac{m}{r}u\big)\in L^\frac{n}{n-1}(I)\\
&\implies r^\frac{m(n-1)}{n}\rho^{\delta-1}u\mathrm{D}_r u\in L^\frac{n}{n-1}(I)\qquad \text{for \textit{a.e.} $t\in (0,T)$}.
\end{aligned}
\end{equation}
On the other hand, it follows from \eqref{spd}--\eqref{spd2} and \eqref{uur} that, for {\it a.e.} $t\in (0,T)$, 
\begin{equation*}
\begin{aligned}
&\rho>0\ \ \text{on $I$},\qquad (\rho^{\delta-1})_r\in L^\infty(I),\\
&(\rho,u,\mathrm{D}_r u)\in L^2(I)\cap L^\infty(I),\quad 
r^\frac{m}{2}\big(u ,\rho^{\delta-1}\frac{\mathrm{D}_r^2 u}{r}\big)\in L^2(I),
\end{aligned}
\end{equation*}
which, along with \eqref{622} and the H\"older inequality, leads to
\begin{align*}
& \begin{aligned}
|\mathcal{B}_8|_\frac{n}{n-1}&\leq C_0 |\chi_1^\flat \rho^{\delta-1}u\mathrm{D}_r u|_\frac{n}{n-1}+C_0 |\chi_1^\sharp \rho^{\delta-1}u\mathrm{D}_r u|_\frac{n}{n-1}\\ 
&\leq C_0|\chi_1^\flat \rho^{\delta-1}|_\infty |u|_\infty |\mathrm{D}_r u|_\infty +C_0|\chi_1^\sharp r^\frac{m(1-n)}{n}|_\infty \big|r^\frac{m(n-1)}{n}\rho^{\delta-1}u\mathrm{D}_r u\big|_{\frac{n}{n-1}} <\infty,
\end{aligned}\\
&\begin{aligned}
|(\mathcal{B}_8)_r|_1&\leq C_0 |(\rho^{\delta-1})_ru\mathrm{D}_r u|_1+ C_0\Big|\rho^{\delta-1}\big((u_r)^2,u_{rr}u,u_r\frac{u}{r},u(\frac{u}{r})_r\big)\Big|_1\\
&\leq C_0\big(|(\rho^{\delta-1})_r|_\infty|u|_2 |\mathrm{D}_r u|_2+ |\rho^\frac{\delta-1}{2}\mathrm{D}_r u|_2^2+|r^\frac{2-m}{2}u|_2 |r^\frac{m-2}{2}\rho^{\delta-1}\mathrm{D}_r^2 u|_2\big)<\infty.
\end{aligned}
\end{align*}

\smallskip
\textbf{3.} Integrating \eqref{eq:6.14} over $I$, together with \eqref{int-B8B9}, yields
\begin{equation}\label{ell}
\begin{aligned}
&\,\frac{1}{2}\frac{\mathrm{d}}{\mathrm{d}t}|u|_2^2+ 2a_1\delta|\rho^\frac{\delta-1}{2}u_r|_2^2+ma_1\delta\Big|\rho^\frac{\delta-1}{2}\frac{u}{r}\Big|_2^2\\
&=-\frac{A\gamma}{2a_1\delta}\int_0^\infty \rho^{\gamma-\delta}(v-u) u\,\mathrm{d}r+ \int_0^\infty v u_ru \,\mathrm{d}r-\frac{(1-\delta)m}{2}\int_0^\infty\! (v-u)\frac{u^2}{r}\,\mathrm{d}r :=\sum_{i=17}^{19} I_i.
\end{aligned}
\end{equation}
It follows from Lemmas \ref{important2}, \ref{l4.4}, and \ref{cru5}, and the H\"older and Young inequalities that 
\begin{equation}\label{j3-j5}
\begin{aligned}
I_{17}&\leq C_0\big(|\rho^{\gamma-\delta} v|_2|u|_2 +|\rho|_\infty^{\gamma-\delta} |u|_2^2\big) \leq  C(T)|(\rho^{\gamma-\delta} v,u)|_2^2,\\
I_{18}&\leq C_0|\rho|_\infty^\frac{1-\delta}{2}|v|_\infty|u|_2 |\rho^\frac{\delta-1}{2}u_r|_2 \leq \frac{a_1\delta}{8}|\rho^\frac{\delta-1}{2}u_r|_2^2+C(T)|u|_2^2,\\
I_{19}&\leq C_0\big(|\rho|_\infty^\frac{1-\delta}{2}|v|_\infty+|\rho^\frac{1-\delta}{2}u|_\infty\big)|u|_2 \Big|\rho^\frac{\delta-1}{2}\frac{u}{r}\Big|_2\leq \frac{a_1\delta}{8}\Big|\rho^\frac{\delta-1}{2}\frac{u}{r}\Big|_2^2+C(T)|u|_2^2, 
\end{aligned}
\end{equation}

Combining \eqref{ell} with \eqref{j3-j5} gives
\begin{equation}\label{pol}
\frac{\mathrm{d}}{\mathrm{d}t}|u|_2^2+ a_1\delta |\rho^\frac{\delta-1}{2}\mathrm{D}_r u|_2^2\leq  C(T)|(\rho^{\gamma-\delta} v,u)|_2^2. 
\end{equation}

\smallskip
\textbf{4.} For the $L^2(I)$-estimate of $\rho^{\gamma-\delta} v$, 
we first multiply \eqref{eq:effective2} by $\rho^{2\gamma-2\delta}v$ and then obtain from $\eqref{e1.5}_1$ that
\begin{equation}\label{eq:B6-pre}
\begin{aligned}
&\,\frac{1}{2}(\rho^{2\gamma-2\delta}v^2)_t+ \frac{1}{2} \big(\underline{u\rho^{2\gamma-2\delta}v^2}_{:=\mathcal{B}_{10}}\big)_r+\frac{A\gamma}{2a_1\delta}\rho^{3\gamma-3\delta} v^2\\
&=\big(\frac{1}{2}+\delta-\gamma\big)\rho^{2\gamma-2\delta} v^2 u_r-(\gamma-\delta)m\rho^{2\gamma-2\delta} v^2\frac{u}{r}+\frac{A\gamma}{2a_1\delta}\rho^{3\gamma-3\delta} vu.
\end{aligned}
\end{equation}
We need to show that $\mathcal{B}_{10}\in W^{1,1}(I)$ and $\mathcal{B}_{10}|_{r=0}=0$ 
for {\it a.e.} $t\in (0,T)$, which allows us to apply Lemma \ref{calculus} to obtain
\begin{equation}\label{eq:B6}
\int_0^\infty (\mathcal{B}_{10})_r\,\mathrm{d}r=-\mathcal{B}_{10}|_{r=0}=0.   
\end{equation}
This proof can be achieved by basically following that of \eqref{int-B6} (with $\alpha=1$) in the proof of Lemma \ref{ele-0}. We omit the details here for brevity.

Thus, integrating \eqref{eq:B6-pre} over $I$, we obtain from \eqref{eq:B6}, Lemmas \ref{important2} and \ref{l4.4}, and the H\"older and Young inequalities that
\begin{equation}\label{new-v}
\begin{aligned}
&\,\frac{1}{2}\frac{\mathrm{d}}{\mathrm{d}t}|\rho^{\gamma-\delta} v|_2^2+ \frac{A\gamma}{2a_1\delta} \big|\rho^\frac{3\gamma-3\delta}{2} v\big|_2^2\\
&=\int_0^\infty \rho^{2\gamma-2\delta} v^2 \Big(\big(\frac{1}{2}+\delta-\gamma\big)u_r -(\gamma-\delta)m \frac{u}{r}\Big)\,\mathrm{d}r+\frac{A\gamma}{2a_1\delta}\int_0^\infty \rho^{3\gamma-3\delta} vu\,\mathrm{d}r\\
&\leq  C_0|\rho|_\infty^\frac{2\gamma+1-3\delta}{2}|v|_\infty |\rho^{\gamma-\delta} v|_2  |\rho^\frac{\delta-1}{2}\mathrm{D}_r u|_2+C_0|\rho|_\infty^{2\gamma-2\delta} |\rho^{\gamma-\delta} v|_2 |u|_2\\
&\leq  C(T)|(\rho^{\gamma-\delta} v,u)|_2^2+\frac{a_1\delta}{8} |\rho^\frac{\delta-1}{2}\mathrm{D}_r u|_2^2. 
\end{aligned}        
\end{equation}

Combining \eqref{pol} with \eqref{new-v} gives
\begin{equation*} 
\frac{\mathrm{d}}{\mathrm{d}t}|(\rho^{\gamma-\delta} v,u)|_2^2+ \frac{a_1\delta}{2} |\rho^\frac{\delta-1}{2}\mathrm{D}_r u|_2^2\leq  C(T)|(\rho^{\gamma-\delta} v,u)|_2^2. 
\end{equation*}
which, along with the Gr\"onwall inequality, yields
\begin{equation}\label{47}
|(\rho^{\gamma-\delta} v,u)(t)|_2^2 + \int_0^t |\rho^\frac{\delta-1}{2}\mathrm{D}_r u|_2^2\,\mathrm{d}s\leq  C(T)\qquad\text{for all $t\in [0,T]$}.
\end{equation}
We still needs to check the $L^2(I)$-boundedness of $(\rho_0^{\gamma-\delta} v_0,u_0)$. 
Indeed, it follows from Lemmas \ref{ale1}, \ref{initial3}, and \ref{lemma-initial} that
\begin{align*}
|u_0|_2&\leq |\chi_1^\flat u_0|_2+|\chi_1^\sharp u_0|_2\leq |u_0|_\infty+|\chi_1^\sharp r^{-\frac{m}{2}}|_\infty |r^\frac{m}{2}u_0|_2 \\
&\leq C_0\big(\|\boldsymbol{u}_0\|_{L^\infty}+\|\boldsymbol{u}_0\|_{L^2}\big)\leq C_0\|\boldsymbol{u}_0\|_{H^2}\leq C_0,\\[1mm]
|\rho_0^{\gamma-\delta} v_0|_2&\leq C_0|\rho_0^{\gamma-\delta}(u_0,(\rho_0^{\delta-1})_r)|_2\leq C_0\big(|\rho_0|_\infty^{\gamma-\delta}|u_0|_2+ |(\rho_0^{\gamma-1})_r|_2\big) \\
&\leq C_0|\rho|_{\infty}^{\gamma-\delta}+C_0|\chi_1^\flat r^\frac{2-m}{2} |_\infty |r^\frac{m-2}{2}(\rho_0^{\gamma-1})_r|_2+C_0|\chi_1^\sharp r^{-\frac{m}{2}}|_\infty |r^\frac{m}{2} (\rho_0^{\gamma-1})_r|_2\\
&\leq C_0\big(\|\rho\|_{L^\infty}^{\gamma-\delta}+\|\nabla (\rho_0^{\gamma-1})\|_{H^1}\big) \leq C_0.
\end{align*}
 
Finally, it follows from \eqref{47}, Lemmas \ref{important2} and \ref{calculus}, and the H\"older inequality that 
\begin{equation*}
\begin{aligned}
\int_0^t |u|_\infty^4\,\mathrm{d}s&\leq \int_0^t |(u^2)_r|_1^2 \,\mathrm{d}s\leq 4\int_0^t |u|_2^2|u_r|_2^2 \,\mathrm{d}s\\
&\leq 4\Big(\sup_{s\in [0,t]}|u|_2^2\Big)\Big(\sup_{s\in [0,t]}|\rho|_\infty^{1-\delta}\Big)\int_0^t |\rho^\frac{\delta-1}{2}u_r|_2^2 \,\mathrm{d}s\leq C(T).
\end{aligned}
\end{equation*}

This completes the proof.
\end{proof}

\subsection{Non-Formation of Cavitation Inside the Fluids}
Now, with the help of Lemmas \ref{l4.4} and \ref{ele}, we can show the pointwise estimates of $\rho$  
in the domain containing the origin.

\begin{lem}\label{lemma-inf-rho}
 Suppose that 
\begin{equation}\label{inf-rho0}
\inf_{z\in[0,r]}\rho_0(z)=\underline{\rho}(r)>0 \qquad\text{for $r>0$},
\end{equation}
with $\underline{\rho}(r)$, defined on $I$, satisfying $\underline{\rho}(r)\to 0$ as $r\to\infty$. 
Then,  for any  $R>0$, there exists a constant $C(T)>0$ such that 
\begin{equation*}
\begin{gathered}
\inf_{(t,r)\in [0,T]\times [0,R]} \rho(t,r)\geq \frac{C(T)^{-1}\underline{\rho}(R)}{\big(R^{\frac{1}{2-2\delta}}+1\big)(\underline{\rho}(R)+1)}.
\end{gathered}   
\end{equation*}
In particular, the cavitation does not form in $[0,T]\times \{\boldsymbol{x}\in\mathbb{R}^n:\,|\boldsymbol{x}|\leq R\}$ 
for any  $R>0$.
\end{lem}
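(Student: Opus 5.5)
The plan is to work with $w:=\rho^{\delta-1}$, so that a pointwise lower bound for $\rho$ on $[0,T]\times[0,R]$ is equivalent to an upper bound for $w$ there. The spatial derivative of $w$ is controlled by the bounds already obtained. By \eqref{V-expression},
\begin{equation*}
w_r=(\rho^{\delta-1})_r=\frac{\delta-1}{2a_1\delta}(v-u).
\end{equation*}
By Lemma \ref{l4.4} ($|v|_\infty\le C(T)$) and Lemma \ref{ele} ($|u|_2\le C(T)$),
\begin{equation*}
|\chi^\flat_R w_r(t)|_2\le C(T)\big(R^{\frac12}|v|_\infty+|u|_2\big)\le C(T)(R^{\frac12}+1)\qquad\text{for all $t\in[0,T]$}.
\end{equation*}
This already gives the factor $R^{1/2}$, which becomes $R^{\frac{1}{2-2\delta}}$ after raising to the power $-\frac{1}{1-\delta}$.

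The missing ingredient is a level anchor for $w$. I will obtain an $L^2(0,R)$-bound for $w$ from the continuity equation, in the form
\begin{equation*}
w_t+uw_r=(1-\delta)\,w\Big(u_r+\frac{mu}{r}\Big).
\end{equation*}
Multiplying by $w$, integrating over $(0,R)$, and integrating by parts the term $w^2u_r$ produces the following contributions:
\begin{itemize}
\item a boundary term $w^2u|_{r=R}$ (the one at $r=0$ vanishes since $u(t,0)=0$);
\item interior terms $\int_0^R w\,w_r\,u$, which are bounded by $|w|_{L^\infty(0,R)}\,|w_r|_{L^2(0,R)}\,|u|_2$;
\item the singular term $\int_0^R w^2\frac{|u|}{r}$.
\end{itemize}
For the singular term I write $w^2\frac{u}{r}=w^{3/2}\cdot\rho^{\frac{\delta-1}{2}}\frac{u}{r}$. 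It is then bounded by $R^{\frac12}|w|_{L^\infty(0,R)}^{3/2}\,|\rho^{\frac{\delta-1}{2}}\mathrm{D}_ru|_2$, and the last factor lies in $L^2(0,T)$ by Lemma \ref{ele}. The boundary term is bounded by $|w|_{L^\infty(0,R)}^2|u|_\infty$, and $|u|_\infty\in L^4(0,T)$, again by Lemma \ref{ele}.

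Next I close with the one-dimensional embedding
\begin{equation*}
|w|_{L^\infty(0,R)}\le R^{-\frac12}|w|_{L^2(0,R)}+R^{\frac12}|w_r|_{L^2(0,R)}.
\end{equation*}
Combined with the $w_r$ bound, this yields a differential inequality for $Y(t):=|w(t)|_{L^2(0,R)}^2+1$ of the form
\begin{equation*}
Y'\le C(T,R)\,\big(1+|u|_\infty+|\rho^{\frac{\delta-1}{2}}\mathrm{D}_ru|_2\big)\,Y,
\end{equation*}
whose coefficient is integrable on $[0,T]$. Gr\"onwall's inequality then bounds $Y$. Initially $w_0\le\underline{\rho}(R)^{\delta-1}$ on $[0,R]$, so $Y(0)\le R\,\underline{\rho}(R)^{2\delta-2}+1$. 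Feeding this back into the embedding gives
\begin{equation*}
\sup_{[0,T]\times[0,R]}\rho^{\delta-1}\le C(T)(R^{\frac12}+1)\big(\underline{\rho}(R)^{\delta-1}+1\big).
\end{equation*}
Raising this to the power $-\frac{1}{1-\delta}$, and using $(\underline{\rho}^{\delta-1}+1)^{-\frac{1}{1-\delta}}\ge c\,\underline{\rho}/(1+\underline{\rho})$, yields the stated lower bound.

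The main obstacle is the precise power of $R$. A naive Gr\"onwall argument on $(0,R)$ produces constants like $e^{C R}$, not the clean factor $R^{\frac{1}{2-2\delta}}$. If the $R$-dependence cannot be kept polynomial, I would first try to carry out the Gr\"onwall step only on the fixed interval $(0,1)$, which gives $w\le C(T)(\underline{\rho}(1)^{\delta-1}+1)$ near the origin. I would then transport this level to $[0,R]$ purely through the $w_r$ bound, which costs exactly $R^{1/2}$ by Cauchy--Schwarz. The anchor on $(0,1)$ is harmless since $\underline{\rho}(1)\ge\underline{\rho}(R)$ for $R\ge1$, and for $R<1$ the factor $R^{\frac{1}{2-2\delta}}+1$ is comparable to $1$. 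The remaining technical point is to justify the integrations by parts near $r=0$ using \eqref{spd}--\eqref{spd2} and $\rho>0$.
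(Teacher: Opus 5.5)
The gap is in the quantitative $R$-dependence, which is part of the statement. Neither your main route nor your fallback produces the factor $R^{\frac{1}{2-2\delta}}$.

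In the main route you bound the singular term by $R^{1/2}|w|_{L^\infty(0,R)}^{3/2}|\rho^{\frac{\delta-1}{2}}\mathrm{D}_ru|_2$ and then convert everything into $Y$. Powers of $R$ therefore land in the Gr\"onwall \emph{coefficient}, and you get $\exp(C(T)R^k)$, as you feared. There is a second loss: even granting $|w|_{L^2(0,R)}\le C(T)\sqrt R(\underline\rho(R)^{\delta-1}+1)$, your sum-form embedding $|w|_{L^\infty(0,R)}\le R^{-1/2}|w|_{L^2(0,R)}+R^{1/2}|w_r|_{L^2(0,R)}$ returns $C(T)(\underline\rho(R)^{\delta-1}+1+R)$, because $R^{1/2}|w_r|_{L^2(0,R)}\sim R^{1/2}(R^{1/2}+1)$.

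The fallback has the same slip. Transporting the level from $(0,1)$ costs $\int_0^R|w_r|\,\mathrm{d}r\le R^{1/2}|w_r|_{L^2(0,R)}\le C(T)(R+R^{1/2})$, not $R^{1/2}$. This is because $|w_r|_{L^2(0,R)}$ is itself of size $R^{1/2}$: you only know $|v|_\infty\le C(T)$, so with this information alone $w$ may grow linearly in $r$.

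Either way you obtain $\sup_{[0,T]\times[0,R]}\rho^{\delta-1}\le C(T)(\underline\rho(R)^{\delta-1}+1+R)$. This proves non-cavitation. It implies the stated bound only when $R^{1/2}\lesssim\underline\rho(R)^{\delta-1}+1$, which is not guaranteed; for instance, it fails for $\rho_0\sim r^{-\sigma}$ with $\sigma(1-\delta)<\frac12$, which is allowed. In general you only get $(R+1)^{\frac{1}{1-\delta}}$ in place of $R^{\frac{1}{2-2\delta}}+1$.

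The missing idea, which is what the paper does, is to run the $L^2(0,R)$-energy for $w$ with an $R$-independent Gr\"onwall coefficient and put all $R$-dependence into the forcing. You do not need to integrate by parts. Estimate
\begin{itemize}
\item $\big|\int_0^R w^2(u_r+\frac{m}{r}u)\,\mathrm{d}r\big|\le C|\rho^{\frac{\delta-1}{2}}\mathrm{D}_ru|_2\,|w|_{L^2(0,R)}|w|_{L^\infty(0,R)}^{1/2}$, using $w^3\le|w|_{L^\infty(0,R)}w^2$ rather than $|w^{3/2}|_{L^2(0,R)}\le R^{1/2}|w|_{L^\infty(0,R)}^{3/2}$;
\item $\int_0^R|u\,w\,w_r|\,\mathrm{d}r\le|u|_\infty|w_r|_{L^2(0,R)}|w|_{L^2(0,R)}\le C(T)(R+1)|u|_\infty^2+|w|_{L^2(0,R)}^2$.
\end{itemize}
Insert the product-type bound $|w|_{L^\infty(0,R)}^2\le(1+R^{-1})|w|_{L^2(0,R)}^2+|w_r|_{L^2(0,R)}^2$ from Lemma \ref{ale1}, and apply Young so that
\[
\frac{\mathrm{d}}{\mathrm{d}t}|w|_{L^2(0,R)}^2\le C\big(1+|\rho^{\frac{\delta-1}{2}}\mathrm{D}_ru|_2^2\big)|w|_{L^2(0,R)}^2+C(T)\Big((R+1)|u|_\infty^2+\frac{R+1}{R}+\sqrt R+1\Big).
\]
By Lemma \ref{ele}, the coefficient has an $R$-independent time integral. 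Hence for $R\ge1$ you get $|w(t)|_{L^2(0,R)}^2\le C(T)(R\,\underline\rho(R)^{2\delta-2}+R+1)$.

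Feed this, together with $|w_r|_{L^2(0,R)}\le C(T)(\sqrt R+1)$, back into the \emph{same} product-type embedding. This gives $|w|_{L^\infty(0,R)}\le C(T)(\sqrt R+1)(\underline\rho(R)^{\delta-1}+1)$ for $R\ge R_0$, and the case $R<R_0$ reduces to $R_0$ by monotonicity. Your final conversion to a lower bound for $\rho$ is then correct as written.
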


\begin{proof}
First, it follows from Lemma \ref{ale1} that, for all $R>0$ and $t\in [0,T]$,
\begin{equation}\label{wuqiong-2-2}
|\chi_R^\flat\rho^{\delta-1}(t)|_\infty\leq C_0\Big(\sqrt{\frac{1+R}{R}} |\chi_R^\flat\rho^{\delta-1}(t)|_2+|\chi_R^\flat(\rho^{\delta-1})_r(t)|_2\Big).
\end{equation}
Then, by  \eqref{V-expression} and Lemmas \ref{l4.4} and \ref{ele}, we obtain that, for all $t\in [0,T]$,
\begin{equation}\label{323}
|\chi_R^\flat (\rho^{\delta-1})_r(t)|_2\leq C_0|\chi_R^\flat (v,u)(t)|_2 \leq C_0\big(\sqrt{R}|v(t)|_\infty+ |u(t)|_2\big)\leq C(T)(\sqrt{R}+1).
\end{equation}

Next, multiplying $\eqref{e1.5}_1$ by $\chi_R^\flat (2\delta-2)\rho^{2\delta-3}$ and integrating over $I$,  we have
\begin{equation}\label{627}
\frac{\mathrm{d}}{\mathrm{d}t}|\chi_R^\flat \rho^{\delta-1}|_2^2=-2\int_0^R  u(\rho^{\delta-1})_r\rho^{\delta-1}\,\mathrm{d}r-(2\delta-2) \int_0^R \big(u_r+\frac{m}{r}u\big)\rho^{2\delta-2}\,\mathrm{d}r:=\sum_{i=20}^{21}I_{i}.
\end{equation}
Then, for $I_{20}$--$I_{21}$, we obtain from \eqref{wuqiong-2-2}--\eqref{323} and the H\"older and Young inequalities that
\begin{equation}\label{628}
\begin{aligned}
I_{20}&\leq 2|u|_\infty|\chi_R^\flat(\rho^{\delta-1})_r|_2|\chi_R^\flat\rho^{\delta-1}|_2\leq C(T)(R+1)|u|_\infty^2+|\chi_R^\flat\rho^{\delta-1}|_2^2,\\
I_{21}&\leq C_0 |\rho^\frac{\delta-1}{2}\mathrm{D}_r u|_2|\chi_R^\flat \rho^{\delta-1}|_2|\chi_R^\flat \rho^{\delta-1}|_\infty^\frac{1}{2} \\
&\leq C(T) \Big(\frac{R+1}{R}\Big)^\frac{1}{4} |\rho^\frac{\delta-1}{2}\mathrm{D}_r u|_2|\chi_R^\flat \rho^{\delta-1}|_2^\frac{3}{2} + C(T) (\sqrt{R}+1)^\frac{1}{2} |\rho^\frac{\delta-1}{2}\mathrm{D}_r u|_2|\chi_R^\flat \rho^{\delta-1}|_2\\
&\leq C(T)\Big(\frac{R+1}{R}+\sqrt{R}+1\Big) + \big(1+ |\rho^\frac{\delta-1}{2}\mathrm{D}_r u|_2^2 \big)|\chi_R^\flat \rho^{\delta-1}|_2^2.
\end{aligned}
\end{equation}

Combining \eqref{627}--\eqref{628}, along with Lemmas \ref{ele} and the Gr\"onwall inequality, yields
\begin{equation}\label{324}
\begin{aligned}
|\chi_R^\flat\rho^{\delta-1}(t)|_2^2 &\leq C(T)\Big(|\chi_R^\flat \rho_0^{\delta-1}|_2^2+\frac{R+1}{R}+\sqrt{R}+1\Big)\\
&\leq C(T)\Big(R|\chi_R^\flat \rho_0^{\delta-1}|_\infty^2+\frac{R+1}{R}+\sqrt{R}+1\Big).
\end{aligned}
\end{equation}

Collecting \eqref{wuqiong-2-2}--\eqref{323} and \eqref{324}, and letting $R_0\geq 1$ be a fixed sufficiently large constant, we obtain that, for all $t\in [0,T]$ and $R\geq R_0$,
\begin{equation*}
|\chi_R^\flat\rho^{\delta-1}(t)|_\infty\leq C(T)(\sqrt{R}+1) \big(|\chi_R^\flat\rho_0^{\delta-1}|_\infty+1 \big)\leq C(T)(\sqrt{R}+1) \big(\underline{\rho}^{\delta-1}(R)+1 \big), 
\end{equation*}
which, implies that, for all $(t,r)\in[0,T]\times [0,R]$ and $R\geq R_0$, 
\begin{equation*}
 \rho(t,r)\geq \frac{C(T)^{-1}\underline{\rho}(R)}{\big(R^{\frac{1}{2-2\delta}}+1\big)(\underline{\rho}(R)+1)}.  
\end{equation*}

Finally, for $R\leq R_0$, it follows from Lemma \ref{ale1}, \eqref{wuqiong-2-2}--\eqref{323}, and \eqref{324} that 
\begin{equation*}
\begin{aligned}
|\chi_{R}^\flat\rho^{\delta-1}(t)|_\infty&\leq |\chi_{R_0}^\flat\rho^{\delta-1}(t)|_\infty \leq C_0\Big(\sqrt{\frac{1+R_0}{R_0}} |\chi_{R_0}^\flat\rho^{\delta-1}(t)|_2+|\chi_{R_0}^\flat(\rho^{\delta-1})_r(t)|_2\Big)\notag \\
&\leq  C(T) \big(|\chi_{R_0}^\flat\rho_0^{\delta-1}|_\infty+1 \big)\leq C(T),
\end{aligned}
\end{equation*}
which implies that $\rho(t,r)\geq C(T)^{-1}$ for all $(t,r)\in[0,T]\times [0,R]$ and $R\leq R_0$.
\end{proof}

\section{Global  Estimates for  
Regular Solutions with Far-Field Vacuum}\label{section-global2}
The goal of this section is to establish the global uniform estimates for the regular solutions when  $\bar\rho=0$. Let $T>0$ be any fixed time, and let $(\rho, u)(t,r)$ be the regular solution of problem \eqref{e1.5} in $[0,T]\times I$ 
obtained in Theorem \ref{rth1}. Moreover, throughout this section, we always assume that 
\eqref{cd1} holds.

Next, we consider the enlarged system \eqref{eq:cccq} in spherical coordinates. 
Specifically, we introduce the following two important quantities: 
\begin{equation}\label{tr}
\phi=\frac{A\gamma}{\gamma-1}\rho^{\gamma-1}, \qquad \psi=\frac{a\delta}{\delta-1}(\phi^{2\iota})_r=\frac{\delta}{\delta-1}(\rho^{\delta-1})_r.
 \end{equation}
Then \eqref{e1.5} can be rewritten as the problem for $(\phi,u,\psi)$ in $[0,T]\times I$:
\begin{equation}\label{e2.2}
\begin{cases}
\displaystyle \phi_t+u\phi_r+(\gamma-1)\phi\big(u_r+ \frac{m}{r}u\big)=0,\\[4pt]
\displaystyle u_t+u u_r+\phi_r=2a_1\delta a\phi^{2\iota}\big(u_r+ \frac{m}{r}u\big)_r+2a_1 \psi\big(\delta u_r+m(\delta-1)\frac{u}{r}\big),\\[4pt]
\displaystyle \psi_t+u\psi_r+\big(\delta u_r+(\delta-1)m\frac{u}{r}\big)\psi+\delta a\phi^{2\iota}\big(u_r+\frac{m}{r}u\big)_r=0,\\[4pt]
\displaystyle  (\phi,u,\psi)|_{t=0}=(\phi_0,u_0,\psi_0)
=(\frac{A\gamma}{\gamma-1}\rho_0^{\gamma-1},u_0,\frac{\delta}{\delta-1}(\rho_0^{\delta-1})_r) \qquad\text{for $r\in I$},\\[4pt]
\displaystyle u|_{r=0}=0 \qquad\qquad\qquad\qquad\qquad\quad \,\text{for $t\in [0,T]$},\\[4pt]
\displaystyle  (\phi,u)\to (0,0) \qquad \,\ \ \text{as $r\to\infty$}  \qquad \text{for $t\in [0,T]$}.
\end{cases}
\end{equation}
Clearly, the effective velocity $v$ and its equation \eqref{eq:effective2} take the following forms, respectively:
\begin{align}
&v=u+2a_1 \psi=u+\frac{2a_1 \delta  a}{\delta-1}(\phi^{2\iota})_r=u+\frac{2a_1\delta a}{\gamma-1}\phi^{2\iota-1}\phi_r,\label{v-express-2}\\
&v_t+uv_r+\frac{\gamma-1}{2a_1 \delta  a}\phi^{1-2\iota}(v-u)=0.\label{eq:effective1}
\end{align}

\subsection{Some Auxiliary Lemmas}
The following two lemmas concern div-curl estimates and weighted div-curl estimates for spherically symmetric functions in spherical coordinates, which are frequently used in our analysis. 

\begin{lem}\label{im-1}
Let $p\in(1,\infty)$ and $\boldsymbol{f}(\boldsymbol{x})=f(r)\frac{\boldsymbol{x}}{r} \in C_{\rm c}^\infty(\mathbb{R}^n)$. 
Then 
\begin{equation*}
\begin{aligned}
&\|\nabla^j\boldsymbol{f}\|_{L^p} \sim \hspace{0.3mm}|r^\frac{m}{p}\mathrm{D}_r^jf|_{p}\sim \Big|r^\frac{m}{p}\mathrm{D}_r^{j-1}\big(f_r+\frac{m}{r}f\big)\Big|_{p}\quad&&\text{for $j=1,2$},\\
&\|\nabla^k\boldsymbol{f}\|_{L^p}\sim |r^\frac{m}{p}\mathrm{D}_r^kf|_{p} \sim \Big|r^\frac{m}{p}\mathrm{D}_r^{k-2}\big(f_r+\frac{m}{r}f\big)_r \Big|_p\quad&&\text{for $k=3,4$},
\end{aligned}
\end{equation*}
where $F_1\sim F_2$ denotes $C^{-1}F_1\leq F_2\leq CF_1$ for some constant $C>0$ depending only on $(n,p)$. 
\end{lem}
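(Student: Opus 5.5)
The plan is to prove both equivalences pointwise as far as possible. Only the comparison with the divergence should need a genuinely nonlocal tool, namely Calder\'on--Zygmund theory for curl-free fields.

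\textbf{Step 1: pointwise formulas for $\nabla^j\boldsymbol{f}$.} Write $\boldsymbol{\omega}=\boldsymbol{x}/r$ and $h=f/r$. Then
\begin{equation*}
\partial_i f_j=f_r\,\omega_i\omega_j+h\,(\delta_{ij}-\omega_i\omega_j).
\end{equation*}
The two tensors $\omega_i\omega_j$ and $\delta_{ij}-\omega_i\omega_j$ are orthogonal projections, so $|\nabla\boldsymbol{f}|^2=f_r^2+m\,h^2\sim|\mathrm{D}_r f|^2$ pointwise. To differentiate further I will use $\partial_k\omega_i=(\delta_{ik}-\omega_i\omega_k)/r$ together with the identity $f_r-h=r h_r$. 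This identity removes every apparently singular combination such as $(f_r-f/r)/r$: it shows that $\nabla^2\boldsymbol{f}$ is a linear combination, with bounded tensor coefficients in $\boldsymbol{\omega}$, of $f_{rr}$ and $h_r=(f/r)_r$. Likewise, $\nabla^3\boldsymbol{f}$ is such a combination of $f_{rrr}$, $f_{rr}/r$, $h_{rr}$ and $h_r/r$, and $\nabla^4\boldsymbol{f}$ is one of the $r$-derivatives of these. These are exactly the entries of $\mathrm{D}_r^3f$ and $\mathrm{D}_r^4f$. The upper bound $|\nabla^k\boldsymbol{f}|\le C|\mathrm{D}_r^kf|$ is then immediate. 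For the lower bound I will use rotation invariance: it suffices to evaluate at $\boldsymbol{x}=r\boldsymbol{e}_1$ and pick out individual components of the tensor, such as $\partial_1^k f_1$ and mixed components involving $\partial_2$, and then solve the resulting finite triangular linear system for the entries of $\mathrm{D}_r^kf$. Integrating $|\cdot|^p$ against $r^m\,\mathrm{d}r$, which is the polar Jacobian, gives $\|\nabla^j\boldsymbol{f}\|_{L^p}\sim|r^{m/p}\mathrm{D}_r^jf|_p$ for $j=1,\dots,4$ and every $p$.

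\textbf{Step 2: comparison with the divergence.} Set $g=f_r+\frac{m}{r}f=\diver\boldsymbol{f}$, which is a radial scalar. Since $\boldsymbol{f}=\nabla\Phi$ with $\Phi(\boldsymbol{x})=\int_0^{r}f$, the field is curl-free, so $\nabla\boldsymbol{f}=\nabla^2\Delta^{-1}\diver\boldsymbol{f}$. The Calder\'on--Zygmund estimate for the Riesz transforms, valid for $p\in(1,\infty)$, therefore gives
\begin{equation*}
\|\nabla^j\boldsymbol{f}\|_{L^p}\le C\|\nabla^{j-1}\diver\boldsymbol{f}\|_{L^p},
\end{equation*}
and the reverse inequality is trivial. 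For the radial scalar $g$ one has $\nabla g=g_r\boldsymbol{\omega}$ and $|\nabla^2g|\sim|(g_{rr},g_r/r)|=|\mathrm{D}_r(g_r)|$ pointwise, by the same projection computation as in Step 1. By the same computation, $|\nabla^3 g|\sim|\mathrm{D}_r^2(g_r)|$ pointwise. This settles the cases $k=3,4$ in the form $|r^{m/p}\mathrm{D}_r^{k-2}g_r|_p$, and the case $j=1$ in the form $|r^{m/p}g|_p$.

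\textbf{Main obstacle.} For $j=2$, the quantity $\mathrm{D}_r g=(g_r,g/r)$ contains the zeroth-order term $g/r$, which the Riesz-transform argument does not control. I would handle it by writing $g/r=(f/r)_r+(m+1)f/r^2$. The first term is already bounded by $|r^{m/p}\mathrm{D}_r^2f|_p$ from Step 1. For $f/r^2=h/r$ I would apply the Hardy inequality (Lemma \ref{hardy}) to bound it by $h_r=(f/r)_r$, using that $h$ decays at infinity for compactly supported $\boldsymbol{f}$. This step is delicate near $r=0$: there $h(0)$ need not vanish, so the weight $r^{m-p}$ must be integrable, which requires $p<n$. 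If the Hardy step fails for $p\ge n$, I would restrict the $j=2$ divergence form to the range of $p$ actually used later, or replace $g/r$ by $g_r$ alone, since $|\nabla^2\boldsymbol{f}|$ is already comparable to $|\nabla g|=|g_r|$ in $L^p$ by the Calder\'on--Zygmund bound.
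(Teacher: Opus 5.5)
Your proposal is correct and follows the paper's route. The paper likewise uses that $\boldsymbol f$ is curl-free to get $\|\nabla^{j}\boldsymbol f\|_{L^p}\sim\|\nabla^{j-1}\diver\boldsymbol f\|_{L^p}$ from the Mihlin--H\"ormander (Calder\'on--Zygmund) theorem, and then simply cites Lemma \ref{lemma-initial} for the radial conversion that your Step 1 carries out by hand.

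Your concern in the $j=2$ case is justified, and your fallback (keeping only $g_r$) is the right reading. For $p\ge n$ the term $r^{\frac mp}g/r$ is genuinely not in $L^p$: take $\boldsymbol f=\boldsymbol x\,\chi(|\boldsymbol x|)$ with $\chi\equiv1$ near $0$, so that $g(0)=n$. The paper's argument only yields $|r^{\frac mp}(f_r+\frac mr f)_r|_p$ in that case. This $\partial_r^{j-1}$ form is also how the lemma is actually used later and how Lemma \ref{im-2} is stated.
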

\begin{proof}
Let $\boldsymbol{f}(\boldsymbol{x})=f(r)\frac{\boldsymbol{x}}{r}$. First, since $\boldsymbol{f}$ is curl-free, we have the following identity 
\begin{equation*}
\Delta \boldsymbol{f} = \nabla\diver\boldsymbol{f}\implies \nabla \boldsymbol{f}= -\nabla(-\Delta)^{-1}\nabla\diver \boldsymbol{f},
\end{equation*}
where $(-\Delta)^{-1}$ is defined via the Fourier transform: 
\begin{equation*}
\big((-\Delta)^{-1}g\big)(\boldsymbol{x})
:=\mathcal{F}^{-1}\big[\frac{1}{4\pi|\boldsymbol{\omega}|^2}\mathcal{F}[g](\boldsymbol{\omega})\big] (\boldsymbol{x}),
\end{equation*}
with $\mathcal{F}[g](\boldsymbol{\omega})$ as
the Fourier transform of $g\in C^\infty_{\rm c}(\mathbb{R}^n)$: $\mathcal{F}[g](\boldsymbol{\omega})=\int_{\mathbb{R}^n} g(\boldsymbol{x})e^{-2\pi\mathrm{i}\boldsymbol{x}\cdot\boldsymbol{\omega}}\,\mathrm{d}\boldsymbol{x}$.

Thus, by the Mihlin--H\"ormander multiplier theorem (see \cite[Theorem 6.2.7]{lg}), we have 
\begin{equation*} 
\|\nabla^j\diver\boldsymbol{f}\|_{L^p}\sim\|\nabla^{j+1}\boldsymbol{f}\|_{L^p}\qquad \text{for integer $0\leq j\leq 3$},
\end{equation*}
which, together with $\diver\boldsymbol{f}=f_r+\frac{m}{r}f$ and  Lemma \ref{lemma-initial}, leads to the desired results. 
\end{proof}

\begin{lem}\label{im-2}
Let $\boldsymbol{f}(t,\boldsymbol{x})=f(t,r)\frac{\boldsymbol{x}}{r}\in C_\mathrm{c}^\infty([0,T]\times\mathbb{R}^n)$. Then, for any $q\geq 2\iota$, there exists a constant $C(q,T)>0$ such that, for all $t\in [0,T]$,
\begin{equation*}
\begin{aligned}
|r^\frac{m}{2}\phi^{q}\mathrm{D}_r^j f|_2&\leq 2\Big|r^\frac{m}{2}\phi^{q}\partial_r^{j-1}\big(f_r+\frac{m}{r}f\big)\Big|_2+C(q,T) (1+|u|_\infty)|r^\frac{m}{2}\mathrm{D}_r^{j-1}f|_2\quad&&\text{for $j=1,2$},\\
|r^\frac{m}{2}\phi^{q}\mathrm{D}_r^k f|_2&\leq 2\Big|r^\frac{m}{2}\phi^q \mathrm{D}_r^{k-2}\big(f_r+ \frac{m}{r}f\big)_{r}\Big|_2+C(q,T)(1+|u|_\infty)|r^\frac{m}{2} \mathrm{D}_r^{k-1} f|_2\quad&&\text{for $k=3,4$}.
\end{aligned}
\end{equation*}
\end{lem}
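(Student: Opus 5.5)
The proof will be a weighted version of the unweighted div--curl identity behind Lemma \ref{im-1}, carried out directly in the radial variable. Set $g:=f_r+\frac{m}{r}f$, the radial form of $\diver\boldsymbol{f}$.

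\textbf{Step 1 ($j=1$).} I expand
\begin{equation*}
\int_0^\infty r^m\phi^{2q}g^2\,\mathrm{d}r=\int_0^\infty r^m\phi^{2q}\Big(f_r^2+m^2\frac{f^2}{r^2}\Big)\mathrm{d}r+m\int_0^\infty r^{m-1}\phi^{2q}(f^2)_r\,\mathrm{d}r .
\end{equation*}
In the cross term I integrate by parts. The boundary terms vanish because $\boldsymbol{f}$ is compactly supported and $f|_{r=0}=0$. This yields
\begin{equation*}
-m(m-1)\int_0^\infty r^{m-2}\phi^{2q}f^2\,\mathrm{d}r-2qm\int_0^\infty r^{m-1}\phi^{2q-1}\phi_r f^2\,\mathrm{d}r .
\end{equation*}
The first of these combines with $m^2\frac{f^2}{r^2}$ to leave $m\frac{f^2}{r^2}\ge0$. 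Hence
\begin{equation*}
|r^{\frac m2}\phi^q\mathrm{D}_rf|_2^2\le |r^{\frac m2}\phi^q g|_2^2+C\,\Big|\int_0^\infty r^{m-1}\phi^{2q-1}\phi_r f^2\,\mathrm{d}r\Big| .
\end{equation*}

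\textbf{Handling the error term.} This term is the only new difficulty, and I expect it to be the main obstacle. To control it I use \eqref{v-express-2}, which gives $\phi^{2\iota-1}\phi_r=c(v-u)$. Then $\phi^{2q-1}\phi_r=c\,\phi^{2q-2\iota}(v-u)$, with $2q-2\iota\ge 2\iota$ since $q\ge 2\iota$ and $\iota<0$. The factor $\phi^{2q-2\iota}$ is unbounded where $\phi$ is small, so the weight must be split. I write $\phi^{2q-2\iota}=\phi^{q}\cdot\phi^{q-2\iota}$, using $q-2\iota\ge0$ together with the upper bound $\phi\le C(T)$ from Lemma \ref{important2}. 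With $|v|_\infty\le C(T)$ from Lemma \ref{l4.4}, the error term is bounded by
\begin{equation*}
C(T)(1+|u|_\infty)\,\Big|r^{\frac m2}\phi^q\frac{f}{r}\Big|_2\,|r^{\frac m2}f|_2 .
\end{equation*}
I then apply Young's inequality and absorb $\frac14|r^{\frac m2}\phi^q\mathrm{D}_rf|_2^2$ into the left-hand side. This produces the factor $2$ and the lower-order term $C(q,T)(1+|u|_\infty)|r^{\frac m2}f|_2$; strictly, Young gives $(1+|u|_\infty)^2$. If the linear power is really needed, I would instead bound the error as $C(T)|r^{\frac m2}\phi^q\mathrm{D}_r f|_2|r^{\frac m2}f|_2(1+|u|_\infty)$ and keep the product form. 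I will not fuss over the exact power. An alternative route is to note that $\phi^{2q-2\iota}$ is bounded because the exponent is nonnegative, and to put the whole $\phi$-weight on $|\mathrm{D}_rf|$ only through the single factor $\phi^q$.

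\textbf{Step 2 ($j=2$).} I apply the same identity to $f_r$, in the form $(f_r)_r+\frac mr f_r$, and use $(\frac fr)_r=\frac1r(f_r-\frac fr)$. This gives $\mathrm{D}_r^2f$ in terms of $g_r$ plus terms of the form $\frac1r\mathrm{D}_rf$, which have the same structure as in Step 1. Concretely, I expand
\begin{equation*}
\int_0^\infty r^m\phi^{2q}g_r^2\,\mathrm{d}r,\qquad g_r=f_{rr}+m\Big(\frac fr\Big)_r .
\end{equation*}
I integrate the cross term $2m\int r^m\phi^{2q}f_{rr}(\frac fr)_r\,\mathrm{d}r$ by parts. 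Using $f_{rr}=r(\frac fr)_{rr}+2(\frac fr)_r$, it becomes a positive multiple of $\int r^m\phi^{2q}|(\frac fr)_r|^2\,\mathrm{d}r$ plus a boundary-free term with $\phi_r$. That $\phi_r$ term is handled exactly as in Step 1, with $f$ replaced by $\frac fr$, and the lower-order term is $|r^{\frac m2}\mathrm{D}_rf|_2$.

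\textbf{Steps 3--4 ($k=3,4$).} Here $\mathrm{D}_r^3$ and $\mathrm{D}_r^4$ are built from $\mathrm{D}_r^2f$ by one or two further radial derivatives. I apply the $j=1,2$ argument to the pair $(f_{rr},(\frac fr)_r)$, which satisfies the analogous structural identity with $(g_r)$. Each time, the $\phi_r$-error is produced by integration by parts and absorbed by the same $v$/$u$ substitution. This gives the stated bounds with lower-order term $|r^{\frac m2}\mathrm{D}_r^{k-1}f|_2$.
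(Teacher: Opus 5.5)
Your proposal is correct and follows the paper's own argument: expand the weighted $L^2$ norm of the radial divergence (or of its derivatives), rewrite the cross term through identities such as $f_{rr}=r(\frac fr)_{rr}+2(\frac fr)_r$ so that integration by parts leaves positive coefficients plus a $(\phi^{2q})_r$-term, and control that term via $\phi^{2\iota-1}\phi_r\propto v-u$, the split $\phi^{2q-2\iota}=\phi^{q}\phi^{q-2\iota}$ with $q-2\iota\ge 0$, and Lemmas \ref{important2} and \ref{l4.4} (the paper writes out only $k=4$, using \eqref{L6} and \eqref{L6''}, whereas you write out $j=1,2$ and only sketch $k=3,4$). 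Two small corrections: the factor $(1+|u|_\infty)^2$ appears only in the squared inequality and becomes exactly $(1+|u|_\infty)$ once you take square roots, so there is nothing to fuss over; and your ``alternative route'' is not valid, because $2q-2\iota<0$ whenever $2\iota\le q<\iota$ (for instance in the frequently used case $q=2\iota$), so $\phi^{2q-2\iota}$ need not be bounded.
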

\begin{proof}
Let $q\geq 2\iota$.  For brevity, we only give the proof for the highest-order estimates, and the others can be derived similarly.

First, since the following identity holds: $f_{rrrr}=r(\frac{f}{r})_{rrrr}+ 4 (\frac{f}{r})_{rrr}$, we deduce from a direct calculation that
\begin{equation}\label{L6}
\begin{aligned}
&\,\Big|r^\frac{m}{2}\phi^q \big(f_r+ \frac{m}{r}f\big)_{rrr}\Big|_2^2\\
&=\int_0^\infty r^m\phi^{2q} \Big(|f_{rrrr}|^2 +m^2 \Big|\big(\frac{f}{r}\big)_{rrr}\Big|^2\Big) \,\mathrm{d}r+ 2m\int_0^\infty r^m\phi^{2q} f_{rrrr} \big(\frac{f}{r}\big)_{rrr}\,\mathrm{d}r\\
&=\int_0^\infty  r^m\phi^{2q} \Big(|f_{rrrr}|^2 +(m^2+8m) \Big|\big(\frac{f}{r}\big)_{rrr}\Big|^2\Big) \,\mathrm{d}r +2m\int_0^\infty r^{m+1}\phi^{2q} \big(\frac{f}{r}\big)_{rrrr} \big(\frac{f}{r}\big)_{rrr}\,\mathrm{d}r\\
&=\int_0^\infty r^m\phi^{2q} \Big(|f_{rrrr}|^2\! +\!7m\Big|\big(\frac{f}{r}\big)_{rrr}\Big|^2\Big) \mathrm{d}r  -\underline{\frac{2qm}{\iota}\int_0^\infty\! r^{m+1}\phi^{2q-2\iota}(\phi^{2\iota})_r \Big|\big(\frac{f}{r}\big)_{rrr}\Big|^2 \mathrm{d}r}_{:=G_1}\!\!\!.    
\end{aligned}
\end{equation}

Then, for $G_1$, we obtain from the identity: $f_{rrr}=r(\frac{f}{r})_{rrr}+ 3 (\frac{f}{r})_{rr}$, \eqref{v-express-2}, Lemmas \ref{important2} and \ref{l4.4}, and the H\"older and Young inequalities that
\begin{equation}\label{G1}
\begin{aligned}
G_1&=\frac{qm(\delta-1)}{a_1\delta a\iota}\int_0^\infty r^{m+1}\phi^{2q-2\iota}(v-u)\Big|\big(\frac{f}{r}\big)_{rrr}\Big|^2 \,\mathrm{d}r\\
&\leq C(q)|\phi|_\infty^{q-2\iota}|(v,u)|_\infty \Big|r^\frac{m+2}{2} \big(\frac{f}{r}\big)_{rrr}\Big|_2\Big|r^\frac{m}{2}\phi^q \big(\frac{f}{r}\big)_{rrr}\Big|_2\\
&\leq C(q,T)(1+|u|_\infty) |r^\frac{m}{2} (\mathrm{D}_r^2 f)_r|_2\Big|r^\frac{m}{2}\phi^q \big(\frac{f}{r}\big)_{rrr}\Big|_2\\
&\leq C(q,T)\big(1+|u|_\infty^2\big)|r^\frac{m}{2} (\mathrm{D}_r^2 f)_r|_2^2+\frac{1}{100}\Big|r^\frac{m}{2}\phi^q \big(\frac{f}{r}\big)_{rrr}\Big|_2^2,
\end{aligned}
\end{equation}
which, along with \eqref{L6}, gives
\begin{equation}\label{L6'}
|r^\frac{m}{2}\phi^q(\mathrm{D}_r^2 f)_{rr}|_2^2 \leq 2\Big|r^\frac{m}{2}\phi^q \big(f_r+ \frac{m}{r}f\big)_{rrr}\Big|_2^2 +C(q,T)\big(1+|u|_\infty^2\big) |r^\frac{m}{2}(\mathrm{D}_r^2 f)_{r}|_2^2.  
\end{equation}

Next, thanks to the identity
\begin{equation}\label{L6''}
\big(\frac{f_{rr}}{r}\big)_r=\big(\frac{f}{r}\big)_{rrr}+2\Big(\frac{1}{r}\big(\frac{f}{r}\big)_r\Big)_r= r \Big(\frac{1}{r}\big(\frac{f}{r}\big)_r\Big)_{rr}+4 \Big(\frac{1}{r}\big(\frac{f}{r}\big)_r\Big)_r,
\end{equation}
we deduce from a direct calculation and integration by parts that
\begin{equation}\label{9.6}
\begin{aligned}
&\,\Big|r^\frac{m}{2}\phi^q \Big(\frac{1}{r}\big(f_r+ \frac{m}{r}f\big)_r\Big)_r\Big|_2^2=\Big|r^\frac{m}{2}\phi^{q}\Big(r\big(\frac{1}{r}(\frac{f}{r})_r\big)_{rr}+(m+4) \big(\frac{1}{r}(\frac{f}{r})_r\big)_r\Big)\Big|_2^2\\
&=\int_0^\infty  r^m\phi^{2q} \Big(\Big|r\Big(\frac{1}{r}\big(\frac{f}{r}\big)_r\Big)_{rr}\Big|^2 +(m+4)^2 \Big|\Big(\frac{1}{r}\big(\frac{f}{r}\big)_r\Big)_r\Big|^2\Big)\,\mathrm{d}r\\
&\quad +2(m+4)\int_0^\infty  r^{m+1}\phi^{2q} \Big(\frac{1}{r}\big(\frac{f}{r}\big)_r\Big)_{rr} \Big(\frac{1}{r}\big(\frac{f}{r}\big)_r\Big)_r\,\mathrm{d}r\\
&=\int_0^\infty  r^m\phi^{2q} \Big(\Big|r\Big(\frac{1}{r}\big(\frac{f}{r}\big)_r\Big)_{rr}\Big|^2 +(3m+12)\Big|\Big(\frac{1}{r}\big(\frac{f}{r}\big)_r\Big)_r\Big|^2\Big)\,\mathrm{d}r\\
&\quad -\underline{\frac{2q(m+4)}{\iota}\int_0^\infty r^{m+1}\phi^{2q-2\iota}(\phi^{2\iota})_r\Big|\Big(\frac{1}{r}\big(\frac{f}{r}\big)_r\Big)_r\Big|^2 \,\mathrm{d}r}_{:=\widetilde G_1}.
\end{aligned}
\end{equation}
Then, for $\widetilde G_1$, using the identity
\begin{equation*}
\frac{f_{rr}}{r}=r\Big(\frac{1}{r}\big(\frac{f}{r}\big)_{r}\Big)_r+\frac{3}{r}\big(\frac{f}{r}\big)_r,
\end{equation*}
repeating a calculation similar to \eqref{G1}, we obtain
\begin{equation*} 
\begin{aligned}
\widetilde G_1&=\frac{q(m+4)(\delta-1)}{a_1\delta a\iota}\int_0^\infty r^{m+1}\phi^{2q-2\iota}(v-u)\Big|\Big(\frac{1}{r}\big(\frac{f}{r}\big)_r\Big)_r\Big|^2 \,\mathrm{d}r\\
&\leq C(q)|\phi|_\infty^{q-2\iota}|(v,u)|_\infty \Big|r^\frac{m+2}{2}\Big(\frac{1}{r}\big(\frac{f}{r}\big)_r\Big)_r\Big|_2\Big|r^\frac{m}{2}\phi^q \Big(\frac{1}{r}\big(\frac{f}{r}\big)_r\Big)_r\Big|_2\\
&\leq C(q,T)(1+|u|_\infty) |r^\frac{m-2}{2}\mathrm{D}_r^2 f|_2\Big|r^\frac{m}{2}\phi^q \Big(\frac{1}{r}\big(\frac{f}{r}\big)_r\Big)_r\Big|_2\\
&\leq C(q,T)\big(1+|u|_\infty^2\big)|r^\frac{m-2}{2}\mathrm{D}_r^2 f|_2^2+\frac{1}{100}\Big|r^\frac{m}{2}\phi^q \Big(\frac{1}{r}\big(\frac{f}{r}\big)_r\Big)_r\Big|_2^2.
\end{aligned}
\end{equation*}

Therefore, \eqref{9.6}, combined with the above and \eqref{L6''}, implies 
\begin{equation*}
\Big|r^\frac{m}{2}\phi^q\big(\frac{\mathrm{D}_r^2 f}{r}\big)_r\Big|_2^2 \leq 2\Big|r^\frac{m}{2}\phi^q \Big(\frac{1}{r}\big(f_r+ \frac{m}{r}f\big)_r\Big)_r\Big|_2^2 +C(q,T)\big(1+|u|_\infty^2\big)|r^\frac{m-2}{2}\mathrm{D}_r^2 f|_2^2.    
\end{equation*}
which, together with \eqref{L6'}, leads to the desired estimate.
\end{proof}

\subsection{Zeroth- and First-Order Estimates of the Velocity}
The first lemma concerns the zeroth-order energy estimate for $u$.
\begin{lem}\label{l4.5}
There exists a constant $C(T)>0$ such that 
\begin{equation*}
|r^{\frac{m}{2}}(u,\phi^{1-2\iota} v)(t)|_2^2 +\int_0^t |r^{\frac{m}{2}}\phi^{\iota}\mathrm{D}_r u|_2^2 \, \mathrm{d}s\leq C(T)\qquad\text{for any $t\in [0,T]$}.
\end{equation*}
\end{lem}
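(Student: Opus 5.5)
The plan is to repeat the argument of Lemma \ref{ele} with the radial weight $r^m$ inserted everywhere. Recall that $\phi^{\iota}=c\,\rho^{\frac{\delta-1}{2}}$ and $\phi^{1-2\iota}=c\,\rho^{\gamma-\delta}$ for positive constants $c$, and that $a\phi^{2\iota}=\rho^{\delta-1}$. By \eqref{V-expression} we also have
\[
(\rho^{\delta-1})_r=\frac{\delta-1}{2a_1\delta}(v-u),\qquad \phi_r=\frac{A\gamma}{2a_1\delta}\rho^{\gamma-\delta}(v-u).
\]
The estimate therefore amounts to two coupled $r^m$-weighted energy inequalities, one for $u$ and one for $\rho^{\gamma-\delta}v$, closed by Gr\"onwall's inequality. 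The inputs are the bounds already proved: $|\rho|_\infty\le C(T)$ (Lemma \ref{important2}), $|v|_\infty\le C(T)$ (Lemma \ref{l4.4}), $\int_0^T|\rho^{\frac{1-\delta}{2}}u|_\infty^2\,\mathrm{d}s\le C(T)$ (Lemma \ref{cru5}), and $\int_0^T|u|_\infty^4\,\mathrm{d}s\le C(T)$ (Lemma \ref{ele}).

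\textbf{Step 1: the $u$-equation.} I would multiply $\eqref{e2.2}_2$ by $r^m u$ and integrate over $I$. The divergence-form terms vanish by Lemma \ref{calculus}, using the regularity \eqref{spd}--\eqref{spd2} and $u|_{r=0}=0$, exactly as for $\mathcal{B}_1,\mathcal{B}_8$.

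\emph{Viscous terms.} I integrate $2a_1\delta\rho^{\delta-1}(u_r+\frac{m}{r}u)_r\, r^m u$ by parts. Together with the $\psi$-term, which equals $\frac{2a_1\delta}{\delta-1}(\rho^{\delta-1})_r\big(\delta u_r+m(\delta-1)\frac{u}{r}\big)$, this should produce
\begin{itemize}
\item a coercive part $2a_1 r^m\rho^{\delta-1}I_1$, with $I_1$ the same binary form as in Lemma \ref{energy-BD}, which is $\ge c^*_\delta\big(u_r^2+\frac{u^2}{r^2}\big)$ because $\delta>1-\frac1n$;
\item cross terms of the form $r^m(\rho^{\delta-1})_r\,u\,\mathrm{D}_r u=c\,r^m(v-u)\,u\,\mathrm{D}_r u$.
\end{itemize}
I would bound the cross terms by
\[
C\big(|\rho|_\infty^{\frac{1-\delta}{2}}|v|_\infty+|\rho^{\frac{1-\delta}{2}}u|_\infty\big)\,|r^{\frac m2}u|_2\,|r^{\frac m2}\rho^{\frac{\delta-1}{2}}\mathrm{D}_r u|_2 .
\]
After Young's inequality this leaves $C(T)\big(1+|\rho^{\frac{1-\delta}{2}}u|_\infty^2\big)|r^{\frac m2}u|_2^2$, whose prefactor is time-integrable by Lemma \ref{cru5}.

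\emph{Convective term.} The identity $\int r^m u^2u_r\,\mathrm{d}r=-\frac m3\int r^{m-1}u^3\,\mathrm{d}r$ gives a bound $|u|_\infty|r^{\frac m2}u|_2\,|r^{\frac m2}\frac ur|_2$. Since $|r^{\frac m2}\frac ur|_2\le |\rho|_\infty^{\frac{1-\delta}{2}}|r^{\frac m2}\rho^{\frac{\delta-1}{2}}\frac ur|_2$, this is absorbed as $\varepsilon|r^{\frac m2}\rho^{\frac{\delta-1}{2}}\mathrm{D}_r u|_2^2+C(T)|u|_\infty^2|r^{\frac m2}u|_2^2$, with $|u|_\infty^2\in L^1(0,T)$ by Lemma \ref{ele}.

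\emph{Pressure term.} Writing $\phi_r$ through $v-u$ gives
\[
\Big|\int_0^\infty r^m\phi_r u\,\mathrm{d}r\Big|\le C(T)\big(|r^{\frac m2}\rho^{\gamma-\delta}v|_2+|r^{\frac m2}u|_2\big)|r^{\frac m2}u|_2 .
\]

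\textbf{Step 2: the $v$-equation.} As in Step 4 of Lemma \ref{ele}, I would multiply \eqref{eq:effective2} by $r^m\rho^{2\gamma-2\delta}v$ and use $\eqref{e1.5}_1$. The transport term becomes $\big(r^m u\rho^{2\gamma-2\delta}v^2\big)_r$, which integrates to zero. The remaining terms are $r^m\rho^{2\gamma-2\delta}v^2\,\mathrm{D}_r u$ and $r^m\rho^{3\gamma-3\delta}uv$. The first is bounded by
\[
|\rho|_\infty^{\frac{2\gamma+1-3\delta}{2}}|v|_\infty\,|r^{\frac m2}\rho^{\gamma-\delta}v|_2\,|r^{\frac m2}\rho^{\frac{\delta-1}{2}}\mathrm{D}_r u|_2 ,
\]
and is absorbed into the dissipation from Step 1; the second is handled by Cauchy--Schwarz. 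Adding the two inequalities gives
\[
\frac{\mathrm{d}}{\mathrm{d}t}Y+c\,|r^{\frac m2}\rho^{\frac{\delta-1}{2}}\mathrm{D}_r u|_2^2\le C(T)\big(1+|\rho^{\frac{1-\delta}{2}}u|_\infty^2+|u|_\infty^2\big)Y,
\qquad Y:=|r^{\frac m2}(u,\rho^{\gamma-\delta}v)|_2^2 .
\]
Gr\"onwall's inequality then closes the estimate. The initial quantity $Y(0)$ is finite because $r^{\frac m2}u_0\in L^2$ and $r^{\frac m2}\rho_0^{\gamma-\delta}v_0\lesssim r^{\frac m2}\big(u_0,(\rho_0^{\gamma-1})_r\big)\in L^2$ by \eqref{etm}.

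\textbf{Main obstacle.} I expect the hardest step to be the algebra in Step 1. One has to check that after dividing by $\rho$ and integrating by parts against $r^m u$, the $\rho^{\delta-1}$-weighted quadratic form really is the coercive $I_1$ with the same discriminant condition. The difficulty is that moving $\rho^{\delta-1}$ through the derivative generates extra $(\rho^{\delta-1})_r$ terms. These must appear only in the form $(v-u)\,u\,\mathrm{D}_r u$, and never as $u^2/r$ terms carrying a bad sign. If an unfavourable $u^2/r$ term appears, I would absorb it using $\frac{u}{r}$ from the dissipation together with $|v|_\infty$, as was done for $I_{19}$.

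A second, more technical point is justifying that the boundary fluxes vanish. The flux $r^m\rho^{\delta-1}u\,\mathrm{D}_r u$ has to be placed in $L^1$ with derivative in $L^1$ near infinity, where $\rho^{\delta-1}$ is unbounded. I would do this with \eqref{622} and the regularity \eqref{spd}, in the same way as for $\mathcal{B}_8$.
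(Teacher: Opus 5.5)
Your proposal is correct and follows the paper's proof. It uses the same two $r^m$-weighted energy identities, for $u$ (multiplier $r^m u$) and for $\phi^{1-2\iota}v\propto\rho^{\gamma-\delta}v$ (multiplier $r^m\phi^{2-4\iota}v$), closed by Gr\"onwall with Lemmas \ref{important2}, \ref{l4.4} and \ref{ele}. The only difference is in the dissipation: the paper keeps it in divergence form $|r^{\frac{m}{2}}\phi^{\iota}(u_r+\frac{m}{r}u)|_2^2$ and recovers the full $\mathrm{D}_r u$ through Lemma \ref{im-2}, whereas your extra integration by parts gives the coercive form $I_1$ directly, at the cost of an extra cross term $(v-u)r^{m-1}u^2$ that you control the same way (your use of Lemma \ref{cru5} is optional, since $|u|_\infty^2\in L^1(0,T)$ by Lemma \ref{ele} already suffices).
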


\begin{proof} We divide the proof into two steps.

\smallskip
\textbf{1.} Multiplying $\eqref{e2.2}_2$ by $r^m u$, together with \eqref{v-express-2}, gives
\begin{equation}\label{eq:B7-pre}
\begin{aligned}
&\,\frac{1}{2}(r^m u^2)_t+2a_1\delta a r^m\phi^{2\iota} \big(u_r+\frac{m}{r}u\big)^2-2a_1\delta a\Big(r^m \underline{\phi^{2\iota}u\big(u_r+\frac{m}{r}u\big)}_{:=\mathcal{B}_{11}}\Big)_r \\
&= -\frac{\gamma-1}{2a_1\delta a }r^m\phi^{1-2\iota}(v-u)u+ r^m(v-2u)uu_r.
\end{aligned}
\end{equation}
Then we show that $r^m \mathrm{D}_r \mathcal{B}_{11} \in L^1(I)$ for {\it a.e.} $t\in (0,T)$, which allows us to apply Lemma \ref{calculus} to obtain
\begin{equation}\label{eq:B7}
\int_0^\infty (r^m\mathcal{B}_{11})_r\,\mathrm{d}r=0.
\end{equation}
Indeed, based on \eqref{spd}, we see that, for \textit{a.e.} $t\in (0,T)$, 
\begin{equation*}
\begin{aligned}
(\phi^{2\iota})_r \in L^\infty(I),\quad r^\frac{m}{2}(\phi^\iota \mathrm{D}_r u,\phi^{2\iota} \mathrm{D}_r^2 u)\in L^2(I),\quad r^\frac{m}{2} (u,\mathrm{D}_r u)\in L^2(I).
\end{aligned}
\end{equation*}
Then it follows from the H\"older inequality that 
\begin{equation*}
\begin{aligned}
|r^m \mathrm{D}_r \mathcal{B}_{11}|_1&\leq C_0\big(\big|r^{m-1}\phi^{2\iota}u\mathrm{D}_r u\big|_1+ |r^m(\phi^{2\iota})_ru\mathrm{D}_r u|_1 + |r^{m}\phi^{2\iota}u_r\mathrm{D}_r u|_1+ |r^{m}\phi^{2\iota}u\mathrm{D}_r^2 u|_1\big)\\
&\leq C_0\big(|r^\frac{m}{2}\phi^\iota\mathrm{D}_r u |_2^2+ |r^\frac{m}{2}u|_2|(\phi^{2\iota})_r|_\infty |r^\frac{m}{2}\mathrm{D}_r u|_2+ |r^\frac{m}{2}u|_2 |r^\frac{m}{2}\phi^{2\iota}\mathrm{D}_r^2 u|_2\big)<\infty.    
\end{aligned}
\end{equation*}    

Thus, integrating \eqref{eq:B7-pre} over $I$, together with \eqref{eq:B7}, yields
\begin{equation}\label{e4.15}
\begin{aligned}
&\,\frac{1}{2}\frac{\mathrm{d}}{\mathrm{d}t} |r^{\frac{m}{2}}u|_2^2 + 2a_1\delta a\Big|r^{\frac{m}{2}}\phi^\iota \big(u_r+\frac{m}{r}u\big)\Big|_2^2\\
&= -\frac{1}{4a_1\delta a \iota}\int_0^\infty r^m \phi^{1-2\iota}(v-u)u\,\mathrm{d}r + \int_0^\infty r^m (v-2u)uu_r\,\mathrm{d}r:=\sum_{i=2}^{3} G_i.
\end{aligned}
\end{equation}
Then, for $G_2$--$G_3$, it follows from \eqref{v-express-2}, Lemmas \ref{important2}, \ref{l4.4}, and \ref{im-2}, 
and the H\"older and Young inequalities that
\begin{equation}\label{G6-G8}   
\begin{aligned}
G_2&\leq  C_0 |r^\frac{m}{2}\phi^{1-2\iota} v|_{2} |r^{\frac{m}{2}}u|_{2} +C_0|\phi|_{\infty}^{1-2\iota} |r^{\frac{m}{2}}u|_{2}^2 \leq  C(T)|r^\frac{m}{2}(\phi^{1-2\iota} v,u)|_2^2,\\
G_3&\leq C_0|\phi|_\infty^{-\iota}|(v,u)|_{\infty}|r^{\frac{m}{2}}u|_{2} |r^{\frac{m}{2}}\phi^\iota u_r|_{2} \\
&\leq  C(T)\big(1+|u|_\infty^2\big)|r^{\frac{m}{2}}u|_{2}^2+\frac{a_1\delta a}{8}\Big|r^{\frac{m}{2}}\phi^\iota\big(u_r+\frac{m}{r}u\big)\Big|_2^2.
\end{aligned}
\end{equation}
Substituting \eqref{G6-G8} into \eqref{e4.15}, along with Lemma \ref{im-2}, leads to
\begin{equation}\label{eq4.16}
\frac{\mathrm{d}}{\mathrm{d}t} |r^{\frac{m}{2}}u|_2^2 + a_1\delta a |r^{\frac{m}{2}}\phi^\iota \mathrm{D}_r u|_2^2 \leq C(T)\big(1+|u|_\infty^2\big) |r^\frac{m}{2}(\phi^{1-2\iota} v,u)|_2^2.
\end{equation}

\smallskip
\textbf{2. $L^2(I)$-estimate for $r^\frac{m}{2}\phi^{1-2\iota} v$.}  We multiply \eqref{eq:effective1} by $r^m\phi^{2-4\iota}v$ and use $\eqref{e2.2}_1$ to obtain
\begin{equation}\label{b8pre}
\begin{aligned}
&\,\frac{1}{2} (r^m \phi^{2-4\iota}v^2)_t+\frac{\gamma-1}{2a_1\delta a}r^m \phi^{3-6\iota} v^2\\
&=-\frac{1}{2}(r^m \underline{u\phi^{2-4\iota}v^2}_{:=\mathcal{B}_{12}})_r-\big(\frac{2\delta+1}{2}-\gamma\big)r^m\phi^{2-4\iota}v^2\big(u_r+\frac{m}{r}u\big)+\frac{\gamma-1}{2a_1\delta a}r^m \phi^{3-2\iota} vu.
\end{aligned}
\end{equation}
Next, we need to  show that $r^m \mathrm{D}_r\mathcal{B}_{12} \in L^1(I)$  for {\it a.e.} $t\in (0,T)$, which allows us to apply Lemma \ref{calculus} to obtain
\begin{equation}\label{eq:B8}
\int_0^\infty (r^m\mathcal{B}_{12})_r\,\mathrm{d}r=0.
\end{equation}
Indeed,  \eqref{spd}--\eqref{spd2}, \eqref{psi,wuqiong0}, and Lemmas \ref{ale1} and \ref{lemma-initial} imply 
\begin{equation*}
(\phi, u, \mathrm{D}_ru,v)\in L^\infty(I),\quad r^\frac{m}{n^*}\phi\in L^{n^*}(I),\quad r^\frac{m}{n}\psi_{r}\in L^n(I),\quad  
r^\frac{m}{2}(\phi_r,u,u_r)\in L^2(I)
\end{equation*}
for {\it a.e.} $t\in (0,T)$, where $n^*$ is defined in \S\ref{othernote}. Then it follows from \eqref{v-express-2} and the H\"older inequality that
\begin{equation*}
\begin{aligned}
|r^m \mathrm{D}_r\mathcal{B}_{12}|_1&\leq C_0\big|\big(r^{m-1}u\phi^{2-4\iota} v^2,r^m u_r\phi^{2-4\iota} v^2,r^m u\phi^{1-4\iota} \phi_r v^2,r^{m}u\phi^{2-4\iota} vv_r\big)\big|_1\\
&\leq C_0|\mathrm{D}_r u|_\infty\big(|\phi|_\infty^{2-4\iota}|r^\frac{m}{2}u|_2^2+ |r^\frac{m}{2}\phi_r|_2^2\big)\\
&\quad +C_0|\phi|_\infty^{1-4\iota}|v|_\infty |r^\frac{m}{2}u|_2\big(|v|_\infty|r^\frac{m}{2}\phi_r|_2+|\phi|_\infty |r^\frac{m}{2}u_r|_2+|r^\frac{m}{n^*}\phi|_{n^*} |r^\frac{m}{n}\psi_r|_n\big)<\infty.  
\end{aligned}
\end{equation*} 

Thus, integrating \eqref{b8pre} over $I$, we obtain from  \eqref{eq:B8}, Lemmas \ref{important2}  and \ref{l4.4}, 
and the H\"older and Young inequalities that
\begin{equation}\label{555}
\begin{aligned}
&\,\frac{1}{2}\frac{\mathrm{d}}{\mathrm{d}t}|r^\frac{m}{2}\phi^{1-2\iota} v|_2^2+ \frac{\gamma-1}{2a_1\delta a} \big|r^\frac{m}{2}\phi^{\frac{3}{2}-3\iota} v\big|_2^2\\
&=\big(\frac{2\delta+1}{2}-\gamma\big)\int_0^\infty  r^m\phi^{2-4\iota} v^{2} \big(u_r+\frac{m}{r}u\big)\,\mathrm{d}r +\frac{\gamma-1}{2a_1\delta a}\int_0^\infty r^m\phi^{6-4\iota} vu\,\mathrm{d}r\\
&\leq  C_0|\phi|_\infty^{1-\iota}|v|_\infty |r^\frac{m}{2}\phi^{1-2\iota} v|_2|r^\frac{m}{2}\phi^\iota\mathrm{D}_r u|_2+C_0|\phi|_\infty^{2-2\iota} |r^\frac{m}{2}\phi^{1-2\iota} v|_2 |r^\frac{m}{2}u|_2\\
&\leq  C(T)|r^\frac{m}{2}(\phi^{1-2\iota} v,u)|_2^2+\frac{a_1\delta a}{8} |r^\frac{m}{2}\phi^\iota\mathrm{D}_r u|_2^2,
\end{aligned}    
\end{equation}
which, together with \eqref{eq4.16}, gives
\begin{equation}\label{eq4.17}
\frac{\mathrm{d}}{\mathrm{d}t}|r^\frac{m}{2}(\phi^{1-2\iota} v,u)|_2^2 + \frac{a_1\delta a}{2}|r^\frac{m}{2}\phi^\iota\mathrm{D}_r u|_2^2\leq C(T)\big(1+|u|_\infty^2\big)|r^\frac{m}{2}(\phi^{1-2\iota} v,u)|_2^2.
\end{equation}

Applying  the Gr\"onwall  inequality to \eqref{eq4.17}, together with Lemma \ref{ele}, yields
\begin{equation*} 
|r^\frac{m}{2}(\phi^{1-2\iota} v,u)|_2^2+\int_0^t |r^\frac{m}{2}\phi^\iota\mathrm{D}_r u|_2^2 \,\mathrm{d}s\leq C(T)\qquad \text{for all $t\in [0,T]$}
\end{equation*}
where, the $L^2(I)$-boundedness of $r^\frac{m}{2}(\phi_0^{1-2\iota} v_0,u_0)$ follows from Lemmas \ref{ale1}, \ref{initial3}, and  \ref{lemma-initial}:
\begin{equation}\label{chuzhi0}
\begin{aligned}
|r^\frac{m}{2}(\phi_0^{1-2\iota}v_0,u_0)|_2 &\leq C_0|r^\frac{m}{2}(\phi_0^{1-2\iota}u_0,\phi_0^{1-2\iota}\psi_0,u_0)|_2 \\
&\leq C_0\big(\|\phi_0\|_{L^\infty}^{1-2\iota}\|\boldsymbol{u}_0\|_{L^2}+\|\nabla\phi_0\|_{L^2} +\|\boldsymbol{u}_0\|_{L^2}\big)\leq C_0. 
\end{aligned}
\end{equation}

This completes the proof.
\end{proof}

Next, we establish the following intermediate estimate for the first-order derivative of $u$.
\begin{lem}\label{l4.6-0}
There exists a constant $C(T)>0$ such that 
\begin{equation*} 
|r^\frac{m}{2} \mathrm{D}_r u|_2^2+\int_0^t |r^{\frac{m}{2}}\phi^{-\iota}u_t|_2^2\,\mathrm{d}s\leq C(T)\qquad\text{for any $t\in [0,T]$}.
\end{equation*}
\end{lem}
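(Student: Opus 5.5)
We will run the standard first-order energy estimate: multiply $\eqref{e2.2}_2$ by $r^m\phi^{-2\iota}u_t$ and integrate over $I$. The weight $\phi^{-2\iota}$ (note $-2\iota>0$, so $\phi^{-2\iota}$ is a positive power of $\rho$, bounded by Lemma \ref{important2}) is chosen to cancel the singular coefficient $\phi^{2\iota}$ in front of the viscous term. After this multiplication the dissipative part reads $2a_1\delta a\,r^m(u_r+\frac{m}{r}u)_r u_t$. Integrating by parts in $r$, the boundary terms vanish by Lemma \ref{calculus}, justified exactly as for $\mathcal{B}_{11}$ using \eqref{spd}. This gives
\begin{equation*}
-2a_1\delta a\int_0^\infty r^m\big(u_r+\tfrac{m}{r}u\big)\big(u_{tr}+\tfrac{m}{r}u_t\big)\,\mathrm{d}r=-a_1\delta a\frac{\mathrm{d}}{\mathrm{d}t}\Big|r^{\frac m2}\big(u_r+\tfrac{m}{r}u\big)\Big|_2^2 ,
\end{equation*}
since $(r^m X)_r=r^m(X_r+\frac{m}{r}X)$. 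We therefore obtain
\begin{equation*}
a_1\delta a\frac{\mathrm{d}}{\mathrm{d}t}\Big|r^{\frac m2}\big(u_r+\tfrac{m}{r}u\big)\Big|_2^2+|r^{\frac m2}\phi^{-\iota}u_t|_2^2=\int_0^\infty r^m\phi^{-2\iota}u_t\Big(-uu_r-\phi_r+2a_1\psi\big(\delta u_r+m(\delta-1)\tfrac{u}{r}\big)\Big)\,\mathrm{d}r .
\end{equation*}
By Lemma \ref{im-1} with $p=2$, the quantity $|r^{m/2}(u_r+\frac{m}{r}u)|_2$ is equivalent to $|r^{m/2}\mathrm{D}_r u|_2$, which is the quantity we want to bound.

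Next we treat the right-hand side. The key device is to rewrite $\phi_r$ and $\psi$ through the effective velocity using \eqref{v-express-2}. We have $2a_1\psi=v-u$, which is bounded in $L^\infty$ by Lemma \ref{l4.4} up to $|u|_\infty$. Also $\phi_r=\frac{\gamma-1}{2a_1\delta a}\phi^{1-2\iota}(v-u)$.

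The pressure term is then bounded by $C|\phi|_\infty^{1-\iota}\,|r^{m/2}\phi^{-\iota}u_t|_2\,\big(|r^{m/2}\phi^{1-2\iota}v|_2+|r^{m/2}u|_2\big)$. Lemma \ref{l4.5} controls the last factor, so Young's inequality absorbs this term.

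The convection and $\psi$ terms combine into $\int r^m\phi^{-2\iota}u_t\big((v-2u)\delta u_r+m(\delta-1)(v-u)\tfrac{u}{r}\big)\,\mathrm{d}r$. These are bounded by $C|\phi|_\infty^{-\iota}(|v|_\infty+|u|_\infty)\,|r^{m/2}\phi^{-\iota}u_t|_2\,|r^{m/2}\mathrm{D}_r u|_2$. After Young's inequality this becomes $\frac14|r^{m/2}\phi^{-\iota}u_t|_2^2+C(T)(1+|u|_\infty^2)|r^{m/2}\mathrm{D}_r u|_2^2$.

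Gr\"onwall's inequality then closes the estimate, because $\int_0^T|u|_\infty^2\,\mathrm{d}s\le C(T)$ by Lemma \ref{ele} (which gives $L^4$ in time, hence $L^2$). The initial value $|r^{m/2}\mathrm{D}_ru_0|_2\le C_0\|\nabla\boldsymbol u_0\|_{L^2}$ follows from Lemma \ref{lemma-initial}.

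The main obstacle is the justification step rather than the algebra. We must check that $r^m\phi^{-2\iota}u_t$ is an admissible multiplier and that the boundary terms vanish for a.e.\ $t$. From \eqref{spd} we have $r^{m/2}\mathrm{D}_ru_t\in L^2$, $r^{m/2}\mathrm{D}_r^2 u\in L^2$ and $u_t\in C(\bar I)$ on $(0,T]$, so $r^m\mathrm{D}_r\big((u_r+\frac{m}{r}u)u_t\big)\in L^1$ and Lemma \ref{calculus} applies. We also need the time derivative to be identified correctly. This uses the time regularity $r^{m/2}\mathrm{D}_ru\in C([0,T];L^2)$ and $r^{m/2}\mathrm{D}_ru_t\in L^2_tL^2$, so $t\mapsto|r^{m/2}(u_r+\frac m r u)|_2^2$ is absolutely continuous. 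A secondary point is that no positive power of $\phi^{-1}$ ever appears in this argument. This is what allows it to work despite the far-field vacuum, since every coefficient involves only $\phi^{-\iota}$ or $\phi^{1-\iota}$ with nonnegative exponents.
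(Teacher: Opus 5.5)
Your proposal is correct and follows the paper's proof essentially line by line: the same multiplier $r^m\phi^{-2\iota}u_t$, the viscous term integrated by parts (justified via Lemma \ref{calculus}), and $\phi_r$ and $2a_1\psi$ rewritten through $v$; it then uses the bounds of Lemmas \ref{important2}, \ref{l4.4}, \ref{l4.5} and closes with Gr\"onwall, using $\int_0^T|u|_\infty^2\,\mathrm{d}s\le C(T)$ from Lemma \ref{ele} together with Lemma \ref{im-1}. Two harmless slips, neither of which affects the estimate since every power of $|\phi|_\infty$ involved is nonnegative and hence bounded by $C(T)$: $-uu_r+(v-u)\delta u_r=(\delta v-(1+\delta)u)u_r$ rather than $\delta(v-2u)u_r$, and the pressure term carries the factors $|\phi|_\infty^{-\iota}$ (for the $v$ part) and $|\phi|_\infty^{1-3\iota}$ (for the $u$ part) rather than $|\phi|_\infty^{1-\iota}$.
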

\begin{proof}
First, multiplying $\eqref{e2.2}_2$ by $r^m \phi^{-2\iota}u_t$, together with $\eqref{e2.2}_1$ and \eqref{v-express-2}, gives 
\begin{equation}\label{7.5-1}
\begin{aligned}
&\,a_1\delta a\frac{\mathrm{d}}{\mathrm{d}t}\Big|\big(u_r+\frac{m}{r}u\big)\Big|_2^2+r^m\phi^{-2\iota}u_t^2-2a_1\delta a \Big(r^m\underline{\big(u_r+\frac{m}{r}u\big)u_t}_{:=\mathcal{B}_{13}}\Big)_r\\
&= -\frac{\gamma-1}{2a_1\delta a}r^m\phi^{1-4\iota}(v-u)u_t+r^m\phi^{-2\iota} u_t\Big( (v-u)\big(\delta u_r+m(\delta-1)\frac{u}{r}\big) -  uu_r\Big).
\end{aligned}
\end{equation}
Then we show that $r^m \mathrm{D}_r\mathcal{B}_{13} \in L^1(I)$ for \textit{a.e.} $t\in (0,T)$, which allows us to apply Lemma \ref{calculus} to obtain 
\begin{equation}\label{int-B13}
\int_0^\infty (r^m\mathcal{B}_{13})_r\,\mathrm{d}r=0.
\end{equation}
To obtain this, we see from \eqref{spd} that
\begin{equation*}
r^\frac{m}{2} (\mathrm{D}_r u,\mathrm{D}_r^2 u,u_t,\mathrm{D}_r u_t)\in L^2(I)\qquad\text{for \textit{a.e.} $t\in (0,T)$}.
\end{equation*}
Then it follows from the H\"older inequality that
\begin{equation*}
\begin{aligned}
|r^m \mathrm{D}_r\mathcal{B}_{13}|_1&\leq C_0|r^{m-1}(\mathrm{D}_r u) u_t|_1+C_0 |r^{m}(\mathrm{D}_r^2 u)u_t|_1+C_0|r^{m}(\mathrm{D}_r u)u_{tr}|_1\\
&\leq C_0 |r^\frac{m}{2} \mathrm{D}_r u|_2|r^\frac{m}{2}\mathrm{D}_r u_t|_2+C_0 |r^\frac{m}{2}\mathrm{D}_r^2 u|_2|r^\frac{m}{2}u_t|_2<\infty.
\end{aligned}
\end{equation*}

Next, integrating \eqref{7.5-1} over $I$, together with \eqref{int-B13}, we have
\begin{equation}\label{e-1.200}
\begin{aligned}
&\,a_1\delta a\frac{\mathrm{d}}{\mathrm{d}t}\Big|r^\frac{m}{2}\big(u_r+\frac{m}{r}u\big)\Big|_2^2+|r^{\frac{m}{2}}\phi^{-\iota}u_t|_2^2\\
&= -\frac{\gamma-1}{2a_1\delta a}\int_0^\infty r^m\phi^{1-4\iota}(v-u)u_t\,\mathrm{d}r \\
&\quad + \int_0^\infty r^m\phi^{-2\iota} u_t\Big( (v-u)\big(\delta u_r+m(\delta-1)\frac{u}{r}\big)-uu_r\Big)\,\mathrm{d}r:=\sum_{i=4}^{5} G_i.
\end{aligned}
\end{equation}
For $G_4$--$G_{5}$, by Lemmas \ref{important2}, \ref{l4.4}, 
and \ref{l4.5}, and the H\"older and Young inequalities, we have
\begin{equation}\label{G9-G110}
\begin{aligned}
G_4&\leq  C_0\big(|\phi|_\infty^{-\iota}|r^{\frac{m}{2}} \phi^{1-2\iota} v|_{2} + |\phi|_\infty^{1-3\iota} |r^{\frac{m}{2}}u|_2\big) |r^{\frac{m}{2}}\phi^{-\iota}u_t|_2\leq C(T)+\frac{1}{8} |r^{\frac{m}{2}}\phi^{-\iota}u_t|^2_2,\\
G_5&\leq C_0|\phi|_\infty^{-\iota}|(v,u)|_{\infty} |r^{\frac{m}{2}}\mathrm{D}_r u|_{2}|r^{\frac{m}{2}}\phi^{-\iota}u_t|_{2}\\
&\leq C(T)\big(1+|u|_{\infty}^2\big) |r^{\frac{m}{2}}\mathrm{D}_r u|_{2}^2+\frac{1}{8}|r^{\frac{m}{2}}\phi^{-\iota}u_t|_{2}^2.
\end{aligned}    
\end{equation}

Combining \eqref{e-1.200}--\eqref{G9-G110} gives
\begin{equation*}
a_1\delta a\frac{\mathrm{d}}{\mathrm{d}t}\Big|r^\frac{m}{2}\big(u_r+\frac{m}{r}u\big)\Big|_2^2+\frac{1}{2}|r^{\frac{m}{2}}\phi^{-\iota}u_t|_2^2\leq C(T)\big(1+|u|_{\infty}^2\big) |r^{\frac{m}{2}}\mathrm{D}_r u|_{2}^2+C(T),
\end{equation*}
which, along with Lemmas \ref{ele} and \ref{im-1}, the Gr\"onwall inequality, and the following estimate for the initial data in view of Lemma \ref{lemma-initial}:
\begin{equation*}
|r^\frac{m}{2}\mathrm{D}_r u_0|_2\leq C_0\|\nabla \boldsymbol{u}_0\|_{L^2}\leq C_0,
\end{equation*}
yields the desired estimate.
\end{proof}

The following lemma concerns the first-order energy estimate of $u$.
\begin{lem}\label{l4.6}
There exists a constant $C(T)>0$ such that 
\begin{equation*} 
|r^{\frac{m}{2}}\phi^\iota\mathrm{D}_r u(t)|_2^2+\int_0^t |r^{\frac{m}{2}}u_t|_2^2\,\mathrm{d}s\leq C(T)\qquad \text{for any $t\in [0,T]$}.
\end{equation*}
\end{lem}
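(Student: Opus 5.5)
We multiply the momentum equation $\eqref{e2.2}_2$ by $r^m u_t$ (no $\phi$-weight) and integrate over $I$. This is the natural counterpart of Lemma \ref{l4.6-0}: there the test function $r^m\phi^{-2\iota}u_t$ produced $|r^{\frac m2}\phi^{-\iota}u_t|_2^2$ and the unweighted $\frac{\mathrm{d}}{\mathrm{d}t}|r^{\frac m2}\mathrm{D}_r u|_2^2$; here the test function $r^m u_t$ should produce $|r^{\frac m2}u_t|_2^2$ together with $\frac{\mathrm{d}}{\mathrm{d}t}$ of a $\phi^{2\iota}$-weighted Dirichlet energy.

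\textbf{The leading viscous term.} Integrating by parts, it gives
\begin{equation*}
-2a_1\delta a\int_0^\infty r^m\phi^{2\iota}\big(u_r+\tfrac{m}{r}u\big)_r u_t\,\mathrm{d}r
= a_1\delta a\frac{\mathrm{d}}{\mathrm{d}t}\Big|r^{\frac m2}\phi^{\iota}\big(u_r+\tfrac mr u\big)\Big|_2^2 - a_1\delta a\int_0^\infty r^m(\phi^{2\iota})_t\big(u_r+\tfrac mr u\big)^2\,\mathrm{d}r + \mathcal{R}.
\end{equation*}
The remainder $\mathcal{R}$ comes from $(\phi^{2\iota})_r$ and is rewritten through \eqref{v-express-2} as $\frac{\delta-1}{2a_1\delta a}(v-u)$ times $r^m(u_r+\frac mr u)u_t$. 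The boundary term $(r^m\phi^{2\iota}(u_r+\frac mr u)u_t)_r$ is killed via Lemma \ref{calculus}, after checking $r^m\mathrm{D}_r(\cdot)\in L^1$ from \eqref{spd} exactly as for $\mathcal{B}_{13}$.

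\textbf{The time-derivative term.} Using $\eqref{e2.2}_1$,
\begin{equation*}
(\phi^{2\iota})_t=-u(\phi^{2\iota})_r-2\iota(\gamma-1)\phi^{2\iota}\big(u_r+\tfrac mr u\big).
\end{equation*}
The first piece contributes $\int r^m|u||v-u||\mathrm{D}_r u|^2$, bounded by $C(T)(1+|u|_\infty^2)|r^{\frac m2}\mathrm{D}_r u|_2^2$ via Lemmas \ref{l4.4} and \ref{l4.6-0}. The second piece is cubic, $\int r^m\phi^{2\iota}|\mathrm{D}_r u|^3$, and this is the \emph{main obstacle}.

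I would first try bounding it by
\begin{equation*}
|\mathrm{D}_r u|_\infty\,|r^{\frac m2}\phi^\iota\mathrm{D}_r u|_2^2,
\end{equation*}
and then controlling $|\mathrm{D}_r u|_\infty$. Following the strategy in \S\ref{subsection-strategy}, I would derive from $\eqref{e2.2}_2$ (solved for $\phi^{2\iota}(u_r+\frac mr u)_r$) combined with the Hardy inequality (Lemma \ref{hardy}) and Lemma \ref{im-2} an interpolation estimate of the form
\begin{equation*}
|\mathrm{D}_r u|_\infty\le C(T)\big(|r^{\frac m2}u_t|_2+1\big)^{\theta}\big(|r^{\frac m2}\phi^\iota\mathrm{D}_r u|_2+1\big)^{1-\theta}
\end{equation*}
for some $\theta\in(0,1)$ (or at least $|r\mathrm{D}_r u|_\infty$ away from the origin, patched near $r=0$ by the $\phi^{2\iota}$ weight, since $\phi$ is bounded above). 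Young's inequality then absorbs the $|r^{\frac m2}u_t|_2$ contribution into the left-hand side, leaving a term $C(T)(1+|r^{\frac m2}\phi^\iota\mathrm{D}_r u|_2^2)\,|r^{\frac m2}\phi^\iota\mathrm{D}_r u|_2^2$. Its coefficient $|r^{\frac m2}\phi^\iota\mathrm{D}_r u|_2^2$ is integrable in time by Lemma \ref{l4.5}, so Gr\"onwall closes.

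\textbf{Remaining source terms.} These are $-\phi_r u_t$, $-uu_r u_t$, and $2a_1\psi(\delta u_r+m(\delta-1)\frac ur)u_t$.
\begin{itemize}
\item For $\phi_r u_t$, write $\phi_r=c\,\phi^{1-2\iota}(v-u)$ and bound it by $|r^{\frac m2}(\phi^{1-2\iota}v,u)|_2\,|r^{\frac m2}u_t|_2$ using Lemma \ref{l4.5}.
\item For the other two, use $2a_1\psi=v-u$ and bound them by $|(u,v)|_\infty|r^{\frac m2}\mathrm{D}_r u|_2|r^{\frac m2}u_t|_2$, where Lemma \ref{l4.6-0} and Lemma \ref{l4.4} control the factors.
\end{itemize}
All of these are absorbed with $\frac18|r^{\frac m2}u_t|_2^2$ plus $C(T)(1+|u|_\infty^2)$, and $|u|_\infty^2$ is integrable in time by Lemma \ref{ele}.

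\textbf{Closing.} Lemma \ref{im-2} (with $q=\iota$, $j=1$) converts $|r^{\frac m2}\phi^\iota(u_r+\frac mr u)|_2$ into $|r^{\frac m2}\phi^\iota\mathrm{D}_r u|_2$, at the cost of $C(T)(1+|u|_\infty)|r^{\frac m2}u|_2$, which is bounded. The initial value $|r^{\frac m2}\phi_0^\iota\mathrm{D}_r u_0|_2$ is finite by the compatibility condition \eqref{xiangrongxing}, i.e. by $\mathcal{G}_1\in L^2$. Gr\"onwall then gives the claim.
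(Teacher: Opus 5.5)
Your treatment of the boundary term, of the $u(\phi^{2\iota})_r$ part of $(\phi^{2\iota})_t$, of the source terms $\phi_r u_t$, $uu_ru_t$, $\psi\,\mathrm{D}_r u\,u_t$, of Lemma \ref{im-2}, and of the initial datum agrees with the paper. The gap is in the cubic term $\int_0^\infty r^m\phi^{2\iota}(u_r+\frac mr u)^3\,\mathrm{d}r$: the interpolation bound you propose for $|\mathrm{D}_r u|_\infty$ is false at this level of regularity.

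Through $\eqref{e2.2}_2$ and Lemma \ref{im-2}, the quantity $|r^{\frac m2}u_t|_2$ controls only $|r^{\frac m2}\mathrm{D}_r^2u|_2\sim\|\nabla^2\boldsymbol{u}\|_{L^2}$. $H^2$ control, even together with weighted $L^2$ control of $\mathrm{D}_r u$, does not bound $\mathrm{D}_r u$ at the origin, even for radial fields. For instance, $\diver\boldsymbol{u}\sim r^{-1/4}$ ($n=3$) or $\diver\boldsymbol{u}\sim(\log\frac1r)^{1/3}$ ($n=2$) near $r=0$ gives $\boldsymbol{u}\in H^2$ with $u_r$ and $\frac ur$ unbounded.

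The $\phi^{2\iota}$ weight cannot patch this, contrary to your parenthetical remark. Since $\iota<0$, $\phi^{2\iota}\ge|\phi|_\infty^{2\iota}$ is bounded from below, not small, and near $r=0$ the density is bounded above and below, so the weight is essentially constant there. In the paper, $\int_0^t|\mathrm{D}_r u|_\infty^2\,\mathrm{d}s$ appears only in Lemma \ref{lemma66}. It comes from $|r^{\frac m2}\mathrm{D}_r u_t|_2$ supplied by Lemma \ref{l4.8}, which itself uses the present lemma, so any route through unweighted $L^\infty$ control of $\mathrm{D}_r u$ is circular here. Note that \S\ref{subsection-strategy} only asserts the \emph{weighted} bound $|r\mathrm{D}_r u|_\infty\le C(T)(|ru_t|_2+1)$.

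The missing ingredient is the unweighted dissipation of Lemma \ref{ele}, $\int_0^t|\rho^{\frac{\delta-1}{2}}\mathrm{D}_r u|_2^2\,\mathrm{d}s\le C(T)$, where $\phi^\iota=c\,\rho^{\frac{\delta-1}{2}}$. The paper splits $r^m=r^{\frac m2}\cdot r^{\frac m2}$:
\begin{equation*}
\int_0^\infty r^m\phi^{2\iota}\Big|u_r+\frac mr u\Big|^3\,\mathrm{d}r\le C_0\Big|\phi^\iota\big(u_r+\frac mr u\big)\Big|_2\Big|r^{\frac m2}\phi^\iota\big(u_r+\frac mr u\big)\Big|_2\,|r^{\frac m2}\mathrm{D}_r u|_\infty .
\end{equation*}
Here only the weighted sup-norm is needed, and it is harmless. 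By Lemma \ref{hardy} on $(0,1)$, the one-dimensional embedding on $(1,\infty)$, and $\eqref{e2.2}_2$ with Lemmas \ref{im-2} and \ref{l4.6-0},
\begin{equation*}
|r^{\frac m2}\mathrm{D}_r u|_\infty\le C_0|r^{\frac m2}(\mathrm{D}_r u,\mathrm{D}_r^2u)|_2\le C(T)\big(|r^{\frac m2}u_t|_2+|u|_\infty+1\big).
\end{equation*}
Young's inequality then bounds the cubic term by
\begin{equation*}
C(T)\Big|\phi^\iota\big(u_r+\frac mr u\big)\Big|_2^2\Big|r^{\frac m2}\phi^\iota\big(u_r+\frac mr u\big)\Big|_2^2+C(T)\big(1+|u|_\infty^2\big)+\frac18|r^{\frac m2}u_t|_2^2 .
\end{equation*}
Gr\"onwall closes because the coefficient $1+|u|_\infty^2+|\phi^\iota(u_r+\frac mr u)|_2^2$ lies in $L^1(0,T)$ by Lemma \ref{ele}. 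With this replacement, the rest of your argument goes through.
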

\begin{proof}
We divide the proof into two steps.

\smallskip
\textbf{1.}  Multiplying $\eqref{e2.2}_2$ by $r^\frac{m}{2}$ and taking the $L^2(I)$-norm of the resulting equality, we obtain from \eqref{v-express-2}, Lemmas \ref{important2}, \ref{l4.4}, and \ref{l4.5}--\ref{l4.6-0} that   
\begin{equation*}
\begin{aligned}
\Big|r^{\frac{m}{2}}\phi^{2\iota}\big(u_r+\frac{m}{r}u\big)_r\Big|_2&\leq C_0 |r^{\frac{m}{2}}(u_t,uu_r,\phi_r,\psi\mathrm{D}_r u)|_2 \\
&\leq  C_0\big(|r^{\frac{m}{2}}(u_t, uu_r)|_2+ |\phi|_\infty^{-2\iota}|r^{\frac{m}{2}} \phi(v-u)|_2+ |r^{\frac{m}{2}} (v-u)\mathrm{D}_r u|_2\big) \\
&\leq  C(T)\big(|r^{\frac{m}{2}}u_t|_2+ |r^\frac{m}{2}\mathrm{D}_r u|_2|(v,u)|_\infty+|r^\frac{m}{2}\phi v|_2 +|\phi|_\infty|r^{\frac{m}{2}}u|_2\big)\\
&\leq C(T)\big(|r^{\frac{m}{2}}u_t|_2+|u|_\infty+1\big),
\end{aligned}
\end{equation*}
which, along with Lemmas \ref{important2}, \ref{im-2}, and \ref{l4.6-0}, yields that, for any $q\in [2\iota,0]$,
\begin{equation}\label{618}
|r^{\frac{m}{2}}\phi^{q}\mathrm{D}_r^2 u|_2\leq |\phi|_\infty^{q-2\iota} |r^{\frac{m}{2}}\phi^{2\iota}\mathrm{D}_r^2 u|_2\leq C(T)\big(|r^{\frac{m}{2}}u_t|_2+|u|_\infty+1\big).
\end{equation}

Next, it follows from \eqref{618}, Lemmas \ref{ele}, \ref{l4.5}--\ref{l4.6-0}, \ref{ale1}, and \ref{hardy}, and the H\"older and Young inequalities that, for all $t\in[0,T]$,
\begin{equation}\label{6.4-d}
\begin{aligned}
|r^\frac{m}{2}\mathrm{D}_r u|_\infty&\leq |\chi_1^\flat r^\frac{m}{2}\mathrm{D}_r u|_\infty+ |\chi_1^\sharp r^\frac{m}{2}\mathrm{D}_r u|_\infty\\
&\leq C_0\big|\chi_1^\flat r^\frac{m+1}{2} (\mathrm{D}_r u,\mathrm{D}_r^2 u)\big|_2 +C_0 |\chi_1^\sharp r^{\frac{m}{2}}\mathrm{D}_r u|_2+C_0|\chi_1^\sharp (r^\frac{m}{2}\mathrm{D}_r u)_r|_2 \\
&\leq C_0 |r^{\frac{m}{2}}(\mathrm{D}_r u,\mathrm{D}_r^2 u) |_2\leq C(T)\big(|r^{\frac{m}{2}}u_t|_2+|u|_\infty+1\big).
\end{aligned}
\end{equation}

Finally, according to \eqref{v-express-2},  Lemmas \ref{important2}, \ref{l4.4}, and \ref{l4.5}, we have 
\begin{equation}\label{6.6-1}
|r^{\frac{m}{2}}\phi_r|_2\leq C_0|r^{\frac{m}{2}}\phi^{1-2\iota}(v-u)|_2 \leq C_0 \big(|r^\frac{m}{2}\phi^{1-2\iota} v|_2+|\phi|_\infty^{1-2\iota}|r^{\frac{m}{2}}u|_2\big)\leq C(T).
\end{equation}

\smallskip
\textbf{2.} It follows from $\eqref{e2.2}_1$ that
\begin{equation}\label{phi2l}
(\phi^{2\iota})_t+u(\phi^{2\iota})_r+(\delta-1)\phi^{2\iota}\big(u_r+\frac{m}{r}u\big)=0.
\end{equation}

Multiplying $\eqref{e2.2}_2$ by $r^m u_t$, together with $\eqref{e2.2}_1$, \eqref{v-express-2}, and \eqref{phi2l}, gives
\begin{equation}\label{eq:B9pre}
\begin{aligned}
&\,a_1\delta a \Big(r^m\phi^{2\iota}\big(u_r+ \frac{m}{r}u\big)^2\Big)_t+r^m u_t^2-2a_1 \delta a\Big(r^m \underline{\phi^{2\iota}\big(u_r+ \frac{m}{r}u\big)u_t}_{:=\mathcal{B}_{14}}\Big)_r\\
&= r^m (v-2u) u_ru_t -r^m\phi_ru_t +\frac{1-\delta}{2} r^m (v-u)u\big(u_r+ \frac{m}{r}u\big)^2\\
&\quad-a_1\delta(\delta-1) a r^m\phi^{2\iota}\big(u_r+ \frac{m}{r}u\big)^3.    
\end{aligned}    
\end{equation}
Next, we need to  show that $r^m \mathrm{D}_r\mathcal{B}_{14} \in L^1(I)$ for {\it a.e.} $t\in (0,T)$, which allows us to apply Lemma \ref{calculus} to obtain
\begin{equation}\label{eq:B9}
\int_0^\infty (r^m\mathcal{B}_{14})_r\,\mathrm{d}r =0.
\end{equation}
To prove this, by \eqref{spd}, we have 
\begin{equation*}
r^\frac{m}{2}(\phi^\iota\mathrm{D}_r u,u_t,\phi^{2\iota}\mathrm{D}_r^2 u,\phi^\iota \mathrm{D}_r u_t)\in L^2(I)  \qquad \text{for {\it a.e.} $t\in (0,T)$}.
\end{equation*}
This implies from the H\"older inequality that 
\begin{equation*}
\begin{aligned}
|r^m \mathrm{D}_r\mathcal{B}_{14}|_1&\leq C_0 |r^{m-1}\phi^{2\iota} u_t \mathrm{D}_r u|_1+C_0 |r^{m}\phi^{2\iota}u_{tr}\mathrm{D}_r u|_1+C_0 |r^{m}\phi^{2\iota}u_{t}\mathrm{D}_r^2 u|_1\\
&\leq C_0 |r^\frac{m}{2}\phi^{\iota}\mathrm{D}_r u_t|_2 |r^\frac{m}{2}\phi^{\iota}\mathrm{D}_r u|_2+C_0|r^\frac{m}{2}u_t|_2 |r^\frac{m}{2}\phi^{2\iota}\mathrm{D}_r^2 u|_2<\infty.
\end{aligned}
\end{equation*}

Therefore, integrating \eqref{eq:B9pre} over $I$, together with \eqref{eq:B9}, yields
\begin{equation}\label{e-1.20}
\begin{aligned}
&\,a_1\delta a\frac{\mathrm{d}}{\mathrm{d}t}\Big|r^\frac{m}{2}\phi^\iota\big(u_r+\frac{m}{r}u\big)\Big|_2^2+| r^{\frac{m}{2}}u_t|_2^2\\
&= \int_0^\infty r^m ((v-2u)u_r- \phi_r )u_t\,\mathrm{d}r +\frac{1-\delta}{2} \int_0^\infty r^m (v-u)u\big(u_r+ \frac{m}{r}u\big)^2\,\mathrm{d}r\\[-2pt]
&\quad -a_1\delta(\delta-1) a \int_0^\infty r^m\phi^{2\iota}\big(u_r+ \frac{m}{r}u\big)^3\,\mathrm{d}r:=\sum_{i=6}^{8} G_i.
\end{aligned}
\end{equation}

Then, for $G_6$--$G_{7}$, it follows from \eqref{6.6-1}, Lemmas \ref{important2} and \ref{l4.4}, and the H\"older and Young inequalities that
\begin{equation}
\begin{aligned}
G_6& \leq  C_0\big(|\phi|_\infty^{-\iota}|(u,v)|_\infty|r^{\frac{m}{2}}\phi^\iota u_r|_{2}+|r^\frac{m}{2}\phi_r|_2\big)|r^{\frac{m}{2}}u_t|_{2}\\
&\leq  C(T)\big(1+|u|_\infty^2\big)|r^{\frac{m}{2}}\phi^\iota u_r|_{2}^2+C(T)+\frac{1}{8} |r^{\frac{m}{2}}u_t|_{2}^2,\\
G_7&\leq C_0|\phi|_\infty^{-2\iota}|(v,u)|_\infty|u|_\infty\Big|r^\frac{m}{2}\phi^\iota\big(u_r+\frac{m}{r}u\big)\Big|_2^2\leq C(T)\big(1+|u|_\infty^2\big)\Big|r^\frac{m}{2}\phi^\iota\big(u_r+\frac{m}{r}u\big)\Big|_2^2.    
\end{aligned}    
\end{equation}
To estimate $G_{8}$, we see from \eqref{6.4-d} that 
\begin{equation}\label{G9-G11}
\begin{aligned}
G_{8}&\leq C_0\Big|\phi^\iota\big(u_r+\frac{m}{r}u\big)\Big|_2\Big|r^\frac{m}{2}\phi^\iota\big(u_r+\frac{m}{r}u\big)\Big|_2 |r^\frac{m}{2}\mathrm{D}_r u|_\infty\\
&\leq C(T)\Big|\phi^\iota\big(u_r+\frac{m}{r}u\big)\Big|_2\Big|r^\frac{m}{2}\phi^\iota\big(u_r+\frac{m}{r}u\big)\Big|_2\big(|r^\frac{m}{2}u_t|_2+|u|_\infty+1\big)\\
&\leq C(T)\Big|\phi^\iota\big(u_r+\frac{m}{r}u\big)\Big|_2^2\Big|r^\frac{m}{2}\phi^\iota\big(u_r+\frac{m}{r}u\big)\Big|_2^2+C(T)\big(1+|u|_\infty^2\big)+\frac{1}{8}|r^\frac{m}{2}u_t|_2^2.
\end{aligned}
\end{equation}

Collecting \eqref{e-1.20}--\eqref{G9-G11}, we obtain 
from Lemmas \ref{im-2}--\ref{l4.5} that
\begin{equation}\label{e-1.23}
\begin{aligned}
&\,a_1\frac{\mathrm{d}}{\mathrm{d}t}\Big|\phi^{\iota}r^\frac{m}{2}\big(u_r+\frac{m}{r}u\big)\Big|_2^2+|r^{\frac{m}{2}}u_t|_2^2\\
&\leq C(T)+C(T)\Big(1+|u|_\infty^2+\Big|\phi^\iota\big(u_r+\frac{m}{r}u\big)\Big|_2^2\Big)\Big|r^\frac{m}{2}\phi^\iota\big(u_r+\frac{m}{r}u\big)\Big|_2^2\\
&\quad +C(T)\big(1+|u|_\infty^2\big)|r^\frac{m}{2}\phi^{\iota}u_r|_2^2\\
&\leq  C(T)\Big(1+|u|_\infty^2+\Big|\phi^\iota\big(u_r+\frac{m}{r}u\big)\Big|_2^2\Big)\Big|r^\frac{m}{2}\phi^\iota\big(u_r+\frac{m}{r}u\big)\Big|_2^2 + C(T)\big(1+|u|_\infty^4\big) ,
\end{aligned}
\end{equation}
which, along with Lemmas \ref{ele} and \ref{im-2} and the Gr\"onwall inequality, yields
\begin{equation}\label{eg}
|r^{\frac{m}{2}}\phi^\iota\mathrm{D}_r u(t)|_2^2+\int_0^t  |r^{\frac{m}{2}}u_t|_2^2\,\mathrm{d}s\leq C(T) |r^{\frac{m}{2}}\phi_0^\iota\mathrm{D}_r u_0|_2^2+C(T)\leq C(T).  
\end{equation}
Here, for the estimate of the initial data, it follows from the initial compatibility condition $\eqref{th78zxq}_1$ and Lemma \ref{lemma-initial} that
\begin{equation}
\begin{aligned}
|r^{\frac{m}{2}}\phi_0^\iota\mathrm{D}_r u_0|_2\leq C_0\|\phi_0^\iota\nabla\boldsymbol{u}_0\|_{L^2}\leq C_0\|\mathcal{G}_1\|_{L^2}\leq C_0.
\end{aligned}
\end{equation}

This completes the proof. 
\end{proof}

\subsection{Second-Order Estimates of the Velocity}
\begin{lem}\label{l4.8}
There exists a constant $C(T)>0$ such that
\begin{equation*}
|r^{\frac{m}{2}}u_t(t)|^2_2+\int_0^t |r^{\frac{m}{2}}\phi^\iota\mathrm{D}_r u_t|_2^2 \,\mathrm{d}s \leq C(T)
\qquad \text{for any $t\in[0,T]$}.
\end{equation*}
\end{lem}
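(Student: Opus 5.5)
We differentiate $\eqref{e2.2}_2$ in time and test against $r^m u_t$. This is the time-differentiated analogue of Lemma \ref{l4.5}, with $u$ replaced by $u_t$. Differentiating gives
\begin{equation*}
u_{tt}+uu_{tr}+u_tu_r+\phi_{rt}=2a_1\delta a\phi^{2\iota}\big(u_{tr}+\tfrac{m}{r}u_t\big)_r+2a_1\delta a(\phi^{2\iota})_t\big(u_r+\tfrac{m}{r}u\big)_r+2a_1\big(\psi\,(\delta u_r+m(\delta-1)\tfrac{u}{r})\big)_t .
\end{equation*}
After multiplying by $r^mu_t$ and integrating, the viscous term yields
\begin{equation*}
2a_1\delta a\,\big|r^{\frac m2}\phi^\iota(u_{tr}+\tfrac{m}{r}u_t)\big|_2^2,
\end{equation*}
plus a commutator carrying $(\phi^{2\iota})_r=\frac{\delta-1}{a\delta}\psi=\frac{\delta-1}{2a_1a\delta}(v-u)$. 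This commutator is absorbed exactly as $G_3$ was, using $|(u,v)|_\infty$. Lemma \ref{im-2}, applied with $f=u_t$ and $q=\iota$, then converts the divergence norm into $|r^{\frac m2}\phi^\iota\mathrm{D}_ru_t|_2^2$, at the cost of $C(T)(1+|u|_\infty^2)|r^{\frac m2}u_t|_2^2$. The boundary terms vanish by Lemma \ref{calculus}, justified through the regularity \eqref{spd}. Since the $t$-weighted bounds there only hold for $t>0$, we would work on $[\tau,t]$ and let $\tau\to0$.

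Next we treat the remaining terms in order.

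\textbf{(a)} $\int r^m\phi_{rt}u_t$. We integrate by parts to $-\int r^m\phi_t(u_{tr}+\frac mr u_t)$ and use $\phi_t=-u\phi_r-(\gamma-1)\phi(u_r+\frac mr u)$. Then $|r^{\frac m2}\phi^{-\iota}\phi_t|_2\le C(T)(|u|_\infty|r^{\frac m2}\phi_r|_2+|r^{\frac m2}\mathrm{D}_ru|_2)\le C(T)(1+|u|_\infty)$ by \eqref{6.6-1}, Lemma \ref{l4.6-0} and $\iota<0$. So this term is bounded by $\varepsilon|r^{\frac m2}\phi^\iota\mathrm{D}_ru_t|_2^2+C(T)(1+|u|_\infty^2)$.

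\textbf{(b)} Convective terms $uu_{tr}u_t$ and $u_t^2u_r$. These are controlled by $|\phi|_\infty^{-\iota}|u|_\infty|r^{\frac m2}u_t|_2|r^{\frac m2}\phi^\iota u_{tr}|_2$ and by $|r^{\frac m2}\mathrm{D}_ru|_\infty|u_t|_2|r^{\frac m2}u_t|_2$ respectively. For the second, near the origin we use \eqref{6.4-d}, Hardy, and the weighted interpolation (Lemma \ref{ale1}).

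\textbf{(c)} The term with $(\phi^{2\iota})_t(u_r+\frac mr u)_r$. By \eqref{phi2l}, $(\phi^{2\iota})_t=-u(\phi^{2\iota})_r-(\delta-1)\phi^{2\iota}\mathrm{div}$, so this term is bounded by $C(|u|_\infty|(u,v)|_\infty+|\mathrm{D}_ru|_\infty)\,|r^{\frac m2}\phi^{2\iota}\mathrm{D}^2_ru|_2|r^{\frac m2}u_t|_2$. We then use \eqref{618}, which gives $|r^{\frac m2}\phi^{2\iota}\mathrm{D}^2_ru|_2\le C(T)(|r^{\frac m2}u_t|_2+|u|_\infty+1)$.

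\textbf{(d)} $\psi_t$, coming from $\eqref{e2.2}_3$. To avoid second derivatives of $\psi$, we write $2a_1\psi=v-u$ and use \eqref{eq:effective1}: $v_t=-uv_r-\frac{\gamma-1}{2a_1\delta a}\phi^{1-2\iota}(v-u)$. The $uv_r$ piece is integrated by parts onto $u_t$ and $u\,\mathrm{D}_ru$, using $|v|_\infty$. The remaining pieces involve $u_t\mathrm{D}_ru$ and $\psi\,\mathrm{D}_ru_t$, each bounded with $|(u,v)|_\infty$ and Lemma \ref{l4.6}.

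Collecting (a)--(d), we expect an inequality of the form
\begin{equation*}
\frac{\mathrm d}{\mathrm dt}|r^{\frac m2}u_t|_2^2+c\,|r^{\frac m2}\phi^\iota\mathrm{D}_ru_t|_2^2\le C(T)\big(1+|u|_\infty^2+|\mathrm{D}_ru|_\infty\big)|r^{\frac m2}u_t|_2^2+C(T)(1+|u|_\infty^4).
\end{equation*}
The Gr\"onwall inequality then closes the estimate provided $\int_0^T|\mathrm{D}_ru|_\infty\,\mathrm{d}t\le C(T)$. We would obtain this from the elliptic relation \eqref{618}, one more derivative via Lemma \ref{im-1}, and Lemma \ref{l4.6}, giving $|\mathrm{D}_ru|_\infty\le C(T)(|r^{\frac m2}u_t|_2+|r^{\frac m2}\mathrm{D}_ru_t|_2\cdots)$. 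If that is too lossy, the alternative is to keep the $|\mathrm{D}_ru|_\infty$ factor paired with $|u_t|_2$ and absorb via interpolation into the dissipation, using the integrability of $|r^{\frac m2}u_t|_2^2$ and $|u|_\infty^4$ from Lemmas \ref{l4.6} and \ref{ele}.

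For the initial value, $|r^{\frac m2}u_t(0)|_2$ is bounded by evaluating $\eqref{e2.2}_2$ at $t=0$. This uses the compatibility condition $L\boldsymbol u_0=\phi_0^{-2\iota}\boldsymbol g_*$ from \eqref{th78zxq}, together with the bounds on $\psi_0$ and $u_0$.

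The main obstacle is the non-integrable weight near $r=0$, namely controlling $|\mathrm{D}_ru|_\infty$ (or $|u_t|_2$ without the $r^{\frac m2}$ weight) in $L^1_t$, equivalently $L^2_t$. This is the step "$\mathrm{D}_ru\in L^2L^\infty$" that the strategy section singled out. We would first try to derive it via the Hardy inequality from $|r\mathrm{D}_ru|_\infty\le C(T)(|ru_t|_2+1)$, as announced there.
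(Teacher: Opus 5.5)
Your overall scheme is the paper's. You differentiate $\eqref{e2.2}_2$ in $t$, test with $r^mu_t$, convert the divergence dissipation with Lemma \ref{im-2}, kill the fluxes by Lemma \ref{calculus}, work on $[\tau,t]$, and bound $\limsup_{\tau\to0}|r^{\frac m2}u_t(\tau)|_2$ through $\eqref{th78zxq}_3$. Your items (a) and (b) match the paper's $G_{10}$ and $G_9$.

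The gap is the closing step. In (c) you put $\mathrm{D}_ru$ in $L^\infty$ with no weight. The same issue arises for the terms $v\,|\mathrm{D}_ru|^2u_t$ that your integration by parts produces in (d), which need a sup-type factor that Lemma \ref{l4.6} does not provide. Consequently your Gr\"onwall argument requires $\int_0^T|\mathrm{D}_ru|_\infty\,\mathrm{d}t\le C(T)$, and that bound is not available at this stage.

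In the paper, $\mathrm{D}_ru\in L^2([0,T];L^\infty(I))$ is a \emph{consequence} of Lemma \ref{l4.8}. See \eqref{ur-infty} in Lemma \ref{lemma66}, which uses $|u|_\infty\le C(T)$ and $\int_0^T|r^{\frac m2}\mathrm{D}_ru_t|_2^2\,\mathrm{d}t\le C(T)$, both coming from the present lemma. The routes you suggest do not supply it:
\begin{itemize}
\item A third-order bound on $u$ from the differentiated momentum equation needs $\mathrm{D}_r\psi$ and $\mathrm{D}_r\phi_r$ (Lemmas \ref{l4.7} and \ref{l4.9}). Those are proved later, and they themselves use $\int_0^T|\mathrm{D}_ru|_\infty^2\,\mathrm{d}t$.
\item No Hardy inequality removes the weight from $|r^{\frac m2}\mathrm{D}_ru|_\infty$; Lemma \ref{hardy} requires $\ell>0$ when $p=\infty$.
\end{itemize}
You have read the strategy in \S\ref{subsection-strategy} backwards. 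The weighted bound \eqref{6.4-d} is what \emph{closes} the $u_t$-estimate, and $\mathrm{D}_ru\in L^2L^\infty$ is obtained only afterwards.

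The fix is never to use $|\mathrm{D}_ru|_\infty$. You have two options.
\begin{itemize}
\item Proceed exactly as you did in (b). Bound terms such as $\int_0^\infty r^m\phi^{2\iota}|\mathrm{D}_ru||\mathrm{D}_r^2u||u_t|\,\mathrm{d}r$ by $|r^{\frac m2}\mathrm{D}_ru|_\infty\,|r^{\frac m2}\phi^{2\iota}\mathrm{D}_r^2u|_2\,|u_t|_2$. Then use $|u_t|_2\le |r^{\frac m2}u_t|_2+|r^{\frac m2}\tfrac{u_t}{r}|_2$ and absorb the second piece into the dissipation. The price is a term quartic in $|r^{\frac m2}u_t|_2$, which is harmless because $\int_0^T|r^{\frac m2}u_t|_2^2\,\mathrm{d}t\le C(T)$ by Lemma \ref{l4.6}.
\item Alternatively, as the paper does for $G_{11}$--$G_{12}$, pair one factor of $\mathrm{D}_ru$ with the \emph{unweighted} norm $|\phi^\iota\mathrm{D}_ru|_2=c\,|\rho^{\frac{\delta-1}{2}}\mathrm{D}_ru|_2$. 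This is square-integrable in time by Lemma \ref{ele}, the key input from \S\ref{section-nonformation} that your proposal never uses.
\end{itemize}
Either way you obtain
\begin{equation*}
\frac{\mathrm d}{\mathrm dt}|r^{\frac m2}u_t|_2^2+c\,|r^{\frac m2}\phi^\iota\mathrm{D}_ru_t|_2^2\le C(T)\big(|r^{\frac m2}u_t|_2^2+|u|_\infty^4+|\phi^\iota\mathrm{D}_ru|_2^2+1\big)\big(|r^{\frac m2}u_t|_2^2+1\big).
\end{equation*}
Its coefficient lies in $L^1(0,T)$ by Lemmas \ref{ele} and \ref{l4.6}, so Gr\"onwall closes without any information on $|\mathrm{D}_ru|_\infty$.
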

 
\begin{proof} We divide the proof into three steps.

\smallskip 
\textbf{1.} Multiplying $\eqref{e2.2}_1$ by $r^\frac{m}{2}$ and taking the $L^2(I)$-norm of the resulting equation, we obtain from \eqref{618}, \eqref{6.6-1}, and Lemmas \ref{important2} and \ref{l4.6-0} that 
\begin{equation}\label{6.6-2}
\begin{aligned}
|r^{\frac{m}{2}}\phi_t|_2&\leq  |r^{\frac{m}{2}}u\phi_r|_2+(\gamma-1)\Big|r^\frac{m}{2}\phi(u_r+\frac{m}{r}u\big)\Big|_2\\
&\leq |u|_\infty|r^{\frac{m}{2}}\phi_r|_2+C_0|\phi|_\infty |r^{\frac{m}{2}}\mathrm{D}_r u|_2 \leq C(T)(|u|_\infty+1).
\end{aligned}    
\end{equation} 

\smallskip
\textbf{2.} Applying $\partial_t$ to $\eqref{e2.2}_2$, together with $\eqref{e2.2}_3$, \eqref{v-express-2}, and \eqref{phi2l}, gives
\begin{equation}\label{743utt}
\begin{aligned}
u_{tt}
&=-(u_tu_r+uu_{tr})-\phi_{tr}+2a_1\delta a \phi^{2\iota}\big(u_{tr}+\frac{m}{r}u_t\big)_{r}+(1-\delta)u(v-u)\big(u_r+\frac{m}{r}u\big)_r\\
&\quad -2a_1 \delta a \phi^{2\iota}\big(u_r+\frac{m}{r}u\big)_r\big((2\delta-1) u_r+2m(\delta-1)\frac{u}{r}\big)-2a_1  \psi_ru\big(\delta u_r+m(\delta-1)\frac{u}{r}\big) \\
&\quad - (v-u)\big(\delta u_r+ m(\delta-1)\frac{u}{r}\big)^2+(v-u)\big(\delta u_{tr}+ m(\delta-1)\frac{u_t}{r}\big).
\end{aligned}
\end{equation}
Then, multiplying the \eqref{743utt} by $r^mu_t$, we obtain from \eqref{v-express-2} that
\begin{equation}\label{eq:B10pre}
\begin{aligned}
&\,\frac{1}{2}(r^m u_t^2)_t+2a_1\delta a r^m\phi^{2\iota}\big(u_{tr}+ \frac{m}{r}u_t\big)^2\\
&=\Big(r^m \big(\underline{2a_1 \delta a \phi^{2\iota} u_t \big(u_{tr}+ \frac{m}{r}u_t\big)- u_t \phi_{t}-2a_1  \psi u\big(\delta u_r+m(\delta-1)\frac{u}{r}\big)u_t\big)}_{:=\mathcal{B}_{15}}\Big)_{r}\\
&\quad - r^m(u_tu_r+uu_{tr})u_t +r^m\phi_{t}\big(u_{tr}+\frac{m}{r}u_t\big)\\
&\quad -2a_1 \delta a r^m\phi^{2\iota}\big(u_r+\frac{m}{r}u\big)_r\big((2\delta-1) u_r +2m(\delta-1)\frac{u}{r}\big)u_t\\
&\quad +r^m(v-u)\big(\delta u_r+m(\delta-1)\frac{u}{r}\big)\Big(u\big(u_{tr}+\frac{m}{r}u_t\big)-(\delta-1)\big(u_r+\frac{m}{r}u\big) u_t\Big)\\
&\quad +r^m(v-u)(uu_{rr}+u_{tr})u_t.
\end{aligned}    
\end{equation}
Next, we need to  show that $r^m \mathrm{D}_r  \mathcal{B}_{15} \in L^1(I)$  for {\it a.e.} $t\in (0,T)$, which allows us to apply Lemma \ref{calculus} to obtain
\begin{equation}\label{eq:B10}
\int_0^\infty (r^m\mathcal{B}_{15})_r\,\mathrm{d}r =0.
\end{equation}
By \eqref{spd}--\eqref{spd2} and Lemmas \ref{ale1} and \ref{lemma-initial}, we obtain that, for {\it a.e.} $t\in (0,T)$,
\begin{equation*}
\begin{aligned}
&(\psi,u,\mathrm{D}_r u)\in L^\infty(I),\qquad r^\frac{m}{n^*}u\in L^{n^*}(I) ,\qquad r^\frac{m}{n}\psi_r\in L^{n}(I),\\
&r^\frac{m}{2}\big(\phi_t,\mathrm{D}_r u,u_t,\mathrm{D}_r^2 u,\mathrm{D}_r u_t,\phi^\iota \mathrm{D}_r u_t,\phi^{2\iota}\mathrm{D}_r^2 u_t\big)\in L^2(I).    
\end{aligned}
\end{equation*}
Then it follows from the H\"older inequality that
\begin{equation*} 
\begin{aligned}
|r^m \mathrm{D}_r \mathcal{B}_{15}|_1&\leq C_0\big|r^{m-1} u_t (\phi^{2\iota}\mathrm{D}_r u_t,\phi_t,\psi u\mathrm{D}_r u)\big|_1 +C_0\big|r^{m}u_{tr} (\phi^{2\iota}\mathrm{D}_r u_t,\phi_t,\psi u\mathrm{D}_r u)\big|_1\\
&\quad +C_0 \big|r^{m}u_{t} (\psi\mathrm{D}_r u_t,\phi^{2\iota}\mathrm{D}_r^2 u_t,\phi_{tr},\psi_r u \mathrm{D}_r u,\psi u_r \mathrm{D}_r u,\psi u \mathrm{D}_r^2 u)\big|_1\\
&\leq C_0 |r^\frac{m}{2}\phi^\iota\mathrm{D}_r u_t|_2^2+C_0 |r^\frac{m}{2} \mathrm{D}_r u_t|_2\big( |r^\frac{m}{2} \phi_t|_2+|\psi|_\infty |u|_\infty|r^\frac{m}{2}\mathrm{D}_r u|_2\big)\\
&\quad +C_0|r^\frac{m}{2}u_t|_2\big(|\psi|_\infty|r^\frac{m}{2}\mathrm{D}_r u_t|_2+|r^\frac{m}{2}(\phi^{2\iota}\mathrm{D}_r^2 u_t,\phi_{tr})|_2+|r^\frac{m}{n}\psi_r|_n|r^\frac{m}{n^*}u|_{n^*} |\mathrm{D}_r u|_\infty\big)\\
&\quad +C_0|r^\frac{m}{2}u_t|_2|\psi|_\infty\big(|u_r|_\infty |r^\frac{m}{2}\mathrm{D}_r u|_2+ |u|_\infty |r^\frac{m}{2}\mathrm{D}_r^2 u|_2\big)<\infty.
\end{aligned}    
\end{equation*}

Now, integrating \eqref{eq:B10pre} over $I$, together with \eqref{eq:B10}, yields
\begin{equation}\label{util2}
\begin{aligned}
&\,\frac{1}{2} \frac{\mathrm{d}}{\mathrm{d}t}|r^{\frac{m}{2}}u_t|^2_2+2a_1\delta a \Big|r^{\frac{m}{2}}\phi^\iota\big(u_{tr}+\frac{m}{r}u_t\big)\Big|_2^2\\
&=-\int_0^\infty r^m(u_tu_{r}+uu_{tr}) u_t\,\mathrm{d}r+\int_0^\infty r^m \phi_t\big(u_{tr}+\frac{m}{r}u_t\big)\,\mathrm{d}r\\
&\quad -2a_1 \delta a\int_0^\infty  r^m\phi^{2\iota}\big(u_r+\frac{m}{r}u\big)_r\big((2\delta-1) u_r +2m(\delta-1)\frac{u}{r}\big)u_t\,\mathrm{d}r\\
&\quad +\int_0^\infty r^m(v-u)\big(\delta u_r+m(\delta-1)\frac{u}{r}\big)\Big(u\big(u_{tr}+\frac{m}{r}u_t\big)-(\delta-1)\big(u_r+\frac{m}{r}u\big) u_t\Big)\,\mathrm{d}r\\
&\quad + \int_0^\infty r^m(v-u)(uu_{rr}+u_{tr})u_t\,\mathrm{d}r:=\sum_{i=9}^{13} G_i.
\end{aligned}    
\end{equation}

\smallskip
\textbf{3.} For $G_{9}$--$G_{13}$, it follows from \eqref{618}--\eqref{6.4-d}, \eqref{6.6-2}, Lemmas \ref{important2}, \ref{ele}, \ref{im-2}, \ref{l4.6}, and the H\"older and Young inequalities that
\begin{align*}
&\begin{aligned}
G_{9}&\leq |\phi|_\infty^{-\iota}|u|_{\infty}|r^{\frac{m}{2}}\phi^\iota u_{tr}|_2|r^{\frac{m}{2}}u_t|_2+ \big(|\chi_1^\flat r^\frac{m}{2} u_ru_t|_{2} +  |\chi_1^\sharp r^\frac{m}{2} u_ru_t|_{2}\big) |r^{\frac{m}{2}}u_t|_2\\
&\leq C(T)|u|_{\infty}|\phi^\iota r^{\frac{m}{2}}u_{tr}|_2|r^{\frac{m}{2}}u_t|_2+|\phi|_\infty^{-\iota} |\chi_1^\flat r^\frac{2-m}{2}|_\infty|r^\frac{m}{2} u_r|_{\infty} |r^{\frac{m-2}{2}}\phi^\iota u_t|_2 |r^{\frac{m}{2}}u_t|_2\\
&\quad +  |\chi_1^\sharp r^{-\frac{m}{2}}|_\infty |r^\frac{m}{2}u_r|_{\infty}|r^{\frac{m}{2}}u_t|_2^2\\
&\leq C(T)\big(|(u,r^\frac{m}{2}u_r)|_\infty^2+1\big)|r^{\frac{m}{2}}u_t|_2^2 +\frac{a_1\delta a}{80}\Big|r^{\frac{m}{2}}\phi^\iota\big(u_{tr}+\frac{m}{r}u_t\big)\Big|_2^2\\
&\leq C(T)\big(|r^{\frac{m}{2}}u_t|_2^2+|u|_\infty^2+1\big)|r^{\frac{m}{2}}u_t|_2^2 +\frac{a_1\delta a}{80}\Big|r^{\frac{m}{2}}\phi^\iota\big(u_{tr}+\frac{m}{r}u_t\big)\Big|_2^2,
\end{aligned}\\
&\begin{aligned}
G_{10}&\leq |\phi|_\infty^{-\iota}|r^{\frac{m}{2}}\phi_t|_2 \Big|r^{\frac{m}{2}}\phi^\iota\big(u_{tr}+\frac{m}{r}u_t\big)\Big|_2\leq C(T)(|u|_\infty+1)\Big|r^{\frac{m}{2}}\phi^\iota\big(u_{tr}+\frac{m}{r}u_t\big)\Big|_2\\
&\leq C(T)\big(|u|_\infty^2+1\big) +\frac{a_1\delta a}{80}\Big|r^{\frac{m}{2}}\phi^\iota\big(u_{tr}+\frac{m}{r}u_t\big)\Big|_2^2,
\end{aligned}\\
&\begin{aligned}
G_{11}&\leq C_0 |\phi^\iota\mathrm{D}_r u|_2\Big|r^\frac{m}{2}\phi^\iota\big(u_r+\frac{m}{r}u\big)_r\Big|_2|r^\frac{m}{2}u_t|_2\\
&\leq C(T)\big(|r^\frac{m}{2}u_t|_2^2+|u|_\infty^2+1\big)|r^\frac{m}{2}u_t|_2^2+C(T) |\phi^\iota \mathrm{D}_r u|_2^2,
\end{aligned}\\
&\begin{aligned}
G_{12}&\leq |(v,u)|_\infty |\phi|_\infty^{-2\iota}|u|_\infty\Big|r^\frac{m}{2}\phi^\iota\mathrm{D}_r u\Big|_2\Big|r^\frac{m}{2}\phi^\iota\big(u_{tr}+\frac{m}{r}u_t\big)\Big|_2\\
&\quad +|(v,u)|_\infty|\phi|_\infty^{-\iota} |\phi^\iota\mathrm{D}_r u|_2 |r^\frac{m}{2}\mathrm{D}_r u|_\infty|r^\frac{m}{2}u_t|_2\\
&\leq C(T)\big(1+|u|_\infty^2\big)\Big|r^\frac{m}{2}\phi^\iota\big(u_{tr}+\frac{m}{r}u_t\big)\Big|_2\\
&\quad +C(T)(1+|u|_\infty) |\phi^\iota\mathrm{D}_r u|_2 |r^\frac{m}{2}u_t|_2\big(1+|u|_\infty+|r^\frac{m}{2}u_t|_2\big)\\
&\leq C(T) \big(|u|_\infty^4+ |\phi^\iota\mathrm{D}_r u|_2^2+1\big)\big(|r^\frac{m}{2}u_t|_2^2+1\big)  +\frac{a_1\delta a}{80}\Big|r^\frac{m}{2}\phi^\iota\big(u_{tr}+\frac{m}{r}u_t\big)\Big|_2^2 ,
\end{aligned}\\
&\begin{aligned}
G_{13}&\leq C_0|(v,u)|_\infty\big(|u|_\infty|r^\frac{m}{2}u_{rr}|_2+|\phi|_\infty^{-\iota}|r^\frac{m}{2}\phi^\iota u_{tr}|_2\big)|r^\frac{m}{2}u_t|_2\\
&\leq C(T)\big((|r^\frac{m}{2}u_t|_2^2+|u|_\infty^2+1)|r^\frac{m}{2}u_t|_2^2+|u|_\infty^4+1\big)+\frac{a_1\delta a}{80}\Big|r^\frac{m}{2}\phi^\iota\big(u_{tr}+\frac{m}{r}u_t\big)\Big|_2^2.
\end{aligned}
\end{align*}

Substituting the above into \eqref{util2}, along with Lemma \ref{im-2}, gives
\begin{equation*} 
\frac{\mathrm{d}}{\mathrm{d}t}|r^{\frac{m}{2}}u_t|^2_2+a_1\delta a |r^{\frac{m}{2}}\phi^\iota\mathrm{D}_r u_t|_2^2 \leq C(T)\big(|r^{\frac{m}{2}}u_{t}|_2^2+|u|_\infty^4+ |\phi^\iota\mathrm{D}_r u|_2^2+1\big)\big(|r^{\frac{m}{2}}u_{t}|_2^2+1\big).
\end{equation*}
Integrating over $(\tau, t)(\tau\in (0,t))$ yields
\begin{equation}\label{lg1}
\begin{aligned}
&\,|r^{\frac{m}{2}}u_t(t)|_2^2+a_1\delta a\int_\tau^t |r^{\frac{m}{2}}\phi^\iota\mathrm{D}_r u_t|_2^2\,\mathrm{d}s\\
&\leq  |r^{\frac{m}{2}}u_t(\tau)|_2^2+C(T)\int_\tau^t\big(|r^{\frac{m}{2}}u_{t}|_2^2+|u|_\infty^4+ |\phi^\iota\mathrm{D}_r u|_2^2+1\big)\big(|r^\frac{m}{2}u_t|_2^2+1\big)\,\mathrm{d}s.
\end{aligned}
\end{equation}
For the $L^2(I)$-boundedness of $r^{\frac{m}{2}}u_t(\tau)$ 
on the right-hand side of \eqref{lg1}, we multiply $\eqref{e2.2}_2$ by $r^\frac{m}{2}$ and then take the $L^2(I)$-norm of the resulting equality to obtain 
\begin{equation*}
|r^{\frac{m}{2}}u_t(\tau)|_2\leq C_0\big(|u|_\infty|r^{\frac{m}{2}}u_r|_2+|r^{\frac{m}{2}}\phi_r|_2+ \Big|r^{\frac{m}{2}}\phi^{2\iota}\big(u_r+\frac{m}{r}u\big)_r\Big|_2+|\psi|_\infty |r^{\frac{m}{2}}\mathrm{D}_r u|_2\big)(\tau).
\end{equation*}
This, along with the time-continuity of $(\phi,u,\psi)$, the initial compatibility condition $\eqref{th78zxq}_3$, and 
Lemmas \ref{ale1}, \ref{initial3}, and \ref{lemma-initial}, yields
\begin{equation*}
\begin{aligned}
\limsup_{\tau\to 0} |r^{\frac{m}{2}}u_t(\tau)|_2&\leq C_0 \big(\|\boldsymbol{u}_0\|_{L^\infty}\|\nabla\boldsymbol{u}_0\|_{L^2}+ \|\nabla\phi_0\|_{L^2}+\|\phi_0^{2\iota}L\boldsymbol{u}_0\|_{L^2} + \|\boldsymbol{\psi}_0\|_{L^\infty} \|\nabla\boldsymbol{u}_0\|_{L^2}\big)\\[1pt]
&\leq C_0\big(\|\boldsymbol{u}_0\|_{H^2}+\|\boldsymbol{\psi}_0\|_{L^\infty}\big)\|\boldsymbol{u}_0\|_{H^2} +C_0\|\boldsymbol{g}_*\|_{L^2}+C_0 \|\nabla\phi_0\|_{L^2} \leq C_0.
\end{aligned}
\end{equation*}
where we have also used the following fact, since $\boldsymbol{u}_0$ is spherically symmetric:
\begin{equation}\label{Lu=}
L\boldsymbol{u}_0=-a_1 \Delta\boldsymbol{u}_0-(2\delta-1)a_1\nabla\diver \boldsymbol{u}_0=-2\delta a_1\nabla\diver \boldsymbol{u}_0.
\end{equation}

Hence, based on the above discussions, letting $\tau\to 0$ in \eqref{lg1}, combined with Lemmas \ref{ele} and \ref{l4.6} and the Gr\"onwall inequality, leads to the desired estimate.
\end{proof}

With the help of Lemma \ref{l4.8}, we can also obtain the following estimates:

\begin{lem}\label{lemma66}
There exists a constant $C(T)>0$ such that, for any $t\in [0,T]$ and $q\in [2\iota,0]$,
\begin{equation*}
|(u,r^\frac{m}{2}u_r)(t)|_\infty+\delta_{3n}|r^\frac{m}{6}(\phi,u,\mathrm{D}_r u)(t)|_{6} + |r^{\frac{m}{2}} (\phi_r,\phi_t,\phi^{q}\mathrm{D}_r^2 u)(t)|_2+ \int_0^t |\mathrm{D}_r u|_\infty^2\,\mathrm{d}s \leq C(T).
\end{equation*}
\end{lem}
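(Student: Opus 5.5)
The plan is to feed the bound $\sup_t|r^{\frac{m}{2}}u_t|_2\le C(T)$ from Lemma \ref{l4.8} into the pointwise-in-time inequalities already derived in the proof of Lemma \ref{l4.6}. The first task is to remove $|u|_\infty$ from their right-hand sides.

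For the $L^\infty$ bound on $u$, I would use Lemma \ref{calculus} together with $u|_{r=0}=0$ and $u\to0$ at infinity:
\begin{equation*}
|u|_\infty^2\le |(u^2)_r|_1\le 2|u|_2|u_r|_2 .
\end{equation*}
Lemma \ref{ele} bounds $|u|_2$. For $|u_r|_2$, I would split at $r=1$. On $(1,\infty)$, bound it by $|r^{\frac m2}u_r|_2\le C(T)$ from Lemma \ref{l4.6-0}. On $(0,1)$, use $|\chi_1^\flat u_r|_2\le |\chi_1^\flat|_2|\mathrm{D}_ru|_\infty$. That would require an $L^\infty$ bound on $\mathrm{D}_r u$, which is circular, so this route needs care.

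A cleaner alternative comes from \eqref{618}, which gives $|r^{\frac m2}\mathrm{D}_r^2u|_2\le C(T)(1+|u|_\infty)$. Using Lemma \ref{ale1}/Hardy near the origin (for $n=3$, the radial embedding Lemma \ref{Hk-Ck-vector} gives $\|\boldsymbol u\|_{L^\infty}\le C\|\boldsymbol u\|_{H^2}$) and interpolation, we get
\begin{equation*}
|u|_\infty\le C\,|r^{\frac m2}(u,\mathrm{D}_ru)|_2^{\theta}\,|r^{\frac m2}\mathrm{D}^2_ru|_2^{1-\theta}
\end{equation*}
for some $\theta\in(0,1)$. By Gagliardo--Nirenberg in $\mathbb R^n$, $\|\boldsymbol u\|_{L^\infty}\le C\|\boldsymbol u\|_{L^2}^{1-n/4}\|\nabla^2\boldsymbol u\|_{L^2}^{n/4}$, and $n/4<1$. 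Since the lower norms are bounded by Lemmas \ref{l4.5}--\ref{l4.6-0}, we obtain $|u|_\infty\le C(T)(1+|u|_\infty)^{n/4}$, and Young's inequality then closes $|u|_\infty\le C(T)$. With this in hand:
\begin{itemize}
\item \eqref{618} gives $|r^{\frac m2}\phi^q\mathrm{D}_r^2u|_2\le C(T)$ for all $q\in[2\iota,0]$;
\item \eqref{6.4-d} gives $|r^{\frac m2}u_r|_\infty\le C(T)$;
\item \eqref{6.6-1} and \eqref{6.6-2} give the bounds on $|r^{\frac m2}\phi_r|_2$ and $|r^{\frac m2}\phi_t|_2$.
\end{itemize}

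For $n=3$, the $L^6$ bounds follow from the radial Sobolev embedding $D^1\hookrightarrow L^6$ (via Lemma \ref{lemma-initial}/\ref{lemma-L6}). Apply it to $\boldsymbol u$ and $\nabla\boldsymbol u$, whose $H^1$-norms correspond to the $r$-weighted $L^2$ norms already bounded. For $\phi$, use $\|\phi\|_{L^6}\le C\|\nabla\phi\|_{L^2}=C|r\phi_r|_2$, which is valid because $\phi\to0$ at infinity.

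The main obstacle is the time-integrated bound $\int_0^t|\mathrm{D}_ru|_\infty^2\,\mathrm ds$: the weight $r^{\frac m2}$ in \eqref{6.4-d} degenerates at the origin. Away from the origin, $|\chi_1^\sharp\mathrm{D}_ru|_\infty\le|r^{\frac m2}\mathrm{D}_ru|_\infty\le C(T)$. Near the origin, I would bound $|\chi_1^\flat\mathrm{D}_ru|_\infty$ by a third-order quantity using $\mathrm{D}_r u\in W^{1,p}$ with $p>n$. By Lemma \ref{im-1}, $\|\nabla\boldsymbol u\|_{L^\infty}\le C\|\nabla\boldsymbol u\|_{H^1}^{1-\theta}\|\nabla^2\boldsymbol u\|_{H^1}^{\theta}$, so the task reduces to controlling $\int_0^t|r^{\frac m2}\mathrm{D}_r^3u|_2^2\,\mathrm ds$.

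That control comes from differentiating $\eqref{e2.2}_2$ in $r$, or equivalently applying Lemma \ref{im-2} with $k=3$. This gives
\begin{equation*}
|r^{\frac m2}\phi^{2\iota}\mathrm{D}_r(\mathrm{div})_r|_2\lesssim |r^{\frac m2}\mathrm{D}_ru_t|_2+\text{l.o.t.}
\end{equation*}
where the lower-order terms involve $\psi_r$ weighted by $r^{\frac mn}$ in $L^n$, and therefore require a simultaneous estimate of $r^{\frac mn}\psi_r$ via the transport equation $\eqref{e2.2}_3$. This is where I expect the difficulty. The term $\phi^{2\iota}\,\mathrm{div}_{rr}$ in $\eqref{e2.2}_3$ can be paired against the parabolic gain, but it yields only a Gronwall inequality with coefficient $|\mathrm{D}_ru|_\infty$. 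I would close that inequality jointly, using $\int_0^t|r^{\frac m2}\phi^\iota\mathrm{D}_ru_t|_2^2\le C(T)$ from Lemma \ref{l4.8} and a logarithmic-type interpolation if necessary.
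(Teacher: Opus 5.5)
Your bound on $|u|_\infty$ (Gagliardo--Nirenberg with \eqref{618} and Lemma \ref{l4.8}, closed by Young) is the paper's argument, and so is everything you derive from it: the bounds on $r^{\frac m2}u_r$ and $r^{\frac m2}(\phi_r,\phi_t,\phi^q\mathrm{D}_r^2u)$, and the $L^6$ bounds for $n=3$. The gap is in the time-integrated bound $\int_0^t|\mathrm{D}_ru|_\infty^2\,\mathrm{d}s$.

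\textbf{Why your route fails.} Passing through $|r^{\frac m2}\mathrm{D}_r^3u|_2$ brings in the terms $\phi^{1-2\iota}\mathrm{D}_r\psi$ and $\mathrm{D}_r\psi\,\mathrm{D}_ru$ from the $r$-derivative of $\eqref{e2.2}_2$. So you must control $Y(t):=|r^{\frac mn}\mathrm{D}_r\psi(t)|_n$. The estimate for $Y$ is a Gr\"onwall inequality whose coefficient is $|\mathrm{D}_ru|_\infty$, whether you derive it from the divergence of the effective-velocity equation, as in Lemma \ref{l4.7}, or from $\eqref{e2.2}_3$. That dependence is exactly why the paper proves the present lemma \emph{before} Lemma \ref{l4.7}. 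Closing jointly gives at best $|\mathrm{D}_ru|_\infty\lesssim(1+|r^{\frac m2}\mathrm{D}_ru_t|_2+Y)^{\kappa}$ with $\kappa>0$. Feeding this back yields $Y'\lesssim(1+|r^{\frac m2}\mathrm{D}_ru_t|_2+Y)^{\kappa}Y$, which is superlinear and closes only on a short time interval, not on an arbitrary $[0,T]$. The logarithmic interpolation you invoke is not justified: for $n=3$ you only know $\nabla\boldsymbol u\in L^\infty_tH^1$, which is not a critical space. Working directly from $\eqref{e2.2}_3$ is worse, since it requires $\phi^{2\iota}(u_r+\frac mru)_{rr}$ in $L^1_tL^n$, i.e.\ third derivatives in $L^3$ when $n=3$.

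\textbf{What is missing.} You already noted that $\mathrm{D}_ru\in W^{1,p}$ with $p>n$ suffices. Take $p=4$ and use the momentum equation itself instead of its derivative.
\begin{itemize}
\item Multiply $\eqref{e2.2}_2$ by $r^{\frac m4}$ and take the $L^4(I)$-norm, using Lemma \ref{im-1} with $p=4$ and $\phi^{2\iota}\ge|\phi|_\infty^{2\iota}$.
\item Write $2a_1\psi=v-u$ and $\phi_r=\frac{\gamma-1}{2a_1\delta a}\phi^{1-2\iota}(v-u)$. These then appear undifferentiated and are bounded by Lemmas \ref{important2}, \ref{l4.4} and your bound on $|u|_\infty$.
\item Use $|r^{\frac m4}\phi^{1-2\iota}v|_4\le|\phi|_\infty^{\frac{1-2\iota}{2}}|v|_\infty^{\frac12}|r^{\frac m2}\phi^{1-2\iota}v|_2^{\frac12}$ together with Lemma \ref{l4.5}.
\item This gives $\|\nabla^2\boldsymbol u\|_{L^4}\le C(T)(\|\boldsymbol u_t\|_{L^4}+\|\nabla\boldsymbol u\|_{L^4}+1)\le C(T)(|r^{\frac m2}\mathrm{D}_ru_t|_2^{\frac n4}+1)$, by Gagliardo--Nirenberg and Lemma \ref{l4.8}.
\item Then Gagliardo--Nirenberg (Lemma \ref{GN-ineq}) gives $|\mathrm{D}_ru|_\infty\le C\|\nabla\boldsymbol u\|_{L^2}^{\frac{4-n}{n+4}}\|\nabla^2\boldsymbol u\|_{L^4}^{\frac{2n}{n+4}}\le C(T)(|r^{\frac m2}\mathrm{D}_ru_t|_2+1)$.
\item This is square-integrable in time, because $|r^{\frac m2}\mathrm{D}_ru_t|_2\le|\phi|_\infty^{-\iota}|r^{\frac m2}\phi^\iota\mathrm{D}_ru_t|_2$ and Lemma \ref{l4.8} applies.
\end{itemize}
No estimate on $\mathrm{D}_r\psi$ is needed, which is what breaks the circularity.
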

\begin{proof}

First, it follows from \eqref{618}, Lemmas \ref{important2}, \ref{im-1}--\ref{l4.5}, \ref{l4.8}, \ref{GN-ineq}, and \ref{lemma-initial} that
\begin{equation*} 
\begin{aligned}
|u|_{\infty}&=\|\boldsymbol{u}\|_{L^\infty}\leq C_0\|\boldsymbol{u}\|_{L^2}^\frac{4-n}{4}\|\nabla^2\boldsymbol{u}\|_{L^2}^\frac{n}{4}\leq C_0|r^\frac{m}{2}u|_2^\frac{4-n}{4} \Big|r^\frac{m}{2} \big(u_r+\frac{m}{r}u\big)_r\Big|_2^\frac{n}{4}\\
&\leq C(T)\big(|r^\frac{m}{2}u_t|_2+|u|_\infty+1\big)^\frac{n}{4}\leq C(T)\big(|u|_\infty^\frac{n}{4}+1\big),
\end{aligned}
\end{equation*}
which, along with the Young inequality, leads to
\begin{equation*} 
|u(t)|_{\infty} \leq C(T).
\end{equation*}
Clearly, the above, together with \eqref{618}--\eqref{6.6-1} and \eqref{6.6-2} and Lemma \ref{l4.8}, also gives that, for all $t\in [0,T]$ and $q\in [2\iota,0]$,
\begin{equation}\label{6026}
|r^\frac{m}{2}u_r(t)|_\infty + |r^{\frac{m}{2}} (\phi_r,\phi_t)(t)|_2 +|r^{\frac{m}{2}}\phi^{q}\mathrm{D}_r^2 u(t)|_2 \leq C(T).
\end{equation}

Next, multiplying $\eqref{e2.2}_2$ by $r^\frac{m}{4}$ and then taking the $L^4(I)$-norm of the resulting equality, we obtain from \eqref{v-express-2}, \eqref{6026}, and Lemmas \ref{important2}, \ref{im-1}, \ref{l4.5}, \ref{l4.8}, \ref{GN-ineq}, and \ref{lemma-initial} that
\begin{equation*}
\begin{aligned}
\|\nabla^2 \boldsymbol{u}\|_{L^4}&\leq  C_0 \Big|r^\frac{m}{4} \big(u_r+\frac{m}{r}u\big)_r\Big|_4\leq  C_0|\phi|_\infty^{-2\iota}\Big|r^\frac{m}{4}\phi^{2\iota}\big(u_r+\frac{m}{r}u\big)_r\Big|_4\\
&\leq C(T)\big(|r^\frac{m}{4} u_t|_4+|(u,v)|_\infty |r^\frac{m}{4}\mathrm{D}_r u|_4+|r^\frac{m}{4}\phi^{1-2\iota}(v-u)|_4\big)\\
&\leq C(T)\|(\boldsymbol{u}_t,\nabla\boldsymbol{u})\|_{L^4}+C_0|\phi|_\infty^\frac{1-2\iota}{2}|(u,v)|_\infty^\frac{1}{2}\big(|r^\frac{m}{2}\phi^{1-2\iota} v|_2^\frac{1}{2}+|\phi|_\infty^\frac{1-2\iota}{2} |r^\frac{m}{2}u|_2^\frac{1}{2}\big)\\
&\leq C(T)\big(\|(\boldsymbol{u}_t,\nabla\boldsymbol{u})\|_{L^2}^\frac{4-n}{4}\|(\nabla\boldsymbol{u}_t,\nabla^2\boldsymbol{u})\|_{L^2}^\frac{n}{4} +1\big)\\
&\leq C(T)\big(|r^\frac{m}{2}(u_t,\mathrm{D}_r u)|_2^\frac{4-n}{4} |r^\frac{m}{2}(\mathrm{D}_r u_t,\mathrm{D}_r^2 u)|_2^\frac{n}{4} +1\big) \leq C(T)\big(|r^\frac{m}{2}\mathrm{D}_r u_t|_2^\frac{n}{4}+1\big),
\end{aligned}
\end{equation*}
which, together with Lemmas \ref{l4.6-0}, \ref{GN-ineq}, and \ref{lemma-initial}, and the Young inequality, yields 
\begin{equation}\label{ur-infty}
\begin{aligned}
|\mathrm{D}_r u|_\infty&\leq C_0\|\nabla \boldsymbol{u}\|_{L^\infty}\leq C_0\|\nabla \boldsymbol{u}\|_{L^2}^\frac{4-n}{n+4}\|\nabla^2\boldsymbol{u}\|_{L^4}^\frac{2n}{n+4}\\
&\leq C(T) |r^\frac{m}{2}\mathrm{D}_r u|_{2}^\frac{4-n}{n+4}\big( |r^\frac{m}{2}\mathrm{D}_r u_t|_2^\frac{n^2}{2(n+4)}+1\big) \leq C(T) \big(|r^\frac{m}{2}\mathrm{D}_r u_t|_2+1\big).
\end{aligned}   
\end{equation}
This, along with Lemma \ref{l4.8}, yields the $L^2([0,T];L^\infty)$ estimate for $\mathrm{D}_r u$.

Finally, when $n=3$, by \eqref{6026} and Lemmas \ref{ale1} and \ref{lemma-initial}, we have
\begin{equation}
\begin{aligned}
|r^\frac{m}{6}(\phi,u,\mathrm{D}_r u) |_{6} &\leq C_0\|(\phi,\boldsymbol{u},\nabla \boldsymbol{u})\|_{L^6} \leq C_0\|(\nabla \phi,\nabla \boldsymbol{u},\nabla^2 \boldsymbol{u})\|_{L^{2}}\\
&\leq C_0|r^\frac{m}{2}(\phi_r,\mathrm{D}_ru,\mathrm{D}_r^2 u) |_{2}\le C(T).
\end{aligned} 
\end{equation}

This completes the proof.
\end{proof}

We now show the first-order estimates for $(\psi,v)$.

\begin{lem}\label{l4.7}
There exists a constant $C(T)>0$ such that
\begin{equation*}\label{e4.34}
|\psi(t)|_\infty+ |r^{\frac{m}{n}}(\mathrm{D}_r v,\mathrm{D}_r \psi)(t)|_{n}+ |r^{\frac{m}{2}} \psi_t(t)|_{2}\leq C(T)\qquad \text{for any $t\in [0,T]$}.
\end{equation*}
\end{lem}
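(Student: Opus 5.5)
The $L^\infty$ bound on $\psi$ comes for free from the effective velocity. By \eqref{v-express-2}, $\psi=(v-u)/(2a_1)$. Lemma \ref{l4.4} bounds $|v|_\infty$ and Lemma \ref{lemma66} bounds $|u|_\infty$, so $|\psi(t)|_\infty\le C(T)$ immediately.

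For the weighted $L^n$ estimate of $\mathrm{D}_r\psi$ I would work with $v$ instead of $\psi$ directly. The transport equation \eqref{eq:effective1} for $v$ has no second-order velocity terms, whereas $\eqref{e2.2}_3$ contains $\phi^{2\iota}(u_r+\frac{m}{r}u)_r$, which would have to be differentiated once more.

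By Lemma \ref{im-1} applied to $\boldsymbol{v}=v\frac{\boldsymbol{x}}{r}$, the quantity $|r^{\frac mn}\mathrm{D}_r v|_n$ is equivalent to $\|\nabla\boldsymbol{v}\|_{L^n}$, and likewise to $|r^{\frac mn}(v_r+\frac mr v)|_n=\|\diver\boldsymbol{v}\|_{L^n}$ (up to approximation). So it suffices to estimate $w:=v_r+\frac{m}{r}v$. Differentiating \eqref{eq:effective1} gives
\begin{equation*}
w_t+uw_r+u_r v_r+\frac{m}{r}uv_r-\frac{m u}{r^2}v+\Big(\frac{\gamma-1}{2a_1\delta a}\phi^{1-2\iota}\Big)_r(v-u)+\frac{\gamma-1}{2a_1\delta a}\phi^{1-2\iota}(w-\mathrm{div}\,u)=0 .
\end{equation*}
Equivalently, in vector form,
\begin{equation*}
\partial_t\nabla\boldsymbol v+\boldsymbol u\cdot\nabla\nabla\boldsymbol v+\nabla\boldsymbol u\,\nabla\boldsymbol v+\nabla\big(c\phi^{1-2\iota}(\boldsymbol v-\boldsymbol u)\big)=0,
\end{equation*}
with $c=\frac{\gamma-1}{2a_1\delta a}$. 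I would multiply this by $|\nabla\boldsymbol v|^{n-2}\nabla\boldsymbol v$, integrate over $\mathbb{R}^n$, and integrate the transport term by parts. This yields
\begin{equation*}
\frac{\mathrm d}{\mathrm dt}\|\nabla\boldsymbol v\|_{L^n}\le C\|\nabla\boldsymbol u\|_{L^\infty}\|\nabla\boldsymbol v\|_{L^n}+C\|\nabla(\phi^{1-2\iota})\|_{L^n}\|(\boldsymbol v,\boldsymbol u)\|_{L^\infty}+C\|\phi\|_{L^\infty}^{1-2\iota}\|\nabla \boldsymbol u\|_{L^n},
\end{equation*}
where the damping term $c\phi^{1-2\iota}|\nabla\boldsymbol v|^n$ has a good sign and is dropped.

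The forcing terms are controlled as follows.
\begin{itemize}
\item $\|\nabla(\phi^{1-2\iota})\|_{L^n}\le C|\phi|_\infty^{-2\iota}\|\nabla\phi\|_{L^n}$.
\item In 2-D, $\|\nabla\phi\|_{L^2}$ is bounded by \eqref{6026}.
\item In 3-D, $\|\nabla\phi\|_{L^3}$ is bounded by interpolating $\|\nabla\phi\|_{L^2}$ against $\|\nabla\phi\|_{L^6}\lesssim\|\nabla^2\phi\|_{L^2}$. The latter is not yet available, so here I would instead write $\phi_r=c'\phi^{1-2\iota}(v-u)$ and use $|\phi|_\infty$, $|(u,v)|_\infty$ and Lemma \ref{far-p-infty}/\ref{l4.3} for $|r^{m/n}\phi^{1-2\iota}|_n$. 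The power $1-2\iota>1$ together with mass conservation should give integrability at infinity.
\item $\|\nabla\boldsymbol u\|_{L^n}$ is bounded by interpolating between Lemma \ref{lemma66} and \eqref{6026}.
\end{itemize}
Since $\int_0^T\|\nabla\boldsymbol u\|_{L^\infty}\,\mathrm{d}t\le C(T)$ by Lemma \ref{lemma66}, Gr\"onwall closes the estimate. The initial value $\|\nabla\boldsymbol v_0\|_{L^n}$ is finite by $\boldsymbol\psi_0\in D^{1,n}$ and $\boldsymbol u_0\in H^3$. Finally, $\mathrm{D}_r\psi=(\mathrm{D}_r v-\mathrm{D}_r u)/(2a_1)$, so the bound on $|r^{\frac mn}\mathrm{D}_r\psi|_n$ follows from those on $\mathrm{D}_r v$ and $\mathrm{D}_r u$. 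Justifying the integrations by parts would again go through \eqref{spd}--\eqref{spd2} and Lemma \ref{calculus}, as in earlier lemmas.

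For $\psi_t$, I would use $\eqref{e2.2}_3$ directly:
\begin{equation*}
|r^{\frac m2}\psi_t|_2\le |u|_\infty|r^{\frac m2}\psi_r|_2+C|\psi|_\infty|r^{\frac m2}\mathrm{D}_ru|_2+C|r^{\frac m2}\phi^{2\iota}\mathrm{D}_r^2u|_2 .
\end{equation*}
The last two terms are bounded by Lemmas \ref{l4.6-0} and \ref{lemma66} with $q=2\iota$. The first term needs $|r^{\frac m2}\psi_r|_2$, which is not the $L^n$ quantity just obtained. I would get it from
\begin{equation*}
\psi_r=\frac{\delta}{\delta-1}\Big(\frac{1}{a}\phi^{2\iota}\Big)_{rr}\sim\phi^{2\iota-1}\phi_{rr}+\phi^{2\iota-2}\phi_r^2 ,
\end{equation*}
but this involves $\phi_{rr}$ and negative powers of $\phi$, which are large far away. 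The cleaner route is to instead write $u\psi_r=u(v_r-u_r)/(2a_1)$ and bound $|r^{\frac m2}uv_r|_2$ by an $L^2$ version of the transport estimate above. This means repeating the $L^n$ argument with $p=2$, where the forcing $\phi_r$ term is controlled by \eqref{6026}.

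I expect the main obstacle to be the forcing term $\nabla(\phi^{1-2\iota})(\boldsymbol v-\boldsymbol u)$ in the $L^n$ estimate for $n=3$, together with the far-field integrability issues it creates.
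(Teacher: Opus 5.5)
In 2-D your argument goes through. Your bound on $|\psi|_\infty$ and the transport/Gr\"onwall scheme for $\nabla\boldsymbol v$ are fine; using $\nabla\boldsymbol v$ instead of the paper's $\diver\boldsymbol v$ is equivalent by Lemma~\ref{im-1}. In 3-D, however, both steps you single out fail as proposed, for the same reason: you put one factor in $L^\infty$ and lose the decay at infinity.

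\textbf{The forcing term.} You bound $|r^{2/3}\phi_r|_3\le C|(u,v)|_\infty|r^{2/3}\phi^{1-2\iota}|_3$. Since $\phi^{1-2\iota}\sim\rho^{\gamma-\delta}$, the last factor is $\big(\int_0^\infty r^2\rho^{3(\gamma-\delta)}\,\mathrm{d}r\big)^{1/3}$, and $3(\gamma-\delta)<1$ is allowed. Take $\delta=0.9$, $\gamma=1.2$, and the admissible profile $\rho_0\sim r^{-3.2}$ at infinity (cf.\ the example in the remarks after Theorem~\ref{th1-high}). Then the integrand behaves like $r^{-0.88}$, so the quantity is infinite already at $t=0$, and mass conservation cannot help.

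The repair uses only what you already have. By \eqref{v-express-2}, $|\phi_r|_\infty\le C|\phi|_\infty^{1-2\iota}|(u,v)|_\infty\le C(T)$. Hence $|r^{2/3}\phi_r|_3^3\le|\phi_r|_\infty|r\phi_r|_2^2\le C(T)$ by \eqref{6026}.

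The paper proceeds differently. It proves $|r^{m/2}\mathrm{D}_r\phi_r|_2\le C(T)(|r^{m/n}\mathrm{D}_rv|_n+1)$ in \eqref{dier}--\eqref{dier2}, interpolates, and absorbs the resulting sublinear power of $|r^{m/n}\mathrm{D}_rv|_n$ in Gr\"onwall. It reuses that bound in Lemma~\ref{l4.9}.

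\textbf{The $\psi_t$ bound.} Your route needs $|r^{m/2}\psi_r|_2<\infty$, obtained from an $L^2$ version of the transport estimate for $\nabla\boldsymbol v$. This is false in general in 3-D. The data class only gives $\nabla\boldsymbol\psi_0\in L^3\cap D^1$. For $\rho_0\sim r^{-\sigma}$ one has $|\nabla^2\rho_0^{\delta-1}|\sim r^{\sigma(1-\delta)-2}$, which is not in $L^2(\mathbb{R}^3)$ once $\sigma(1-\delta)>\frac12$; this is admissible, e.g.\ $\delta=0.9$, $\gamma=1.2$, $\sigma=6$. So $\nabla\boldsymbol v_0\notin L^2$, and the $L^2$ estimate starts from infinite data; even $|r^{m/2}\psi_r(0)|_2$ is infinite.

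The missing step is simply to avoid $|u|_\infty$ here. Since $\frac12=\frac1{n^*}+\frac1n$,
\[
|r^{\frac m2}u\psi_r|_2\le|r^{\frac m{n^*}}u|_{n^*}\,|r^{\frac mn}\psi_r|_n .
\]
The first factor is bounded by Lemma~\ref{lemma66}, and it is just $|u|_\infty$ when $n=2$. The second is bounded by the $L^n$ estimate you have just proved. This is how the paper concludes.
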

\begin{proof}
First, it follows from \eqref{v-express-2} and Lemmas \ref{l4.4} and \ref{lemma66} that
\begin{equation*}
|\psi(t)|_\infty\leq C_0|(v,u)(t)|_\infty\leq C(T)\qquad\text{for all $t\in [0,T]$}.    
\end{equation*}

Next, define the effective velocity $\boldsymbol{v}$ in coordinates $(t,\boldsymbol{x})$ as
\begin{equation}\label{MDv}
\boldsymbol{v}=\boldsymbol{u}+\frac{2a_1\delta a}{\delta-1}\nabla \phi^{2\iota}.    
\end{equation}
Then, from $\eqref{eq:cccq}_2$ and \eqref{kuzxc}, we can obtain its equation:
\begin{equation}\label{845}
\boldsymbol{v}_t+\boldsymbol{u}\cdot\nabla\boldsymbol{v}+\frac{\gamma-1}{2a_1\delta a}\phi^{1-2\iota}(\boldsymbol{v}-\boldsymbol{u})=\boldsymbol{0}.
\end{equation}
Applying $\mathrm{div}$ to \eqref{845} leads to the following equation in the sense of distributions:
\begin{equation}\label{nabla-v}
\begin{aligned}
(\diver\boldsymbol{v})_t+\diver\big(\boldsymbol{u} (\diver\boldsymbol{v})\big)&=(\diver\boldsymbol{u})(\diver\boldsymbol{v})-\nabla\boldsymbol{u}^\top:\nabla\boldsymbol{v}\\
&\quad -\frac{\gamma-\delta}{2a_1\delta a} \phi^{-2\iota}\nabla\phi\cdot(\boldsymbol{v}-\boldsymbol{u})-\frac{\gamma-1}{2a_1\delta a}\phi^{1-2\iota}(\diver\boldsymbol{v}-\diver\boldsymbol{u}).
\end{aligned}
\end{equation}
Using \eqref{MDv} and Lemmas \ref{th1} and \ref{ale1}, we obtain
\begin{equation}
\diver\boldsymbol{v}\in L^\infty([0,T];L^n(\mathbb{R}^n)),\qquad \boldsymbol{u}\in L^1([0,T];W^{1,\infty}(\mathbb{R}^n)),    
\end{equation}
and the right-hand side of \eqref{nabla-v} belongs to $L^1([0,T];L^n(\mathbb{R}^n))$. 
Thus, it follows from Lemma \ref{lemma-lions} that, for {\it a.e.} $t\in (0,T)$,
\begin{equation}
\begin{aligned}
&\frac{\mathrm{d}}{\mathrm{d}t}\|\diver\boldsymbol{v}\|_{L^n}^{n}+\frac{\gamma-1}{a_1\delta a}\big\|\phi^\frac{1-2\iota}{n}\diver\boldsymbol{v}\big\|_{L^n}^{n}\notag\\
&=\int_{\mathbb{R}^n}  (\diver\boldsymbol{u}) |\diver\boldsymbol{v}|^n\,\mathrm{d}\boldsymbol{x}-n\int_{\mathbb{R}^n} (\nabla\boldsymbol{u}^\top:\nabla\boldsymbol{v})|\diver\boldsymbol{v}|^{n-2}\diver\boldsymbol{v}\,\mathrm{d}\boldsymbol{x} \\
&\quad +\int_{\mathbb{R}^n} \Big(-\frac{n(\gamma-\delta)}{2a_1\delta a}\phi^{-2\iota}\nabla\phi\cdot(\boldsymbol{v}-\boldsymbol{u})+\frac{n(\gamma-1)}{2a_1\delta a}\phi^{1-2\iota}(\diver\boldsymbol{u})\Big)|\diver\boldsymbol{v}|^{n-2}\diver\boldsymbol{v}\,\mathrm{d}\boldsymbol{x}.\notag
\end{aligned}
\end{equation}
This, along with the spherical coordinate transformation, yields that, 
for {\it a.e.} $t\in (0,T)$,
\begin{equation}\label{derive-vr}
\begin{aligned}
&\frac{\mathrm{d}}{\mathrm{d}t}\Big|r^{\frac{m}{n}}\big(v_r+\frac{m}{r}v\big)\Big|_{n}^{n}+\frac{\gamma-1}{a_1\delta a}\Big|r^\frac{m}{n}\phi^\frac{1-2\iota}{n}\big(v_r+\frac{m}{r}v\big)\Big|_{n}^{n}\\
&=\int_0^\infty r^m \big(u_r+\frac{m}{r}u\big)\Big|v_r+\frac{m}{r}v\Big|^n\,\mathrm{d}r\\
&\quad -n\int_0^\infty r^m \Big(u_rv_r+\frac{m}{r^2}uv\Big)\Big|v_r+\frac{m}{r}v\Big|^{n-2}\big(v_r+\frac{m}{r}v\big)\,\mathrm{d}r\\
&\quad +  \int_0^\infty r^m\Big(-\frac{n(\gamma-\delta)}{2a_1\delta a}\phi^{-2\iota}\phi_r(v-u)+\frac{n(\gamma-1)}{2a_1\delta a}\phi^{1-2\iota}\big(u_r+\frac{m}{r}u\big)\Big)\\
&\qquad\qquad \times \Big|v_r+\frac{m}{r}v\Big|^{n-2}\big(v_r+\frac{m}{r}v\big)\,\mathrm{d}r.
\end{aligned}    
\end{equation}

We now continue to estimate the right-hand side of \eqref{derive-vr}. To achieve this, we first present some higher-order estimates for $\phi$. It follows from \eqref{tr}, \eqref{v-express-2},  Lemmas \ref{important2} and \ref{lemma66}--\ref{l4.7} that
\begin{equation}\label{dier}
\begin{aligned}
|r^\frac{m}{2}\mathrm{D}_r \phi_r|_2&\leq C_0 |r^\frac{m}{2}\mathrm{D}_r (\phi^{1-2\iota}\psi)|_2\leq C_0|\phi|_\infty^{-2\iota} |r^\frac{m}{2} (\phi_r\psi,\phi\mathrm{D}_r \psi)|_2\\
&\leq C_0|\phi|_\infty^{-2\iota} |\psi|_\infty|r^\frac{m}{2}\phi_r|_2+  C_0|\phi|_\infty^{-2\iota}|r^\frac{m}{n^*}\phi|_{n^*} |r^\frac{m}{n} \mathrm{D}_rv|_n\\
&\quad + C_0|\phi|_\infty^{-2\iota}|r^\frac{m}{n^*}\phi|_{n^*}|r^\frac{m}{n^*} \mathrm{D}_ru|_{n^*}^\frac{2-n}{2}|r^\frac{m}{2} \mathrm{D}_ru|_2^\frac{4-n}{2} \leq C(T)\big(|r^\frac{m}{n}\mathrm{D}_r v|_n+1\big), 
\end{aligned}
\end{equation}
which, along with Lemmas \ref{ale1}, \ref{lemma-initial}, and \ref{Hk-Ck-vector}, leads to
\begin{equation}\label{dier2}
|r^\frac{m}{n^*} \phi_r|_{n^*}\leq C_0 \|\nabla\phi\|_{L^{n^*}} \leq C_0 \|\nabla^2\phi\|_{L^{2}}\leq C_0|r^\frac{m}{2}\mathrm{D}_r \phi_r|_2 \leq C(T)\big(|r^\frac{m}{n}\mathrm{D}_r v|_n+1\big). 
\end{equation}

Hence, by \eqref{dier}--\eqref{dier2}, Lemmas \ref{important2}, \ref{l4.6-0}, and \ref{lemma66}, and the H\"older and Young inequalities, we have
\begin{equation*} 
\begin{aligned}
&\frac{\mathrm{d}}{\mathrm{d}t}\Big|r^{\frac{m}{n}}\big(v_r+\frac{m}{r}v\big)\Big|_{n}^{n}+\frac{\gamma-1}{a_1\delta a}\Big|(r^m\phi^{1-2\iota})^\frac{1}{n}\big(v_r+\frac{m}{r}v\big)\Big|_{n}^{n}\\
&\leq C_0 |\mathrm{D}_r u|_\infty |r^\frac{m}{n}\mathrm{D}_rv|_{n}^n +C_0|\phi|_\infty^{-2\iota} |r^\frac{m}{n^*}\phi_r|_{n^*}^\frac{n-2}{2} |r^\frac{m}{2}\phi_r|_{2}^\frac{4-n}{2}|(v,u)|_\infty |r^\frac{m}{2}\mathrm{D}_r v|_{n}^{n-1}\\
&\quad +C_0|\phi|_\infty^{1-2\iota} |r^\frac{m}{n^*}\mathrm{D}_r u|_{n^*}^\frac{n-2}{2} |r^\frac{m}{2}\mathrm{D}_r u|_{2}^\frac{4-n}{2} |r^\frac{m}{2}\mathrm{D}_r v|_{n}^{n-1}\\
&\leq C(T) \big(|\mathrm{D}_r u|_\infty^2+1\big)|r^\frac{m}{n}\mathrm{D}_r v|_{n}^{n}+C(T),
\end{aligned}    
\end{equation*}
which, along with Lemmas \ref{im-1}, \ref{lemma66}, \ref{ale1}, and \ref{lemma-initial}, and the Gr\"onwall inequality, gives
\begin{equation*}
\begin{aligned}
|r^\frac{m}{n}\mathrm{D}_r v(t)|_{n}&\leq C(T)\big(|r^\frac{m}{n}\mathrm{D}_r v_0|_{n}+1\big) \leq C(T)\big(\big|r^\frac{m}{n} (\mathrm{D}_r u_0,\mathrm{D}_r(\rho_0^{\delta-1})_r)\big|_{n}+1\big)\\
&\leq C(T)\big(\|(\nabla \boldsymbol{u}_0,\nabla^2 \rho_0^{\delta-1})\|_{L^{n}}+1\big)\leq C(T).
\end{aligned}  
\end{equation*}
This, together with \eqref{v-express-2} and Lemma \ref{l4.6-0}, yields 
\begin{equation*}
|r^\frac{m}{n}\mathrm{D}_r \psi(t)|_{n}\leq C_0 |r^\frac{m}{n}(\mathrm{D}_r v,\mathrm{D}_r u)(t)|_{n}\leq C(T)\qquad\text{for all $t\in [0,T]$}.
\end{equation*}

Finally, multiplying $\eqref{e2.2}_3$ by $r^\frac{m}{2}$ and then taking the $L^2(I)$-norm of the resulting equality, we obtain from the above and Lemmas \ref{l4.6-0} and \ref{lemma66} that
\begin{equation*} 
\begin{aligned}
|r^{\frac{m}{2}}\psi_t|_2&\leq  |r^{\frac{m}{2}}u\psi_r|_2+C_0 |r^\frac{m}{2} \psi\mathrm{D}_r u|_2+C_0 |r^{\frac{m}{2}}\phi^{2\iota}\mathrm{D}_r^2 u|_2 \\
&\leq  |r^{\frac{m}{n^*}}u|_{n^*}|r^{\frac{m}{n}}\psi_r|_n+C_0 |\psi|_\infty|r^{\frac{m}{2}}\mathrm{D}_r u|_2+C(T) \leq C(T).
\end{aligned}
\end{equation*}

This completes the proof. 
\end{proof}

The following lemma provides the higher-order estimates of $(\phi,u)$.
\begin{lem}\label{l4.9}  
There exists a constant $C(T)>0$ such that, for any $q\in [2\iota,0]$,
\begin{equation*}
|r^{\frac{m}{n^*}} \phi_{r}(t)|_{n^*}+|r^{\frac{m}{2}} (\mathrm{D}_r \phi_{r},\phi_{tr})(t)|_2+\int_{0}^{t} |r^{\frac{m}{2}}\phi^{q}\mathrm{D}_r^3 u|_2^2\,\mathrm{d}s\leq C(T)\qquad\text{for any $t\in [0,T]$},
\end{equation*}
where $n^*$ is defined as in {\rm\S\ref{othernote}}.
\end{lem}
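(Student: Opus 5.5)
The first two bounds follow almost directly from estimates already established, so the real work is the time integral of $\mathrm{D}_r^3 u$. Lemma \ref{l4.7} gives $|r^{\frac{m}{n}}\mathrm{D}_r v|_n\le C(T)$, so \eqref{dier}--\eqref{dier2} immediately yield
\begin{equation*}
|r^{\frac{m}{2}}\mathrm{D}_r\phi_r(t)|_2+|r^{\frac{m}{n^*}}\phi_r(t)|_{n^*}\le C(T).
\end{equation*}

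For $\phi_{tr}$, I would differentiate $\eqref{e2.2}_1$ in $r$:
\begin{equation*}
\phi_{tr}=-u_r\phi_r-u\phi_{rr}-(\gamma-1)\phi_r\big(u_r+\tfrac{m}{r}u\big)-(\gamma-1)\phi\big(u_r+\tfrac{m}{r}u\big)_r .
\end{equation*}
Multiplying by $r^{\frac m2}$ and taking $L^2(I)$-norms, the terms are controlled as follows.
\begin{itemize}
\item $|u|_\infty$ (Lemma \ref{lemma66}) times $|r^{\frac m2}\phi_{rr}|_2$ (just obtained).
\item $|\phi|_\infty$ times $|r^{\frac m2}\mathrm{D}_r^2u|_2$, which is bounded by Lemma \ref{lemma66} with $q=0$.
\item The product $|r^{\frac m2}\phi_r\mathrm{D}_ru|_2$. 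Pointwise $|\mathrm{D}_ru|_\infty$ is only known in $L^2_t$, so instead I use H\"older: $|r^{\frac{m}{n^*}}\phi_r|_{n^*}$ times $|r^{\frac m n}\mathrm{D}_r u|_n$. For $n=3$ this is the $L^6\times L^3$ splitting, with $L^3$ interpolated between the $L^2$ and $L^6$ bounds of Lemma \ref{lemma66}. For $n=2$ I use $|r^{\frac m4}\phi_r|_4\,|r^{\frac m4}\mathrm{D}_ru|_4$ via Gagliardo--Nirenberg (Lemma \ref{GN-ineq}) and the $H^1$-type bounds on $\nabla\phi$ and $\nabla\boldsymbol{u}$.
\end{itemize}
Together these give $|r^{\frac m2}\phi_{tr}|_2\le C(T)$.

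For the third-order term, I would apply $\partial_r$ to $\eqref{e2.2}_2$ and solve for the top-order piece:
\begin{equation*}
2a_1\delta a\,\phi^{2\iota}\big(u_r+\tfrac mr u\big)_{rr}=u_{tr}+(uu_r)_r+\phi_{rr}-2a_1\delta a(\phi^{2\iota})_r\big(u_r+\tfrac mr u\big)_r-2a_1\big(\psi(\delta u_r+m(\delta-1)\tfrac ur)\big)_r .
\end{equation*}
The analogous identity for $\big(\tfrac1r(u_r+\tfrac mr u)_r\big)$ follows by dividing the undifferentiated equation by $r$; the $1/r$ factors are then handled through Lemma \ref{im-1} and the Hardy inequality (Lemma \ref{hardy}).

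Each right-hand term, weighted by $r^{\frac m2}$ in $L^2$, is estimated as follows.
\begin{itemize}
\item $|r^{\frac m2}u_{tr}|_2\le |\phi|_\infty^{-\iota}|r^{\frac m2}\phi^\iota\mathrm{D}_ru_t|_2$, which is in $L^2_t$ by Lemma \ref{l4.8}.
\item $|r^{\frac m2}\phi_{rr}|_2\le C(T)$ from the first step.
\item $(uu_r)_r$ is bounded using $|u|_\infty$, $|r^{\frac m2}\mathrm{D}_r^2u|_2$, $|\mathrm{D}_ru|_\infty$ and $|r^{\frac m2}\mathrm{D}_ru|_2$; the last product is in $L^2_t$ by Lemma \ref{lemma66}.
\item $(\phi^{2\iota})_r=\frac{\delta-1}{a\delta}\psi$ is bounded by Lemma \ref{l4.7}, so this term is $C|\psi|_\infty|r^{\frac m2}\mathrm{D}_r^2u|_2$.
\item $\psi_r\mathrm{D}_ru$ is bounded via $|r^{\frac mn}\psi_r|_n$ from Lemma \ref{l4.7} together with $|r^{\frac{m}{n^*}}\mathrm{D}_ru|_{n^*}$, using Sobolev/Gagliardo--Nirenberg in $n=2$. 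The remaining $\psi\,\mathrm{D}_r^2u$ pieces are direct.
\end{itemize}
This gives $|r^{\frac m2}\phi^{2\iota}\mathrm{D}_r(u_r+\frac mr u)_r|_2\le C(T)\big(|r^{\frac m2}\phi^\iota \mathrm{D}_ru_t|_2+|\mathrm{D}_ru|_\infty+1\big)$. Lemma \ref{im-2} with $k=3$ and $q=2\iota$ converts this into a bound on $|r^{\frac m2}\phi^{2\iota}\mathrm{D}_r^3u|_2$, at the cost of $C(T)(1+|u|_\infty)|r^{\frac m2}\mathrm{D}_r^2u|_2$, which is already bounded. For $q\in[2\iota,0]$ the estimate follows from $\phi^{q}\le|\phi|_\infty^{q-2\iota}\phi^{2\iota}$. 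Squaring and integrating in time, Lemmas \ref{l4.8} and \ref{lemma66} close the $L^2_t$ bound.

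The main obstacle I expect is the $n=2$ product terms $\phi_r\mathrm{D}_ru$ and $\psi_r\mathrm{D}_ru$. There $n^*=\infty$, so $\phi_r\in L^\infty$ is not directly available, and one must instead place $\mathrm{D}_ru$ in $L^\infty$ (integrable in time) or use weighted $L^4$ interpolation. I would first try the split into $|\mathrm{D}_r u|_\infty\in L^2_t$ times an $L^2$ or $L^n$ weighted norm, falling back on $L^4$ Gagliardo--Nirenberg where needed.
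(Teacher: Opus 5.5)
Your argument is correct. For $\phi_r$, $\mathrm{D}_r\phi_r$ and the $L^2_t$ bound on $\phi^{q}\mathrm{D}_r^3u$ it coincides with the paper's proof: apply $r^{\frac m2}\mathrm{D}_r$ to $\eqref{e2.2}_2$ to obtain \eqref{uxxxl1}, then use Lemma \ref{im-2} and Lemmas \ref{l4.8}--\ref{lemma66}.

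The only genuine difference is the bound on $\phi_{tr}$.
\begin{itemize}
\item The paper uses the algebraic relation $\phi_r=\frac{\gamma-1}{a\delta}\phi^{1-2\iota}\psi$ coming from \eqref{tr}. Thus $\phi_{tr}$ is a combination of $\phi^{-2\iota}\phi_t\psi$ and $\phi^{1-2\iota}\psi_t$, and is bounded at once by $|\psi|_\infty$, $|r^{\frac m2}\phi_t|_2$ (Lemma \ref{lemma66}) and $|r^{\frac m2}\psi_t|_2$ (Lemma \ref{l4.7}).
\item You instead differentiate the transport equation $\eqref{e2.2}_1$ in $r$. This needs the freshly obtained bound on $\phi_{rr}$, the bound on $|r^{\frac m2}\mathrm{D}_r^2u|_2$, and a H\"older split of $\phi_r\mathrm{D}_ru$.
\end{itemize}
Both routes work. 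The paper's is shorter because the hyperbolic estimate for $\psi_t$ has already been done. Yours uses only the $\phi$-equation and the bounds just derived.

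The $n=2$ obstacle you flag is not actually one.
\begin{itemize}
\item Your own first step gives $|r^{\frac{m}{n^*}}\phi_r|_{n^*}=|\phi_r|_\infty\le C(T)$ when $n=2$, via Lemma \ref{Hk-Ck-vector} in \eqref{dier2}. So the split $|\phi_r|_\infty\,|r^{\frac12}\mathrm{D}_ru|_2$ is immediate, and the $L^4$ Gagliardo--Nirenberg fallback, though valid, is unnecessary.
\item The term $\psi_r\mathrm{D}_ru$ is simply $|r^{\frac12}\psi_r|_2\,|\mathrm{D}_ru|_\infty$. This is exactly the $|\mathrm{D}_ru|_\infty$ contribution in your final bound, which is square-integrable in time by Lemma \ref{lemma66}.
\end{itemize}
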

\begin{proof} We divide the proof into two steps.

\smallskip
\textbf{1. Estimates on $\phi$.} First, by \eqref{dier}--\eqref{dier2} and Lemma \ref{l4.7}, we have
\begin{equation*}
|r^{\frac{m}{n^*}} \phi_{r}(t)|_{n^*}+|r^{\frac{m}{2}} \mathrm{D}_r \phi_{r} (t)|_2\leq C(T).
\end{equation*}
Next, for the $L^2(I)$-estimate of $r^\frac{m}{2}\phi_{tr}$, it follows from the above, \eqref{tr}, and Lemmas \ref{important2} and \ref{lemma66}--\ref{l4.7} that
\begin{equation*} 
\begin{aligned}
|r^{\frac{m}{2}}\phi_{tr}|_2&\leq C_0|r^\frac{m}{2}(\phi^{1-2\iota}\psi)_t|_2\leq C_0|\phi|_\infty^{-2\iota}|r^\frac{m}{2}(\phi_t\psi,\phi\psi_t)|_2 \\
&\leq C_0|\phi|_\infty^{-2\iota}(|\psi|_\infty |r^\frac{m}{2}\phi_t|_2+ |\phi|_\infty |r^\frac{m}{2}\psi_t|_2)\leq C(T).
\end{aligned}
\end{equation*}

\smallskip
\textbf{2. Estimates on $u$.} Applying $r^\frac{m}{2}\mathrm{D}_r$ on both sides of $\eqref{e2.2}_2$ and taking $L^2(I)$-norm of the resulting equality,
we obtain from Lemmas \ref{important2}, \ref{l4.6-0}--\ref{l4.6}, 
and \ref{lemma66}--\ref{l4.7} that
\begin{equation*} 
\begin{aligned}
\Big|r^{\frac{m}{2}}\mathrm{D}_r\big(\phi^{2\iota}\big(u_r+ \frac{m}{r}u\big)_{r}\big)\Big|_2&\leq C_0\big(\big|r^{\frac{m}{2}}\big(\mathrm{D}_ru_{t},|\mathrm{D}_ru|^2,u\mathrm{D}_r^2u\big)\big|_2+ |r^{\frac{m}{2}}\phi^{-2\iota}\phi_r\psi|_2\big)\\
&\quad +C_0\big(|r^{\frac{m}{2}}\phi^{1-2\iota}\mathrm{D}_r\psi|_2+\big|r^\frac{m}{2}\big(|\mathrm{D}_r\psi||\mathrm{D}_ru|,\psi \mathrm{D}_r^2u \big)\big|_2\big)\\
&\leq C_0\big(|\phi|_\infty^{-\iota}|r^{\frac{m}{2}}\phi^\iota \mathrm{D}_r u_t|_{2}+|u_r|_\infty|r^{\frac{m}{2}}\mathrm{D}_ru|_2+ |(u,\psi)|_\infty|r^{\frac{m}{2}}\mathrm{D}_r^2 u|_2\big)\\
&\quad +C_0 |\phi|_\infty^{-2\iota}|\psi|_\infty|r^{\frac{m}{2}}\phi_r|_2
+C_0|\phi|_\infty^{-2\iota}|r^\frac{m}{n^*}\phi|_{n^*}|r^{\frac{m}{n}}\mathrm{D}_r\psi|_n\\
&\quad +C_0|r^\frac{m}{n^*}\mathrm{D}_r u|_{n^*} |r^{\frac{m}{n}}\mathrm{D}_r\psi|_n \leq C(T)\big(|r^{\frac{m}{2}}\phi^\iota \mathrm{D}_ru_t|_{2}+|\mathrm{D}_r u|_\infty+1\big),
\end{aligned}    
\end{equation*}
which, along with \eqref{tr} and Lemmas \ref{lemma66}--\ref{l4.7}, implies
\begin{equation}\label{uxxxl1}
\begin{aligned}
\Big|r^{\frac{m}{2}} \phi^{2\iota}\mathrm{D}_r\big(u_r+ \frac{m}{r}u\big)_{r}\Big|_2&\leq C_0\Big|r^{\frac{m}{2}}\mathrm{D}_r\Big(\phi^{2\iota}\big(u_r+ \frac{m}{r}u\big)_{r}\Big)\Big|_2+C_0|\psi|_\infty  |r^{\frac{m}{2}}\mathrm{D}_r^2 u|_2\\
&\leq C(T)\big(|\phi^\iota r^{\frac{m}{2}}\mathrm{D}_r u_t|_{2}+|\mathrm{D}_r u|_\infty+1\big).
\end{aligned}
\end{equation}

Thus, we obtain from \eqref{uxxxl1}  and Lemmas \ref{im-2} and \ref{l4.8}--\ref{lemma66} that
\begin{equation}
\begin{aligned}
\int_0^t|r^\frac{m}{2}\phi^{2\iota}\mathrm{D}_r^3 u|_2^2\,\mathrm{d}s \leq C(T)\int_0^t\big(|\phi^\iota r^{\frac{m}{2}}\mathrm{D}_r u_t|_{2}^2+|\mathrm{D}_r u|_\infty^2+1\big)\,\mathrm{d}s \leq C(T).
\end{aligned}
\end{equation}

Finally, using Lemma \ref{important2} again, we arrive at the desired estimates. 
\end{proof}

\subsection{Third-Order Energy Estimates of the Velocity}
\begin{lem}\label{l4.10}  
There exists a constant $C(T)>0$ such that
\begin{equation*}
\delta_{3n}|r^\frac{m}{6}u_t(t)|_6+|r^{\frac{m}{2}}\phi^\iota\mathrm{D}_r u_t(t)|_2+\int_{0}^{t}  |r^{\frac{m}{2}}u_{tt}|^2_2 \,\mathrm{d}s\leq C(T)
\qquad\text{for any $t\in [0,T]$}.
\end{equation*}
\end{lem}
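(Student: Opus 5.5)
The natural route is the standard time-differentiated energy estimate: multiply the $t$-differentiated momentum equation \eqref{743utt} by $r^m\phi^{-2\iota}u_{tt}$. This mirrors Lemma \ref{l4.6}, one time derivative higher. Using $\eqref{e2.2}_1$ and \eqref{phi2l} to move the time derivative off the weight, the leading viscous term $2a_1\delta a\phi^{2\iota}(u_{tr}+\frac{m}{r}u_t)_r\cdot r^m u_{tt}$ becomes, after integration by parts,
\begin{equation*}
a_1\delta a\frac{\mathrm{d}}{\mathrm{d}t}\Big|r^{\frac m2}\phi^\iota\big(u_{tr}+\frac mr u_t\big)\Big|_2^2+|r^{\frac m2}u_{tt}|_2^2
\end{equation*}
on the left. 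Two error terms appear on the right: one from $(\phi^{2\iota})_t=-u(\phi^{2\iota})_r-(\delta-1)\phi^{2\iota}\mathrm{div}$, and one from $(\phi^{2\iota})_r$ hitting $u_{tt}$ in the flux. The latter is rewritten via $\psi$ and \eqref{v-express-2}, so it is controlled by $|(u,v)|_\infty$ and $|r^{\frac m2}\phi^{\iota}\mathrm{D}_r u_t|_2|r^{\frac m2}u_{tt}|_2$. As in \eqref{eq:B9}, I would first check via \eqref{spd} that the boundary flux $r^m\mathcal{B}$ has $r^m\mathrm{D}_r\mathcal{B}\in L^1(I)$, so that Lemma \ref{calculus} kills it. Lemma \ref{im-2} (with $q=\iota$, $j=1$) then converts $|r^{\frac m2}\phi^\iota(u_{tr}+\frac mr u_t)|_2$ into $|r^{\frac m2}\phi^\iota\mathrm{D}_ru_t|_2$, up to $C(T)|r^{\frac m2}u_t|_2\le C(T)$ by Lemma \ref{l4.8}.

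The remaining right-hand terms of \eqref{743utt} are handled by H\"older, absorbing $\frac18|r^{\frac m2}u_{tt}|_2^2$. The transport terms $u_tu_r$ and $uu_{tr}$ are bounded using $|u|_\infty\le C(T)$ and $|r^{\frac m2}u_r|_\infty\le C(T)$ from Lemma \ref{lemma66}, together with the Hardy-type splitting near the origin used for $G_9$. The term $\phi_{tr}$ uses Lemma \ref{l4.9}. The term $\psi_r u\,\mathrm{D}_ru$ uses $|r^{\frac mn}\psi_r|_n$ from Lemma \ref{l4.7} and $|r^{\frac m{n^*}}u|_{n^*}$ with $|\mathrm{D}_ru|_\infty$. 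The terms $(v-u)(\mathrm{D}_r u)^2$ and $(v-u)\mathrm{D}_ru_t$ are bounded by $C(T)(|\mathrm{D}_ru|_\infty+|r^{\frac m2}\phi^\iota \mathrm{D}_ru_t|_2)$. The cubic-type term $\phi^{2\iota}(u_r+\frac mr u)_r\,\mathrm{D}_ru$ is bounded by $|\mathrm{D}_ru|_\infty|r^{\frac m2}\phi^{2\iota}\mathrm{D}_r^2u|_2\le C(T)|\mathrm{D}_ru|_\infty$. Collecting these should give
\begin{equation*}
\frac{\mathrm{d}}{\mathrm{d}t}|r^{\frac m2}\phi^\iota\mathrm{D}_r u_t|_2^2+|r^{\frac m2}u_{tt}|_2^2\le C(T)\big(1+|\mathrm{D}_ru|_\infty^2+|r^{\frac m2}\phi^\iota\mathrm{D}_ru_t|_2^2\big)\big(1+|r^{\frac m2}\phi^\iota\mathrm{D}_ru_t|_2^2\big),
\end{equation*}
possibly after redefining the energy to include the lower-order correction from Lemma \ref{im-2}. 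The coefficient is integrable in time by Lemmas \ref{l4.8} and \ref{lemma66}, so Gr\"onwall closes the estimate.

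The main obstacle I expect is the initial value of $|r^{\frac m2}\phi^\iota\mathrm{D}_ru_t|_2$, since $u_t$ at $t=0$ is only defined through the equation. I would proceed as at the end of Lemma \ref{l4.8}: integrate over $(\tau,t)$, express $\mathrm{D}_ru_t(\tau)$ from $\mathrm{D}_r$ of $\eqref{e2.2}_2$, and let $\tau\to0$. The dangerous term is $\phi^\iota\mathrm{D}_r(\phi^{2\iota}\tilde Lu)$, which is controlled precisely by the compatibility condition $\eqref{th78zxq}_2$, $\nabla(\phi_0^{2\iota}L\boldsymbol u_0)=\phi_0^{-\iota}\mathcal G_2$, via \eqref{Lu=} and Lemma \ref{lemma-initial}. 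The other terms, such as $\phi^{\iota}\mathrm{D}_r(\phi_r)$ and $\phi^\iota\mathrm{D}_r(\psi\mathrm{D}_ru)$, require $\phi^{\iota}$ weights that are only bounded on compact sets. Here I would instead use $\phi^\iota\nabla\phi=c\,\phi^{1-\iota}\psi$, and $\phi^\iota\nabla\boldsymbol u_0=\mathcal G_1$ from $\eqref{th78zxq}_1$, which keeps every factor bounded.

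Finally, for $n=3$, Sobolev embedding with Lemma \ref{lemma-initial} gives $|r^{\frac m6}u_t|_6\le C_0\|\nabla\boldsymbol u_t\|_{L^2}\le C_0|\phi|_\infty^{-\iota}|r^{\frac m2}\phi^\iota\mathrm{D}_ru_t|_2$, which bounds the first term of the statement.
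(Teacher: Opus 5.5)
Your argument is essentially the paper's proof. You differentiate the momentum equation in $t$ and test it with $r^m u_{tt}$. You control the terms coming from $(\phi^{2\iota})_t$ and $(\phi^{2\iota})_r=\frac{\delta-1}{a\delta}\psi$, together with the lower-order terms, via Lemmas \ref{l4.8}--\ref{l4.9}. You then close by Gr\"onwall with a coefficient involving $|\mathrm{D}_r u|_\infty^2$, which is integrable in time, and treat $\tau\to 0$ through the compatibility condition on $\mathcal{G}_2$; bounding $\psi_r u\,\mathrm{D}_r u$ directly, instead of using $|r^{\frac m2}\psi_t|_2$ as the paper does, is a harmless variation.

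One correction: the multiplier must be $r^m u_{tt}$, not $r^m\phi^{-2\iota}u_{tt}$ as your first sentence says. With $r^m\phi^{-2\iota}u_{tt}$ you would only get the unweighted energy $|r^{\frac m2}\mathrm{D}_r u_t|_2^2$ and the degenerate dissipation $|r^{\frac m2}\phi^{-\iota}u_{tt}|_2^2$, and neither yields the statement. Your displayed identity and all later estimates are in fact the ones for $r^m u_{tt}$.
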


\begin{proof}
We divide the proof into two steps.

\smallskip 
\textbf{1.} Applying $r^mu_{tt}$ to $\eqref{e2.2}_2$, we obtain 
\begin{equation}\label{dtg15}
a_1\delta a \Big(r^m\phi^{2\iota}\big(u_{tr}+\frac{m}{r}u_t\big)^2\Big)_t+r^mu_{tt}^2=2a_1\delta a\Big(r^m\underline{\phi^{2\iota}\big(u_{tr}+\frac{m}{r}u_t\big)u_{tt}}_{:=\mathcal{B}_{16}}\Big)_r+G_{14},
\end{equation}
where
\begin{equation*}
\begin{aligned}
G_{14}&=-r^m(u_tu_r+uu_{tr}+\phi_{tr})  +2a_1 \delta a \iota r^m(\log\phi)_t \phi^{2\iota}\Big(2\big(u_r+\frac{m}{r}u\big)_r u_{tt}+ \big(u_{tr}+\frac{m}{r}u_t\big)^2\Big)\\
&\quad +2a_1  r^m\psi_t\big(\delta u_r+m(\delta-1)\frac{u}{r}\big)u_{tt} +2a_1  r^m\psi \big(\delta u_{tr}+m(\delta-1)\frac{u_t}{r}\big)u_{tt}.
\end{aligned}
\end{equation*}

\smallskip
\textbf{2.} We show that $r^m \mathrm{D}_r  \mathcal{B}_{16} \in L^1(I)$  for {\it a.e.} $t\in (0,T)$, so that Lemma \ref{calculus} implies
\begin{equation}\label{int-B16}
\int_0^\infty (r^m\mathcal{B}_{16})_r\,\mathrm{d}r =0.
\end{equation}
Indeed, by \eqref{spd}--\eqref{spd2}, we see that, for {\it a.e.} $t\in (0,T)$,
\begin{equation*}
(\psi,u,\mathrm{D}_r u)\in L^\infty(I), \qquad 
r^\frac{m}{2}(\phi_t,\mathrm{D}_r u,u_t,\mathrm{D}_r^2 u,\mathrm{D}_r u_t,\phi^\iota \mathrm{D}_r u_t,\phi^{2\iota}\mathrm{D}_r^2 u_t)\in L^2(I).
\end{equation*}
Then it follows from the H\"older inequality that
\begin{equation*} 
\begin{aligned}
|r^m \mathrm{D}_r \mathcal{B}_{16}|_1&\leq C_0\big(\big|r^{m-1}\phi^{2\iota}(\mathrm{D}_r u_t) u_{tt}\big|_1 + \big|r^{m} (\psi (\mathrm{D}_r u_t)u_{tt},\phi^{2\iota}(\mathrm{D}_r^2 u_t)u_{tt},,\phi^{2\iota}(\mathrm{D}_r u_t)u_{ttr})\big|_1\big)\\
&\leq C_0 |r^\frac{m}{2}\phi^\iota\mathrm{D}_r u_t|_2|r^\frac{m}{2}\phi^\iota\mathrm{D}_r u_{tt}|_2 \\
&\quad +C_0|r^\frac{m}{2}u_{tt}|_2\big(|\psi|_\infty|r^\frac{m}{2}\mathrm{D}_r u_t|_2+|r^\frac{m}{2} \phi^{2\iota}\mathrm{D}_r^2 u_t|_2\big) <\infty,
\end{aligned}    
\end{equation*}
which thus leads to \eqref{int-B16}.

\smallskip
\textbf{3.} To estimate $G_{14}$, we first see from from \eqref{tr} and $\eqref{e2.2}_1$ that
\begin{equation}\label{eq:logphi}
(\log\phi)_t+\frac{\gamma-1}{\delta a}u \phi^{-2\iota} \psi+(\gamma-1)\big(u_r+\frac{m}{r}u\big)=0,
\end{equation}
which, along with Lemmas \ref{important2} and \ref{lemma66}--\ref{l4.7}, yields
\begin{equation}\label{logphi}
|(\log\phi)_t|_\infty\leq C_0 \big(|u|_\infty|\phi|_\infty^{-2\iota}|\psi|_\infty+|\mathrm{D}_ru|_\infty\big)\leq C(T)\big(|\mathrm{D}_ru|_\infty+1\big).
\end{equation}

Next, by \eqref{logphi}, Lemmas \ref{important2} and \ref{l4.8}--\ref{l4.9}, 
and the H\"older and Young inequalities, we have
\begin{equation}\label{etrq}
\begin{aligned}
|G_{14}|_1&\leq C_0\big(|r^{\frac{m}{2}}(u_t,\psi_t)|_2|\mathrm{D}_ru|_\infty + |\phi|_\infty^{-\iota}|r^{\frac{m}{2}}\phi^{\iota}\mathrm{D}_ru_t|_2|(u,\psi)|_\infty+|r^{\frac{m}{2}}\phi_{tr}|_2\big)|r^{\frac{m}{2}}u_{tt}|_2\\
&\quad +C_0|(\log\phi)_t|_\infty\big(|r^\frac{m}{2}\phi^{2\iota}\mathrm{D}_r^2 u|_2|r^\frac{m}{2}u_{tt}|_2+|r^\frac{m}{2}\phi^{\iota}\mathrm{D}_r u_t|_2^2\big)\\
&\leq C(T) \big(|\mathrm{D}_ru|_\infty^2 +1\big)\big(|r^\frac{m}{2}\phi^{\iota}\mathrm{D}_r u_t|_2^2+1\big) +\frac{1}{8}|r^{\frac{m}{2}}u_{tt}|_2^2.
\end{aligned}    
\end{equation}

Hence, integrating \eqref{dtg15} over $[\tau,t]\times I$ with $\tau\in (0,t)$, along with \eqref{int-B16}--\eqref{etrq}, and Lemmas \ref{im-2} and \ref{l4.8}--\ref{lemma66}, implies 
\begin{equation}\label{etrq2}
\begin{aligned}
&\,|r^{\frac{m}{2}}\phi^\iota\mathrm{D}_r u_t(t)|^2_2+\int_{\tau}^{t}  |r^{\frac{m}{2}}u_{tt}|^2_2 \,\mathrm{d}s\\
&\leq C(T)\big(|r^{\frac{m}{2}}\phi^\iota\mathrm{D}_r u_t(\tau)|^2_2+1\big)+C(T) \int_\tau^t \big(|\mathrm{D}_ru|_\infty^2 +1\big)\big(|r^\frac{m}{2}\phi^{\iota}\mathrm{D}_r u_t|_2^2+1\big)\,\mathrm{d}s.
\end{aligned}
\end{equation}
For the $L^2(I)$-boundedness of $r^{\frac{m}{2}}\phi^\iota\mathrm{D}_r u_t(\tau)$ 
on the right-hand side of \eqref{etrq2}, we apply $r^\frac{m}{2}\phi^\iota \mathrm{D}_r$ to $\eqref{e2.2}_2$  and take the $L^2(I)$-norm of the resulting equality. Then it follows from \eqref{tr}, Lemmas \ref{important2} and \ref{l4.8}--\ref{l4.7}, and the H\"older inequality that, for $\tau \in (0,t)$,
\begin{equation*}
\begin{aligned}
|r^{\frac{m}{2}}\phi^\iota\mathrm{D}_r u_t |_2&\leq C_0\big(|\mathrm{D}_ru |_\infty|r^{\frac{m}{2}}\phi^\iota \mathrm{D}_r u |_2+|u |_\infty|r^{\frac{m}{2}}\phi^\iota \mathrm{D}_r^2 u |_2 +|\phi|_\infty^{1-\iota}|r^{\frac{m}{2}} \mathrm{D}_r\psi|_2\big)\\
&\quad + C_0 |r^\frac{m}{2}\phi|_2|\phi|_\infty^{-3\iota}| \psi|_\infty^2+ C_0\Big|r^{\frac{m}{2}}\phi^\iota \mathrm{D}_r\Big(\phi^{2\iota}\big(u_r+\frac{m}{r}u\big)_r\Big) \Big|_2\\
&\quad +C_0 |r^\frac{m}{n}\mathrm{D}_r\psi|_n \Big|r^\frac{m}{n^*}\phi^\iota \big(u_r+\frac{m}{r}u\big) \Big|_{n^*}+C_0|\psi |_\infty|r^{\frac{m}{2}}\phi^{\iota} \mathrm{D}_r^2 u|_2\\
&\leq C(T)\big(|\mathrm{D}_ru |_\infty  +1\big)  + C_0\Big|r^{\frac{m}{2}}\phi^\iota \mathrm{D}_r\Big(\phi^{2\iota}\big(u_r+\frac{m}{r}u\big)_r\Big) \Big|_2 \\
&\quad +C_0 |r^\frac{m}{n}\mathrm{D}_r\psi|_n \Big|r^\frac{m}{n^*}\phi^\iota \big(u_r+\frac{m}{r}u\big)\Big|_{n^*},
\end{aligned}
\end{equation*}
which, along with the time-continuity of $(\phi,u,\psi)$, \eqref{Lu=}, the initial compatibility condition \eqref{th78zxq}, and 
Lemmas \ref{ale1}, \ref{initial3}, and \ref{lemma-initial}, yields
\begin{equation*}
\begin{aligned}
\limsup_{\tau\to 0} |r^{\frac{m}{2}}\phi^\iota\mathrm{D}_ru_t(\tau)|_2&\leq C(T) \big(\|\nabla\boldsymbol{u}_0\|_{L^\infty}\!+\! \|\phi_0^\iota\nabla(\phi_0^{2\iota}L\boldsymbol{u}_0)\|_{L^2}\! +\! \|\nabla\boldsymbol{\psi}_0\|_{L^n} \|\phi_0^{\iota}\diver\boldsymbol{u}_0\|_{L^{n^*}}\!+1\big)\\[-4pt]
&\leq C(T)\big(\|\mathcal{G}_2\|_{L^2}+   \|\phi_0^{\iota}\diver\boldsymbol{u}_0\|_{H^2}+1\big)\\[1pt]
&\leq C(T)\big(\|\phi_0^{\iota}\nabla\boldsymbol{u}_0\|_{L^2}+\|\nabla\phi_0^{\iota}\|_{L^4}\|\nabla\boldsymbol{u}_0\|_{L^4}+\|\phi_0^{\iota}L\boldsymbol{u}_0\|_{L^2}\big)\\
&\quad +C(T)\|\phi_0\|_{L^\infty}^{-\iota}\big(\|\nabla^2(\phi_0^{\iota}\diver\boldsymbol{u}_0)\|_{L^2}+\|\nabla\boldsymbol{\psi}_0\|_{L^n}\|\diver\boldsymbol{u}_0 \|_{L^{n^*}}\big)\\
&\quad +C(T)\|\phi_0\|_{L^\infty}^{-\iota}\big(\|\phi_0\|_{L^\infty}^{-2\iota}\|\boldsymbol{\psi}_0\|_{L^\infty}^2\|\diver\boldsymbol{u}_0 \|_{L^{2}}+\|\phi_0^{2\iota}L\boldsymbol{u}_0\|_{L^2}+1\big)\\[1pt]
&\leq C(T)\big(\|(\mathcal{G}_1,\mathcal{G}_2,\boldsymbol{g}_*)\|_{L^2}+1\big)\leq C(T).
\end{aligned}
\end{equation*}
Consequently, based on the above discussions, letting $\tau\to 0$ in \eqref{etrq2}, together with Lemma \ref{lemma66} and the Gr\"onwall inequality, yields 
\begin{equation*}
|r^{\frac{m}{2}}\phi^\iota\mathrm{D}_r u_t(t)|^2_2+\int_{0}^{t}  |r^{\frac{m}{2}}u_{tt}|^2_2 \,\mathrm{d}s\leq C(T)\qquad\text{for all $t\in [0,T]$}.
\end{equation*}

Finally, if $n=3$, it follows from the above and Lemmas \ref{important2}, \ref{l4.8}, \ref{ale1}, and \ref{lemma-initial} that
\begin{equation*}
|r^\frac{m}{6} u_t|_{6}\leq C_0\|\boldsymbol{u}_t\|_{L^{6}}\leq C_0\|\boldsymbol{u}_t\|_{H^1}\leq C_0\big(|r^\frac{m}{2} u_t |_{2}+|\phi|_\infty^{-\iota}|r^\frac{m}{2}\phi^\iota\mathrm{D}_r u_t |_{2}\big)\leq C(T).
\end{equation*}

This completes the proof.
\end{proof}

\begin{lem}\label{l4.10-ell}  
There exists a constant $C(T)>0$ such that, for any $q\in [2\iota,0]$ and $t\in [0,T]$,
\begin{equation*}
|\mathrm{D}_r u(t)|_\infty\!+\!\delta_{3n}|r^\frac{m}{6}\mathrm{D}_r^2 u(t)|_{6}\!+\!|r^\frac{m}{2n}(\phi^\iota)_r(t)|_{2n}\!+\!|r^\frac{m}{2}\phi^q \mathrm{D}_r^3u(t)|_2\!+\!\int_{0}^{t} |r^\frac{m}{2}\phi^q\mathrm{D}_r^2u_t|_{2}^2 \mathrm{d}s\leq C(T).
\end{equation*}
\end{lem}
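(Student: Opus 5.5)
The plan is to read all five quantities off the momentum equation $\eqref{e2.2}_2$ and its spatial derivative, treated as elliptic equations, feeding in the time-derivative bounds of Lemmas \ref{l4.8} and \ref{l4.10}.

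\textbf{Step 1: $|\mathrm{D}_r u|_\infty$ and $\delta_{3n}|r^{\frac{m}{6}}\mathrm{D}_r^2 u|_6$.} These follow from the proof of Lemma \ref{lemma66}. Estimate \eqref{ur-infty} gives
\[
|\mathrm{D}_r u|_\infty\le C(T)\big(|r^{\frac m2}\mathrm{D}_r u_t|_2+1\big),
\]
and Lemma \ref{l4.10} together with Lemma \ref{important2} bounds $|r^{\frac m2}\mathrm{D}_r u_t|_2\le |\phi|_\infty^{-\iota}|r^{\frac m2}\phi^\iota\mathrm{D}_r u_t|_2\le C(T)$ uniformly in $t$. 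For $n=3$, Lemmas \ref{ale1} and \ref{lemma-initial} give $|r^{\frac m6}\mathrm{D}_r^2u|_6\le C_0\|\nabla^3\boldsymbol u\|_{L^2}$. This is then controlled by Lemma \ref{im-1} and \eqref{uxxxl1}, and the right-hand side of \eqref{uxxxl1} is now bounded for every $t$.

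\textbf{Step 2: $|r^{\frac m2}\phi^q\mathrm{D}_r^3u|_2$.} From \eqref{uxxxl1}, now pointwise in time,
\[
\big|r^{\frac m2}\phi^{2\iota}\mathrm{D}_r(u_r+\tfrac mr u)_r\big|_2\le C(T).
\]
Lemma \ref{im-2} with $k=3$ converts this into a bound on $|r^{\frac m2}\phi^{2\iota}\mathrm{D}_r^3u|_2$, using $|u|_\infty\le C(T)$ and Lemma \ref{lemma66} for $|r^{\frac m2}\mathrm{D}_r^2u|_2$. Since $\phi$ is bounded above, $\phi^{q}\le |\phi|_\infty^{q-2\iota}\phi^{2\iota}$ for $q\in[2\iota,0]$, which covers the whole range of $q$.

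\textbf{Step 3: $|r^{\frac{m}{2n}}(\phi^\iota)_r|_{2n}$.} Write $(\phi^\iota)_r=\tfrac12\phi^{-\iota}(\phi^{2\iota})_r$ and use $\psi=\frac{a\delta}{\delta-1}(\phi^{2\iota})_r$, so that $(\phi^\iota)_r=c\,\phi^{-\iota}\psi$. Since $-\iota>0$ and $\phi$ is bounded, $|(\phi^\iota)_r|\le C(T)|\psi|$, and Lemma \ref{l4.7} gives $|\psi|_\infty\le C(T)$. This controls $\chi_1^\flat$ directly. The far field is the delicate part: $\phi^\iota\to\infty$ there, and $|r^{\frac{m}{2n}}\psi|_{2n}$ need not be finite. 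I would instead use the transport equation
\[
(\phi^\iota)_t+u(\phi^\iota)_r+\tfrac{\delta-1}{2}\phi^\iota\big(u_r+\tfrac mr u\big)=0,
\]
which follows from $\eqref{e2.2}_1$. Differentiating it in $r$ gives
\[
(\phi^\iota)_{rt}+u(\phi^\iota)_{rr}+\tfrac{\delta+1}{2}u_r(\phi^\iota)_r+\tfrac{(\delta-1)m}{2r}u(\phi^\iota)_r+\tfrac{\delta-1}{2}\phi^\iota\big(u_r+\tfrac mr u\big)_r=0 .
\]
Multiplying by $r^m|(\phi^\iota)_r|^{2n-2}(\phi^\iota)_r$ and integrating (with the boundary term handled via Lemma \ref{calculus}, justified as in the earlier lemmas) leads to
\[
\frac{\mathrm d}{\mathrm dt}|r^{\frac{m}{2n}}(\phi^\iota)_r|_{2n}^{2n}\le C|\mathrm{D}_r u|_\infty|r^{\frac{m}{2n}}(\phi^\iota)_r|_{2n}^{2n}+C\big|r^{\frac{m}{2n}}\phi^\iota\mathrm{D}_r^2u\big|_{2n}\,|r^{\frac{m}{2n}}(\phi^\iota)_r|_{2n}^{2n-1}.
\]
The forcing term is bounded by $\|\phi^\iota\nabla^2\boldsymbol u\|_{L^{2n}}$, which Gagliardo--Nirenberg controls through $\phi^{\iota}\nabla^2\boldsymbol u$ and $\nabla(\phi^\iota\nabla^2\boldsymbol u)$ in $L^2$. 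Both are bounded by Step 2 and Lemma \ref{lemma66}, plus a cross term $(\phi^\iota)_r\mathrm{D}_r^2u$, which I would absorb by Hölder, using the $L^{2n}$ quantity itself against $|r^{\frac m{n^*}}\cdots|$. Gronwall then closes, with the initial datum finite by \eqref{th78qq}. \textbf{This step is the main obstacle}, because of the far-field singular weight: one must check that $\phi^\iota\mathrm{D}_r^2u$ stays in $L^{2n}$ without further loss.

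\textbf{Step 4: $\int_0^t|r^{\frac m2}\phi^q\mathrm{D}_r^2u_t|_2^2\,\mathrm{d}s$.} Isolate $2a_1\delta a\,\phi^{2\iota}(u_{tr}+\tfrac mr u_t)_r$ in \eqref{743utt}. The remaining terms are $u_{tt}$, $\phi_{tr}$, $\mathrm{D}_ru\,\mathrm{D}_ru_t$, $u\,\mathrm{D}_ru_t$, $(v-u)\mathrm{D}_ru_t$, $\psi_ru\,\mathrm{D}_ru$, and $\phi^{2\iota}\mathrm{D}_r^2u\,\mathrm{D}_ru$. Their $r^{\frac m2}$-weighted $L^2$ norms are bounded by $|r^{\frac m2}u_{tt}|_2+C(T)$, using Lemmas \ref{l4.7}–\ref{l4.10}, Step 1, and the earlier $L^n$–$L^{n^*}$ pairing for $\psi_r$. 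Squaring, integrating in time with Lemma \ref{l4.10}, and applying Lemma \ref{im-2} with $j=2$ to $f=u_t$ gives the bound for $q=2\iota$. Larger $q$ then follows from the upper bound on $\phi$.
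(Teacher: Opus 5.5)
Your proposal is correct and follows essentially the same route as the paper: $|\mathrm{D}_r u|_\infty$ comes from \eqref{ur-infty} and Lemma \ref{l4.10}, and the weighted $\mathrm{D}_r^3u$ bound from \eqref{uxxxl1} and Lemma \ref{im-2}. Likewise, the $\mathrm{D}_r^2u_t$ bound comes from the time-differentiated momentum equation, and the $(\phi^\iota)_r$ bound from the differentiated transport equation with forcing controlled by $\|\phi^\iota\nabla^2\boldsymbol u\|_{L^{2n}}\le C\|\phi^\iota\nabla^2\boldsymbol u\|_{H^1}$.

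The only simplification in the paper is that the cross term $\nabla\phi^\iota\,\nabla^2\boldsymbol u$ is bounded directly by $|(\phi^\iota)_r|\le C|\phi|_\infty^{-\iota}|\psi|\le C(T)$, which you already observed, so it need not be fed back into the Gr\"onwall loop. If you do route it through H\"older, the partner of $L^{2n}$ must be $L^{\frac{2n}{n-1}}$ rather than $L^{n^*}$.
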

\begin{proof}
First, it follows from \eqref{ur-infty} and Lemmas \ref{important2} and \ref{l4.10},
\begin{equation}\label{ur-infty-t}
|\mathrm{D}_r u|_\infty\leq C(T) \big(|\phi|_\infty^{-\iota}|r^\frac{m}{2} \phi^\iota\mathrm{D}_r u_t|_2+ 1\big)\leq C(T)\qquad \text{for all $t\in [0,T]$},
\end{equation}
which, along with \eqref{uxxxl1}, and Lemmas \ref{im-2}, \ref{lemma66}, and \ref{l4.10}, yields that, for any $q\in [2\iota,0]$,
\begin{equation}\label{3333}
\begin{aligned}
|r^\frac{m}{2}\phi^q \mathrm{D}_r^3u|_{2} &\leq |\phi|_\infty^{q-2\iota}|r^\frac{m}{2}\phi^{2\iota}\mathrm{D}_r^3u|_{2}\leq C(T)\big(|r^{\frac{m}{2}}\phi^\iota\mathrm{D}_r u_t|_{2}+|\mathrm{D}_r u|_\infty+1\big)\leq C(T).
\end{aligned}    
\end{equation}

Next, applying $r^\frac{m}{2}\partial_t$ to $(\ref{e2.2})_2$ and taking $L^2(I)$-norm of the resulting equality, along with \eqref{logphi}, \eqref{ur-infty-t}, and Lemmas \ref{im-2} and \ref{l4.8}--\ref{l4.10}, we have 
\begin{equation}\label{utt}
\begin{aligned}
|r^\frac{m}{2}\phi^{2\iota}\mathrm{D}_r^2u_t|_{2}&\leq C(T)\Big(\Big|r^\frac{m}{2}\phi^{2\iota}\big(u_r+ \frac{m}{r}u\big)_{tr}\Big|_2+1\Big)\\
&\leq C(T)\big(\big|r^{\frac{m}{2}}\big(u_{tt},(u u_r)_t,\phi_{tr},(\psi \mathrm{D}_r u)_t,(\phi^{2\iota})_t \mathrm{D}_r^2 u\big)\big|_2+1\big)\\
&\leq C(T)\big(|r^{\frac{m}{2}}(u_{tt},\phi_{tr})|_2 +|r^{\frac{m}{2}}(u_t,\psi_t)|_2|\mathrm{D}_r u|_\infty+|r^{\frac{m}{2}}\mathrm{D}_r u_t|_2|(u,\psi)|_\infty \big) \\
&\quad +C(T)\big(|(\log\phi)_t|_\infty |r^\frac{m}{2}\phi^{2\iota}\mathrm{D}_r^2 u|_2+1\big) \leq C(T)\big(|r^{\frac{m}{2}}u_{tt}|_2 +1\big).
 \end{aligned}
\end{equation}
This, combined with Lemmas \ref{important2} and \ref{l4.10}, yields the $L^2([0,T];L^2)$-estimates of $r^\frac{m}{2}\phi^{q}\mathrm{D}_r^2u_t$.

To obtain $L^{2n}(I)$-estimate for $r^\frac{m}{2n}(\phi^\iota)_r$, we first see from \eqref{3333} and Lemmas \ref{important2}, \ref{lemma66}--\ref{l4.7}, \ref{ale1}, and \ref{lemma-initial} that
\begin{equation*}
\begin{aligned}
|r^\frac{m}{2n}\phi^\iota \mathrm{D}_r u|_{2n}&\leq C_0\|\phi^\iota\nabla^2\boldsymbol{u}\|_{L^{2n}}\leq C_0\|(\phi^\iota\nabla^2\boldsymbol{u},\phi^{-\iota}|\boldsymbol{\psi}||\nabla^2\boldsymbol{u}|,\phi^\iota\nabla^3\boldsymbol{u})\|_{L^2}\\
&\leq C_0|r^\frac{m}{2}\phi^\iota(\mathrm{D}_r^2u,\mathrm{D}_r^3u)|_{2}+C_0|\phi|_\infty^{-\iota}|\psi|_\infty|r^\frac{m}{2}\mathrm{D}_r^2u|_{2}\leq C(T).  
\end{aligned}
\end{equation*}
Then applying $r^m|(\phi^\iota)_r|^{2n-2}(\phi^\iota)_r\partial_r$ to $\eqref{e2.2}_1\times \phi^{\iota-1}$ and then integrating over $I$, together with the above, \eqref{ur-infty-t}, and the H\"older and Young inequalities, yield
\begin{equation*}
\begin{aligned}
\frac{\mathrm{d}}{\mathrm{d}t}|r^\frac{m}{2n}(\phi^\iota)_r|_{2n}^{2n}&\leq C_0|\mathrm{D}_r u|_\infty|r^\frac{m}{2n}(\phi^\iota)_r|_{2n}^{2n}+C_0|r^\frac{m}{2n}\phi^\iota \mathrm{D}_r u|_{2n}|r^\frac{m}{2n}(\phi^\iota)_r|_{2n}^{2n-1}\\
&\leq C(T)\big(|r^\frac{m}{2n}(\phi^\iota)_r|_{2n}^{2n}+1\big),
\end{aligned}
\end{equation*}
which, along with the Gr\"onwall inequality, implies the desired estimate.

Finally, if $n=3$, it follows from \eqref{3333} and Lemmas \ref{lemma66}, \ref{ale1}, and \ref{lemma-initial} that
\begin{equation*}
|r^\frac{m}{6}\mathrm{D}_r^2 u|_{6}\leq C_0\|\nabla^2\boldsymbol{u}\|_{L^{6}}\leq C_0\|\nabla^2\boldsymbol{u}\|_{H^1}\leq C_0|r^\frac{m}{2}(\mathrm{D}_r^2 u,\mathrm{D}_r^3 u)|_{2}\leq C(T).
\end{equation*}
\end{proof}

\subsection{The Highest-Order Estimates of $(\phi,\boldsymbol{\psi})$.}
Now, we focus on the $D^2(\mathbb{R}^n)$-estimates for $\boldsymbol{\psi}$ and  $D^3(\mathbb{R}^n)$-estimates for $\phi$.

\begin{lem}\label{lemma-psi-high}
There exists a constant $C(T)>0$ such that
\begin{equation*}
|r^\frac{m}{2}(\mathrm{D}_r^2\psi,\mathrm{D}_r \psi_t)(t)|_2\leq C(T)\qquad \text{for any $t\in [0,T]$}.
\end{equation*}
\end{lem}
\begin{proof} We divide the proof into three steps.

\smallskip
\textbf{1.} It follows from Lemmas \ref{l4.7}, \ref{ale1}, and \ref{hardy}, and the H\"older inequality that
\begin{equation}\label{vr-infty}
\begin{aligned}
|\chi_1^\flat r\mathrm{D}_r v|_\infty+|\chi_1^\sharp \mathrm{D}_r v|_\infty&\leq C_0\big(|\chi_1^\flat r^\frac{3}{2}(\mathrm{D}_r v,\mathrm{D}_r^2 v)|_2 + |\chi_1^\sharp(\mathrm{D}_r v,\mathrm{D}_r^2 v)|_2\big)\\
&\leq C_0\big(|\chi_1^\flat r^{\frac{3}{2}-\frac{m}{n}}|_{n^*}+|\chi_1^\flat r^{-\frac{m}{n}}|_{n^*}\big)|r^\frac{m}{n}\mathrm{D}_r v|_n+C_0|r^\frac{m}{2}\mathrm{D}_r^2 v|_2^2\\
&\leq C(T)\big(|r^\frac{m}{2}\mathrm{D}_r^2 v|_2^2+1\big).
\end{aligned}
\end{equation}

\smallskip
\textbf{2.}
Applying $\partial_l$ ($l=1,\cdots\!,n)$ to both sides of \eqref{nabla-v} leads to the following equation in the sense of distributions:
\begin{equation}\label{partdivev}
\begin{aligned}
&\, (\partial_l\diver\boldsymbol{v})_t+\diver\big(\boldsymbol{u} (\partial_l\diver\boldsymbol{v})\big)\\
&=(\diver\boldsymbol{u})(\partial_l\diver\boldsymbol{v})-\partial_l\boldsymbol{u}\cdot \nabla(\diver\boldsymbol{v})-\partial_l(\nabla\boldsymbol{u}^\top:\nabla\boldsymbol{v})\\
&\quad -\frac{\gamma-\delta}{2a_1\delta a}\big(\frac{1-\delta}{\delta a}\phi^{-4\iota}\psi_l\nabla\phi+\phi^{-2\iota}\nabla(\partial_l\phi)\big)\cdot(\boldsymbol{v}-\boldsymbol{u})-\frac{\gamma-\delta}{2a_1\delta a}\phi^{-2\iota}\nabla\phi\cdot(\partial_l\boldsymbol{v}-\partial_l\boldsymbol{u})\\
&\quad  -\frac{\gamma-\delta}{2a_1\delta a} \phi^{-2\iota}\partial_l\phi(\diver\boldsymbol{v}-\diver\boldsymbol{u})-\frac{\gamma-1}{2a_1\delta a}\phi^{1-2\iota}(\partial_l\diver\boldsymbol{v}-\partial_l\diver\boldsymbol{u}).
\end{aligned}
\end{equation}
It can be checked due to $\boldsymbol{v}=\boldsymbol{u}+2a_1\nabla\log\rho$, and Lemmas \ref{th1} and \ref{ale1} that 
\begin{equation*}
\nabla\diver\boldsymbol{v}\in L^\infty([0,T];L^2(\mathbb{R}^n)),\quad \boldsymbol{u}\in L^1([0,T];W^{1,\infty}(\mathbb{R}^n)),
\end{equation*}
and the right-hand side of \eqref{partdivev} belongs to $L^1([0,T];L^2(\mathbb{R}^n))$.
Then it follows from Lemma \ref{lemma-lions} that, for {\it a.e.} $t\in (0,T)$, 
\begin{equation}
\begin{aligned}
&\frac{\mathrm{d}}{\mathrm{d}t}\|\partial_l\diver\boldsymbol{v}\|_{L^2}^{2}+\frac{\gamma-1}{a_1\delta a}\big\|\phi^{\frac{1}{2}-\iota}\partial_l\diver\boldsymbol{v}\big\|_{L^2}^{2}\\
&=\int_{\mathbb{R}^n} \big( (\diver\boldsymbol{u}) (\partial_l\diver\boldsymbol{v}) -2 (\partial_l\boldsymbol{u}\cdot\nabla\diver\boldsymbol{v}) -2 \partial_l(\nabla\boldsymbol{u}^\top:\nabla\boldsymbol{v})\big)(\partial_l\diver\boldsymbol{v})\,\mathrm{d}\boldsymbol{x}\\
&\quad- \frac{\gamma-\delta}{a_1\delta a}\int_{\mathbb{R}^n}\big(\frac{1-\delta}{\delta a}\phi^{-4\iota}\psi_l\nabla\phi+\phi^{-2\iota}\nabla(\partial_l\phi)\big)\cdot(\boldsymbol{v}-\boldsymbol{u})(\partial_l\diver\boldsymbol{v})\,\mathrm{d}\boldsymbol{x}\\
&\quad - \frac{\gamma-\delta}{a_1\delta a}\int_{\mathbb{R}^n} \phi^{-2\iota}\big(\nabla\phi\cdot(\partial_l\boldsymbol{v}-\partial_l\boldsymbol{u})+\partial_l\phi(\diver\boldsymbol{v}-\diver\boldsymbol{u})\big)(\partial_l\diver\boldsymbol{v})\,\mathrm{d}\boldsymbol{x}\\
&\quad + \frac{\gamma-1}{a_1\delta a}\int_{\mathbb{R}^n}  \phi(\partial_l\diver\boldsymbol{u}) (\partial_l\diver\boldsymbol{v})\,\mathrm{d}\boldsymbol{x}.
\end{aligned}    
\end{equation}
Summing  above over $l=1,\cdots\!,n$, we obtain from the spherical coordinate transformation that, for {\it a.e.} $t\in (0,T)$,
\begin{equation}\label{nabla2-v}
\frac{\mathrm{d}}{\mathrm{d}t}\Big|r^\frac{m}{2}\big(v_r+\frac{m}{r}v\big)_{r}\Big|_2^2+\frac{\gamma-1}{a_1\delta a}\Big|(r^m\phi)^{\frac{1}{2}-\iota}\big(v_r+\frac{m}{r}v\big)_{r}\Big|_2^2:=\sum_{i=15}^{19}G_i,
\end{equation}
where
\begin{align*}
G_{15}&=\int_0^\infty  r^m\big(\frac{m}{r}u- u_r\big)\big(v_r+\frac{m}{r}v\big)_{r}^2\,\mathrm{d}r,\\
G_{16}&=-2\int_0^\infty r^m\Big(u_rv_r+\frac{m}{r^2}uv\Big)_r\big(v_r+\frac{m}{r}v\big)_{r}\,\mathrm{d}r,\\
G_{17}&= -\frac{\gamma-\delta}{a_1\delta a}\int_0^\infty r^m\big(\frac{1-\delta}{\delta a}\phi^{-4\iota}\psi \phi_r+\phi^{-2\iota}\phi_{rr}\big)(v-u)\big(v_r+\frac{m}{r}v\big)_{r}\,\mathrm{d}r,\\
G_{18}&=-\frac{\gamma-\delta}{a_1\delta a}\int_0^\infty r^m \phi^{-2\iota}\phi_{r}\Big(\big(2v_r+\frac{m}{r}v\big)-\big(2u_r+\frac{m}{r}u\big)\Big)\big(v_r+\frac{m}{r}v\big)_{r}\,\mathrm{d}r,\\
G_{19}&=\frac{\gamma-1}{a_1\delta a}\int_0^\infty  r^m\phi^{1-2\iota} \big(u_r+\frac{m}{r}u\big)_{r}\big(v_r+\frac{m}{r}v\big)_{r}\,\mathrm{d}r.
\end{align*}

For $G_{15}$--$G_{19}$, it follows from \eqref{vr-infty}, Lemmas \ref{important2}, \ref{l4.4}, \ref{lemma66}--\ref{l4.9}, \ref{l4.10-ell}, \ref{ale1}, and \ref{hardy}, and the H\"older and Young inequalities that
\begin{align*}\label{G16-20}
&\begin{aligned}
G_{15}&\leq C_0|\mathrm{D}_r u|_\infty|r^\frac{m}{2}\mathrm{D}_r^2 v|_2^2 \leq C(T)|r^\frac{m}{2}\mathrm{D}_r^2 v|_2^2,     
\end{aligned}\\
&\begin{aligned}
G_{16}&\leq C_0\big(\big|\chi_1^\flat r^\frac{m}{2}|\mathrm{D}_r^2 u| |\mathrm{D}_r v|\big|_2+\big|\chi_1^\sharp r^\frac{m}{2}|\mathrm{D}_r^2 u| |\mathrm{D}_r v|\big|_2+|\mathrm{D}_r u|_\infty|r^\frac{m}{2}\mathrm{D}_r^2 v|_2\big)|r^\frac{m}{2}\mathrm{D}_r^2 v|_2\\
&\leq C_0\big( |r^\frac{m}{2} \mathrm{D}_r^3 u|_2 |\chi_1^\flat r\mathrm{D}_r v|_\infty+ |r^\frac{m}{2}\mathrm{D}_r^2 u|_2|\chi_1^\sharp \mathrm{D}_r v|_\infty+|\mathrm{D}_r u|_\infty|r^\frac{m}{2}\mathrm{D}_r^2 v|_2\big)|r^\frac{m}{2}\mathrm{D}_r^2 v|_2\\
&\leq C(T)\big(|r^\frac{m}{2}\mathrm{D}_r^2 v|_2^2 +1\big),
\end{aligned}\\
&\begin{aligned}
G_{17}&\leq C_0\big(|\phi|_\infty^{-4\iota}|\psi|_\infty|r^\frac{m}{2}\phi_{r}|_2 +|\phi|_\infty^{-2\iota}|r^\frac{m}{2}\phi_{rr}|_2\big)|(u,v)|_\infty|r^\frac{m}{2}\mathrm{D}_r^2 v|_2\leq C(T)\big(|r^\frac{m}{2}\mathrm{D}_r^2 v|_2^2 +1\big),
\end{aligned}\\
&\begin{aligned}
G_{18}&\leq C_0|\phi|_\infty^{-2\iota}\big(|r^{\frac{m-2}{2}}\phi_r|_2 |(\chi_1^\flat rv_r,v)|_\infty+|r^{\frac{m}{2}}\phi_r|_2 |\chi_1^\sharp v_r|_\infty+|r^{\frac{m}{2}}\phi_r|_2|\mathrm{D}_r u|_\infty\big) |r^\frac{m}{2}\mathrm{D}_r^2 v|_2 \\
&\leq C(T) \big(|r^\frac{m}{2}\mathrm{D}_r^2 v|_2^2+1\big),
\end{aligned}\\
&\begin{aligned}
G_{19}&\leq C|\phi|_\infty^{1-2\iota}|r^\frac{m}{2}\mathrm{D}_r^2 u|_2|r^\frac{m}{2}\mathrm{D}_r^2 v|_2\leq C(T) \big(|r^\frac{m}{2}\mathrm{D}_r^2 v|_2^2+1\big).
\end{aligned}
\end{align*}

Thus, substituting the above into \eqref{nabla2-v} leads to
\begin{equation*}
\frac{\mathrm{d}}{\mathrm{d}t}\Big|r^\frac{m}{2}\big(v_r+\frac{m}{r}v\big)_r\Big|_2^2\leq C(T) \big(|r^\frac{m}{2}\mathrm{D}_r^2 v|_2^2+1\big),
\end{equation*}
which, along with Lemmas \ref{im-1} and \ref{lemma-initial}, and the Gr\"onwall inequality, yields
\begin{equation*}
\begin{aligned}
|r^\frac{m}{2}\mathrm{D}_r^2 v(t)|_2&\leq C(T)\big(|r^\frac{m}{2}\mathrm{D}_r^2 v_0|_2+1 \big)\leq C(T)\big(|r^\frac{m}{2}(\mathrm{D}_r^2u_0,\mathrm{D}_r^2\psi_0)|_2+1\big)\\
&\leq C(T)\big(\|(\nabla^2\boldsymbol{u}_0,\nabla^2\boldsymbol{\psi}_0)\|_{L^2}^2+1\big)\leq C(T).
\end{aligned}
\end{equation*}
Certainly, by \eqref{v-express-2} and Lemma \ref{lemma66}, we also have
\begin{equation}\label{high-psi}
|r^\frac{m}{2}\mathrm{D}_r^2 \psi(t)|_2\leq C_0\big(|r^\frac{m}{2}\mathrm{D}_r^2 v(t)|_2+|r^\frac{m}{2}\mathrm{D}_r^2 u(t)|_2\big)\leq C(T).
\end{equation}

\smallskip
\textbf{3.} Applying $r^\frac{m}{2}\mathrm{D}_r$ to $\eqref{e2.2}_3$ and taking the $L^2(I)$-norm of the resulting equality, we obtain from \eqref{tr}, \eqref{high-psi}, and Lemmas \ref{lemma66}--\ref{l4.7} and \ref{l4.10-ell} that
\begin{equation*}
|r^\frac{m}{2}\mathrm{D}_r \psi_t|_2\leq C_0\big(|r^\frac{m}{n^*}u|_{n^*}|r^\frac{m}{n}\mathrm{D}_r \psi|_n+|(u,\psi)|_\infty|r^\frac{m}{2}(\mathrm{D}_r^2u,\mathrm{D}_r^2 \psi)|_2 +|r^\frac{m}{2}\phi^{2\iota}\mathrm{D}_r^3u|_2 \big) \leq C(T).   
\end{equation*}

This completes the proof.
\end{proof}

\begin{lem}\label{Lemma6.12}
There exists a constant $C(T)>0$ such that, for all $q\in [2\iota,0]$,
\begin{equation*}
|r^\frac{m}{2}(\mathrm{D}_r^2 \phi_r,\mathrm{D}_r\phi_{tr})(t)|_2+\int_0^t\big(|\mathrm{D}_r^2 u|_\infty^2+|r^\frac{m}{2}\phi^q\mathrm{D}_r^4 u|_{2}^2\big) \,\mathrm{d}s\leq C(T)\qquad \text{for all $t\in [0,T]$}.
\end{equation*}
\end{lem}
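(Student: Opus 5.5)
The plan is to read off $\phi$'s third-order bounds from the $\psi$-bounds already in hand (Lemma \ref{lemma-psi-high}), and then get the fourth-order bound on $u$ from the elliptic structure of $\eqref{e2.2}_2$ after one more derivative.

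\textbf{Step 1: $\mathrm{D}_r^2\phi_r$.} By \eqref{tr} and \eqref{v-express-2} we have $\phi_r=C_0\phi^{1-2\iota}\psi$. Applying $\mathrm{D}_r^2$ and using the Leibniz rule gives three kinds of terms. The first, $\phi^{1-2\iota}\mathrm{D}_r^2\psi$, is bounded in weighted $L^2$ by Lemma \ref{lemma-psi-high}. The second, $\phi^{-2\iota}\phi_r\mathrm{D}_r\psi$, is controlled by
\[
|r^{\frac{m}{n^*}}\phi_r|_{n^*}\,|r^{\frac{m}{n}}\mathrm{D}_r\psi|_n ,
\]
interpolating through $L^2$ when $n=2$, as in \eqref{dier}, together with Lemmas \ref{l4.7} and \ref{l4.9}. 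The third collects terms with $\psi\,\mathrm{D}_r\phi_r$ and $\phi^{-1}\phi_r^2\psi$; these are bounded using $|\psi|_\infty$ and Lemma \ref{l4.9}. The upper bound on $\phi$ from Lemma \ref{important2} handles $\phi^{1-2\iota}$ and $\phi^{-2\iota}$, since $-2\iota>0$. The $1/r$ factors inside $\mathrm{D}_r^2$ are handled through Lemma \ref{im-1}, which lets us argue in $\mathbb{R}^n$ with $\nabla^3\phi$.

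\textbf{Step 2: $\mathrm{D}_r\phi_{tr}$.} Differentiate $\phi_r=C_0\phi^{1-2\iota}\psi$ in $t$ and then apply $\mathrm{D}_r$. This produces $\phi^{1-2\iota}\mathrm{D}_r\psi_t$, which is bounded by Lemma \ref{lemma-psi-high}. The remaining terms are products of $\phi_t,\phi_{tr},\psi_t,\mathrm{D}_r\psi,\phi_r$; each is bounded by pairing an $L^\infty$ factor with a weighted $L^2$ or $L^{n}$/$L^{n^*}$ factor. The needed bounds are $|(\log\phi)_t|_\infty\le C(T)$, from \eqref{logphi} and Lemma \ref{l4.10-ell}, together with Lemmas \ref{l4.7}--\ref{l4.9}.

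\textbf{Step 3: $\mathrm{D}_r^4u$ and $|\mathrm{D}_r^2u|_\infty$.} Apply $r^{\frac m2}\mathrm{D}_r^2$ to $\eqref{e2.2}_2$ and solve for $2a_1\delta a\,\phi^{2\iota}\mathrm{D}_r(u_r+\frac mr u)_{rr}$, exactly as in \eqref{uxxxl1} but one order higher. The resulting right-hand side contains:
\begin{itemize}
\item $r^{\frac m2}\mathrm{D}_r^2u_t$, which is in $L^2_t$ by Lemma \ref{l4.10-ell};
\item $\mathrm{D}_r^2(uu_r)$, bounded via $|\mathrm{D}_ru|_\infty$ and Lemma \ref{l4.10-ell};
\item $\mathrm{D}_r^2\phi_r$, bounded by Step 1;
\item commutators $\mathrm{D}_r^k\phi^{2\iota}\cdot\mathrm{D}_r^{4-k}u$ and $\mathrm{D}_r^2(\psi\mathrm{D}_ru)$.
\end{itemize}
The commutator terms involve $\mathrm{D}_r^2\psi$ against $\mathrm{D}_ru\in L^\infty$, $\mathrm{D}_r\psi\in L^n$ against $\mathrm{D}_r^2u\in L^{n^*}$, and $\psi\in L^\infty$ against $\mathrm{D}_r^3u$. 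For $n=2$, the $L^{n^*}=L^\infty$ term is handled by the weighted Gagliardo--Nirenberg inequality (Lemma \ref{GN-ineq}), giving $|\mathrm{D}_r^2u|_\infty\lesssim \|\nabla^2\boldsymbol u\|_{H^1}^{\theta}\|\nabla^4\boldsymbol u\|^{1-\theta}$, which is absorbed by Young's inequality. Lemma \ref{im-2} with $k=4$ then converts the bound on $\phi^{2\iota}\mathrm{D}_r^{2}(u_r+\frac mr u)_r$ into one on $r^{\frac m2}\phi^{2\iota}\mathrm{D}_r^4u$, at the price of $(1+|u|_\infty)|r^{\frac m2}\mathrm{D}_r^3u|_2$, which is bounded. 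The result is
\[
|r^{\frac m2}\phi^{2\iota}\mathrm{D}_r^4u|_2\le C(T)\bigl(|r^{\frac m2}\phi^{2\iota}\mathrm{D}_r^2u_t|_2+1\bigr).
\]
Squaring, integrating in time, and using Lemma \ref{l4.10-ell} gives the $L^2_t$ bound for $q=2\iota$. The range $q\in[2\iota,0]$ follows from Lemma \ref{important2}, since $\phi^{q-2\iota}$ is bounded. Finally, $|\mathrm{D}_r^2u|_\infty\le C\|\nabla^2\boldsymbol u\|_{L^\infty}\le C\|\nabla^2\boldsymbol u\|_{H^2}$ (via Lemmas \ref{Hk-Ck-vector} and \ref{lemma-initial}) gives the $L^2_tL^\infty$ bound.

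\textbf{Main obstacle.} The delicate point is the commutator terms in Step 3 involving the singular weight: for example $(\phi^{2\iota})_{rr}\,\mathrm{D}_r^2u$, where $(\phi^{2\iota})_{rr}\sim\psi_r$, is only in weighted $L^n$, and the $1/r$ terms of $\mathrm{D}_r^4$ near the origin need care. I would treat these by switching to the Cartesian picture. There, $\nabla\boldsymbol\psi\in L^n$, $\nabla^2\boldsymbol u\in L^{n^*}$ (or Gagliardo--Nirenberg for $n=2$), and $\nabla^2\boldsymbol\psi\in L^2$ against $\nabla\boldsymbol u\in L^\infty$ are all available, and the Hardy inequality (Lemma \ref{hardy}) controls the origin.
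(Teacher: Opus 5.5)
Your proposal is correct and follows the paper's proof essentially step by step. You obtain $\mathrm{D}_r^2\phi_r$ and $\mathrm{D}_r\phi_{tr}$ from $\phi_r=C\phi^{1-2\iota}\psi$ together with Lemma \ref{lemma-psi-high}, and the fourth-order bound from applying $r^{\frac m2}\mathrm{D}_r^2$ to $\eqref{e2.2}_2$, absorbing $|\mathrm{D}_r^2u|_\infty$ via Gagliardo--Nirenberg and Young, and converting with Lemma \ref{im-2}, which yields $|r^{\frac m2}\phi^{2\iota}\mathrm{D}_r^4u|_2\le C(T)(|r^{\frac m2}\mathrm{D}_r^2u_t|_2+1)$.

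The only (harmless) deviation is in the term $\phi^{-2\iota}\phi_r\psi_t$. Your route bounds it directly through $|\phi_r|_\infty\le C|\phi|_\infty^{1-2\iota}|\psi|_\infty$, whereas the paper uses a Hardy-type splitting near the origin. In Step 1, make sure to rewrite $\phi^{-1}\phi_r=C\phi^{-2\iota}\psi$, so that no negative power of $\phi$ survives at the far-field vacuum.
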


\begin{proof} We divide the proof into two steps.

\smallskip
\textbf{1.} First, for the $L^2(I)$-estimate of $r^\frac{m}{2}\mathrm{D}_r^2 \phi_r$, by Lemmas \ref{important2}, \ref{lemma66}--\ref{l4.9}, and \ref{lemma-psi-high}, we have
\begin{equation}\label{lem612-2}
\begin{aligned}
|r^\frac{m}{2}\mathrm{D}_r^2 \phi_r|_2&\leq C_0|r^\frac{m}{2}\mathrm{D}_r^2(\phi^{1-2\iota}\psi)|_2\leq  C_0\big|r^\frac{m}{2}(\phi^{-4\iota}\phi_r\psi^2,\phi^{1-4\iota} \psi\mathrm{D}_r\psi,\phi^{1-2\iota}\mathrm{D}_r^2\psi)\big|_2,\\
&\leq C_0|\phi|_\infty^{-4\iota}\big(|r^\frac{m}{2}\phi_{r}|_2|\psi|_\infty +|r^\frac{m}{n^*} \phi_r|_{n^*} |\psi|_\infty|r^\frac{m}{n}\mathrm{D}_r\psi|_n\big)+C_0|\phi|_\infty^{1-2\iota}  |r^\frac{m}{2}\mathrm{D}_r^2\psi|_2 \\
&\leq C(T).
\end{aligned}
\end{equation}

Next, notice from \eqref{logphi} and Lemmas \ref{important2}, \ref{lemma66}--\ref{l4.9}, and \ref{lemma-psi-high} that
\begin{equation}\label{lem612-8}
\begin{aligned}
|r^\frac{m}{2}\mathrm{D}_r\phi_{tr} |_2&\leq C_0|r^\frac{m}{2}\mathrm{D}_r(\phi^{-2\iota}\phi_t\psi)|_2+C_0|r^\frac{m}{2}\mathrm{D}_r(\phi^{1-2\iota}\psi_t)|_2\\
&\leq C_0|r^\frac{m}{2}(\phi^{-4\iota}\phi_t \psi^2,\phi^{-2\iota}\phi_{tr}\psi,\phi^{1-2\iota}(\log\phi)_t\mathrm{D}_r\psi, \phi^{-2\iota}\phi_r\psi_t,\phi^{1-2\iota} \mathrm{D}_r\psi_t)|_2\\
&\leq C_0|\phi|_\infty^{-4\iota}|\psi|_\infty^2 |r^\frac{m}{2}\phi_t|_2+C_0|\phi|_\infty^{-2\iota}|(\phi,\psi)|_\infty |r^\frac{m}{2}(\phi_{tr},\mathrm{D}_r\psi_{t})|_2 \\
&\quad +C_0|\phi|_\infty^{-2\iota} |(\log\phi)_t|_\infty|r^\frac{m}{n^*}\phi|_{n^*} |r^\frac{m}{n}\mathrm{D}_r\psi|_n+C_0|\phi|_\infty^{-2\iota}|r^\frac{m}{2}\phi_r\psi_t|_2\\
&\leq C(T)\big(|r^\frac{m}{2}\phi_r\psi_t|_2+1\big).
\end{aligned}
\end{equation}
For the estimate of $|r^\frac{m}{2}\phi_r\psi_t|_2$ on the right-hand side of \eqref{lem612-8}, it follows from Lemmas \ref{lemma66}--\ref{l4.9}, \ref{ale1}, and \ref{hardy} that
\begin{equation}\label{lem612-9}
\begin{aligned}
|r^\frac{m}{2}\phi_r\psi_t|_2&\leq |\chi_1^\flat r^\frac{m}{2}\phi_r\psi_t|_2+|\chi_1^\sharp r^\frac{m}{2}\phi_r\psi_t|_2 \leq  |\chi_1^\flat r\phi_r|_\infty|r^\frac{m-2}{2}\psi_t|_2+|\chi_1^\sharp \phi_r|_\infty|r^\frac{m}{2}\psi_t|_2\\
&\leq  C(T)\big(|\chi_1^\flat r^\frac{3}{2}(\phi_r,\phi_{rr})|_2+|\chi_1^\sharp (\phi_r,\phi_{rr})|_2\big) \\
&\leq  C(T)\big(|\chi_1^\flat r^\frac{3-m}{2}|_\infty+ |\chi_1^\sharp r^{-\frac{m}{2}}|_\infty\big) |r^\frac{m}{2}(\phi_r,\phi_{rr})|_2\leq C(T),    
\end{aligned}
\end{equation}
Combining with \eqref{lem612-8}--\eqref{lem612-9} yields
\begin{equation}\label{lem612-10}
|r^\frac{m}{2}\mathrm{D}_r\phi_{tr}(t)|_2\leq C(T)\qquad \text{for all $t\in [0,T]$}.
\end{equation}

\smallskip
\textbf{2.} Applying $r^\frac{m}{2}\mathrm{D}_r^2$ to $\eqref{e2.2}_2$ and taking the $L^2(I)$-norm of the resulting equality, we first obtain 
from \eqref{lem612-2}, and Lemmas \ref{lemma66}--\ref{l4.7} and \ref{l4.10-ell}--\ref{lemma-psi-high} that
\begin{equation*}
\begin{aligned}
\Big|r^\frac{m}{2}\mathrm{D}_r^2\big(\phi^{2\iota}(u_r+\frac{m}{r}u)_{r}\big)\Big|_2&\leq C_0\big|r^\frac{m}{2}\big(\mathrm{D}_r^2u_{t},|\mathrm{D}_r u|| \mathrm{D}_r^2u|,u\mathrm{D}_r^3u,\mathrm{D}_r^2\phi_{r}\big)\big|_2\\
&\quad +C_0\big|r^\frac{m}{2}\big(|\mathrm{D}_r^2\psi||\mathrm{D}_r u|,|\mathrm{D}_r\psi||\mathrm{D}_r^2 u|, \psi \mathrm{D}_r^3 u\big)\big|_2\\
&\leq C_0|r^\frac{m}{2}(\mathrm{D}_r^2u_{t},\mathrm{D}_r^2\phi_r)|_2+C_0|(u,\psi)|_\infty |r^\frac{m}{2}\mathrm{D}_r^3u|_2\\
&\quad +C_0 |r^\frac{m}{n} \mathrm{D}_r\psi|_n|r^\frac{m}{n^*}\mathrm{D}_r^2 u|_{n^*} + C_0|r^\frac{m}{2}(\mathrm{D}_r^2 u,\mathrm{D}_r^2\psi)|_2|\mathrm{D}_r u|_\infty \\
&\leq C(T)\big(|r^\frac{m}{2}\mathrm{D}_r^2 u_t|_2+|\mathrm{D}_r^2 u|_\infty+1\big),
\end{aligned}
\end{equation*}
which also implies
\begin{equation}\label{lem612-4}
\begin{aligned}
\Big|r^\frac{m}{2} \phi^{2\iota}\mathrm{D}_r^2\big(u_r+\frac{m}{r}u\big)_{r} \Big|_2
&\leq C_0\Big|r^\frac{m}{2}\mathrm{D}_r^2\big(\phi^{2\iota}(u_r+\frac{m}{r}u)_{r}\big)\Big|_2
+C_0|\psi|_\infty|r^\frac{m}{2}\mathrm{D}_r^3 u|_2\\
&\quad +C_0|r^\frac{m}{n}\mathrm{D}_r\psi|_n|r^\frac{m}{n^*}\mathrm{D}_r^2 u|_{n^*}\\
&\leq C(T)\big(|r^\frac{m}{2}\mathrm{D}_r^2 u_t|_2+|\mathrm{D}_r^2 u|_\infty+1\big).
\end{aligned}
\end{equation}

For the $L^\infty(I)$-estimate of $\mathrm{D}_r^2 u$, we see Lemmas \ref{important2}, \ref{im-1}, \ref{lemma66}, \ref{GN-ineq} and \ref{lemma-initial} that
\begin{equation}\label{776'}
\begin{aligned}
|\mathrm{D}_r^2 u|_\infty&\leq C_0\|\nabla^2 \boldsymbol{u}\|_{L^\infty}\leq C_0\|\nabla^2 \boldsymbol{u}\|_{L^2}^\frac{4-n}{4}\|\nabla^4 \boldsymbol{u}\|_{L^2}^\frac{n}{4}\\
&\leq C_0 |r^\frac{m}{2}\mathrm{D}_r^2 u|_{2}^\frac{4-n}{4}|r^\frac{m}{2}\mathrm{D}_r^4 u|_{2}^\frac{n}{4}\leq C(T) \Big|r^\frac{m}{2} \phi^{2\iota}\mathrm{D}_r^2\big(u_r+\frac{m}{r}u\big)_{r} \Big|_2^\frac{n}{4}.
\end{aligned}
\end{equation}
Then \eqref{lem612-4}, along with \eqref{776'}, Lemmas \ref{im-2}, \ref{lemma66}, and \ref{l4.10-ell}, and the Young inequality, gives
\begin{equation}\label{4jir}
\begin{aligned}
|r^\frac{m}{2}\phi^{2\iota}\mathrm{D}_r^4 u|_{2}\leq C(T)\big(\big|r^\frac{m}{2} \phi^{2\iota}\mathrm{D}_r^2(u_r+\frac{m}{r}u)_{r} \big|_2+1\big)
\leq C(T)\big(|r^\frac{m}{2}\mathrm{D}_r^2 u_t|_2 +1\big).
\end{aligned}
\end{equation}

Finally, from \eqref{776'} and Lemmas \ref{important2} and \ref{l4.10-ell}, we obtain  the desired estimates for $u$.
\end{proof}

\subsection{Time-Weighted Estimates of the Velocity}
We now establish the time-weighted estimates for $u$.
\begin{lem}\label{Lemma6.13}  
There exists a constant $C(T)>0$ such that
\begin{equation*}
\begin{aligned}
&t|r^{\frac{m}{2}}u_{tt}(t)|^2_2+\int_{0}^{t} s |r^{\frac{m}{2}}\phi^\iota \mathrm{D}_r u_{tt}|^2_2 \,\mathrm{d}s\leq C(T) \qquad \text{for any $t\in [0,T]$}.
\end{aligned}
\end{equation*}
\end{lem}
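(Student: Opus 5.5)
We would differentiate the momentum equation $\eqref{e2.2}_2$ twice in time and test against $r^m u_{tt}$, weighted by $t$, exactly parallel to the proof of Lemma \ref{l4.8} but one time derivative higher. Applying $\partial_t$ to \eqref{743utt} gives an equation for $u_{ttt}$ whose principal part is $2a_1\delta a\,\phi^{2\iota}\big(u_{ttr}+\frac{m}{r}u_{tt}\big)_r$. The remainder consists of $(\phi^{2\iota})_t=2\iota(\log\phi)_t\phi^{2\iota}$ times $\mathrm{D}_r^2u_t$, terms with $\phi_{ttr}$, $\psi_t$, $\psi_{tt}$, and quadratic products such as $u_tu_{tr}$ and $u_{tt}u_r$. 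Multiplying by $r^m u_{tt}$ and integrating over $I$, with the boundary flux handled by Lemma \ref{calculus} as before (the needed integrability follows from \eqref{spd} and the $t^{1/2}$-regularity in Lemma \ref{rth1}), we aim for
\begin{equation*}
\frac{1}{2}\frac{\mathrm{d}}{\mathrm{d}t}|r^{\frac{m}{2}}u_{tt}|_2^2+a_1\delta a|r^{\frac{m}{2}}\phi^\iota\mathrm{D}_ru_{tt}|_2^2\leq C(T)\big(1+|r^{\frac{m}{2}}u_{tt}|_2^2+|r^{\frac m2}\phi^{2\iota}\mathrm{D}_r^2u_t|_2^2\big)\big(|r^{\frac{m}{2}}u_{tt}|_2^2+1\big).
\end{equation*}
The coercive term uses Lemma \ref{im-2} with $q=\iota$. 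Multiplying by $t$ and integrating, the extra term $\int_0^t|r^{\frac m2}u_{tt}|_2^2\,\mathrm{d}s$ is bounded by Lemma \ref{l4.10}. The weighted Gr\"onwall inequality, together with $\int_0^t(|r^{\frac m2}u_{tt}|_2^2+|r^{\frac m2}\phi^{2\iota}\mathrm{D}_r^2u_t|_2^2)\,\mathrm{d}s\leq C(T)$ from Lemmas \ref{l4.10}--\ref{l4.10-ell}, then yields the claim. No compatibility condition is required because of the factor $t$.

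For the individual terms, we would use the uniform bounds already available: $|(u,v,\psi,\mathrm{D}_ru)|_\infty\leq C(T)$ (Lemmas \ref{l4.4}, \ref{lemma66}, \ref{l4.7}, \ref{l4.10-ell}), $|(\log\phi)_t|_\infty\leq C(T)$ from \eqref{logphi}, $|r^{\frac m2}\phi^\iota\mathrm{D}_ru_t|_2\leq C(T)$ (Lemma \ref{l4.10}), and $|r^{\frac m2}(\psi_t,\mathrm{D}_r\psi_t,\phi_{tr},\mathrm{D}_r\phi_{tr})|_2\leq C(T)$ (Lemmas \ref{l4.9}, \ref{lemma-psi-high}, \ref{Lemma6.12}).

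The second-order-in-time density terms, $\phi_{ttr}$ and $\psi_{tt}$, will be integrated by parts onto $u_{tt}$, so that only $r^{\frac m2}\phi_{tt}$ and $r^{\frac m2}\psi_{tt}$ in $L^2$ are needed. We obtain these by differentiating $\eqref{e2.2}_1$ and $\eqref{e2.2}_3$ in $t$:
\begin{equation*}
|r^{\frac m2}\phi_{tt}|_2\leq C(T)\big(1+|r^{\frac m2}\mathrm{D}_ru_t|_2\big),\qquad |r^{\frac m2}\psi_{tt}|_2\leq C(T)\big(1+|r^{\frac m2}\phi^{2\iota}\mathrm{D}_r^2u_t|_2\big).
\end{equation*}
Both right-hand sides are square-integrable in time, and $\phi^{-\iota}$ is bounded since $\iota<0$ and $\phi$ is bounded. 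The resulting products are then absorbed into the dissipation $|r^{\frac m2}\phi^\iota\mathrm{D}_ru_{tt}|_2^2$.

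The main obstacle is the quasilinear commutator term
\begin{equation*}
\int_0^\infty r^m(\phi^{2\iota})_t\big(u_{tr}+\tfrac mr u_t\big)_r u_{tt}\,\mathrm{d}r,
\end{equation*}
together with the analogous term in which $(\phi^{2\iota})_{tt}$ multiplies $\mathrm{D}_r^2u$. For the first, we would integrate by parts in $r$, moving one derivative onto $u_{tt}$ and onto $(\phi^{2\iota})_t$. This produces $\phi^\iota\mathrm{D}_ru_{tt}\cdot\phi^{\iota}\mathrm{D}_ru_t$, which is harmless, and a term with $(\phi^{2\iota})_{tr}$. By \eqref{tr}, the latter is expressed through $\psi_t$ and $\psi$, so it is controlled by $|r^{\frac m2}\mathrm{D}_r\psi_t|_2$ and the $L^\infty$ bound on $\psi$, with Hardy's inequality (Lemma \ref{hardy}) applied near the origin to handle the factor $u_{tt}/r$. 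The term containing $(\phi^{2\iota})_{tt}\sim\phi^{2\iota}\big((\log\phi)_{tt}+(\log\phi)_t^2\big)$ is handled using $\eqref{eq:logphi}$ differentiated in time. That expression involves $\mathrm{D}_ru_t$ and $\psi_t$, both paired against $|r^{\frac m2}\phi^{2\iota}\mathrm{D}_r^2u|_2$ bounded and $u_{tt}$. If a residual $|\mathrm{D}_ru_t|_\infty$ appears, we would bound it by $|r^{\frac m2}\mathrm{D}_r^2u_t|_2$ via the weighted embedding used in \eqref{6.4-d}, which lies in $L^2(0,T)$ by Lemma \ref{l4.10-ell}.
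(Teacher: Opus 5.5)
Your overall scheme is the paper's: differentiate $\eqref{e2.2}_2$ twice in $t$, test with $r^mu_{tt}$, get coercivity from Lemma \ref{im-2}, multiply by $t$, and close with Gr\"onwall using $\int_0^T\big(|r^{\frac m2}u_{tt}|_2^2+|r^{\frac m2}\phi^{2\iota}\mathrm{D}_r^2u_t|_2^2\big)\,\mathrm{d}s\le C(T)$ (Lemmas \ref{l4.10}--\ref{l4.10-ell}). The term you single out as the main obstacle is not one. We have $(\phi^{2\iota})_t=2\iota\phi^{2\iota}(\log\phi)_t$, and $|(\log\phi)_t|_\infty\le C(T)$ by \eqref{logphi} and Lemma \ref{l4.10-ell}. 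So the paper bounds that term directly by $C(T)|r^{\frac m2}\phi^{2\iota}\mathrm{D}_r^2u_t|_2|r^{\frac m2}u_{tt}|_2$, and likewise bounds $\phi_{ttr}$ directly in \eqref{phittr}. Your integrations by parts are admissible but unnecessary. Two steps, however, fail as you state them.

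\textbf{Justifying the energy identity.} The class \eqref{spd} contains no $u_{ttt}$, so $\frac12\frac{\mathrm{d}}{\mathrm{d}t}|r^{\frac m2}u_{tt}|_2^2=\int_0^\infty r^mu_{ttt}u_{tt}\,\mathrm{d}r$ is not just a matter of boundary fluxes handled by Lemma \ref{calculus}.
\begin{itemize}
\item The paper writes the once-differentiated equation weakly against $\boldsymbol{\varphi}=\varphi\frac{\boldsymbol{x}}{r}\in H^1(\mathbb{R}^n)$ and differentiates in $t$. It shows $\big|\frac{\mathrm{d}}{\mathrm{d}t}\int_0^\infty r^mu_{tt}\varphi\,\mathrm{d}r\big|\le F(t)|r^{\frac m2}(\varphi,\mathrm{D}_r\varphi)|_2$ with $F\in L^2(\tau,T)$.
\item Hence $\boldsymbol{u}_{ttt}\in L^2([\tau,T];H^{-1}(\mathbb{R}^n))$, and the identity follows from Lemma \ref{triple}.
\item This holds only for $\tau>0$, so you cannot simply integrate the $t$-weighted inequality from $0$. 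One integrates over $[\tau,t]$ and takes $\tau=\tau_k\to0$ with $\tau_k|r^{\frac m2}u_{tt}(\tau_k)|_2^2\to0$. Such $\tau_k$ exist by Lemma \ref{bjr}, since $r^{\frac m2}u_{tt}\in L^2([0,T];L^2(I))$ by Lemma \ref{l4.10}.
\end{itemize}

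\textbf{Bounding $|\mathrm{D}_ru_t|_\infty$.} Bounding it by $|r^{\frac m2}\mathrm{D}_r^2u_t|_2$ is false near the origin for both $m=1,2$. For example, $u_t=r|\log r|^{1/4}$ near $r=0$ has $|r^{\frac m2}\mathrm{D}_r^2u_t|_{L^2(0,1/2)}<\infty$ but unbounded $u_{tr}$. Also, \eqref{6.4-d} controls only $r^{\frac m2}\mathrm{D}_ru$. This factor appears in the $(\phi^{2\iota})_{tt}\,\mathrm{D}_r^2u\,u_{tt}$ term, in the $\psi_t\,\mathrm{D}_ru_t\,u_{tt}$ term, and in the $\psi_t$-term your integration by parts creates.

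What is true is the weighted bound \eqref{lem613-1}, from Lemma \ref{hardy}: $|\chi_1^\flat r\mathrm{D}_ru_t|_\infty+|\chi_1^\sharp\mathrm{D}_ru_t|_\infty\le C(T)\big(|r^{\frac m2}\phi^{2\iota}\mathrm{D}_r^2u_t|_2+1\big)$. The lost power of $r$ must then go on the other factor:
\begin{itemize}
\item $|r^{\frac{m-2}{2}}\phi^{2\iota}\mathrm{D}_r^2u|_2\le|r^{\frac m2}\phi^{2\iota}\mathrm{D}_r^3u|_2\le C(T)$, by Lemma \ref{l4.10-ell}, since $\frac{\mathrm{D}_r^2u}{r}$ is a component of $\mathrm{D}_r^3u$;
\item $|r^{\frac{m-2}{2}}\psi_t|_2\le|r^{\frac m2}\mathrm{D}_r\psi_t|_2\le C(T)$, by Lemma \ref{lemma-psi-high}.
\end{itemize}
The same estimate \eqref{lem613-1} also gives the bound on $|u_t|_\infty$ needed for the $u_tu_{tr}u_{tt}$ part of $(uu_r)_{tt}$, which you did not address. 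With these two repairs your inequality closes.
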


\begin{proof} We divide the proof into three steps.

\smallskip
\textbf{1.} 
First, it follows from Lemmas \ref{important2}, \ref{l4.8}, \ref{l4.10-ell}, \ref{ale1}, and \ref{hardy} that
\begin{equation}\label{lem613-1}
\begin{aligned}
|u_t|_\infty&\leq C_0|(u_t,u_{tr})|_2\leq C_0|\chi_1^\flat (u_t,u_{tr})|_2+C_0|\chi_1^\sharp (u_t,u_{tr})|_2\\
&\leq C_0|\chi_1^\flat r(u_t,u_{tr},u_{trr})|_2+C_0|\chi_1^\sharp (u_t,u_{tr})|_2\\
&\leq C_0  |r^\frac{m}{2}(u_t,u_{tr},u_{trr})|_2 \leq C(T) \big(|r^\frac{m}{2}\phi^{2\iota}u_{trr}|_2+1\big),\\
|(\chi_1^\flat r\mathrm{D}_r u_t,\chi_1^\sharp \mathrm{D}_r u_t)|_\infty &\leq C_0|\chi_1^\flat r^\frac{3}{2}(\mathrm{D}_r u_t,\mathrm{D}_r^2 u_t)|_2+ C_0|\chi_1^\sharp (\mathrm{D}_r u_t,\mathrm{D}_r^2 u_t)|_2\\
&\leq C_0 |r^\frac{m}{2}(\mathrm{D}_r u_t,\mathrm{D}_r^2 u_t)|_2 \leq C(T) \big(|r^\frac{m}{2}\phi^{2\iota}\mathrm{D}_r^2 u_t|_2+1\big).
\end{aligned}    
\end{equation}

Next, applying $r^\frac{m}{2}\partial_t$ to $\eqref{e2.2}_3$ and taking the $L^2(I)$-norm of both sides of the resulting equality, we obtain from \eqref{logphi}, \eqref{lem613-1}, and
Lemmas \ref{lemma66}--\ref{l4.7} and \ref{l4.10}--\ref{lemma-psi-high} that
\begin{equation}\label{lem613-1'}
\begin{aligned}
|r^\frac{m}{2}\psi_{tt}|_2&\leq C_0\big|r^\frac{m}{2}\big(u_t\psi_r,u\psi_{tr},\psi_t \mathrm{D}_r u,\psi \mathrm{D}_r u_t,\phi^{2\iota}(\log\phi)_t\mathrm{D}_r^2 u,\phi^{2\iota}\mathrm{D}_r^2 u_t\big)\big|_2\\
&\leq C_0\big(|(u,\psi)|_\infty |r^\frac{m}{2}(\mathrm{D}_r u,\psi_{tr})|_2+|\mathrm{D}_r u|_\infty|r^\frac{m}{2}\psi_t|_2+|r^\frac{m}{n^*}u_t|_{n^*}|r^\frac{m}{n}\psi_r|_n\big)\\
&\quad +C_0|(\log\phi)_t|_\infty|r^\frac{m}{2}\phi^{2\iota}\mathrm{D}_r^2 u|_2+C_0|r^\frac{m}{2}\phi^{2\iota}\mathrm{D}_r^2 u_t|_2\\
&\leq C(T) \big(|r^\frac{m}{2}\phi^{2\iota}\mathrm{D}_r^2 u_t|_2+1\big).
\end{aligned}
\end{equation}

Finally, based on $\eqref{e2.2}_1$, it follows from Lemmas \ref{important2}, \ref{l4.8}--\ref{l4.9}, 
and \ref{l4.10-ell} that
\begin{equation}\label{cal-phitt}
\begin{aligned}
|r^\frac{m}{2}\phi_{tt}|_2&\leq |r^\frac{m}{2}(u\phi_r)_t|_2+C_0|r^\frac{m}{2}(\phi\mathrm{D}_r u)_t|_2\\
&\leq |u|_\infty |r^\frac{m}{2}\phi_{tr}|_2+|\phi|_\infty|\psi|_\infty|r^\frac{m}{2}u_t|_2\\
&\quad +C_0|r^\frac{m}{2}\phi_t|_2 |\mathrm{D}_r u|_\infty+C_0|\phi|_\infty^{1-\iota} |r^\frac{m}{2}\phi^\iota\mathrm{D}_r u_t|_2\leq C(T),
\end{aligned}    
\end{equation}
which, along with the chain rule, \eqref{tr}, \eqref{logphi}, \eqref{lem613-1'}, and Lemma \ref{lemma66}, leads to
\begin{equation}\label{phittr}
\begin{aligned}
|r^\frac{m}{2}\phi_{ttr}|_2&\leq C_0\big|r^\frac{m}{2}(\phi^{-2\iota}\phi_{tt}\psi,\phi^{-2\iota}(\log\phi)_{t}\phi_t\psi,\phi^{1-2\iota}(\log\phi)_t\psi_t,\phi^{1-2\iota}\psi_{tt})\big|_2\\
&\leq C_0\big(|\phi|_\infty^{-2\iota}|(\phi,\psi)|_\infty \big|r^\frac{m}{2}(\phi_{tt},\psi_{tt})\big|_2+|\phi|_\infty^{-2\iota}|(\log\phi)_t|_\infty|\psi|_\infty |r^\frac{m}{2}\phi_{t}|_2\big)\\
&\quad + C_0|\phi|_\infty^{1-2\iota}|(\log\phi)_t|_\infty |r^\frac{m}{2}\psi_t|_2\leq C(T) \big(|r^\frac{m}{2}\phi^{2\iota}\mathrm{D}_r^2 u_t|_2+1\big).
\end{aligned}    
\end{equation}

\vspace{1pt}
\textbf{2.} We now proceed to prove Lemma \ref{Lemma6.13}. 
Formally, applying $r^mu_{tt}\partial_{t}^2$ to both sides of $\eqref{e2.2}_2$ and integrating the resulting equality over $I$ yield
\begin{equation}\label{lem613-2}
\begin{aligned}
&\,\frac{1}{2}\frac{\mathrm{d}}{\mathrm{d}t}|r^\frac{m}{2}u_{tt}|_2^2+2a_1\delta a\Big|r^\frac{m}{2}\phi^\iota\big(u_{ttr}+\frac{m}{r}u_{tt}\big)\Big|_2^2 \\
&=4a_1\delta a\int_0^\infty r^m (\phi^{2\iota})_t\big(u_{tr}+\frac{m}{r}u_t\big)_r u_{tt}\,\mathrm{d}r +2a_1\delta a\int_0^\infty r^m(\phi^{2\iota})_{tt}\big(u_{r}+\frac{m}{r}u\big)_r u_{tt}\,\mathrm{d}r\\
&\quad +2a_1\int_0^\infty r^m \Big(\psi_{tt}\big(\delta u_r+m(\delta-1)\frac{u}{r}\big) +2\psi_t\big(\delta u_{tr}+m(\delta-1)\frac{u_t}{r}\big)+\psi u_{ttr}\Big) u_{tt}\,\mathrm{d}r\\
&\quad -\int_0^\infty r^m\big((uu_r)_{tt}+\phi_{ttr}\big)u_{tt}\,\mathrm{d}r:=\sum_{i=20}^{23}G_i
\end{aligned}
\end{equation}
for {\it a.e.} $t\in (\tau,T)$ and $\tau\in (0,T)$. Here, we temporarily assume that energy equality \eqref{lem613-2} holds, and the specific proof will be given in Step 3 below.

Then, for $G_{20}$, it follows from \eqref{logphi}, \eqref{lem613-1}, 
Lemmas \ref{important2}, \ref{im-1}, \ref{l4.7}, and \ref{lemma-psi-high}, and the H\"older and Young inequalities that
\begin{equation} 
\begin{aligned}
G_{20}&\leq C_0|(\log\phi)_t|_\infty|r^\frac{m}{2}\phi^{2\iota}\mathrm{D}_r^2 u_t|_2|r^\frac{m}{2}u_{tt}|_2+C_0|\chi_1^\flat r\mathrm{D}_r u_t|_\infty |r^\frac{m-2}{2}\psi_t|_2|r^\frac{m}{2}u_{tt}|_2\\
&\quad +C_0|\chi_1^\sharp \mathrm{D}_r u_t|_\infty |r^\frac{m}{2}\psi_t|_2|r^\frac{m}{2}u_{tt}|_2+C_0|\psi|_\infty |r^\frac{m}{2}u_{ttr}|_2|r^\frac{m}{2}u_{tt}|_2\\
&\leq C(T) \big(|r^\frac{m}{2}\phi^{2\iota}\mathrm{D}_r^2 u_t|_2+|r^\frac{m}{2}u_{ttr}|_2+1\big)|r^\frac{m}{2}u_{tt}|_2\\
&\leq C(T)\big(|r^\frac{m}{2}u_{tt}|_2^2+|r^\frac{m}{2}\phi^{2\iota}\mathrm{D}_r^2 u_t|_2^2+1\big)+\frac{a_1}{8} \Big|r^\frac{m}{2}\phi^\iota\big(u_{ttr}+\frac{m}{r}u_{tt}\big)\Big|_2^2.
\end{aligned}
\end{equation}

To estimate $G_{21}$, we first see from \eqref{tr} and \eqref{phi2l} that
\begin{equation*} 
(\phi^{2\iota})_{tt}=(1-\delta)\Big(\frac{1}{\delta a} (u_t \psi+u\psi_t)+ 2\iota \phi^{2\iota}(\log\phi)_t\big(u_{r}+\frac{m}{r}u\big)+ \phi^{2\iota}\big(u_{tr}+\frac{m}{r}u_t\big)\Big).
\end{equation*}
Then it follows from the above, \eqref{logphi}, \eqref{lem613-1}, 
Lemmas \ref{lemma66}--\ref{l4.7} and \ref{l4.10-ell}--\ref{lemma-psi-high}, and the H\"older and Young inequalities that
\begin{equation} 
\begin{aligned}
G_{21} &\leq C_0|(u,\psi)|_\infty|\mathrm{D}_r^2 u|_\infty|r^\frac{m}{2}(u_t,\psi_t)|_2|r^\frac{m}{2}u_{tt}|_2\\
&\quad +C_0|(\log\phi)_t|_\infty|\mathrm{D}_r u|_\infty |r^\frac{m}{2}\phi^{2\iota}\mathrm{D}_r^2 u|_2|r^\frac{m}{2}u_{tt}|_2\\
&\quad +C_0|\chi_1^\flat r\mathrm{D}_r u_t|_\infty |r^\frac{m-2}{2}\phi^{2\iota}\mathrm{D}_r^2 u|_2|r^\frac{m}{2}u_{tt}|_2 +C_0|\chi_1^\sharp \mathrm{D}_r u_t|_\infty |r^\frac{m}{2}\phi^{2\iota}\mathrm{D}_r^2 u|_2|r^\frac{m}{2}u_{tt}|_2 \\
&\leq C(T) \big(|\mathrm{D}_r^2 u|_\infty+|r^\frac{m}{2}\phi^{2\iota}\mathrm{D}_r^2 u_t|_2 +1\big)|r^\frac{m}{2}u_{tt}|_2\\
&\leq C(T)\big(|r^\frac{m}{2}u_{tt}|_2^2+|r^\frac{m}{2}\phi^{2\iota}\mathrm{D}_r^2 u_t|_2^2+|\mathrm{D}_r^2 u|_\infty^2+1\big).
\end{aligned}
\end{equation}

For $G_{22}$--$G_{23}$, it follows from $\eqref{e2.2}_3$, \eqref{lem613-1}--\eqref{lem613-1'}, \eqref{phittr}, Lemmas \ref{important2}, \ref{im-1}, \ref{lemma66}--\ref{l4.7}, and \ref{l4.10-ell}--\ref{lemma-psi-high}, and the H\"older and Young inequalities that
\begin{align}
&\begin{aligned}
G_{22}
&\leq C_0|\mathrm{D}_r u|_\infty|r^\frac{m}{2}\psi_{tt}|_2|r^\frac{m}{2}u_{tt}|_2+C_0|\chi_1^\flat r\mathrm{D}_r u_t|_\infty |r^\frac{m-2}{2}\psi_t|_2|r^\frac{m}{2}u_{tt}|_2\\
&\quad +C_0|\chi_1^\sharp \mathrm{D}_r u_t|_\infty |r^\frac{m}{2}\psi_t|_2|r^\frac{m}{2}u_{tt}|_2+C_0|\psi|_\infty |r^\frac{m}{2}u_{ttr}|_2|r^\frac{m}{2}u_{tt}|_2\\
&\leq C(T) \big(|r^\frac{m}{2}\phi^{2\iota}\mathrm{D}_r^2 u_t|_2+|r^\frac{m}{2}u_{ttr}|_2+1\big)|r^\frac{m}{2}u_{tt}|_2\\
&\leq C(T)\big(|r^\frac{m}{2}u_{tt}|_2^2+|r^\frac{m}{2}\phi^{2\iota}\mathrm{D}_r^2 u_t|_2^2+1\big)+\frac{a_1}{8} \Big|r^\frac{m}{2}\phi^\iota\big(u_{ttr}+\frac{m}{r}u_{tt}\big)\Big|_2^2,
\end{aligned}\\
&\begin{aligned}
G_{23}&\leq \big(|u_r|_\infty |r^\frac{m}{2}u_{tt}|_2+2|u_t|_\infty |r^\frac{m}{2}u_{tr}|_2 + |u|_\infty |r^\frac{m}{2} u_{ttr}|_2 +|r^\frac{m}{2}\phi_{ttr}|_2\big)|r^\frac{m}{2}u_{tt}|_2\\
&\leq C(T)\big(|r^\frac{m}{2}u_{tt}|_2^2+|r^\frac{m}{2}\phi^{2\iota}\mathrm{D}_r^2 u_t|_2^2+1\big)+\frac{a_1}{8}\Big|r^\frac{m}{2}\phi^\iota\big(u_{ttr}+\frac{m}{r}u_{tt}\big)\Big|_2^2.
\end{aligned}\label{G23}
\end{align}

Thus, collecting \eqref{lem613-2}--\eqref{G23}, along with Lemma \ref{im-2}, gives
\begin{equation*} 
\begin{aligned}
\frac{\mathrm{d}}{\mathrm{d}t}|r^\frac{m}{2}u_{tt}|_2^2+a_1\delta a |r^\frac{m}{2}\phi^\iota\mathrm{D}_r u_{tt}|_2^2&\leq C(T)\big(|r^\frac{m}{2}u_{tt}|_2^2+|r^\frac{m}{2}\phi^{2\iota}\mathrm{D}_r^2 u_t|_2^2+|\mathrm{D}_r^2 u|_\infty^2+1\big).
\end{aligned}
\end{equation*}
Multiplying the above by $t$ and integrating the resulting inequality over $[\tau, t]$ for $\tau\in(0,t)$, along with Lemmas \ref{im-2}, \ref{l4.10-ell}, and \ref{Lemma6.12}, imply 
\begin{equation}\label{etrq2'}
t|r^{\frac{m}{2}}u_{tt}(t)|_2^2+ \int_\tau^t s |r^\frac{m}{2}\phi^\iota \mathrm{D}_r u_{tt}|_2^2\,\mathrm{d}s\leq C(T)\big(\tau|r^{\frac{m}{2}}u_{tt}(\tau)|_2^2+1\big).
\end{equation}

Next, thanks to Lemma \ref{l4.10} and $r^{\frac{m}{2}}u_{tt}\in L^2([0,T];L^2(I))$, it follows from  Lemma \ref{bjr} that there exists a sequence $\{\tau_k\}_{k=1}^\infty$ such that $\tau_k\to 0$ and $\tau_k|r^\frac{m}{2}u_{tt}(\tau_k)|_2^2\to 0$ as $k\to \infty$. Choosing $\tau=\tau_k\to 0$  in  \eqref{etrq2'} yields 
\begin{equation}\label{qunjin2}
t|r^{\frac{m}{2}}u_{tt}(t)|_2^2+ \int_0^t s |r^\frac{m}{2}\phi^\iota \mathrm{D}_r u_{tt}|_2^2\,\mathrm{d}s\leq C(T)\qquad\text{for all $t\in [0,T]$}.
\end{equation}

\textbf{3.} In the final step, we give a rigorous proof of the energy equality \eqref{lem613-2}. Suppose that $\boldsymbol{\varphi}(\boldsymbol{x})=\varphi(r)\frac{\boldsymbol{x}}{r}$ is any given spherically symmetric vector function satisfying $r^\frac{m}{2}(\varphi,\mathrm{D}_r\varphi)\in L^2(I)$. By Lemma \ref{lemma-initial}, this is equivalent to $\boldsymbol{\varphi}\in H^1(\mathbb{R}^n)$. 
Then applying $r^m\varphi\partial_t$ to both sides of $\eqref{e2.2}_2$ and integrating over $I$ yield from \eqref{tr} and integration by parts that 
\begin{equation*}
\begin{aligned}
\int_0^\infty r^mu_{tt}\varphi\,\mathrm{d}r&=-2a_1\delta a\int_0^\infty r^m\phi^{2\iota}\big(u_{tr}+\frac{m}{r}u_t\big)\big(\varphi_r+\frac{m}{r}\varphi\big)\,\mathrm{d}r\\
&\quad  +2a_1\delta a\int_0^\infty r^m(\phi^{2\iota})_t\big(u_{r}+\frac{m}{r}u\big)_r\varphi\,\mathrm{d}r \\
&\quad + 2a_1\int_0^\infty r^m \Big(\psi_t \big(\delta u_r+m(\delta-1)\frac{u}{r}\big) +\psi  u_{tr}\Big) \varphi\,\mathrm{d}r\\
&\quad -\int_0^\infty r^m\big((uu_r)_t+\phi_{tr}\big) \varphi\,\mathrm{d}r  .
\end{aligned}    
\end{equation*}
Here, the process of integration by parts can be justified by following the similar discussions in \eqref{eq:B10pre}--\eqref{util2}. 

Next, differentiating the above with respect to $t$ leads to
\begin{equation*} 
\begin{aligned}
\frac{\mathrm{d}}{\mathrm{d}t}\int_0^\infty r^mu_{tt}\varphi\,\mathrm{d}r&=-2a_1\delta a\int_0^\infty r^m\phi^{2\iota}\big(u_{ttr}+\frac{m}{r}u_{tt}\big)\big(\varphi_r+\frac{m}{r}\varphi\big)\,\mathrm{d}r\\
&\quad -2a_1\delta a\int_0^\infty r^m(\phi^{2\iota})_t\big(u_{tr}+\frac{m}{r}u_t\big)\big(\varphi_r+\frac{m}{r}\varphi\big)\,\mathrm{d}r\\
&\quad +2a_1\delta a\int_0^\infty r^m\Big((\phi^{2\iota})_{tt}\big(u_{r}+\frac{m}{r}u\big)_r+(\phi^{2\iota})_t\big(u_{tr}+\frac{m}{r}u_t\big)_r\Big)\varphi\,\mathrm{d}r\\
&\quad + \int_0^\infty r^m \Big(2a_1\Big(\psi_t \big(\delta u_r+m(\delta-1)\frac{u}{r}\big) +\psi  u_{tr}\Big)_t-(uu_r)_{tt}-\phi_{ttr}\big)\Big) \varphi\,\mathrm{d}r.
\end{aligned}    
\end{equation*}
Based on the {\it a priori} assumption $r^\frac{m}{2}(u_{ttr},\frac{u_{tt}}{r},\phi_{ttr},\psi_{tt})\in L^2([\tau,T];L^2(I))$ for $\tau\in (0,T)$, 
and the similar calculations of $G_{20}$--$G_{23}$ in Step 2, we obtain 
\begin{equation*}
\frac{1}{\omega_n}\frac{\mathrm{d}}{\mathrm{d}t}\int_{\mathbb{R}^n} \boldsymbol{u}_{tt}\cdot\boldsymbol{\varphi}\,\mathrm{d}\boldsymbol{x}=\frac{\mathrm{d}}{\mathrm{d}t}\int_0^\infty r^mu_{tt}\varphi\,\mathrm{d}r\leq F(t)|r^\frac{m}{2}(\varphi,\mathrm{D}_r\varphi)|_2
\end{equation*}
for some positive function $F(t)\in L^2(\tau,T)$, where $\omega_n$ denotes the surface area of the $n$-sphere. Thus, it follows from Lemma 1.1 on \cite[page 250]{temam} and  Lemma \ref{lemma-initial}  that $\boldsymbol{u}_{ttt}\in L^2([\tau,T];H^{-1}(\mathbb{R}^n))$. Consequently, the energy equality follows  from the spherical coordinates transformation and the following identity, due to Lemma \ref{triple}:
\begin{equation*}
\frac{\mathrm{d}}{\mathrm{d}t}\int_0^\infty r^m|u_{tt}|^2\,\mathrm{d}r=\frac{1}{\omega_n}\frac{\mathrm{d}}{\mathrm{d}t}\int_{\mathbb{R}^n} |\boldsymbol{u}_{tt}|^2\,\mathrm{d}\boldsymbol{x}=\frac{2}{\omega_n}\langle \boldsymbol{u}_{ttt},\boldsymbol{u}_{tt}\rangle_{H^{-1}(\mathbb{R}^n)\times H^1(\mathbb{R}^n)}.
\end{equation*}
The proof of Lemma \ref{Lemma6.13} is completed.
\end{proof}

\begin{lem}\label{Lemma6.14}  
There exists a constant $C(T)>0$ such that, for any $q\in [2\iota,0]$,
\begin{equation*}
\sqrt{t}|r^{\frac{m}{2}}\phi^q(\mathrm{D}_r^2 u_t,\mathrm{D}_r^4 u)(t)|_2\leq C(T)\qquad\text{for all $t\in [0,T]$}.
\end{equation*}
\end{lem}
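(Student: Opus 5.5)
The estimate will be read off from elliptic-type inequalities already derived, now multiplied by $\sqrt{t}$ and combined with Lemma \ref{Lemma6.13}. Since $\phi$ is bounded above (Lemma \ref{important2}) and $q\geq 2\iota$, we have $|r^{\frac m2}\phi^q f|_2\le |\phi|_\infty^{q-2\iota}|r^{\frac m2}\phi^{2\iota}f|_2$. It therefore suffices to treat the case $q=2\iota$ for both $\mathrm{D}_r^2u_t$ and $\mathrm{D}_r^4u$.

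\textbf{Step 1: the term $\mathrm{D}_r^2u_t$.} Inequality \eqref{utt} in the proof of Lemma \ref{l4.10-ell} gives, for every $t\in[0,T]$,
\begin{equation*}
|r^\frac{m}{2}\phi^{2\iota}\mathrm{D}_r^2u_t|_2\le C(T)\big(|r^\frac{m}{2}u_{tt}|_2+1\big).
\end{equation*}
That derivation was pointwise in time: we applied $r^{\frac m2}\partial_t$ to $\eqref{e2.2}_2$ and used Lemma \ref{im-2} together with bounds that are uniform in $t$. Multiplying by $\sqrt t$ and invoking Lemma \ref{Lemma6.13}, which gives $\sqrt t|r^{\frac m2}u_{tt}|_2\le C(T)$, yields $\sqrt t|r^{\frac m2}\phi^{2\iota}\mathrm{D}_r^2u_t|_2\le C(T)$.

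\textbf{Step 2: the term $\mathrm{D}_r^4u$.} Inequality \eqref{4jir} in the proof of Lemma \ref{Lemma6.12} gives, for each $t$,
\begin{equation*}
|r^\frac{m}{2}\phi^{2\iota}\mathrm{D}_r^4u|_2\le C(T)\big(|r^\frac{m}{2}\mathrm{D}_r^2u_t|_2+1\big).
\end{equation*}
This was obtained from \eqref{lem612-4}, the Gagliardo--Nirenberg bound \eqref{776'} for $|\mathrm{D}_r^2u|_\infty$, and Young's inequality to absorb the fractional power $n/4<1$. Bounding $|r^{\frac m2}\mathrm{D}_r^2u_t|_2\le |\phi|_\infty^{-2\iota}|r^{\frac m2}\phi^{2\iota}\mathrm{D}_r^2u_t|_2$, multiplying by $\sqrt t\le \sqrt T$, and applying Step 1 gives the claim for $\mathrm{D}_r^4u$.

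\textbf{Main obstacle.} The delicate point is Step 2. We must check that the absorption argument behind \eqref{4jir} really holds at each fixed time and not merely after time integration. The constants in \eqref{lem612-4} use only the $L^\infty_t$ bounds from Lemmas \ref{lemma66}--\ref{lemma-psi-high} and \eqref{lem612-2}, so the absorption via \eqref{776'} works pointwise in $t$. I will recheck this carefully, including the lower-order terms handled through Lemma \ref{im-2}.
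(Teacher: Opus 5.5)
Your proposal is correct and follows the paper's proof essentially verbatim. The paper reduces to $q=2\iota$ using $q-2\iota\ge 0$ and the upper bound on $\phi$, combines \eqref{utt} with $\sqrt{t}\,|r^{\frac m2}u_{tt}|_2\le C(T)$ from Lemma \ref{Lemma6.13}, and then feeds the result into \eqref{4jir}. Your observation that the Young absorption behind \eqref{4jir} (via \eqref{776'} and \eqref{lem612-4}) holds pointwise in $t$ is exactly what the paper uses implicitly.
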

\begin{proof}
It follows from \eqref{utt} and Lemmas \ref{important2} and \ref{Lemma6.13} that 
\begin{equation*} 
\sqrt{t}|r^{\frac{m}{2}}\phi^q\mathrm{D}_r^2u_t(t)|_2\leq C(T)\sqrt{t}|r^{\frac{m}{2}}\phi^{2\iota}\mathrm{D}_r^2 u_t(t)|_2\leq C(T)\big(\sqrt{t}|r^\frac{m}{2}u_{tt}(t)|_2+1\big)\leq C(T).
\end{equation*}
Then \eqref{4jir}, together with the above, yields
\begin{equation*} 
\sqrt{t} |r^{\frac{m}{2}}\phi^q\mathrm{D}_r^4u(t)|_2\leq C(T)\sqrt{t} |r^{\frac{m}{2}}\phi^{2\iota}\mathrm{D}_r^4u(t)|_2\leq C(T)\big(\sqrt{t} |r^{\frac{m}{2}}\phi^q\mathrm{D}_r^2u_t(t)|_2+1\big)\leq C(T).
\end{equation*}
\end{proof}

\section{Global Well-Posedness of  Regular Solutions with Far-Field Vacuum}\label{se46}
Based on the local well-posedness stated in Theorem \ref{thm-loc} and the corresponding global uniform estimates obtained in \S \ref{section-upper-density}--\S \ref{section-global2}, we are now ready to give the proof for Theorem \ref{th1-high}. We divide the proof into three steps.

\smallskip
\textbf{1. Global well-posedness of regular solutions.} 
First, according to Theorem \ref{thm-loc}, there exists a 
regular solution $(\rho, \boldsymbol{u})(t,\boldsymbol{x})$ of the Cauchy problem \eqref{eq:1.1}--\eqref{kelaoxiusi} with \eqref{eqs:CauchyInit}--\eqref{e1.3} in $[0,T_*]\times \mathbb{R}^n$ for some $T_*>0$, which takes form \eqref{2.8}.

Second, let $\overline{T}_*>0$ be the life span of $(\rho, \boldsymbol{u})(t,\boldsymbol{x})$, 
and let $T$ be any fixed time satisfying $0<T<\overline{T}_*$.
Then collecting the uniform {\it a priori} bounds obtained in Lemmas \ref{far-p-infty}, \ref{important2}, and \ref{l4.5}--\ref{Lemma6.14}, together with \eqref{tr} and Lemma \ref{lemma-initial}, yields that, for any  $t\in [0,T]$,
\begin{equation}\label{global-unifrom33}
\begin{aligned}
\|\rho(t)\|_{L^1\cap L^\infty}+\|\nabla\rho^{\gamma-1}(t)\|_{H^2}+\|\nabla\rho^{\delta-1}(t)\|_{L^\infty\cap D^{1,n}\cap D^2}+\|\nabla\rho^\frac{\delta-1}{2}(t)\|_{L^{2n}} \leq C(T),\\[3pt]
\|\boldsymbol{u}(t)\|_{H^3}+\|\boldsymbol{u}_t(t)\|_{H^1}+\int_0^t \big\|(\nabla^4\boldsymbol{u},\nabla^2\boldsymbol{u}_t,\boldsymbol{u}_{tt})\big\|_{L^2}^2 \,\mathrm{d}s\leq C(T),\\ 
\big\|\rho^\frac{\delta-1}{2} (\nabla\boldsymbol{u},\nabla\boldsymbol{u}_t)(t)\big\|_{L^2}+\big\|\rho^{\delta-1} (\nabla^2\boldsymbol{u}, \nabla^3\boldsymbol{u})(t)\big\|_{L^2}+\int_0^t \big\|\rho^{\delta-1}\nabla^4\boldsymbol{u}\big\|_{L^2}^2 \,\mathrm{d}s\leq C(T),\\ 
\sqrt{t}\big\|(\nabla^4\boldsymbol{u},\nabla^2\boldsymbol{u}_t,\boldsymbol{u}_{tt})(t)\big\|_{L^2}+\int_0^t s\|\nabla\boldsymbol{u}_{tt}\|_{L^2}^2\,\mathrm{d}s\leq C(T).
\end{aligned}
\end{equation}

Clearly, $\overline{T}_*\ge T_*$.  Next, we show  that $\overline{T}_*=\infty$. Otherwise, if $\overline{T}_*<\infty$, 
according to  the uniform {\it a priori} estimates \eqref{global-unifrom33} and the weak convergence arguments, for any sequence $\{t_k\}_{k=1}^\infty$ satisfying $0<t_k<\overline{T}_*$ and $t_k\to \overline{T}_*$ as $k\to \infty$, there exist a subsequence (still denoted by) $\{t_{k}\}_{k=1}^\infty$ and limits $(\rho,\boldsymbol{u}, \bar{\boldsymbol{f}}, \bar{\boldsymbol{\psi}})(\overline{T}_*,\boldsymbol{x})$ satisfying 
\begin{align*}
&\rho(\overline{T}_*,\boldsymbol{x})\in L^p(\mathbb{R}^n) \ \ \text{for any $p\in (1,\infty]$},\qquad
\boldsymbol{u}(\overline{T}_*,\boldsymbol{x})\in H^3(\mathbb{R}^n),\\
&\bar{\boldsymbol{f}}(\overline{T}_*,\boldsymbol{x})\in H^2(\mathbb{R}^n),\qquad \bar{\boldsymbol{\psi}}(\overline{T}_*,\boldsymbol{x})\in L^\infty(\mathbb{R}^n)\cap D^{1,n}(\mathbb{R}^n)\cap D^{2}(\mathbb{R}^n), 
\end{align*}
and, as $k\to\infty$ and for $p\in (1,\infty)$,
\begin{equation}\label{f2}
\begin{aligned}
\rho(t_k,\boldsymbol{x})\to \rho(\overline{T}_*,\boldsymbol{x}) \qquad &\text{weakly\  \, in } L^p(\mathbb{R}^n),\\
\boldsymbol{u}(t_{k},\boldsymbol{x})\to  \boldsymbol{u}(\overline{T}_*,\boldsymbol{x}) \qquad &\text{weakly\  \,  in } H^3(\mathbb{R}^n),\\
(\rho,\nabla\rho^{\delta-1})(t_{k},\boldsymbol{x})\to  (\rho,\bar{\boldsymbol{\psi}})(\overline{T}_*,\boldsymbol{x}) \qquad &\text{weakly* in } L^\infty(\mathbb{R}^n),\\
\nabla\rho^{\gamma-1}(t_k,\boldsymbol{x})\to \bar{\boldsymbol{f}}(\overline{T}_*,\boldsymbol{x}) \qquad &\text{weakly  \, in } H^1(\mathbb{R}^n),\\ 
\nabla^2\rho^{\delta-1}(t_{k},\boldsymbol{x})\to  \nabla\bar{\boldsymbol{\psi}}(\overline{T}_*,\boldsymbol{x}) \qquad &\text{weakly  \, in } L^{n}(\mathbb{R}^n),\\
\nabla^3\rho^{\delta-1}(t_{k},\boldsymbol{x})\to  \nabla^2\bar{\boldsymbol{\psi}}(\overline{T}_*,\boldsymbol{x}) \qquad &\text{weakly  \, in } L^{2}(\mathbb{R}^n).
\end{aligned}
\end{equation}
Then we claim 
\begin{equation}\label{claim103}
(\bar{\boldsymbol{f}},\bar{\boldsymbol{\psi}})(\overline{T}_*,\boldsymbol{x})=(\nabla\rho^{\gamma-1},\nabla \rho^{\delta-1})(\overline{T}_*,\boldsymbol{x})
\qquad\text{for {\it a.e.} $\boldsymbol{x}\in \mathbb{R}^n$}.
\end{equation}
 For simplicity, we prove that $\bar{\boldsymbol{f}}(\overline{T}_*,\boldsymbol{x})=\nabla\rho^{\gamma-1}(\overline{T}_*,\boldsymbol{x})$ for {\it a.e.} $\boldsymbol{x}\in \mathbb{R}^n$, since the rest of \eqref{claim103} can be derived analogously. 

First, due to $\eqref{global-unifrom33}_1$, there exists a subsequence such that 
\begin{equation}\label{phibar}
\rho^{\gamma-1}(t_k,\boldsymbol{x})\to  \bar\phi(\overline{T}_*,\boldsymbol{x}) \qquad \text{weakly* in $L^\infty(\mathbb{R}^n)$}
\end{equation}
for some limit  $\phi(\overline{T}_*,\boldsymbol{x}) \in L^\infty(\mathbb{R}^n)$.
On the other hand, it follows from $\eqref{global-unifrom33}_1$ and Lemma \ref{lemma-inf-rho} that,  
for any fixed $R>0$, 
\begin{equation}\label{lll-222}
\sup_{t\in [0,T]}\|\rho^{\gamma-1}(t)\|_{H^2(B_R)}\leq C(R,T),\quad \quad \inf_{(t,\boldsymbol{x})\in [0,T]\times B_R} \rho(t,\boldsymbol{x})\geq C^{-1}(R,T)
\end{equation}
for some constant $C(R,T)>0$ depending only on $(C_0,R,T)$,
where $B_R=\{\boldsymbol{x}\in \mathbb{R}^n:\,|\boldsymbol{x}|< R\}$.

Since $H^2(B_R)$ is compactly embedded in $C(\overline{B_R})$, by extracting a subsequence, 
there exists a limit $0<\bar{\bar\phi}(\overline{T}_*,\boldsymbol{x})\in C(\mathbb{R}^n)$ such that, 
for each $R>0$,
\begin{equation}\label{BR}
\rho^{\gamma-1}(t_k,\boldsymbol{x})\to \bar{\bar\phi}(\overline{T}_*,\boldsymbol{x}) \qquad
\text{uniformly on $B_R$} \ \  \text{as $k\to\infty$}.
\end{equation}
Clearly,  $0<\bar\phi(\overline{T}_*,\boldsymbol{x})=\bar{\bar\phi}(\overline{T}_*,\boldsymbol{x})\in L^\infty(\mathbb{R}^n)\cap C(\mathbb{R}^n)$ due to the uniqueness of limits in \eqref{phibar} and \eqref{BR}, which, together with $\eqref{global-unifrom33}_1$ and  \eqref{lll-222}--\eqref{BR},   yields
\begin{equation} \label{denstylimit}
\rho(t_k,\boldsymbol{x})\to \bar\phi^\frac{1}{\gamma-1}(\overline{T}_*,\boldsymbol{x}) \qquad\text{for any $\boldsymbol{x}\in\mathbb{R}^n$} \,\,\, \text{as $k\to\infty$}. 
\end{equation}
Then it follows from  the uniqueness of the limits in $\eqref{f2}_1$ and \eqref{denstylimit} that  $\bar\phi^\frac{1}{\gamma-1}(\overline{T}_*,\boldsymbol{x})=\rho (\overline{T}_*,\boldsymbol{x})$, {\it i.e.}, $\bar\phi(\overline{T}_*,\boldsymbol{x})=\rho^{\gamma-1} (\overline{T}_*,\boldsymbol{x})$. 

Next, it follows from $\eqref{f2}_4$, \eqref{phibar},  $\bar\phi(\overline{T}_*,\boldsymbol{x})=\rho^{\gamma-1} (\overline{T}_*,\boldsymbol{x})$, and the Lebesgue dominated convergence theorem that, for any $\zeta(\boldsymbol{x})\in C^\infty_{\rm c}(\mathbb{R}^n)$ and $i=1,\cdots\!,n$, 
\begin{align*}
&\int_{\mathbb{R}^n}\rho^{\gamma-1}(\overline{T}_*,\boldsymbol{x})\zeta_{x_i}(\boldsymbol{x})\,\mathrm{d}\boldsymbol{x}=\lim_{k\to\infty}\int_{\mathbb{R}^n} \rho^{\gamma-1}(t_k,\boldsymbol{x})\zeta_{x_i}(\boldsymbol{x})\,\mathrm{d}\boldsymbol{x}\\
&=-\lim_{k\to\infty}\int_{\mathbb{R}^n}(\rho^{\gamma-1})_{x_i}(t_k,\boldsymbol{x})\zeta(\boldsymbol{x})\,\mathrm{d}\boldsymbol{x}=-\int_{\mathbb{R}^n}\bar{f}_i(\overline{T}_*,\boldsymbol{x})\zeta(\boldsymbol{x})\,\mathrm{d}\boldsymbol{x}.
\end{align*}
This implies that $\rho^{\gamma-1}(\overline{T}_*,\boldsymbol{x})$ admits the weak derivatives $(\rho^{\gamma-1})_{x_i}(\overline{T}_*,\boldsymbol{x})=\bar{f}_i(\overline{T}_*,\boldsymbol{x})\in L^2(\mathbb{R}^n)$ for $i=1,\cdots\!, n$, so that $\nabla\rho^{\gamma-1}(\overline{T}_*,\boldsymbol{x})=\bar{\boldsymbol{f}}(\overline{T}_*,\boldsymbol{x})$ for {\it a.e.} $\boldsymbol{x}\in \mathbb{R}^n$. 

We now continue to prove that $\overline{T}_*=\infty$. We aim to show that functions 
$(\rho, \boldsymbol{u})(\overline{T}_*,\boldsymbol{x})$ satisfy all the initial assumptions 
given in Theorem \ref{thm-loc}, which consist of showing that 
$(\rho, \boldsymbol{u})(\overline{T}_*,\boldsymbol{x})$ are spherically symmetric 
and satisfy \eqref{id1-high}--\eqref{th78zx}. 

\smallskip
\textbf{1.1. $(\rho,\boldsymbol{u})(\overline{T}_*,\boldsymbol{x})$ are spherically symmetric.} 
It suffices to show that $\boldsymbol{u}(\overline{T}_*,\boldsymbol{x})$ is spherically symmetric, 
since the proof for $\rho(\overline{T}_*,\boldsymbol{x})$ can be derived analogously. 
To achieve this, it suffices to show that 
$\boldsymbol{u}(\overline{T}_*,\boldsymbol{x})=(\mathcal{O}^\top\boldsymbol{u})(\overline{T}_*,\mathcal{O}\boldsymbol{x})$ for any $\mathcal{O}\in \mathrm{SO}(n)$. 
Indeed, since $\boldsymbol{u}(t_k,\boldsymbol{x})$ is spherically symmetric for each $t_k$ and, by \eqref{f2}, $\boldsymbol{u}(t_k,\boldsymbol{x})$ converges to $\boldsymbol{u}(\overline{T}_*,\boldsymbol{x})$ weakly in $L^2(\mathbb{R}^n)$ as $k\to \infty$, it follows from the coordinate transformation that, 
for any vector function $\boldsymbol{\zeta}(\boldsymbol{x})\in L^2(\mathbb{R}^n)$,
\begin{align*}
&\int_{\mathbb{R}^n} \boldsymbol{u}(\overline{T}_*,\boldsymbol{x})\cdot \boldsymbol{\zeta}(\boldsymbol{x})\,\mathrm{d}\boldsymbol{x}=\lim_{k\to\infty}\int_{\mathbb{R}^n}\boldsymbol{u}(t_k,\boldsymbol{x})\cdot \boldsymbol{\zeta}(\boldsymbol{x})\,\mathrm{d}\boldsymbol{x}\\
&=\lim_{k\to\infty}\int_{\mathbb{R}^n}(\mathcal{O}^\top\boldsymbol{u})(t_k,\mathcal{O}\boldsymbol{x})\cdot \boldsymbol{\zeta}(\boldsymbol{x})\,\mathrm{d}\boldsymbol{x}=\lim_{k\to\infty}\int_{\mathbb{R}^n}\boldsymbol{u}(t_k,\boldsymbol{x})\cdot (\mathcal{O}\boldsymbol{\zeta})(\mathcal{O}^\top\boldsymbol{x})\,\mathrm{d}\boldsymbol{x}\\
&=\int_{\mathbb{R}^n}\boldsymbol{u}(\overline{T}_*,\boldsymbol{x})\cdot (\mathcal{O}\boldsymbol{\zeta})(\mathcal{O}^\top\boldsymbol{x})\,\mathrm{d}\boldsymbol{x}=\int_{\mathbb{R}^n}(\mathcal{O}^\top\boldsymbol{u})(\overline{T}_*,\mathcal{O}\boldsymbol{x})\cdot \boldsymbol{\zeta}(\boldsymbol{x})\,\mathrm{d}\boldsymbol{x},
\end{align*}
which implies that 
$\boldsymbol{u}(\overline{T}_*,\boldsymbol{x})=(\mathcal{O}^\top\boldsymbol{u})(\overline{T}_*,\mathcal{O}\boldsymbol{x})$ for any $\mathcal{O}\in \mathrm{SO}(n)$.

\smallskip
\textbf{1.2. $(\rho,\boldsymbol{u})(\overline{T}_*,\boldsymbol{x})$ satisfies \eqref{id1-high}--\eqref{th78zx}.} It can be directly checked that \eqref{id1-high} holds. Next, $\rho|_{t=\overline{T}_*}\in L^1(\mathbb{R}^n)$ follows from \eqref{global-unifrom33}, $\bar\phi^\frac{1}{\gamma-1}|_{t=\overline{T}_*}= \rho|_{t=\overline{T}_*}$ in \eqref{denstylimit}, and Lemma \ref{Fatou}:
\begin{equation*}
\int_{\mathbb{R}^n} \rho(\overline{T}_*,\boldsymbol{x})\,\mathrm{d}\boldsymbol{x}=\int_{\mathbb{R}^n} \liminf_{k\to\infty}\rho(t_{k},\boldsymbol{x})\,\mathrm{d}\boldsymbol{x}\le \liminf_{k\to\infty}\int_{\mathbb{R}^n} \rho(t_{k},\boldsymbol{x})\,\mathrm{d}\boldsymbol{x}\leq C(T).
\end{equation*} 
To show \eqref{th78zx}, from \eqref{f2}--\eqref{claim103}, it suffices to prove 
\begin{equation}
(\rho^\frac{\delta-1}{2} \nabla\boldsymbol{u},\rho^{\delta-1} \nabla^2\boldsymbol{u}, \rho^{\delta-1}\nabla^3\boldsymbol{u})(\overline{T}_*,\boldsymbol{x})\in L^{2}(\mathbb{R}^n).
\end{equation}
For brevity, we only sketch the proof of $\rho^{\delta-1}\nabla^3\boldsymbol{u} (\overline{T}_*,\boldsymbol{x})\in L^{2}(\mathbb{R}^n)$. By $\eqref{global-unifrom33}_3$, denoting by $\{t_k\}_{k=1}^\infty$ the same sequence as in \eqref{f2}, we see that $\rho^{\delta-1}\nabla^3\boldsymbol{u}(t_{k},\boldsymbol{x})\to  \boldsymbol{F}(\overline{T}_*,\boldsymbol{x})$ weakly in  $L^2(\mathbb{R}^n)$. On the other hand, based on
\begin{equation}\label{rrrrr}
\begin{aligned}
&\,(\rho^{\delta-1}\nabla^3\boldsymbol{u})(t_{k},\boldsymbol{x})-(\rho^{\delta-1}\nabla^3\boldsymbol{u})(\overline{T}_*,\boldsymbol{x})\\
&=\big(\rho^{\delta-1}(t_{k},\boldsymbol{x})-\rho^{\delta-1}(\overline{T}_*,\boldsymbol{x})\big)\nabla^3\boldsymbol{u}(t_k,\boldsymbol{x})+  \rho^{\delta-1}(\overline{T}_*,\boldsymbol{x})\big(\nabla^3\boldsymbol{u}(t_{k},\boldsymbol{x})-\nabla^3\boldsymbol{u}(\overline{T}_*,\boldsymbol{x})\big),    
\end{aligned}
\end{equation}
using $\eqref{f2}_2$ and  $\rho (t_k,\boldsymbol{x})\to \rho(\overline{T}_*,\boldsymbol{x})$ uniformly on $B_R$ as $k\to\infty$ due to \eqref{BR}, we can show that the right-hand side of \eqref{rrrrr} converges weakly to $\boldsymbol{0}$  in $L^2(B_R)$ for each $R>0$. Thus, the uniqueness of the limits implies that $\rho^{\delta-1}\nabla^3\boldsymbol{u}|_{t=\overline{T}_*}=\boldsymbol{F}|_{t=\overline{T}_*}\in L^2(\mathbb{R}^n)$.

\smallskip
\textbf{1.3.}
To sum up, we have shown that $(\rho,\boldsymbol{u})(\overline{T}_*,\boldsymbol{x})$ satisfies all the initial assumptions on the initial data of Theorem \ref{thm-loc}. 
Thus, by Theorem \ref{thm-loc}, $(\rho, \boldsymbol{u})(t,\boldsymbol{x})$ can be uniquely extended to the  regular solution of the Cauchy problem \eqref{eq:1.1}--\eqref{kelaoxiusi} with \eqref{eqs:CauchyInit}--\eqref{e1.3} in  $[0,\overline{T}_*+T_{**}]\times \mathbb{R}^n$ for some $T_{**}>0$. This  contradicts to the definition  of $\overline{T}_*<\infty$. Therefore $\overline{T}_*=\infty$.

\smallskip
\textbf{2. Proof of Theorem \ref{th1-high} (i).}  First, it follows  from \eqref{global-unifrom33} and the relation:  
$(\delta-1)\nabla \rho=\rho^{2-\delta} \nabla \rho^{\delta-1}$ that, for any finite  $T>0$,
\begin{equation*}
\|\nabla^2\rho\|_{L^2(\mathbb{R}^n)} \leq C_0\|\rho\|_{L^{6-4\delta}(\mathbb{R}^n)}^{3-2\delta}\|\nabla\rho^{\delta-1}\|_{L^\infty(\mathbb{R}^n)}^2 +C_0\|\rho\|_{L^{n^*(2-\delta)}(\mathbb{R}^n)}^{2-\delta}\!\|\nabla^2\rho^{\delta-1}\|_{L^n(\mathbb{R}^n)}\leq C(T),
\end{equation*}
which, along with the fact that $\rho \in C([0,T];L^1(\mathbb{R}^n))$, gives that, for any $t_0\in [0,T]$,
\begin{equation*} 
\begin{aligned}
\lim_{t\to t_0}\|\rho(t)-\rho(t_0)\|_{L^\infty(\mathbb{R}^n)}&\leq C_0\lim_{t\to t_0}\|\rho(t)-\rho(t_0)\|_{L^1(\mathbb{R}^n)}^\frac{4-n}{n+4}\|\nabla^2\rho(t)-\nabla^2\rho(t_0)\|_{L^\infty(\mathbb{R}^n)}^\frac{2n}{n+4}\\
&\leq C(T)\lim_{t\to t_0}\|\rho(t)-\rho(t_0)\|_{L^1(\mathbb{R}^n)}^\frac{4-n}{n+4}=0. 
\end{aligned}
\end{equation*}
This implies that $\rho\in C([0,T];C(\overline{\mathbb{R}^n}))$.

Next, since $\nabla\rho^{\delta-1}$ is a spherically symmetric vector function, it follows directly from Lemma \ref{Hk-Ck-vector} in Appendix \ref{improve-sobolev} and $\nabla\rho^{\delta-1}\in C([0,T];D^{1,n}(\mathbb{R}^n))$ that $\nabla\rho^{\delta-1}\in C([0,T];C(\overline{\mathbb{R}^n}))$, which thus yields $\nabla\rho\in C([0,T];C(\overline{\mathbb{R}^n}))$ due to  $\rho\in C([0,T];C(\overline{\mathbb{R}^n}))$ and $(\delta-1)\nabla \rho=\rho^{2-\delta} \nabla \rho^{\delta-1}$.

Moreover, $\boldsymbol{u}\in C([0,T];C^1(\overline{\mathbb{R}^n}))$ follows directly from $\boldsymbol{u}\in C([0,T];H^3(\mathbb{R}^n))$ and  Lemma \ref{ale1}, and $\rho_t\in C([0,T];C(\overline{\mathbb{R}^n}))$ owing to $\rho_t=-\boldsymbol{u}\cdot\nabla \rho -\rho \diver\boldsymbol{u}$ and $(\rho,\boldsymbol{u})\in C([0,T];C^1(\overline{\mathbb{R}^n}))$.

Finally, thanks to \eqref{global-unifrom33}, we have 
\begin{equation*}
(t\boldsymbol{u}_t,t\nabla^2\boldsymbol{u})\in L^\infty([0,T];H^2(\mathbb{R}^n)), \qquad
((t\boldsymbol{u}_t)_t,(t\nabla^2\boldsymbol{u})_t)\in L^2([0,T];L^2(\mathbb{R}^n)),
\end{equation*}
which, along with Lemma \ref{triple}, yields $(t\boldsymbol{u}_t,t\nabla^2\boldsymbol{u})\in C([0,T];W^{1,4}(\mathbb{R}^n))$. Therefore, it follows from the above  and Lemma \ref{ale1} that $(\boldsymbol{u}_t,\nabla^2\boldsymbol{u})\in C((0,T]\times \mathbb{R}^n)$.

\medskip
\textbf{3. Proof of Theorem \ref{th1-high}(ii)--(iii).}  We only give the proof for the 3-D case, and the 2-D case follows analogously. First, since $(\sqrt{\rho},\sqrt{\rho}\boldsymbol{u})\in C([0,T];L^2(\mathbb{R}^3))$, then $\rho\boldsymbol{u}\in C([0,T];L^1(\mathbb{R}^3))$ and hence $r^2\rho u\in C([0,T];L^1(I))$. 
Next, via the spherical coordinate transformations: $r\in I$, $\theta_1\in [0,2\pi]$, $\theta_2\in [0,\pi]$, and
\begin{equation*}
\boldsymbol{x}=(x_1,x_2,x_3)^\top=(r\cos\theta_1\sin\theta_2,r\sin\theta_1\sin\theta_2,r\cos\theta_2)^\top,
\end{equation*}
$\mathcal{P}(t)\equiv \boldsymbol{0}$ follows directly  from the spherical symmetry of $\rho\boldsymbol{u}$:
\begin{equation*}
\begin{aligned}
\mathcal{P}(t)&=\int_{\mathbb{R}^3} \rho\boldsymbol{u}\,\mathrm{d}\boldsymbol{x}=\int_0^\pi\int_0^{2\pi}\int_0^\infty \frac{(x_1,x_2,x_3)^\top}{r}(r^2\rho u)\sin\theta_2\,\mathrm{d}r\mathrm{d}\theta_1\mathrm{d}\theta_2\\
&=\Big(\int_0^\infty  r^2 \rho u \,\mathrm{d}r\Big)\int_0^\pi\int_0^{2\pi} (\cos\theta_1\sin^2\theta_2,\sin\theta_1\sin^2\theta_2,\cos\theta_2\sin\theta_2)^\top \, \mathrm{d}\theta_1\mathrm{d}\theta_2 =\boldsymbol{0}.     
\end{aligned}
\end{equation*}

The conservation of total mass can be simply derived by integrating the mass equation $\eqref{eq:1.1}_1$ over $[0,T]\times \mathbb{R}^n$ and using the fact that $\rho\boldsymbol{u} \in C([0,T];W^{1,1}(\mathbb{R}^n))$.  Finally, Theorem \ref{th1-high} (iii) is a directly consequence of Lemmas \ref{far-p-infty}, \ref{important2}, and \ref{lemma-inf-rho}.

\section{Global Well-Posedness of   Regular  Solutions  with Strictly Positive Initial Density}\label{nonvacuumfarfield}

This section is devoted to establishing the global well-posedness of spherically symmetric classical solutions of the Cauchy problem of system  \eqref{eq:1.1} with general smooth initial data and
strictly positive initial density, {\it i.e.}, $\inf_{\boldsymbol{x}\in \mathbb{R}^n} \rho_0(\boldsymbol{x})>0$ so that $\bar{\rho}>0$ in 
\eqref{e1.3}. Certainly, in this case, under the spherical coordinates, the Cauchy problem \eqref{eq:1.1}--\eqref{kelaoxiusi} with \eqref{eqs:CauchyInit}--\eqref{e1.3} in $[0,T]\times \mathbb{R}^n$ for some $T>0$ can be written as the following \textbf{IBVP} in $[0,T]\times I$:
\begin{equation}\label{e1.5hh-}
\begin{cases}
\displaystyle 
\rho_t+(\rho u)_r+\frac{m\rho u}{r}=0,\\[1pt]
\displaystyle
\rho u_t+\rho uu_r+A(\rho^\gamma)_r=2a_1\delta\big(\rho^\delta u_r+\frac{m\rho^\delta u}{r} \big)_r-\frac{2a_1 m(\rho^\delta)_r u}{r},\\[4pt]
\displaystyle
(\rho,u)|_{t=0}=(\rho_0,u_0) \qquad\quad\,\,\,\, \text{for $r\in I$},\\[3pt]
\displaystyle
u|_{r=0}=0 \qquad\qquad\qquad\qquad\text{for $t\in [0,T]$},\\[3pt]
\displaystyle
(\rho,u)\to \left(\bar\rho,0\right)  \qquad\qquad\text{as $r\to \infty\,$ \, for $t\in [0,T]$}.
\end{cases}
\end{equation}

\subsection{Local Well-Posedness in M-D Coordinates} 
To establish the local well-posedness when $\inf_{\boldsymbol{x}\in \mathbb{R}^n} \rho_0(\boldsymbol{x})>0$, we consider the following reformulated system in M-D coordinates: 
\begin{equation}\label{re}
\begin{cases}
\rho_t+\diver(\rho\boldsymbol{u})=0,\\[3pt]
\displaystyle\boldsymbol{u}_t+\boldsymbol{u}\cdot\nabla\boldsymbol{u}+\frac{A\gamma}{\gamma-1} \nabla\rho^{\gamma-1}+L\boldsymbol{u}=\frac{\delta}{\delta-1}\nabla \rho^{\delta-1} \cdot Q(\boldsymbol{u}),
\end{cases}
\end{equation}
where the operators $(L,Q)$ are defined in \eqref{operatordefinition}, \textit{i.e.},
\begin{equation*}
L\boldsymbol{u}=-a_1\Delta\boldsymbol{u}-(a_1+a_2)\nabla\diver \boldsymbol{u},\qquad Q(\boldsymbol{u})=2a_1 D(\boldsymbol{u})+a_2\diver\boldsymbol{u}\,\mathbb{I}_n.
\end{equation*}
Notice that the positive lower bound of $\rho$ can be obtained via the transport 
equation $\eqref{re}_1$ in some short time. Then, employing the methods used by Matsumura--Nishida \cite{MN} and Sundbye \cite{Sundbye2}, we can establish the corresponding local well-posedness results of the regular solutions (as defined in Definition \ref{cjk-po}) when $\inf_{\boldsymbol{x}\in \mathbb{R}^n} \rho_0(\boldsymbol{x})>0$. 
We omit the proof here for brevity and give the statement of this theorem:

\begin{thm}\label{zth2-po}
Let $n=2$ or $3$, $\bar\rho>0$ in \eqref{e1.3}, and let \eqref{canshu} hold. If the initial data $(\rho_0, \boldsymbol{u}_0)(\boldsymbol{x})$ are spherically symmetric and satisfy \eqref{id1-high-positive}, then there exists $T_*>0$ such that the Cauchy problem \eqref{eq:1.1}--\eqref{kelaoxiusi} with \eqref{eqs:CauchyInit}--\eqref{e1.3}  admits a unique regular solution $(\rho, \boldsymbol{u})(t, \boldsymbol{x})$ in $[0, T_*] \times \mathbb{R}^n$ satisfying \eqref{er2-high}, \eqref{3dclassical2}, and \eqref{decay-est-positive} with $T$ replaced by $T_*$, and
\begin{equation*}
\rho_{tt} \in C([0, T_*] ; L^2(\mathbb{R}^n)) \cap L^2([0,T_*];D^1(\mathbb{R}^n)).
\end{equation*}
Moreover, $(\rho, \boldsymbol{u})$ is spherically symmetric with form \eqref{2.8}.
\end{thm}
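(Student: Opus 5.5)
The plan is a standard linearization plus contraction argument for a quasilinear parabolic–hyperbolic system with positive density, carried out directly in the multi-dimensional form \eqref{re}. Since $\inf\rho_0>0$ and $\rho_0-\bar\rho\in H^3$, the coefficient $\rho^{\delta-1}$ in front of the Lamé operator is bounded above and below near $t=0$. This makes the momentum equation uniformly parabolic, so the singular weights of Theorem \ref{th1} are unnecessary. For the linearization I would use the non-divided form (or equivalently \eqref{qiyi}, keeping $\rho^{\delta-1}L\boldsymbol{u}$):
\begin{equation*}
\boldsymbol{u}_t+\rho^{\delta-1}L\boldsymbol{u}=-\boldsymbol{u}\cdot\nabla\boldsymbol{u}-\frac{A\gamma}{\gamma-1}\nabla\rho^{\gamma-1}+\frac{\delta}{\delta-1}\nabla\rho^{\delta-1}\cdot Q(\boldsymbol{u}).
\end{equation*}
Given $(\hat\rho,\hat{\boldsymbol u})$ in a ball of $C([0,T];H^3)\times\big(C([0,T];H^3)\cap L^2([0,T];D^4)\big)$ with $\hat\rho\ge \underline c>0$, I first solve the linear transport equation $\rho_t+\hat{\boldsymbol u}\cdot\nabla\rho+\rho\,\diver\hat{\boldsymbol u}=0$. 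Standard $H^3$ energy estimates give $\rho-\bar\rho\in C([0,T];H^3)$. The characteristic formula $\rho=\rho_0(X^{-1})\exp(-\int\diver\hat{\boldsymbol u})$ gives $\rho\ge\frac12\inf\rho_0$ for $T$ small, using $\int_0^T\|\nabla\hat{\boldsymbol u}\|_{L^\infty}\le C\sqrt T$.

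Next I solve the linear parabolic system for $\boldsymbol u$ with coefficient $\hat\rho^{\delta-1}$ and frozen right-hand side (à la Matsumura--Nishida \cite{MN} and Sundbye \cite{Sundbye2}). Ellipticity of $L$ follows from \eqref{kelaoxiusi} and \eqref{coeff}, since $a_2=2(\delta-1)a_1$ gives $2a_1+na_2>0$ for $\delta>1-\frac1n$. Galerkin approximation and energy estimates give $\boldsymbol u\in C([0,T];H^3)\cap L^2([0,T];D^4)$ and $\boldsymbol u_t\in C([0,T];H^1)\cap L^2([0,T];D^2)$. The time-weighted bounds \eqref{er2-high} come from differentiating in $t$ and multiplying by $t$, exactly as in Lemmas \ref{Lemma6.13}--\ref{Lemma6.14}. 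No compatibility conditions are needed, because here $\rho$ is bounded away from zero.

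The main obstacle is closing the uniform bounds in the iteration. The top-order term $\nabla^3(\hat\rho^{\delta-1}L\boldsymbol u)$ requires $\nabla^3\hat\rho\in L^2$ paired with $\nabla^2\boldsymbol u\in L^\infty$. The source $\nabla\hat\rho^{\delta-1}\cdot Q(\hat{\boldsymbol u})$ at order $H^2$ requires $\nabla^3\hat\rho\cdot\nabla\hat{\boldsymbol u}$. Both are controlled by $H^3$ of the density times $H^3$ of the velocity with a factor $\sqrt T$ or $T$. Choosing $T_*$ small keeps the map in the ball. I then show contraction in the lower norm $C([0,T];L^2\times H^1)$ via difference estimates. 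The transport difference loses one derivative, which is standard and handled by working at the $L^2$ level for $\rho$. Contraction gives existence and uniqueness.

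Finally I verify the remaining properties. The bound $\rho_{tt}\in C([0,T_*];L^2)\cap L^2([0,T_*];D^1)$ follows by differentiating the mass equation in time and using $\boldsymbol u_t\in C(H^1)\cap L^2(D^2)$. The classical regularity \eqref{3dclassical2} follows from Sobolev embedding together with the $\sqrt t$ bounds. Estimate \eqref{decay-est-positive} follows from the transport representation. Spherical symmetry follows from uniqueness, since $(\mathcal O^\top\rho,\mathcal O^\top\boldsymbol u)(t,\mathcal O\boldsymbol x)$ is again a solution for every $\mathcal O\in\mathrm{SO}(n)$, as in Step 1.1 of \S\ref{se46}.
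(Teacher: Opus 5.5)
Your proposal is correct and follows the route the paper indicates before omitting the details: short-time positivity and boundedness of $\rho$ from the transport equation, then a Matsumura--Nishida/Sundbye-type linearization and iteration for the uniformly parabolic system with coefficient $\rho^{\delta-1}$, with spherical symmetry obtained from uniqueness. One small fix is needed in the contraction step: if the density difference is measured only in $L^2$, the velocity difference must be measured in $L^\infty([0,T];L^2)\cap L^2([0,T];H^1)$ rather than $C([0,T];H^1)$, because the pressure difference and $\nabla(\hat\rho_1^{\delta-1}-\hat\rho_2^{\delta-1})\cdot Q(\hat{\boldsymbol u})$ can only be handled by integrating by parts against $\bar{\boldsymbol u}$ itself.
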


\smallskip
\subsection{Global Well-Posedness in  Spherically Symmetric  Coordinates}

The proof for global well-posedness is organized as follows: First, we establish the  uniform upper bound of $\rho$. Next, we establish the uniform $L^\infty(\mathbb{R}^n)$-estimate for the effective velocity, and then obtain the  uniform lower bound of $\rho$. Finally, based on these, we obtain global uniform estimates for regular solutions and then show the desired  global well-posedness of regular solutions.

This part of the proof can be completed by combining the arguments in \S\ref{section-upper-density}--\S\ref{se46} with the method presented in \cite[Section 10]{CZZ1}.  For simplicity, we only list the points that require noticeable modifications in the following three most important parts:
\begin{itemize}
\item[\S\ref{subsub1}] Global uniform upper bound of the density;
\item[\S\ref{subsub2}] Global uniform bound of the effective velocity;
\item[\S\ref{subsub3}] Global uniform lower bound of the density.
\end{itemize}

In what follows, unless otherwise specified, we adopt all the notations and defined quantities in \S\ref{section-upper-density}--\S\ref{section-nonformation}, and denote $C_0 \in[1, \infty)$ a generic constant depending only on $\left(\bar\rho,\rho_0, u_0, n, a_1, A, \gamma,\delta\right)$, and $C\left(\nu_1, \cdots\!, \nu_k\right) \in[1, \infty)$ a generic constant depending on $C_0$ and parameters $\left(\nu_1, \cdots\!, \nu_k\right)$, which may be different at each occurrence.

\subsubsection{Global uniform upper bound of $\rho$}\label{subsub1}
We can obtain the global uniform upper bound of $\rho$ by combining the arguments used in \S\ref{section-upper-density} and  \cite[Section 10]{CZZ1}.
The major modification lies in the establishment of $L^p(I)$-estimates for $(u,v)$, namely, Lemma \ref{lemma-uv-lp}. It should be mentioned that Lemma \ref{l4.3} also holds in this case and, for convenience, we present here a modified version of Lemmas \ref{energy-BD}--\ref{far-p-infty}, which is adapted to the case where $\bar\rho>0$ in \eqref{e1.3}.

\begin{lem}\label{lemma991}
Assume that $\bar{\rho}>0$ in \eqref{e1.3}. Then {\rm Lemma \ref{energy-BD}} in {\rm\S\ref{section-upper-density}} holds with $\rho^\gamma$ replaced by $j_\gamma(\rho):=(\rho^\gamma-\bar\rho^\gamma)-\gamma\bar\rho^{\gamma-1}(\rho-\bar\rho)$ and, for any $\omega>0$, there exists $C(\omega)>0$ such that
\begin{equation*}
|\chi^\sharp_\omega \rho(t)|_\infty\leq C(\omega)\qquad\text{for any $t\in [0,T]$}.
\end{equation*}
\end{lem}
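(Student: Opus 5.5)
Our plan is to repeat the proofs of Lemmas \ref{energy-BD} and \ref{far-p-infty}, with the pressure potential $\rho^\gamma$ replaced by the relative potential $j_\gamma(\rho)$ and the $L^1$-mass replaced by this relative energy.

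\textbf{Energy and BD estimates.} We multiply $\eqref{e1.5hh-}_2$ by $r^m u$ as in Step 1 of Lemma \ref{energy-BD}. The only change is the pressure term. Since $\bar\rho$ is constant, $A(\rho^\gamma)_r=A(\rho^\gamma-\bar\rho^\gamma)_r$, and the continuity equation gives
\begin{equation*}
\Big(\tfrac{A}{\gamma-1}r^m j_\gamma(\rho)\Big)_t+\Big(\tfrac{A}{\gamma-1}r^m u\, j_\gamma(\rho)\Big)_r=-A r^m u(\rho^\gamma)_r .
\end{equation*}
The key identity behind this is $\rho j_\gamma'(\rho)-j_\gamma(\rho)=(\gamma-1)(\rho^\gamma-\bar\rho^\gamma)$, and it is exactly why the linear correction $\gamma\bar\rho^{\gamma-1}(\rho-\bar\rho)$ appears in $j_\gamma$. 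The viscous quadratic form $I_1$ is unchanged, so its positivity via the discriminant carries over verbatim.

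For the BD part we multiply \eqref{ess} by $r^m v$. Since \eqref{ess} involves only $(\rho^\gamma)_r$, the same identity applies, and the dissipation term $r^m\rho^{\gamma+\delta-3}\rho_r^2$ is unchanged. The boundary terms are handled as in \eqref{int-B1}, using the regularity in Theorem \ref{zth2-po}, where $\rho-\bar\rho\in H^3$ and $\rho$ is bounded below. The initial bound on $\int r^m j_\gamma(\rho_0)\,\mathrm{d}r$ follows from $j_\gamma(\rho_0)\le C|\rho_0-\bar\rho|^2$ together with $\rho_0-\bar\rho\in L^2$. Finally, $r^{m/2}(\rho^{\delta-1/2})_r\in L^\infty_tL^2$ follows from the bounds on $(r^m\rho)^{1/2}(u,v)$, as before.

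\textbf{Bound away from the origin.} This is the step that genuinely changes, because $r^m\rho\notin L^1$ now, so the argument of Lemma \ref{far-p-infty} cannot be repeated literally. Instead, for $r\ge\omega$ we control $h:=\rho^{\delta-1/2}-\bar\rho^{\delta-1/2}$. We use:
\begin{itemize}
\item $j_\gamma(\rho)\ge c\,\min\{|\rho-\bar\rho|^2,|\rho-\bar\rho|^\gamma\}$, which gives $r^m j_\gamma(\rho)\in L^1$;
\item $r^{m/2}h_r\in L^2$, from the BD estimate.
\end{itemize}

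\emph{Handling the region where $\rho$ is large.} Here we use the one-dimensional fundamental theorem of calculus. For $r\ge\omega$ we pick a nearby point $r_0\in[r,r+1]$ at which $\rho(r_0)\le 2\bar\rho+1$. Such a point exists because otherwise $\int_r^{r+1} s^m j_\gamma\,\mathrm{d}s$ would exceed $C_0$, provided we require that $\rho$ be large on a set of length one. We then get
\begin{equation*}
|h(r)|\le |h(r_0)|+\int_r^{r_0}|h_s|\,\mathrm{d}s\le C+\omega^{-m/2}\,|r^{m/2}h_r|_2\le C(\omega).
\end{equation*}
Because $\delta>\frac12$, this yields $\rho\le C(\omega)$ on $[\omega,\infty)$.

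The main obstacle is making the choice of $r_0$ rigorous. We plan to take the interval length $\ell=\ell(\omega)$ large enough that $\omega^m\ell\cdot c\,\min\{(\bar\rho+1)^2,(\bar\rho+1)^\gamma\}>C_0$. Then on $[r,r+\ell]$ there must be a point where $|\rho-\bar\rho|\le\bar\rho+1$. The fundamental-theorem estimate then costs an extra factor of $\ell^{1/2}$, which depends only on $\omega$ and so is harmless.
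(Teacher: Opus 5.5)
Your proposal is correct. The paper does not write out a proof of this lemma; it presents it as a routine adaptation of Lemmas \ref{energy-BD}--\ref{far-p-infty}, and your energy/BD step is exactly that adaptation.

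There is one slip in that step. The displayed identity is wrong as written: the right-hand side has the wrong sign, and the flux $\frac{A}{\gamma-1}r^mu\,j_\gamma(\rho)$ is missing the term $Ar^mu(\rho^\gamma-\bar\rho^\gamma)$. That missing term is precisely where your identity $\rho j_\gamma'-j_\gamma=(\gamma-1)(\rho^\gamma-\bar\rho^\gamma)$ enters. The correct relation is
\begin{equation*}
\Big(\frac{A}{\gamma-1}r^m j_\gamma(\rho)\Big)_t+\Big(\frac{A\gamma}{\gamma-1}r^m\rho u\big(\rho^{\gamma-1}-\bar\rho^{\gamma-1}\big)\Big)_r=A r^m u(\rho^\gamma)_r ,
\end{equation*}
and its flux integrates to zero just like $\mathcal{B}_1$.

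Note also that the initial bounds on $\int_0^\infty r^m\rho_0(u_0^2+v_0^2)\,\mathrm{d}r$ can no longer use $r^m\rho_0\in L^1$. They follow instead from $\rho_0\in L^\infty$, $\inf\rho_0>0$, and $(\boldsymbol{u}_0,\nabla\rho_0)\in L^2(\mathbb{R}^n)$.

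For the far-field bound, your route differs from the most direct one. Let $h=\rho^{\delta-\frac12}-\bar\rho^{\delta-\frac12}$. Since $2\delta-1<1<\gamma$ and $j_\gamma(0)=(\gamma-1)\bar\rho^\gamma>0$, one has the pointwise comparison $h^2\le C(\bar\rho,\gamma,\delta)\,j_\gamma(\rho)$ for all $\rho\ge 0$. The energy and BD bounds then give $\|h\|_{L^2(\omega,\infty)}^2+\|h_r\|_{L^2(\omega,\infty)}^2\le C_0\omega^{-m}$. The embedding $H^1(\omega,\infty)\hookrightarrow L^\infty(\omega,\infty)$ from Lemma \ref{ale1} then yields $\|h\|_{L^\infty(\omega,\infty)}\le C(\omega)$ at once.

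Your ``good point'' argument is a hand-made version of this embedding. It uses only that $j_\gamma$ is bounded below outside a neighbourhood of $\bar\rho$, so no growth comparison between $h^2$ and $j_\gamma$ is needed. The cost is the auxiliary length $\ell(\omega)$. Your choice of $\ell$ is exactly what makes the existence of $r_0$ rigorous; the unit-length version is not, as you noticed.

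Both routes use $\delta>\frac12$ in the same place, to turn a bound on $\rho^{\delta-\frac12}$ into one on $\rho$. Both give constants uniform in $t\in[0,T]$.
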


Now, we give the proof for Lemma \ref{lemma-uv-lp} when $\bar\rho>0$ in \eqref{e1.3}.
\begin{proof}[Proof of Lemma \ref{lemma-uv-lp} when $\bar{\rho}>0$]

We divide the rest of the proof into two steps.

\smallskip
\textbf{1.} Following a similar argument in Steps 1--3 of the proof of Lemma \ref{lem-u-lp} when $\bar{\rho}=0$, we multiply $\eqref{e1.5hh-}_2$ by $r^m|u|^{p-2}u$, with $p\in [2,\tilde{p}_m(\delta))$, and integrate the resulting equality. Then we arrive at
\begin{equation}\label{4.172}
\frac{1}{p}\frac{\mathrm{d}}{\mathrm{d}t}\big|(r^m\rho)^\frac{1}{p}u\big|_p^p+2a_1 c_p\int_0^\infty r^m \rho^\delta |u|^{p-2}|\mathrm{D}_r u|^2\,\mathrm{d}r\leq -A\int_0^\infty r^m(\rho^\gamma)_r|u|^{p-2}u\,\mathrm{d}r:=G^*_1.
\end{equation}

To estimate $G^*_1$, we introduce the following smooth cut-off functions:
\begin{equation}\label{zzze}
\zeta=\zeta(r)\in C_{\mathrm{c}}^\infty([0,1]),\quad \zeta_r\leq 0,\quad  \zeta=1 \ \  \text{on $[0,\frac{1}{2}]$},\quad \zeta=0 \ \  \text{on $[\frac{3}{4},1]$},\quad \zeta^\sharp:=1-\zeta.
\end{equation}
Then it follows from integration by parts that
\begin{equation}\label{G1*}
\begin{aligned}
G^*_1&=-A\int_0^\infty \zeta r^m(\rho^\gamma)_r|u|^{p-2}u\,\mathrm{d}r-A\int_0^\infty \zeta^\sharp r^m(\rho^\gamma)_r|u|^{p-2}u\,\mathrm{d}r\\
&=A\int_0^\infty \zeta r^m \rho^\gamma |u|^{p-2}\big((p-1)u_r+\frac{m}{r}u\big)\,\mathrm{d}r+A\int_0^\infty \zeta_r r^m \rho^\gamma |u|^{p-2}u\,\mathrm{d}r\\
&\quad -A\int_0^\infty \zeta^\sharp r^m(\rho^\gamma)_r|u|^{p-2}u\,\mathrm{d}r:=\sum_{i=1}^3 G^*_{1,i}.
\end{aligned}
\end{equation}

For $G^*_{1,1}$, we obtain from the H\"older and Young inequalities that 
\begin{equation*} 
\begin{aligned}
G^*_{1,1}&\leq C(p)\big|(r^m\rho^\delta)^{\frac12}|u|^{\frac{p-2}{2}}\mathrm{D}_r u\big|_2\big|\chi_1^\flat r^{\frac{m}{2}}\rho^{\gamma-\frac{\delta}{2}}|u|^{\frac{p-2}{2}}\big|_2\\
&\leq  \frac{a_1 c_p}{32}\big|(r^m\rho^\delta)^{\frac12}|u|^{\frac{p-2}{2}}\mathrm{D}_r u\big|_2^2+C(p)\big|\chi_1^\flat r^\frac{p+m-2}{p\gamma-p\delta+\delta} \rho\big|_{p\gamma-p\delta+\delta}^\frac{2p\gamma-2p\delta+2\delta}{p}\big|(r^{m-2}\rho^\delta)^\frac{1}{p}u\big|_{p}^{p-2}\\
&\leq \frac{a_1 c_p}{16}\big|(r^m\rho^\delta)^{\frac12}|u|^{\frac{p-2}{2}}\mathrm{D}_r u\big|_2^2+C(p) \underline{\big|\chi_1^\flat r^\frac{p+m-2}{p\gamma-p\delta+\delta} \rho\big|_{p\gamma-p\delta+\delta}^{p\gamma-p\delta+\delta}}_{=I_{3,1}},
\end{aligned}
\end{equation*}
where $I_{3,1}$ is defined in \eqref{J1}. Hence, following the same argument as in Steps 4.1--4.2 in the proof of Lemma \ref{lem-u-lp} when $\bar{\rho}=0$, we obtain that, for any $\varepsilon\in (0,1)$,
\begin{equation} 
G^*_{1,1}\leq \frac{a_1 c_p}{8}\big|(r^m\rho^\delta)^{\frac12}|u|^{\frac{p-2}{2}}\mathrm{D}_r u\big|_2^2+\varepsilon \big|(r^m\rho^{\gamma-\delta+1})^\frac{1}{p}v\big|_p^p+C(p,\varepsilon).
\end{equation}

For $G^*_{1,2}$--$G^*_{1,3}$, we see from \eqref{V-expression}, Lemma \ref{lemma991}, and the H\"older and Young inequalities that
\begin{equation*} 
\begin{aligned}
G^*_{1,2}&\leq C_0\int_\frac{1}{2}^\frac{3}{4}  r^m \rho^\gamma |u|^{p-1}\,\mathrm{d}r\leq C(p)|\chi_\frac{1}{2}^\sharp\rho|_\infty^{\gamma-1+\frac{1}{p}}\big|(r^m\rho)^\frac{1}{p}u\big|_p^{p-1}\leq C(p)+\big|(r^m\rho)^\frac{1}{p}u\big|_p^p,\\
G^*_{1,3}&\leq C_0 \int_\frac{1}{2}^\infty\!\! r^m \rho^{\gamma-\delta+1}|v-u||u|^{p-1}\mathrm{d}r \leq  C_0|\chi_\frac{1}{2}^\sharp \rho|_\infty^{\gamma-\delta} \big|(r^m\rho)^\frac{1}{p}(u,v)\big|_p^p\leq  C_0 \big|(r^m\rho)^\frac{1}{p}(u,v)\big|_p^p.
\end{aligned}
\end{equation*}

Collecting \eqref{4.172}, \eqref{G1*}, and the above, we obtain that, for any $p\in [2,\tilde{p}_m(\delta))$ and $\varepsilon\in (0,1)$,
\begin{equation}\label{dtdtbu}
\begin{aligned}
&\,\frac{1}{p}\frac{\mathrm{d}}{\mathrm{d}t}\big|(r^m\rho)^\frac{1}{p}u\big|_p^p+ a_1 c_p\big|(r^m \rho^\delta)^\frac{1}{2} |u|^\frac{p-2}{2} \mathrm{D}_r u\big|_2^2 \\
&\leq C(p,\varepsilon)\big(\big|(r^m\rho)^\frac{1}{p}(u,v)\big|_p^p+1\big) +\varepsilon \big|(r^m\rho^{\gamma-\delta+1})^\frac{1}{p}v\big|_p^p.   
\end{aligned}
\end{equation}

\smallskip
\textbf{2.} Repeating the same proof of Lemma \ref{lem-v-lp}, we can still obtain \eqref{dt-v-p} when $\bar{\rho}>0$. Therefore, combining \eqref{dtdtbu} with \eqref{dt-v-p},  choosing  $\varepsilon>0$ sufficiently small, and then applying the Gr\"onwall inequality to the resulting inequality, we obtain the desired conclusion.
\end{proof}

\subsubsection{Global uniform bound of $v$}\label{subsub2}

Once we prove Lemma \ref{rho u-L2} when $\bar{\rho}>0$, the $L^\infty(I)$-estimate for $v$ can be derived by following the same method in \S \ref{section-effective} without further modifications. Hence, we only sketch some revisions to the proof of Lemma \ref{rho u-L2} when $\bar\rho>0$.
\begin{proof}[Proof of Lemma \ref{rho u-L2} when $\bar{\rho}>0$ in \eqref{e1.3}]
The main difference is in the estimate for the following integral:
\begin{equation*}
\mathrm{G}_2^*:=-A\int_0^\infty (\rho^\gamma)_r u\,\mathrm{d}r,
\end{equation*}
where we have used integration by parts to treat it in the proof of Lemma \ref{rho u-L2} in \S \ref{section-effective}; see \eqref{503}.

Here, to overcome the estimate away from the origin, we employ the cut-off function $\zeta$ in \eqref{zzze} and rewrite $\mathrm{G}_2^*$ as
\begin{equation*}
|\mathrm{G}_2^* |\le A\Big|\int_0^\infty \zeta(\rho^\gamma)_r u\,\mathrm{d}r\Big|+A\Big|\int_0^\infty \zeta^\sharp (\rho^\gamma)_r u\,\mathrm{d}r\Big|.    
\end{equation*}
The first term on the right-hand side of the above can be handled by using integration by parts and the same argument as in the proof of Lemma \ref{rho u-L2} in \S \ref{section-effective}:
\begin{equation*}
A\Big|\int_0^\infty \zeta(\rho^\gamma)_r u\,\mathrm{d}r\Big|
\leq C(\varepsilon,T)+\varepsilon|\rho^\frac{\delta}{2}u_r|_2^2.
\end{equation*}
For the second one, it follows from Lemma \ref{lemma991} that
\begin{equation*}
\begin{aligned}
&\,A\Big|\int_0^\infty \zeta^\sharp  (\rho^\gamma)_r u\,\mathrm{d}r\Big| 
\leq C_0\int_\frac{1}{2}^\infty  \rho^{\gamma-1}|\rho_r| u\,\mathrm{d}r\\
&\leq C_0|\chi_\frac{1}{2}^\sharp r^{-m}|_\infty|\chi_\frac{1}{2}^\sharp \rho|_\infty^\frac{\gamma-\delta}{2}\big|r^\frac{m}{2}\rho^\frac{\gamma+\delta-3}{2}\rho_r\big|_2|(r^m\rho)^\frac{1}{2}u|_2\leq C_0+\big|r^\frac{m}{2}\rho^\frac{\gamma+\delta-3}{2}\rho_r\big|_2^2.  
\end{aligned}
\end{equation*}

Collecting the above estimates, we arrive at
\begin{equation*}
|\mathrm{G}_2^*|\leq C(\varepsilon,T)+\varepsilon|\rho^\frac{\delta}{2}u_r|_2^2+\big|r^\frac{m}{2}\rho^\frac{\gamma+\delta-3}{2}\rho_r\big|_2^2,
\end{equation*}
and we can finally close the estimate to complete the proof of Lemma \ref{rho u-L2} when $\bar{\rho}>0$ by using the $L^2([0,T];L^2(I))$-estimate of $r^\frac{m}{2}\rho^\frac{\gamma+\delta-3}{2}\rho_r$ in Lemma \ref{lemma991}.
\end{proof}

\subsubsection{Global uniform lower bound of $\rho$}\label{subsub3}

Since we already show the global uniform $L^\infty(I)$-estimates for $(\rho,v)$, we can follow the same method provided in \S \ref{sub-refine} to obtain Lemmas \ref{cru4}--\ref{ele} when $\bar{\rho}>0$. Here, we summarize some related estimates for use in Lemma \ref{lemma-lowerbound} below. 
\begin{lem}\label{lemmauseful}
When $\bar{\rho}>0$ in \eqref{e1.3}, there exists a constant $C(T)>0$ such that
\begin{equation*}
|(\rho,v)(t)|_\infty+|u(t)|_2+\int_0^t \big(|\rho^\frac{\delta-1}{2}\mathrm{D}_r u|_2^2+|u|_\infty^4\big)\,\mathrm{d}s\leq C(T).
\end{equation*}
\end{lem}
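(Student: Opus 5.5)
This lemma collects the $\bar\rho>0$ analogues of Lemma \ref{important2}, Lemma \ref{l4.4} and Lemma \ref{ele}. I would rerun those chains, replacing the total mass by the relative energy built from $j_\gamma(\rho)$ in Lemma \ref{lemma991}. That lemma already gives the BD entropy, the energy bounds, and $|\chi^\sharp_\omega\rho|_\infty\le C(\omega)$.

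\textbf{Step 1: the bound for $|\rho|_\infty$.} Near the origin I argue exactly as in Lemma \ref{important2}. The BD entropy bound on $r^{m/2}(\rho^{\delta-1/2})_r$, together with Lemma \ref{l4.3}, gives $|\chi_1^\flat\rho|_{2\delta-1}\le C_0$ (the paper says Lemma \ref{l4.3} still holds here). The $L^{p_0}$ bound for $(r^m\rho)^{1/p_0}(u,v)$ with some $p_0\in(n,\min\{\frac1{1-\delta},\tilde p_m(\delta)\})$ is the proof of Lemma \ref{lemma-uv-lp} just given for $\bar\rho>0$. Combining the two absorbs $|\chi_1^\flat\rho|_\infty^{2\delta-1}$. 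Away from the origin, Lemma \ref{lemma991} with $\omega=1$ closes the estimate.

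\textbf{Step 2: the bound for $|v|_\infty$.} I follow Lemmas \ref{rho u-L2}, \ref{cru3} and \ref{l4.4}, using the $\bar\rho>0$ version of Lemma \ref{rho u-L2} proved above. In Lemma \ref{cru3} the interpolation uses only $|\rho|_\infty$, the $L^q$ bound of Lemma \ref{lemma-uv-lp}, and $\int|\rho^{\delta/2}u_r|_2^2$, so it carries over. The characteristic representation \eqref{try} then gives the Gr\"onwall bound for $v$.

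\textbf{Step 3: the bounds for $|u|_2$, the dissipation and $|u|_\infty^4$.} I repeat Lemmas \ref{cru4}--\ref{ele}. For the $\rho^{-\alpha}$ and $\rho^{-1}$ weighted multipliers, the pressure term is again rewritten through $v-u$ via \eqref{eq:effective2}, so no integration by parts against $\rho^\gamma$ is needed. Lemma \ref{ele} then gives $|u|_2$ and $\int|\rho^{\frac{\delta-1}2}\mathrm{D}_ru|_2^2$ directly, and $\int|u|_\infty^4\le 4\sup|u|_2^2\,|\rho|_\infty^{1-\delta}\int|\rho^{\frac{\delta-1}2}u_r|_2^2$ follows.

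\textbf{Main obstacle.} The danger is that $\rho\to\bar\rho>0$ at infinity, so unweighted quantities like $|\rho^{\frac{1-\alpha}2}u|_2$ or $|\rho^{\gamma-\delta+\frac{1-\alpha}2}v|_2$ are no longer controlled by the vacuum decay. The multiplier $\rho^{2(\gamma-\delta)+1-\alpha}v$ in \eqref{duo-v} might fail to be integrable if $v\notin L^2$ far out. I first check that $v-u=\frac{2a_1\delta}{\delta-1}(\rho^{\delta-1})_r\in L^2$ and $u\in L^2$ at the initial time, since $\rho_0-\bar\rho\in H^3$, which gives $v_0\in L^2$. If that is not enough, I would replace $v$ by its far-field-subtracted form in the energy, and verify the boundary terms $\mathcal{B}_5$--$\mathcal{B}_{10}$ vanish using the $H^3$ regularity from Theorem \ref{zth2-po}.
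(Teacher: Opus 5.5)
Your proposal is correct and follows the paper's route: the density upper bound as in \S\ref{subsub1} (the absorption argument of Lemma \ref{important2} near the origin, Lemma \ref{lemma991} away from it); the $L^\infty$ bound on $v$ as in \S\ref{subsub2} via Lemmas \ref{rho u-L2}, \ref{cru3}, \ref{l4.4}; and a rerun of Lemmas \ref{cru4}--\ref{ele}, with the pressure always written through $v-u$. Your far-field worry is a non-issue, since $\rho-\bar\rho\in H^3$ together with the lower bound on $\rho$ gives $(\rho^{\delta-1})_r\in L^2(I)$ and $v\to 0$ as $r\to\infty$, so $v\in L^2(I)$ directly and no far-field subtraction is needed.
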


Now, based on Lemma \ref{lemmauseful}, we establish the uniform lower bound for $\rho$.
\begin{lem}\label{lemma-lowerbound}
Let $\bar{\rho}>0$ in \eqref{e1.3}.
For any $(t,r)\in[0,T]\times I$,
\begin{equation*}
\rho(t,r)\geq C(T)^{-1},
\end{equation*}
where $C(T)\geq 1$ is a constant depending only on $(T,\bar\rho,\rho_0,u_0,n,a_1,\delta,\gamma,A)$.
\end{lem}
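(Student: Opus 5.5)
The plan is to run the argument of Lemma \ref{lemma-inf-rho} globally, now using that $\rho^{\delta-1}$ is already bounded at infinity. Since $\bar\rho>0$, the natural quantity is $w:=\rho^{\delta-1}-\bar\rho^{\delta-1}$, which decays at infinity. In one dimension (the radial variable) we have $|w|_\infty^2\le 2|w|_2|w_r|_2$, so it suffices to bound $|w(t)|_2$ and $|(\rho^{\delta-1})_r(t)|_2$ uniformly on $[0,T]$. Then $\rho^{\delta-1}\le \bar\rho^{\delta-1}+|w|_\infty\le C(T)$, which gives $\rho\ge C(T)^{-1}$ because $\delta<1$.

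\textbf{Gradient bound.} By \eqref{V-expression}, $(\rho^{\delta-1})_r=\frac{\delta-1}{2a_1\delta}(v-u)$. The bound $|u|_2\le C(T)$ comes from Lemma \ref{lemmauseful}. For $v$, the $L^\infty$ bound alone is not enough on the whole half-line, so I would use the $L^2$ bound on $\rho^{\gamma-\delta}v$ from the analogue of Lemma \ref{ele}, which holds when $\bar\rho>0$. To turn this into a bound on $v$ itself, I would split the line: where $\rho\ge \bar\rho/2$ it directly controls $|v|$; where $\rho<\bar\rho/2$, the function $w$ is bounded below by a positive constant. So I would rather estimate $(\rho^{\delta-1})_r$ in $L^2$ directly via $v$ on $\{\rho\ge\bar\rho/2\}$, and on the complement rely on the finiteness of $|\{\rho<\bar\rho/2\}|$, which is controlled by $|w|_2^2$, together with $|v|_\infty\le C(T)$. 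The result is
\begin{equation*}
|(\rho^{\delta-1})_r|_2\le C(T)\big(1+|w|_2\big).
\end{equation*}
Alternatively, the BD part of Lemma \ref{lemma991} gives $r^{m/2}(\rho^{\delta-1/2})_r\in L^2$, and away from the origin this converts to $(\rho^{\delta-1})_r$ up to the factor $\rho^{-1/2}$, which in turn is controlled by $1+|w|_\infty$.

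\textbf{Evolution of $w$.} From the continuity equation,
\begin{equation*}
w_t+uw_r+(\delta-1)\rho^{\delta-1}\big(u_r+\tfrac{m}{r}u\big)=0.
\end{equation*}
Multiplying by $w$ and integrating over $I$ gives the following bounds for the two terms:
\begin{equation*}
\Big|\int uw_rw\,\mathrm{d}r\Big|\le |u|_\infty|w_r|_2|w|_2,
\qquad
\Big|\int \rho^{\delta-1}\mathrm{D}_r u\, w\,\mathrm{d}r\Big|\le |\rho^{\frac{\delta-1}{2}}\mathrm{D}_r u|_2\,|\rho^{\delta-1}|_\infty^{\frac12}\,|w|_2 .
\end{equation*}
Using $|\rho^{\delta-1}|_\infty\le C(1+|w|_\infty)\le C\big(1+|w|_2^{1/2}|w_r|_2^{1/2}\big)$ and the gradient bound, I would arrive at
\begin{equation*}
\frac{\mathrm d}{\mathrm dt}|w|_2^2\le C(T)\big(1+|u|_\infty^2+|\rho^{\frac{\delta-1}{2}}\mathrm{D}_r u|_2^2\big)\big(1+|w|_2^2\big).
\end{equation*}
The coefficient is integrable in time by Lemma \ref{lemmauseful}, so Gr\"onwall closes the estimate. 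Finally, $|w(0)|_2<\infty$ because $\rho_0-\bar\rho\in H^3$ and $\inf\rho_0>0$.

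\textbf{Main obstacle.} The delicate step is controlling $|(\rho^{\delta-1})_r|_2$ on the entire half-line rather than on $[0,R]$, because $v$ is known only in $L^\infty$. I expect the splitting argument above, or the BD-entropy route, to handle it; the BD route gives $L^2$ control of $\rho^{\delta-3/2}\rho_r$ only with the weight $r^m$, so near the origin I would still use $|v|_\infty$ on $[0,1]$ instead.
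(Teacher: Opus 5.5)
Your main argument is correct, but it follows a different route from the paper's.

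\textbf{The paper's route.} The paper works with $\log\hat\rho=\log(\rho/\bar\rho)$ rather than $w=\rho^{\delta-1}-\bar\rho^{\delta-1}$. Its derivative is $(\log\rho)_r=\frac{1}{2a_1\delta}\rho^{1-\delta}(v-u)$, and the weight $\rho^{1-\delta}$ is harmless once the upper bound on $\rho$ is known. Applying $W^{1,1}(I)\hookrightarrow L^\infty(I)$ to $|\log\hat\rho|^3$ gives $|\log\hat\rho|_\infty\le C(T)(1+|\log\hat\rho|_2)$ using only $|v|_\infty$ and $|u|_2$, so $v$ never needs to lie in $L^2(I)$. The $L^2$ estimate for $\log\hat\rho$ is then linear, because $|(\log\rho)_r|_\infty\le C|\rho|_\infty^{1-\delta}|(u,v)|_\infty$ and $|\mathrm{D}_r u|_2\le|\rho|_\infty^{\frac{1-\delta}{2}}|\rho^{\frac{\delta-1}{2}}\mathrm{D}_r u|_2$. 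Everything rests on the quantities in Lemma \ref{lemmauseful}.

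\textbf{Your route.} Your choice of $w$ makes $w_r=\frac{\delta-1}{2a_1\delta}(v-u)$ weight-free, but then you need $v\in L^2(I)$ on the whole half-line. You get this from two ingredients. The first is the extra bound $|\rho^{\gamma-\delta}v|_2\le C(T)$, from the $\bar\rho>0$ version of Lemma \ref{ele}, which the paper does assert. The second is the level-set splitting: on $\{\rho<\bar\rho/2\}$ one has $w\ge\bar\rho^{\delta-1}(2^{1-\delta}-1)>0$, so that set has measure at most $C|w|_2^2$. Together these correctly give $|w_r|_2\le C(T)(1+|w|_2)$.

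The reaction term then produces $|\rho^{\frac{\delta-1}{2}}\mathrm{D}_r u|_2\,(1+|w|_2)^{3/2}$, which is superlinear in $|w|_2$. It is absorbed via $bX^{3/4}\le(1+b^2)X$ with $X=1+|w|_2^2$, so your Gr\"onwall inequality holds as stated. Your version stays closer in structure to Lemma \ref{lemma-inf-rho} and bounds $\rho^{\delta-1}$ directly. The paper's version is more economical: it uses fewer a priori inputs and only linear estimates.

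\textbf{Caveat: drop the ``alternative'' BD route for the gradient bound.} Writing $(\rho^{\delta-1})_r=\frac{\delta-1}{\delta-\frac12}\rho^{-\frac12}(\rho^{\delta-\frac12})_r$, the factor is $\rho^{-\frac12}=(\bar\rho^{\delta-1}+w)^{\frac{1}{2(1-\delta)}}$. Since $\frac{1}{2(1-\delta)}>1$ for $\delta>\frac12$, this is not controlled linearly by $1+|w|_\infty$. After interpolating $|w|_\infty$ between $|w|_2$ and $|w_r|_2$, one of two things happens:
\begin{itemize}
\item for $\delta\ge\frac34$, the $|w_r|_2$ term cannot be absorbed at all;
\item for $\delta<\frac34$, the resulting bound on $|w_r|_2$ is a power of $|w|_2$ strictly greater than one.
\end{itemize}
In either case the Gr\"onwall step would not close globally on $(\frac12,1)$. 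The splitting argument is the one that works.
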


\begin{proof}
Set $\hat\rho=\rho/\bar\rho$ and $\hat\rho_0=\hat\rho|_{t=0}$. First, it follows from \eqref{V-expression} and Lemmas \ref{lemmauseful} and \ref{ale1} that
\begin{align*}
\begin{aligned}
|\log\hat\rho|_\infty^3
&\leq C_0\big\||\log\hat\rho|^3\big\|_{1,1}\leq C_0\int_0^\infty\big(|\log\hat\rho|^3+|\log\hat\rho|^2\rho^{1-\delta}(|u|+|v|)\big)\,\mathrm{d}r\\
&\leq C_0|\log\hat\rho|_\infty|
\log\hat\rho|_2^2+C_0|\rho|_\infty^{1-\delta}|v|_\infty|
\log\hat\rho|_2^2 +C_0 |\rho|_\infty^{1-\delta}|\log\hat\rho|_\infty|\log\hat\rho|_2|u|_2\\
&\leq C(T)\big((|\log\hat\rho|_\infty+1)
|\log\hat\rho|_2^2+ |\log\hat\rho|_\infty|\log\hat\rho|_2\big),
\end{aligned}
\end{align*}
which, along with the Young inequality, leads to
\begin{equation}\label{wuqiong-log-42}
|\log\hat\rho|_\infty\leq C(T)\big(|\log\hat\rho|_2+1\big).   
\end{equation}

Next, rewrite $\eqref{e1.5hh-}_1$ as
\begin{equation*} 
(\log\hat\rho)_t+u(\log\rho)_r+\big(u_r+\frac{m}{r}u\big)=0.
\end{equation*}
Multiplying the above by $\log\hat\rho$ and integrating over $I$, we obtain from \eqref{V-expression}, Lemma \ref{lemmauseful}, and the H\"older and Young inequalities that
\begin{equation*}
\begin{aligned}
\frac{\mathrm{d}}{\mathrm{d}t}|\log\hat\rho|_2^2&\leq |u|_2|(\log\rho)_r|_\infty|\log\hat\rho|_2
+C_0|\mathrm{D}_r u|_2|\log\hat\rho|_2\\
&\leq C_0\big(|u|_2^2|\rho|_\infty^{2-2\delta}|(u,v)|_\infty^2+|\rho|_\infty^{1-\delta}|\rho^\frac{\delta-1}{2}\mathrm{D}_r u|_2^2\big)+|\log\hat\rho|_2^2\\
&\leq C(T)\big(1+|u|_\infty^2+|\rho^\frac{\delta-1}{2}\mathrm{D}_r u|_2^2\big)+|\log\hat\rho|_2^2,
\end{aligned}
\end{equation*}
which, along with the Gr\"onwall inequality, leads to
\begin{equation}\label{324-po}
|\log\hat\rho|_2\leq C(T)\big(|\log\hat\rho_0|_2+1\big).
\end{equation}

For the $L^2(I)$-boundedness of $\log\hat\rho_0$, observing that $\rho_0(r)\to \bar\rho$ as $r\to \infty$, one can find a sufficiently large $R_0>0$, depending only on $\bar\rho$, such that, for all $r\in [R_0,\infty)$, 
\begin{equation*} 
\chi_{R_0}^\sharp|\log\hat\rho_0|
=\chi_{R_0}^\sharp|\log(1+(\hat\rho_0-1))|\leq 2\chi_{R_0}^\sharp|\hat\rho_0-1|
\leq C_0\chi_{R_0}^\sharp|\rho_0-\bar\rho|.
\end{equation*}
For such $R_0>0$, Lemma \ref{lemma-initial}, together with $0<\inf_{r\in I}\rho_0\leq \rho_0\leq |\rho_0|_\infty$, implies 
\begin{equation*} 
\begin{aligned}
|\log\hat\rho_0|_2&\leq |\chi_{R_0}^\flat\log\hat\rho_0|_2
+|\chi_{R_0}^\sharp\log\hat\rho_0|_2\\
&\leq C_0\sqrt{R_0}\big(|\log\rho_0|_\infty+|\log\bar\rho|\big)+C_0|\chi_{R_0}^\sharp (\rho_0-\bar\rho)|_2\\
&\leq C_0+C_0|r^\frac{m}{2} (\rho_0-\bar\rho)|_2\leq C_0+C_0\|\rho_0-\bar\rho\|_{L^2}\leq C_0.
\end{aligned}
\end{equation*}

Thus, substituting above into \eqref{324-po}, together with \eqref{wuqiong-log-42}, gives
\begin{equation*}
|\log\hat\rho|_\infty\leq C(T)\big(|\log\hat\rho|_2 +1\big)\leq C(T)\big(|\log\hat\rho_0|_2+1\big)\leq C(T),
\end{equation*}
which implies the desired result.
\end{proof}

\section{Local Well-Posedness  of Regular Solutions with Far-Field Vacuum}
\label{section-local-regular}

This section is devoted to establishing the local well-posedness of regular solutions of the Cauchy problem \eqref{eq:1.1}--\eqref{kelaoxiusi} with \eqref{eqs:CauchyInit}--\eqref{e1.3}  when $\bar\rho=0$. We first give the proof of Theorem \ref{th1} and,  at the end of this section, we show that this theorem indeed implies Theorem \ref{thm-loc}.

\subsection{Linearization}
Let $T>0$. In order to solve the nonlinear problem  \eqref{eq:cccq}{\rm--}\eqref{sfanb1}, we consider the following  linearized problem for $(\phi^{\epsilon,\eta}, \boldsymbol{u}^{\epsilon,\eta},  h^{\epsilon,\eta})$  in $[0,T]\times \mathbb{R}^n$ ($n=2$, $3$):
\begin{equation}\label{ln}
\begin{cases}
\phi^{\epsilon,\eta}_t+\boldsymbol{w}\cdot\nabla\phi^{\epsilon,\eta}+(\gamma-1)\phi^{\epsilon,\eta} \diver\boldsymbol{w}=0,\\[3pt]
\boldsymbol{u}^{\epsilon,\eta}_t+\boldsymbol{w}\cdot \nabla \boldsymbol{w}+\nabla\phi^{\epsilon,\eta}+a \sqrt {(h^{\epsilon,\eta})^2+\epsilon^2} L\boldsymbol{u}^{\epsilon,\eta}=\boldsymbol{\psi}^{\epsilon,\eta} \cdot Q(\boldsymbol{w}),\\[3pt]
h^{\epsilon,\eta}_t+\boldsymbol{w}\cdot \nabla h^{\epsilon,\eta}+(\delta-1)g\diver\boldsymbol{w}=0,\\[3pt]
(\phi^{\epsilon,\eta},\boldsymbol{u}^{\epsilon,\eta},h^{\epsilon,\eta})|_{t=0}=(\phi^\eta_0,\boldsymbol{u}^\eta_0,h^\eta_0)=(\phi_0+\eta,\boldsymbol{u}_0,(\phi_0+\eta)^{2\iota})\qquad  \text{for $\boldsymbol{x}\in\mathbb{R}^n$},\\[3pt]
(\phi^{\epsilon,\eta},\boldsymbol{u}^{\epsilon,\eta},h^{\epsilon,\eta})\to (\eta,\boldsymbol{0},\eta^{2\iota}) \qquad \text{as $|\boldsymbol{x}|\to \infty$} \quad \text{for $t\in [0,T]$},
\end{cases}
\end{equation}
where $\epsilon$ and $\eta$ are  positive constants,
\begin{equation}\label{psih}
\boldsymbol{\psi}^{\epsilon,\eta}=\frac{a\delta}{\delta-1}\nabla h^{\epsilon,\eta},
\end{equation}
$\boldsymbol{w}=(w_1,\cdots\!,w_n)^{\top}$ is a given spherically symmetric vector function of the form: $
\boldsymbol{w}(t,\boldsymbol{x})=w(t,|\boldsymbol{x}|)\frac{\boldsymbol{x}}{|\boldsymbol{x}|}$,
and  $g>0$ is a given spherically symmetric  function. Moreover, $\boldsymbol{w}$ and $g$ are independent of ($\epsilon, \eta$) and satisfy  
\begin{equation*}
\begin{aligned}
&\boldsymbol{w}(0,\boldsymbol{x})=\boldsymbol{u}_0(\boldsymbol{x}),\quad g(0,\boldsymbol{x})=h_0(\boldsymbol{x}),\quad g\in L^\infty([0,T]\times \mathbb{R}^n)\cap C([0,T]\times \mathbb{R}^n),\\
& \nabla g\in C([0,T]; D^{1,n}(\mathbb{R}^n)\cap D^2(\mathbb{R}^n)),\quad  g_t\in C([0,T];H^2(\mathbb{R}^n)),\\
&\nabla g_{tt}\in L^2([0,T];L^2(\mathbb{R}^n)),\quad  \boldsymbol{w}\in C([0,T];H^3(\mathbb{R}^n))\cap L^2([0,T];H^4(\mathbb{R}^n)),\\ 
&t^{\frac{1}{2}}\boldsymbol{w}\in L^\infty([0,T];D^4(\mathbb{R}^n)),\quad \boldsymbol{w}_t\in C([0,T];H^1(\mathbb{R}^n))\cap L^2([0,T];D^2(\mathbb{R}^n)),\\
& \boldsymbol{w}_{tt}\in L^2([0,T];L^2(\mathbb{R}^n)),\quad 
t^{\frac{1}{2}}\boldsymbol{w}_t\in L^\infty([0,T];D^2(\mathbb{R}^n))\cap L^2([0,T];D^3(\mathbb{R}^n)),\\
&t^{\frac{1}{2}}\boldsymbol{w}_{tt}\in L^\infty([0,T];L^2(\mathbb{R}^n))\cap L^2([0,T];D^1(\mathbb{R}^n)).
\end{aligned}
\end{equation*}
For the sake of clarity, we declare that the functions $(\phi_0,\boldsymbol{u}_0)$  shown above in problem \eqref{ln} are exactly the ones in  \eqref{sfana1}.

It follows from the classical theory \cite{evans, oar,amj}, at least when $\eta$
and $\epsilon$ are positive, that the following global well-posedness of \eqref{ln} in $[0,T]\times \mathbb{R}^n$ holds.
\begin{lem}\label{ls}
Let \eqref{canshu} hold, $\epsilon>0$, and $\eta>0$. 
If  the initial data  $(\phi_0, \boldsymbol{u}_0)(\boldsymbol{x})$ are spherically symmetric and satisfy \eqref{th78qq}{\rm--}\eqref{th78zxq}, then, for any  $T>0$,  there exists a unique classical solution $(\phi^{\epsilon,\eta},\boldsymbol{u}^{\epsilon,\eta},h^{\epsilon,\eta})$ in $[0,T]\times \mathbb{R}^n$ of the Cauchy problem $\eqref{ln}$ such that
\begin{align}
&\inf_{[0,T]\times \mathbb{R}^n}\phi(t,x)>0,\quad (\nabla \phi^{\epsilon,\eta},\phi^{\epsilon,\eta}_t)\in C([0,T];H^2(\mathbb{R}^n)), \notag\\
&h^{\epsilon,\eta}\in L^\infty([0,T]\times \mathbb{R}^n)\cap C([0,T]\times \mathbb{R}^n),\quad \nabla h^{\epsilon,\eta}\in C([0,T];H^2(\mathbb{R}^n)),\notag\\
&h^{\epsilon,\eta}_t\in C([0,T];H^2(\mathbb{R}^n)),\quad \boldsymbol{u}^{\epsilon,\eta}\in C([0,T];H^3(\mathbb{R}^n))\cap L^2([0,T];H^4(\mathbb{R}^n)),\label{linearregularity}\\
& \boldsymbol{u}^{\epsilon,\eta}_t\in C([0,T];H^1(\mathbb{R}^n))\cap L^2([0,T];D^2(\mathbb{R}^n)),\quad \boldsymbol{u}^{\epsilon,\eta}_{tt}\in L^2([0,T];L^2(\mathbb{R}^n)),\notag\\
&t^{\frac{1}{2}}\boldsymbol{u}^{\epsilon,\eta}\in L^\infty([0,T];D^4(\mathbb{R}^n)), \quad t^{\frac{1}{2}}\boldsymbol{u}^{\epsilon,\eta}_t\in L^\infty([0,T];D^2(\mathbb{R}^n))\cap L^2([0,T];D^3(\mathbb{R}^n)),\notag\\
& t^{\frac{1}{2}}\boldsymbol{u}^{\epsilon,\eta}_{tt}\in L^\infty([0,T];L^2(\mathbb{R}^n))\cap L^2([0,T];D^1(\mathbb{R}^n)).\notag
\end{align}
Moreover, $(\phi^{\epsilon,\eta},\boldsymbol{u}^{\epsilon,\eta},h^{\epsilon,\eta},\boldsymbol{\psi}^{\epsilon,\eta})$ is  spherically symmetric and takes the form
\begin{equation}\label{2..}
\begin{aligned}
(\phi^{\epsilon,\eta},h^{\epsilon,\eta})(t, \boldsymbol{x})&=(\phi^{\epsilon,\eta},\phi^{\epsilon,\eta})(t,|\boldsymbol{x}|),\quad 
(\boldsymbol{u}^{\epsilon,\eta},\boldsymbol{\psi}^{\epsilon,\eta})(t, \boldsymbol{x})=(u^{\epsilon,\eta}, \psi^{\epsilon,\eta})(t,|\boldsymbol{x}|) \frac{\boldsymbol{x}}{|\boldsymbol{x}|},
\end{aligned}
\end{equation} 
where  $\psi^{\epsilon,\eta}(t,r)=\frac{a\delta}{\delta-1}(h^{\epsilon,\eta})_r(t,r)$.
\end{lem}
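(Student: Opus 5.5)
The plan is to solve the three equations of \eqref{ln} one after another, since the system is decoupled in the unknowns. First, the transport equations $\eqref{ln}_1$ and $\eqref{ln}_3$ for $\phi^{\epsilon,\eta}$ and $h^{\epsilon,\eta}$ are linear and driven by the known field $\boldsymbol{w}$. Here $\boldsymbol{w}\in C([0,T];H^3)\cap L^2([0,T];H^4)$, so $\nabla\boldsymbol{w}\in L^1([0,T];W^{1,\infty})$ by Sobolev embedding.

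\textbf{The two transport equations.} I would integrate $\eqref{ln}_1$ along the characteristics $\dot X=\boldsymbol{w}(t,X)$, which gives
\[
\phi^{\epsilon,\eta}(t,X(t))=(\phi_0+\eta)\exp\Big(-(\gamma-1)\int_0^t\diver\boldsymbol{w}\,\mathrm{d}s\Big).
\]
This yields at once the positive lower bound $\inf\phi^{\epsilon,\eta}\ge \eta e^{-C\int_0^T\|\nabla\boldsymbol{w}\|_{L^\infty}}>0$ and the far-field limit $\eta$. Energy estimates for $\nabla\phi^{\epsilon,\eta}$ in $H^2$ follow by differentiating the equation up to three times and applying Gr\"onwall, using $\nabla\phi_0\in H^2$ together with $\boldsymbol{w}\in L^2H^4$. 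The bound on $\phi^{\epsilon,\eta}_t$ is then read off from the equation.

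The $h$-equation is handled the same way. Its source $(\delta-1)g\diver\boldsymbol{w}$ has $\nabla(g\diver\boldsymbol{w})\in L^2([0,T];H^2)$, because $g\in L^\infty$, $\nabla g\in D^{1,n}\cap D^2$ and $\boldsymbol{w}\in L^2H^4$. Hence $\nabla h^{\epsilon,\eta}\in C([0,T];H^2)$, and $h^{\epsilon,\eta}_t\in C([0,T];H^2)$ follows from the equation. The initial value $h_0^\eta=(\phi_0+\eta)^{2\iota}$ is bounded by $\eta^{2\iota}$, and its gradient lies in $H^2$ because $\phi_0+\eta\ge\eta$. Finally, $\boldsymbol{\psi}^{\epsilon,\eta}$ is defined by \eqref{psih}.

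\textbf{The velocity equation.} With $h^{\epsilon,\eta}$ known, $\eqref{ln}_2$ becomes a linear strongly parabolic Lam\'e-type system. Its coefficient $a\sqrt{(h^{\epsilon,\eta})^2+\epsilon^2}$ satisfies $a\epsilon\le\cdot\le C$, and the coefficient is Lipschitz in $x$ with $\nabla$ in $H^2$ and $\partial_t$ in $H^2$. The source $-\boldsymbol{w}\cdot\nabla\boldsymbol{w}-\nabla\phi^{\epsilon,\eta}+\boldsymbol{\psi}^{\epsilon,\eta}\cdot Q(\boldsymbol{w})$ lies in $C([0,T];H^2)$, with time derivative in $L^2L^2$ and better. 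Ellipticity of $L$ follows from $a_1>0$ and $2a_1+na_2\ge0$, and the equation is uniformly parabolic because $\epsilon>0$.

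I would construct the solution by Galerkin approximation or by the classical linear parabolic theory cited (\cite{evans, oar,amj}). The estimates come in successive orders: the $L^2$ energy estimate first, then the estimate for $\boldsymbol{u}_t$ obtained by differentiating in time, then elliptic regularity for $L$ with variable coefficient, which gives $H^3$ and $L^2H^4$. The time-weighted estimates for $t^{1/2}\boldsymbol{u}_{tt}$ and $t^{1/2}\nabla^4\boldsymbol{u}$ are obtained by differentiating twice in time and multiplying by $t$. The initial value $\boldsymbol{u}_t(0)$ belongs to $H^1$: this uses $\boldsymbol{u}_0\in H^3$ and the boundedness of the coefficient, without any compatibility condition, since $\epsilon,\eta>0$. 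Uniqueness follows from the $L^2$ energy estimate for the difference of two solutions.

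\textbf{Spherical symmetry.} For any $\mathcal{O}\in\mathrm{SO}(n)$, the rotated triple $(\phi(t,\mathcal{O}\boldsymbol{x}),\mathcal{O}^\top\boldsymbol{u}(t,\mathcal{O}\boldsymbol{x}),h(t,\mathcal{O}\boldsymbol{x}))$ solves the same problem, because $\boldsymbol{w}$, $g$ and the data are spherically symmetric and $L$, $Q$, $\diver$ and $\nabla$ are rotation-covariant. Uniqueness then gives invariance, hence the form \eqref{2..}, with $\psi^{\epsilon,\eta}=\frac{a\delta}{\delta-1}h^{\epsilon,\eta}_r$.

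\textbf{Main obstacle.} The hardest part is the top-order time-weighted estimates for $\boldsymbol{u}^{\epsilon,\eta}$, in particular $t^{1/2}\boldsymbol{u}_{tt}\in L^\infty L^2\cap L^2D^1$. They require commuting two time derivatives through the variable coefficient, which produces $h_{tt}$. That term is controlled through $\eqref{ln}_3$ using $g_t\in H^2$, $\boldsymbol{w}_{tt}\in L^2L^2$ and $t^{1/2}\boldsymbol{w}_{tt}\in L^2D^1$. The strategy would mirror the proof of Lemma \ref{Lemma6.13}, with the crude constants now allowed to depend on $\epsilon$ and $\eta$.
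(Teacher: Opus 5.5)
Your route is the paper's. The paper settles existence, uniqueness and \eqref{linearregularity} in one sentence by citing the classical theory for linear transport equations and for the parabolic system $\eqref{ln}_2$, which is uniformly parabolic because $\epsilon>0$. It then obtains symmetry by rotating the solution and invoking uniqueness. Your order-by-order sketch of the existence part is a more explicit version of what that citation covers.

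The step that fails as written is the passage from invariance to the form \eqref{2..} when $n=2$, and the paper's argument fails there too. Rotations of the plane commute with $\boldsymbol{x}\mapsto\boldsymbol{x}^\perp=(-x_2,x_1)^\top$. Hence every swirl field $\beta(|\boldsymbol{x}|)\boldsymbol{x}^\perp/|\boldsymbol{x}|$ satisfies $\mathcal{O}^\top\boldsymbol{f}(\mathcal{O}\boldsymbol{x})=\boldsymbol{f}(\boldsymbol{x})$ for all $\mathcal{O}\in\mathrm{SO}(2)$. So $\mathrm{SO}(2)$-invariance of the velocity only yields $\boldsymbol{u}^{\epsilon,\eta}=\alpha(t,r)\frac{\boldsymbol{x}}{r}+\beta(t,r)\frac{\boldsymbol{x}^\perp}{r}$. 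The paper's Lemma \ref{rmk31} has the same defect, since its proof uses a $180^\circ$ rotation about the axis through $\boldsymbol{x}_0$, which exists only for $n=3$.

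The fix is cheap. The check that the transformed triple solves \eqref{ln} uses only $\mathcal{O}^\top\mathcal{O}=\mathbb{I}_n$. The data $\phi_0^\eta$, $h_0^\eta$, $g$, $\boldsymbol{u}_0$, $\boldsymbol{w}$ are invariant under all of $\mathrm{O}(n)$, so uniqueness also gives invariance under reflections. The reflection $\mathbb{I}_n-2\boldsymbol{e}\boldsymbol{e}^\top$ with $\boldsymbol{e}\perp\boldsymbol{x}_0$ fixes $\boldsymbol{x}_0$, and therefore forces $\boldsymbol{u}^{\epsilon,\eta}(t,\boldsymbol{x}_0)\parallel\boldsymbol{x}_0$.

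Alternatively, argue with the swirl coefficient directly. The source in $\eqref{ln}_2$ is radial and $\diver(\beta\boldsymbol{x}^\perp/r)=0$. So $\beta$ solves $\beta_t-aa_1\sqrt{(h^{\epsilon,\eta})^2+\epsilon^2}\,(\beta_{rr}+\beta_r/r-\beta/r^2)=0$ with $\beta|_{t=0}=0$, and hence vanishes.
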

\begin{proof} 
The well-posedness follows from the classical theories for the transport equation and the parabolic equations when $\eta$
and $\epsilon$ are positive, which can be found in  \cite{oar,amj}. We only show that the solution $(\phi^{\epsilon,\eta},\boldsymbol{u}^{\epsilon,\eta},h^{\epsilon,\eta},\boldsymbol{\psi}^{\epsilon,\eta})$ is spherically symmetric and takes form \eqref{2..}. For simplicity, in the rest of the proof, we drop the superscript $\epsilon$ and $\eta$ in $(\phi^{\epsilon,\eta},\boldsymbol{u}^{\epsilon,\eta},h^{\epsilon,\eta},\boldsymbol{\psi}^{\epsilon,\eta})$, and let  $(\phi,\boldsymbol{u},h)$ be the unique solution of  \eqref{ln} in $[0,T]\times\mathbb{R}^n$ and $(\boldsymbol{\psi},h)$ satisfy \eqref{psih}. 

Indeed, since $(\phi_0^\eta,\boldsymbol{u}_0^\eta,h_0^\eta)$ is spherically symmetric, then
\begin{equation*} 
\phi_0^\eta(\boldsymbol{x})=\phi_0^\eta(\mathcal{O}\boldsymbol{x}), \ \  \boldsymbol{u}_0^\eta(\boldsymbol{x})= \mathcal{O}^\top\boldsymbol{u}_0^\eta(\mathcal{O}\boldsymbol{x}), \ \ h_0^\eta(\boldsymbol{x})= h_0^\eta(\mathcal{O}\boldsymbol{x})\qquad\text{for any  $\mathcal{O}\in \mathrm{SO}(n)$}.
\end{equation*}
Thus, denoting
\begin{equation*}
\hat\phi(t,\boldsymbol{x})=\phi(t,\mathcal{O}\boldsymbol{x}),\quad \hat{\boldsymbol{u}}(t,\boldsymbol{x})=\mathcal{O}^{\top}\boldsymbol{u}(t,\mathcal{O}\boldsymbol{x}),\quad \hat{h}(t,\boldsymbol{x})=h(t,\mathcal{O}\boldsymbol{x}),
\end{equation*}
we obtain from  $|\mathcal{O}\boldsymbol{x}|=|\boldsymbol{x}|$ and \eqref{e1.3} that
\begin{equation*}
\begin{aligned}
&(\hat\phi,\hat{\boldsymbol{u}},\hat{h})|_{t=0}=(\hat\phi_0,\hat{\boldsymbol{u}}_0,\hat{h}_0)=(\phi_0^\eta,\boldsymbol{u}_0^\eta,h_0^\eta),\\
&\hat{\boldsymbol{u}}(t,\boldsymbol{x})|_{|\boldsymbol{x}|=0} =\mathcal{O}^{\top}\boldsymbol{u}(t, \mathcal{O}\boldsymbol{x})|_{|\boldsymbol{x}|=0} =\boldsymbol{0} \qquad \text{for $t\in [0,T]$},\\
&(\hat\phi,\hat{\boldsymbol{u}},\hat h)\to (\eta,\boldsymbol{0},\eta^{2\iota}) \qquad 
 \text{as $|\boldsymbol{x}|\to \infty$} \hspace{7mm}  \text{for $t\in [0,T]$}.
\end{aligned}
\end{equation*}
 
Next, we show that $(\hat\phi,\hat{\boldsymbol{u}},\hat{h})$ is also a solution of \eqref{ln}. For convenience, in what follows, we adopt the Einstein summation convention that 
any index that appears exactly twice in a term is summed over.
For any $\mathcal{O}=(\mathcal{O}_{lk})_{n\times n}\in \mathrm{SO}(n)$, let $\boldsymbol{x}=(x_1,\cdots\!, x_n)^\top$ and $\boldsymbol{y}=(y_1,\cdots\!, y_n)^\top$ satisfying  $\boldsymbol{y}=\mathcal{O}\boldsymbol{x}$. Clearly, $y_l=\mathcal{O}_{lk}x_k$, $\mathcal{O}_{ki}\mathcal{O}_{kj}=\delta_{ij}$, and  $\boldsymbol{w}(t,\boldsymbol{x})=\mathcal{O}^\top \boldsymbol{w}(t,\boldsymbol{y})$. 

To show that $\hat\phi$ satisfies $\eqref{ln}_1$, by the following direct calculations: 
\begin{align*}
(\hat\phi_t+\boldsymbol{w}\cdot\nabla\hat\phi)(t,\boldsymbol{x})&=(\phi_t+\mathcal{O}_{ki}\mathcal{O}_{li}w_l\partial_{y_k}\phi) (t,\boldsymbol{y})=(\phi_t+\boldsymbol{w}\cdot\nabla_{\boldsymbol{y}}\phi)(t,\boldsymbol{y}),\\
(\hat\phi\diver\boldsymbol{w})(t,\boldsymbol{x})&=(\phi\mathcal{O}_{kj}\mathcal{O}_{lj} \partial_{y_k}w_l)(t,\boldsymbol{y})=(\phi\diver_{\boldsymbol{y}}\boldsymbol{w})(t,\boldsymbol{y}),
\end{align*}
we find that $\hat\phi$ satisfies  $\eqref{ln}_1$ indeed. Clearly, repeating the same calculation, we can also show that $\hat h$ satisfies $\eqref{ln}_3$.

Next, we show that $(\hat\phi,\hat{\boldsymbol{u}},\hat h,\hat{\boldsymbol{\psi}})$ satisfies $\eqref{ln}_2$. 
Notice that
\begin{align*}
&\begin{aligned}
(\hat{\boldsymbol{u}}_t+\boldsymbol{w}\cdot\nabla\boldsymbol{w})_i(t,\boldsymbol{x})
&=(\mathcal{O}_{qi} (\partial_tu_q+\mathcal{O}_{lj}\mathcal{O}_{kj} w_l \partial_{y_k}w_q)) (t,\boldsymbol{y}) \\
&=(\mathcal{O}_{qi}(\partial_t u_q+ w_k \partial_{y_k}w_q)) (t,\boldsymbol{y})=(\mathcal{O}^\top (\boldsymbol{u}_t+\boldsymbol{w}\cdot\nabla_{\boldsymbol{y}} \boldsymbol{w}))_i(t,\boldsymbol{y}),\\
\partial_i\hat\phi(t,\boldsymbol{x})&=(\mathcal{O}_{ki}  \partial_{y_k}\phi) (t,\boldsymbol{y})=(\mathcal{O}^\top\nabla_{\boldsymbol{y}}\phi)_i(t,\boldsymbol{y}),\\
\Delta \hat{u}_i(t,\boldsymbol{x})&=(\mathcal{O}_{lj} \mathcal{O}_{kj} \mathcal{O}_{qi}\partial_{y_ly_k}u_q)(t,\boldsymbol{y}) =(\mathcal{O}^\top \Delta_{\boldsymbol{y}} \boldsymbol{u})_i(t,\boldsymbol{y}),
\end{aligned}\\
&  \, \,\,\, \, \,\begin{aligned}
\partial_i\diver\hat{\boldsymbol{u}}(t,\boldsymbol{x})&=(\mathcal{O}_{li}\mathcal{O}_{kj}\mathcal{O}_{qj}\partial_{y_ly_k}u_q)(t,\boldsymbol{y}) =(\mathcal{O}^\top \nabla_{\boldsymbol{y}}\diver_{\boldsymbol{y}}\boldsymbol{u})_i (t,\boldsymbol{y}),\\
2(\hat{\boldsymbol{\psi}}\cdot D(\boldsymbol{w}))_i(t,\boldsymbol{x})
&=(\mathcal{O}_{kj} \psi_k (\mathcal{O}_{lj}\mathcal{O}_{pi} \partial_{y_l}w_p+\mathcal{O}_{li}\mathcal{O}_{pj} \partial_{y_l}w_p))(t,\boldsymbol{y})\\
&=(\mathcal{O}_{li}\psi_k (\partial_{y_k}w_l+ \partial_{y_l}w_k))(t,\boldsymbol{y})=2(\mathcal{O}^\top(\boldsymbol{\psi}\cdot D_{\boldsymbol{y}}(\boldsymbol{w})))_i(t,\boldsymbol{y}),
\end{aligned}\\
&\quad \ \ \, \, \, \,\begin{aligned}
(\hat{\psi}_i\diver\boldsymbol{w})(t,\boldsymbol{x})
&=(\mathcal{O}_{kj} \psi_k (\mathcal{O}_{li}\mathcal{O}_{pi} \partial_{y_l}w_p))(t,\boldsymbol{y}) =(\mathcal{O}^\top(\boldsymbol{\psi}\diver_{\boldsymbol{y}}\boldsymbol{w}))_i(t,\boldsymbol{y}).
\end{aligned}
\end{align*}
Hence, the above identities yield that $(\hat\phi,\hat{\boldsymbol{u}},\hat h,\hat{\boldsymbol{\psi}})$ satisfies $\eqref{ln}_2$.

Now, since we have shown that $(\hat \phi,\hat{\boldsymbol{u}},\hat{h})$  is also a solution of the linearized problem \eqref{ln}, which takes the initial data $(\phi_0^\eta,\boldsymbol{u}_0^\eta,h_0^\eta)$, the uniqueness of the solution implies
\begin{equation*} 
\phi(t,\mathcal{O}\boldsymbol{x})=\phi(t,\boldsymbol{x}), \qquad (\mathcal{O}^{\top}\boldsymbol{u})(t,\mathcal{O}\boldsymbol{x})=\boldsymbol{u}(t,\boldsymbol{x}), \qquad h(t,\mathcal{O}\boldsymbol{x})=h(t,\boldsymbol{x}).
\end{equation*}
Therefore,  $(\phi,h)(t,\boldsymbol{x})=(\phi,h)(t,|\boldsymbol{x}|)$ with some functions $(\phi,h)(t,r)$ defined on $[0,T]\times I$. Moreover, it follows from Lemma \ref{rmk31} in Appendix \ref{appA} that   $\boldsymbol{u}(t,\boldsymbol{x})
=u(t,|\boldsymbol{x}|)\frac{\boldsymbol{x}}{|\boldsymbol{x}|}$ with some function $u(t,r)$ defined on $[0,T]\times I$, and $\boldsymbol{u}(t,\boldsymbol{x})|_{|\boldsymbol{x}|=0}=\boldsymbol{0}$. Finally, the spherical symmetry of $\boldsymbol{\psi}$ follows directly from that of $h$ and \eqref{psih}. 
\end{proof}
 
Now we are going to derive the uniform \textit{a priori} estimates, independent of $(\epsilon,\eta)$, for solutions $(\phi^{\epsilon,\eta},\boldsymbol{u}^{\epsilon,\eta},h^{\epsilon,\eta})$ of $\eqref{ln}$ obtained in Lemma \ref{ls}. In the rest of \S \ref{section-local-regular},  $C\geq 4$  denotes a generic constant depending only on  $(A, a_1,a_2, \gamma, \delta, T)$ and may be different at each occurrence. 

\subsection{\textit{A Priori} Estimates Independent of $(\epsilon,\eta)$}  \label{priorilinear}
Assume that  $(\phi_0,\boldsymbol{u}_0)$  satisfy \eqref{th78qq}--\eqref{th78zxq} and $(\phi^\eta_0,\boldsymbol{u}^\eta_0)$ are given in $\eqref{ln}_4$.

First, it follows from Lemmas \ref{ale1}, \ref{initial3}, and \ref{Hk-Ck-vector} (in Appendix \ref{improve-sobolev}), the facts that $\delta>1-\frac{1}{n}$ and $\phi_0\rightarrow 0$ as $|\boldsymbol{x}|\to \infty$,
and the initial assumption \eqref{th78qq} on $(\phi_0,\boldsymbol{u}_0,\boldsymbol{\psi}_0)$ (or the equivalent one \eqref{id1-high} on $(\rho_0,\boldsymbol{u}_0)$) that 
\begin{equation*}
(\phi_0,\ \boldsymbol{\psi_0}=\frac{a\delta}{\delta-1}\nabla \phi^{2\iota}_0,\ \nabla \phi^{\iota}_0)\in L^\infty(\mathbb{R}^n) \ \ \text{for $n=2,3$}, \qquad  \phi_0\in L^6(\mathbb{R}^3).
\end{equation*}

Next, for any fixed $\eta\in(0,1]$, denote 
\begin{equation}\label{etainitial}
\mathcal{G}^\eta_1=(\phi^\eta_0)^{\iota}\nabla \boldsymbol{u}^\eta_0,\qquad \mathcal{G}^\eta_2=(\phi^\eta_0)^{\iota}\nabla((\phi^\eta_0)^{2\iota}L\boldsymbol{u}^\eta_0),\qquad 
\boldsymbol{g}^\eta_*=(\phi^\eta_0)^{2\iota}L\boldsymbol{u}^\eta_0.
\end{equation}
 Since
\begin{align}
&\begin{aligned}
\|\phi_0^\iota\nabla^2\phi_0 \|_{L^2}&\leq  C\big(\|\phi_0\|_{L^{n^*}}\|\phi_0\|^{-\iota}_{L^\infty}\|\nabla\boldsymbol{\psi}_0\|_{L^n}+\|\nabla \phi^\iota_0\|_{L^{2n}}\|\nabla \phi_0\|_{L^{\frac{2n}{n-1}}}\big),\notag\\
\|(\phi^\eta_0)^\iota\nabla^2\phi^\eta_0\|_{L^2}&= \big\|\phi_0^\iota\nabla^2\phi_0 \frac{\phi_0^{-\iota}}{(\phi_0+\eta)^{-\iota}}\big\|_{L^2}\leq  \|\phi_0^\iota\nabla^2\phi_0 \|_{L^2},\notag
\end{aligned}\\
&\hspace{13mm}\begin{aligned}
\|\mathcal{G}^\eta_1\|_{L^2}&=\big\|\frac{\phi_0^{-\iota}}{(\phi_0+\eta)^{-\iota}}\mathcal{G}_1\big\|_{L^2}\leq \|\mathcal{G}_1\|_{L^2},\\ 
\|\mathcal{G}^\eta_2\|_{L^2}
&=\big\|\frac{\phi_0^{-3\iota}}{(\phi_0+\eta)^{-3\iota}}(\mathcal{G}_2-\frac{\eta\nabla\phi_0^{2\iota}}{\phi_0+\eta}\phi_0^\iota Lu_0)\big\|_{L^2}\\
&\leq C(\|\mathcal{G}_2\|_{L^2}+\|\boldsymbol{\psi}_0\|_{L^\infty}\|\phi_0\|^{-\iota}_{L^\infty}\|\boldsymbol{g}^\eta_*\|_{L^2}),\\
\|\boldsymbol{g}_*^\eta\|_{L^2}
&=\big\|\frac{\phi_0^{-2\iota}}{(\phi_0+\eta)^{-2\iota}}\boldsymbol{g}_*\big\|_{L^2}\leq \|\boldsymbol{g}_*\|_{L^2},
\end{aligned}\label{initialapproximation}\\
&\hspace{3mm}\begin{aligned}
\|\nabla^2(\phi^\eta_0)^{2\iota}\|_{L^n}
&\leq C\big\|\nabla \big( \frac{\phi_0^{-2\iota+1}}{(\phi_0+\eta)^{-2\iota+1}}\nabla \phi_0^{2\iota}\big)\big\|_{L^n}
\leq  C\big(\|\nabla \boldsymbol{\psi}_0\|_{L^n}+\|\nabla\phi^{\iota}_0 \|^2_{L^{2n}}\big),\notag\\
\|\nabla^3(\phi^\eta_0)^{2\iota}\|_{L^2}
&\leq C\big\|\nabla^2 \big( \frac{\phi_0^{-2\iota+1}}{(\phi_0+\eta)^{-2\iota+1}}\nabla \phi_0^{2\iota}\big)\big\|_{L^2}\notag\\
&\leq  C\big(\|\nabla^2 \boldsymbol{\psi}_0 \|_{L^2}+\|\phi_0\|^{-\iota}_{L^\infty}(\delta_{3n}\|\nabla\phi^{\iota}_0 \|^3_{L^{2n}}+\delta_{2n}\|\nabla\phi^{\iota}_0 \|_{L^{\infty}}\|\nabla\phi^{\iota}_0 \|^2_{L^{2n}})\big)\\
&\quad +C\|\phi_0\|^{-\iota}_{L^\infty}\|\nabla\phi^{\iota}_0 \|_{L^{n^*}}\|\nabla \boldsymbol{\psi}_0 \|_{L^n},\notag    
\end{aligned}
\end{align}
we can check that there exists a constant $c_0\geq 1$ independent of $\eta$ such that
\begin{equation}\label{2.14}
\begin{aligned}
&\|\nabla\phi^\eta_0\|_{H^2}+\|\phi^\eta_0\|_{ L^\infty}+\delta_{3n}\|\phi^\eta_0-\eta\|_{ L^6}+\|\boldsymbol{u}^\eta_0\|_{H^3}+\|\nabla h^\eta_0\|_{L^\infty\cap D^{1,n}\cap D^2}+\|\nabla (h^\eta_0)^{\frac{1}{2}}\|_{L^{2n}}
\\
&+\|(\phi^\eta_0)^\iota\nabla^2\phi^\eta_0\|_{L^2}+\|(h^\eta_0)^{-1}\|_{L^\infty}+\|\mathcal{G}^\eta_1\|_{L^2}+\|\mathcal{G}^\eta_2\|_{L^2}+\|\boldsymbol{g}^\eta_*\|_{L^2}+2+\eta\leq c_0.
\end{aligned}
\end{equation}

\begin{rk}\label{r1}
Based on \eqref{2.14}, we also have
\begin{equation}\label{incc}
\begin{aligned}
\|(\phi^\eta_0)^{2\iota}\boldsymbol{u}^\eta_0\|_{D^2}&\leq C(\|\boldsymbol{g}^\eta_*\|_{L^2}+\|G(\psi^\eta_0,\boldsymbol{u}^\eta_0)\|_{L^2})\leq Cc^2_0,\\
\|(\phi^\eta_0)^{2\iota}\nabla^2\boldsymbol{u}^\eta_0\|_{L^2}&\leq  C(\|(\phi^\eta_0)^{2\iota}\boldsymbol{u}^\eta_0\|_{D^2}+\|\nabla\boldsymbol{\psi}^\eta_0\|_{L^n}\|\boldsymbol{u}^\eta_0\|_{L^{n^*}}+\|\boldsymbol{\psi}^\eta_0\|_{L^\infty}\|\nabla \boldsymbol{u}^\eta_0\|_{L^2})\leq Cc^2_0.
\end{aligned}
\end{equation}
Indeed, it follows from  the definition of $\boldsymbol{g}^\eta_*$, $\phi^\eta_0>\eta$, and  $\eqref{ln}_5$  that 
\begin{equation*} 
L((\phi^\eta_0)^{2\iota}\boldsymbol{u}^\eta_0)=\boldsymbol{g}^\eta_*-\frac{\delta-1}{a\delta}G(\boldsymbol{\psi}^\eta_0,\boldsymbol{u}^\eta_0),\qquad \text{with \ $
(\phi^\eta_0)^{2\iota}\boldsymbol{u}^\eta_0\to \boldsymbol{0}$ as $|\boldsymbol{x}|\to \infty$},
\end{equation*}
where $\boldsymbol{\psi}^\eta_0=\frac{a\delta}{\delta-1}\nabla(\phi^\eta_0)^{2\iota}=\frac{a\delta}{\delta-1}\nabla h^\eta_0$ and 
\begin{equation}\label{Gdingyi}
G(\boldsymbol{f}_1,\boldsymbol{f}_2):=a_1\boldsymbol{f}_1\cdot\nabla \boldsymbol{f}_2+a_1  \diver(\boldsymbol{f}_2\otimes\boldsymbol{f}_1)+(a_1+a_2)(\boldsymbol{f}_1\,\diver \boldsymbol{f}_2+\boldsymbol{f}_1\cdot\nabla \boldsymbol{f}_2+\boldsymbol{f}_2\cdot\nabla\boldsymbol{f}_1).
\end{equation}
Then \eqref{incc} follows directly from \eqref{2.14} and the classical elliptic theory in {\rm Lemma \ref{df3}}.
\end{rk}

\medskip
Now let $T>0$ be a fixed constant, and  assume that there exist $T^*\in(0,T]$ and constants $c_i \ (i=1,\cdots\!,5)$ such that
\begin{equation*} 
1<c_0\leq c_1\leq c_2\leq c_3\leq c_4\leq c_5,
\end{equation*}
and
\begin{align}
\sup_{t\in [0,T^*]}\|\nabla g\|^2_{L^\infty\cap D^{1,n}\cap D^2}\leq c_1^2, \ \ 
0<g^{-1}\leq c_1,\ \  \sup_{t\in [0,T^*]}\big(\|g_t\|^2_{D^1}+\|g^{-1}g_t\|^2_{L^\infty}\big)\leq c^2_4,\notag\\
\sup_{t\in [0,T^*]}\|\boldsymbol{w}\|^2_{H^1}+\int^{T^*}_0\big(\|\boldsymbol{w}\|^2_{D^2}+\|\boldsymbol{w}_t\|^2_{L^2}\big)\,\mathrm{d}t\leq c_2^2,\notag\\
\sup_{t\in [0,T^*]}\big(\|\boldsymbol{w}\|^2_{D^2}+\|\boldsymbol{w}_t\|^2_{L^2}+\|g\nabla^2\boldsymbol{w}\|^2_{L^2}\big)+\int^{T^*}_0\big(\|\boldsymbol{w}\|_{D^3}^2+\|\boldsymbol{w}_t\|^2_{D^1}\big)\,\mathrm{d}t\leq c_3^2,\notag\\
\sup_{t\in [0,T^*]}\big(\|\boldsymbol{w}\|^2_{D^3}+\|\sqrt{g}\nabla \boldsymbol{w}_t\|^2_{L^2}+\|\nabla \boldsymbol{w}_t\|^2_{L^2}\big)+\int^{T^*}_0\big(\|\boldsymbol{w}\|^2_{D^4}+\|\boldsymbol{w}_t\|^2_{D^2}\big)\,\mathrm{d}t\leq c_4^2,\label{2.16}\\
\sup_{t\in [0,T^*]}\|g\nabla^2\boldsymbol{w}\|^2_{D^1}+\int^{T^*}_0\big(\|(g\nabla^2\boldsymbol{w})_t\|^2_{L^2}+\|g\nabla^2\boldsymbol{w}\|^2_{D^2}+\|\boldsymbol{w}_{tt}\|^2_{L^2}\big)\,\mathrm{d}t\leq c_4^2,\notag\\
\sup_{t\in [0,T^*]}\big(t\|\boldsymbol{w}\|^2_{D^4}+t\|\nabla^2\boldsymbol{w}_t\|^2_{L^2}+t\|g\nabla^2\boldsymbol{w}_t\|^2_{L^2}\big)+\int^{T^*}_0\|g_{tt}\|^2_{D^1}\,\mathrm{d}t\leq c^2_5,&\notag\\ 
\sup_{t\in [0,T^*]}t\|\boldsymbol{w}_{tt}\|^2_{L^2}+\int^{T^*}_0t\big(\|\boldsymbol{w}_{tt}\|^2_{D^1}+\|\sqrt{g}\boldsymbol{w}_{tt}\|_{D^1}^2+\|\boldsymbol{w}_t\|^2_{D^3}\big)\,\mathrm{d}t\leq c_5^2.\notag
\end{align}
Here, $T^*$ and $c_i$ $(i=1,\cdots\!,5)$ will be determined later, and depend only on $c_0$ and the fixed constants $(A, a_1,a_2,\gamma,\delta, T)$.

In  the rest of \S \ref{priorilinear}, without causing ambiguity,
we simply drop superscripts $\epsilon$ and $\eta$ in 
$(\phi^\eta_0,\boldsymbol{u}^\eta_0,h^\eta_0,\boldsymbol{\psi}^\eta_0)$, 
$(\phi^{\epsilon,\eta},\boldsymbol{u}^{\epsilon,\eta},h^{\epsilon,\eta},\boldsymbol{\psi}^{\epsilon,\eta})$, and $(\mathcal{G}^\eta_1,\mathcal{G}^\eta_2,\boldsymbol{g}_*^\eta)$, and we let  $(\phi,\boldsymbol{u},h)$ be the unique classical solution of $\eqref{ln}$ in $[0,T]\times\mathbb{R}^n$ obtained in Lemma \ref{ln}. 

\smallskip
\subsubsection{The  estimates for $\phi$.}

First, we give the estimates for $\phi$.
\begin{lem}\label{phiphi}
For any $0\leq t\leq T_1:=\min\{T^*,(1+Cc_4)^{-6}\}$,
\begin{equation*} 
\begin{aligned}
&\|\phi(t)-\eta\|_{L^\infty\cap D^1\cap D^3}\leq Cc_0,\qquad\|\phi_t(t)\|_{L^2}\leq Cc_0c_2,\qquad\|\phi_t(t)\|_{D^1}\leq Cc_0c_3,\\
&\|\phi_t(t)\|_{D^2}\leq Cc_0c_4,\qquad\|\phi_{tt}(t)\|_{L^2}\leq Cc_4^3,\qquad\int^t_0\|\phi_{tt}\|^2_{H^1} \,\mathrm{d}s\leq Cc_0^2c_4^2.
\end{aligned}
\end{equation*}
\end{lem}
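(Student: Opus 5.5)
The plan is the standard one for linear transport equations. Set $\varphi=\phi-\eta$, which satisfies
\[
\varphi_t+\boldsymbol{w}\cdot\nabla\varphi+(\gamma-1)\varphi\,\diver\boldsymbol{w}=-(\gamma-1)\eta\,\diver\boldsymbol{w}.
\]
The argument proceeds in four steps.

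\textbf{Step 1: $L^\infty$ bound.} Use characteristics. Let $X$ be the flow of $\boldsymbol{w}$. Then
\[
\phi(t,X(t,\boldsymbol{x}))=\phi_0^\eta(\boldsymbol{x})\exp\Big(-(\gamma-1)\int_0^t\diver\boldsymbol{w}\,\mathrm{d}s\Big).
\]
So $\phi>0$, and $\|\phi(t)\|_{L^\infty}\le c_0\exp(C\int_0^t\|\nabla\boldsymbol{w}\|_{L^\infty}\mathrm{d}s)$. By Gagliardo--Nirenberg and \eqref{2.16}, $\int_0^t\|\nabla\boldsymbol{w}\|_{L^\infty}\le Ct^{1/2}c_4\le C$ for $t\le T_1$. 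Because $\phi\ge\eta$ is not needed, the bound on $\|\phi-\eta\|_{L^\infty}$ follows the same way: the source $\eta\,\diver\boldsymbol{w}$ integrates to $\le C\eta c_4 t^{1/2}\le C$, with $\eta\le c_0$.

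\textbf{Step 2: $D^1$ and $D^3$ bounds.} Apply $\partial^\varsigma$ with $|\varsigma|=1,2,3$ and take $L^2$ energy estimates. The commutators are $\partial^\varsigma(\boldsymbol{w}\cdot\nabla\varphi)-\boldsymbol{w}\cdot\nabla\partial^\varsigma\varphi$ and $\partial^\varsigma(\varphi\diver\boldsymbol{w})$. Control them by Moser-type estimates with $\|\nabla\boldsymbol{w}\|_{L^\infty}$, $\|\nabla\boldsymbol{w}\|_{H^3}\le \|\boldsymbol{w}\|_{D^1\cap D^4}$, and $\|\varphi\|_{L^\infty}$. This gives
\[
\frac{\mathrm{d}}{\mathrm{d}t}\|\nabla\varphi\|_{H^2}\le C\|\boldsymbol{w}\|_{D^1\cap D^4}\big(\|\nabla\varphi\|_{H^2}+\|\varphi\|_{L^\infty}+\eta\big).
\]
By Cauchy--Schwarz, $\int_0^t\|\boldsymbol{w}\|_{D^4}\le t^{1/2}c_4$, which is $\le (1+Cc_4)^{-2}$ for $t\le T_1$. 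Gr\"onwall then yields $\|\nabla\varphi\|_{H^2}\le Cc_0$.

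The main obstacle is the top-order commutator. It involves $\varphi\,\nabla^3\diver\boldsymbol{w}$, which is $L^2$ only in time. Handle it exactly this way: keep $\|\boldsymbol{w}\|_{D^4}$ integrated in time, and rely on the smallness of $T_1$. In 3-D, when pairing $\nabla\varphi$ against $\nabla^2\boldsymbol{w}$, use $L^3\times L^6$ Sobolev pairings.

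\textbf{Step 3: time derivatives.} Read these off the equation $\phi_t=-\boldsymbol{w}\cdot\nabla\phi-(\gamma-1)\phi\,\diver\boldsymbol{w}$:
\[
\|\phi_t\|_{L^2}\le \|\boldsymbol{w}\|_{L^\infty}\|\nabla\phi\|_{L^2}+C\|\phi\|_{L^\infty}\|\nabla\boldsymbol{w}\|_{L^2}\le Cc_0c_2,
\]
using $\|\boldsymbol{w}\|_{L^\infty}\le C\|\boldsymbol{w}\|_{H^2}$ and noting that $\|\boldsymbol{w}\|_{H^1}\le c_2$ controls the $\phi$-term. Similarly, one derivative uses the $c_3$-bounds on $\|\boldsymbol{w}\|_{D^2}$, giving $\|\phi_t\|_{D^1}\le Cc_0c_3$. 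Two derivatives use $\|\boldsymbol{w}\|_{D^3}\le c_4$, giving $\|\phi_t\|_{D^2}\le Cc_0c_4$.

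\textbf{Step 4: second time derivative.} Differentiating the equation in $t$ gives
\[
\phi_{tt}=-\boldsymbol{w}_t\cdot\nabla\phi-\boldsymbol{w}\cdot\nabla\phi_t-(\gamma-1)(\phi_t\diver\boldsymbol{w}+\phi\,\diver\boldsymbol{w}_t).
\]
Then
\[
\|\phi_{tt}\|_{L^2}\le C\big(\|\boldsymbol{w}_t\|_{L^2}\|\nabla\phi\|_{L^\infty}+c_4\|\nabla\phi_t\|_{L^2}+\|\phi_t\|_{L^3}\|\nabla\boldsymbol{w}\|_{L^6}+c_0\|\nabla\boldsymbol{w}_t\|_{L^2}\big)\le Cc_4^3,
\]
where $\|\nabla\phi\|_{L^\infty}\le C\|\nabla\phi\|_{H^2}$ is used. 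For the time integral, bound $\|\phi_{tt}\|_{H^1}$ by the analogous expression, which is linear in $\|\boldsymbol{w}_t\|_{D^1\cap D^2}$ with the remaining factors bounded by powers of $c_0$. Squaring and integrating over time, then using $\int_0^t\|\boldsymbol{w}_t\|_{D^1\cap D^2}^2\le c_4^2$ and absorbing lower-order terms through $t\le T_1$, gives $\int_0^t\|\phi_{tt}\|_{H^1}^2\le Cc_0^2c_4^2$.
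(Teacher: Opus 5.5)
Your proposal is correct and follows the paper's route: characteristics for the $L^\infty$ bound, $L^2$ energy estimates for the transport equation at the $D^1$--$D^3$ levels with Gr\"onwall over the short time $T_1$, and then reading off the time-derivative bounds from $\eqref{ln}_1$. One small slip is in Step~3: since \eqref{2.16} controls $\|\boldsymbol{w}\|_{D^2}$ only by $c_3$, the bound $\|\boldsymbol{w}\|_{L^\infty}\le C\|\boldsymbol{w}\|_{H^2}$ gives $Cc_0c_3$; to get $\|\phi_t\|_{L^2}\le Cc_0c_2$, pair $\|\boldsymbol{w}\|_{L^6}\|\nabla\phi\|_{L^3}$ in 3-D or $\|\boldsymbol{w}\|_{L^4}\|\nabla\phi\|_{L^4}$ in 2-D, both of which are bounded by $C\|\boldsymbol{w}\|_{H^1}\|\nabla\phi\|_{H^1}$.
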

\begin{proof} 
First, let   $X(t;\boldsymbol{x})$  be the particle path defined by
\begin{equation}\label{characteristic}
\frac{\mathrm{d}}{\mathrm{d}t}X(t;\boldsymbol{x})=\boldsymbol{w}(t,X(t; \boldsymbol{x})) \ \ \text{for $t\in [0,T]$},\qquad\text{with $X(0;\boldsymbol{x})=\boldsymbol{x} \in \mathbb{R}^n$}.
\end{equation}
Then, integrating $\eqref{ln}_1$ along $X(t; \boldsymbol{x})$, we see that, 
for $0\leq t\leq T_1:=\min\{T^*,(1+Cc_4)^{-6}\}$,
\begin{equation*} 
\|\phi\|_{L^\infty}\leq C\|\phi_0\|_{L^\infty}e^{ \int_0^tC\|\boldsymbol{w}\|_{H^3} \,\mathrm{d}s} \leq Cc_0.         
\end{equation*}

Next, it follows from the energy estimates  for the transport equation 
and \eqref{2.16} that
\begin{equation*}
\|\phi-\eta\|_{D^1\cap D^3}\leq C\Big(\|\phi_0-\eta\|_{D^1\cap D^3}+\eta\int^t_0\|\nabla \boldsymbol{w}\|_{H^3}\,\mathrm{d}s\Big)\exp\Big(\int^t_0C\|\boldsymbol{w}\|_{H^4} \,\mathrm{d}s\Big)
\leq Cc_0
\end{equation*}
for $t\in[0, T_1]$, which, together with $\eqref{ln}_1$ and \eqref{2.16},  yields the desired estimates.
\end{proof}

\smallskip
\subsubsection{The  estimates for $\boldsymbol{\psi}$.}
 It follows from  $\boldsymbol{\psi}=\frac{a\delta}{\delta-1}\nabla h$  and   $\eqref{ln}_3$ that 
\begin{equation}\label{psieq}
\boldsymbol{\psi}_t+ \boldsymbol{w}\cdot \nabla\boldsymbol{\psi}+B^* \boldsymbol{\psi}+a\delta ( g\nabla \diver \boldsymbol{w} +\nabla g \diver \boldsymbol{w} ) =0,
\end{equation}
where $B^*=B^*(\boldsymbol{w})=(\nabla \boldsymbol{w})^\top$. Now we give the estimates for $\boldsymbol{\psi}$.

\begin{lem}\label{psi} 
For any $t\in [0, T_1]$, 
\begin{equation*}
\|\boldsymbol{\psi}(t)\|^2_{L^\infty\cap D^{1,n}\cap D^2}\leq Cc_0^2,\quad\|\boldsymbol{\psi}_t(t)\|_{L^2}\leq Cc_3^2,
\quad \|\boldsymbol{\psi}_t(t)\|^2_{D^1}+\int^t_0\|\boldsymbol{\psi}_{tt}\|^2_{L^2}\,\mathrm{d}s\leq Cc_4^4.
\end{equation*}
\end{lem}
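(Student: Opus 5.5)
The plan is to treat \eqref{psieq} as a linear symmetric hyperbolic system for $\boldsymbol{\psi}$. It is driven by the known velocity $\boldsymbol{w}$, and the source is $-a\delta(g\nabla\diver\boldsymbol{w}+\nabla g\diver\boldsymbol{w})$. We run energy estimates in $L^n$, $D^{1,n}$ and $D^2$ on $[0,T_1]$. The $L^\infty$ bound then comes from the spherical critical embedding $D^{1,n}\hookrightarrow L^\infty$ of Lemma \ref{Hk-Ck-vector}. An alternative for $L^\infty$ is to integrate along the characteristics \eqref{characteristic}, giving $\|\boldsymbol{\psi}\|_{L^\infty}\le (\|\boldsymbol{\psi}_0\|_{L^\infty}+\int_0^t\|g\nabla^2\boldsymbol{w}\|_{L^\infty}+\|\nabla g\|_{L^\infty}\|\nabla\boldsymbol{w}\|_{L^\infty}\,\mathrm{d}s)\exp(\int_0^t\|\nabla\boldsymbol{w}\|_{L^\infty}\mathrm{d}s)$.

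For the $D^{1,n}$ estimate, apply $\partial_j$ to \eqref{psieq}, multiply by $|\nabla\boldsymbol{\psi}|^{n-2}\partial_j\boldsymbol{\psi}$ and integrate. The transport term only produces $\|\nabla\boldsymbol{w}\|_{L^\infty}\|\nabla\boldsymbol{\psi}\|_{L^n}^n$. The commutator terms $\nabla B^*\,\boldsymbol{\psi}$ are bounded by $\|\nabla^2\boldsymbol{w}\|_{L^n}\|\boldsymbol{\psi}\|_{L^\infty}$. The source contributes $\|\nabla(g\nabla^2\boldsymbol{w})\|_{L^n}+\|\nabla^2 g\|_{L^n}\|\nabla\boldsymbol{w}\|_{L^\infty}+\|\nabla g\|_{L^\infty}\|\nabla^2\boldsymbol{w}\|_{L^n}$.

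For $D^2$, apply $\nabla^2$ and use the $L^2$ energy identity. The worst source term is $\|\nabla^2(g\nabla^2\boldsymbol{w})\|_{L^2}$, which by \eqref{2.16} is controlled only in $L^2_t$ via $\|g\nabla^2\boldsymbol{w}\|_{D^2}$. The remaining terms involve $\nabla^2 g\,\nabla^2\boldsymbol{w}$ and $\nabla^3 g\,\nabla\boldsymbol{w}$. These are bounded with Sobolev/H\"older using $\|\nabla g\|_{D^{1,n}\cap D^2}\le c_1$ and $\|\boldsymbol{w}\|_{H^3}\le Cc_4$.

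Combining, we get $\frac{d}{dt}Y\le C(1+\|\boldsymbol{w}\|_{H^4})Y+Cc_4^{k}(1+\|g\nabla^2\boldsymbol{w}\|_{D^2})$ for $Y=\|\boldsymbol{\psi}\|_{L^\infty\cap D^{1,n}\cap D^2}$. Integrating over $[0,t]$ with $t\le T_1\le(1+Cc_4)^{-6}$ and using Cauchy--Schwarz in time ($\int_0^t\|g\nabla^2\boldsymbol{w}\|_{D^2}\le t^{1/2}c_4$) makes every time-integrated contribution bounded by a constant. Gr\"onwall and \eqref{2.14} then yield $Y\le Cc_0$. The main obstacle is organizing this so that the growth factors of $c_4$ are absorbed by the small time $T_1$, giving a constant $Cc_0$ independent of $c_1,\dots,c_4$. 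I would track each source term carefully to verify that its power of $c_4$ is at most $3$, so that $t^{1/2}c_4^3\le 1$.

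The time-derivative bounds are read off directly from \eqref{psieq}:
\[
\|\boldsymbol{\psi}_t\|_{L^2}\le \|\boldsymbol{w}\|_{L^{n^*}}\|\nabla\boldsymbol{\psi}\|_{L^n}+\|\nabla\boldsymbol{w}\|_{L^2}\|\boldsymbol{\psi}\|_{L^\infty}+C\|g\nabla^2\boldsymbol{w}\|_{L^2}+\|\nabla g\|_{L^\infty}\|\nabla\boldsymbol{w}\|_{L^2}\le Cc_3^2 .
\]
Differentiating once in space gives $\|\boldsymbol{\psi}_t\|_{D^1}\le Cc_4^2$, using $\|g\nabla^2\boldsymbol{w}\|_{D^1}\le c_4$. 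Differentiating \eqref{psieq} in time expresses $\boldsymbol{\psi}_{tt}$ through $\boldsymbol{w}_t\cdot\nabla\boldsymbol{\psi}$, $\boldsymbol{w}\cdot\nabla\boldsymbol{\psi}_t$, $\nabla\boldsymbol{w}_t\,\boldsymbol{\psi}$, $(g\nabla^2\boldsymbol{w})_t$ and $g_t\nabla\diver\boldsymbol{w}+\nabla g_t\diver\boldsymbol{w}+\nabla g\diver\boldsymbol{w}_t$. Its $L^2_tL^2_x$ norm is then bounded by $Cc_4^4$ using $\int_0^t\|(g\nabla^2\boldsymbol{w})_t\|_{L^2}^2\le c_4^2$, $\|g_t\|_{D^1}\le c_4$ and $\int_0^t\|\boldsymbol{w}_t\|_{D^1}^2\le c_3^2$.
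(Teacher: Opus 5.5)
Your proposal is correct and follows essentially the paper's route. The paper gets $\|\boldsymbol{\psi}\|_{L^\infty}$ by integrating \eqref{psieq} along the characteristics \eqref{characteristic}, which is your stated alternative; your primary route via Lemma \ref{Hk-Ck-vector} also works because $\boldsymbol{\psi}$ is spherically symmetric. It then runs $L^n$ and $L^2$ energy estimates for $\nabla\boldsymbol{\psi}$ and $\nabla^2\boldsymbol{\psi}$ with the same commutator bounds and Gr\"onwall on $[0,T_1]$, and reads off $\boldsymbol{\psi}_t$ and $\boldsymbol{\psi}_{tt}$ directly from \eqref{psieq}. The one thing to drop is the mention of an $L^n$ estimate for $\boldsymbol{\psi}$ itself: $\|\boldsymbol{\psi}_0\|_{L^n}$ is not controlled by $c_0$ in \eqref{2.14} (for the limiting datum $\nabla\phi_0^{2\iota}$ it is typically infinite), and the argument never needs it.
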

\begin{proof} 
First,  integrating \eqref{psieq} along $X(t; \boldsymbol{x})$ (see \eqref{characteristic}), we obtain that, for $t\in [0, T_1]$,
\begin{equation}\label{inftypsi}
\begin{aligned}
\|\boldsymbol{\psi}\|_{L^\infty}&\leq C\Big(\|\boldsymbol{\psi}_0\|_{L^\infty}+\int_0^t\big(\|g\nabla^2 \boldsymbol{w}\|_{L^\infty}+\|\nabla g\|_{L^\infty} \|\nabla \boldsymbol{w}\|_{L^\infty}\big) \,\mathrm{d}s\Big)e^{ \int_0^tC\|\boldsymbol{w}\|_{H^3} \,\mathrm{d}s} \\
& \leq C(c_0+c_4t^{\frac{1}{2}}+c_1c_4 t)e^{Cc_4t}\leq Cc_0,         
\end{aligned}
\end{equation}
where the $L^\infty$-boundedness for $\boldsymbol{\psi}_0$ is due to $\boldsymbol{\psi}_0\in D^{1,n}(\mathbb{R}^n)$ and Lemma \ref{Hk-Ck-vector}.

Next, let $\boldsymbol{\varsigma}=(\varsigma_1,\cdots\!,\varsigma_n)$ be the multi-index. Applying  $n|\partial_{\boldsymbol{x}}^{\boldsymbol{\varsigma}} \boldsymbol{\psi}|^{n-2}\partial_{\boldsymbol{x}}^{\boldsymbol{\varsigma}} \boldsymbol{\psi}\partial_{\boldsymbol{x}}^{\boldsymbol{\varsigma}}$ ($|\boldsymbol{\varsigma}|=1$) and $2\partial_{\boldsymbol{x}}^{\boldsymbol{\varsigma}} \boldsymbol{\psi}\partial_{\boldsymbol{x}}^{\boldsymbol{\varsigma}}$ ($|\boldsymbol{\varsigma}|=2$) to \eqref{psieq} respectively, and then integrating these two resulting equalities over $\mathbb{R}^n$, we obtain
\begin{equation}\label{2.26}
\begin{aligned}
\frac{\mathrm{d}}{\mathrm{d}t}\|\partial_{\boldsymbol{x}}^{\boldsymbol{\varsigma}}  \boldsymbol{\psi}\|^n_{L^n}
&\leq  C\|\nabla \boldsymbol{w}\|_{L^\infty}\|\partial_{\boldsymbol{x}}^{\boldsymbol{\varsigma}}  \boldsymbol{\psi}\|^n_{L^n}+C\|\Theta_{\boldsymbol{\varsigma}} \|_{L^n}\|\partial_{\boldsymbol{x}}^{\boldsymbol{\varsigma}}  \boldsymbol{\psi}\|^{n-1}_{L^n}&& \ \ \text{with $|\boldsymbol{\varsigma}|=1$},\\
\frac{\mathrm{d}}{\mathrm{d}t}\|\partial_{\boldsymbol{x}}^{\boldsymbol{\varsigma}}  \boldsymbol{\psi}\|^2_{L^2}
&\leq C\|\nabla \boldsymbol{w}\|_{L^\infty}\|\partial_{\boldsymbol{x}}^{\boldsymbol{\varsigma}}  \boldsymbol{\psi}\|^2_{L^2}+C\|\Theta_{\boldsymbol{\varsigma}}\|_{L^2}\|\partial_{\boldsymbol{x}}^{\boldsymbol{\varsigma}}  \boldsymbol{\psi}\|_{L^2}&& \ \ \text{with $|\boldsymbol{\varsigma}|=2$}, 
\end{aligned}
\end{equation}
where
\begin{equation*}
\begin{aligned}
\Theta_{\boldsymbol{\varsigma}}=\partial_{\boldsymbol{x}}^{\boldsymbol{\varsigma}} (B^*\boldsymbol{\psi})-B^*\partial_{\boldsymbol{x}}^{\boldsymbol{\varsigma}}  \boldsymbol{\psi}+\sum_{k=1}^{n}\big(\partial_{\boldsymbol{x}}^{\boldsymbol{\varsigma}} (\boldsymbol{w}_{k} \partial_k \boldsymbol{\psi})-\boldsymbol{w}_{k} \partial_k\partial_{\boldsymbol{x}}^{\boldsymbol{\varsigma}}  \boldsymbol{\psi}\big)+ a\delta \partial_{\boldsymbol{x}}^{\boldsymbol{\varsigma}}\big(g\nabla \diver \boldsymbol{w}+\nabla g \diver \boldsymbol{w}\big).
\end{aligned}
\end{equation*}

Then we can directly check that, for $|\boldsymbol{\varsigma}|=1$,
\begin{equation}\label{zhen2}
\|\Theta_{\boldsymbol{\varsigma}} \|_{L^n}\leq C\big(\|\nabla^2 \boldsymbol{w}\|_{L^n}\|(\boldsymbol{\psi},\nabla g)\|_{L^\infty} + \|\nabla \boldsymbol{w}\|_{L^\infty} \|(\nabla\boldsymbol{\psi},\nabla^2 g)\|_{L^n}
+\|\nabla(g\nabla^2 \boldsymbol{w})\|_{L^n}\big),
\end{equation}
and, for $|\boldsymbol{\varsigma}|=2$, 
\begin{equation}\label{zhen2*}
\begin{aligned}
\|\Theta_{\boldsymbol{\varsigma}} \|_{L^2}&\leq C\big(\|\nabla \boldsymbol{w}\|_{L^\infty}\|(\nabla^2\boldsymbol{\psi},\nabla^3 g)\|_{L^2}+\|\nabla^2 \boldsymbol{w}\|_{L^6}\|(\nabla\boldsymbol{\psi},\nabla^2 g)\|_{L^3}\big)\\
&\quad +C\big(\|\nabla^3 \boldsymbol{w}\|_{L^2} \|(\boldsymbol{\psi},\nabla g)\|_{L^\infty}+C\|g\nabla \diver\boldsymbol{w}\|_{D^2}\big).
\end{aligned}
\end{equation}
Hence, it follows from $\eqref{2.26}$--$\eqref{zhen2*}$ and the Gr\"onwall inequality that
\begin{equation*}
\|\boldsymbol{\psi}(t)\|_{D^{1,n}\cap D^2}\leq  \Big(c_0+Cc^2_4t+C\int_0^t \|g\nabla \diver\boldsymbol{w}\|_{D^2} \,\mathrm{d}s\Big) e^{Cc_4t}\leq Cc_0\qquad \text{for $t\in [0,T_1]$},
\end{equation*}
which, along with \eqref{2.16} and \eqref{psieq}--\eqref{inftypsi}, implies 
\begin{equation*}
\|\boldsymbol{\psi}_t(t)\|_{L^2}\leq Cc^2_3,\quad 
\|\boldsymbol{\psi}_t(t)\|_{D^1} \leq Cc^2_4,\quad  \int_0^t \|\boldsymbol{\psi}_{tt}\|^2_{L^2} \,\mathrm{d}s\leq Cc^4_4\qquad\text{for $t\in [0,T_1]$}.
\end{equation*}
\end{proof}

\subsubsection{\textit{A priori} estimates for auxiliary variables related to $h$}  We define the quantities:
\begin{equation*}
(\varphi,\varphi_0)=(h^{-1},h_0^{-1}).
\end{equation*}
\begin{lem}\label{varphi} 
For any $t\in [0, T_1]$ and $\boldsymbol{x}\in \mathbb{R}^n$, 
\begin{equation*} 
\begin{aligned}
&C^{-1}\leq (gh^{-1})(t,\boldsymbol{x})\leq C,\quad 
h(t,\boldsymbol{x}) \geq (2c_0)^{-1},\quad\|\varphi(t)\|_{L^\infty}+\|\nabla\sqrt{h}(t)\|_{L^{2n}}\leq Cc_0, \\
&\|h^{-1}h_t(t)\|^2_{L^\infty}\leq Cc_2^{\frac{9}{2}}c^{\frac{3}{2}}_4,\qquad \int^t_0\|h^{-1}h_{tt}\|^2_{L^{2n}}\,\mathrm{d}s\leq Cc_4^2.
\end{aligned}
\end{equation*}
\end{lem}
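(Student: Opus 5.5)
The plan is to work along the particle paths $X(t;\boldsymbol{x})$ of \eqref{characteristic}. The equation $\eqref{ln}_3$ says $\frac{\mathrm{d}}{\mathrm{d}t}h(t,X)=-(\delta-1)(g\diver\boldsymbol{w})(t,X)$. For $g$ I will use $\eqref{ln}_3$'s analogue: by construction in the iteration $g$ itself satisfies $g_t+\boldsymbol{w}\cdot\nabla g+(\delta-1)g\diver\boldsymbol{w}=0$ at the fixed point. Since that is not available a priori, I will instead compare $h$ and $g$ directly.

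\textbf{Step 1: two-sided comparison of $h$ and $g$.} Along $X$ I write $\frac{\mathrm{d}}{\mathrm{d}t}h=(1-\delta)g\diver\boldsymbol{w}$. This gives
\[
|h(t,X)-h_0|\le C\int_0^t\|g\|_{L^\infty}\|\nabla\boldsymbol{w}\|_{L^\infty}\,\mathrm{d}s .
\]
The term $\|g\|_{L^\infty}$ is not assumed bounded in \eqref{2.16}, so I rather estimate $\frac{\mathrm{d}}{\mathrm{d}t}(g(t,X))$ using $\|g^{-1}g_t\|_{L^\infty}\le c_4$ and $|\boldsymbol{w}\cdot\nabla g|\le \|\boldsymbol{w}\|_{L^\infty}c_1$. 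I then divide by $g$, using $g^{-1}\le c_1$, which yields
\[
\big|\tfrac{\mathrm{d}}{\mathrm{d}t}\log g(t,X)\big|\le c_4+c_1^2 c_2 .
\]
Hence on $[0,T_1]$, since $T_1\le(1+Cc_4)^{-6}$, we get $g(t,X)\sim g(0,X(0))=h_0$ up to a factor $e^{Cc_4t}\le C$. Similarly $\frac{\mathrm{d}}{\mathrm{d}t}\log h=(1-\delta)(g/h)\diver\boldsymbol{w}$. Setting $y=\sup|h/g|$-type quantities along the path, or more simply treating $\frac{\mathrm{d}}{\mathrm{d}t}(h/g)$ as a linear ODE with coefficients bounded by $C(c_4+c_1^2c_2)$ and $\|\nabla\boldsymbol{w}\|_{L^\infty}\le Cc_4$, Gr\"onwall gives $C^{-1}\le h/g\le C$ on $[0,T_1]$, because $h/g=1$ at $t=0$. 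Then $h\ge C^{-1}g\ge (Cc_1)^{-1}$. To reach the sharper $h\ge(2c_0)^{-1}$, I use instead $h(t,X)\ge h_0-Cc_4\int_0^t g\,\mathrm{d}s\ge h_0(1-Cc_4t)\ge h_0/2$, together with $h_0^{-1}\le c_0$ from \eqref{2.14}. This also gives $\|\varphi\|_{L^\infty}\le 2c_0$.

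\textbf{Step 2: the quantity $\nabla\sqrt h$.} Writing $\eqref{ln}_3$ for $\sqrt h$ gives
\[
(\sqrt h)_t+\boldsymbol{w}\cdot\nabla\sqrt h+\tfrac{\delta-1}{2}\tfrac{g}{\sqrt h}\diver\boldsymbol{w}=0 .
\]
I differentiate it and test with $2n|\nabla\sqrt h|^{2n-2}\nabla\sqrt h$. The source term $\nabla\big(\tfrac{g}{\sqrt h}\diver\boldsymbol{w}\big)$ is bounded by
\[
\sqrt{g/h}\,|\nabla^2\boldsymbol{w}|\sqrt g+\tfrac{g}{h}|\nabla\sqrt h||\nabla\boldsymbol{w}|+\tfrac{|\nabla g|}{\sqrt h}|\nabla\boldsymbol{w}| .
\]
In $L^{2n}$ this is controlled using Step 1, $\|\sqrt g\nabla^2\boldsymbol{w}\|_{L^{2n}}\lesssim\|g\nabla^2\boldsymbol{w}\|_{L^{2n}}^{1/2}\|\nabla^2\boldsymbol{w}\|_{L^{2n}}^{1/2}$ (bounded via \eqref{2.16} and Sobolev), and $\nabla g\in L^\infty$. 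There is one catch: $\nabla g/\sqrt h$ alone is only in $L^\infty$, not $L^{2n}$. I rewrite $\nabla g\,\diver\boldsymbol{w}$ and take the $L^{2n}$ norm using $\diver\boldsymbol{w}\in L^{2n}$. Gr\"onwall on $[0,T_1]$ then gives $\|\nabla\sqrt h\|_{L^{2n}}\le C(c_0+c_4^2 t)\le Cc_0$.

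\textbf{Step 3: time derivatives.} From $\eqref{ln}_3$, $h^{-1}h_t=-\boldsymbol{w}\cdot\nabla h/h-(\delta-1)(g/h)\diver\boldsymbol{w}$. Hence
\[
\|h^{-1}h_t\|_{L^\infty}\le Cc_0\|\boldsymbol{w}\|_{L^\infty}\|\boldsymbol{\psi}\|_{L^\infty}+C\|\nabla\boldsymbol{w}\|_{L^\infty},
\]
with Lemma \ref{psi} supplying $\|\boldsymbol{\psi}\|_{L^\infty}\le Cc_0$. The Gagliardo--Nirenberg interpolation $\|\nabla\boldsymbol{w}\|_{L^\infty}\lesssim\|\boldsymbol{w}\|_{D^1}^{a}\|\boldsymbol{w}\|_{D^2}^{b}\|\boldsymbol{w}\|_{D^3}^{c}$ produces the stated power $c_2^{9/2}c_4^{3/2}$ after tuning exponents (I expect $\|\nabla\boldsymbol{w}\|_{L^\infty}^2\lesssim\|\nabla\boldsymbol{w}\|_{L^2}^{1/2}\|\nabla\boldsymbol{w}\|_{D^2}^{3/2}$ in 3-D). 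For $h_{tt}$ I differentiate the above expression in $t$; terms like $\boldsymbol{w}_t\cdot\nabla h$, $\boldsymbol{w}\cdot\nabla h_t$, $g_t\diver\boldsymbol{w}$, and $g\diver\boldsymbol{w}_t$, each divided by $h$, are put in $L^2_tL^{2n}_x$ using \eqref{2.16}, Lemma \ref{psi}, and Step 1.

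The main obstacle is Step 3's bookkeeping of $c_i$ powers, together with closing Step 2 without an $L^\infty$ bound on $g$. If Step 1's comparison fails to close, I would fall back on bounding $g/h$ directly along characteristics first.
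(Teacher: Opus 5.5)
Your proposal is correct. Steps 2 and 3 are essentially the paper's argument: you differentiate the $\sqrt h$ equation and test with $2n|\nabla\sqrt h|^{2n-2}\nabla\sqrt h$, keeping $\nabla g\,\diver\boldsymbol{w}$ in $L^{2n}$ through $\diver\boldsymbol{w}$, and then read off $h^{-1}h_t$ and $h^{-1}h_{tt}$ from $\eqref{ln}_3$ and its time derivative. Your interpolation for the $g\nabla\diver\boldsymbol{w}/\sqrt h$ term is valid, but the paper simply uses $\|\varphi\|_{L^\infty}^{1/2}\|g\nabla^2\boldsymbol{w}\|_{L^{2n}}$.

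Your Step 1 takes a genuinely different route.
\begin{itemize}
\item The paper sets $\Lambda=gh^{-1}$ and solves, at each fixed $\boldsymbol{x}$, the Eulerian linear ODE $(\Lambda^{-1})_t+(g^{-1}g_t)\Lambda^{-1}=-(g^{-1}\boldsymbol{w}\cdot\nabla h+(\delta-1)\diver\boldsymbol{w})$. It controls the source through $\|\nabla h\|_{L^\infty}\le C\|\boldsymbol{\psi}\|_{L^\infty}\le Cc_0$ from Lemma \ref{psi}. It then needs a second characteristic ODE, $\varphi_t+\boldsymbol{w}\cdot\nabla\varphi=(\delta-1)\Lambda\varphi\diver\boldsymbol{w}$, fed by the bound on $\Lambda$, to get $h\ge(2c_0)^{-1}$.
\item You instead follow a single characteristic and compare both $g$ and $h$ with $h_0(\boldsymbol{x})$. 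The rate $\frac{\mathrm{d}}{\mathrm{d}t}\log g(t,X)$ is bounded using only $g^{-1}g_t$, $g^{-1}$ and $\nabla g$ from \eqref{2.16}. Then $h(t,X)=h_0+(1-\delta)\int_0^t g\diver\boldsymbol{w}\,\mathrm{d}s$ gives $h\sim h_0$, so the ratio bound and $h\ge h_0/2\ge(2c_0)^{-1}$ come out together.
\end{itemize}
Your version makes Step 1 independent of Lemma \ref{psi}, since it uses $\nabla g$ rather than $\nabla h$, and needs only one ODE. The paper's version avoids tracking $g$ along the flow but must have the $\boldsymbol{\psi}$ bound in hand first.

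One bookkeeping correction is needed. $\|\boldsymbol{w}\|_{L^\infty}$ is controlled by $\|\boldsymbol{w}\|_{H^2}\le Cc_3$, not by $c_2$, because $H^1\not\hookrightarrow L^\infty$ for $n=2,3$. So the rate for $\log g$ along paths is $c_4+c_1^2c_3\le Cc_4^3$, and the distortion factor is $e^{Cc_4^3t}$ rather than $e^{Cc_4t}$. This is still bounded by a constant on $[0,T_1]$ because $T_1\le(1+Cc_4)^{-6}$, so nothing breaks. Your opening remark about $g$ satisfying a transport equation "at the fixed point" is unnecessary; as you then note, only the bounds in \eqref{2.16} are used.
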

\begin{proof}
We divide the proof into four steps.

\smallskip
\textbf{1.} Set $\Lambda=\Lambda(t,\boldsymbol{x})=gh^{-1}$. We then see from $\eqref{ln}_3$ that $\Lambda^{-1}$ satisfies
\begin{equation*}
(\Lambda^{-1})_t + (g^{-1}g_t)\Lambda^{-1}=-(g^{-1}\boldsymbol{w}\cdot \nabla h+(\delta-1)\diver \boldsymbol{w}),\qquad \Lambda^{-1}(0,\boldsymbol{x}) =1.
\end{equation*}
Hence,  the standard argument for ODE gives that, for $(t,\boldsymbol{x})\in [0,T_1]\times \mathbb{R}^n$,
\begin{equation*}
\Lambda^{-1} (t,\boldsymbol{x})= e^{-\int_0^t(g^{-1}g_t)(\tau, \boldsymbol{x})\mathrm{d}\tau}\Big(1-\!\int_0^t\! \big(g^{-1}\boldsymbol{w}\cdot \nabla h+(\delta-1)\diver \boldsymbol{w}\big)(\tau, \boldsymbol{x})e^{\int_0^\tau(g^{-1}g_t)(s, \boldsymbol{x})\mathrm{d}s}\mathrm{d}\tau\Big),
\end{equation*}
which, along with  Lemma \ref{psi} and \eqref{2.16}, yields 
\begin{equation}\label{1008}
\Lambda^{-1}\in [C^{-1},C]\iff \Lambda \in [C^{-1},C].
\end{equation} 

\smallskip
\textbf{2.} Note that 
\begin{equation*} 
\varphi_t+\boldsymbol{w}\cdot \nabla \varphi-(\delta-1)\Lambda\varphi\diver \boldsymbol{w}=0.
\end{equation*}
Then the standard argument for ODE implies that, for $(t,\boldsymbol{x})\in [0,T_1]\times \mathbb{R}^n$, 
\begin{equation}\label{2.34d}
\frac{2}{3}\eta^{-2\iota}<\varphi(t,\boldsymbol{x})<2\|\varphi_0\|_{L^\infty}\leq 2c_0\implies h(t,\boldsymbol{x})>(2c_0)^{-1}.
\end{equation}

\smallskip
\textbf{3.} It follows from  $\eqref{ln}_3$ that 
\begin{equation*} 
(\sqrt{h})_t+\boldsymbol{w}\cdot\nabla\sqrt{h}+\frac{1}{2\sqrt{h}}(\delta-1)g\diver\boldsymbol{w}=0.
\end{equation*}
Applying  $\nabla$ to the above,
multiplying  by $2n|\nabla\sqrt{h}|^{2n-2}\nabla\sqrt{h}$, 
and  integrating  over $\mathbb{R}^n$ yield
\begin{equation*}
\begin{aligned}
\frac{\mathrm{d}}{\mathrm{d}t}\|\nabla\sqrt{h}\|_{L^{2n}} 
\leq&\, C\|\nabla \boldsymbol{w}\|_{L^\infty}\|\nabla\sqrt{h}\|_{L^{2n}}+C\big \|\varphi\|^{\frac{1}{2}}_{L^\infty}\big(\|g\nabla^2\boldsymbol{w}\|_{L^{2n}}+\|\nabla g\|_{L^\infty}\|\nabla \boldsymbol{w}\|_{L^{2n}}\big) \\
&\, +C\|gh^{-1}\|_{L^\infty}\|\varphi\|^{\frac{1}{2}}_{L^\infty}\|\diver\boldsymbol{w}\|_{L^{2n}}\|\boldsymbol{\psi}\|_{L^\infty} ,
\end{aligned}
\end{equation*}
which, along with the Gr\"onwall inequality, \eqref{2.16}, \eqref{2.34d}, and Lemma \ref{psi},  leads to 
\begin{equation*} 
\|\nabla\sqrt{h}(t)\|_{L^{2n}}\leq Cc_0\qquad \text{for $t\in [0,T_1]$}.
\end{equation*}

\smallskip
\textbf{4.}
Finally, it follows from $\eqref{ln}_3$, \eqref{2.16}, \eqref{1008}--\eqref{2.34d}, and Lemmas \ref{psi} and \ref{GN-ineq} that
\begin{equation*} 
\begin{aligned}
\|h^{-1}h_t\|_{L^\infty}&\leq  C\big(\|\boldsymbol{w}\|_{L^\infty}\|\boldsymbol{\psi}\|_{L^\infty}\|\varphi\|_{L^\infty}+\|gh^{-1}\|_{L^\infty}\|\mathrm{div}\,\boldsymbol{w}\|_{L^\infty}\big)\\
&\leq C\big(c^3_2+\|\nabla \boldsymbol{w}\|^{\frac{4-n}{4}}_{L^2}\|\nabla^3\boldsymbol{w}\|^{\frac{n}{4}}_{L^2}\big)\leq Cc^2_2c^{\frac{4-n}{4}}_2c^{\frac{n}{4}}_4,\\
\|h^{-1}h_{tt}\|_{L^{2n}}&\leq C\big(\|\varphi\|_{L^\infty}\|\psi\|_{L^\infty}\| \boldsymbol{w}_t\|_{L^{2n}}+\|\varphi\|_{L^\infty}\|\boldsymbol{w}\|_{L^\infty}\| \boldsymbol{\psi}_t\|_{L^{2n}}\big)\\
&\quad +C\|gh^{-1}\|_{L^\infty}\big(\|g^{-1}g_t\|_{L^\infty}\| \nabla \boldsymbol{w}\|_{L^{2n}}+ \| \nabla \boldsymbol{w}_t\|_{L^{2n}}\big) \leq C\big(c^4_4+\| \nabla^2 \boldsymbol{w}_t\|_{L^2}\big),
\end{aligned}
\end{equation*}
which, along with \eqref{2.16}, yields the rest of estimates.
\end{proof} 

\smallskip
\subsubsection{The estimates for $\boldsymbol{u}$}
Now we derive the estimates for $\boldsymbol{u}$ in the following several lemmas.
For simplicity, we denote 
\begin{equation*}
\begin{aligned}
\mathcal{H}&:=-\boldsymbol{u}_t-\boldsymbol{w}\cdot\nabla \boldsymbol{w}-\nabla\phi+\boldsymbol{\psi}\cdot Q(\boldsymbol{w}),\quad \mathcal{H}_0:=\mathcal{H}+\boldsymbol{u}_t,\notag\\
\mathcal{H}_1&:=\mathcal{H}-aG(\nabla\sqrt{h^2+\epsilon^2},\boldsymbol{u}),\quad 
\mathcal{H}_2:=\mathcal{H}_t -a\frac{h}{\sqrt{h^2+\epsilon^2}} L\boldsymbol{u}
h_t-aG(\nabla\sqrt{h^2+\epsilon^2},\boldsymbol{u}_t),\notag
\end{aligned}
\end{equation*}
where $G(\cdot,\cdot)$ is defined by \eqref{Gdingyi}.

\begin{lem}\label{HH} 
For any $t\in [0, T_1]$, we have the following estimates{\rm:}
\begin{itemize}
\item[\rm(i)] Estimates on $\mathcal{H}${\rm :}
\begin{equation*}
\|\mathcal{H}\|_{L^2}\leq C\big(\|u_t\|_{L^2}+ c_2^{\frac{3}{2}}c_3^{\frac{1}{2}}\big),\ \  \|\mathcal{H}\|_{D^1}\leq C\big(\|u_t\|_{D^1}+ c_3^2\big),\ \  \|\mathcal{H}\|_{D^2}\leq C\big(\|\boldsymbol{u}_t\|_{D^2}+c_4^2\big);
\end{equation*}
\item[\rm(ii)] Estimates on $\mathcal{H}_0${\rm :}
\begin{equation*}
\begin{aligned}
&\|\mathcal{H}_0\|_{L^2} \leq Cc_2^{\frac{3}{2}}   c_3^{\frac{1}{2}}, \qquad 
\|(\mathcal{H}_0)_t\|_{L^2}\leq  Cc^3_4,\notag\\
&\|(\mathcal{H}_0)_{tt}\|_{L^2}\leq C \big(c^3_4+c_4^2\|\nabla^2 \boldsymbol{w}_t\|_{L^2}+c_4\|\boldsymbol{w}_{tt}\|_{H^1}+\|\nabla\phi_{tt}\|_{L^2}+c_4\|\boldsymbol{\psi}_{tt}\|_{L^2}\big);\notag
\end{aligned}
\end{equation*}
\item[\rm(iii)] Estimates on $\mathcal{H}_1${\rm :}
\begin{equation*}
\begin{aligned}
&\|\mathcal{H}_1\|_{L^2}\leq C\big(\|\boldsymbol{u}_t\|_{L^2}+c_0^{3}\| \boldsymbol{u}\|_{L^2}+ c_0^{\frac{7}{2}}\|\sqrt{h}\nabla \boldsymbol{u}\|_{L^2}+c_2^{\frac{3}{2}}c_3^{\frac{1}{2}}\big),\notag\\[-2pt]
&\|\mathcal{H}_1\|_{D^1}\leq C\big(\|\boldsymbol{u}_t\|_{D^1}+c^4_0\| \boldsymbol{u}\|_{H^2}+c_3^2\big);\notag
\end{aligned}
\end{equation*}
\item[\rm(iv)] Estimates on $\mathcal{H}_2${\rm :}
\begin{equation*}
\begin{aligned}
&\|\mathcal{H}_2\|_{L^2}\leq C\big(\|\boldsymbol{u}_{tt}\|_{L^2}+c^3_4\|h\nabla^2\boldsymbol{u}\|_{L^2}+c^3_0\| \boldsymbol{u}_t\|_{H^1}+c^3_4\big),\notag\\
&\|\mathcal{H}_2\|_{D^1}\leq C\big(\|\boldsymbol{u}_{tt}\|_{D^1 }+c_0^4\|\boldsymbol{u}_t\|_{H^2}+c_4^4\big(\|h\nabla^3 \boldsymbol{u}\|_{L^2} +\|\nabla^2 \boldsymbol{u}\|_{H^1} \big)+c_3\|\nabla^2 \boldsymbol{w}_t\|_{L^2}+c_4^3\big).
\end{aligned}
\end{equation*}
\end{itemize}
\end{lem}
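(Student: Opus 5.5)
The plan is to estimate each of $\mathcal{H},\mathcal{H}_0,\mathcal{H}_1,\mathcal{H}_2$ term by term with H\"older's inequality. The inputs are the a priori assumptions \eqref{2.16} on $(g,\boldsymbol{w})$, Lemmas \ref{phiphi}, \ref{psi} and \ref{varphi} for $(\phi,\boldsymbol{\psi},h,\varphi)$, and the Sobolev and Gagliardo--Nirenberg inequalities (Lemmas \ref{ale1} and \ref{GN-ineq}), always on $[0,T_1]$. Since $\mathcal{H}=-\boldsymbol{u}_t+\mathcal{H}_0$, part (i) follows directly from (ii) together with the analogous $D^1,D^2$ bounds on $\mathcal{H}_0$. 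So I first treat
\[
\mathcal{H}_0=-\boldsymbol{w}\cdot\nabla\boldsymbol{w}-\nabla\phi+\boldsymbol{\psi}\cdot Q(\boldsymbol{w}).
\]
For the $L^2$ bound, use $\|\boldsymbol{w}\|_{L^\infty}\le C\|\boldsymbol{w}\|_{H^1}^{1/2}\|\boldsymbol{w}\|_{H^2}^{1/2}$ (valid for $n\le 3$) to get $\|\boldsymbol{w}\cdot\nabla\boldsymbol{w}\|_{L^2}\le Cc_2^{3/2}c_3^{1/2}$. Combine this with $\|\nabla\phi\|_{L^2}\le Cc_0$ and $\|\boldsymbol{\psi}\|_{L^\infty}\|\nabla\boldsymbol{w}\|_{L^2}\le Cc_0c_2$, and absorb lower powers since $c_0\le c_i$. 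For the $D^1,D^2$ bounds, distribute derivatives. The worst terms are $\nabla^2\boldsymbol{\psi}\,\nabla\boldsymbol{w}$, handled by $\|\nabla^2\boldsymbol{\psi}\|_{L^2}\|\nabla\boldsymbol{w}\|_{L^\infty}\le Cc_0c_4$, and $\nabla\boldsymbol{\psi}\,\nabla^2\boldsymbol{w}$, handled by $L^n\times L^{2n/(n-2)}$ (or $L^n\times L^\infty$-type bounds in 2-D) together with $\|\nabla^3\phi\|_{L^2}\le Cc_0$. This gives $c_3^2$ at first order and $c_4^2$ at second order.

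For the time derivatives of $\mathcal{H}_0$, differentiate once and twice in $t$. The first derivative contains $\boldsymbol{w}_t\cdot\nabla\boldsymbol{w}$, $\boldsymbol{w}\cdot\nabla\boldsymbol{w}_t$, $\nabla\phi_t$, $\boldsymbol{\psi}_t\nabla\boldsymbol{w}$ and $\boldsymbol{\psi}\nabla\boldsymbol{w}_t$. These are bounded using $\|\boldsymbol{\psi}_t\|_{L^2\cap D^1}$ and $\|\phi_t\|_{D^1}$ from Lemmas \ref{psi} and \ref{phiphi}, giving $Cc_4^3$. In the second derivative, keep the genuinely time-integrable quantities $\nabla^2\boldsymbol{w}_t$, $\boldsymbol{w}_{tt}$, $\nabla\phi_{tt}$ and $\boldsymbol{\psi}_{tt}$ explicit, with coefficients $c_4^2$ or $c_4$ as in the statement. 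For example, bound $\boldsymbol{\psi}_t\nabla\boldsymbol{w}_t$ by $\|\boldsymbol{\psi}_t\|_{L^{2n}}\|\nabla\boldsymbol{w}_t\|_{L^{2n/(n-1)}}\le Cc_4^2\|\nabla\boldsymbol{w}_t\|_{H^1}$.

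For $\mathcal{H}_1$ and $\mathcal{H}_2$, the new ingredient is $G(\nabla\sqrt{h^2+\epsilon^2},\boldsymbol{u})$ and its analogues. Write $\nabla\sqrt{h^2+\epsilon^2}=\frac{h}{\sqrt{h^2+\epsilon^2}}\nabla h$, which has modulus at most $|\nabla h|\le C|\boldsymbol{\psi}|$. The $L^2$ estimate of $\mathcal{H}_1$ needs care, and I expect it to be the main obstacle. The term $\nabla h\,\nabla\boldsymbol{u}$ must be charged to $\|\sqrt{h}\nabla\boldsymbol{u}\|_{L^2}$ rather than $\|\nabla\boldsymbol{u}\|_{L^2}$. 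To do this, write
\[
|\nabla h|\,|\nabla\boldsymbol{u}|=|\nabla h|\,h^{-1/2}\,\sqrt{h}|\nabla\boldsymbol{u}|,
\]
and use $\|\boldsymbol{\psi}\|_{L^\infty}\le Cc_0$ together with $\|h^{-1/2}\|_{L^\infty}\le Cc_0^{1/2}$ from Lemma \ref{varphi}. The terms involving $\nabla^2 h\,\boldsymbol{u}$ are bounded by $\|\nabla\boldsymbol{\psi}\|_{L^n}\|\boldsymbol{u}\|_{L^{2n/(n-2)}}$, or in 2-D via $\|\nabla\boldsymbol{\psi}\|_{L^2}$-type splits against $\|\boldsymbol{u}\|_{H^1}$. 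Collecting the powers of $c_0$ then yields the stated $c_0^3,c_0^{7/2}$.

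In $\mathcal{H}_2$, the term $\frac{h}{\sqrt{h^2+\epsilon^2}}h_t L\boldsymbol{u}$ is bounded by $\|h^{-1}h_t\|_{L^\infty}\|hL\boldsymbol{u}\|_{L^2}$, which gives the $c_4^3\|h\nabla^2\boldsymbol{u}\|_{L^2}$ term via Lemma \ref{varphi}. Its gradient additionally produces $\nabla h_t\,\nabla^2\boldsymbol{u}$ and $h_t\nabla^3\boldsymbol{u}$. The first is bounded by $\|\boldsymbol{\psi}_t\|_{L^{2n}}\|\nabla^2\boldsymbol{u}\|_{H^1}$. The second is bounded by $\|h^{-1}h_t\|_{L^\infty}\|h\nabla^3\boldsymbol{u}\|_{L^2}$. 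Finally, $\mathcal{H}_t$ contributes $\boldsymbol{u}_{tt}$, and in $D^1$ it contributes $\nabla^2\boldsymbol{w}_t$ with coefficient $c_3$, while $G(\nabla\sqrt{h^2+\epsilon^2},\boldsymbol{u}_t)$ is treated like $\mathcal{H}_1$ with $c_0^4\|\boldsymbol{u}_t\|_{H^2}$. These three pieces, together with the bounds above, give (iii)--(iv).
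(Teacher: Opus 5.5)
Your overall plan is the paper's: term-by-term H\"older/Sobolev bounds fed by \eqref{2.16} and Lemmas \ref{phiphi}--\ref{varphi}, with (i) read off from $\mathcal{H}=-\boldsymbol{u}_t+\mathcal{H}_0$. Your treatment of $\mathcal{H}_0$ and its time derivatives agrees with it. So does your handling of $\nabla h\,\nabla\boldsymbol{u}$ through $\|\boldsymbol{\psi}\|_{L^\infty}\|\varphi\|_{L^\infty}^{1/2}\|\sqrt{h}\nabla\boldsymbol{u}\|_{L^2}$ and of $h_tL\boldsymbol{u}$ through $\|h^{-1}h_t\|_{L^\infty}$.

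What is missing is the treatment of the higher derivatives of the regularized coefficient. Only the first derivative obeys $|\nabla\sqrt{h^2+\epsilon^2}|\le|\nabla h|$. The second and third derivatives are not multiples of $\nabla^2 h$ and $\nabla^3 h$:
\[
\nabla^2\sqrt{h^2+\epsilon^2}=\frac{h}{\sqrt{h^2+\epsilon^2}}\nabla^2 h+\frac{\epsilon^2}{(h^2+\epsilon^2)^{3/2}}\nabla h\otimes\nabla h .
\]
Likewise $\nabla^3\sqrt{h^2+\epsilon^2}$ also contains $\frac{\epsilon^2}{(h^2+\epsilon^2)^{3/2}}\nabla h\otimes\nabla^2 h$ and $-\frac{3\epsilon^2 h}{(h^2+\epsilon^2)^{5/2}}\nabla h\otimes\nabla h\otimes\nabla h$. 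In addition, $\nabla\mathcal{H}_2$ contains $\frac{\epsilon^2}{(h^2+\epsilon^2)^{3/2}}h_t\,\nabla h\otimes L\boldsymbol{u}$.

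Your sentence about ``terms involving $\nabla^2 h\,\boldsymbol{u}$'' skips all of these, although they enter every bound in (iii)--(iv). In fact, without them your bookkeeping for $\|\mathcal{H}_1\|_{L^2}$ only produces powers like $c_0^{3/2}$; the $c_0^3$ in the statement comes from exactly these terms.

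The paper handles them first, in \eqref{hl2}, by proving $\|\nabla^2\sqrt{h^2+\epsilon^2}\|_{L^3}\le Cc_0^3$ and $\|\nabla^3\sqrt{h^2+\epsilon^2}\|_{L^2}\le Cc_0^4$. This is why Lemma \ref{varphi} records $\|\nabla\sqrt{h}\|_{L^{2n}}\le Cc_0$. The bounds use $\frac{\epsilon^2}{(h^2+\epsilon^2)^{3/2}}|\nabla h|^2\le |\nabla h|^2/h=4|\nabla\sqrt{h}|^2$, and $h^{-2}|\nabla h|^3=8h^{-1/2}|\nabla\sqrt{h}|^3$ in 3-D, resp.\ $=4h^{-1}|\nabla h|\,|\nabla\sqrt{h}|^2$ in 2-D.

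Part of this is easy to repair inside your framework:
\begin{itemize}
\item The quadratic term is pointwise at most $\|\varphi\|_{L^\infty}\|\nabla h\|_{L^\infty}^2\le Cc_0^3$. This gives exactly the $c_0^3\|\boldsymbol{u}\|_{L^2}$ in (iii), and $c_0^3\|\boldsymbol{u}_t\|_{L^2}$ in (iv).
\item The extra term in $\nabla\mathcal{H}_2$ is at most $\|h^{-1}h_t\|_{L^\infty}\|\boldsymbol{\psi}\|_{L^\infty}\|\nabla^2\boldsymbol{u}\|_{L^2}$, which is absorbed into $c_4^4\|\nabla^2\boldsymbol{u}\|_{H^1}$.
\end{itemize}
The cubic term is the real issue. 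With only $L^\infty$ bounds it costs $\|\varphi\|_{L^\infty}^2\|\nabla h\|_{L^\infty}^3\sim c_0^5$. You would then get $c_0^5\|\boldsymbol{u}\|_{H^2}$ and $c_0^5\|\boldsymbol{u}_t\|_{H^2}$ in the $D^1$ bounds for $\mathcal{H}_1$ and $\mathcal{H}_2$, instead of the stated $c_0^4$. To prove the lemma as stated you must bring in $\nabla\sqrt{h}\in L^{2n}$ as the paper does. The weaker power would be harmless in the later bootstrap, up to relabeling exponents such as $c_3^{10}$, but it is not the statement.
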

\begin{proof}
It follows from the H\"older inequality, \eqref{2.16}, and Lemmas \ref{phiphi}--\ref{varphi} and \ref{GN-ineq} that 
\begin{equation}\label{hl2}
\begin{aligned}
\|\nabla \boldsymbol{w}\|_{L^3} &\leq  \|\nabla \boldsymbol{w}\|^{\frac{1}{2}}_{L^2}\|\nabla \boldsymbol{w}\|^{\frac{1}{2}}_{L^6} \leq  C\|\nabla \boldsymbol{w}\|^{\frac{1}{2}}_{L^2}\|\nabla \boldsymbol{w}\|^{\frac{1}{2}}_{H^1}\leq Cc_2^{\frac{1}{2}}c_3^{\frac{1}{2}},\\
\|\nabla^2\sqrt{h^2+\epsilon^2}\|_{L^3} &\leq   C\big(\|\nabla \sqrt{h}\|^2_{L^{6}}+\|\nabla \boldsymbol{\psi}\|_{L^3}\big)\\
&\leq C\big(\delta_{3n}\|\nabla \sqrt{h}\|^2_{L^{6}}\!+\delta_{2n}\|\nabla \sqrt{h}\|^{\frac{4}{3}}_{L^{4}}\|\boldsymbol{\psi}\|^{\frac{2}{3}}_{L^\infty} \|\varphi\|^{\frac{1}{3}}_{L^\infty}\!+\|\nabla \boldsymbol{\psi}\|_{L^3}\big)\leq Cc^3_0,\\
 \|\nabla^3\sqrt{h^2+\epsilon^2}\|_{L^2} 
&\leq C\big(\delta_{3n}\|\varphi\|^{\frac{1}{2}}_{L^\infty}\|\nabla \sqrt{h}\|^3_{L^{2n}}+
\delta_{2n}\|\varphi\|_{L^\infty}\|\boldsymbol{\psi}\|_{L^\infty}\|\nabla \sqrt{h}\|^2_{L^{2n}}\big)\\
&\quad +C\big(\|\varphi\|^{\frac{1}{2}}_{L^\infty}\|\nabla \boldsymbol{\psi}\|_{L^n}\|\nabla \sqrt{h}\|_{L^{n^*}}+\|\nabla^2 \boldsymbol{\psi}\|_{L^2}\big)\leq Cc^{4}_0. 
\end{aligned}
\end{equation}

Then, based on \eqref{hl2}, we obtain from the H\"older inequality, \eqref{2.16}, and Lemmas  \ref{phiphi}--\ref{varphi}, \ref{ale1}--\ref{GN-ineq}, and \ref{lemma-L6}--\ref{Hk-Ck-vector} that
\begin{align}
&\begin{aligned}
\|\mathcal{H}_0\|_{L^2}
&\leq C\big(\|\boldsymbol{w}\|_{L^6}\|\nabla \boldsymbol{w}\|_{L^3}+\|\nabla\phi\|_{L^2}+\|\boldsymbol{\psi}\|_{L^\infty}\|\nabla \boldsymbol{w}\|_{L^2}\big)\leq C c_2^{\frac{3}{2}}   c_3^{\frac{1}{2}},\notag\\
\|(\mathcal{H}_0)_t\|_{L^2}&\leq  C\big(\|\boldsymbol{w}\|_{H^2}\| \boldsymbol{w}_t\|_{H^1}+\| \phi_t\|_{H^1}+\|\boldsymbol{\psi}_t\|_{L^2}\|\nabla \boldsymbol{w}\|_{L^\infty}+\|\boldsymbol{\psi}\|_{L^\infty}\|\nabla \boldsymbol{w}_t\|_{L^2}\big)\leq Cc^3_4,\notag \\
\|(\mathcal{H}_0)_{tt}\|_{L^2}&\leq C \big(\|\nabla \boldsymbol{w}_t\|_{L^6}\|\boldsymbol{w}_t\|_{L^3}+\|\nabla \boldsymbol{w}\|_{L^\infty}\|\boldsymbol{w}_{tt}\|_{L^2}+\|\boldsymbol{w}\|_{L^\infty}\|\nabla \boldsymbol{w}_{tt}\|_{L^2}+\|\nabla\phi_{tt}\|_{L^2}\big)\notag\\
&\quad + C\big(\|\boldsymbol{\psi}_{tt}\|_{L^2}\|\nabla \boldsymbol{w}\|_{L^\infty}+\|\boldsymbol{\psi}\|_{L^\infty}\|\nabla \boldsymbol{w}_{tt}\|_{L^2}
+\|\boldsymbol{\psi}_t\|_{L^3}\|\nabla \boldsymbol{w}_t\|_{L^6}\big)\notag\\
&\leq C \big(c^3_4+c_4^2\|\nabla^2 \boldsymbol{w}_t\|_{L^2}+c_4\|\boldsymbol{w}_{tt}\|_{H^1} +\|\nabla\phi_{tt}\|_{L^2}+c_4\|\boldsymbol{\psi}_{tt}\|_{L^2}\big)\notag,
\end{aligned}\\
&\hspace{6.5mm}\begin{aligned}
\|\mathcal{H}\|_{L^2}&\leq C\big(\|\mathcal{H}_0\|_{L^2}+\|\boldsymbol{u}_t\|_{L^2}\big)\leq C\big(\|\boldsymbol{u}_t\|_{L^2}+ c_2^{\frac{3}{2}}c_3^{\frac{1}{2}}\big),\notag\\
\|\mathcal{H}\|_{D^1}
&\leq C\big(\|\boldsymbol{u}_t\|_{D^1}+\|\boldsymbol{w}\|_{H^2}^2+\|\nabla\phi\|_{H^1}+\|\boldsymbol{\psi}\|_{D^{1,n}}\|\boldsymbol{w}\|_{H^2}\big)\leq C\big(\|\boldsymbol{u}_t\|_{D^1}+c_3^2\big),\notag\\
\|\mathcal{H}\|_{D^2}&\leq C\big(\|\boldsymbol{u}_t\|_{D^2}+\|\boldsymbol{w}\|_{H^3}^2+\|\nabla \phi\|_{H^2} +\|\boldsymbol{\psi}\|_{ D^{1,n}\cap D^2}\|\boldsymbol{w}\|_{H^3}\big) \leq  C\big(\|\boldsymbol{u}_t\|_{D^2}+c_4^2\big),\notag 
\end{aligned}\\
&\hspace{5.5mm}\begin{aligned}
\|\mathcal{H}_1\|_{L^2}&\leq  C\big(\|\mathcal{H}\|_{L^2}+\|\boldsymbol{\psi}\|_{L^\infty} \|\nabla \boldsymbol{u}\|_{L^2}
+\|\nabla^2 \sqrt{h^2+\epsilon^2}\|_{L^3} 
\|\boldsymbol{u}\|_{L^6}\big)\notag\\
& \leq  C\big(\|\boldsymbol{u}_t\|_{L^2}+c_0^{3}\| \boldsymbol{u}\|_{L^2}+ c_0^{\frac{7}{2}}\|\sqrt{h}\nabla \boldsymbol{u}\|_{L^2}+c_2^{\frac{3}{2}}c_3^{\frac{1}{2}}\big),\notag
\end{aligned}\\
&\hspace{5mm}\begin{aligned}
\|\mathcal{H}_1\|_{D^1}
&\leq  C\big(\|\mathcal{H}\|_{D^1} +\|\boldsymbol{\psi}\|_{L^\infty} \|\nabla^2 \boldsymbol{u}\|_{L^2}\big)  +C \|\nabla \sqrt{h^2+\epsilon^2}\|_{D^{1,3}\cap D^2} \|\boldsymbol{u}\|_{H^2} \notag\\
&\leq C\big(\|\boldsymbol{u}_t\|_{D^1}+c^4_0\| \boldsymbol{u}\|_{H^2}+c_3^2\big),\notag
\end{aligned}\\
&\hspace{5.5mm}\begin{aligned}
\|\mathcal{H}_2\|_{L^2}&\leq  C\big(\|((\mathcal{H}_0)_t,\boldsymbol{u}_{tt})\|_{L^2}  +\|h^{-1}h_t\|_{L^\infty}\|h\nabla^2\boldsymbol{u}\|_{L^2}\big)\notag\\
&\quad +C\big(\|\nabla\sqrt{h^2+\epsilon^2}\|_{L^\infty} \|\nabla \boldsymbol{u}_t\|_{L^2}+\|\nabla^2 \sqrt{h^2+\epsilon^2}\|_{L^3} \|\boldsymbol{u}_t\|_{L^{6}}\big)\notag\\
&\leq   C\big(\|\boldsymbol{u}_{tt}\|_{L^2}+c^3_4\|h\nabla^2\boldsymbol{u}\|_{L^2}+c^3_0\| \boldsymbol{u}_t\|_{H^1}+c^3_4\big),\notag    
\end{aligned}\\
&\hspace{5mm}\begin{aligned}
\|\mathcal{H}_2\|_{D^1}
&\leq  C\big(\|\boldsymbol{u}_{tt}\|_{D^1 }+\|\boldsymbol{w}\|_{H^3}\| \boldsymbol{w}_t\|_{H^1}+\|\boldsymbol{w}\|_{L^\infty}\|\nabla^2 \boldsymbol{w}_t\|_{L^2}+ \| \phi_t\|_{H^2}\big)\notag\\
&\quad +C\big(\|\boldsymbol{\psi}\|_{L^\infty \cap  D^{1,3}}\|\nabla \boldsymbol{w}_t\|_{H^1}+\|\boldsymbol{\psi}_t\|_{H^1}\| \boldsymbol{w}\|_{H^3}+\|h^{-1}h_t\|_{L^\infty}  \|h\nabla^3 \boldsymbol{u}\|_{L^2}\big)\notag\\
&\quad +C\big(\|h^{-1}h_t\|_{L^\infty}  \|\boldsymbol{\psi}\|_{L^\infty}\|\nabla^2 \boldsymbol{u}\|_{L^2}+\|\boldsymbol{\psi}_t\|_{L^3}\|\nabla^2 \boldsymbol{u}\|_{L^6}\big)\notag\\
&\quad +C\big(\|\boldsymbol{\psi}\|_{L^\infty}+\|\nabla \sqrt{h^2+\epsilon^2}\|_{D^{1,3}\cap D^2}\big)\|\boldsymbol{u}_t\|_{H^2} \notag\\
&\leq  C\big(\|\boldsymbol{u}_{tt}\|_{D^1 }+c_0^4\|\boldsymbol{u}_t\|_{H^2}+c_4^4\big(\|h\nabla^3 \boldsymbol{u}\|_{L^2} +\|\nabla^2 \boldsymbol{u}\|_{H^1} \big)+c_3\|\nabla^2 \boldsymbol{w}_t\|_{L^2}+c_4^3\big).\notag
\end{aligned}
\end{align}
\end{proof}

\begin{lem}\label{lu}
Let $T_2:=\min\{T_1,(1+Cc_4)^{-20}\}$.
Then, for any $t\in [0, T_2]$, 
\begin{equation*} 
\begin{aligned}
\|\boldsymbol{u}(t)\|_{H^1}^2+\|\sqrt{h}\nabla \boldsymbol{u}(t)\|^2_{L^2}  +\int^t_0\big(\|\nabla^2\boldsymbol{u}\|^2_{L^2}+\|\boldsymbol{u}_t\|^2_{L^2}+\|h\nabla^2\boldsymbol{u}\|^2_{L^2}\big)\,\mathrm{d}s
&\leq Cc^4_0,\\
\|\boldsymbol{u}_t(t)\|_{L^2}^2+
 \|h\nabla^2 \boldsymbol{u}(t)\|^2_{L^2}+ \|\boldsymbol{u}(t)\|^2_{D^2}&\leq Cc_2^{10}c_3,\\
\int_0^t \big(\|\sqrt{h}\nabla \boldsymbol{u}_t\|^2_{L^2}+\|\nabla \boldsymbol{u}_t\|^2_{L^2}+\|h\nabla^3\boldsymbol{u}\|^2_{L^2}+ \|\nabla^3\boldsymbol{u}\|^2_{L^2}\big)\,\mathrm{d}s
&\leq  Cc^6_0.
\end{aligned}
\end{equation*}
\end{lem}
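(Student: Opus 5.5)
We prove the three groups of estimates in Lemma \ref{lu} in order, each by a standard energy method for the linear degenerate parabolic equation $\eqref{ln}_2$, written as
\[
\boldsymbol{u}_t+a\sqrt{h^2+\epsilon^2}\,L\boldsymbol{u}=\mathcal{H}_0 .
\]
The terms without a time derivative of $\boldsymbol{u}$ are controlled by Lemma \ref{HH}. The lower bound $h\geq(2c_0)^{-1}$ and the comparison $h\sim g$ of Lemma \ref{varphi} keep the weights uniformly equivalent to the constants in $\epsilon,\eta$. The time interval is then shrunk to $T_2$ so that every bad power of $c_i$ (with $i\ge1$) appears multiplied by a positive power of $t$.

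\textbf{Step 1 (zeroth and first order).} Multiply $\eqref{ln}_2$ by $\boldsymbol{u}$ and integrate by parts. Since $L$ is strongly elliptic under \eqref{kelaoxiusi}, this gives
\[
\frac{\mathrm{d}}{\mathrm{d}t}\|\boldsymbol{u}\|_{L^2}^2+c\|\sqrt{h}\nabla\boldsymbol{u}\|_{L^2}^2\leq C\|\nabla\sqrt{h^2+\epsilon^2}\|_{L^\infty}\|\boldsymbol{u}\|_{L^2}\|\nabla\boldsymbol{u}\|_{L^2}+\|\mathcal{H}_0\|_{L^2}\|\boldsymbol{u}\|_{L^2}.
\]
We have $\|\nabla\sqrt{h^2+\epsilon^2}\|_{L^\infty}\le C\|\boldsymbol\psi\|_{L^\infty}\le Cc_0$ by Lemma \ref{psi}, and $\|\nabla\boldsymbol{u}\|_{L^2}\le Cc_0^{1/2}\|\sqrt h\nabla\boldsymbol{u}\|_{L^2}$.

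Next, test $\eqref{ln}_2$ against $\boldsymbol{u}_t$. Using $a\sqrt{h^2+\epsilon^2}L\boldsymbol{u}=L(a\sqrt{h^2+\epsilon^2}\boldsymbol{u})-aG(\cdot,\cdot)$, this yields $\frac{\mathrm{d}}{\mathrm{d}t}\int\sqrt{h^2+\epsilon^2}(a_1|\nabla\boldsymbol{u}|^2+(a_1+a_2)|\diver\boldsymbol{u}|^2)+\|\boldsymbol{u}_t\|_{L^2}^2$, controlled by $\|h^{-1}h_t\|_{L^\infty}\|\sqrt h\nabla \boldsymbol{u}\|_{L^2}^2$, terms in $\boldsymbol\psi\nabla\boldsymbol{u}$, and $\|\mathcal{H}_0\|_{L^2}^2$. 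By Lemmas \ref{varphi} and \ref{HH}(ii), the coefficients are of size $c_2^{9/4}c_4^{3/4}$ and $c_2^3c_3$. Their time integrals over $[0,T_2]$ are $\le C$, which gives the bound $Cc_0^4$ via Gr\"onwall.

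For the $\nabla^2\boldsymbol{u}$ and $h\nabla^2\boldsymbol{u}$ integrals, use the elliptic identity
\[
L(\sqrt{h^2+\epsilon^2}\boldsymbol{u})=a^{-1}\mathcal{H}_1
\]
together with Lemma \ref{df3}, and then Lemma \ref{HH}(iii). Finally, $\|\nabla^2\boldsymbol{u}\|\le Cc_0\|h\nabla^2\boldsymbol{u}\|$.

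\textbf{Step 2 (the $\boldsymbol{u}_t$ level).} Differentiate $\eqref{ln}_2$ in $t$ and test with $\boldsymbol{u}_t$:
\[
\frac{\mathrm{d}}{\mathrm{d}t}\|\boldsymbol{u}_t\|_{L^2}^2+c\|\sqrt h\nabla\boldsymbol{u}_t\|_{L^2}^2\le C\big(\|h^{-1}h_t\|_{L^\infty}\|h\nabla^2\boldsymbol{u}\|_{L^2}+\|(\mathcal{H}_0)_t\|_{L^2}+c_0\|\nabla\boldsymbol{u}_t\|_{L^2}\big)\|\boldsymbol{u}_t\|_{L^2}.
\]
The initial value $\|\boldsymbol{u}_t(0)\|_{L^2}$ is bounded by $Cc_0^2$ through the compatibility data $\boldsymbol{g}_*$ and \eqref{incc}. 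Then $\|h\nabla^2\boldsymbol{u}(t)\|_{L^2}$ and $\|\boldsymbol{u}(t)\|_{D^2}$ follow pointwise in time from the same elliptic estimate with $\mathcal{H}_1$, using Lemma \ref{HH}(i),(iii) and $\|\mathcal{H}_0\|_{L^2}\le Cc_2^{3/2}c_3^{1/2}$. This accounts for the factor $c_2^{10}c_3$.

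\textbf{Step 3 (third order).} The $L^2_t$ bounds for $\sqrt h\nabla\boldsymbol{u}_t$ and $\nabla\boldsymbol{u}_t$ come from the time-integrated version of Step 2, once more using $h\ge(2c_0)^{-1}$. For $h\nabla^3\boldsymbol{u}$, apply $\nabla$ to the elliptic identity, i.e.\ use the $D^1$-estimate of Lemma \ref{df3}:
\[
\|\sqrt{h^2+\epsilon^2}\boldsymbol{u}\|_{D^3}\le C\|\mathcal{H}_1\|_{D^1}\le C\big(\|\boldsymbol{u}_t\|_{D^1}+c_0^4\|\boldsymbol{u}\|_{H^2}+c_3^2\big).
\]
Commutators are absorbed with the $D^{1,n}\cap D^2$ bounds on $\boldsymbol\psi$ and the $L^{2n}$ bound on $\nabla\sqrt h$. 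Integrating in time, the $c_3^2$ term contributes $c_3^4t\le C$ on $[0,T_2]$.

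The main obstacle is the bookkeeping in Step 2. The top-order coefficient $\sqrt{h^2+\epsilon^2}$ is time-dependent, and $\|h^{-1}h_t\|_{L^\infty}$ carries $c_4$-dependence, so the estimate must be arranged so that only $c_0$ survives on the right. The plan is to put every $c_i$ with $i\ge1$ in front of a factor $t^{1/2}$ or $t$, which is harmless because $T_2\le(1+Cc_4)^{-20}$.
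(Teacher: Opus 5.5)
Your proposal is correct and follows the paper's proof essentially step for step. The steps match: the $\boldsymbol{u}$- and $\boldsymbol{u}_t$-tested energy estimates closed by Gr\"onwall on the short interval; the elliptic identity $aL(\sqrt{h^2+\epsilon^2}\boldsymbol{u})=\mathcal{H}_1$ with Lemma \ref{df3}, at the $L^2$ level for $h\nabla^2\boldsymbol{u}$ and at the $D^1$ level for $h\nabla^3\boldsymbol{u}$; and the time-differentiated equation tested with $\boldsymbol{u}_t$, with initial data controlled via $\boldsymbol{g}_*$ and \eqref{incc}. There are two small points of presentation. First, in the $\boldsymbol{u}_t$-test the energy identity comes from integrating by parts directly, and the $L(\sqrt{h^2+\epsilon^2}\boldsymbol{u})$ identity is only needed for the elliptic step. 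Second, the initial value $\|\sqrt{h_0}\nabla\boldsymbol{u}_0\|_{L^2}$ must be bounded through $\|\mathcal{G}_1^\eta\|_{L^2}\le c_0$ from \eqref{2.14}, since $h_0^\eta=(\phi_0+\eta)^{2\iota}$ is not bounded uniformly in $\eta$.
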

\begin{proof}
We divide the proof into three steps.

\smallskip
\textbf{1. $L^2(\mathbb{R}^n)$-estimate on $\boldsymbol{u}$.}
Multiplying $\eqref{ln}_2$ by $\boldsymbol{u}$ and integrating over $\mathbb{R}^n$, we obtain from the H\"older inequality and the Young inequality that
\begin{align}
&\,\frac{1}{2}\frac{\mathrm{d}}{\mathrm{d}t}\|\boldsymbol{u}\|_{L^2}^2+aa_1\|(h^2+\epsilon^2)^{\frac{1}{4}}\nabla \boldsymbol{u}\|^2_{L^2}+a(a_1+a_2)\|(h^2+\epsilon^2)^{\frac{1}{4}}\diver \boldsymbol{u}\|^2_{L^2} \notag\\
& =\int_{\mathbb{R}^n} \big(\mathcal{H}_0
-a\nabla\sqrt{h^2+\epsilon^2}\cdot Q(\boldsymbol{u})\big)\cdot \boldsymbol{u}\,\mathrm{d}\boldsymbol{x}\leq C\big(\|\mathcal{H}_0\|_{L^2}+\|\boldsymbol{\psi}\|_{L^\infty}\|\nabla \boldsymbol{u}\|_{L^2}\big)\|\boldsymbol{u}\|_{L^2}\notag\\
&\leq C\big(c_0^{3}\|\boldsymbol{u}\|^2_{L^2}+c_3^4\big)+\frac{1}{2}a a_1\|\sqrt{h}\nabla \boldsymbol{u}\|^2_{L^2},\notag
\end{align}
which, along with the Gr\"onwall inequality, yields 
\begin{equation}\label{ul2}
\begin{aligned}
\|\boldsymbol{u}\|_{L^2}^2+\int^t_0\|\sqrt{h}\nabla \boldsymbol{u}\|^2_{L^2}\,\mathrm{d}s
\leq  C\big(\|\boldsymbol{u}_0\|_{L^2}^2+c_3^4t\big)e^{Cc_0^{3}t}\leq Cc^2_0\qquad \text{for $t\in [0,T_2]$}.
\end{aligned}
\end{equation}

\smallskip
\textbf{2. $D^1(\mathbb{R}^n)$-estimate on $\boldsymbol{u}$.} Multiplying $\eqref{ln}_2$ by $\boldsymbol{u}_t$ and  integrating over $\mathbb{R}^n$, we obtain from the H\"older and Young inequalities, and Lemmas \ref{phiphi}--\ref{varphi} that
\begin{equation}
\begin{aligned}
&\,\frac{1}{2}\frac{\mathrm{d}}{\mathrm{d}t}\big(aa_1\|(h^2+\epsilon^2)^{\frac{1}{4}}\nabla \boldsymbol{u}\|^2_{L^2}+a(a_1+a_2)\|(h^2+\epsilon^2)^{\frac{1}{4}}\diver \boldsymbol{u}\|^2_{L^2}\big)+\|\boldsymbol{u}_t\|^2_{L^2},\notag\\
& =\int_{\mathbb{R}^n}\Big( \big(\mathcal{H}_0-a\nabla\sqrt{h^2+\epsilon^2}\cdot Q(\boldsymbol{u})
\big)\cdot \boldsymbol{u}_t\,\mathrm{d}\boldsymbol{x} +\frac{a}{2}\frac{ hh_t}{\sqrt{h^2+\epsilon^2}}\big(a_1|\nabla \boldsymbol{u}|^2+(a_1+a_2)|\diver\boldsymbol{u}|^2\big)\Big)\,\mathrm{d}\boldsymbol{x}\notag\\
&\leq C\big(\|\mathcal{H}_0\|_{L^2}+\|\boldsymbol{\psi}\|_{L^\infty}\|\varphi\|^{\frac{1}{2}}_{L^\infty}\|\sqrt{h}\nabla \boldsymbol{u}\|_{L^2}\big)\|\boldsymbol{u}_t\|_{L^2} +C\|h_t h^{-1}\|_\infty\|\sqrt{h}\nabla \boldsymbol{u}\|^2_{L^2}\notag\\
&\leq C\big(c_4^3\|\sqrt{h}\nabla \boldsymbol{u}\|_{L^2}^2+c_3^4\big)+\frac{1}{2}\|\boldsymbol{u}_t\|^2_{L^2},\notag
\end{aligned}
\end{equation}
which, along with the Gr\"onwall inequality and \eqref{2.14}, implies that, for $t\in [0,T_2]$,
\begin{equation}\label{2.61f}
\|\sqrt{h}\nabla \boldsymbol{u}\|^2_{L^2}+\int^t_0\|\boldsymbol{u}_t\|^2_{L^2}\,\mathrm{d}s
\leq C\big(c^2_0+c_3^4t\big)e^{ Cc_4^3t}\leq  Cc^2_0,\qquad \|\nabla \boldsymbol{u}\|^2_{L^2}\leq Cc^3_0.
\end{equation}

By the definitions of the Lam\'e operator $L$ and $\boldsymbol{\psi}$ and equation $\eqref{ln}_2$, we have
\begin{equation}\label{2.61h}
aL(\sqrt{h^2+\epsilon^2}\boldsymbol{u}) =a\sqrt{h^2+\epsilon^2}L\boldsymbol{u}-aG(\nabla\sqrt{h^2+\epsilon^2},\boldsymbol{u})=\mathcal{H}_1.
\end{equation}
Then it follows from \eqref{hl2}--\eqref{2.61f}, and  Lemmas \ref{df3} and  \ref{psi}--\ref{HH} that 
\begin{equation}\label{2.61i}
\begin{aligned}
&\,\|h\nabla^2\boldsymbol{u}\|_{L^2} \leq\|\sqrt{h^2+\epsilon^2}\nabla^2\boldsymbol{u}\|_{L^2}\\
&\leq C\big(\|\sqrt{h^2+\epsilon^2}\boldsymbol{u}\|_{D^2} +\|\nabla^2\sqrt{h^2+\epsilon^2}\|_{L^3}\|\boldsymbol{u}\|_{L^6}+\|\nabla \sqrt{h^2+\epsilon^2}\|_{L^\infty}\|\nabla \boldsymbol{u}\|_{L^2}\big)\\
&\leq  C \big(\|\mathcal{H}_1\|_{L^2} +c_2^{4}c_3^{\frac{1}{2}}\big) \leq  C\big(\|\boldsymbol{u}_t\|_{L^2}+c_2^{4}c_3^{\frac{1}{2}}\big),
\end{aligned}
\end{equation}
which, along with \eqref{2.61f} and Lemma \ref{varphi}, implies 
\begin{equation*} 
\int_0^{T_2} \|h\nabla^2\boldsymbol{u}\|^2_{L^2}\,\mathrm{d}s
\leq  Cc^2_0,\qquad \int_0^{T_2} \|\nabla^2\boldsymbol{u}\|^2_{L^2}\,\mathrm{d}s
\leq  Cc^4_0.
\end{equation*}

\smallskip
\textbf{3. $D^2(\mathbb{R}^n)$-estimate on $\boldsymbol{u}$.} Applying $\partial_t$ to $\eqref{ln}_2$ yields
\begin{equation}\label{2.61j}
\boldsymbol{u}_{tt}+a\sqrt{h^2+\epsilon^2}L\boldsymbol{u}_t= (\mathcal{H}_0)_t-a\frac{h}{\sqrt{h^2+\epsilon^2}} h_tL\boldsymbol{u}.
\end{equation}
Multiplying \eqref{2.61j} by $\boldsymbol{u}_t$ and integrating over $\mathbb{R}^n$  lead to 
\begin{equation*} 
\begin{aligned}
&\,\frac{1}{2}\frac{\mathrm{d}}{\mathrm{d}t}\|\boldsymbol{u}_{t}\|^2_{L^2}+aa_1\|(h^2+\epsilon^2)^{\frac{1}{4}}\nabla \boldsymbol{u}_t\|_{L^2}^2+a(a_1+a_2)\|(h^2+\epsilon^2)^{\frac{1}{4}}\diver \boldsymbol{u}_t\|_{L^2}^2\\
&=\int_{\mathbb{R}^n} (\mathcal{H}_0)_t\cdot \boldsymbol{u}_t\,\mathrm{d}\boldsymbol{x}-\int_{\mathbb{R}^n} a\frac{h}{\sqrt{h^2+\epsilon^2}} h_tL\boldsymbol{u}\cdot \boldsymbol{u}_t\,\mathrm{d}\boldsymbol{x}-\int_{\mathbb{R}^n} a\nabla\sqrt{h^2+\epsilon^2}\cdot Q(\boldsymbol{u}_t)\cdot \boldsymbol{u}_t\,\mathrm{d}\boldsymbol{x}\\
&\leq C\big(\|(\mathcal{H}_0)_t\|_{L^2}+\|h^{-1}h_t\|_{L^\infty}\|h\nabla^2\boldsymbol{u}\|_{L^2}+\|\varphi\|^{\frac{1}{2}}_{L^\infty}\|\boldsymbol{\psi}\|_{L^\infty}\|\sqrt{h}\nabla \boldsymbol{u}_t\|_{L^2}\big)\|\boldsymbol{u}_t\|_{L^2}\\
&\leq C\big(c^6_4\|\boldsymbol{u}_{t}\|^2_{L^2}+c^9_4\big)+\frac{aa_1}{2}\|\sqrt{h}\nabla \boldsymbol{u}_t\|_{L^2}^2.
\end{aligned}
\end{equation*}
Integrating the above over $(\tau,t)$ with $\tau\in(0,t)$,  we obtain by using \eqref{2.16}, Lemmas \ref{phiphi}--\ref{varphi}  and the Young inequality that
\begin{equation}\label{2.61n}
\|\boldsymbol{u}_t(t)\|^2_{L^2}+aa_1\int^t_\tau\|\sqrt{h}\nabla \boldsymbol{u}_t\|^2_{L^2}\,\mathrm{d}s\leq  \|\boldsymbol{u}_t(\tau)\|^2_{L^2}+Cc_4^{6}\int^t_0\|\boldsymbol{u}_t\|^2\,\mathrm{d}s+Cc_4^9t.
\end{equation}
Thanks to $\eqref{ln}_2$, we have
\begin{equation*} 
\|\boldsymbol{u}_t(\tau)\|_{L^2}\leq  C\big(\|\boldsymbol{w}\|_{L^\infty}\|\nabla \boldsymbol{w}\|_{L^2}+\|\nabla\phi\|_{L^2}+\|(h+\epsilon)L\boldsymbol{u}\|_{L^2}+\|\psi\|_{L^\infty}\|\nabla \boldsymbol{w}\|_{L^2}\big)(\tau),
\end{equation*}
which, along with \eqref{2.14}--\eqref{incc} and Lemma \ref{ls}, yields 
\begin{equation*}\label{2.611p}\begin{aligned}
\limsup_{\tau\to 0}\|\boldsymbol{u}_t(\tau)\|_{L^2}&\leq C\big(\|(\boldsymbol{u}_0,\boldsymbol{\psi}_0)\|_{L^\infty}\|\nabla \boldsymbol{u}_0\|_{L^2}+\|\nabla\phi_0\|_{L^2}+\|\boldsymbol{g}_*\|_{L^2}+\|L\boldsymbol{u}_0\|_{L^2}\big)\leq Cc^2_0.
\end{aligned}
\end{equation*}
Letting $\tau\to 0$ in \eqref{2.61n} and using the Gr\"onwall inequality give 
\begin{equation*} 
\|\boldsymbol{u}_t(t)\|_{L^2}^2+\int^t_0 \|\sqrt{h}\nabla \boldsymbol{u}_t \|^2_{L^2} \,\mathrm{d}s
\leq C \big(c_4^9t+c^4_0\big)e^{Cc_4^{6}t}\leq Cc^4_0\qquad\text{for $t\in [0,T_2]$},
\end{equation*}
which, along with  \eqref{2.61i}, yields 
\begin{equation}\label{2.61s}
\|h\nabla^2 \boldsymbol{u}(t)\|_{L^2}\leq Cc_2^{4}c_3^{\frac{1}{2}},\quad \|\boldsymbol{u}(t)\|_{D^2}\leq Cc_2^{5}c_3^{\frac{1}{2}},\quad \int^{T_2}_0\|\nabla \boldsymbol{u}_t \|^2_{L^2}\,\mathrm{d}t
\leq Cc^5_0.
\end{equation}

Next, we see from \eqref{2.61f} and  Lemma \ref{HH} that
\begin{equation}\label{2.61t}
\|\mathcal{H}_1\|_{D^1}\leq C\big(\|\boldsymbol{u}_t\|_{D^1}+c_3^{10}\big).
\end{equation}
Then it follows from \eqref{2.61h}, \eqref{2.61t}, and Lemmas \ref{psi}, \ref{HH}, and \ref{df3} that
\begin{equation}\label{2.61tkkk}\begin{aligned}
\|h\nabla^3\boldsymbol{u}\|_{L^2}
&\leq\|\sqrt{h^2+\epsilon^2}\nabla^3\boldsymbol{u}\|_{L^2}
\leq C\big( \|\sqrt{h^2+\epsilon^2}\boldsymbol{u}\|_{D^3} +\|\nabla^3\sqrt{h^2+\epsilon^2}\|_{L^2}\|\boldsymbol{u}\|_{L^\infty}\big)\\
&\quad +C\big(\|\nabla^2 \sqrt{h^2+\epsilon^2}\|_{L^3}\|\nabla \boldsymbol{u}\|_{L^6}+\|\nabla \sqrt{h^2+\epsilon^2}\|_{L^\infty}\|\nabla^2 \boldsymbol{u}\|_{L^2}\big)\\
&\leq  C \big(\|\mathcal{H}_1\|_{D^1} +c_3^{10}\big) \leq  C\big(\|\boldsymbol{u}_t\|_{D^1}+c_3^{10}\big),\end{aligned}
\end{equation}
which, along with  \eqref{2.61s}, \eqref{2.61tkkk}, and Lemma \ref{varphi}, implies 
\begin{equation*} 
\begin{aligned}
\int_0^{T_2} \|h\nabla^3\boldsymbol{u}\|^2_{L^2}\,\mathrm{d}t
\leq  Cc^5_0,\qquad \int_0^{T_2} \|\nabla^3\boldsymbol{u}\|^2_{L^2}\,\mathrm{d}t
\leq  Cc^6_0.
\end{aligned}
\end{equation*}

This completes the proof.
\end{proof}

Next, we establish the $D^3(\mathbb{R}^n)$-estimates for $\boldsymbol{u}$.
\begin{lem}\label{hu}
Let $T_3:=\min\{T_2,(1+Cc_4)^{-28}\}$. Then, for any $t\in [0, T_3]$, 
\begin{equation*} 
\begin{aligned}
\|\sqrt h\nabla \boldsymbol{u}_t(t)\|_{L^2}^2+\int^t_0\|\boldsymbol{u}_{tt}\|^2_{L^2}\,\mathrm{d}s\leq Cc^7_0,\qquad  \|\nabla \boldsymbol{u}_t(t)\|^2_{L^2}&\leq Cc^8_0,\\
\|h\nabla^3\boldsymbol{u}(t)\|_{L^2}+\|h\nabla^2\boldsymbol{u}(t)\|_{D^1}+\|\nabla^3\boldsymbol{u}(t)\|_{L^2}&\leq Cc^{11}_3,\\
\int^{t}_0\big(\|h\nabla^2\boldsymbol{u}_t\|^2_{L^2}+\|\boldsymbol{u}_t\|^2_{D^2}+\|\boldsymbol{u}\|^2_{D^4}+\|h\nabla^2\boldsymbol{u}\|_{D^2}^2+\|(h\nabla^2\boldsymbol{u})_t\|^2_{L^2}\big)\,\mathrm{d}s&\leq  Cc^{11}_0.
\end{aligned}
\end{equation*}
\end{lem}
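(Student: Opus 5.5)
The plan is the standard next-order energy estimate for the linearized problem \eqref{ln}: test the time-differentiated momentum equation against $\boldsymbol{u}_{tt}$, then recover spatial regularity from the elliptic structure \eqref{2.61h} and its time derivative.

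\textbf{Step 1: estimate for $\sqrt h\nabla\boldsymbol{u}_t$.} Multiply \eqref{2.61j} by $\boldsymbol{u}_{tt}$ and integrate over $\mathbb{R}^n$. Integrating by parts in the Lam\'e term gives
\begin{equation*}
\frac{a}{2}\frac{\mathrm{d}}{\mathrm{d}t}\Big(a_1\|(h^2+\epsilon^2)^{\frac14}\nabla\boldsymbol{u}_t\|_{L^2}^2+(a_1+a_2)\|(h^2+\epsilon^2)^{\frac14}\diver\boldsymbol{u}_t\|_{L^2}^2\Big)+\|\boldsymbol{u}_{tt}\|_{L^2}^2
\end{equation*}
on the left-hand side. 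The right-hand side consists of:
\begin{itemize}
\item the term $\int(\mathcal{H}_0)_t\cdot\boldsymbol{u}_{tt}$, bounded by $Cc_4^3\|\boldsymbol{u}_{tt}\|_{L^2}$ via Lemma \ref{HH};
\item the term $\int\frac{hh_t}{\sqrt{h^2+\epsilon^2}}L\boldsymbol{u}\cdot\boldsymbol{u}_{tt}$, bounded by $\|h^{-1}h_t\|_{L^\infty}\|h\nabla^2\boldsymbol{u}\|_{L^2}\|\boldsymbol{u}_{tt}\|_{L^2}$ and controlled by Lemmas \ref{varphi} and \ref{lu};
\item the commutator $\int\nabla\sqrt{h^2+\epsilon^2}\cdot Q(\boldsymbol{u}_t)\cdot\boldsymbol{u}_{tt}$, bounded by $\|\boldsymbol{\psi}\|_{L^\infty}\|\varphi\|^{1/2}_{L^\infty}\|\sqrt h\nabla\boldsymbol{u}_t\|_{L^2}\|\boldsymbol{u}_{tt}\|_{L^2}$;
\item the term $\int\frac{hh_t}{\sqrt{h^2+\epsilon^2}}|\nabla\boldsymbol{u}_t|^2$, arising from the time derivative of the weight and bounded by $\|h^{-1}h_t\|_{L^\infty}\|\sqrt h\nabla\boldsymbol{u}_t\|^2_{L^2}$.
\end{itemize}
After Young's inequality we integrate over $(\tau,t)$ and let $\tau\to0$. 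The initial value $\|\sqrt h\nabla\boldsymbol{u}_t(\tau)\|_{L^2}$ is handled by applying $\sqrt h\nabla$ to $\eqref{ln}_2$, which bounds it by $\|(\phi_0^\eta)^\iota\nabla((\phi^\eta_0)^{2\iota}L\boldsymbol{u}_0)\|_{L^2}=\|\mathcal{G}^\eta_2\|_{L^2}$ plus lower-order terms controlled through \eqref{2.14}--\eqref{incc}. This uses the compatibility condition in the same way as in Lemma \ref{l4.10}.

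\textbf{Main obstacle: time smallness.} The step I expect to be hardest is ensuring that the accumulated powers $c_4^{k}t$ stay below $c_0$. With $T_3\le(1+Cc_4)^{-28}$ and Gr\"onwall's inequality this closes, giving $\|\sqrt h\nabla\boldsymbol{u}_t\|^2_{L^2}+\int_0^t\|\boldsymbol{u}_{tt}\|_{L^2}^2\,\mathrm{d}s\le Cc_0^7$. Using $h^{-1}\le 2c_0$ from Lemma \ref{varphi} then yields $\|\nabla\boldsymbol{u}_t\|^2_{L^2}\le Cc_0^8$.

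\textbf{Step 2: third-order bounds for $\boldsymbol{u}$.} Insert the bound on $\|\boldsymbol{u}_t\|_{D^1}$ into \eqref{2.61tkkk} to obtain $\|h\nabla^3\boldsymbol{u}\|_{L^2}\le Cc_3^{11}$. The product rule, with $\nabla h$ controlled by $\boldsymbol{\psi}$, then bounds $\|h\nabla^2\boldsymbol{u}\|_{D^1}$, and $h^{-1}\le 2c_0$ bounds $\|\nabla^3\boldsymbol{u}\|_{L^2}$.

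\textbf{Step 3: time-integrated fourth-order quantities.} Rewrite \eqref{2.61j} as the elliptic problem $aL(\sqrt{h^2+\epsilon^2}\boldsymbol{u}_t)=\mathcal{H}_2$. Lemma \ref{df3} combined with the bound on $\|\mathcal{H}_2\|_{L^2}$ from Lemma \ref{HH} gives
\begin{equation*}
\|h\nabla^2\boldsymbol{u}_t\|_{L^2}\le C\big(\|\boldsymbol{u}_{tt}\|_{L^2}+c_4^{k}\big),
\end{equation*}
which is square-integrable in time; the same argument controls $\|\boldsymbol{u}_t\|_{D^2}$. Applying $\nabla^2$ to \eqref{2.61h} and using $\|\mathcal{H}_1\|_{D^2}$, which is bounded through $\|\mathcal{H}\|_{D^2}\le C(\|\boldsymbol{u}_t\|_{D^2}+c_4^2)$ together with the $D^2$ bounds on $\nabla\sqrt{h^2+\epsilon^2}$, controls $\|h\nabla^2\boldsymbol{u}\|_{D^2}$ and $\|\boldsymbol{u}\|_{D^4}$ in $L^2_t$. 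Finally, $(h\nabla^2\boldsymbol{u})_t=h_t\nabla^2\boldsymbol{u}+h\nabla^2\boldsymbol{u}_t$ is bounded using $\|h^{-1}h_t\|_{L^\infty}$ from Lemma \ref{varphi}. All the time integrals are then at most $Cc_0^{11}$ on $[0,T_3]$, because each $c_i$-dependent contribution carries a factor of $t$.
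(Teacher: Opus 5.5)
Your Step~1 (testing \eqref{2.61j} against $\boldsymbol u_{tt}$, the four right-hand-side terms, the limit $\tau\to0$ via $\mathcal G_2^\eta$, and Gr\"onwall on $[0,T_3]$) and your Step~2 are exactly the paper's argument. So is the part of Step~3 that uses $aL(\sqrt{h^2+\epsilon^2}\boldsymbol u_t)=\mathcal H_2$ to control $h\nabla^2\boldsymbol u_t$, $\nabla^2\boldsymbol u_t$ and $(h\nabla^2\boldsymbol u)_t$.

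The gap is in how you obtain $\int_0^t\|\boldsymbol u\|_{D^4}^2\,\mathrm{d}s$ and $\int_0^t\|h\nabla^2\boldsymbol u\|_{D^2}^2\,\mathrm{d}s$. Applying $\nabla^2$ to \eqref{2.61h} requires $\|\mathcal H_1\|_{D^2}$, where $\mathcal H_1=\mathcal H-aG(\nabla\sqrt{h^2+\epsilon^2},\boldsymbol u)$. By \eqref{Gdingyi}, $G$ contains $\boldsymbol u$ multiplied by \emph{second} derivatives of $\sqrt{h^2+\epsilon^2}$:
\begin{itemize}
\item $a_1\boldsymbol u\,\Delta\sqrt{h^2+\epsilon^2}$, coming from $\diver(\boldsymbol u\otimes\nabla\sqrt{h^2+\epsilon^2})$;
\item $(a_1+a_2)\,\boldsymbol u\cdot\nabla(\nabla\sqrt{h^2+\epsilon^2})$.
\end{itemize}
In the radial component these combine with coefficient $2a_1+a_2=2\delta a_1\neq0$ in front of $u\,(\sqrt{h^2+\epsilon^2})_{rr}$, so nothing cancels. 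Consequently $\nabla^2\mathcal H_1$ contains $\boldsymbol u\,\nabla^4\sqrt{h^2+\epsilon^2}$, i.e.\ essentially $\nabla^3\boldsymbol\psi$. But only $\boldsymbol\psi\in L^\infty\cap D^{1,n}\cap D^2$ is controlled (Lemma~\ref{psi}), and \eqref{th78qq} does not even place $\boldsymbol\psi_0$ in $D^3$.

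This is why Lemma~\ref{HH} bounds $\mathcal H_1$ only in $L^2$ and $D^1$; the $D^1$ bound already uses $\nabla\sqrt{h^2+\epsilon^2}\in D^2$ at full strength. The ``$D^2$ bounds on $\nabla\sqrt{h^2+\epsilon^2}$'' you invoke therefore fall one derivative short. The same obstruction returns if you try to convert $\|\sqrt{h^2+\epsilon^2}\boldsymbol u\|_{D^4}$ into $\|\boldsymbol u\|_{D^4}$ or $\|h\nabla^2\boldsymbol u\|_{D^2}$. You also cannot get the weighted bound by multiplying $\|\nabla^4\boldsymbol u\|_{L^2}$ by $h$, since $h$ has no $\eta$-uniform upper bound.

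The paper never puts more than two derivatives on the weight.
\begin{itemize}
\item For the unweighted norm it uses \eqref{H=Lu}, $aL\boldsymbol u=(h^2+\epsilon^2)^{-1/2}\mathcal H$. Lemma~\ref{df3} applied to $\nabla^2\boldsymbol u$ then gives $\|\boldsymbol u\|_{D^4}\le C\|(h^2+\epsilon^2)^{-1/2}\mathcal H\|_{D^2}\le C(c_0^2\|\boldsymbol u_{tt}\|_{L^2}+c_4^{11})$. This involves at most two derivatives of $h$, together with $\|\mathcal H\|_{D^2}\le C(\|\boldsymbol u_t\|_{D^2}+c_4^2)$ from Lemma~\ref{HH}.
\item For the weighted norm it differentiates first and weights afterwards: for $|\boldsymbol\xi|=2$,
\begin{equation*}
aL\big(\sqrt{h^2+\epsilon^2}\,\nabla^{\boldsymbol{\xi}}\boldsymbol{u}\big)=\sqrt{h^2+\epsilon^2}\,\nabla^{\boldsymbol{\xi}}\big((h^2+\epsilon^2)^{-\frac{1}{2}}\mathcal{H}\big)-aG\big(\nabla\sqrt{h^2+\epsilon^2},\nabla^{\boldsymbol{\xi}}\boldsymbol{u}\big).
\end{equation*}
The right-hand side is estimated in $L^2$ using only $\nabla\sqrt{h^2+\epsilon^2}\in L^\infty$ and $\nabla^2\sqrt{h^2+\epsilon^2}\in L^3$, paired with $\nabla^3\boldsymbol u\in L^2$ and $\nabla^2\boldsymbol u\in L^6$. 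This yields $\|\sqrt{h^2+\epsilon^2}\nabla^2\boldsymbol u\|_{D^2}\le C(\|\boldsymbol u_t\|_{D^2}+c_4^{14})\le C(c_0\|\boldsymbol u_{tt}\|_{L^2}+c_4^{14})$.
\end{itemize}
Squaring, integrating over $[0,t]$, and using Step~1 with $t\le T_3\le(1+Cc_4)^{-28}$ then gives the last line of the lemma.
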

\begin{proof}
We divide the proof into two steps.

\smallskip
\textbf{1.} Multiplying \eqref{2.61j} by $\boldsymbol{u}_{tt}$ and integrating over $\mathbb{R}^n$ lead to 
\begin{align*} 
&\begin{aligned}[b]
&\, \frac{1}{2}\frac{\mathrm{d}}{\mathrm{d}t}\big(aa_1\|(h^2+\epsilon^2)^{\frac{1}{4}}\nabla \boldsymbol{u}_t\|^2_{L^2}+a(a_1+a_2)\|(h^2+\epsilon^2)^{\frac{1}{4}}\mathrm{div}\,\boldsymbol{u}_t\|^2_{L^2}\big)+\|\boldsymbol{u}_{tt}\|_{L^2}^2\\
&=\int_{\mathbb{R}^n} \Big((\mathcal{H}_0)_t-a \frac{h}{\sqrt{h^2+\epsilon^2}} h_tL\boldsymbol{u}
\Big)\cdot \boldsymbol{u}_{tt}\,\mathrm{d}\boldsymbol{x} -\int_{\mathbb{R}^n} a\nabla\sqrt{h^2+\epsilon^2}\cdot Q(\boldsymbol{u}_t)\cdot \boldsymbol{u}_{tt}\,\mathrm{d}\boldsymbol{x}\\
&\quad +\frac{a}{2}\int_{\mathbb{R}^n} \frac{h}{\sqrt{h^2+\epsilon^2}}h_t\big(a_1|\nabla \boldsymbol{u}_t|^2+(a_1+a_2)|\mathrm{div}\, \boldsymbol{u}_t|^2\big)\,\mathrm{d}\boldsymbol{x}
\end{aligned}\\
&\begin{aligned}
&\leq C\big(\|(\mathcal{H}_0)_t\|_{L^2}+\|h^{-1}h_t\|_{L^\infty}\|h\nabla^2\boldsymbol{u}\|_{L^2}\big)\|\boldsymbol{u}_{tt}\|_{L^2},\\
&\quad +
C\big(\|\varphi\|^{\frac{1}{2}}_{L^\infty}\|\boldsymbol{\psi}\|_{L^\infty}\|\sqrt{h}\nabla \boldsymbol{u}_t\|_{L^2}\|\boldsymbol{u}_{tt}\|_{L^2}
+\|h^{-1}h_t\|_{L^\infty}\|\sqrt{h}\nabla \boldsymbol{u}_t\|^2_{L^2}\big)\notag\\
&\leq Cc^3_4\|\sqrt{h}\nabla \boldsymbol{u}_t(t)\|^2_{L^2}+\frac{1}{2}\|\boldsymbol{u}_{tt}\|^2_{L^2}+Cc^{17}_4.
\end{aligned}
\end{align*}
Integrating the above over $(\tau,t)$ yields that, for $t\in [0,T_3]$,
\begin{equation}\label{2.62b}
\begin{aligned}
&\,\|\sqrt{h}\nabla \boldsymbol{u}_t(t)\|^2_{L^2}+\int^t_\tau\|\boldsymbol{u}_{tt}\|^2_{L^2}\,\mathrm{d}s\\
&\leq C\|(h^2+\epsilon^2)^{\frac{1}{4}}\nabla \boldsymbol{u}_t(\tau)\|^2_{L^2}+Cc_4^3\int^t_0\|\sqrt{h}\nabla \boldsymbol{u}_t\|^2_{L^2}\,\mathrm{d}s +Cc_4^{17}t.
\end{aligned}
\end{equation}

Due to  $\eqref{ln}_2$, we have
\begin{equation*} 
\|\sqrt{h}\nabla \boldsymbol{u}_t(\tau)\|_{L^2} \leq \big(\|\sqrt{h}\nabla(\boldsymbol{w}\cdot \nabla \boldsymbol{w}+\nabla\phi+a\sqrt {h^2+\epsilon^2} L\boldsymbol{u} -\boldsymbol{\psi}\cdot Q(\boldsymbol{w}))\|_{L^2}\big)(\tau). 
\end{equation*}
Then it follows from \eqref{2.14}, Lemma \ref{ls}, and Remark \ref{r1} that 
\begin{equation*}
\begin{aligned}
\lim\sup_{\tau\to 0}\|\sqrt{h}\nabla \boldsymbol{u}_t(\tau)\|_{L^2}
& \leq  C\big(\|\sqrt{h_0}\nabla(\boldsymbol{u}_0\cdot\nabla \boldsymbol{u}_0)\|_{L^2}+\|\sqrt{h_0}\nabla^2\phi_0\|_{L^2}\big)\\
&\quad +C\big(\|\sqrt{h_0}\nabla(\boldsymbol{\psi}_0 \cdot  Q(\boldsymbol{u}_0))\|_{L^2} +\|\mathcal{G}_2\|_{L^2}+ \|\sqrt{\varphi_0}\nabla^3 u_0\|_{L^2}\big)
 \leq Cc^{3}_0,
\end{aligned}
\end{equation*}
where we have used the fact that 
\begin{equation*}
\sqrt{h_0}\nabla(\sqrt{h_0^2+\epsilon^2} L\boldsymbol{u}_0)=\Big(\frac{h_0}{\sqrt{h_0^2+\epsilon^2}}\mathcal{G}_2+\epsilon^2\nabla L\boldsymbol{u}_0\frac{\sqrt{h_0}}{\sqrt{h_0^2+\epsilon^2}}\Big).
\end{equation*}

Letting $\tau\to 0$ in \eqref{2.62b}, we see from the Gr\"onwall inequality that, for $t\in [0, T_3]$,
\begin{equation}\label{2.62m}\begin{aligned}
&\|\sqrt h\nabla \boldsymbol{u}_t(t)\|_{L^2}^2+\int^t_0\|\boldsymbol{u}_{tt}\|^2_{L^2}\,\mathrm{d}s\leq Cc^7_0,\qquad  \|\nabla \boldsymbol{u}_t(t)\|^2_{L^2}\leq Cc^8_0,
\end{aligned}
\end{equation}
which, along with \eqref{2.61tkkk} and Lemmas \ref{psi}--\ref{varphi}, yields  
\begin{equation}\label{2.63}
\|h\nabla^3\boldsymbol{u}\|_{L^2}+\|h\nabla^2\boldsymbol{u}\|_{D^1}+\|\nabla^3\boldsymbol{u}\|_{L^2}\leq Cc^{11}_3.
\end{equation}

\smallskip
\textbf{2.} We give the estimates for $(\nabla^2\boldsymbol{u}_t,\nabla^4 \boldsymbol{u})$. First, it follows from Lemmas \ref{HH}--\ref{lu} and \eqref{2.63} that 
\begin{equation}\label{2.61ccvv}
\|\mathcal{H}_2\|_{L^2}\leq  C\big(\|\boldsymbol{u}_{tt}\|_{L^2}+c_4^{9}\big).
\end{equation}
Moreover, recall from the definition of $\mathcal{H}_2$ and \eqref{2.61j} that 
\begin{equation}\label{2.64}
aL(\sqrt{h^2+\epsilon^2}\boldsymbol{u}_t)=\mathcal{H}_2.
\end{equation}
Thus, based on  \eqref{hl2},  \eqref{2.61t}, \eqref{2.62m}, 
and \eqref{2.61ccvv}--\eqref{2.64}, 
we obtain from the classical theory for elliptic equations in Lemma \ref{df3}, and  Lemmas \ref{psi}--\ref{lu} that
\begin{equation}\label{2.65}\begin{aligned}
\displaystyle
\|h\nabla^2\boldsymbol{u}_t\|_{L^2}&\leq C\big(\|\sqrt{h^2+\epsilon^2}\boldsymbol{u}_t\|_{D^2}+\|\nabla \boldsymbol{u}_t\|_{L^2}\|\boldsymbol{\psi}\|_{L^\infty}+\|\nabla^2\sqrt{h^2+\epsilon^2}\|_{L^3}\|\boldsymbol{u}_t\|_{L^6}\big)\\
&\displaystyle
\leq  C\big(\|\mathcal{H}_2\|_{L^2}+c_0^5+c_0^7\big)\leq C\big(\|\boldsymbol{u}_{tt}\|_{L^2}+c_4^{9}\big),\\[2pt]
\displaystyle
\|(h\nabla^2\boldsymbol{u})_t\|_{L^2}&\leq C\big(\|h\nabla^2\boldsymbol{u}_t\|_{L^2}+\|h^{-1}h_t\|_{L^\infty}\|h\nabla^2 \boldsymbol{u}\|_{L^2}\big)
\leq  C\big(\|\boldsymbol{u}_{tt}\|_{L^2}+c_4^{9}\big),\\[2pt]
\displaystyle
\|\boldsymbol{u}\|_{D^4}&\leq C\|(h^2+\epsilon^2)^{-\frac{1}{2}}\mathcal{H}\|_{D^2}
\leq  C\big(c^2_0\|\boldsymbol{u}_{tt}\|_{L^2}+c_4^{11}\big),
\end{aligned}
\end{equation}
where, in the last inequality of \eqref{2.65}, we have used the fact that
\begin{equation}\label{H=Lu}
\mathcal{H}=a\sqrt{h^2+\epsilon^2}L\boldsymbol{u}.
\end{equation}

Next, notice from \eqref{H=Lu} that, for multi-index $\boldsymbol{\xi}=(\xi_1,\cdots\!,\xi_n)$ with $|\boldsymbol{\xi}|=2$,  
\begin{equation*} 
\begin{aligned}
aL(\sqrt{h^2+\epsilon^2}\nabla^{\boldsymbol{\xi}} \boldsymbol{u})=\sqrt{h^2+\epsilon^2}\nabla^{\boldsymbol{\xi}}\big( (\sqrt{h^2+\epsilon^2})^{-1}\mathcal{H}\big)
-aG(\nabla\sqrt{h^2+\epsilon^2},\nabla^{\boldsymbol{\xi}}\boldsymbol{u}).
\end{aligned}
\end{equation*}
Then this, together with \eqref{hl2}, \eqref{2.61t}, \eqref{2.62m}--\eqref{H=Lu}, the classical theory for elliptic equations in Lemma \ref{df3}, and  Lemmas \ref{psi}--\ref{lu},  implies 
\begin{equation*} 
\begin{aligned}
\|\sqrt{h^2+\epsilon^2}\nabla^2\boldsymbol{u} \|_{D^2}
&\leq C\big\|\sqrt{h^2+\epsilon^2}\nabla^{\boldsymbol{\xi}}\big((\sqrt{h^2+\epsilon^2})^{-1}\mathcal{H}\big)\big\|_{L^2}\\
&\quad+C\big(\|\boldsymbol{\psi}\|_{L^\infty}\|\nabla^3\boldsymbol{u}\|_{L^2}+\|\nabla^2\sqrt{h^2+\epsilon^2}\|_{L^3}\|\nabla^2\boldsymbol{u}\|_{L^6}\big)\\
&\leq  C\big(\|\boldsymbol{u}_{t}\|_{D^2}+c_4^{14}\big) \leq  C\big(c_0\|\boldsymbol{u}_{tt}\|_{L^2}+c_4^{14}\big).
\end{aligned}
\end{equation*}

Finally, it follows from the above, \eqref{2.16}, \eqref{2.62m}, \eqref{2.65}, and Lemma \ref{varphi} that  
\begin{equation*} 
\int^{T_3}_0\big(\|h\nabla^2\boldsymbol{u}_t\|^2_{L^2}+\|\boldsymbol{u}_t\|^2_{D^2}+\|\boldsymbol{u}\|^2_{D^4}+\|h\nabla^2\boldsymbol{u}\|_{D^2}^2+\|(h\nabla^2\boldsymbol{u})_t\|^2_{L^2}\big)\,\mathrm{d}t\leq  Cc^{11}_0.
\end{equation*}

This completes the proof.
\end{proof}

We now derive the time-weighted estimates for velocity $\boldsymbol{u}$.
\begin{lem}\label{hu2}
Let $T_4:=\min\{T_3,(1+Cc_5)^{-30}\}$. Then, for any $t\in [0,T_4]$, 
\begin{equation*} 
\begin{aligned}
t\big(\|\boldsymbol{u}_t\|^2_{D^2}+\|h\nabla^2\boldsymbol{u}_t\|^2_{L^2}+ \|\boldsymbol{u}_{tt}\|^2_{L^2}+ \|\boldsymbol{u}\|^2_{D^4}\big)(t)&\leq  Cc^{24}_4\\
\int^t_0s\big(\|\boldsymbol{u}_{tt}\|_{D^1}^2+\|\boldsymbol{u}_t\|^2_{D^3}+\|\sqrt{h} \boldsymbol{u}_{tt}\|_{D^1}^2\big)\,\mathrm{d}s &\leq  Cc^{22}_4.
\end{aligned}
\end{equation*}
\end{lem}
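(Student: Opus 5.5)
The plan is to differentiate the time-differentiated momentum equation \eqref{2.61j} once more in $t$ and run a $t$-weighted energy estimate, mirroring Lemma \ref{Lemma6.13} in the linearized, $(\epsilon,\eta)$-regularized setting. Since $\eta,\epsilon>0$ and Lemma \ref{ls} gives a classical solution, the formal manipulations are justified by the same duality argument used in Step 3 of Lemma \ref{Lemma6.13}: show $\boldsymbol{u}_{ttt}\in L^2([\tau,T];H^{-1})$ for $\tau>0$, then use Lemma \ref{triple}. We may therefore compute directly. Apply $\partial_t$ to \eqref{2.61j}, multiply by $t\,\boldsymbol{u}_{tt}$ and integrate over $\mathbb{R}^n$. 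This gives
\begin{equation*}
\frac12\frac{\mathrm{d}}{\mathrm{d}t}\big(t\|\boldsymbol{u}_{tt}\|_{L^2}^2\big)+a a_1 t\|(h^2+\epsilon^2)^{\frac14}\nabla\boldsymbol{u}_{tt}\|_{L^2}^2\le \frac12\|\boldsymbol{u}_{tt}\|_{L^2}^2+t\,\mathcal{R},
\end{equation*}
where $\mathcal{R}$ collects the remaining terms.

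These remainders are estimated term by term.
\begin{itemize}
\item $(\mathcal{H}_0)_{tt}\cdot\boldsymbol{u}_{tt}$ is bounded by Lemma \ref{HH}(ii). There, $\|\boldsymbol{w}_{tt}\|_{H^1}$, $\|\nabla^2\boldsymbol{w}_t\|_{L^2}$, $\|\nabla\phi_{tt}\|_{L^2}$ and $\|\boldsymbol{\psi}_{tt}\|_{L^2}$ are controlled in time-weighted $L^2$ by \eqref{2.16} together with Lemmas \ref{phiphi}--\ref{psi}.
\item The term $(\frac{h h_t}{\sqrt{h^2+\epsilon^2}})_t L\boldsymbol{u}\cdot\boldsymbol{u}_{tt}$ uses $\|h^{-1}h_t\|_{L^\infty}$ and $\|h^{-1}h_{tt}\|_{L^{2n}}$ from Lemma \ref{varphi}, together with $\|h\nabla^2\boldsymbol{u}\|_{L^2\cap D^1}$ from Lemmas \ref{lu}--\ref{hu}.
\item The term $h h_t L\boldsymbol{u}_t/\sqrt{h^2+\epsilon^2}$ uses $\|h\nabla^2\boldsymbol{u}_t\|_{L^2}$.
\item The term $\nabla\sqrt{h^2+\epsilon^2}\cdot Q(\boldsymbol{u}_{tt})\cdot\boldsymbol{u}_{tt}$ arises from integrating by parts. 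It is absorbed via $\|\boldsymbol{\psi}\|_{L^\infty}\|\varphi\|_{L^\infty}^{1/2}\|\sqrt h\nabla\boldsymbol{u}_{tt}\|_{L^2}$ with Young.
\end{itemize}
The dissipation is bounded below by $\|\sqrt h\nabla\boldsymbol{u}_{tt}\|_{L^2}^2$, and by $\|\nabla\boldsymbol{u}_{tt}\|_{L^2}^2$ up to a factor $c_0$ using $h\ge(2c_0)^{-1}$. This yields
\begin{equation*}
\frac{\mathrm{d}}{\mathrm{d}t}\big(t\|\boldsymbol{u}_{tt}\|_{L^2}^2\big)+t\|\sqrt h\nabla\boldsymbol{u}_{tt}\|_{L^2}^2\le Cc_4^{k}\big(t\|\boldsymbol{u}_{tt}\|_{L^2}^2+1\big)+\|\boldsymbol{u}_{tt}\|_{L^2}^2+F(t),
\end{equation*}
with $\int_0^{T_4} F\,\mathrm{d}t\le Cc_4^{k}$.

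Integrate from $\tau$ to $t$. Since $\boldsymbol{u}_{tt}\in L^2(0,T;L^2)$ by Lemma \ref{hu}, Lemma \ref{bjr} gives $\tau_k\to0$ with $\tau_k\|\boldsymbol{u}_{tt}(\tau_k)\|^2_{L^2}\to0$. Gr\"onwall on $[0,T_4]$, with $T_4$ small so that $e^{Cc_5^{k}t}\le C$, gives the bound on $t\|\boldsymbol{u}_{tt}\|_{L^2}^2$ and on $\int_0^t s\|\sqrt h\boldsymbol{u}_{tt}\|_{D^1}^2\,\mathrm{d}s$. The $D^1$ bound for $\boldsymbol{u}_{tt}$ then follows from $h^{-1}\le 2c_0$.

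The remaining norms come from the elliptic estimates already set up. From \eqref{2.65}, multiplied by $\sqrt t$, we get $\sqrt t(\|h\nabla^2\boldsymbol{u}_t\|_{L^2}+\|\boldsymbol{u}_t\|_{D^2}+\|\boldsymbol{u}\|_{D^4})\le C(c_0^2\sqrt t\|\boldsymbol{u}_{tt}\|_{L^2}+c_4^{11})$. For $t\int\|\boldsymbol{u}_t\|_{D^3}^2$, apply Lemma \ref{df3} to $aL(\sqrt{h^2+\epsilon^2}\boldsymbol{u}_t)=\mathcal{H}_2$ at the $D^3$ level. This bounds $\|\boldsymbol{u}_t\|_{D^3}$ by $\|\mathcal{H}_2\|_{D^1}$ plus lower-order products involving $\nabla^2\sqrt{h^2+\epsilon^2}$, $\nabla^3\sqrt{h^2+\epsilon^2}$ from \eqref{hl2}. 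Then Lemma \ref{HH}(iv) bounds $\|\mathcal{H}_2\|_{D^1}$ by $\|\boldsymbol{u}_{tt}\|_{D^1}+c_0^4\|\boldsymbol{u}_t\|_{H^2}+c_4^4(\ldots)+c_3\|\nabla^2\boldsymbol{w}_t\|_{L^2}$. Multiply by $t$ and integrate.

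The main obstacle is the lower-weight term $\|\boldsymbol{u}_{tt}\|_{L^2}^2$ produced by differentiating $t$, together with the terms containing $\boldsymbol{w}_{tt}$, $\phi_{tt}$, $\boldsymbol{\psi}_{tt}$ and $h_{tt}$. Each must be shown to be integrable with the time weight using only \eqref{2.16}, the $\sqrt t$-weighted $c_5$ bounds, and Lemmas \ref{phiphi}--\ref{hu}. The powers of $c_4,c_5$ must also be tracked so that the final constant is $Cc_4^{24}$ after shrinking $T_4$.
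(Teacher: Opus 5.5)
Your proposal follows essentially the same route as the paper: a $t$-weighted $L^2$ energy estimate for $\boldsymbol{u}_{tt}$ obtained from $\partial_t$ of \eqref{2.61j}, then Lemma \ref{bjr} to remove $\tau\|\boldsymbol{u}_{tt}(\tau)\|_{L^2}^2$, then Gr\"onwall, and finally \eqref{2.65} together with the $D^1$-bound on $\mathcal{H}_2$ and Lemma \ref{df3} for the remaining norms. One point needs to be made precise: the term $c_4\|\nabla\boldsymbol{w}_{tt}\|_{L^2}\|\boldsymbol{u}_{tt}\|_{L^2}$ must be split as $c_5^{-2}\|\nabla\boldsymbol{w}_{tt}\|_{L^2}^2+Cc_5^{4}\|\boldsymbol{u}_{tt}\|_{L^2}^2$, not absorbed into a coefficient $Cc_4^k$ as in your displayed inequality; then $c_5$ enters only the Gr\"onwall exponent, which $T_4\le(1+Cc_5)^{-30}$ neutralizes, and this is how the paper keeps the final bound in terms of $c_4$ alone.
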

\begin{proof}
We divide the proof into two steps.

\smallskip
\textbf{1.} Applying  $\boldsymbol{u}_{tt}\partial_t$ to \eqref{2.61j} and integrating over $\mathbb{R}^n$ give
\begin{equation}\label{2.72}
\frac{1}{2}\frac{\mathrm{d}}{\mathrm{d}t}\|\boldsymbol{u}_{tt}\|^2_{L^2}+aa_1\|(h^2+\epsilon^2)^{\frac{1}{4}}\nabla \boldsymbol{u}_{tt}\|^2_{L^2}+a(a_1+a_2)\|(h^2+\epsilon^2)^{\frac{1}{4}}\mathrm{div}\,\boldsymbol{u}_{tt}\|^2_{L^2}=\sum^{2}_{i=1}J_i.
\end{equation}
Here, $J_1$ and $J_2$ are given as follows and can be estimated by using Lemmas \ref{HH} and \ref{ale1}, and the H\"older and Young inequities:
\begin{align}
J_1&=\int_{\mathbb{R}^n} (\mathcal{H}_0)_{tt}\cdot \boldsymbol{u}_{tt}\,\mathrm{d}\boldsymbol{x}\leq \|(\mathcal{H}_0)_{tt}\|_{L^2}\|\boldsymbol{u}_{tt}\|_{L^2}\notag\\
&\leq C \big(c_5^4\|\boldsymbol{u}_{tt}\|^2_{L^2}+\|\nabla^2\boldsymbol{w}_{t}\|^2_{L^2}+\|\boldsymbol{w}_{tt}\|^2_{L^2}+c^{-2}_5\|\nabla \boldsymbol{w}_{tt}\|^2_{L^2}+\|\nabla\phi_{tt}\|^2_{L^2}+\|\boldsymbol{\psi}_{tt}\|^2_{L^2}+c^6_4\big),\notag\\
J_2&=-\int_{\mathbb{R}^n} \Big(a\frac{h} {\sqrt{h^2+\epsilon^2}}\nabla h\cdot Q(\boldsymbol{u}_{tt})+a\frac{h}{\sqrt{h^2+\epsilon^2}}h_{tt} L\boldsymbol{u}\notag\\
&\quad\quad\qquad+a\frac{\epsilon^2h^2_t}{(h^2+\epsilon^2)^{\frac{3}{2}}} L\boldsymbol{u}+2a\frac{h}{\sqrt{h^2+\epsilon^2}}h_{t}  L\boldsymbol{u}_t\Big) \cdot \boldsymbol{u}_{tt}\,\mathrm{d}\boldsymbol{x}\notag\\
&\leq C
\big(\|\boldsymbol{\psi}\|_{L^\infty}\|\sqrt{h}\nabla \boldsymbol{u}_{tt}\|_{L^2}\|\varphi\|^{\frac{1}{2}}_{L^\infty}
+\|h^{-1}h_{tt}\|_{L^{2n}}\|h\nabla^2\boldsymbol{u}\|_{L^{\frac{2n}{n-1}}}\big)\|\boldsymbol{u}_{tt}\|_{L^2}\notag\\[1pt]
&\quad +C\big(\|h^{-1}h_t\|^2_{L^\infty}\|\varphi\|_{L^\infty}\|\nabla^2 \boldsymbol{u}\|_{L^2}
+\|h^{-1}h_t\|_{L^\infty}\|h\nabla^2 \boldsymbol{u}_{t}\|_{L^2}\big)\|\boldsymbol{u}_{tt}\|_{L^2}.\notag
\end{align}
Multiplying $\eqref{2.72}$ by $t$ and integrating over $(\tau,t)$ for $0<\tau<t \leq T_4$, together with the above estimates on $J_1$ and $J_2$, \eqref{2.16} and Lemmas \ref{phiphi}--\ref{hu}, yield
\begin{equation}\label{2.73}
t\|\boldsymbol{u}_{tt}(t)\|^2_{L^2}+\frac{aa_1}{4}\int^t_\tau s\|\sqrt{h}\nabla \boldsymbol{u}_{tt}\|^2_{L^2}\,\mathrm{d}s
\leq  \tau\|\boldsymbol{u}_{tt}(\tau)\|^2_{L^2}+Cc^{20}_4+Cc^{4}_5\int^t_\tau s\|\boldsymbol{u}_{tt}\|^2_{L^2}\,\mathrm{d}s.
\end{equation}
To handle the right-hand side of \eqref{2.73}, we see from \eqref{2.62m} and Lemma \ref{bjr} that  there exists a sequence $s_k$ such that $s_k\to 0$ and $s_k\|\boldsymbol{u}_{tt}(s_k,\boldsymbol{x})\|^2_{L^2}\to 0$ as $k\to \infty$. Taking $\tau=s_k$ and letting $k\to \infty$ in $\eqref{2.73}$, we obtain from the Gr\"onwall inequality that
\begin{equation}\label{2.74}
t\|\boldsymbol{u}_{tt}(t)\|^2_{L^2}+\int^t_0 s\big(\|\sqrt{h}\nabla \boldsymbol{u}_{tt}\|^2_{L^2}+\|\sqrt{h}\boldsymbol{u}_{tt}\|^2_{D^1}+\|\nabla \boldsymbol{u}_{tt}\|^2_{L^2}\big)\,\mathrm{d}s\leq Cc^{20}_4\quad \text{for $t\in [0,T_4]$}.
\end{equation}

\smallskip
\textbf{2.} It follows from \eqref{2.65}  and \eqref{2.74}  that
\begin{equation}\label{2.75}
t^{\frac{1}{2}}\|\nabla^2\boldsymbol{u}_t(t)\|_{L^2}+t^{\frac{1}{2}}\|h\nabla^2\boldsymbol{u}_t(t)\|_{L^2}+ t^{\frac{1}{2}}\|\nabla^4\boldsymbol{u}(t)\|_{L^2}\leq  Cc^{12}_4.
\end{equation}

Next, it follows from   Lemmas \ref{HH}--\ref{hu} that 
\begin{equation}\label{2.61ccvvmmmm}
\begin{aligned}
\|\mathcal{H}_2\|_{D^1}&\leq C\big(\|\nabla \boldsymbol{u}_{tt}\|_{L^2}+c^4_4\|\boldsymbol{w}_t\|_{D^2}+c^4_4\|\boldsymbol{u}_t\|_{D^2}+c^{15}_4\big).
\end{aligned}
\end{equation}
Hence, \eqref{hl2}, \eqref{2.64}, \eqref{2.61ccvvmmmm}, and Lemmas \ref{df3} and \ref{psi}--\ref{hu} yield that,  for $t\in [0,T_4]$,
\begin{equation*}
\begin{aligned}
\|h\nabla^3\boldsymbol{u}_t(t)\|_{L^2}& \leq C\big(\|\sqrt{h^2+\epsilon^2}\boldsymbol{u}_t\|_{D^3}+\|\boldsymbol{\psi}\|_{L^\infty}\|\nabla^2\boldsymbol{u}_t\|_{L^2}\big)\\
&\quad +C\big(\|\nabla^2\sqrt{h^2+\epsilon^2}\|_{L^3}\|\nabla\boldsymbol{u}_t\|_{L^6}+\|\nabla^3\sqrt{h^2+\epsilon^2}\|_{L^2}\|\boldsymbol{u}_t\|_{L^\infty}\big)\\
&\leq  C\big(\|\mathcal{H}_2\|_{D^1}+c^4_0\|\boldsymbol{u}_t\|_{D^2}+c_4^{8}\big),
\end{aligned}
\end{equation*}
which, along with \eqref{2.74}--\eqref{2.61ccvvmmmm}, and Lemmas  \ref{varphi} and \ref{hu}, implies 
\begin{equation*}\label{2.78}
\int^{T_4}_0s\big(\|\sqrt{h^2+\epsilon^2}\boldsymbol{u}_t\|_{D^3}^2+\|h\nabla^3\boldsymbol{u}_t\|_{L^2}^2+\|\nabla^3\boldsymbol{u}_t\|^2_{L^2}\big)\,\mathrm{d}s\leq Cc^{22}_4.
\end{equation*}

This completes the proof of Lemma \ref{hu2}.
\end{proof}
It then  follows from Lemmas \ref{phiphi}--\ref{hu2} that, for $0\leq t\leq T_4=\min\{T^*,(1+Cc_5)^{-30}\}$,
\begin{align*}
\begin{aligned}
\big(\|\phi\|^2_{L^\infty\cap D^1\cap D^3}+\|\phi_t\|^2_{H^2}+
\|\phi_{tt}\|^2_{L^2}+\|\boldsymbol{\psi}_t\|^2_{D^1}\big)(t)+\int^t_0\big(\|\phi_{tt}\|^2_{H^1}+\|\boldsymbol{\psi}_{tt}\|^2_{L^2}\big)\,\mathrm{d}s&\leq Cc_4^6,\\
\big(\|\nabla\sqrt{h}\|^2_{L^{2n}}+
\|\boldsymbol{\psi}\|^2_{L^\infty\cap D^{1,n}\cap D^2}\big)(t)\leq Cc^2_0,\ \ \big(\|h^{-1}h_t\|_{L^\infty}^2+
\|\boldsymbol{\psi}_t\|^2_{L^2}\big)(t)&\leq 
Cc_3^{\frac{9}{2}}c^{\frac{3}{2}}_4,\\
\frac{2}{3}\eta^{-2\iota}<h^{-1}(t,\boldsymbol{x})<2c_0, \  
\big(\|\sqrt{h}\nabla \boldsymbol{u}\|^2_{L^2}+\|\boldsymbol{u}\|^2_{H^1}\big)(t)+\int^t_0\big(\|\nabla \boldsymbol{u}\|^2_{H^1}+\|\boldsymbol{u}_t\|^2_{L^2}\big)\,\mathrm{d}s
&\leq  Cc^4_0,\\
\big(\|\boldsymbol{u}\|_{D^2}^2+\|h\nabla^2\boldsymbol{u}\|^2_{L^2}+\|\boldsymbol{u}_t\|^2_{L^2}\big)(t)+\int^t_0\big(\|\boldsymbol{u}\|^2_{D^3}+\|h\nabla^2\boldsymbol{u}\|_{D^1}^2+\|\boldsymbol{u}_t\|^2_{D^1}\big)\,\mathrm{d}s&\leq  Cc_2^{10}c_3,\\
\big(\|\boldsymbol{u}_t\|^2_{D^1}+\|\sqrt{h}\nabla \boldsymbol{u}_t\|^2_{L^2}+\|\boldsymbol{u}\|^2_{D^3}+\|h\nabla^2\boldsymbol{u}\|^2_{D^1}\big)(t)+\int^t_0\|\boldsymbol{u}_t\|^2_{D^2}\,\mathrm{d}s&\leq   Cc^{22}_3,\\
\int^t_0\big(\|\boldsymbol{u}_{tt}\|^2_{L^2}+\|\boldsymbol{u}\|^2_{D^4}+\|h\nabla^2\boldsymbol{u}\|^2_{D^2}+\|(h\nabla^2\boldsymbol{u})_t\|^2_{L^2}\big)\,\mathrm{d}s&\leq Cc^{22}_3,\\
t\big(\|\boldsymbol{u}_t\|^2_{D^2}+\|h\nabla^2\boldsymbol{u}_t\|^2_{L^2}+\|\boldsymbol{u}_{tt}\|^2_{L^2}+\|\boldsymbol{u}\|^2_{D^4}\big)(t) &\leq  Cc^{24}_4,
\end{aligned}\\
\begin{aligned}
\int^t_0\big(s\|\boldsymbol{u}_{tt}\|_{D^1}^2+s\|\boldsymbol{u}_t\|^2_{D^3}+s\|\sqrt{h} \boldsymbol{u}_{tt}\|_{D^1}^2\big)\,\mathrm{d}s  &\leq  Cc^{24}_4.
\end{aligned}
\end{align*}
Therefore, defining the time ($C\geq 4$: the generic constant)
\begin{equation*} 
T^*=
\min\{T,(1+C^{1501}c^{4920}_0)^{-30}\},
\end{equation*}
and constants
\begin{equation*}
c_1^2=Cc^2_0,\quad  c_2^2=Cc^4_0,\quad c_3^2=C^{6}c^{20}_0,\quad
c_4^2=C^{250}c^{820}_0,\quad c_5^2=C^{3001}c^{9840}_0,
\end{equation*}
we arrive at  the following desirable estimates  for $t\in [0,T^*]$:
\begin{align}
\big(\|\phi\|^2_{D^1\cap D^3}+\|\phi_t\|^2_{H^2}+
\|\phi_{tt}\|^2_{L^2}+\|\boldsymbol{\psi}_t\|^2_{D^1}\big)(t)+\int^t_0\big(\|\phi_{tt}\|^2_{H^1}+\|\boldsymbol{\psi}_{tt}\|^2_{L^2}\big)\,\mathrm{d}s\leq  c_5^2,\notag\\
\big(\|\nabla\sqrt{h}\|^2_{L^{2n}}+
\|\boldsymbol{\psi}\|^2_{D^{1,n}\cap D^2}\big)(t)\leq c^2_1,\quad \big(\|h_t\|_{L^\infty}^2+\|\boldsymbol{\psi}_t\|^2_{L^2}\big)(t)\leq  c_4^2,\notag\\
\frac{2}{3}\eta^{-2\iota}\!<h^{-1}(t,\boldsymbol{x})<\!c_1, \  
\big(\|\sqrt{h}\nabla \boldsymbol{u}\|^2_{L^2}+\|\boldsymbol{u}\|^2_{H^1}\big)(t)+\int^t_0\big(\|\nabla \boldsymbol{u}\|^2_{H^1}+\|\boldsymbol{u}_t\|^2_{L^2}\big)\,\mathrm{d}s\leq c_2^2,\notag\\
\big(\|\boldsymbol{u}\|_{D^2}^2+\|h\nabla^2\boldsymbol{u}\|^2_{L^2}+\|\boldsymbol{u}_t\|^2_{L^2}\big)(t)+\int^t_0\big(\|\boldsymbol{u}\|^2_{D^3}+\|h\nabla^2\boldsymbol{u}\|_{D^1}^2+\|\boldsymbol{u}_t\|^2_{D^1}\big)\,\mathrm{d}s\leq  c_3^2,\label{key1kk}\\
\big(\|\boldsymbol{u}_t\|^2_{D^1}+\|\sqrt{h}\nabla \boldsymbol{u}_t\|^2_{L^2}+\|\boldsymbol{u}\|^2_{D^3}+\|h\nabla^2\boldsymbol{u}\|^2_{D^1}\big)(t)+\int^t_0\|\boldsymbol{u}_t\|^2_{D^2}\,\mathrm{d}s\leq  c_4^2,\notag\\
\int^t_0\big(\|\boldsymbol{u}_{tt}\|^2_{L^2}+\|\boldsymbol{u}\|^2_{D^4}+\|h\nabla^2\boldsymbol{u}\|^2_{D^2}+\|(h\nabla^2\boldsymbol{u})_t\|^2_{L^2}\big)\,\mathrm{d}s\leq  c_4^2,\quad t\|h\nabla^2\boldsymbol{u}_t(t)\|^2_{L^2}\leq  c_5^2,\notag\\
t\big(\|\boldsymbol{u}_t\|^2_{D^2}+\|\boldsymbol{u}_{tt}\|^2_{L^2}+\|\boldsymbol{u}\|^2_{D^4}\big)(t)+
\int^t_0s\big(\|\boldsymbol{u}_{tt}\|_{D^1}^2+\|\boldsymbol{u}_t\|^2_{D^3}+\|\sqrt{h} \boldsymbol{u}_{tt}\|_{D^1}^2\big)\,\mathrm{d}s\leq  c_5^2.\notag
\end{align}

\subsection{Inviscid Limit of the Artificial Viscosity}

Based on the uniform estimates in \eqref{key1kk}, now we can obtain the local well-posedness of problem \eqref{ln} with $\epsilon=0$.

\begin{lem}\label{epsilon0}
Let \eqref{canshu} hold, $\eta>0$, and $(\phi_0,\boldsymbol{u}_0)$ satisfy \eqref{th78qq}--\eqref{th78zxq}. Then there exists $T^*>0$ independent of $\eta$ and a unique classical solution $(\phi^\eta,\boldsymbol{u}^\eta, h^\eta)$ in $[0,T^*]\times\mathbb{R}^n$ of  problem \eqref{ln} with  $\epsilon=0$ satisfying \eqref{linearregularity} with $T$ replaced by $T^*$, and $\boldsymbol{\psi}^\eta=\frac{a\delta}{\delta-1}\nabla h^\eta$. Moreover, the uniform estimates in \eqref{key1kk} that are independent of $\eta$ still hold for $(\phi^\eta,\boldsymbol{u}^\eta,h^\eta)$, and $C^{-1}\leq (gh^{-1})(t,\boldsymbol{x})\leq C$ in $[0,T^*]\times\mathbb{R}^n$.
\end{lem}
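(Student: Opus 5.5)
The proof is a compactness argument in $\epsilon$ at fixed $\eta>0$. For each $\epsilon\in(0,1]$, Lemma \ref{ls} gives a classical solution $(\phi^{\epsilon,\eta},\boldsymbol{u}^{\epsilon,\eta},h^{\epsilon,\eta})$ of \eqref{ln}. By \S\ref{priorilinear}, the bounds \eqref{key1kk} hold on $[0,T^*]$ with constants $c_i$ and time $T^*$ depending only on $c_0$ and $(A,a_1,a_2,\gamma,\delta,T)$. In particular they are independent of both $\epsilon$ and $\eta$. We also have the pointwise bounds $h^{\epsilon,\eta}\ge (2c_0)^{-1}$ and $C^{-1}\le g/h^{\epsilon,\eta}\le C$ from Lemma \ref{varphi}.

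First, we extract a limit. The bounds in \eqref{key1kk} control $\phi-\eta$ in $L^\infty([0,T^*];D^1\cap D^3)$ with $\phi_t$ bounded in $H^2$, $\nabla h=\frac{\delta-1}{a\delta}\boldsymbol{\psi}$ in $L^\infty(D^{1,n}\cap D^2)$ with $h_t\in L^\infty$, and $\boldsymbol{u}\in L^\infty(H^3)\cap L^2(D^4)$ with $\boldsymbol{u}_t\in L^\infty(H^1)\cap L^2(D^2)$. Applying the Aubin--Lions lemma on balls $B_R$ and a diagonal argument, we get a subsequence $\epsilon_k\to0$ with $(\phi^{\epsilon_k,\eta},\boldsymbol{u}^{\epsilon_k,\eta},h^{\epsilon_k,\eta})\to(\phi^\eta,\boldsymbol{u}^\eta,h^\eta)$ strongly in $C([0,T^*];C^1(\overline{B_R}))$ for $\boldsymbol{u}$, and in $C([0,T^*]\times\overline{B_R})$ for $\phi$ and $h$, together with $\nabla h$, which converges strongly in $C([0,T^*];L^p(B_R))$. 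The remaining quantities converge weakly or weak-$*$ in the spaces of \eqref{key1kk}. In particular $\boldsymbol{\psi}^{\epsilon_k,\eta}\to\frac{a\delta}{\delta-1}\nabla h^\eta$.

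Next, we pass to the limit in the equations. The key point is that $\sqrt{(h^{\epsilon_k})^2+\epsilon_k^2}\to h^\eta$ uniformly on compact sets, since $|\sqrt{h^2+\epsilon^2}-h|\le\epsilon$. Hence $\sqrt{h^2+\epsilon^2}L\boldsymbol{u}\to h^\eta L\boldsymbol{u}^\eta$ weakly in $L^2$: this is a product of a strongly convergent factor and a weakly convergent one. The transport equations $\eqref{ln}_1$ and $\eqref{ln}_3$ are linear in the unknowns, with the given coefficients $\boldsymbol{w},g$, so they pass to the limit directly. The solution therefore satisfies \eqref{ln} with $\epsilon=0$ in the sense of distributions, and by its regularity it satisfies it a.e., hence classically. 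The same continuity reasoning yields the far-field behavior, the initial data, and the identity $\boldsymbol{\psi}^\eta=\frac{a\delta}{\delta-1}\nabla h^\eta$. The bounds \eqref{key1kk} and $C^{-1}\le g/h^\eta\le C$ are preserved by lower semicontinuity of norms under weak convergence and by pointwise convergence, including the time-weighted terms. Spherical symmetry passes to the limit as in Step 1.1 of \S\ref{se46}.

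Time continuity (the $C([0,T^*];\cdot)$ parts of \eqref{linearregularity}) follows from Lemma \ref{triple}, using $\boldsymbol{u}\in L^2(H^4)$, $\boldsymbol{u}_t\in L^2(H^2)$, and the analogous statements for $\phi$ and $h$. The transport parts use the standard continuity theory of linear transport equations, i.e.\ renormalization via Lemma \ref{lemma-lions}.

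Uniqueness is an energy estimate on the difference of two solutions $(\phi_i,\boldsymbol{u}_i,h_i)$ with the same $\boldsymbol{w},g$. The difference of the $\phi$'s and of the $h$'s vanishes by uniqueness for linear transport. For $\bar{\boldsymbol{u}}=\boldsymbol{u}_1-\boldsymbol{u}_2$ we then have $\bar{\boldsymbol{u}}_t+ah L\bar{\boldsymbol{u}}=0$. Testing with $\bar{\boldsymbol{u}}$ and integrating by parts, the term involving $\nabla h$ is absorbed using $h\ge(2c_0)^{-1}$ and $\boldsymbol{\psi}\in L^\infty$, and Gr\"onwall gives $\bar{\boldsymbol{u}}=0$.

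I expect the main obstacle to be the limit passage in the degenerate terms, in particular checking that the uniform bounds do not depend on any lower bound of $\sqrt{h^2+\epsilon^2}$ other than the $\epsilon$-independent bound $h\ge(2c_0)^{-1}$. Since \eqref{key1kk} was derived uniformly in $\epsilon$, this is already secured, and the limit argument itself should then be routine.
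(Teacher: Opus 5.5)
Your proposal is correct and follows essentially the same route as the paper. Both arguments use the $(\epsilon,\eta)$-uniform bounds \eqref{key1kk}, Aubin--Lions compactness on balls $B_R$, passage to the limit $\epsilon\to 0$ in the weak formulation with $\sqrt{(h^{\epsilon,\eta})^2+\epsilon^2}\to h^\eta$ strongly, lower semicontinuity for the (weighted) bounds, and uniqueness via $h^\eta\ge (2c_0)^{-1}$.

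One small simplification: $\eqref{ln}_1$ and $\eqref{ln}_3$ involve neither $\epsilon$ nor $\boldsymbol{u}^{\epsilon,\eta}$, so $\phi^{\epsilon,\eta}$ and $h^{\epsilon,\eta}$ do not depend on $\epsilon$ at all. This makes their convergence trivial. It also directly supplies the $\eta$-dependent $L^2$ bounds on $\nabla h^\eta$ and $h^\eta_t$ (the paper's \eqref{related}). Those bounds are needed for $\nabla h^\eta, h^\eta_t\in C([0,T^*];H^2(\mathbb{R}^n))$ in \eqref{linearregularity}, and \eqref{key1kk} alone does not give them.
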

\begin{proof} 
We divide the proof into two steps.

\smallskip
\textbf{1. Existence}. First, since $(\phi_0,\boldsymbol{u}_0)$ satisfy \eqref{th78qq}--\eqref{th78zxq}, it follows from  \eqref{etainitial}--\eqref{initialapproximation} that  there exists a constant $c_0\geq 1$ independent of $\eta$ such that \eqref{2.14} holds. Then, according to  Lemmas \ref{ls}--\ref{hu2}, for any fixed $\epsilon>0$ and $\eta>0$, there exist $T^*>0$ independent of $(\epsilon,\eta)$ and a unique classical solution $(\phi^{\epsilon,\eta}, \boldsymbol{u}^{\epsilon,\eta}, h^{\epsilon,\eta})(t,\boldsymbol{x})$ in $[0,T^*]\times \mathbb{R}^n$  of  problem \eqref{ln} satisfying  the estimates in   \eqref{key1kk}  which, along with  $\eqref{ln}_3$, yields that, for $t\in [0,T^*]$,
\begin{equation}\label{related}
\|h^{\epsilon,\eta}(t)\|_{L^\infty}+\|\nabla h^{\epsilon,\eta}(t)\|_{L^2}+ \|h^{\epsilon,\eta}_t(t)\|_{L^2}\leq C(A, \eta, a_1, a_2, \gamma, \delta, T^*, \phi_0,\boldsymbol{u}_0).
\end{equation}

Thus, it follows from  \eqref{key1kk}--\eqref{related}  and  the Aubin--Lions Lemma \ref{aubin-lions}   that, for any $R> 0$,  there exists a subsequence of solutions (still denoted by) $(\phi^{\epsilon,\eta}, \boldsymbol{u}^{\epsilon,\eta},  h^{\epsilon,\eta} )$ converging  to  a limit $(\phi^\eta,  \boldsymbol{u}^\eta,  h^\eta) $ as $\epsilon \to 0$:
\begin{equation}\label{ert1}
(\phi^{\epsilon,\eta}, \boldsymbol{u}^{\epsilon,\eta},  h^{\epsilon,\eta}) \to (\phi^\eta,  \boldsymbol{u}^\eta,  h^\eta ) \quad \text{ in $C([0,T^*];H^2(B_R))$ \ as $\epsilon \to 0$},
\end{equation}
where $B_R$ is a ball centered at the origin with radius $R$, such that 
\begin{align}\label{ruojixian}
\begin{aligned}
\boldsymbol{u}^{\epsilon,\eta}\rightharpoonup   \boldsymbol{u}^\eta \qquad &\text{weakly* \ in } L^\infty([0,T^*];H^3(\mathbb{R}^n)),\\ 
(\nabla  \phi^{\epsilon,\eta},  \phi^{\epsilon,\eta}_t,\boldsymbol{\psi}^{\epsilon,\eta},h^{\epsilon,\eta}_t)\rightharpoonup (\nabla \phi^\eta, \phi^\eta_t,\boldsymbol{\psi}^\eta,h^{\eta}_t) \qquad &\text{weakly* \ in } L^\infty([0,T^*];H^2(\mathbb{R}^n)),\\
\boldsymbol{u}^{\epsilon,\eta}_t \rightharpoonup   \boldsymbol{u}^\eta_t \qquad &\text{weakly* \ in } L^\infty([0,T^*];H^1(\mathbb{R}^n)),\notag
\end{aligned}\hspace{0.8mm}\\
\begin{aligned}
t^{\frac{1}{2}}(\nabla^2\boldsymbol{u}_t^{\epsilon,\eta},\nabla^4\boldsymbol{u}^{\epsilon,\eta}) \rightharpoonup  t^{\frac{1}{2}}(\nabla^2\boldsymbol{u}_t^{\eta}, \nabla^4\boldsymbol{u}^{\eta}) \qquad &\text{weakly* \ in } L^\infty([0,T^*];L^2(\mathbb{R}^n)),\\
(\phi^{\epsilon,\eta}_{tt}, t^{\frac{1}{2}}\boldsymbol{u}^{\epsilon,\eta}_{tt}) \rightharpoonup ( \phi^{\eta}_{tt}, t^{\frac{1}{2}} \boldsymbol{u}^{\eta}_{tt}) \qquad &\text{weakly* \ in } L^\infty([0,T^*];L^2(\mathbb{R}^n)),\\
(h^{\epsilon,\eta},(h^{\epsilon,\eta})^{-1}h^{\epsilon,\eta}_t) \rightharpoonup  (h^{\eta},(h^{\eta})^{-1}h^{\eta}_t) \qquad &\text{weakly* \ in } L^\infty([0,T^*];L^\infty(\mathbb{R}^n)),\\
\nabla\sqrt{h^{\epsilon,\eta}} \rightharpoonup  \nabla \sqrt{h^{\eta}}\qquad  &\text{weakly* \ in } L^\infty([0,T^*];L^{2n}(\mathbb{R}^n)),\\
\boldsymbol{u}^{\epsilon,\eta}_{t} \rightharpoonup  \boldsymbol{u}^{\eta}_{t}\qquad &\text{weakly \hspace{2mm}  in } L^2([0,T^*];H^2(\mathbb{R}^n)), 
\end{aligned}\\
\begin{aligned}
\phi^{\epsilon,\eta}_{tt} \rightharpoonup    \phi^{\eta}_{tt} \qquad &\text{weakly \hspace{2mm}  in } L^2([0,T^*];H^1(\mathbb{R}^n)),\\
(\boldsymbol{\psi}^{\epsilon,\eta}_{tt},\nabla^4 \boldsymbol{u}^{\epsilon,\eta},
\boldsymbol{u}^{\epsilon,\eta}_{tt}) \rightharpoonup  (\boldsymbol{\psi}^{\eta}_{tt},\nabla^4 \boldsymbol{u}^{\eta} ,\boldsymbol{u}^{\eta}_{tt})\qquad &\text{weakly \hspace{2mm}  in } L^2([0,T^*];L^2(\mathbb{R}^n)),\\
t^{\frac{1}{2}}(\nabla \boldsymbol{u}^{\epsilon,\eta}_{tt},\nabla^3\boldsymbol{u}^{\epsilon,\eta}_t) \rightharpoonup  t^{\frac{1}{2}}(\nabla \boldsymbol{u}^{\eta}_{tt},\nabla^3\boldsymbol{u}^{\eta}_t) \qquad &\text{weakly \hspace{2mm}  in } L^2([0,T^*];L^2(\mathbb{R}^n)),\notag
\end{aligned}\hspace{2.5mm}
\end{align}
which implies that $(\phi^\eta,  \boldsymbol{u}^\eta, h^\eta) $  satisfies  \eqref{key1kk}--\eqref{related} except the weighted estimates on $\boldsymbol{u}^\eta$.

Now,  the uniform estimates on $(\phi^\eta,  \boldsymbol{u}^\eta,  h^\eta) $ obtained above  and  the    convergences in \eqref{ert1}--\eqref{ruojixian} imply
\begin{equation}\label{ruojixian2}
\begin{aligned}
\sqrt{h^{\epsilon,\eta}}( \nabla \boldsymbol{u}^{\epsilon,\eta}, \nabla \boldsymbol{u}^{\epsilon,\eta}_t) \rightharpoonup \sqrt{h^{\eta}} ( \nabla \boldsymbol{u}^{\eta},\nabla \boldsymbol{u}^{\eta}_t) \quad &\text{weakly* in } L^\infty([0,T^*];L^2(\mathbb{R}^n)),\\
h^{\epsilon,\eta}\nabla^2 \boldsymbol{u}^{\epsilon,\eta} \rightharpoonup  h^{\eta}\nabla^2 \boldsymbol{u}^{\eta} \quad &\text{weakly* in } L^\infty([0,T^*];H^1(\mathbb{R}^n)),\\
(h^{\epsilon,\eta}\nabla^4 \boldsymbol{u}^{\epsilon,\eta},
 (h^{\epsilon,\eta}\nabla^2 \boldsymbol{u}^{\epsilon,\eta})_t)
\rightharpoonup (h^{\eta}\nabla^4 \boldsymbol{u}^{\eta},
(h^{\eta}\nabla^2 \boldsymbol{u}^{\eta})_t) 
\quad &\text{weakly \hspace{2mm} in } L^2([0,T^*]; L^2(\mathbb{R}^n)),
\end{aligned}
\end{equation}
which implies that $(\phi^\eta,  \boldsymbol{u}^\eta,  h^\eta) $  satisfies also the uniform weighted  estimates of   $ \boldsymbol{u}^{\eta}$ in \eqref{key1kk}.

Now we are ready to show that $(\phi^\eta,  \boldsymbol{u}^\eta, h^\eta) $ is a weak solution in the sense of distributions to  \eqref{ln} with  $\epsilon=0$.
First, multiplying $\eqref{ln}_1$ by  any given  $X(t,\boldsymbol{x})\in C^\infty_c ([0,T^*)\times \mathbb{R}^n)$ on both sides and integrating over $[0,T^*)\times \mathbb{R}^n$, we have
\begin{equation*} 
\int_0^{T^*} \int_{\mathbb{R}^n}  \Big(\phi^{\epsilon,\eta} X_t-\boldsymbol{w}\cdot\nabla\phi^{\epsilon,\eta}X-(\gamma-1)\phi^{\epsilon,\eta} \diver\boldsymbol{w}X\Big)\,\mathrm{d}\boldsymbol{x}\mathrm{d}t+ \int_{\mathbb{R}^n}  \phi_0^{\eta} X(0,\boldsymbol{x})\,\mathrm{d}\boldsymbol{x}=0.
\end{equation*}
According to   the  uniform estimates obtained above and  the convergences shown in \eqref{ert1}--\eqref{ruojixian}, we  can pass to the  limit  $\epsilon \to 0$ in the above to obtain 
\begin{equation*}
\int_0^{T^*} \int_{\mathbb{R}^n}  \Big(\phi^{\eta} X_t-\boldsymbol{w}\cdot\nabla\phi^{\eta}X-(\gamma-1)\phi^{\eta} \diver\boldsymbol{w}X\Big)\,\mathrm{d}\boldsymbol{x}\mathrm{d}t+ \int_{\mathbb{R}^n}  \phi_0^{\eta} X(0,\boldsymbol{x})\,\mathrm{d}\boldsymbol{x}=0.
\end{equation*}
Similarly, we can show that $(\boldsymbol{u}^\eta,h^\eta)$ also satisfies equations $\eqref{ln}_2$--$\eqref{ln}_3$ and the initial data in the sense of distributions. Therefore, $(\phi^\eta,  \boldsymbol{u}^\eta,  h^\eta) $ is a weak solution of  problem \eqref{ln} with  $\epsilon=0$ in the sense of distributions, satisfying  
\begin{equation*}\label{zheng}
\begin{aligned}
&\nabla \phi^\eta\in L^\infty([0,T^*]; H^2(\mathbb{R}^n)), \quad 0<h^\eta\in L^\infty([0,T^*]\times \mathbb{R}^n),\quad \nabla(\phi^\eta)^\iota\in L^\infty([0,T^*]; L^{2n}(\mathbb{R}^n)),\\
&(\nabla h^\eta, h^\eta_t)\in L^\infty([0,T^*];H^2(\mathbb{R}^n)), \  \ \ \boldsymbol{u}^\eta\in L^\infty([0,T^*]; H^3(\mathbb{R}^n))\cap L^2([0,T^*]; H^4(\mathbb{R}^n)), \\
&\boldsymbol{u}^\eta_t \in L^\infty([0,T^*]; H^1(\mathbb{R}^n))\cap L^2([0,T^*]; D^2(\mathbb{R}^n)),\ \   \boldsymbol{u}^\eta_{tt}\in L^2([0,T^*];L^2(\mathbb{R}^n)),\\
&t^{\frac{1}{2}}\boldsymbol{u}^\eta\in L^\infty([0,T^*];D^4(\mathbb{R}^n)),\quad   t^{\frac{1}{2}}\boldsymbol{u}^\eta_t\in L^\infty([0,T^*];D^2(\mathbb{R}^n))\cap L^2([0,T^*]; D^3(\mathbb{R}^n)),\\
&t^{\frac{1}{2}}\boldsymbol{u}^\eta_{tt}\in L^\infty([0,T^*];L^2(\mathbb{R}^n))\cap L^2([0,T^*];D^1(\mathbb{R}^n)).
\end{aligned}
\end{equation*}
This implies that $(\phi^\eta,\boldsymbol{u}^\eta,h^\eta)$  satisfies equations $\eqref{ln}_1$--$\eqref{ln}_3$  with $\epsilon=0$ pointwisely.
\smallskip

\textbf{2. Uniqueness and time continuity.}  Due to  $h^\eta>\frac{1}{2c_0}$,  the uniqueness and the time continuity of the classical solution obtained above can be given by the same arguments used in Lemma \ref{ls}. Here we omit the details.
\end{proof}

\subsection{Nonlinear Approximation Solutions away from Vacuum}
Now we  give the local well-posedness of the classical solution to the following nonlinear problem when   $\eta>0$:
\begin{equation}\label{nl}
\begin{cases}
\phi^{\eta}_t+\boldsymbol{u}^{\eta}\cdot\nabla\phi^{\eta}+(\gamma-1)\phi^{\eta} \mathrm{div}\boldsymbol{u}^{\eta}=0,\\[2pt]
\displaystyle\boldsymbol{u}^{\eta}_t+\boldsymbol{u}^{\eta}\cdot \nabla \boldsymbol{u}^{\eta}+\nabla\phi^{\eta}+a h^{\eta} L\boldsymbol{u}^{\eta}= \boldsymbol{\psi}^{\eta} \cdot Q(\boldsymbol{u}^{\eta}),\\[3pt]
h^{\eta}_t+\boldsymbol{u}^\eta\cdot\nabla h^\eta+(\delta-1) h^\eta \mathrm{div} \boldsymbol{u}^{\eta}=0,\\[3pt]
(\phi^{\eta},\boldsymbol{u}^{\eta},h^{\eta})|_{t=0}=(\phi^\eta_0,\boldsymbol{u}^\eta_0,h_0^\eta)
=(\phi_0+\eta,\boldsymbol{u}_0,(\phi_0+\eta)^{2\iota})\quad \text{in $\mathbb{R}^n$},\\[3pt]
(\phi^{\eta},\boldsymbol{u}^{\eta},h^{\eta})\to (\eta,\boldsymbol{0},\eta^{2\iota}) \quad \text{as $|\boldsymbol{x}|\to \infty$} \qquad \text{for $t\geq 0$},
\end{cases}
\end{equation}
where $\boldsymbol{\psi}^\eta=\frac{a\delta}{\delta-1}\nabla h^\eta$.

\begin{thm}\label{nltm}
Let \eqref{canshu} hold, $\eta>0$, and  $(\phi_0,\boldsymbol{u}_0)$ satisfy \eqref{th78qq}--\eqref{th78zxq}. Then there exist $T_*>0$ independent of $\eta$ and a unique classical solution 
$(\phi^\eta, \boldsymbol{u}^\eta, h^\eta)$ in $[0,T_*]\times\mathbb{R}^n$ of problem \eqref{nl} satisfying \eqref{linearregularity}, where $T_*$ is independent of $\eta$. Moreover, the $\eta$-uniform estimates in \eqref{key1kk} still hold for $(\phi^\eta,\boldsymbol{u}^\eta,h^\eta)$ with $T^*$ replaced by $T_*$. 
\end{thm}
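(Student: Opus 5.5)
We construct the solution of \eqref{nl} by a Picard-type iteration built on the linear theory of Lemma \ref{epsilon0}, using the $\eta$-independent bounds \eqref{key1kk} to fix a common existence time and then proving contraction in a low-order norm.

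\textbf{Iteration and uniform bounds.} Set $\boldsymbol{w}^0$ to be the solution of the heat-type problem $\boldsymbol{w}_t-\Delta\boldsymbol{w}=0$, $\boldsymbol{w}(0)=\boldsymbol{u}_0$, and $g^0=h_0^\eta$. Choose $T^{**}$ small so that $(\boldsymbol{w}^0,g^0)$ satisfy \eqref{2.16} with the constants $c_i$ fixed after \eqref{key1kk}. For $k\ge 1$, let $(\phi^{k},\boldsymbol{u}^{k},h^{k})$ be the solution of \eqref{ln} with $\epsilon=0$ and $(\boldsymbol{w},g)=(\boldsymbol{u}^{k-1},h^{k-1})$ given by Lemma \ref{epsilon0}. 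The structure of \eqref{key1kk} shows that the constants reproduce themselves: the bounds on $(\boldsymbol{u}^k,h^k)$ are exactly of the form \eqref{2.16}, and $C^{-1}\le h^{k-1}/h^k\le C$. We must also check $g^{-1}g_t\in L^\infty$ and $\int\|g_{tt}\|_{D^1}^2\le c_5^2$ for $g=h^k$. These follow from the $h$-equation and the bounds on $\boldsymbol{u}^{k-1}$, in the same way as Lemma \ref{varphi}. Hence all iterates exist on a common $[0,T_*]$, $T_*=\min\{T^{**},T^*\}$, with $\eta$-independent bounds. Spherical symmetry propagates through each step by Lemma \ref{ls}.

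\textbf{Contraction.} Set $\bar\phi^{k+1}=\phi^{k+1}-\phi^k$, and define $\bar{\boldsymbol{u}}^{k+1}$, $\bar h^{k+1}$, $\bar{\boldsymbol{\psi}}^{k+1}$ analogously. We derive differenced equations and estimate
\[
\Gamma^{k+1}(t):=\sup_{s\le t}\big(\|\bar\phi^{k+1}\|_{L^2}^2+\|\bar{\boldsymbol{\psi}}^{k+1}\|_{L^2}^2+\|\bar h^{k+1}\|_{L^2\cap L^\infty}^2+\|\bar{\boldsymbol{u}}^{k+1}\|_{L^2}^2\big)+\int_0^t\|\sqrt{h^{k+1}}\nabla\bar{\boldsymbol{u}}^{k+1}\|_{L^2}^2\,\mathrm{d}s .
\]
The transport parts are handled by $L^2$ energy estimates, with the sources controlled by $\|\nabla\bar{\boldsymbol{u}}^k\|$ terms. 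The crucial term is $a\bar h^{k+1}L\boldsymbol{u}^k$ in the $\bar{\boldsymbol{u}}$-equation. It is controlled using $\|\bar h^{k+1}/h^{k+1}\|$ together with the weighted bound $\|h^k\nabla^2\boldsymbol{u}^k\|_{L^2}$ from \eqref{key1kk}, and the comparability of $h^k$ and $h^{k+1}$. Likewise, $\bar{\boldsymbol{\psi}}^{k+1}\cdot Q(\boldsymbol{u}^k)$ is bounded by $\|\nabla\boldsymbol{u}^k\|_{L^\infty}$. With these estimates, Gronwall gives
\[
\Gamma^{k+1}(t)\le C\int_0^t\big(1+\|\boldsymbol{u}^k\|_{D^4}\big)\,\mathrm{d}s\,\Gamma^k(t)+C t^{1/2}\Gamma^k(t),
\]
and shrinking $T_*$ (still independent of $\eta$) gives $\sum_k\Gamma^k(T_*)^{1/2}<\infty$.

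\textbf{Main obstacle: the viscous coupling.} The delicate point is the degenerate viscous term, since only $\sqrt{h}\nabla\bar{\boldsymbol{u}}$ is dissipated. Terms such as $\nabla h^{k}\cdot Q(\bar{\boldsymbol{u}})$ are absorbed through $\|\boldsymbol{\psi}\|_{L^\infty}\|\varphi\|_{L^\infty}^{1/2}$. Because $\eta>0$, we have $h\ge(2c_0)^{-1}$ here, so no singular weight issue arises. Even so, we keep the constants independent of $\eta$ by working only with \eqref{key1kk}.

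\textbf{Passing to the limit and uniqueness.} The full sequence converges strongly in the low norm. Interpolating with the uniform high-order bounds and using weak-* compactness as in \eqref{ruojixian}--\eqref{ruojixian2}, the limit solves \eqref{nl} classically and satisfies \eqref{key1kk} and \eqref{linearregularity}. Uniqueness follows from the same difference estimate applied to two solutions, and time continuity follows as in Lemma \ref{ls}.
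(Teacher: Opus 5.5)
Your construction of the iterates and the propagation of the bounds \eqref{key1kk} match the paper. This includes the checks on $g^{-1}g_t$ and $\int_0^t\|g_{tt}\|_{D^1}^2\,\mathrm{d}s$, and your different initial iterate is harmless.

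The gap is in the contraction step. Your energy $\Gamma^k$ is too weak to control the sources in the differenced equations for $\bar{\boldsymbol{\psi}}^{k+1}$ and $\bar h^{k+1}$.
\begin{itemize}
\item Differentiating the $h$-equation gives \eqref{psibar1}. Its source contains $a\delta\,h^k\nabla\diver\bar{\boldsymbol{u}}^k$ (and $a\delta\,\bar h^k\nabla\diver\boldsymbol{u}^{k-1}$). So any $L^2$ estimate of $\bar{\boldsymbol{\psi}}^{k+1}$ requires $\int_0^t\|h^k\nabla^2\bar{\boldsymbol{u}}^k\|_{L^2}\,\mathrm{d}s$. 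Your $\Gamma^k$ contains only $\int_0^t\|\sqrt{h^k}\nabla\bar{\boldsymbol{u}}^k\|_{L^2}^2\,\mathrm{d}s$, so no second derivative of $\bar{\boldsymbol u}^k$ is controlled at all.
\item Propagating $\|\bar h^{k+1}\|_{L^\infty}$ along characteristics requires $h^{k-1}\diver\bar{\boldsymbol{u}}^k\in L^1_tL^\infty$, which is also unavailable.
\item Hence your claim that the transport sources are ``controlled by $\|\nabla\bar{\boldsymbol u}^k\|$'' is false for these two equations, and the Gr\"onwall inequality you state cannot be derived.
\item With an unweighted $\|\bar h\|_{L^2}$, the source $h^{k-1}\diver\bar{\boldsymbol u}^k$ costs a factor $\|h^{k-1}\|_{L^\infty}^{1/2}$. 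This factor blows up as $\eta\to0$, so the contraction time would depend on $\eta$. The weight $\varphi^{k+1}=(h^{k+1})^{-1}$ is what makes this term $\eta$-uniform.
\end{itemize}

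The paper closes the estimate with a stronger energy $\varGamma^{k+1}(t,\upsilon_1)$, built as follows.
\begin{itemize}
\item $\bar\phi^{k+1}$ is estimated in $H^1$, and $\varphi^{k+1}\bar h^{k+1}$ in $L^2$.
\item Testing the $\bar{\boldsymbol u}$-equation with $\bar{\boldsymbol u}^{k+1}_t$ adds $\upsilon_1\|\sqrt{h^{k+1}}\nabla\bar{\boldsymbol u}^{k+1}\|_{L^2}^2$ and $\upsilon_1\int_0^t\|\bar{\boldsymbol u}^{k+1}_t\|_{L^2}^2\,\mathrm{d}s$.
\item It recovers $\|h^k\nabla^2\bar{\boldsymbol u}^k\|_{L^2}$ from the $\bar{\boldsymbol u}^k$-equation read as an elliptic problem, \eqref{hubarr}. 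That estimate involves $\nabla\bar\phi^k$, which is why $\bar\phi$ must be in $H^1$.
\item The term $\bar h\,L\boldsymbol u$ is handled via $\|\varphi\bar h\|_{L^3}\lesssim\|(\bar{\boldsymbol\psi},\varphi\bar h)\|_{L^2}$, as in \eqref{2Dxinfaxian}.
\item The choice $\sigma=\upsilon_1^{3/2}$ in \eqref{Gamma} then closes the argument.
\end{itemize}

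A lighter repair within your framework also appears to work.
\begin{itemize}
\item Drop $\bar{\boldsymbol\psi}$ and $\|\bar h\|_{L^\infty}$, and keep only $\|\varphi^{k+1}\bar h^{k+1}\|_{L^2}$.
\item Integrate $\int\bar{\boldsymbol\psi}^{k+1}\cdot Q(\boldsymbol u^k)\cdot\bar{\boldsymbol u}^{k+1}$ by parts onto $\bar h^{k+1}$.
\item The resulting terms are bounded by $\|\varphi\bar h\|_{L^2}\|\sqrt{h^k}\nabla\boldsymbol u^k\|_{L^\infty}\|\sqrt{h}\nabla\bar{\boldsymbol u}^{k+1}\|_{L^2}$ and $\|\varphi\bar h\|_{L^2}\|h^k\nabla^2\boldsymbol u^k\|_{L^\infty}\|\bar{\boldsymbol u}^{k+1}\|_{L^2}$.
\item By \eqref{key1kk}, $\sqrt{h}\nabla\boldsymbol u$ is bounded in $L^\infty_tH^2$ and $h\nabla^2\boldsymbol u$ in $L^2_tH^2$, uniformly in $\eta$.
\end{itemize}
Either way, some such change is needed: as written, your $\Gamma^k$ does not close.
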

\begin{proof}
We divide the proof into three steps.  

\smallskip
\textbf{1.} Since $(\phi_0,\boldsymbol{u}_0)$ satisfy \eqref{th78qq}--\eqref{th78zxq}, it follows from  \eqref{etainitial}--\eqref{initialapproximation} that  there exists a constant $c_0\geq 1$ independent of $\eta$ such that \eqref{2.14} holds. Next, denote $\mathrm{X}=(\Phi,\mathrm{H})^\top$, and  let $(\phi^0,\boldsymbol{u}^0,h^0)$ be the solution of the following Cauchy problem in $(0,\infty)\times\mathbb{R}^n$:
\begin{equation*} 
\begin{cases}
\displaystyle \mathrm{X}_t+\boldsymbol{u}_0\cdot \nabla \mathrm{X}=0, \\
\mathbf{U}_t-\mathrm{H}\Delta \mathbf{U}=\boldsymbol{0}, \\
(\Phi,\mathbf{U},\mathrm{H})|_{t=0}=(\phi^\eta_0,\boldsymbol{u}^\eta_0,h^\eta_0)\quad \text{in $\mathbb{R}^n$},\\
(\Phi,\mathbf{U},\mathrm{H})\to (\eta,\boldsymbol{0},\eta^{2\iota}) \quad \quad  \ \ \ \ \text{as $|\boldsymbol{x}|\to \infty$} \quad \text{for $t\geq 0$}.
\end{cases}
\end{equation*}

Choose $\bar{T}\in (0,T^*]$ small enough such that
\begin{align}
\begin{aligned}
0<(h^0)^{-1}\leq c_1,\ \
\sup_{t\in[0,\bar{T}]}\|\nabla h^0(t)\|^2_{L^\infty\cap D^{1,n}\cap D^2}&\leq c^2_1,\notag\\
\sup_{t\in[0,\bar{T}]}\|\boldsymbol{u}^0(t)\|^2_{H^1}
+\int^{\bar{T}}_0\big(\|\boldsymbol{u}^0\|^2_{D^2}+\|\boldsymbol{u}^0_t\|^2_{L^2}\big)\,\mathrm{d}t&\leq c_2^2,\notag
\end{aligned}\\
\begin{aligned}\label{it0}
\sup_{t\in[0,\bar{T}]}\big(\|\boldsymbol{u}^0\|_{D^2}^2+\|h^0\nabla^2\boldsymbol{u}^0\|^2_{L^2}+\|\boldsymbol{u}^0_t\|^2_{L^2}\big)(t)
+\int^{\bar{T}}_0\big(\|\boldsymbol{u}^0\|^2_{D^3}+\|\boldsymbol{u}^0_t\|^2_{D^1}\big)\,\mathrm{d}t\leq c_3^2,\\
\sup_{t\in[0,\bar{T}]}\big(\|\boldsymbol{u}^0_t\|^2_{D^1}+\|h^0_t\|^2_{D^1}+\|\boldsymbol{u}^0\|^2_{D^3}\big)(t)
+\int^{\bar{T}}_0\big(\|\boldsymbol{u}^0_t\|^2_{D^2}+\|\boldsymbol{u}^0\|^2_{D^4}+\|\boldsymbol{u}^0_{tt}\|^2_{L^2}\big)\,\mathrm{d}t
\leq c_4^2,\\[-2pt]
\sup_{t\in[0,\bar{T}]}\|(h_0)^{-1}h^0_t(t)\|^2_{L^{\infty}}
+\int^{\bar{T}}_0\big(\|(h^0\nabla^2\boldsymbol{u}^0)_t\|^2_{L^2}+\|h^0\nabla^2\boldsymbol{u}^0\|^2_{D^2}\big)\,\mathrm{d}t\leq c_4^2,\\[4pt]
\sup_{t\in[0,\bar{T}]}\big(\|\sqrt{h^0}\nabla \boldsymbol{u}^0_t\|^2_{L^2}+\|h^0\nabla^2\boldsymbol{u}^0\|^2_{D^1}\big)(t)\leq c^2_4,
\end{aligned}\\[1pt]
\begin{aligned} 
\sup_{t\in[0,\bar{T}]}\big(t\|\boldsymbol{u}^0_t(t)\|^2_{D^2}+t\|h^0\nabla^2\boldsymbol{u}^0_t(t)\|^2_{L^2}+t\|\boldsymbol{u}^0_{tt}(t)\|^2_{L^2}+t\|\boldsymbol{u}^0(t)\|^2_{D^4}\big)\leq  c_5^2,\\
\int^t_0\big(s\|\boldsymbol{u}^0_{tt}\|_{D^1}^2+s\|\boldsymbol{u}^0_t\|^2_{D^3}+s\|\sqrt{h^0} \boldsymbol{u}^0_{tt}\|_{D^1}^2+\|h^0_{tt}\|^2_{D^1}\big)\,\mathrm{d}s\leq  c_5^2.\notag
\end{aligned}
\end{align}

\smallskip
\textbf{2. Existence}. Let $(\boldsymbol{w},g)=(\boldsymbol{u}^0, h^0)$ in problem  \eqref{ln} with $\epsilon=0$. Then, according to Lemma \ref{epsilon0},
we  can obtain a unique  classical solution $(\phi^1,\boldsymbol{u}^1,h^1)$, which solves this problem locally in time. Inductively, we construct approximate sequences $(\phi^{k+1},\boldsymbol{u}^{k+1},h^{k+1})$ as follows:
given $(\boldsymbol{u}^k, h^k)$ for $k\geq 1$, define  $(\phi^{k+1},\boldsymbol{u}^{k+1},h^{k+1})$ by solving the  linear problem:
\begin{equation}\label{k+1}
\begin{cases}
\displaystyle
\phi^{k+1}_t+\boldsymbol{u}^k\cdot\nabla\phi^{k+1}+(\gamma-1)\phi^{k+1}\mathrm{div}\boldsymbol{u}^k=0,\\[2pt]
\displaystyle
\boldsymbol{u}_t^{k+1}+\boldsymbol{u}^k\cdot\nabla \boldsymbol{u}^k+\nabla\phi^{k+1}+ah^{k+1}L\boldsymbol{u}^{k+1}
=\boldsymbol{\psi}^{k+1}\cdot Q(\boldsymbol{u}^k),\\[2pt]
\displaystyle
h^{k+1}_t+\boldsymbol{u}^k\cdot \nabla h^{k+1}+(\delta-1)h^k\mathrm{div}\boldsymbol{u}^k=0,\\[2pt]
\displaystyle
(\phi^{k+1},\boldsymbol{u}^{k+1},h^{k+1})|_{t=0}=(\phi^\eta_0,\boldsymbol{u}^\eta_0,h^\eta_0)
 \qquad \text{in $\mathbb{R}^n$}, \\[2pt]
\displaystyle
(\phi^{k+1},\boldsymbol{u}^{k+1},h^{k+1})\to (\eta,\boldsymbol{0},\eta^{2\iota}) \quad\quad \ \qquad \text{as $|\boldsymbol{x}|\to \infty$} \quad \text{for $t\geq 0$},
\end{cases}
\end{equation}
where $\boldsymbol{\psi}^{k+1}=\frac{a\delta}{\delta-1}\nabla h^{k+1}$.
It follows from Lemma \ref{epsilon0} that the solution  $(\phi^{k+1},\boldsymbol{u}^{k+1},h^{k+1})$ $(k\in \mathbb{N})$ satisfies  the uniform estimates \eqref{key1kk}  and $C^{-1}\leq (h^k(h^{k+1})^{-1})(t,\boldsymbol{x})\leq C$ in $[0,\bar{T}]\times\mathbb{R}^n$.
To show the strong convergence of $(\phi^k,\boldsymbol{u}^k,h^k)$, we set
\begin{equation*}
\begin{aligned}
&\bar{\phi}^{k+1}=\phi^{k+1}-\phi^k,\ \ \bar{\boldsymbol{u}}^{k+1}=\boldsymbol{u}^{k+1}-\boldsymbol{u}^k, \ \  \bar{\boldsymbol{\psi}}^{k+1}=\boldsymbol{\psi}^{k+1}-\boldsymbol{\psi}^k,\ \  \bar{h}^{k+1}=h^{k+1}-h^k.
\end{aligned}
\end{equation*}
Then $(\phi^k,\boldsymbol{u}^k,h^k)$ solves the following problem due to \eqref{k+1}: 
\begin{equation}\label{k+3}
\begin{cases}
\displaystyle
\bar{\phi}^{k+1}_t+\boldsymbol{u}^k\cdot\nabla\bar{\phi}^{k+1}+\bar{\boldsymbol{u}}^k\cdot\nabla\phi^k+(\gamma-1)\big(\bar{\phi}^{k+1}\mathrm{div}\boldsymbol{u}^k+\phi^k\mathrm{div}\bar{\boldsymbol{u}}^k\big)=0,\\[4pt]
\displaystyle
\bar{\boldsymbol{u}}_t^{k+1}+\boldsymbol{u}^k\cdot\nabla \bar{\boldsymbol{u}}^k+\bar{\boldsymbol{u}}^k\cdot\nabla \boldsymbol{u}^{k-1}+\nabla\bar{\phi}^{k+1}
+ah^{k+1}L\bar{\boldsymbol{u}}^{k+1}+a\bar{h}^{k+1}L\boldsymbol{u}^{k}\\[2pt]
\displaystyle
\quad =\bar{\boldsymbol{\psi}}^{k+1}\cdot Q(\boldsymbol{u}^k)+\boldsymbol{\psi}^{k}\cdot Q(\bar{\boldsymbol{u}}^k),\\[2pt]
\displaystyle
\bar{h}^{k+1}_t+\boldsymbol{u}^k\cdot\nabla \bar{h}^{k+1}+\bar{\boldsymbol{u}}^k\cdot\nabla h^k+(\delta-1)\big(h^k\mathrm{div}{\boldsymbol{u}}^k-{h}^{k-1}\mathrm{div}\boldsymbol{u}^{k-1}\big)=0,\\[2pt]
\displaystyle
(\bar{\phi}^{k+1},\bar{\boldsymbol{u}}^{k+1},\bar{h}^{k+1})|_{t=0}=(0,\boldsymbol{0},0) \qquad\text{in $\mathbb{R}^n$},
\\[4pt]
\displaystyle
(\bar{\phi}^{k+1},\bar{\boldsymbol{u}}^{k+1},\bar{h}^{k+1})\to (0,\boldsymbol{0},0) \ \ \ \ \qquad\text{as $|\boldsymbol{x}|\to \infty$} \quad \text{for $t\geq 0$}.
\end{cases}
\end{equation}

Besides, by the same argument as in the proof of \cite[Lemma 3.11]{dxz}, we can obtain the following regularity for $(\bar{\phi}^{k+1},\bar{\boldsymbol{\psi}}^{k+1},\bar{h}^{k+1})$:
\begin{equation*} 
(\bar{\phi}^{k+1},\ \bar{\boldsymbol{\psi}}^{k+1}, \ \bar{h}^{k+1}) \in L^\infty([0,\bar{T}];H^2(\mathbb{R}^n)) \qquad \text{for $k\in \mathbb{N}$}.
\end{equation*}
Moreover, it follows from the relation $\boldsymbol{\psi}^{k+1}=\frac{a\delta}{\delta-1}\nabla h^{k+1}$ and $\eqref{k+1}_3$ that 
\begin{equation}\label{psibar1}
\begin{aligned}
&\,\bar{\boldsymbol{\psi}}^{k+1}_t+\nabla\big(\boldsymbol{u}^k\cdot \bar{\boldsymbol{\psi}}^{k+1}+\bar{\boldsymbol{u}}^k\cdot\boldsymbol{\psi}^k\big)+(\delta-1)\big(\bar{\boldsymbol{\psi}}^k\mathrm{div}\boldsymbol{u}^k+\boldsymbol{\psi}^{k-1}\mathrm{div}\bar{\boldsymbol{u}}^k\big)\\[2pt]
&+a\delta\big(h^k\nabla\diver \bar{\boldsymbol{u}}^k+\bar{h}^k\nabla\mathrm{div}\boldsymbol{u}^{k-1}\big)=\boldsymbol{0}.
\end{aligned}
\end{equation}

\textbf{2.1. Estimates for $\varphi^{k+1}\bar{h}^{k+1}$.} Denote $\varphi^{k}=(h^{k})^{-1}$. Then, according to $\eqref{k+1}_3$, 
\begin{equation*}
\varphi^{k+1}_t+\boldsymbol{u}^k\cdot \nabla \varphi^{k+1}+(1-\delta)h^k (\varphi^{k+1})^2\diver\boldsymbol{u}^k=0.
\end{equation*}
Multiplying $\eqref{k+3}_3$ by $2\overline{h}^{k+1}(\varphi^{k+1})^2$ and integrating over $\mathbb{R}^n$ yield
\begin{equation}\label{go64aah}
\frac{\rm d}{{\rm d}t}\|\varphi^{k+1}\bar{h}^{k+1}\|^2_{L^2}\\ 
\leq \frac{C}{\sigma}\|\varphi^{k+1}\bar{h}^{k+1}\|^2_{L^2} + \sigma \big(\|\bar{\boldsymbol{\psi}}^{k+1}\|^2_{L^2}
+ \|(\bar{\boldsymbol{u}}^k,\, \sqrt{h^k}\diver\bar{\boldsymbol{u}}^k,\,\varphi^k\bar{h}^k)\|_{L^2}^2\big).
\end{equation}

\textbf{2.2. Estimates for $(\bar{\phi}^{k+1},\bar{\boldsymbol{u}}^{k+1},\bar{\boldsymbol{\psi}}^{k+1})$.} First, it follows from $\eqref{k+3}_2$ that
\begin{equation}\label{hubarr}
\begin{aligned}
\|h^k\nabla^2\bar{\boldsymbol{u}}^k\|_{L^2}& \leq C\big(\|(\bar{\boldsymbol{u}}^k,\,\sqrt{h^k}\nabla \bar{\boldsymbol{u}}^k,\,\bar{\boldsymbol{u}}^k_t)\|_{L^2}+\|\bar{\boldsymbol{u}}^{k-1}\|_{L^2}\big)\\
&\quad +C\big(\|\sqrt{h^{k-1}}\nabla\bar{\boldsymbol{u}}^{k-1}\|_{L^2}+\|(\bar{\boldsymbol{\psi}}^{k},\,\varphi^{k}\bar{h}^{k},\,\nabla \bar{\phi}^{k})\|_{L^2}\big),
\end{aligned}
\end{equation}
where we have used the fact that $\nabla \varphi^k=-(\varphi^k)^2\nabla h^k$ and the following estimates:
\begin{equation}\label{2Dxinfaxian}
\begin{aligned}
\|\varphi^k\bar{h}^{k}\|_{L^3}
&\leq C\big(\|\varphi^k\bar{h}^{k}\|_{L^2}+\|\nabla(\varphi^k\bar{h}^{k})\|_{L^2}\big)\leq C\|(\bar{\boldsymbol{\psi}}^{k},\,\varphi^{k}\bar{h}^{k})\|_{L^2}\big),\\
\|\bar{h}^{k}L\boldsymbol{u}^{k-1}\|_{L^2}&=\|\varphi^k\bar{h}^{k} h^k L\boldsymbol{u}^{k-1}\|_{L^2}=\|(\varphi^k\bar{h}^{k}) (h^{k-1} L\boldsymbol{u}^{k-1})((h^{k-1})^{-1}h^k)\|_{L^2}\\
& \leq C\|\varphi^k\bar{h}^{k}\|_{L^3}\|h^{k-1} L\boldsymbol{u}^{k-1}\|_{L^6}\|(h^{k-1})^{-1}h^k\|_{L^\infty}.
\end{aligned}
\end{equation}

Next, multiplying \eqref{psibar1} by $2\bar{\boldsymbol{\psi}}^{k+1}$ and integrating over $\mathbb{R}^n$, combined with \eqref{2Dxinfaxian}, give that, for any $\sigma\in (0,1)$,
\begin{equation}\label{psibar}
\begin{aligned}
\frac{\mathrm{d}}{\mathrm{d}t}\|\bar{\boldsymbol{\psi}}^{k+1}\|^2_{L^2}
&\leq C\big(\|(\bar{\boldsymbol{u}}^{k},\sqrt{h^k}\nabla \bar{\boldsymbol{u}}^{k},h^k\nabla^2\bar{\boldsymbol{u}}^k,\bar{\boldsymbol{\psi}}^k,\bar{\boldsymbol{\psi}}^{k+1})\|_{L^2}+\|\varphi^k\bar{h}^{k}\|_{L^3}\big)\|\bar{\boldsymbol{\psi}}^{k+1}\|_{L^2}\\
&\leq  \frac{C}{\sigma}\|\bar{\boldsymbol{\psi}}^{k+1}\|^2_{L^2}
+\sigma \big(\|(\bar{\boldsymbol{u}}^k,\,\sqrt{h^k}\nabla \bar{\boldsymbol{u}}^k,\,\bar{\boldsymbol{u}}^k_t)\|^2_{L^2}+\|\bar{\boldsymbol{u}}^{k-1}\|^2_{L^2}\big)\\
&\quad +\sigma\big(\|\sqrt{h^{k-1}}\nabla\bar{\boldsymbol{u}}^{k-1}\|^2_{L^2}
+\|(\bar{\boldsymbol{\psi}}^{k},\,\varphi^{k}\bar{h}^{k},\,\nabla \bar{\phi}^{k})\|^2_{L^2}\big).
\end{aligned}
\end{equation}

For the estimates of $\bar{\phi}^{k+1}$, multiplying $\eqref{k+3}_1$ by $2\bar{\phi}^{k+1}$ and integrating over $\mathbb{R}^n$ give 
\begin{equation}\label{phibar1}
\frac{\mathrm{d}}{\mathrm{d}t}\|\bar{\phi}^{k+1}\|^2_{L^2}\leq C\|\bar{\phi}^{k+1}\|^2_{L^2}
+C\|(\bar{\boldsymbol{u}}^{k},\,\sqrt{h^k}\nabla\bar{\boldsymbol{u}}^{k})\|_{L^2}\|\bar{\phi}^{k+1}\|_{L^2}.
\end{equation}
Furthermore, applying $2\partial^{\boldsymbol{\varsigma}}_{\boldsymbol{x}}\bar{\phi}^{k+1}\partial^{\boldsymbol{\varsigma}}_{\boldsymbol{x}}$ $(|\boldsymbol{\varsigma}|=1)$ to $\eqref{k+3}_1$  and  integrating over $\mathbb{R}^n$, we obtain 
\begin{equation*}
\frac{\mathrm{d}}{\mathrm{d}t}\|\partial^{\boldsymbol{\varsigma}}_{\boldsymbol{x}}\bar{\phi}^{k+1}\|^2_{L^2}\leq C\|\bar{\phi}^{k+1}\|^2_{H^1}+C\|(\bar{\boldsymbol{u}}^{k},\,\sqrt{h^k}\nabla\bar{\boldsymbol{u}}^{k},\,h^k\nabla^2\bar{\boldsymbol{u}}^{k})\|_{L^2}\|\nabla\bar{\phi}^{k+1}\|_{L^2},
\end{equation*}
which, along with \eqref{hubarr} and \eqref{phibar1}, implies that, for $t\in [0,\bar{T}]$,
\begin{equation}\label{phibar2}
\frac{\mathrm{d}}{\mathrm{d}t}\|\bar{\phi}^{k+1}\|^2_{H^1}\leq \frac{C}{\sigma}\|\bar{\phi}^{k+1}\|^2_{H^1
}+\sigma\|(\bar{\boldsymbol{u}}^{k},\,\sqrt{h^k}\nabla\bar{\boldsymbol{u}}^{k},\,h^k\nabla^2\bar{\boldsymbol{u}}^{k})\|^2_{L^2}.
\end{equation}

We now estimate  $\bar{\boldsymbol{u}}^{k+1}$. Multiplying $\eqref{k+3}_2$ by $2\bar{\boldsymbol{u}}^{k+1}$ and integrating over $\mathbb{R}^n$ yield 
\begin{equation}\label{ubar}
\begin{aligned}
\frac{\mathrm{d}}{\mathrm{d}t}\|\bar{\boldsymbol{u}}^{k+1}\|^2_{L^2}\!+\!aa_1\|\sqrt{h^{k+1}}\nabla\bar{\boldsymbol{u}}^{k+1}\|^2_{L^2}&\leq \frac{C}{\sigma}\|\bar{\boldsymbol{u}}^{k+1}\|^2_{L^2}+C\big(\|\bar{\phi}^{k+1}\|^2_{H^1}+\|\bar{\boldsymbol{\psi}}^{k+1}\|^2_{L^2}\big)\\
&\quad +\sigma\big(\|(\bar{\boldsymbol{u}}^{k},\,\sqrt{h^{k}}\nabla\bar{\boldsymbol{u}}^{k})\|^2_{L^2}\!+\!\|\varphi^{k+1}\bar{h}^{k+1}\|^2_{L^2}\big),
\end{aligned}
\end{equation}
where we have used \eqref{2Dxinfaxian} and the fact that
\begin{equation*}
\|h^{k+1}L\boldsymbol{u}^k\|_{L^6}\leq C\|(h^k)^{-1}h^{k+1}\|_{L^\infty}\|h^{k}L\boldsymbol{u}^k\|_{L^6}.    
\end{equation*} 

Multiplying $\eqref{k+3}_2$ by $2\bar{\boldsymbol{u}}_t^{k+1}$ and integrating over $\mathbb{R}^n$ yield 
\begin{equation*} 
\begin{aligned}
&\,\frac{\rm d}{{\rm d}t}\big(a a_1\|\sqrt{h^{k+1}}\nabla\bar{\boldsymbol{u}}^{k+1}\|^2_{L^2}
+a(a_1+a_2)\|\sqrt{h^{k+1}}\mathrm{div}\bar{\boldsymbol{u}}^{k+1}\|^2_{L^2}\big)+2\|\bar{\boldsymbol{u}}^{k+1}_t\|^2_{L^2}\\
&\leq C\big\|(\bar{\boldsymbol{u}}^k,\sqrt{h^k}\nabla\bar{\boldsymbol{u}}^k,\sqrt{h^{k+1}}\nabla\bar{\boldsymbol{u}}^{k+1},\bar{\boldsymbol{\psi}}^{k+1},\varphi^{k+1}\bar{h}^{k+1})\big\|_{L^2}\|\bar{\boldsymbol{u}}^{k+1}_t\|_{L^2}\\
&\quad +C\|\bar{\phi}^{k+1}\|_{H^1} \|\bar{\boldsymbol{u}}^{k+1}_t\|_{L^2} +C\|\sqrt{h^{k+1}}\nabla\bar{\boldsymbol{u}}^{k+1}\|_{L^2}^2,
\end{aligned}
\end{equation*}
which, along with the Young inequality, leads to
\begin{equation}\label{hubar}
\begin{aligned}
&\,\frac{\rm d}{{\rm d}t}\big(a a_1\|\sqrt{h^{k+1}}\nabla\bar{\boldsymbol{u}}^{k+1}\|^2_{L^2}
+a(a_1+a_2)\|\sqrt{h^{k+1}}\mathrm{div}\bar{\boldsymbol{u}}^{k+1}\|^2_{L^2}\big)+\|\bar{\boldsymbol{u}}^{k+1}_t\|^2_{L^2}\\
&\leq C\big(\big\|(\bar{\boldsymbol{u}}^k,
\sqrt{h^k}\nabla\bar{\boldsymbol{u}}^k,\sqrt{h^{k+1}}\nabla\bar{\boldsymbol{u}}^{k+1},\bar{\boldsymbol{\psi}}^{k+1},\varphi^{k+1}\bar{h}^{k+1})\big\|_{L^2}^2+\|\bar{\phi}^{k+1}\|_{H^1}^2 \big).
\end{aligned}
\end{equation}

\textbf{2.3. Closing the estimates.} Combining \eqref{psibar} with \eqref{phibar2}--\eqref{hubar} implies 
\begin{equation}\label{allbar}
\begin{aligned}
&\,\frac{\mathrm{d}}{\mathrm{d}t}\varGamma^{k+1}(t,\upsilon_1) +a a_1\|\sqrt{h^{k+1}}\nabla\bar{\boldsymbol{u}}^{k+1}\|^2_{L^2}+\upsilon_1\|\bar{\boldsymbol{u}}_t^{k+1}\|^2_{L^2}\\
&\leq \frac{C}{\sigma}\varGamma^{k+1}(t,\upsilon_1) \!+\!C\sigma\|\bar{\boldsymbol{u}}^k_t\|_{L^2}^2\! +\!C(\sigma+\upsilon_1)\big(\|\bar{\boldsymbol{u}}^{k-1}\|^2_{L^2}\!+\!\|\sqrt{h^{k-1}}\nabla\bar{\boldsymbol{u}}^{k-1}\|^2_{L^2}\!+\!\varGamma^{k}(t,\upsilon_1)\big),
\end{aligned}
\end{equation}
where $\upsilon_1\in (0,1)$ is a constant to be determined later and
\begin{equation*}
\begin{aligned}
\varGamma^{k+1}(t,\upsilon_1)&=  \|\bar\phi^{k+1}\|_{H^1}^2+ \|\bar{\boldsymbol{\psi}}^{k+1}\|_{L^2}^2+ \|\bar{\boldsymbol{u}}^{k+1}\|_{L^2}^2+\|\varphi^{k+1}\bar{h}^{k+1}\|^2_{L^2}\\
&\quad+\upsilon_1 a a_1\|\sqrt{h^{k+1}}\nabla\bar{\boldsymbol{u}}^{k+1}\|^2_{L^2}
+\upsilon_1 a(a_1+a_2)\|\sqrt{h^{k+1}}\mathrm{div}\bar{\boldsymbol{u}}^{k+1}\|^2_{L^2}.
\end{aligned}
\end{equation*}

Set $\sigma=\upsilon_1^{\frac{3}{2}}$ and define
\begin{equation*}
\mathcal{E}^{k+1}(t,\upsilon_1):=\sup_{s\in [0,t]}\varGamma^{k+1}(s,\upsilon_1)+\int^t_0\big(aa_1\|\sqrt{h^{k+1}}\nabla\bar{\boldsymbol{u}}^{k+1}\|^2_{L^2}+\upsilon_1\|\bar{\boldsymbol{u}}^{k+1}_t\|^2_{L^2}\big)\,\mathrm{d}s.
\end{equation*}
Then it follows from \eqref{allbar}  and the Gr\"onwall inequality  that 
\begin{equation}\label{Gamma}
\mathcal{E}^{k+1}(t,\upsilon_1)\leq C\upsilon^{\frac{1}{2}}_1 e^{C\upsilon_1^{-\frac{3}{2}}t}(t+1)\big(\mathcal{E}^{k}(t,\upsilon_1)+\mathcal{E}^{k-1}(t,\upsilon_1)\big).
\end{equation}
Since $\bar{T}\in (0,1)$, we can first choose   $\upsilon_1=\bar{\upsilon}\in (0,1)$ such that
$C\bar{\upsilon}^{\frac{1}{2}}\leq \frac{1}{64}$, and then choose  $T_*\in (0,\bar{T}]$ such that
$(1+T_*)e^{C\bar{\upsilon}^{-\frac{3}{2}}T_*}\leq 4$. Then \eqref{Gamma} becomes
\begin{equation*}
\mathcal{E}^{k+1}(T_*,\bar\upsilon)\leq \frac{1}{16}\big(\mathcal{E}^{k}(T_*,\bar\upsilon)+\mathcal{E}^{k-1}(T_*,\bar\upsilon)\big),    
\end{equation*}
which yields $\sum_{k=1}^{\infty}\mathcal{E}^{k}(T_*,\bar\upsilon)<\infty$.

This, combined with the $k$-uniform estimates \eqref{key1kk}, gives that, for any $s'\in [1,3)$,
\begin{equation*}
\|(\bar\phi^{k},\bar {\boldsymbol{u}}^{k})\|_{H^{s'}}+\|(\bar{\boldsymbol{\psi}}^{k},\bar h^{k})\|_{L^\infty}\to 0\qquad \text{as $k\to\infty$},
\end{equation*}
which implies that there exist a subsequence (still denoted by $(\phi^k,\boldsymbol{u}^k,h^k ,\boldsymbol{\psi}^k)$) and limit functions  $(\phi^\eta,\boldsymbol{u}^\eta,h^\eta, \boldsymbol{\psi}^\eta)$ such that
\begin{equation*} 
\begin{aligned}
\phi^k\to \phi^\eta\qquad &\text{in $L^\infty([0,T_*];L^\infty(\mathbb{R}^n)\cap D^1(\mathbb{R}^n)\cap D^2(\mathbb{R}^n))$},\\
\boldsymbol{u}^k\to \boldsymbol{u}^\eta \qquad   &\text{in $L^\infty([0,T_*];H^{s'}(\mathbb{R}^n))$},\\
(\boldsymbol{\psi}^k,h^k)\to (\boldsymbol{\psi}^\eta,h^\eta) \qquad  &\text{in $L^\infty([0,T_*];L^\infty(\mathbb{R}^n))$}.
\end{aligned}
\end{equation*}

On the other hand,    the local estimates  \eqref{key1kk} independent of $k$ yield that there exists a subsequence (still denoted by) $(\phi^k,\boldsymbol{u}^k, h^k, \boldsymbol{\psi}^k)$ converging to the limit $(\phi^\eta,\boldsymbol{u}^\eta, h^\eta, \boldsymbol{\psi}^\eta)$ in the weak or weak* sense.
According to the lower semi-continuity of norms,  the corresponding estimates in \eqref{key1kk} still hold for  $(\phi^\eta,\boldsymbol{u}^\eta, h^\eta,\boldsymbol{\psi}^\eta)$ with $T^*$ replaced by $T_*$. Thus, we can check that  $(\phi^\eta,\boldsymbol{u}^\eta, h^\eta)$ is a weak solution of \eqref{nl} in the sense of distributions and satisfies the equations in \eqref{nl} pointwise.

\smallskip
\textbf{3. Uniqueness and time-continuity.}  Let $(\phi_1,  \boldsymbol{u}_1, h_1)$ and $(\phi_2, \boldsymbol{u}_2, h_2)$ be two classical solutions to the   Cauchy problem \eqref{nl} satisfying the  estimates in \eqref{key1kk}.

Set 
$\boldsymbol{\psi}_i=\frac{a\delta}{\delta-1}\nabla h_i$ ($i=1,2$), and 
\begin{equation*}
\begin{aligned}
\bar{h}=h_1-h_2,\qquad \bar{\phi}= \phi_1-\phi_2,\qquad \bar{\boldsymbol{u}}=\boldsymbol{u}_1-\boldsymbol{u}_2, \qquad  \bar{\boldsymbol{\psi}}=\boldsymbol{\psi}_1-\boldsymbol{\psi}_2.
\end{aligned}
\end{equation*}
Then it follows from the equations in  \eqref{nl} that
\begin{equation}\label{eq:1.2wcvb}
\begin{cases}
\displaystyle
\bar{\phi}_t+\boldsymbol{u}_1\cdot \nabla\bar{\phi}+\bar{\boldsymbol{u}} \cdot\nabla\phi_2+(\gamma-1)(\bar{\phi}\mathrm{div}\boldsymbol{u}_1 +\phi_2\mathrm{div}\bar{\boldsymbol{u}})=0,\\[3pt]
 \displaystyle
\bar{\boldsymbol{u}}_t+ \boldsymbol{u}_1\cdot\nabla \bar{\boldsymbol{u}}+\nabla \bar{\phi}+ a h_1L\bar{\boldsymbol{u}} \\[3pt]
\displaystyle
\quad =- \bar{\boldsymbol{u}} \cdot \nabla \boldsymbol{u}_2- a \bar h L\boldsymbol{u}_2+ \boldsymbol{\psi}_1 \cdot Q( \bar{\boldsymbol{u}})+\bar{\boldsymbol{\psi}} \cdot Q(\boldsymbol{u}_2),\\[3pt]
\bar{h}_t+\boldsymbol{u}_1\cdot \nabla\bar{h}+\bar{\boldsymbol{u}}\cdot\nabla h_2+(\delta-1)(\bar{h} \mathrm{div}\boldsymbol{u}_2 +h_1\mathrm{div}\bar{\boldsymbol{u}})=0,\\[3pt]
\displaystyle
(\bar{\phi},\bar{\boldsymbol{u}},\bar{h})|_{t=0}=(0,\boldsymbol{0},0) \qquad \text{in $\mathbb{R}^n$},\\[3pt]
\displaystyle
(\bar{\phi},\bar{\boldsymbol{u}},\bar{h})\to (0,\boldsymbol{0},0) \qquad\quad \ \text{as $|\boldsymbol{x}|\to \infty$} \quad \text{for $t\geq 0$}.
\end{cases}
\end{equation}

Set 
$\varGamma(t)=\|\bar\phi\|_{H^1}^2
+\|(h_1)^{-1}\bar{h}\|_{L^2}+\|(\bar{\boldsymbol{\psi}},\, \sqrt{aa_1h_1}\nabla\bar{\boldsymbol{u}},\,\sqrt{a(a_1+a_2)h_1}\diver\bar{\boldsymbol{u}},\bar{\boldsymbol{u}})\|^2_{L^2}$. 
In a similar way for  the derivation of \eqref{Gamma}, we obtain
\begin{equation*}
\varGamma'(t)+C^{-1}\|(\nabla \bar{\boldsymbol{u}},\,\bar{\boldsymbol{u}}_t)\|^2_{L^2}
\leq \bar{H}(t)\varGamma(t),
\end{equation*}
with a continuous  function $\bar{H}(t)\in L^1(0,T_*)$. Hence, it follows from the Gr\"onwall inequality that
$\bar{\phi}=0$ and $\bar{\boldsymbol{\psi}}=\bar{\boldsymbol{u}}=\boldsymbol{0}$.

Finally, the time-continuity follows from the same procedure as in Lemma \ref{ls}. Thus, the proof of Theorem \ref{nltm} is completed.
\end{proof}

\subsection{Passing to the Limit $\eta\to 0$}
Based on the uniform estimates in \eqref{key1kk}, we are ready to prove Theorem  \ref{th1}.

\begin{proof}[Proof of {\rm Theorem \ref{th1}}] 
We divide the proof into four steps.

\smallskip
\textbf{1. Local uniform lower bound of $\phi$}. For any $\eta\in (0,1]$, set 
\begin{equation*}
\phi_0^\eta=\phi_0+\eta,\qquad \boldsymbol{\psi}^\eta_0=\frac{a\delta}{\delta-1}\nabla (\phi_0+\eta)^{2\iota},\qquad h_0^\eta=(\phi_0+\eta)^{2\iota}.
\end{equation*}
First, since \eqref{th78qq}--\eqref{th78zxq} hold, it follows from  \eqref{etainitial}--\eqref{initialapproximation} that  there exists a constant $c_0\geq 1$ independent of $\eta$ such that \eqref{2.14} holds.
Therefore, it follows from Theorem \ref{nltm} that, for the initial data $(\phi^\eta_0,\boldsymbol{u}^\eta_0,h^\eta_0)$, problem \eqref{nl} admits a unique classical solution  $(\phi^\eta,\boldsymbol{u}^\eta, h^\eta)$ in $[0,T_*]\times \mathbb{R}^n$ satisfying the  estimates in \eqref{key1kk}, where $T_*$ is independent of $\eta$ and $\boldsymbol{\psi}^\eta=\frac{a\delta}{\delta-1}\nabla h^\eta$.

Moreover, $\phi^\eta$ is locally uniformly positive, as shown below, which can be proved by the same argument for Lemma 3.12 in \cite{dxz}.
\begin{lem}\label{phieta}
For any $R_0>0$ and $\eta\in (0,1)$, there exists a constant $C(R_0)$ independent of $\eta$ such that
\begin{equation*}
\phi^\eta(t,x)\geq C(R_0)^{-1}>0 \qquad \text{for all $(t,\boldsymbol{x})\in [0,T_*]\times B_{R_0}$}.
\end{equation*}
\end{lem}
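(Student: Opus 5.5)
We control $(\phi^\eta)^{\iota}$ (which blows up exactly where $\phi^\eta\to0$) in $L^\infty(B_{R_0})$ uniformly in $\eta$, in the spirit of \cite[Lemma 3.12]{dxz}. Since $\iota<0$, an upper bound $(\phi^\eta)^{\iota}\le C(R_0)$ on $[0,T_*]\times B_{R_0}$ is equivalent to the claimed lower bound $\phi^\eta\ge C(R_0)^{1/\iota}$.

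\textbf{Step 1: gradient bound.} The uniform estimates \eqref{key1kk} hold for $(\phi^\eta,\boldsymbol{u}^\eta,h^\eta)$ on $[0,T_*]$. Moreover, the argument for $\|\nabla\sqrt{h}\|_{L^{2n}}$ in Lemma \ref{varphi} (now with $\boldsymbol{w}=\boldsymbol{u}^\eta$, $g=h^\eta$) gives $\|\nabla\sqrt{h^\eta}\|_{L^{2n}}\le Cc_0$. Since $h^\eta=(\phi^\eta)^{2\iota}$, we have $\sqrt{h^\eta}=(\phi^\eta)^{\iota}$. Hence
\begin{equation*}
\sup_{t\in[0,T_*]}\|\nabla(\phi^\eta)^\iota(t)\|_{L^{2n}}\le C,
\end{equation*}
uniformly in $\eta$. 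As $2n>n$, Morrey's embedding on the ball gives
\begin{equation*}
\|(\phi^\eta)^\iota\|_{L^\infty(B_{R_0})}\le C(R_0)\big(\|(\phi^\eta)^\iota\|_{L^{2}(B_{R_0})}+\|\nabla(\phi^\eta)^\iota\|_{L^{2n}}\big).
\end{equation*}
It therefore remains to bound $\|(\phi^\eta)^\iota\|_{L^2(B_{R_0})}$ uniformly in $\eta$ and $t$.

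\textbf{Step 2: low-order norm.} From $\eqref{nl}_1$, the function $f=(\phi^\eta)^\iota$ satisfies the transport equation
\begin{equation*}
f_t+\boldsymbol{u}^\eta\cdot\nabla f+\iota(\gamma-1)f\,\diver\boldsymbol{u}^\eta=0.
\end{equation*}
Integrating along the particle paths $X(t;\boldsymbol{x})$ of $\boldsymbol{u}^\eta$ yields
\begin{equation*}
f(t,X(t;\boldsymbol{x}))=f_0(\boldsymbol{x})\exp\Big(-\iota(\gamma-1)\int_0^t\diver\boldsymbol{u}^\eta\,\mathrm{d}s\Big).
\end{equation*}
The exponent is bounded because $\int_0^{T_*}\|\nabla\boldsymbol{u}^\eta\|_{L^\infty}\,\mathrm{d}s\le C$ by the $H^3$ bound in \eqref{key1kk}. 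For the same reason, $|X(t;\boldsymbol{x})-\boldsymbol{x}|\le\int_0^{T_*}\|\boldsymbol{u}^\eta\|_{L^\infty}\le C_1$, so every point of $B_{R_0}$ at time $t$ is the image of a point of $B_{R_0+C_1}$ at time $0$. Consequently
\begin{equation*}
\sup_{[0,T_*]\times B_{R_0}}f\le C\sup_{B_{R_0+C_1}}(\phi_0+\eta)^\iota\le C\sup_{B_{R_0+C_1}}\phi_0^\iota.
\end{equation*}
The right-hand side is finite because $\phi_0$ is continuous and strictly positive on compact sets (indeed $\rho_0>0$, and $\phi_0\in C$ by $\nabla\phi_0\in H^2$). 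This is independent of $\eta$, since $(\phi_0+\eta)^\iota\le\phi_0^\iota$.

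The characteristic argument alone thus already gives the claim, and Step 1 serves only as a cross-check. The main point needing care is the uniform-in-$\eta$ control of the flow displacement and of $\int_0^{T_*}\|\diver\boldsymbol{u}^\eta\|_{L^\infty}$, which both follow from \eqref{key1kk} via the Sobolev embedding $H^3\hookrightarrow W^{1,\infty}$.
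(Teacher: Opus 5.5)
Your argument is correct and is essentially the particle-path proof that the paper delegates to \cite[Lemma 3.12]{dxz}. The $\eta$-uniform $H^3$ bound on $\boldsymbol{u}^\eta$ from \eqref{key1kk} controls both the amplification factor $\exp\big(|\iota|(\gamma-1)\int_0^{T_*}\|\diver\boldsymbol{u}^\eta\|_{L^\infty}\,\mathrm{d}s\big)$ and the displacement of characteristics, while $(\phi_0+\eta)^\iota\le\phi_0^\iota$ together with the continuity and positivity of $\phi_0$ on $\overline{B_{R_0+C_1}}$ makes the bound independent of $\eta$, so Step 1 is superfluous and can simply be dropped.
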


\smallskip
\textbf{2. Passing to the limit $\eta\to 0$}. First, it follows from  the $\eta$-independent estimates in \eqref{key1kk} and Lemma \ref{aubin-lions}  that,
 for any $R>0$, there exists a subsequence (still denoted by ($\phi^\eta,\boldsymbol{u}^\eta, h^\eta,\boldsymbol{\psi}^\eta$)) converging to a limit  $(\phi,\boldsymbol{u}, h,\boldsymbol{\psi})$ as follows:
\begin{equation} \label{nonlinearstrongconvergence}
\begin{aligned}
\phi^\eta-\eta\to \phi \ \ &\text{in} \ \ C([0,T_*];D^1(B_R)), \quad &&\boldsymbol{\psi}^\eta\to \boldsymbol{\psi}\ \ \text{in} \ \ C([0,T_*];D^{1,n}(B_R)),\\
\boldsymbol{u}^\eta\to \boldsymbol{u} \ \ &\text{in} \ \ C([0,T_*]; H^2(B_R)),  
\quad &&\hspace{1.6mm} h^\eta\to h\ \ \text{in} \ \ C([0,T_*];H^2(B_R)).
\end{aligned}
\end{equation}
Moreover, since $\boldsymbol{\psi}^\eta=\frac{a\delta}{\delta-1}\nabla h^\eta$, we see that $\boldsymbol{\psi}=\frac{a\delta}{\delta-1}\nabla h$.

Second, the $\eta$-independent   estimate \eqref{key1kk} also yields that 
there exists a subsequence (still denoted by) $(\phi^\eta,\boldsymbol{u}^\eta, \nabla \sqrt{h^\eta},\boldsymbol{\psi}^\eta)$ converging to $(\phi,\boldsymbol{u}, \boldsymbol{\tilde{h}},\boldsymbol{\psi})$ as follows:
\begin{equation}\label{key}
\begin{aligned}
\phi^\eta-\eta\rightharpoonup \phi \qquad &\text{weakly}^* \ \ \!\text{in } L^\infty([0,T_*];D^1(\mathbb{R}^n)\cap D^3(\mathbb{R}^n)),\\
\boldsymbol{u}^\eta\rightharpoonup \boldsymbol{u} \qquad &\text{weakly} \ \,\, \hspace{1mm} \text{in } L^2([0,T_*];H^4(\mathbb{R}^n)),\\
\boldsymbol{\psi}^\eta\rightharpoonup \boldsymbol{\psi}  \qquad &\text{weakly}^* \ \ \!\text{in } L^\infty([0,T_*];L^\infty(\mathbb{R}^n)\cap D^{1,n}(\mathbb{R}^n)\cap D^2(\mathbb{R}^n)),\\
\nabla \sqrt{h^\eta}\rightharpoonup \boldsymbol{\tilde{h}}  \qquad &\text{weakly}^* \ \ \!\text{in } L^\infty([0,T_*];L^{2n}(\mathbb{R}^n)),\\
\phi_t^\eta\rightharpoonup \phi_t \qquad &\text{weakly}^* \ \ \!\text{in } L^\infty([0,T_*];H^2(\mathbb{R}^n)),\\
(\boldsymbol{u}_t^\eta,\boldsymbol{\psi}_t^\eta)\rightharpoonup (\boldsymbol{u}_t,\boldsymbol{\psi}_t) \qquad &\text{weakly}^* \ \ \!\text{in } L^\infty([0,T_*];H^1(\mathbb{R}^n)).
\end{aligned}
\end{equation}
Then, by the lower semi-continuity of weak convergences, $(\phi,\boldsymbol{u},\boldsymbol{\psi})$ satisfies the corresponding estimates as in \eqref{key1kk} except the estimate on $\nabla \sqrt{h}$ and the  $h$-weighted ones on $\boldsymbol{u}$.
In fact, it follows from  the $\eta$-independent estimate \eqref{key1kk} for $(\phi^\eta,\boldsymbol{u}^\eta, \boldsymbol{\psi}^\eta)$,  the facts that $h\geq (2c_0)^{-1}$ and $h^\eta\geq (2c_0)^{-1}$, and  the convergences shown in \eqref{nonlinearstrongconvergence}--\eqref{key} that $\boldsymbol{\tilde{h}}=\nabla \sqrt{h}$  and
\begin{equation*} 
\begin{aligned}
\sqrt{h^
\eta}(\nabla \boldsymbol{u}^\eta,\nabla \boldsymbol{u}^\eta_t)\rightharpoonup \sqrt{h}(\nabla \boldsymbol{u},\nabla \boldsymbol{u}_t)\quad &\text{weakly}^*\ \ \!\text{in}\ \ L^\infty([0,T_*];L^2(\mathbb{R}^n)),\\
h^\eta\nabla^2 \boldsymbol{u}^\eta\rightharpoonup h\nabla^2\boldsymbol{u}\quad &\text{weakly}^*\ \ \!\text{in}\ \ L^\infty([0,T_*];H^1(\mathbb{R}^n)),\\
h^\eta\nabla^2\boldsymbol{u}^\eta\rightharpoonup  h\nabla^2\boldsymbol{u} \quad &\text{weakly}\ \ \hspace{1.5mm} \!\text{in}\ \ L^2([0,T_*];D^1(\mathbb{R}^n) \cap D^2(\mathbb{R}^n)),\\
(h^\eta\nabla^2 \boldsymbol{u}^\eta)_t\rightharpoonup (h\nabla^2\boldsymbol{u})_t\quad &\text{weakly}\ \ \hspace{1.5mm} \!\text{in}\ \ L^2([0,T_*];L^2(\mathbb{R}^n)).
\end{aligned}
\end{equation*}
Hence, the corresponding weighted estimates for $u$ in \eqref{key1kk} still hold for the limit functions, and it is direct to show that $(\phi,\boldsymbol{u},h)$ satisfy the equations in $\eqref{eq:cccq}_1$ and 
\begin{equation*}
h_t+\boldsymbol{u}\cdot \nabla h+(\delta-1)h\mathrm{div}\,\boldsymbol{u}=0\qquad \text{holds pointwise}.
\end{equation*}
Then, denoting $\phi^*=h-\phi^{2\iota}$, we have
\begin{equation*}
\phi^*_t+\boldsymbol{u}\cdot \nabla \phi^*+(\delta-1)\phi^*\diver\boldsymbol{u}=0,\qquad \phi^*(0,\boldsymbol{x})=0 \ \  \text{for $\boldsymbol{x}\in \mathbb{R}^n$},
\end{equation*}
which, along with the characteristic method, implies that $\phi^*=0$. 
Therefore, $h=\phi^{2\iota}$.

Hence, $(\phi,\boldsymbol{u},\boldsymbol{\psi})$ is a weak solution of  problem \eqref{eq:cccq}{\rm--}\eqref{sfanb1}  in the sense of distributions and satisfies system \eqref{eq:cccq} pointwisely. 

\smallskip
\textbf{3. Proof for $\phi^\frac{1}{\gamma-1}\in C([0,T];L^1(\mathbb{R}^n))$.}
Let $f(s)\in C^\infty([0,\infty))$ be a function such that
\begin{equation*}
f(s)>0,\qquad f'(s)\leq 0,\qquad f(s)= 1 \quad\text{for $s\in [0,\frac{1}{2}]$}, \qquad f(s)=e^{-s} \quad\text{for $s\in [1,\infty)$}.
\end{equation*}
Clearly, there exists a generic constant $C >0$ such that  $|f'(s)| \le C f(s)$. Moreover, for any $R>1$, define $f_R(\boldsymbol{x})=f (\tfrac{|\boldsymbol{x}|}{R})$.

Since equation $\eqref{eq:cccq}_1$ holds pointwisely everywhere, we can multiply it  by $f_R(\boldsymbol{x})\phi^\frac{2-\gamma}{\gamma-1}$ and integrate over $\mathbb{R}^n$ to obtain
\begin{equation*} 
\begin{aligned}
\frac{\mathrm{d}}{\mathrm{d}t} \int_{\mathbb{R}^n} \phi^\frac{1}{\gamma-1}  f_R\,\mathrm{d}\boldsymbol{x}&=-\int_{\mathbb{R}^n} \diver (\phi^\frac{1}{\gamma-1} \boldsymbol{u})  f_R\,\mathrm{d}\boldsymbol{x}= \int_{\mathbb{R}^n} \phi^\frac{1}{\gamma-1} \boldsymbol{u} \cdot \boldsymbol{x} f' (\frac{|\boldsymbol{x}|}{R}) \frac{1 }{R|\boldsymbol{x}|}\,\mathrm{d}\boldsymbol{x}\\
&\leq \frac{C}{R}\|\boldsymbol{u}\|_{L^\infty} \int_{\mathbb{R}^n}\phi^\frac{1}{\gamma-1}   f_R\,\mathrm{d}\boldsymbol{x}\leq C\int_{\mathbb{R}^n}\phi^\frac{1}{\gamma-1}   f_R\,\mathrm{d}\boldsymbol{x},    
\end{aligned}
\end{equation*}
which,  along with the Gr\"onwall inequality, yields 
\begin{align*}
\sup_{t\in [0,T]}\int_{\mathbb{R}^n} \phi^\frac{1}{\gamma-1}  f_R\,\mathrm{d}\boldsymbol{x}  \leq C \int_{\mathbb{R}^n} \phi^\frac{1}{\gamma-1}_0  f_R\,\mathrm{d}\boldsymbol{x}\leq C\|\phi^\frac{1}{\gamma-1}_0\|_{L^1} 
\end{align*}
with $C$ a generic constant independent of $R$. 
Note that $\phi^\frac{1}{\gamma-1} f_R \to \phi^\frac{1}{\gamma-1}$ as $R\to \infty$ for all $\boldsymbol{x}\in \mathbb{R}^n$. Thus, by the Fatou lemma (Lemma \ref{Fatou}), we have 
\begin{equation*} 
\sup_{t\in [0,T]} \int_{\mathbb{R}^n} \phi^\frac{1}{\gamma-1}\,\mathrm{d}\boldsymbol{x}   \leq \sup_{t\in [0,T]}\liminf_{R \to \infty}\int_{\mathbb{R}^n} \phi^\frac{1}{\gamma-1}  f_R \,\mathrm{d}\boldsymbol{x}  \leq C\|\phi^\frac{1}{\gamma-1}_0\|_{L^1},
\end{equation*}
which implies that $\phi^\frac{1}{\gamma-1}\in L^\infty([0,T];L^1(\mathbb{R}^n))$. 

To show the time-continuity, we see from $\eqref{eq:cccq}_1$ that
\begin{equation*}
(\phi^\frac{1}{\gamma-1})_t=-\phi^\frac{1}{\gamma-1}\diver\boldsymbol{u}-(a
\delta)^{-1}\phi^\frac{2-\delta}{\gamma-1} \boldsymbol{u}\cdot\boldsymbol{\psi}.  
\end{equation*}
Then, by the regularity properties of $(\phi,\boldsymbol{\psi},\boldsymbol{u})$, we obtain that $(\phi^\frac{1}{\gamma-1})_t\in L^\infty([0,T];L^1(\mathbb{R}^n))$, so that $\phi^\frac{1}{\gamma-1}\in C([0,T];L^1(\mathbb{R}^n))$.

\smallskip
\textbf{4. Uniqueness and time-continuity}. The uniqueness follows from the same procedure as in the proof of Theorem \ref{nltm}, and the time-continuities of $(\phi,\boldsymbol{\psi})$ can be obtained by a similar argument as in Lemma \ref{ls}. Moreover, since the vector $\boldsymbol{\psi}$ is spherically symmetric, we see from  Lemma \ref{Hk-Ck-vector} that
\begin{equation}\label{psi-hi}
(\boldsymbol{\psi},\nabla \phi^{2\iota}) \in C([0,T_*];C(\overline{\mathbb{R}^n})).
\end{equation}

For velocity $\boldsymbol{u}$,  the \textit{a priori} estimates obtained above and Lemmas \ref{ale1} and \ref{triple} imply 
\begin{equation}\label{zheng1}
\begin{aligned}
\boldsymbol{u}\in C([0,T_*]; H^2(\mathbb{R}^n))\cap  L^\infty([0,T_*]; H^3(\mathbb{R}^n)), \qquad \phi^{\iota}\nabla \boldsymbol{u}\in  C([0,T_*]; L^2(\mathbb{R}^n)).
 \end{aligned}
\end{equation}
It then follows from $\eqref{eq:cccq}_2$ that
\begin{equation*}
\phi^{-2\iota} \boldsymbol{u}_t \in L^2([0,T_*];H^2(\mathbb{R}^n)),\quad (\phi^{-2\iota} \boldsymbol{u}_t)_t \in L^2([0,T_*];L^2(\mathbb{R}^n)),
\end{equation*}
which, along with Lemma \ref{triple}, implies that $\phi^{-2\iota} \boldsymbol{u}_t \in C([0,T_*];H^1(\mathbb{R}^3))$. This and the classical elliptic estimates in Lemma \ref{df3} for
\begin{equation*}
\begin{aligned}
aL\boldsymbol{u}&=-\phi^{-2\iota}(\boldsymbol{u}_t+\boldsymbol{u}\cdot\nabla \boldsymbol{u} +\nabla\phi- \boldsymbol{\psi}  \cdot Q(\boldsymbol{u}))
\end{aligned}
\end{equation*}
yield that $\boldsymbol{u}\in C([0,T_*]; H^{3}(\mathbb{R}^n))$ immediately.

Finally, note that 
\begin{equation*}
\phi^{2\iota}\nabla^2 \boldsymbol{u} \in L^\infty([0,T_*]; H^1(\mathbb{R}^n))\cap L^2([0,T_*] ; D^2(\mathbb{R}^n)),\qquad (\phi^{2\iota}\nabla^2 \boldsymbol{u})_t \in  L^2([0,T_*] ; L^2(\mathbb{R}^n)).
\end{equation*}
Thus, Lemma \ref{triple} gives $\phi^{2\iota}\nabla^2 \boldsymbol{u}\in C([0,T_*]; H^1(\mathbb{R}^3))$ which, combined with the facts that $\nabla \phi^{2\iota} \in C([0,T_*];C(\overline{\mathbb{R}^n}))$ and $\boldsymbol{u}\in C([0,T_*]; H^{3}(\mathbb{R}^n))$, yields 
\begin{equation*}
(\phi^{2\iota}\nabla^2 \boldsymbol{u},\phi^{2\iota}\nabla^3 \boldsymbol{u})\in C([0,T_*]; L^2(\mathbb{R}^n)).
\end{equation*}
Finally, $\boldsymbol{u}_t\in C([0,T];H^1(\mathbb{R}^n))$ follows directly from the above, \eqref{psi-hi}--\eqref{zheng1}, the time continuities of $(\phi,\boldsymbol{\psi})$, and $\boldsymbol{u}\in C([0,T_*]; H^{3}(\mathbb{R}^n))$. 
\end{proof}

\subsection{The Proof for Theorem \ref{thm-loc}}
Based on Theorem \ref{th1}, now we are ready to establish the local well-posedness of the regular solution of the original Cauchy problem \eqref{eq:1.1}--\eqref{kelaoxiusi} with \eqref{eqs:CauchyInit}--\eqref{e1.3}  shown in Theorem \ref{thm-loc}.
\begin{proof}[Proof for Theorem \ref{thm-loc}] 
It follows from the initial assumptions  \eqref{id1-high}--\eqref{th78zx} and Theorem \ref{th1} that there exists $T_{*}> 0$ such that problem \eqref{eq:cccq}--\eqref{sfanb1} has a unique regular solution $(\phi,\boldsymbol{u},\boldsymbol{\psi})$ satisfying the regularity property \eqref{er2-high} with $T$ replaced by $T_*$, which implies 
\begin{equation*} 
\phi\in C^1([0,T_{*}]\times \mathbb{R}^n), \qquad (\boldsymbol{u}, \nabla \boldsymbol{u}) \in   C([0,T_{*}]\times \mathbb{R}^n).
\end{equation*}
Set $\rho=(\frac{\gamma-1}{A\gamma}\phi)^{\frac{1}{\gamma-1}}>0$ with $\rho(0,\boldsymbol{x})=\rho_0$. Due to $\boldsymbol{\psi}=\frac{a\delta}{\delta-1}\nabla h$ and $h=\phi^{2\iota}$, we see that  $\boldsymbol{\psi}=\frac{\delta}{\delta-1}\nabla \rho^{\delta-1}$. Hence, based on the above regularity property and relations of solution $(\phi,\boldsymbol{u},\boldsymbol{\psi})$,  multiplying $\eqref{eq:cccq}_1$ by
\begin{equation*}
\frac{\partial \rho}{\partial \phi}(t,\boldsymbol{x})=\frac{1}{\gamma-1}\Big(\frac{\gamma-1}{A\gamma}\Big)^{\frac{1}{\gamma-1}}\phi^{\frac{2-\gamma}{\gamma-1}}(t,\boldsymbol{x})
\end{equation*}
yields equation $\eqref{eq:1.1}_1$, while multiplying $\eqref{eq:cccq}_2$ by $\rho$ gives equation  \eqref{eq:1.1}.

Therefore, we have shown that $(\rho,\boldsymbol{u})$ satisfies problem \eqref{eq:1.1}--\eqref{kelaoxiusi} with \eqref{eqs:CauchyInit}--\eqref{e1.3}  in the sense of distributions and has the regularity properties shown in Definition \ref{cjk} and \eqref{er2-high}. 
\end{proof}

\appendix

\section{Some Basic Lemmas}\label{appA}

We present some useful lemmas in this appendix, which are used frequently in the previous sections. The first lemma is on the classical  Sobolev embedding theorem.  

\begin{lem}[\cite{af,CZZ1}]\label{ale1}
Assume that $\Omega \subset \mathbb{R}^n$ $(n\in \mathbb{N}^*)$ is a domain with smooth boundary. Let  $f\in W^{k,p}(\Omega)$ for some $k\in \mathbb{N}^*$ and $p\in [1,\infty]$. 
\begin{enumerate}
\item[$\mathrm{(i)}$] 
Let $kp\leq n$. Then $W^{k,p}(\Omega)\hookrightarrow L^s(\Omega)$ for any $s\in\big[p,\frac{np}{n-kp}\big]$ if $kp<n$ and any $s\in[p,\infty)$ if $kp=n$, and there exists  $C_1>0$ depending only on $(k,p,s,n,\Omega)$ such that 
\begin{equation*}
\|f\|_{L^s(\Omega)}\leq C_1\|f\|_{W^{k,p}(\Omega)}.
\end{equation*}
In particular, if $\Omega=\mathbb{R}^n$, $kp<n$, and $f\in D^{k,p}(\mathbb{R}^n)\cap L^{\frac{np}{n-kp}}(\mathbb{R}^n)$, or $f\in D^{k,p}(\mathbb{R}^n)$  and $f\to 0$ as $|\boldsymbol{x}|\to \infty$, then 
\begin{equation*} 
\|f\|_{L^{\frac{np}{n-kp}}(\mathbb{R}^n)}\leq C_1\|\nabla^k f\|_{L^p(\mathbb{R}^n)}.
\end{equation*}
\item[$\mathrm{(ii)}$] 
Let $(k,p)=(n,1)$. Then $W^{n,1}(\Omega)\hookrightarrow C(\overline{\Omega})$ and  there exists $C_2>0$ depending only on $(n,\Omega)$ such that
\begin{equation*}
\|f\|_{L^\infty(\Omega)}\leq C_2\|f\|_{W^{n,1}(\Omega)}. 
\end{equation*}
\item[$\mathrm{(iii)}$] 
Let $kp>n$. Then $W^{k,p}(\Omega)\hookrightarrow C^\ell(\overline\Omega)$ for all $\ell\in \mathbb{N}$ and $0\leq \ell<k-n/p$,  and  there exists $C_3>0$ depending only on $(k,p,\ell,n,\Omega)$ such that
\begin{equation*}
\max_{0\leq j\leq \ell}\|\nabla ^j f\|_{L^\infty(\Omega)}\leq C_3\|f\|_{W^{k,p}(\Omega)}, 
\end{equation*}
where $C^\ell(\overline\Omega)$ $(\ell\in \mathbb{N})$ denotes the space of all functions $f$ for which $\nabla^j f$ $(0\leq j\leq \ell)$ are bounded and uniformly continuous in $\Omega$.  In particular, the following inequality holds for $f=f(r)\in H^1(0,R)$ $(R>0)${\rm :}
\begin{equation*} 
\|f\|_{L^\infty(0,R)}^2 \leq (1+R^{-1})\|f\|_{L^2(0,R)}^2+\|f_r\|_{L^2(0,R)}^2.
\end{equation*}
\end{enumerate}
Moreover, if $\Omega=\mathbb{R}^n$, then the above constants $(C_1,C_2,C_3)$ are independent of $\Omega$.
\end{lem}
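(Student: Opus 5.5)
The lemma just stated is the classical Sobolev embedding theorem, so the plan is to reduce each part to standard arguments rather than prove it from scratch. The first step is the extension step: since $\Omega$ has smooth boundary, there is a bounded extension operator $E:W^{k,p}(\Omega)\to W^{k,p}(\mathbb{R}^n)$ with $\|Ef\|_{W^{k,p}(\mathbb{R}^n)}\le C(k,p,n,\Omega)\|f\|_{W^{k,p}(\Omega)}$. This reduces (i)--(iii) to the whole-space case, with constants depending on $\Omega$ only through $E$. That is also why the constants become independent of $\Omega$ when $\Omega=\mathbb{R}^n$: there $E$ is the identity.

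For (i) on $\mathbb{R}^n$, I would first prove the Gagliardo--Nirenberg--Sobolev inequality $\|f\|_{L^{n/(n-1)}}\le C\|\nabla f\|_{L^1}$ for $f\in C_c^\infty$. The argument writes $|f(\boldsymbol{x})|\le\int|\partial_i f|\,\mathrm{d}x_i$ in each coordinate and applies the generalized H\"older inequality iteratively. Applying this to $|f|^\theta$ gives the $L^{np/(n-p)}$ bound, and iterating in $k$ gives the bound for $kp<n$. Interpolation with the trivial $L^p$ bound then yields every $s\in[p,np/(n-kp)]$. For the critical case $kp=n$, I would combine the embedding into $L^{q}$ for every finite $q$ (apply the subcritical case with a slightly smaller $p$ on a localized function) with interpolation against $L^p$.

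The homogeneous statement $\|f\|_{L^{np/(n-kp)}}\le C\|\nabla^kf\|_{L^p}$ for $f\in D^{k,p}$ with decay at infinity needs an approximation step: mollify, then cut off with $\chi(\cdot/R)$. The hypothesis $f\in L^{np/(n-kp)}$, or $f\to0$ as $|\boldsymbol{x}|\to\infty$, is exactly what makes the lower-order commutator terms coming from the cutoff vanish as $R\to\infty$. I expect this step, controlling the cutoff error terms using only decay, to be the main technical obstacle. I would handle it by applying the $k=1$ case inductively to $\nabla^{k-1}f$, where decay is inherited through the Sobolev bounds.

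For (ii), I would write $f(\boldsymbol{x})=\int_{-\infty}^{x_1}\!\cdots\!\int_{-\infty}^{x_n}\partial_1\cdots\partial_n f$ for compactly supported smooth $f$. This gives $\|f\|_\infty\le\|\partial_1\cdots\partial_nf\|_{L^1}$, and continuity follows by density. For (iii), Morrey's inequality $|f(\boldsymbol{x})-f(\boldsymbol{y})|\le C|\boldsymbol{x}-\boldsymbol{y}|^{1-n/p}\|\nabla f\|_{L^p}$ for $p>n$, combined with (i) applied to derivatives to lower the integrability index appropriately, gives the $C^\ell$ bounds. The one-dimensional inequality on $(0,R)$ I would check directly. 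Write $f(r)^2=f(r_0)^2+2\int_{r_0}^r ff_r$ and choose $r_0$ with $f(r_0)^2\le R^{-1}\|f\|_{L^2(0,R)}^2$, which is possible by the mean-value property. Then Young's inequality $2|ff_r|\le f^2+f_r^2$ gives $\|f\|_\infty^2\le(1+R^{-1})\|f\|_2^2+\|f_r\|_2^2$. Finally, the reference to \cite{af} covers the remaining standard details.
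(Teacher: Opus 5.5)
The paper gives no proof of this lemma; it only cites \cite{af,CZZ1}. Your outline follows the standard route and is fine for the inhomogeneous part of (i), for (ii), for (iii), and for the one-dimensional bound on $(0,R)$. The homogeneous inequality $\|f\|_{L^{np/(n-kp)}(\mathbb{R}^n)}\le C_1\|\nabla^k f\|_{L^p(\mathbb{R}^n)}$, however, has a genuine gap.

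\textbf{The cutoff argument fails under the decay hypothesis.} Already for $k=1$, the hypothesis $f\to0$ gives only the bound $\|f\nabla\chi(\cdot/R)\|_{L^p}\le C\big(\sup_{|\boldsymbol{x}|\ge R}|f|\big)R^{n/p-1}$. Since $p<n$ and the decay carries no rate, this bound need not tend to zero. The commutator does in fact vanish for such $f$, but only because $f\in L^{np/(n-p)}$, which is exactly what you are trying to prove. Under the hypothesis $f\in L^{np/(n-p)}$ with $k=1$ your argument is fine, because $\|f\nabla\chi(\cdot/R)\|_{L^p}\le\|f\|_{L^{np/(n-p)}(R<|\boldsymbol{x}|<2R)}\|\nabla\chi\|_{L^n}\to0$.

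\textbf{The proposed induction has no base.} Applying the $k=1$ case to $\nabla^{k-1}f$ needs decay or integrability of $\nabla^{k-1}f$, and neither hypothesis supplies it. A priori $\nabla^{k-1}f$ could be a nonzero constant plus an $L^{np/(n-p)}$ function, so "decay is inherited through the Sobolev bounds" is unjustified. For $k\ge2$ the same problem affects the cutoff terms $\nabla^j f\,\nabla^{k-j}\chi(\cdot/R)$, $j<k$, whose integrability is unknown.

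\textbf{The missing idea} is to separate the structure of $D^{k,p}$ from the decay, and to use the decay only at the very end.
For $k=1$ and $f\to0$ one can truncate. The function $(|f|-\varepsilon)_+$ has compact support and satisfies $|\nabla(|f|-\varepsilon)_+|\le|\nabla f|$. The compactly supported Gagliardo--Nirenberg--Sobolev inequality gives $\|(|f|-\varepsilon)_+\|_{L^{np/(n-p)}}\le C\|\nabla f\|_{L^p}$, and monotone convergence as $\varepsilon\downarrow0$ finishes the argument.
For general $k$, use the scale-invariant Sobolev--Poincar\'e inequality $\|g-g_{B_R}\|_{L^{np/(n-p)}(B_R)}\le C\|\nabla g\|_{L^p(B_R)}$. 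It gives $|g_{B_R}-g_{B_{2R}}|\le CR^{1-n/p}\|\nabla g\|_{L^p}$, so the averages converge to a constant $c$, and then $\|g-c\|_{L^{np/(n-p)}}\le C\|\nabla g\|_{L^p}$ for every $g\in D^{1,p}$ with $p<n$.
Apply this first to $\nabla^{k-1}f$, then to $\nabla^{k-2}f$ minus a linear function, and so on. The exponents $np/(n-jp)$ with $j<k$ stay below $n$ because $kp<n$. This yields $f=P+g$, where $P$ is a polynomial of degree less than $k$ and $\|g\|_{L^{np/(n-kp)}}\le C\|\nabla^k f\|_{L^p}$. Either hypothesis ($f\in L^{np/(n-kp)}$ or $f\to0$) then forces $P\equiv0$.

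\textbf{Two smaller points.}
In the critical case $kp=n$, localising with a smaller exponent is impossible when $(k,p)=(n,1)$. There, (ii) together with interpolation against $L^1$ gives every $s$.
Your bounded extension operator exists for bounded smooth domains, and by Stein's theorem for unbounded domains with uniformly Lipschitz boundary. It does not exist for arbitrary unbounded smooth domains. For $\Omega=\{(x,y)\in\mathbb{R}^2:\ |y|<(1+x^2)^{-3/2}\}$, the function $(1+x^2)^{1/4}$ lies in $W^{1,2}(\Omega)$ but not in $L^4(\Omega)$. So the lemma must be read with such a restriction on $\Omega$; that restriction covers every use of the lemma in the paper.
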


The second lemma concerns the well-known Gagliardo--Nirenberg inequality.  

\begin{lem}[\cite{ln}]\label{GN-ineq}
Assume that  $f\in L^{p}(\mathbb{R}^n)\cap D^{\ell,q}(\mathbb{R}^n)$ for   $1 \leq p,  q \leq \infty$.  Let real numbers $(\vartheta,s)$ and  natural numbers $(n,\ell,j)$ satisfy
\begin{equation*}
\frac{1}{{s}} = \frac{j}{n} + \Big( \frac{1}{q} - \frac{\ell}{n} \Big) \vartheta + \frac{1 - \vartheta}{p}, \qquad \frac{j}{\ell} \leq \vartheta \leq 1.
\end{equation*}
Then $f\in D^{j,{s}}(\mathbb{R}^n)$ and  there exists $C>0$ depending only on $(\ell,n,j,p,q,\vartheta)$ such that
\begin{equation*} 
\| \nabla^{j} f \|_{L^{{s}}} \leq C \| \nabla^{\ell} f \|_{L^{q}}^{\vartheta} \| f \|_{L^{p}}^{1 - \vartheta}.
\end{equation*}
Moreover, if $j = 0$, $\ell q < n$, and $p = \infty$, then it is necessary to make the additional assumption that either f tends to zero at infinity or that f lies in $L^{\tilde{s}}(\mathbb{R}^n)$ for some finite $\tilde{s} > 0${\rm;} if $1 < q < \infty$ and $\ell -j -n/q$ is a non-negative integer, then it is necessary to assume also that $\vartheta \neq 1$.
\end{lem}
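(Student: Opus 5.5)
The Gagliardo--Nirenberg inequality of Lemma \ref{GN-ineq} is classical (Nirenberg's 1959 interpolation theorem). The plan is to reduce to the two extreme cases $\vartheta=j/\ell$ and $\vartheta=1$, and then cover the intermediate range by Hölder interpolation.

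\textbf{The endpoint $\vartheta=1$.} Here the scaling relation reads $\frac1s=\frac jn+\frac1q-\frac{\ell}{n}$. This is exactly the homogeneous Sobolev embedding $D^{\ell-j,q}\hookrightarrow L^s$ applied to $\nabla^j f$. For $q\ge1$ with $(\ell-j)q<n$ it is Lemma \ref{ale1}(i) in the $\mathbb{R}^n$ form. The decay hypothesis (either $f\to0$ at infinity or $f\in L^{\tilde s}$) is needed to exclude polynomials: it lets us represent $\nabla^j f$ by a Riesz potential of $\nabla^\ell f$ and apply Hardy--Littlewood--Sobolev, or Gagliardo's argument when $q=1$. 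This is also why the exceptional case ($\ell-j-n/q\in\mathbb{N}$, $1<q<\infty$) must exclude $\vartheta=1$: the endpoint embedding into $L^\infty$ fails there.

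\textbf{The endpoint $\vartheta=j/\ell$.} This is the pure interpolation inequality $\|\nabla^j f\|_{L^s}\le C\|\nabla^\ell f\|_{L^q}^{j/\ell}\|f\|_{L^p}^{1-j/\ell}$ with $\frac1s=\frac{j}{\ell q}+\frac{\ell-j}{\ell p}$. I would prove it for $j=1$, $\ell=2$ first. Write $|\partial_i f|^{s}=\partial_i f\,|\partial_if|^{s-2}\partial_i f$ and integrate by parts once, moving the derivative onto $f$. Hölder's inequality then gives $\|\partial_i f\|_s^s\le C\|f\|_p\|\partial_i^2 f\|_q\|\partial_i f\|_s^{s-2}$, which closes because $\frac1p+\frac1q+\frac{s-2}{s}=1$. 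The case $s<2$, or $p=\infty$ and similar endpoints, needs a regularization $(|\partial_if|^2+\varepsilon)^{(s-2)/2}$. General $(j,\ell)$ then follows by induction on $\ell$, feeding the $(1,2)$ inequality into itself and optimizing the Hölder exponents. The inhomogeneous weights combine multiplicatively, because each estimate is scale-invariant under $f(\lambda\cdot)$.

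\textbf{Intermediate $\vartheta$.} For $j/\ell<\vartheta<1$, choose an intermediate exponent $\tilde s$ and apply Hölder to $\nabla^j f$ between the $\vartheta=j/\ell$ bound and the $\vartheta=1$ bound. The scaling identity matches the exponents automatically. Equivalently, one can interpolate via a Sobolev embedding of an intermediate-order derivative. The constant depends only on $(\ell,n,j,p,q,\vartheta)$, as required.

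\textbf{Main obstacle.} I expect the main difficulty to be the endpoint bookkeeping: $p$ or $q$ equal to $1$ or $\infty$, and the borderline case $\ell-j-n/q\in\mathbb{N}$. There one has to check that the constructions avoid the failing embedding $W^{n/q,q}\not\hookrightarrow L^\infty$. I would handle these cases by density and regularization, using the decay hypothesis when $j=0$ and $p=\infty$. In the paper itself, though, I would simply cite \cite{ln} rather than reproduce this argument.
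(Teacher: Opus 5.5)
\textbf{Gap in the intermediate-$\vartheta$ step.} You interpolate by H\"older between the $\vartheta=j/\ell$ bound and the $\vartheta=1$ bound, and you identify the latter with the embedding $D^{\ell-j,q}\hookrightarrow L^{s_1}$, $\frac{1}{s_1}=\frac1q-\frac{\ell-j}{n}$. That endpoint is a genuine Lebesgue estimate only when $(\ell-j)q<n$ (or $q=1$, $\ell-j=n$). If $(\ell-j)q>n$, then $\frac{1}{s_1}<0$ and there is no $L^{s_1}$ endpoint at all. If $(\ell-j)q=n$ with $1<q<\infty$, the endpoint is the false bound into $L^\infty$, as you yourself note. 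In both cases there is nothing to interpolate against, and ``density and regularization'' cannot produce an endpoint that is missing or false.

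This is not a marginal case: the paper invokes the lemma almost exclusively in this regime. Examples are $\|\boldsymbol{u}\|_{L^\infty}\le C\|\boldsymbol{u}\|_{L^2}^{\frac{4-n}{4}}\|\nabla^2\boldsymbol{u}\|_{L^2}^{\frac n4}$ in Lemma~\ref{lemma66} and its analogue \eqref{776'} for $\nabla^2\boldsymbol{u}$ (here $\ell q=4>n$). Another is $\|\nabla\boldsymbol{u}\|_{L^\infty}\le C\|\nabla\boldsymbol{u}\|_{L^2}^{\frac{4-n}{n+4}}\|\nabla^2\boldsymbol{u}\|_{L^4}^{\frac{2n}{n+4}}$ in \eqref{ur-infty}, where $\ell=1$ and $q=4>n$. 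A third is $\|f\|_{L^\infty}\le C\|f\|_{L^1}^{\frac{4-n}{n+4}}\|\nabla^2 f\|_{L^2}^{\frac{2n}{n+4}}$ in the proof of Lemma~\ref{initial3}. Finally, for $n=2$ the Ladyzhenskaya-type bound $\|g\|_{L^4}\le C\|g\|_{L^2}^{1/2}\|\nabla g\|_{L^2}^{1/2}$ in Lemma~\ref{lemma66} is exactly the borderline case.

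\textbf{What is missing} is an independent $L^\infty$ endpoint. In the supercritical case, start from the bounded-domain estimate $\|g\|_{L^\infty(B_1)}\le C(\|g\|_{L^p(B_1)}+\|\nabla^{\ell}g\|_{L^q(B_1)})$. This follows from Sobolev--Morrey on $B_1$, with a polynomial projection to remove the intermediate derivatives. Rescaling gives $|f(\boldsymbol{x})|\le C(R^{-n/p}\|f\|_{L^p}+R^{\ell-n/q}\|\nabla^\ell f\|_{L^q})$ for all $R>0$, and you then optimize in $R$. For $j=0$ this yields the case $\frac1s=0$, i.e.\ $\vartheta=\vartheta_\infty:=\frac{n/p}{\ell-n/q+n/p}$. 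The H\"older interpolation must then be done between $\vartheta=j/\ell$ and $\vartheta_\infty$, not $\vartheta=1$.

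In the borderline case, use Gagliardo's device of applying $\|g\|_{L^{n/(n-1)}}\le C\|\nabla g\|_{L^1}$ to a power of $f$. For $n=2$ this reads $\|f\|_{L^4}^2=\|f^2\|_{L^2}\le C\|\nabla(f^2)\|_{L^1}\le 2C\|f\|_{L^2}\|\nabla f\|_{L^2}$. Alternatively, a Littlewood--Paley splitting with Bernstein's inequality and an optimized cutoff frequency handles both regimes at once.

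A smaller point: your integration by parts for $(j,\ell)=(1,2)$ closes only when $s\ge 2$, because H\"older must place $|\partial_i f|^{s-2}$ in $L^{s/(s-2)}$. For $s<2$ (e.g.\ $p=q=1$) the $\varepsilon$-regularization leaves a factor $\varepsilon^{(s-2)/2}$ that blows up, so a different argument (e.g.\ interval covering) is needed.

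Citing \cite{ln}, as the paper does, is of course enough. But the sketch itself does not prove the cases the paper actually uses.
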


The third lemma is on the fundamental theorem of calculus. This lemma plays an important role in justifying the procedure of integration by parts in spherical coordinates.
\begin{lem}[\cite{realrudin}]\label{calculus}
The following statements hold{\rm:}
\begin{enumerate}
\item[{\rm(i)}] Let $l_1\in (0,\infty)$ and $l_2\in [0,\infty)$. Assume that $f(r)\in W^{1,1}(0,l_1)$ and $g(r)\in L^p(l_2,\infty)\cap D^{1,1}(l_2,\infty)$ for some $p\in [1,\infty)$.  Then, for any $r_0,r_1\in [0,l_1]$ and $r_2\in [l_2,\infty)$,
\begin{equation}\label{newton}
f(r_1)=f(r_0)+\int_{r_0}^{r_1} f_r\,\mathrm{d}r,\qquad g(r_2)=-\int_{r_2}^{\infty} g_r\,\mathrm{d}r.
\end{equation}
In particular, $g(r)\to 0$ as $r\to \infty$ and the following estimates hold{\rm:}
\begin{equation}\label{newton2}
\|f\|_{L^\infty(0,l_1)}\leq \|f_r\|_{L^1(0,l_1)} \quad \text{if $f(0)=0$},\qquad\,\, \|g\|_{L^\infty(l_2,\infty)}\leq \|g_r\|_{L^1(l_2,\infty)}.
\end{equation}
\item[{\rm(ii)}] If $f$ is a function such that $r^m(f_r,\frac{f}{r}) \in L^1(I)$, then
\begin{equation}
\int_0^\infty (r^m f)_r\,\mathrm{d}r=0.
\end{equation}
\end{enumerate}
\end{lem}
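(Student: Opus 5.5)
The statement to prove is Lemma \ref{calculus}. I would treat parts (i) and (ii) separately, reducing everything to the one-dimensional fact that $W^{1,1}$ functions on an interval have absolutely continuous representatives.

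\textbf{Part (i), the function $f$.} Since $f\in W^{1,1}(0,l_1)$, the standard characterization (approximate by smooth functions, or use $\int f\varphi'=-\int f_r\varphi$ against test functions) gives an absolutely continuous representative. This yields $f(r_1)=f(r_0)+\int_{r_0}^{r_1}f_r\,\mathrm{d}r$ for all $r_0,r_1\in[0,l_1]$, with values at $0$ and $l_1$ defined by continuous extension. Taking $r_0=0$ and $f(0)=0$ then gives $|f(r_1)|\le\|f_r\|_{L^1(0,l_1)}$.

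\textbf{Part (i), the function $g$.} On each bounded interval $(l_2,R)$ we have $g\in W^{1,1}$, because $g_r\in L^1$ and $g\in L^p\subset L^1_{\rm loc}$. Hence $g(r_2)=g(R)-\int_{r_2}^R g_r\,\mathrm{d}r$. Since $g_r\in L^1(l_2,\infty)$, the limit $L:=\lim_{R\to\infty}g(R)$ exists by the Cauchy criterion. If $L\neq0$, then $|g|\ge |L|/2$ on some $[R_0,\infty)$, contradicting $g\in L^p(l_2,\infty)$ with $p<\infty$. So $L=0$, which gives the representation formula, the decay $g(r)\to0$, and the bound $\|g\|_{L^\infty}\le\|g_r\|_{L^1}$.

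\textbf{Part (ii).} Set $F=r^mf$. Then $F_r=r^mf_r+m r^{m-1}f = r^m f_r + m\, r^m\frac{f}{r}\in L^1(I)$ by hypothesis, so $F$ is absolutely continuous on every $[\epsilon,R]$. The limits $F(0^+)$ and $F(\infty)$ therefore exist, and $\int_0^\infty F_r\,\mathrm{d}r=F(\infty)-F(0^+)$. It remains to show both limits vanish.

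\emph{At infinity.} Note $r^{m-1}f=r^m\frac{f}{r}\in L^1(1,\infty)$, so $F/r\in L^1(1,\infty)$. A nonzero limit of $F$ would force $F/r\sim c/r$, which is not integrable. Hence $F(\infty)=0$.

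\emph{At the origin.} Again $F/r=r^{m-1}f\in L^1(0,1)$. If $F(0^+)=c\neq0$, then $|F/r|\ge |c|/(2r)$ near $0$, which is not integrable. Hence $F(0^+)=0$.

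\textbf{Main obstacle.} The only delicate point is the boundary behavior in (ii): the integrability of $f/r$ weighted by $r^m$ must be converted into vanishing of $r^mf$ at both ends. The argument is the $1/r$ non-integrability at $0$ and at $\infty$ described above.
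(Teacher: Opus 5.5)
Your proof is correct. Part (i) is essentially the paper's argument: the fundamental theorem of calculus on bounded intervals, then $g\in L^p$ with $p<\infty$ rules out a nonzero limit at infinity. You are in fact slightly more careful, since you justify the existence of $\lim_{R\to\infty}g(R)$ by the Cauchy criterion from $g_r\in L^1$, which the paper takes for granted.

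Part (ii) is where you genuinely diverge. The paper lifts $f$ to the radial vector field $\boldsymbol{f}(\boldsymbol{x})=f(r)\frac{\boldsymbol{x}}{r}$ and rewrites $\int_0^\infty(r^mf)_r\,\mathrm{d}r=\omega_n^{-1}\int_{\mathbb{R}^n}\diver\boldsymbol{f}\,\mathrm{d}\boldsymbol{x}$. It then uses Lemma \ref{lemma-initial} and the radial embedding of Lemma \ref{lemma-L6} to place $\boldsymbol{f}$ in $L^{\frac{n}{n-1}}(\mathbb{R}^n)\cap D^{1,1}(\mathbb{R}^n)$. Finally it removes the flux at infinity with a cutoff $\varphi_R$ and H\"older's inequality on the annulus $B_{2R}\backslash B_R$. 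The origin is handled implicitly inside the assertion $\boldsymbol{f}\in D^{1,1}(\mathbb{R}^n)$.

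You stay on the half-line instead. The function $F=r^mf$ has $F_r\in L^1(I)$, so $F(0^+)$ and $F(\infty)$ exist. Since $F/r=r^{m-1}f\in L^1$ near both ends and $1/r$ is not integrable at $0$ or at $\infty$, both limits must vanish. This is the same mechanism you used for $g$ in (i).

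Your route is shorter and self-contained. It needs neither the norm conversion nor the radial Sobolev embedding, whose proof the paper defers to earlier work. It makes the vanishing at the origin explicit rather than relying on the point $\boldsymbol{x}=\boldsymbol{0}$ being removable for $W^{1,1}$ vector fields. It also works for any real $m$. The paper's route stays consistent with its general practice of moving between radial and Eulerian formulations and yields the extra fact $\boldsymbol{f}\in L^{\frac{n}{n-1}}(\mathbb{R}^n)$, but that is not needed for the identity itself.
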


\begin{proof}
We divide the proof into two steps.

\smallskip
\textbf{1. Proof for (i).} \eqref{newton2} is a direct consequence of \eqref{newton}, and $\eqref{newton}_1$ is the classical fundamental theorem of calculus; see \cite[Chapter 7]{realrudin}. Hence, it suffices to prove $\eqref{newton}_2$.

We first see from Lemma \ref{ale1} that $g\in C([l_2,l_3])$ for any $l_3\in [l_2,\infty)$. Thus, $g(r)$ is well-defined on $[l_2,\infty)$.  Next, for any $g\in L^p(l_2,\infty)\cap D^{1,1}(l_2,\infty)$, by the fundamental theorem of calculus, we obtain that, for any fixed $r_2,r_3\in [l_2,\infty)$,
\begin{equation}\label{r3}
g(r_3)=g(r_2)+\int_{r_2}^{r_3} g_r\,\mathrm{d}r.
\end{equation}
Taking the limit as $r_3\to \infty$ gives
\begin{equation*} 
L:=\lim_{r_3\to\infty} |g(r_3)|\leq |g(r_2)|+\int_{r_2}^{\infty} |g_r|\,\mathrm{d}r\leq |g(r_2)|+\|g_r\|_{L^1(l_2,\infty)}<\infty.
\end{equation*}

We claim that the limit $L=0$. Otherwise, if $L>0$, we can find a constant $M>0$ such that, for any $r>M$, $|g(r)|\geq \frac{L}{2}$. This yields
\begin{equation*}
\int_{l_2}^\infty |g|^p\,\mathrm{d}r\geq \int_{M}^\infty |g|^p\,\mathrm{d}r\geq \int_{M}^\infty \big(\frac{L}{2}\big)^p\,\mathrm{d}r=\infty,
\end{equation*}
which contradicts the fact that $g\in L^p (l_2,\infty)$.

Therefore, $L=0$, and we obtain from \eqref{r3} that
\begin{equation*} 
g(r_2)=\lim_{r_3\to\infty} \Big(g(r_3)-\int_{r_2}^{r_3} g_r\,\mathrm{d}r\Big)=-\int_{r_2}^{\infty} g_r\,\mathrm{d}r. 
\end{equation*}

\smallskip
\textbf{2. Proof for (ii).} Define $\boldsymbol{f}(\boldsymbol{x})=f(r)\frac{\boldsymbol{x}}{r}$. Then $\boldsymbol{f}$ is a spherically symmetric vector function defined on $\mathbb{R}^n$, and Lemmas \ref{lemma-initial} and \ref{lemma-L6} (see Appendices \ref{appb}--\ref{improve-sobolev}) yield $\boldsymbol{f}\in L^\frac{n}{n-1}(\mathbb{R}^n)\cap D^{1,1}(\mathbb{R}^n)$. Moreover, via the coordinate transformation, we have
\begin{equation*}
\int_0^\infty (r^mf)_r\,\mathrm{d}r=\int_0^\infty r^m\big(f_r+\frac{m}{r}f\big)\,\mathrm{d}r=\frac{1}{\omega_n}\int_{\mathbb{R}^n}\diver \boldsymbol{f}\,\mathrm{d}\boldsymbol{x},
\end{equation*}
where $\omega_n$ denotes the surface area of the $n$-D sphere, satisfying $\omega_2=2\pi$ and $\omega_3=4\pi$. 

Next, define $B_R:=\{\boldsymbol{x}:|\boldsymbol{x}|<R\}$. Let $\varphi(\boldsymbol{x})\in C^\infty_{\mathrm{c}}(B_2)$ be a cut-off function such that $\varphi\equiv 1$ in $B_1$ and $|\nabla \varphi|\leq C$ for some constant $C>0$, and let $\varphi_R(\boldsymbol{x}):=\varphi(\frac{\boldsymbol{x}}{R})$. Then we obtain from the H\"older inequality that
\begin{equation*}
\begin{aligned}
\Big|\int_{\mathbb{R}^n}\diver \boldsymbol{f}\,\mathrm{d}\boldsymbol{x}\Big|&=\lim_{R\to \infty} \Big|\int_{\mathbb{R}^n}(\diver \boldsymbol{f})\varphi_R\,\mathrm{d}\boldsymbol{x}\Big|=\lim_{R\to \infty}\Big|\frac{1}{R}\int_{\mathbb{R}^n}\boldsymbol{f}\cdot(\nabla\varphi)\big(\frac{\boldsymbol{x}}{R}\big)\,\mathrm{d}\boldsymbol{x}\Big|\\
&\leq \lim_{R\to \infty} \frac{C}{R}\int_{B_{2R}\backslash B_R}|\boldsymbol{f}|\,\mathrm{d}\boldsymbol{x}\leq C \lim_{R\to \infty} \Big(\int_{B_{2R}\backslash B_R}|\boldsymbol{f}|^\frac{n}{n-1}\,\mathrm{d}\boldsymbol{x}\Big)^\frac{n-1}{n}=0.
\end{aligned}
\end{equation*}

This completes the proof.
\end{proof}

The fourth lemma is on the Hardy inequality.
\begin{lem}[\cite{brown,CZZ1,opic}]\label{hardy}  
Let $q\in [2,\infty)$, $b>0$, and let $f=f(r)$ be a function defined on $[0,b]$ such that $r^{\ell +1+\frac{1}{p}-\frac{1}{q}}(f,f_r)\in L^q(0,b)$ for some $p\in [q,\infty]$ and $\ell>-\frac{1}{p}$ $(\ell>0$ if $p=\infty)$. Then $r^\ell f\in L^p(0,b)$ and the following inequalities hold{\rm :}
\begin{equation*}
\begin{aligned}
\|r^\ell f\|_{L^p(0,b)}&\leq C_1\big\|r^{\ell+1+\frac{1}{p}-\frac{1}{q}}(f,f_r)\big\|_{L^q(0,b)} &&\quad \text{if }p\in [q,\infty),\\
\|r^\ell f\|_{L^\infty(0,b)}&\leq C_2\big\|r^{\ell +1-\frac{1}{q}}(f,f_r)\big\|_{L^q(0,b)}  &&\quad \text{if }p=\infty,
\end{aligned}
\end{equation*}
where $C_1$ and $C_2$ are positive constants depending only on $(\ell,p,q,b)$ and  $(\ell,q,b)$, respectively. Moreover, if $r^{\ell +1-\frac{1}{q}}(f,f_r)\in L^q(0,b)$ for some $\ell>0$, then $r^\ell f\in C([0,b])$. 
\end{lem}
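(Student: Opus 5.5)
Both inequalities follow from a one-dimensional representation formula combined with the weighted Hardy inequality, and the continuity claim then comes from a density argument.

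\textbf{Step 1: cut-off and representation.} Fix a smooth cut-off $\chi$ on $[0,b]$ with $\chi\equiv1$ on $[0,b/2]$ and $\chi\equiv 0$ near $b$. We split $f=\chi f+(1-\chi)f$. On $[b/2,b]$ the weight $r$ is bounded above and below, so $(1-\chi)f\in W^{1,q}(b/2,b)\hookrightarrow L^\infty$ by Lemma \ref{ale1}; hence $\|r^\ell(1-\chi)f\|_{L^p}$ is controlled by $\|r^{\ell+1+\frac1p-\frac1q}(f,f_r)\|_{L^q}$ with constants depending on $b$. For $g:=\chi f$ we have $g(b)=0$, so for $r\in(0,b]$
\[
g(r)=-\int_r^b g_s\,\mathrm{d}s ,
\]
justified by Lemma \ref{calculus} on $[r,b]$, where $g\in W^{1,1}$ since $r$ is bounded below there. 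Moreover $|g_s|\le C(|f|+|f_s|)$.

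\textbf{Step 2: weighted Hardy inequality for $p\in[q,\infty)$.} We need
\[
\Big\|r^\ell\int_r^b h\,\mathrm{d}s\Big\|_{L^p(0,b)}\le C\|r^{\ell+1+\frac1p-\frac1q}h\|_{L^q(0,b)} .
\]
This is a dual Hardy operator with power weights $u(r)=r^{\ell p}$ and $v(s)=s^{(\ell+1+\frac1p-\frac1q)q}$. For $p\ge q$ we apply the Bradley/Muckenhoupt criterion (as in \cite{opic}). It requires
\[
\sup_{R}\Big(\int_0^R r^{\ell p}\,\mathrm{d}r\Big)^{1/p}\Big(\int_R^b s^{-(\ell+1+\frac1p-\frac1q)q'}\,\mathrm{d}s\Big)^{1/q'}<\infty .
\]
The first factor is finite exactly because $\ell>-1/p$, and it is $\sim R^{\ell+1/p}$. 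The second factor is $\lesssim R^{-(\ell+1/p)}$ when its exponent makes the tail integral divergent at $0$; otherwise it is bounded by a constant depending on $b$. In either case the product is bounded by $C(\ell,p,q,b)$.

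\textbf{Step 3: the case $p=\infty$.} Here Hölder suffices directly:
\[
r^\ell|g(r)|\le r^\ell\Big(\int_r^b s^{-(\ell+1-\frac1q)q'}\,\mathrm{d}s\Big)^{1/q'}\|s^{\ell+1-\frac1q}g_s\|_{L^q} .
\]
The exponent is $(\ell+1-\tfrac1q)q'=\ell q'+1$, so the integral is $\lesssim r^{-\ell q'}$ and the prefactor is bounded; this uses $\ell>0$. The borderline case, where the exponent equals $1$, only occurs when $\ell=0$, which is excluded.

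\textbf{Step 4: continuity.} Step 3 shows that $r^\ell f\in C((0,b])$ with $r^\ell f(r)\to0$ as $r\to 0$. Indeed, the same estimate with $\|s^{\ell+1-\frac1q}g_s\|_{L^q(0,\rho)}$ in place of the full norm gives
\[
\sup_{(0,\rho)}r^\ell|g|\le C\,\|s^{\ell+1-\frac1q}g_s\|_{L^q(0,\rho)}+r^\ell|g(\rho)| ,
\]
up to a lower-order term. Letting $r\to0$ and then $\rho\to0$ gives the limit, so $r^\ell f$ extends continuously by $0$ at the origin.

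\textbf{Main obstacle.} The delicate point is verifying the Muckenhoupt condition in Step 2 uniformly across the sign cases of the exponent $(\ell+1+\frac1p-\frac1q)q'-1$, including the logarithmic borderline. In that borderline case the second factor grows like $\log(b/R)^{1/q'}$, but it is multiplied by $R^{\ell+1/p}$, which tends to $0$, so the product stays bounded. I expect this bookkeeping to carry the most risk.
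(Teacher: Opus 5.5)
Your proof is correct and follows the route the paper implicitly takes: the paper gives no argument of its own and simply defers to the weighted Hardy theory of \cite{opic}. Its Bradley--Muckenhoupt criterion for the dual Hardy operator is exactly what you verify after localizing with the cut-off, and you add the elementary H\"older bound for $p=\infty$ and the order of limits $r\to 0$, then $\rho\to 0$, for continuity at the origin.

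The case analysis you flag as the main risk is vacuous. Because $\ell>-\frac1p$ and $(1-\frac1q)q'=1$, one always has $(\ell+1+\frac1p-\frac1q)q'=(\ell+\frac1p)q'+1>1$. So the tail integral always diverges at $0$, no logarithmic borderline occurs, and the Muckenhoupt product is bounded by $(\ell p+1)^{-1/p}\big((\ell+\frac1p)q'\big)^{-1/q'}$ uniformly in $R\in(0,b)$.
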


The fifth lemma is the well-known Fatou lemma which can be found in \cite{realrudin}.
\begin{lem}[\cite{realrudin}]\label{Fatou}
Let  $\{f_n\}$ be a sequence of measurable non-negative functions $f_n$ on $\mathbb{R}^n$. Define $f(\boldsymbol{x}):= \liminf_{n\to \infty} f_n(\boldsymbol{x})$ for {\it a.e.} $\boldsymbol{x}\in \mathbb{R}^n$. Then $f$ is measurable and
\begin{equation*}
\int_{\mathbb{R}^n} f \,\mathrm{d}\boldsymbol{x} \leq \liminf_{n\to \infty} \int_{\mathbb{R}^n} f_n \,\mathrm{d}\boldsymbol{x} \qquad \text{for {\it a.e.} $\boldsymbol{x}\in \mathbb{R}^n$}.
\end{equation*}
\end{lem}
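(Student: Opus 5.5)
This is the classical Fatou lemma, so the plan is the standard measure-theoretic argument; it needs nothing from the PDE analysis above. Set $g_k(\boldsymbol{x}):=\inf_{j\ge k} f_j(\boldsymbol{x})$. Each $g_k$ is a countable infimum of measurable non-negative functions, hence measurable and non-negative. The sequence $\{g_k\}$ is non-decreasing in $k$, and $g_k\uparrow \liminf_{j\to\infty} f_j=f$ pointwise. In particular $f$ is measurable as a pointwise limit of measurable functions, which settles the first claim.

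The key comparison is $g_k\le f_j$ for every $j\ge k$. Monotonicity of the Lebesgue integral for non-negative measurable functions then gives $\int_{\mathbb{R}^n} g_k\,\mathrm{d}\boldsymbol{x}\le \inf_{j\ge k}\int_{\mathbb{R}^n} f_j\,\mathrm{d}\boldsymbol{x}$. Letting $k\to\infty$, the right-hand side increases to $\liminf_{j\to\infty}\int_{\mathbb{R}^n} f_j\,\mathrm{d}\boldsymbol{x}$. On the left-hand side I would invoke the monotone convergence theorem for the non-decreasing non-negative sequence $g_k\uparrow f$, which gives $\int_{\mathbb{R}^n} g_k\,\mathrm{d}\boldsymbol{x}\to\int_{\mathbb{R}^n} f\,\mathrm{d}\boldsymbol{x}$. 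Combining the two limits yields the stated inequality, with both sides allowed to take the value $+\infty$.

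The only step that carries real content is the use of monotone convergence; everything else is bookkeeping. I would note two points in passing. First, the ``\textit{a.e.}'' qualifier only reflects that $f$ may be modified on a null set without changing either integral. Second, the same argument works verbatim on any measure space, for instance on $I$ with the weight $r^m\,\mathrm{d}r$ as it is used in the spherical-coordinate computations. Since the statement is a standard result cited from \cite{realrudin}, I would keep this argument brief rather than reprove monotone convergence.
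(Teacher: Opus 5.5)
Your proposal is correct and matches the standard argument in the cited reference (Rudin), which the paper quotes without giving its own proof. You set $g_k=\inf_{j\ge k}f_j\uparrow f$, compare $\int g_k\le\int f_j$ for $j\ge k$, and conclude by monotone convergence; your reading of the stray ``a.e.'' qualifier as a harmless null-set modification is also right.
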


The sixth lemma is used to obtain the time-weighted estimates of the velocity.
\begin{lem}[\cite{bjr,CZZ1}]\label{bjr}
Let $E\subset \mathbb{R}^n$ $(n\in \mathbb{N}^*)$ be any set and $f\in L^2([0,T]; L^2(E))$. Then there exists a sequence $\{t_k\}_{k=1}^\infty$ such that
\begin{equation*}
t_k\to 0, \quad\ t_k \|f(t_k)\|^2_{L^2(E)}\to 0 \qquad\,\, \text{as $k\to\infty$}.
\end{equation*}
\end{lem}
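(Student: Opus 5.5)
The plan is to prove Lemma \ref{bjr} by contradiction, using only the integrability of $\|f(t)\|_{L^2(E)}^2$ on $[0,T]$ and the non-integrability of $t^{-1}$ near $0$. Set $F(t):=\|f(t)\|_{L^2(E)}^2$, which is a non-negative function in $L^1(0,T)$ by the hypothesis $f\in L^2([0,T];L^2(E))$. It suffices to show that
\begin{equation*}
\liminf_{t\to 0^+} tF(t)=0,
\end{equation*}
where the $\liminf$ is taken over $t$ outside a null set, since $f(t)$ is only defined for a.e. $t$. Once this holds, we can pick $t_k\downarrow 0$ with $t_kF(t_k)<1/k$, all chosen inside the full-measure set on which $F$ is defined; this is the required sequence.

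To prove the $\liminf$ claim, suppose it fails. Then there exist constants $c>0$ and $\tau\in(0,T]$ such that $tF(t)\geq c$ for a.e. $t\in(0,\tau)$. This gives $F(t)\geq c\,t^{-1}$ almost everywhere on $(0,\tau)$, and therefore
\begin{equation*}
\int_0^T F(t)\,\mathrm{d}t\geq c\int_0^\tau \frac{\mathrm{d}t}{t}=\infty .
\end{equation*}
This contradicts $F\in L^1(0,T)$.

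There is no real obstacle in this argument. The only point needing care is that $f(t)$ is defined only for a.e. $t$, so the sequence $\{t_k\}$ must be drawn from the full-measure set where $F$ is finite and defined. This is automatic if we read $\liminf$ as an essential $\liminf$: for each $k$, the set $\{t\in(0,1/k):\,tF(t)<1/k\}$ has positive measure (otherwise the contradiction above applies on $(0,1/k)$), so a suitable $t_k$ can be selected from it inside the good set.
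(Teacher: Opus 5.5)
Your argument is correct. It is essentially the standard proof of this lemma: the paper gives no proof of its own and only cites \cite{bjr,CZZ1}, where the same contradiction is used, namely that if $t\|f(t)\|^2_{L^2(E)}\geq c$ near $t=0$, then $\|f(t)\|^2_{L^2(E)}\geq c/t$ fails to be integrable. Your extra observation that $\{t\in(0,1/k):\,t\|f(t)\|^2_{L^2(E)}<1/k\}$ has positive measure is a useful refinement. It lets the $t_k$ avoid any prescribed null set, which is what is needed when the lemma is applied to inequalities holding only for a.e.\ $\tau$, as in the time-weighted estimates of Lemma~\ref{Lemma6.13}.
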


The seventh lemma gives an equivalent statement on the spherically symmetric vector fields.
\begin{lem}\label{rmk31}
Let $\boldsymbol{f}=\boldsymbol{f}(\boldsymbol{x})$ be a spherically symmetric  continuous vector function on $\mathbb{R}^n$. Then $\boldsymbol{f}$ takes the form{\rm :} $\boldsymbol{f}(\boldsymbol{x})=f(|\boldsymbol{x}|)\frac{\boldsymbol{x}}{|\boldsymbol{x}|}$ if and only if 
\begin{equation}\label{dengjia}
\mathcal{O}\boldsymbol{f}(\boldsymbol{x})=\boldsymbol{f}(\mathcal{O}\boldsymbol{x}) \qquad \text{for all $\boldsymbol{x}\in \mathbb{R}^n$ and $\mathcal{O}\in \mathrm{SO}(n)$}.
\end{equation}
In particular, any spherically symmetric  vector  function $\boldsymbol{f}$ satisfies $\boldsymbol{f}(\boldsymbol{0})=\boldsymbol{0}$.
\end{lem}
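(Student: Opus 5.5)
The ``if'' direction is the substantive one, so I will start there. Assume \eqref{dengjia}. To define $f$, fix a unit vector $\boldsymbol{e}_1$ and, for $r>0$, decompose $\boldsymbol{f}(r\boldsymbol{e}_1)=f(r)\boldsymbol{e}_1+\boldsymbol{g}(r)$ with $\boldsymbol{g}(r)\perp\boldsymbol{e}_1$. The argument then has three steps: show that $\boldsymbol{g}\equiv\boldsymbol{0}$, transport the identity to every point by a rotation, and handle the origin separately.

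\textbf{Step 1: $\boldsymbol{g}\equiv\boldsymbol{0}$.} I would apply \eqref{dengjia} with rotations $\mathcal{O}\in\mathrm{SO}(n)$ that fix $\boldsymbol{e}_1$. For such $\mathcal{O}$ the identity gives $\mathcal{O}\boldsymbol{f}(r\boldsymbol{e}_1)=\boldsymbol{f}(r\boldsymbol{e}_1)$, and hence $\mathcal{O}\boldsymbol{g}(r)=\boldsymbol{g}(r)$, since $\boldsymbol{e}_1$ is fixed and $\boldsymbol{e}_1^\perp$ is invariant under $\mathcal{O}$.
\begin{itemize}
\item For $n=3$, the stabilizer contains all rotations about the $\boldsymbol{e}_1$-axis, including the rotation by $\pi$. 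That rotation acts as $-\mathrm{Id}$ on $\boldsymbol{e}_1^\perp$, so $\boldsymbol{g}=-\boldsymbol{g}$ and therefore $\boldsymbol{g}=\boldsymbol{0}$.
\item For $n=2$, the only element of $\mathrm{SO}(2)$ fixing $\boldsymbol{e}_1$ is the identity, so the same trick gives nothing.
\end{itemize}
The case $n=2$ is the main obstacle. Here I would rely on the definition of spherical symmetry that the paper uses, namely the form $f(r)\frac{\boldsymbol{x}}{r}$, which is the reflection-invariant, curl-free radial structure. If only $\mathrm{SO}(2)$-equivariance is available, I would add the reflection $\boldsymbol{x}\mapsto(x_1,-x_2)$ to \eqref{dengjia}, since the solutions built in the paper are also invariant under it (by uniqueness, exactly as in Lemma \ref{ls}). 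That reflection fixes $\boldsymbol{e}_1$ and acts as $-1$ on $\boldsymbol{e}_1^\perp$, which again forces $\boldsymbol{g}=\boldsymbol{0}$. I flag this dependence on the reflection explicitly in the write-up.

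\textbf{Step 2: extend to all $\boldsymbol{x}\neq\boldsymbol{0}$.} For $\boldsymbol{x}\ne\boldsymbol{0}$, I choose $\mathcal{O}\in\mathrm{SO}(n)$ with $\mathcal{O}(|\boldsymbol{x}|\boldsymbol{e}_1)=\boldsymbol{x}$, which exists because $\mathrm{SO}(n)$ acts transitively on spheres for $n\ge2$. Then
\[
\boldsymbol{f}(\boldsymbol{x})=\mathcal{O}\boldsymbol{f}(|\boldsymbol{x}|\boldsymbol{e}_1)=f(|\boldsymbol{x}|)\,\mathcal{O}\boldsymbol{e}_1=f(|\boldsymbol{x}|)\frac{\boldsymbol{x}}{|\boldsymbol{x}|}.
\]
Moreover $f(r)=\boldsymbol{f}(r\boldsymbol{e}_1)\cdot\boldsymbol{e}_1$ is continuous in $r$.

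\textbf{Step 3: the origin.} Taking $\boldsymbol{x}=\boldsymbol{0}$ in \eqref{dengjia} gives $\mathcal{O}\boldsymbol{f}(\boldsymbol{0})=\boldsymbol{f}(\boldsymbol{0})$ for every $\mathcal{O}\in\mathrm{SO}(n)$. For $n\ge2$ the only vector fixed by all of $\mathrm{SO}(n)$ is $\boldsymbol{0}$, so $\boldsymbol{f}(\boldsymbol{0})=\boldsymbol{0}$. This is consistent with continuity, since it forces $f(r)\to0$ as $r\to0$.

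\textbf{The ``only if'' direction.} This is a direct check:
\[
\boldsymbol{f}(\mathcal{O}\boldsymbol{x})=f(|\mathcal{O}\boldsymbol{x}|)\frac{\mathcal{O}\boldsymbol{x}}{|\mathcal{O}\boldsymbol{x}|}=\mathcal{O}\Big(f(|\boldsymbol{x}|)\frac{\boldsymbol{x}}{|\boldsymbol{x}|}\Big),
\]
because $|\mathcal{O}\boldsymbol{x}|=|\boldsymbol{x}|$. At $\boldsymbol{x}=\boldsymbol{0}$ the identity holds because continuity, together with Step 3, gives $\boldsymbol{f}(\boldsymbol{0})=\boldsymbol{0}$.
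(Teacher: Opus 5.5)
For $n=3$ your argument is the paper's own. The half-turn about the axis through $\boldsymbol{x}_0$ kills the component of $\boldsymbol{f}(\boldsymbol{x}_0)$ orthogonal to $\boldsymbol{x}_0$, and rotation invariance of the remaining coefficient (your transitivity step) gives dependence on $|\boldsymbol{x}_0|$ only. The origin is then handled because no nonzero vector is fixed by all of $\mathrm{SO}(n)$.

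For $n=2$ your hesitation is justified, and in fact the ``if'' direction is false when only $\mathrm{SO}(2)$ is used. Take $\boldsymbol{f}(\boldsymbol{x})=e^{-|\boldsymbol{x}|^2}(-x_2,x_1)^\top$. It is smooth and satisfies \eqref{dengjia}, because every rotation commutes with the quarter-turn $\boldsymbol{x}\mapsto(-x_2,x_1)^\top$. Yet $\boldsymbol{f}(\boldsymbol{x})\cdot\boldsymbol{x}\equiv 0$ with $\boldsymbol{f}\not\equiv\boldsymbol{0}$, so $\boldsymbol{f}$ is not of the form $f(|\boldsymbol{x}|)\frac{\boldsymbol{x}}{|\boldsymbol{x}|}$. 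The paper writes out only the 3-D case ``as an example'' and so has exactly this gap.

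Your repair is the right one: impose \eqref{dengjia} for every orthogonal $\mathcal{O}$, in particular the reflection $(x_1,x_2)\mapsto(x_1,-x_2)$, which fixes $\boldsymbol{e}_1$ and acts as $-1$ on $\boldsymbol{e}_1^\perp$. This stronger hypothesis is genuinely available where the lemma is applied. The computations in the proof of Lemma \ref{ls} use only $\mathcal{O}^\top\mathcal{O}=\mathbb{I}_n$, and data and coefficients of radial form are reflection-equivariant as well. Uniqueness then gives $\mathrm{O}(n)$-equivariance of the solution, and the same remark applies to the continuation step in \S\ref{se46}.

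Discard your other alternative, reading ``spherically symmetric'' as already meaning the radial form, since that makes the lemma vacuous. One small slip: in the ``only if'' direction at $\boldsymbol{x}=\boldsymbol{0}$, do not cite Step 3, which presupposes \eqref{dengjia} at the origin. Continuity alone suffices: let $\boldsymbol{x}\to\boldsymbol{0}$ in $\mathcal{O}\boldsymbol{f}(\boldsymbol{x})=\boldsymbol{f}(\mathcal{O}\boldsymbol{x})$.
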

\begin{proof}
Clearly, if $\boldsymbol{f}(\boldsymbol{x})=f(|\boldsymbol{x}|)\frac{\boldsymbol{x}}{|\boldsymbol{x}|}$, then \eqref{dengjia} holds.

Conversely, if \eqref{dengjia} holds, we take the 3-D case as an example. Let $\boldsymbol{x}_0\in \mathbb{R}^n$ be any fixed displacement vector, $\boldsymbol{e}_1=\frac{\boldsymbol{x}_0}{|\boldsymbol{x}_0|}$, and let $\mathcal{O}_1\in \mathrm{SO}(n)$ be a rotation by 180 degrees about an axis parallel to $\boldsymbol{x}_0$, \textit{i.e.}, $\mathcal{O}_1\boldsymbol{x}_0=\boldsymbol{x}_0$. Then \eqref{dengjia} implies
\begin{equation}\label{cls'}
\mathcal{O}_1\boldsymbol{f}(\boldsymbol{x}_0)=\boldsymbol{f}(\boldsymbol{x}_0).
\end{equation}
Next, suppose that $\{\boldsymbol{e}_2,\boldsymbol{e}_3\}$ are unit vectors that, together with $\boldsymbol{e}_1$, form an orthonormal basis for $\mathbb{R}^3$. 
Then there exist constants $\alpha_i=\alpha_i(\boldsymbol{x}_0)\in \mathbb{R}$ $(i=1,2,3)$, depending only on $\boldsymbol{x}_0$, such that $\boldsymbol{f}(\boldsymbol{x}_0)=\alpha_1\boldsymbol{e}_1+\alpha_2\boldsymbol{e}_2+\alpha_3\boldsymbol{e}_3$. This, combined with \eqref{cls'}, gives 
\begin{equation*}
\alpha_1\boldsymbol{e}_1+\alpha_2\boldsymbol{e}_2+\alpha_3\boldsymbol{e}_3=\alpha_1\boldsymbol{e}_1-\alpha_2\boldsymbol{e}_2-\alpha_3\boldsymbol{e}_3\implies \alpha_2\boldsymbol{e}_2+\alpha_3\boldsymbol{e}_3=\boldsymbol{0}\implies \alpha_2=\alpha_3=0.
\end{equation*}
Hence, for any fixed $\boldsymbol{x}_0\in \mathbb{R}^3$,
\begin{equation*}
\boldsymbol{f}(\boldsymbol{x}_0)=\alpha_1\boldsymbol{e}_1=\alpha_1(\boldsymbol{x}_0)\frac{\boldsymbol{x}_0}{|\boldsymbol{x}_0|}.
\end{equation*}
By \eqref{dengjia}, we see that $\alpha_1(\mathcal{O}\boldsymbol{x}_0)=\alpha_1(\boldsymbol{x}_0)$ for all $\mathcal{O}\in \mathrm{SO}(n)$, that is, $\alpha_1(\boldsymbol{x}_0)=\alpha_1(|\boldsymbol{x}_0|)$. 
Thus, letting $f(r):=\alpha_1(r)$ implies that $\boldsymbol{f}$ takes the form: $\boldsymbol{f}(\boldsymbol{x})=f(|\boldsymbol{x}|)\frac{\boldsymbol{x}}{|\boldsymbol{x}|}$.

Finally, from \eqref{dengjia}, we can take $\boldsymbol{x}=\boldsymbol{0}$ to obtain
\begin{equation}\label{s0}
\boldsymbol{f}(\boldsymbol{0})=\mathcal{O}\boldsymbol{f}(\boldsymbol{0}) \qquad \text{for all $\mathcal{O}\in \mathrm{SO}(n)$}.
\end{equation}
Then, choosing $\mathcal{O}=\mathcal{O}_2$ by 180 degrees with respect to 
an axis perpendicular to $\boldsymbol{f}(\boldsymbol{0})$, that is, $\mathcal{O}_2\boldsymbol{f}(\boldsymbol{0})=-\boldsymbol{f}(\boldsymbol{0})$, we obtain from \eqref{s0} that $\boldsymbol{f}(\boldsymbol{0})=-\boldsymbol{f}(\boldsymbol{0})$, so that $\boldsymbol{f}(\boldsymbol{0})=\boldsymbol{0}$. 
\end{proof}

The following lemma is on the evolution triple embedding.
\begin{lem}[\cite{evans}]\label{triple}
Let $T>0$, $n\in\mathbb{N}$, and $n\geq 2$. 
Suppose that $f\in L^2([0,T];H^1(\mathbb{R}^n))$ and  $f_t\in L^2([0,T];H^{-1}(\mathbb{R}^n))$. 
Then $f\in C([0,T];L^2(\mathbb{R}^n))$, and the mapping{\rm :} $t\mapsto |f(t)|_2^2$ is absolutely continuous 
with
\begin{equation*}
\frac{\mathrm{d}}{\mathrm{d}t} \|f(t)\|_{L^2(\mathbb{R}^n)}^2=2\left<f_t, f\right>_{H^{-1}(\mathbb{R}^n)\times H^1(\mathbb{R}^n)} \qquad \text{for {\it a.e.} $t\in (0,T)$}.
\end{equation*}
Moreover, if additionally $f\in L^\infty([0,T];H^1(\mathbb{R}^n))$, then $f\in C([0,T];L^q(\mathbb{R}^n))$ for $q\in [2,\infty)$ if $n=2$ and for $q\in [2,\frac{2n}{n-2})$ if $n\geq 3$.
\end{lem}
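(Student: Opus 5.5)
The statement is the evolution-triple lemma (Lemma \ref{triple}), which is classical. The plan is a density argument followed by a compactness/interpolation step.

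\textbf{Step 1: smooth approximation.} First mollify in time: extend $f$ to a slightly larger interval by reflection and take $f^\varepsilon=\eta_\varepsilon *_t f$. Then
\begin{itemize}
\item $f^\varepsilon\to f$ in $L^2([0,T];H^1(\mathbb{R}^n))$,
\item $f^\varepsilon_t\to f_t$ in $L^2([0,T];H^{-1}(\mathbb{R}^n))$.
\end{itemize}
For the smooth $f^\varepsilon$ the identity
\[
\frac{\mathrm{d}}{\mathrm{d}t}\|f^\varepsilon\|_{L^2}^2=2\langle f^\varepsilon_t,f^\varepsilon\rangle
\]
holds trivially.

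\textbf{Step 2: uniform Cauchy estimate.} Apply this identity to the difference $f^\varepsilon-f^\delta$ and integrate from a suitably chosen $s$ (chosen by averaging, so that $\|f^\varepsilon-f^\delta\|_{L^2}(s)$ is small) to $t$. This gives
\[
\sup_t\|f^\varepsilon-f^\delta\|_{L^2}^2\le C\Big(\|f^\varepsilon-f^\delta\|_{L^2H^1}^2+\|f^\varepsilon_t-f^\delta_t\|_{L^2H^{-1}}^2\Big).
\]
Hence $\{f^\varepsilon\}$ is Cauchy in $C([0,T];L^2)$ and its limit agrees a.e. with $f$. Passing to the limit in the integrated identity for $f^\varepsilon$ gives absolute continuity of $t\mapsto\|f(t)\|_{L^2}^2$ together with the stated derivative formula.

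\textbf{Step 3: continuity into $L^q$.} Under the extra hypothesis $f\in L^\infty([0,T];H^1)$, use Gagliardo--Nirenberg (Lemma \ref{GN-ineq}):
\[
\|f(t)-f(s)\|_{L^q}\le C\|f(t)-f(s)\|_{L^2}^{1-\vartheta}\|\nabla(f(t)-f(s))\|_{L^2}^{\vartheta},\qquad \vartheta=n\Big(\tfrac12-\tfrac1q\Big)<1.
\]
This requires $q$ in the stated subcritical range. A technical point is that $f(t)$ must lie in $H^1$ with a uniform bound for every $t$, not just almost every $t$. This follows from weak continuity: bounded sets of $H^1$ are weakly compact, and the weak limits are identified through the $C([0,T];L^2)$ continuity from Step 2. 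Then $\|f(t)-f(s)\|_{L^q}\to0$ as $s\to t$.

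The only delicate point in the whole argument is this last pointwise-in-$t$ $H^1$ bound; everything else is standard (see Evans).
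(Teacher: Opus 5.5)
Your proposal is correct and follows the standard argument in Evans, which the paper cites without giving a proof of its own: mollify in time, get a Cauchy estimate in $C([0,T];L^2(\mathbb{R}^n))$ by choosing the initial time through averaging, and pass to the limit in the integrated energy identity. The paper offers no argument for the $L^q$-continuity clause, and your Gagliardo--Nirenberg interpolation, together with the weak-lower-semicontinuity bound $\|f(t)\|_{H^1}\le \|f\|_{L^\infty([0,T];H^1)}$ for every $t$, is a valid way to supply it.
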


The following lemma is on the Aubin--Lions lemma.
\begin{lem}\cite{jm}\label{aubin-lions}
Let $X_0 \subset X \subset X_1$ be three Banach spaces. Suppose that $X_0$ is compactly embedded in $X$, and $X$ is continuously embedded in $X_1$. Then the following statements hold{\rm:}
\begin{enumerate}
\item[{\rm(i)}] If $F$ is bounded in $L^p([0,T];X_0)$ for $1\leq p<\infty$, and $F_t$ is bounded in $L^1([0,T];X_1)$, then $F$ is relatively compact in $L^p([0,T];X)${\rm;}
\item[{\rm(ii)}] If $F$ is bounded in $L^\infty([0,T];X_0)$, and $F_t$ is bounded in $L^p([0,T];X_1)$ for $p>1$, then $F$ is relatively compact in $C([0,T];X)$.
\end{enumerate}
\end{lem}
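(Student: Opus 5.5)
The statement is the Aubin--Lions lemma (Lemma \ref{aubin-lions}). I would prove it through the classical Ehrling-type interpolation inequality combined with time-compactness of averages, in the manner of Simon's treatment.

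\textbf{Step 1: Ehrling inequality.} From the compact embedding $X_0\hookrightarrow X$ and the continuous embedding $X\hookrightarrow X_1$, I derive that for every $\varepsilon>0$ there is $C_\varepsilon$ with
\[
\|v\|_X\le \varepsilon\|v\|_{X_0}+C_\varepsilon\|v\|_{X_1}\qquad \text{for all } v\in X_0 .
\]
The argument is by contradiction. Suppose there are $v_k$ with $\|v_k\|_{X_0}=1$ and $\|v_k\|_X>\varepsilon+k\|v_k\|_{X_1}$. By compactness a subsequence converges in $X$, and the limit is forced to vanish in $X_1$, hence in $X$. This contradicts $\|v_k\|_X>\varepsilon$.

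\textbf{Step 2: compactness in $X_1$.} For $F$ bounded as assumed, I show relative compactness in $L^p([0,T];X_1)$ (case (i)) or in $C([0,T];X_1)$ (case (ii)).
\begin{itemize}
\item \emph{Case (ii).} The bound on $F_t$ in $L^p(X_1)$ with $p>1$ gives the H\"older estimate
\[
\|f(t)-f(s)\|_{X_1}\le C|t-s|^{1-1/p},
\]
so the family is equicontinuous. For pointwise relative compactness, $f(t)$ lies in a bounded set of $X_0$, which is relatively compact in $X_1$. Arzel\`a--Ascoli then gives compactness in $C([0,T];X_1)$.
\item \emph{Case (i).} I use Simon's criterion: averages $\int_{t_1}^{t_2}f\,\mathrm{d}t$ are bounded in $X_0$, hence relatively compact in $X_1$. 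The translates satisfy $\|\tau_hf-f\|_{L^1(0,T-h;X_1)}\le h\|f_t\|_{L^1(X_1)}\to0$ uniformly in $f$. Together these give compactness in $L^1(X_1)$.
\end{itemize}

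\textbf{Step 3: upgrade to $X$ and $p$.} Apply Ehrling pointwise in $t$. In case (ii) this gives
\[
\sup_t\|f-g\|_X\le 2\varepsilon M+C_\varepsilon \sup_t\|f-g\|_{X_1},
\]
so a Cauchy subsequence in $C(X_1)$ is Cauchy in $C(X)$. Case (i) works the same way after integrating $\|\cdot\|_X^p$, yielding convergence in $L^p(X)$. Since the Ehrling inequality only handles $L^1(X_1)$ data, I also need the step from $L^1(X_1)$ to $L^p(X)$. I would handle this by interpolating the $L^1(X_1)$ Cauchy property against the $L^p(X_0)$ bound via H\"older, choosing $\varepsilon$ first.

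\textbf{Main obstacle.} I expect the obstacle to be the compactness in case (i), that is, the Simon/Riesz--Fr\'echet--Kolmogorov argument in Bochner spaces. There one must control the time translates and the averages simultaneously. Everything else is routine.
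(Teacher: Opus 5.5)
Your architecture (Ehrling inequality, compactness in the weak space $X_1$, then upgrade to $X$) is the standard one behind Simon's result, which the paper only cites. Case (ii) goes through as you describe, modulo two routine points. First, $f(t)$ lies in a bounded set of $X_0$ only for a.e.\ $t$, so you should use the $X_1$-continuous representative and the $X_1$-closure of that bounded set. Second, Ehrling applied on the full-measure set is what shows $f\in C([0,T];X)$ in the first place.

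\textbf{The gap: Step 3 of case (i) for $p>1$.} After integrating Ehrling you need $\|f_k-f_j\|_{L^p(X_1)}\to 0$. Your proposed repair is H\"older interpolation of the $L^1(X_1)$ Cauchy property against the $L^p(X_0)$ bound. That only reaches $L^q(X_1)$ for $q<p$: at $q=p$ the exponent on the $L^1$ factor is $0$. No argument built from only these two facts can work. Take $X_0=X=X_1=\mathbb{R}$ and $f_k=k^{1/p}\chi_{[0,1/k]}$: this sequence is bounded in $L^p$ and tends to $0$ in $L^1$, but not in $L^p$. You must use the derivative bound a second time to rule out such concentration in time.

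\textbf{The fix.} Since $f\in W^{1,1}(0,T;X_1)$, write $f(t)=f(s)+\int_s^t f_t\,\mathrm{d}\tau$ and average in $s$. This gives, uniformly over $F$,
\[
\sup_{t\in[0,T]}\|f(t)\|_{X_1}\le T^{-1}\|f\|_{L^1(0,T;X_1)}+\|f_t\|_{L^1(0,T;X_1)}\le K .
\]
Then
\[
\|f-g\|_{L^p(X_1)}^p\le (2K)^{p-1}\|f-g\|_{L^1(X_1)},
\]
so a subsequence that is Cauchy in $L^1(X_1)$ is Cauchy in $L^p(X_1)$. The estimate $\|f-g\|_{L^p(X)}\le 2\varepsilon M+C_\varepsilon\|f-g\|_{L^p(X_1)}$ then finishes the proof. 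Equivalently, you can run Simon's criterion directly in $L^p(0,T;X_1)$, because the same bound gives
\[
\|\tau_hf-f\|^p_{L^p(0,T-h;X_1)}\le (2K)^{p-1}h\,\|f_t\|_{L^1(X_1)}.
\]

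\textbf{On the black box.} Invoking Simon's translation/average criterion as given amounts to citing the core of the reference. If you want the ``main obstacle'' handled self-containedly, use the time averages $f_h(t)=h^{-1}\int_t^{t+h}f\,\mathrm{d}s$. These satisfy three bounds:
\begin{itemize}
\item $\|f_h(t)\|_{X_0}\le h^{-1/p}M$;
\item $f_h$ is equi-Lipschitz into $X_1$ with constant $h^{-1}\sup_F\|f_t\|_{L^1(X_1)}$;
\item $\|f-f_h\|_{L^1(0,T-h;X_1)}\le \tfrac h2\|f_t\|_{L^1(X_1)}$.
\end{itemize}
Arzel\`a--Ascoli applied to $\{f_h\}$, together with $\|f\|_{L^1(T-h,T;X_1)}\le hK$, then gives total boundedness of $F$ in $L^1(0,T;X_1)$.
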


The following auxiliary lemma is used to show some equivalent norms for a function $f$ 
satisfying $f\in L^1(\mathbb{R}^n)$ and $\nabla f^\alpha\in D^{1,n}(\mathbb{R}^n)\cap D^2(\mathbb{R}^n)$. 

\begin{lem}\label{initial3}
Let $n=2,3$, and let $f>0$ be a spherically symmetric scalar function defined on $\mathbb{R}^n$. If $f\in L^1(\mathbb{R}^n)$ and $\nabla f^{\alpha}\in D^{1,n}(\mathbb{R}^n)\cap D^2(\mathbb{R}^n)$ for some $\alpha\in (-\frac{1}{n},0)$, then  
\begin{enumerate}
\item[$\mathrm{(i)}$] $f\in L^p(\mathbb{R}^n)$ for all $p\in (1, \infty]${\rm ;}
\smallskip
\item[$\mathrm{(ii)}$] $\nabla f^{\alpha}\in  L^\infty(\mathbb{R}^n)$ and $\nabla f^\beta\in  H^2(\mathbb{R}^n)$  for all $\beta\in[\alpha+\frac{1}{2},\infty)$.
\end{enumerate}
\end{lem}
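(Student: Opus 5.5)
Set $g:=f^{\alpha}$, so $g>0$ is radial and $\nabla g\in D^{1,n}\cap D^2$. I will work in the radial variable using the embedding lemmas of Appendix~\ref{improve-sobolev}; in particular, Lemma~\ref{Hk-Ck-vector} gives $D^{1,n}\hookrightarrow L^\infty$ for spherically symmetric vector fields. The plan is to prove the $L^\infty$ bound on $\nabla g$ first, deduce from it a lower bound on $g$ and hence an upper bound on $f$, and then handle $\nabla f^\beta$.

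\textbf{Step 1: $\nabla g\in L^\infty$.} The vector field $\nabla g=g_r\frac{\boldsymbol{x}}{r}$ is spherically symmetric and lies in $D^{1,n}$. Hence $\|\nabla g\|_{L^\infty}\le C\|\nabla^2 g\|_{L^n}$, provided $\nabla g$ decays at infinity. To get that decay, I combine $\nabla g\in D^{1,n}\cap D^2$ with the Sobolev/Gagliardo--Nirenberg Lemmas~\ref{ale1}--\ref{GN-ineq}, which give $\nabla g\in L^{q}$ for some finite $q$. Then Lemma~\ref{calculus}(i) applies to $r^{m/q}g_r$.

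\textbf{Step 2: $f\in L^\infty$, hence $f\in L^p$ for all $p\in(1,\infty]$.} Since $\alpha<0$, an upper bound on $f$ is the same as a positive lower bound on $g$, and I get it by arguing near the origin. From $g_r\in L^\infty$ and the mean value theorem, $g(r)\ge g(0)-Lr$ fails to help directly. Instead I use the growth $g(r)\le g(0)+Lr$, so $f=g^{1/\alpha}\ge (g(0)+Lr)^{1/\alpha}$. Then $\int f\,\mathrm{d}\boldsymbol{x}\ge c\int_0^\infty r^{m}(g(0)+Lr)^{1/\alpha}\,\mathrm{d}r$, which is finite only because $1/\alpha<-n$; this is exactly where $\alpha>-\frac1n$ enters. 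The relation between $\|f\|_{L^1}$ and $g(0)$ should bound $g(0)^{-1}$. More concretely, for any point $r_0$ with $g(r_0)=\epsilon$, I have $g\le \epsilon+L|r-r_0|$ on a ball of radius $\epsilon/L$ around $r_0$. There $f\ge (2\epsilon)^{1/\alpha}$, and the shell of radius $\epsilon/L$ around $r_0$ has volume $\gtrsim (\epsilon/L)^n$ (or larger if $r_0$ is large). This gives $\|f\|_{L^1}\gtrsim \epsilon^{n+1/\alpha}L^{-n}$. Since $n+1/\alpha<0$, letting $\epsilon\to0$ contradicts $f\in L^1$. So $\inf g>0$, i.e.\ $f\in L^\infty$, and interpolation with $L^1$ gives (i). I expect this covering argument to be the main obstacle, and if it gets delicate I would fall back on the simpler origin version above.

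\textbf{Step 3: $\nabla f^\beta\in H^2$.} Write $f^\beta=g^{\beta/\alpha}$, so that $\nabla f^\beta=\frac{\beta}{\alpha}g^{\beta/\alpha-1}\nabla g=\frac{\beta}{\alpha}f^{\beta-\alpha}\nabla g$. Differentiating up to twice more produces terms $f^{\beta-\alpha}\nabla^3 g$, $f^{\beta-2\alpha}\nabla g\,\nabla^2 g$, and $f^{\beta-3\alpha}|\nabla g|^3$.
\begin{itemize}
\item The $L^2$ norm of $f^{\beta-\alpha}\nabla g$ is bounded by $\|\nabla g\|_{L^\infty}\|f^{\beta-\alpha}\|_{L^2}$. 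This is finite because $2(\beta-\alpha)\ge1$, which is precisely the condition $\beta\ge\alpha+\frac12$, together with $f\in L^1\cap L^\infty$.
\item The top-order term is bounded by $\|f\|_{L^\infty}^{\beta-\alpha}\|\nabla^3g\|_{L^2}$.
\item For the middle term, every power of $f$ that appears is at least $\beta-\alpha\ge\frac12$. Hölder then gives $f^{\beta-2\alpha}\in L^{2n/(n-2)}$ (or $L^\infty$) and $\nabla^2 g\in L^n$, which places the product in $L^2$.
\item The cubic term $f^{\beta-3\alpha}|\nabla g|^3$ is handled the same way as the first bullet.
\end{itemize}
The first-derivative terms are treated similarly, which completes (ii).
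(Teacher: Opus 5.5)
Your proposal is correct. Steps~1 and~3 follow the paper's argument: $\nabla f^\alpha\in L^\infty$ comes from Lemma~\ref{Hk-Ck-vector}, and $\nabla f^\beta\in H^2$ comes from the same chain-rule expansion plus H\"older, with $\beta\ge\alpha+\frac12$ entering through $f^{\beta-\alpha}\in L^2$.

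You can drop the decay proviso in Step~1. Lemma~\ref{Hk-Ck-vector} has no such hypothesis: radial fields in $D^{1,n}$ vanish at infinity automatically, since $\int_0^\infty|g_r|^n r^{-1}\,\mathrm{d}r\lesssim\|\nabla^2 g\|_{L^n}^n$. The detour through Lemmas~\ref{ale1}--\ref{GN-ineq} would not by itself give $\nabla g\in L^q$, because constants lie in $D^{1,n}\cap D^2$.

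The real difference is Step~2.
\begin{itemize}
\item \textbf{The paper's route.} It gets $f\in L^\infty$ from the Gagliardo--Nirenberg bound $\|f\|_{L^\infty}\le C\|f\|_{L^1}^{\frac{4-n}{n+4}}\|\nabla^2 f\|_{L^2}^{\frac{2n}{n+4}}$. It then writes $\nabla^2 f$ in terms of $f^{1-2\alpha}|\nabla f^\alpha|^2$ and $f^{1-\alpha}\nabla^2 f^\alpha$, interpolates the resulting norms of $f$ between $L^1$ and $L^\infty$, and absorbs by Young. There, $\alpha>-\frac1n$ is what makes the powers of $\|f\|_{L^\infty}$ on the right-hand side less than one.
\item \textbf{Your route.} You use only $|\nabla f^\alpha|\le L$ and a volume count, which yields $f(\boldsymbol{x}_0)^{1+n\alpha}\lesssim L^n\|f\|_{L^1}$ at every point. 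Here $\alpha>-\frac1n$ enters as $n+\frac1\alpha<0$. This gives $\|f\|_{L^\infty}\le C\|f\|_{L^1}^{\frac{1}{1+n\alpha}}\|\nabla f^\alpha\|_{L^\infty}^{\frac{n}{1+n\alpha}}$. That is exactly the first term of the paper's bound, without the $\|\nabla^2 f^\alpha\|_{L^n}$ term and without using spherical symmetry in this step.
\item \textbf{What your route buys.} It is genuinely free of a priori assumptions. The paper's absorption step tacitly presupposes $\|f\|_{L^\infty}<\infty$, and its use of Gagliardo--Nirenberg needs $\nabla^2 f\in L^2$. Your argument proves $f\in L^\infty$ instead of assuming it.
\end{itemize}

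One caution: the ``origin version'' you mention as a fallback bounds only $f(0)$, not $\sup f$. It becomes your argument once you center it at an arbitrary $\boldsymbol{x}_0$ and integrate $f(\boldsymbol{y})\ge\big(f^\alpha(\boldsymbol{x}_0)+L|\boldsymbol{y}-\boldsymbol{x}_0|\big)^{1/\alpha}$ over all $\boldsymbol{y}\in\mathbb{R}^n$. That is the cleanest way to write Step~2.
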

\begin{proof}
We divide the proof into two steps.

\smallskip
\textbf{1.} Since $f$ is a spherically symmetric scalar function, then $\nabla f^\alpha$ is also a spherically symmetric vector function.
\smallskip
First,  we obtain from Lemma \ref{Hk-Ck-vector} in Appendix \ref{improve-sobolev} that
\begin{equation}\label{A13}
\|\nabla f^\alpha\|_{L^\infty}\leq C\|\nabla f^\alpha\|_{D^{1,n}}\leq C.
\end{equation}

Next, let $n^*$ be defined as in \S\ref{othernote}. Then it follows from \eqref{A13}, Lemma \ref{GN-ineq}, and the H\"older inequality that
\begin{equation*}
\begin{aligned}
\|f\|_{L^\infty}&\leq C\|f\|_{L^1}^\frac{4-n}{n+4}\|\nabla^2 f\|_{L^2}^\frac{2n}{n+4} \leq C\|f\|_{L^1}^\frac{4-n}{n+4}\big(\|f^{1-2\alpha}|\nabla f^\alpha|^2\|_{L^2}+\|f^{1-\alpha}\nabla^2 f^\alpha\|_{L^2}\big)^\frac{2n}{n+4}\\
&\leq C\|f\|_{L^1}^\frac{4-n}{n+4}\big(\|f\|_{L^{2-4\alpha}}^\frac{2n(1-2\alpha)}{n+4}\|\nabla f^\alpha\|_{L^\infty}^\frac{4n}{n+4}+ \|f\|_{L^{n^*(1-\alpha)}}^\frac{2n(1-\alpha)}{n+4}\|\nabla^2 f^\alpha\|_{L^n}^\frac{2n}{n+4}\big)\notag\\
&\leq C\|f\|_{L^1}^\frac{4}{n+4}\|f\|_{L^\infty}^\frac{n(1-4\alpha)}{n+4}\|\nabla f^\alpha\|_{L^\infty}^\frac{4n}{n+4}+C\|f\|_{L^1}^\frac{2}{n+4}\|f\|_{L^\infty}^\frac{n+2-2n\alpha}{n+4}\|\nabla^2 f^\alpha\|_{L^n}^\frac{2n}{n+4},
\end{aligned}
\end{equation*}
which, along with the Young inequality, implies
\begin{equation}\label{lemb2-1}
\|f\|_{L^\infty}\leq C\|f\|_{L^1}^\frac{1}{1+n\alpha}\|\nabla f^\alpha\|_{L^\infty}^\frac{n}{1+n\alpha}+C\|f\|_{L^1}^\frac{1}{1+n\alpha}\|\nabla^2 f^\alpha\|_{L^n}^\frac{n}{1+n\alpha}\leq C.
\end{equation}

Therefore,  $f\in L^\infty(\mathbb{R}^n)$, which, along with $f\in L^1(\mathbb{R}^n)$, gives (i).

\smallskip
\textbf{2.} It remains to show that $\nabla f^\beta\in  H^2(\mathbb{R}^n)$  for all $\beta\in[\alpha+\frac{1}{2},\infty)$. A direct calculation gives 
\begin{equation*}
\begin{aligned}
(f^\beta)_{x_i} &= \frac{\beta}{\alpha} f^{\beta-\alpha} (f^\alpha)_{x_i},\qquad (f^\beta)_{x_ix_j}=\frac{\beta}{\alpha}\Big(\frac{\beta-\alpha}{\alpha} f^{\beta-2\alpha} (f^\alpha)_{x_i} (f^\alpha)_{x_j}+ f^{\beta-\alpha}(f^\alpha)_{x_ix_j}\Big),\\
(f^\beta)_{x_ix_jx_k}&=\frac{\beta}{\alpha}\Big(\frac{\beta-\alpha}{\alpha}\frac{\beta-2\alpha}{\alpha}f^{\beta-3\alpha} (f^\alpha)_{x_i} (f^\alpha)_{x_j}(f^\alpha)_{x_k}+\frac{\beta-\alpha}{\alpha} f^{\beta-2\alpha} (f^\alpha)_{x_ix_k} (f^\alpha)_{x_j}\Big)\\
&\quad+ \frac{\beta}{\alpha}\frac{\beta-\alpha}{\alpha}f^{\beta-2\alpha}\Big( (f^\alpha)_{x_i} (f^\alpha)_{x_jx_k}+ (f^\alpha)_{x_k} (f^\alpha)_{x_ix_j}\big)+ \frac{\beta}{\alpha} f^{\beta-\alpha} (f^\alpha)_{x_ix_jx_k}\Big). 
\end{aligned}
\end{equation*}
Then it follows from $\beta\in [\alpha+\frac{1}{2},\infty)$, \eqref{A13}--\eqref{lemb2-1}, and  the H\"older inequality that 
\begin{align*}
\|\nabla f^\beta \|_{L^2}&\leq C\|f\|^{\beta-\alpha}_{L^{2\beta-2\alpha}} \|\nabla f^\alpha\|_{L^{\infty}}\leq C,\\
\|\nabla^2 (f^\beta)\|_{L^2}&\leq C \|f\|_{L^{2\beta-4\alpha}}^{\beta-2\alpha}\|\nabla f^\alpha\|_{L^{\infty}}^2  +C \|f\|_{L^{n^*(\beta-\alpha)}}^{\beta-\alpha} \|\nabla^2 f^\alpha\|_{L^{n}} \leq C,\\
\|\nabla^3 (f^\beta)\|_{L^2}&\leq C \|f\|_{L^{2\beta-6\alpha}}^{\beta-3\alpha}\|\nabla f^\alpha\|_{L^{\infty}}^3 +C \|f\|_{L^{\infty}}^{\beta-\alpha} \|\nabla^3 f^\alpha\|_{L^{2}} \\
&\quad +C \|f\|_{L^{n^*(\beta-2\alpha)}}^{\beta-2\alpha}\|\nabla f^\alpha\|_{L^{\infty}} \|\nabla^2 f^\alpha\|_{L^{n}}\leq C.
\end{align*}

This completes the proof.
\end{proof}

Next, to establish the $L^p(\mathbb{R}^n)$-estimates for the transport equation:
\begin{equation}\label{diff}
g_t+\diver(\boldsymbol{w} g)=f,
\end{equation}
we need the following lemma to justify the energy equality.
\begin{lem}[\cite{CZZ1,lions1}]\label{lemma-lions}
Let $n\in\mathbb{N}^*$ and $T>0$. Assume that
\begin{equation*}
f\in L^1([0,T];L^p(\mathbb{R}^n)),\quad \boldsymbol{w}\in L^1([0,T];W^{1,\infty}(\mathbb{R}^n)),\quad g\in L^\infty([0,T];L^p(\mathbb{R}^n)),
\end{equation*}
for some $p\in[2,\infty)$, satisfying \eqref{diff} in the sense of distributions. Then 
\begin{equation*} 
\frac{\mathrm{d}}{\mathrm{d}t}\|g\|^p_{L^p}=-(p-1)\int_{\mathbb{R}^n}|g|^p\diver \boldsymbol{w}\,\mathrm{d}\boldsymbol{x}+p\int_{\mathbb{R}^n}|g|^{p-2}gf\,\mathrm{d}\boldsymbol{x}  \qquad \text{for {\it a.e.} $t\in (0,T)$}.
\end{equation*}
\end{lem}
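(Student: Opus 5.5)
The plan is to argue exactly as in the proof of Lemma 1.1 on \cite[page 250]{temam}-type renormalization results, namely by mollification in space and a commutator estimate of DiPerna--Lions type. Let $\theta_\varepsilon$ be a standard mollifier on $\mathbb{R}^n$ and set $g_\varepsilon=g*\theta_\varepsilon$. Mollifying \eqref{diff} gives
\begin{equation*}
\partial_t g_\varepsilon+\diver(\boldsymbol{w}g_\varepsilon)=f*\theta_\varepsilon+R_\varepsilon,\qquad R_\varepsilon:=\diver(\boldsymbol{w}g_\varepsilon)-\diver(\boldsymbol{w}g)*\theta_\varepsilon,
\end{equation*}
which now holds pointwise in $x$ and, since $g_\varepsilon$ is smooth in $x$, in $W^{1,1}(0,T)$ with values in $L^p_{\mathrm{loc}}$. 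For fixed $\varepsilon$ I multiply by $p|g_\varepsilon|^{p-2}g_\varepsilon$ and integrate over $\mathbb{R}^n$. Using $\diver(\boldsymbol{w}g_\varepsilon)=\boldsymbol{w}\cdot\nabla g_\varepsilon+g_\varepsilon\diver\boldsymbol{w}$ and integrating $\boldsymbol{w}\cdot\nabla|g_\varepsilon|^p$ by parts (justified by a cutoff $\varphi_R$ as in Lemma \ref{calculus}(ii), since $g_\varepsilon\in L^p\cap W^{1,p}$ and $\boldsymbol{w}\in W^{1,\infty}$) yields
\begin{equation*}
\frac{\mathrm{d}}{\mathrm{d}t}\|g_\varepsilon\|_{L^p}^p=-(p-1)\int|g_\varepsilon|^p\diver\boldsymbol{w}\,\mathrm{d}\boldsymbol{x}+p\int|g_\varepsilon|^{p-2}g_\varepsilon\,(f*\theta_\varepsilon+R_\varepsilon)\,\mathrm{d}\boldsymbol{x}
\end{equation*}
in the sense of distributions on $(0,T)$. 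In integrated form this is an identity between $\|g_\varepsilon(t)\|_{L^p}^p-\|g_\varepsilon(s)\|_{L^p}^p$ and a time integral.

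The key step, and the main obstacle, is the commutator lemma. I need $\|R_\varepsilon(t)\|_{L^p}\le C\|\nabla\boldsymbol{w}(t)\|_{L^\infty}\|g(t)\|_{L^p}$ uniformly in $\varepsilon$, and $R_\varepsilon(t)\to0$ in $L^p$ for a.e. $t$. To get the bound I write
\begin{equation*}
R_\varepsilon(x)=\int g(y)\big(\boldsymbol{w}(x)-\boldsymbol{w}(y)\big)\cdot\nabla\theta_\varepsilon(x-y)\,\mathrm{d}y+g_\varepsilon\diver\boldsymbol{w},
\end{equation*}
and use $|\boldsymbol{w}(x)-\boldsymbol{w}(y)|\le\|\nabla\boldsymbol{w}\|_{L^\infty}|x-y|$ together with $\||\cdot|\,|\nabla\theta_\varepsilon|\|_{L^1}\le C$ and Young's inequality. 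The convergence is first checked for smooth $g$, for which $R_\varepsilon\to0$ directly, and then extended by density thanks to the uniform bound. Dominated convergence in time then applies, using $f\in L^1(L^p)$, $g\in L^\infty(L^p)$ and $\nabla\boldsymbol{w}\in L^1(L^\infty)$. This gives convergence of every term, since $|g_\varepsilon|^{p-2}g_\varepsilon\to|g|^{p-2}g$ in $L^{p'}$.

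Passing $\varepsilon\to0$ in the integrated identity, after noting $\|g_\varepsilon(t)\|_{L^p}\to\|g(t)\|_{L^p}$ for a.e. $t$, shows that $t\mapsto\|g(t)\|_{L^p}^p$ agrees a.e. with an absolutely continuous function whose derivative is the right-hand side stated in the lemma. This is exactly the claim for a.e. $t\in(0,T)$.
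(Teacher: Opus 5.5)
Your argument is correct: spatial mollification, the DiPerna--Lions commutator bound $\|R_\varepsilon\|_{L^p}\le C\|\nabla\boldsymbol{w}\|_{L^\infty}\|g\|_{L^p}$ with $R_\varepsilon\to 0$ by density, the chain rule for $g_\varepsilon\in W^{1,1}(0,T;L^p(\mathbb{R}^n))$, and dominated convergence in time together form the standard renormalization proof behind the references the paper cites. The paper itself states this lemma without proof, only citing those references. The only slips are cosmetic. The Temam lemma you invoke concerns identification of weak time derivatives, which is relevant only to your chain-rule step, not to the commutator estimate. Also, $\partial_t g_\varepsilon=-(\boldsymbol{w}g)*\nabla\theta_\varepsilon+f*\theta_\varepsilon$ lies in $L^1(0,T;L^p(\mathbb{R}^n))$ globally rather than merely in $L^p_{\mathrm{loc}}$, and that global property is what you need to differentiate $\|g_\varepsilon\|_{L^p}^p$ over all of $\mathbb{R}^n$.
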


Finally, we need to the following estimates on the equation for the Lam\'e operator $L$:
\begin{equation}\label{aue}
L\boldsymbol{f}=-a_1\Delta\boldsymbol{f} -(a_1+a_2)\nabla\diver\boldsymbol{f} =\boldsymbol{g} \qquad\text{in $\mathbb{R}^n$}.
\end{equation}
\begin{lem}[\cite{zhangzhifei}]\label{df3}
Let $q\in (1,\infty)$, and let $\boldsymbol{f}\in D^{1,q}(\mathbb{R}^n)$ 
be a weak solution to \eqref{aue} with the asymptotic condition $\boldsymbol{f}\to \boldsymbol{0}$ as $|\boldsymbol{x}|\to\infty$. 
Then, if $\boldsymbol{g}\in L^q(\mathbb{R}^n)$, there exists a constant $C>0$, depending only on $(n,a_1,k,q)$ and independent of $(\boldsymbol{f},\boldsymbol{g})$, such that
\begin{equation*}
\|\nabla^{2}\boldsymbol{f}\|_{L^q(\mathbb{R}^n)} \leq C\|\boldsymbol{g}\|_{L^q(\mathbb{R}^n)}.
\end{equation*} 
\end{lem}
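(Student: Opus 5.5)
The statement to prove is Lemma \ref{df3}: the $L^q$ elliptic estimate for the Lamé system \eqref{aue} on $\mathbb{R}^n$.

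The approach is Fourier-multiplier based. The Lamé operator has symbol $a_1|\xi|^2\mathbb{I}_n+(a_1+a_2)\xi\otimes\xi$ (up to the factor $4\pi^2$). Its eigenvalues are $a_1|\xi|^2$ on $\xi^\perp$ and $(2a_1+a_2)|\xi|^2$ along $\xi$. Under \eqref{kelaoxiusi} we have $a_1>0$, and in the setting of the paper $2a_1+a_2>0$ (the BD relation with $\delta>1-\frac1n$ gives $2a_1+na_2>0$, which yields $2a_1+a_2>0$ whenever $a_2<0$). Hence the symbol is invertible for $\xi\neq0$, and its inverse is
\[
\frac{1}{a_1|\xi|^2}\Big(\mathbb{I}_n-\frac{\xi\otimes\xi}{|\xi|^2}\Big)+\frac{1}{(2a_1+a_2)|\xi|^2}\frac{\xi\otimes\xi}{|\xi|^2}.
\]

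The plan is to proceed in three steps.

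\textbf{Step 1 (smooth data).} For $\boldsymbol{g}\in C_{\rm c}^\infty(\mathbb{R}^n)$, I define $\boldsymbol{f}_*=L^{-1}\boldsymbol{g}$ through the inverse symbol above. Then $\partial_j\partial_k\boldsymbol{f}_*$ is $\boldsymbol{g}$ acted on by multipliers of the form $\xi_j\xi_k/|\xi|^2$ and $\xi_j\xi_k\xi_l\xi_m/|\xi|^4$. These are homogeneous of degree zero and smooth away from the origin, so the Mihlin--Hörmander theorem (already used in the proof of Lemma \ref{im-1}) gives $\|\nabla^2\boldsymbol{f}_*\|_{L^q}\le C\|\boldsymbol{g}\|_{L^q}$ for $1<q<\infty$.

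\textbf{Step 2 (general data).} For $\boldsymbol{g}\in L^q$ I argue by density. I approximate $\boldsymbol{g}$ by $\boldsymbol{g}_k\in C_{\rm c}^\infty$ and pass to the limit in the homogeneous space $D^{2,q}$ defined modulo affine functions. This produces $\boldsymbol{f}_*$ with $\nabla^2\boldsymbol{f}_*\in L^q$ satisfying $L\boldsymbol{f}_*=\boldsymbol{g}$ weakly and the bound of Step 1.

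\textbf{Step 3 (uniqueness; the main obstacle).} Set $\boldsymbol{h}=\boldsymbol{f}-\boldsymbol{f}_*$, so that $L\boldsymbol{h}=0$ in $\mathcal{S}'$.

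\begin{itemize}
\item Taking the Fourier transform, the invertibility of the symbol away from $0$ forces $\hat{\boldsymbol{h}}$ to be supported at the origin. Hence $\boldsymbol{h}$ is a polynomial.
\item Since $\nabla\boldsymbol{h}=\nabla\boldsymbol{f}-\nabla\boldsymbol{f}_*$ and $\nabla\boldsymbol{f}\in L^q$, the polynomial $\nabla\boldsymbol{h}$ can grow at most like a constant. So $\boldsymbol{h}$ is affine and $\nabla^2\boldsymbol{h}=0$. A bounded-growth argument on balls, or the decay condition $\boldsymbol{f}\to\boldsymbol{0}$, confirms this.
\end{itemize}

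It follows that $\nabla^2\boldsymbol{f}=\nabla^2\boldsymbol{f}_*$, and the estimate transfers to $\boldsymbol{f}$.

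The main care needed in Step 3 is justifying that $\boldsymbol{f}$ is a tempered distribution and controlling the polynomial degree. Here I would use $\boldsymbol{f}\in D^{1,q}$ together with the decay condition, which places $\boldsymbol{f}$ in $L^{nq/(n-q)}$ if $q<n$ and in a BMO-type class otherwise.
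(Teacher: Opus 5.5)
The paper does not prove this lemma; it quotes it from \cite{zhangzhifei} as a standard elliptic estimate. Your Fourier-multiplier argument is a correct, self-contained proof, and it uses the same Mihlin--H\"ormander theorem the paper itself invokes in Lemma \ref{im-1}.

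The usual physical-space route is different only in presentation. One takes the divergence of \eqref{aue} to get $-(2a_1+a_2)\Delta\diver\boldsymbol{f}=\diver\boldsymbol{g}$, then treats $-a_1\Delta\boldsymbol{f}=\boldsymbol{g}+(a_1+a_2)\nabla\diver\boldsymbol{f}$, applying the $L^q$ theory of the Laplacian twice. This is the same splitting of the symbol into its parts along $\xi$ and on $\xi^\perp$. What your version adds is the explicit Liouville step (Step 3). That step is exactly what passes the estimate from the solution you construct to an arbitrary weak solution in $D^{1,q}$.

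Three points need tightening; none is a genuine gap.

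\textbf{The case $n=2$.} Here the inverse symbol is of size $|\xi|^{-2}$, which is not locally integrable at $\xi=0$. So $L^{-1}\boldsymbol{g}$ cannot literally be defined by Fourier inversion. Instead, define $\nabla^2\boldsymbol{f}_*$ directly through the degree-zero multipliers (or use the logarithmic fundamental solution). Then recover $\boldsymbol{f}_*$ modulo affine functions, which is all Step 2 uses.

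\textbf{Growth of $\nabla\boldsymbol{h}$ in Step 3.} The polynomial $\nabla\boldsymbol{h}=\nabla\boldsymbol{f}-\nabla\boldsymbol{f}_*$ is not bounded merely because $\nabla\boldsymbol{f}\in L^q$. You only know $\nabla^2\boldsymbol{f}_*\in L^q$, so for $q\ge n$ the field $\nabla\boldsymbol{f}_*$ may grow logarithmically or like $|\boldsymbol{x}|^{1-n/q}$. The clean argument is as follows.
\begin{itemize}
\item By H\"older and Poincar\'e, the mean oscillation of $\nabla\boldsymbol{h}$ over $B_R$ is $O(R^{-n/q}+R^{1-n/q})=o(R)$.
\item A polynomial of degree $d\ge 1$ has mean oscillation at least $cR^d$ on $B_R$ for large $R$.
\item Hence $\nabla\boldsymbol{h}$ is constant and $\nabla^2\boldsymbol{h}=0$.
\end{itemize}
This also shows that the decay hypothesis $\boldsymbol{f}\to\boldsymbol{0}$ is not actually needed for the second-derivative bound.

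\textbf{Dependence of the constant.} Your constant depends on $(2a_1+a_2)^{-1}$, which equals $(2\delta a_1)^{-1}$ under \eqref{coeff}. Testing with gradient fields $\boldsymbol{g}=\nabla\varphi$ shows this dependence cannot be removed. So the list $(n,a_1,k,q)$ in the statement should be read as including $\delta$. This is harmless in the paper, since all its constants depend on $\delta$ anyway.
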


\medskip
\section{Conversion of Sobolev Spaces for Spherically Symmetric Functions}\label{appb}

This appendix is devoted to showing the conversion of some Sobolev spaces between the M-D Eulerian coordinate $\boldsymbol{x}$ and the spherical coordinate $r$ for spherically symmetric functions. 
Let $n$ be the spatial dimensions and $m=n-1$.

\begin{lem}[\cite{CZZ1}]\label{lemma-initial}
Let $q\in [1,\infty]$, $0\leq a<b\leq \infty$, $\Omega:=\{\boldsymbol{x}\in \mathbb{R}^n:\, a\leq |\boldsymbol{x}|< b\}$, and $r\in J:=[a,b)$ with $r=|\boldsymbol{x}|$. 
Then
\begin{enumerate}
\item[$\mathrm{(i)}$] For spherically symmetric function $f\in W^{3,q}(\Omega)$ with $f(\boldsymbol{x})=f(r)$,  
\begin{equation*} 
\|f\|_{L^q(\Omega)} \sim 
\|r^\frac{m}{q}f\|_{L^q(J)},\qquad \|\nabla^j f\|_{L^q(\Omega)} \sim 
\|r^\frac{m}{q}\mathrm{D}_r^{j-1}f_r\|_{L^q(J)} \quad \text{for $j=1,2,3$};
\end{equation*}
\item[$\mathrm{(ii)}$] For spherically symmetric vector function $\boldsymbol{f}\in W^{4,q}(\Omega)$ with $\boldsymbol{f}(\boldsymbol{x})=f(r)\frac{\boldsymbol{x}}{r}$,
\begin{equation*}
\|\nabla^j\boldsymbol{f}\|_{L^q(\Omega)} \sim \|r^\frac{m}{q}\mathrm{D}_r^jf\|_{L^q(J)} \quad \text{for $j=0,1,2,3,4$}.
\end{equation*}
\end{enumerate}
Here, $E\sim F$ denotes $C^{-1}E\leq F\leq CE$ for some constant $C\geq 1$ depending only on $n$.
\end{lem}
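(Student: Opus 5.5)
The plan is to reduce everything to explicit formulas for derivatives of radial functions and then compare pointwise in the radial variable, using polar coordinates $\mathrm{d}\boldsymbol{x}=\omega_n r^m\,\mathrm{d}r\,\mathrm{d}\theta$ for the integrals. This immediately gives $\|g\|_{L^q(\Omega)}=\omega_n^{1/q}\|r^{m/q}g\|_{L^q(J)}$ for any radial scalar $g$. So it suffices to show that, for each order $j$, the Euclidean norm $|\nabla^j f|(\boldsymbol{x})$ (resp.\ $|\nabla^j\boldsymbol{f}|(\boldsymbol{x})$) is pointwise comparable to $|\mathrm{D}_r^{j-1}f_r|(r)$ (resp.\ $|\mathrm{D}_r^j f|(r)$), with constants depending only on $n$. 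I would first argue for smooth functions and then pass to $W^{k,q}$ by density of radial smooth functions, obtained by mollifying with radial mollifiers. Since radial mollification commutes with rotations, it preserves the form $f(r)\frac{\boldsymbol{x}}{r}$. The case $q=\infty$ is handled directly by the pointwise comparison.

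For part (i), write $\nabla f=f_r\frac{\boldsymbol{x}}{r}$. This is a radial vector field with profile $f_r$, so $\nabla^j f=\nabla^{j-1}(f_r\frac{\boldsymbol{x}}{r})$, and part (i) for $j\le3$ follows from part (ii) with $j-1\le2$ applied to the profile $f_r$.

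For part (ii), I will compute $\partial_i f_k$ for $\boldsymbol{f}=f(r)\frac{x_k}{r}$:
\[
\partial_i f_k=f_r\frac{x_ix_k}{r^2}+\frac{f}{r}\Big(\delta_{ik}-\frac{x_ix_k}{r^2}\Big).
\]
The two matrices $P=\hat x\otimes\hat x$ and $P^\perp=I-P$ are orthogonal projections, so $|\nabla\boldsymbol{f}|^2=f_r^2+m\,(f/r)^2$, which is comparable to $|\mathrm{D}_rf|^2$.

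For higher orders I will proceed by induction, using an ansatz. The $j$-th derivative is a linear combination, with universal coefficients, of a fixed finite family of tensors built from $\hat x$ and $\delta$ (symmetrized products such as $\hat x^{\otimes j+1}$ and $\delta\otimes\hat x^{\otimes j-1}$). The scalar coefficients are exactly the entries of $\mathrm{D}_r^jf$, for example $f_{rr}$ and $(f/r)_r$ at $j=2$. This works because differentiating $g(r)$ produces $g_r\hat x$, differentiating $\hat x$ produces $\frac1r P^\perp$, and the groupings in the definition of $\mathrm{D}_r^j$ (such as $(\frac{f}{r})_r$, $\frac{f_{rr}}{r}$, and $\frac1r(\frac fr)_r$) are precisely the combinations that arise. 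The identities $f_{rr}=r(\frac fr)_{rr}+2(\frac fr)_r$, together with the analogous higher ones used in Lemma \ref{im-2}, let me rewrite each coefficient in terms of the listed components.

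The upper bound $|\nabla^j\boldsymbol{f}|\le C|\mathrm{D}_r^jf|$ then follows from the triangle inequality, since the basis tensors have bounded norm. The lower bound follows because the family of basis tensors at a fixed unit vector $\hat x$ is linearly independent, so the Gram matrix is positive definite. That Gram matrix is independent of $\hat x$ by rotation invariance, which gives a uniform constant depending only on $n$.

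I expect the main obstacle to be the lower bound at orders $3$ and $4$. There one must check that the specific components listed in $\mathrm{D}_r^3$ and $\mathrm{D}_r^4$ are each recoverable from $\nabla^j\boldsymbol{f}$, and not merely some combination of them. My plan is to verify this by contracting $\nabla^j\boldsymbol{f}$ against suitable test tensors: first against $\hat x^{\otimes j+1}$, which isolates the pure $r$-derivative, and then against tensors containing one or two factors $e\perp\hat x$, which isolate the $\frac1r$-type terms. I would do this for $n=2,3$ explicitly, using a second unit vector orthogonal to $\hat x$, and solve the resulting triangular linear system.
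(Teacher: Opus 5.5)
The paper does not prove this lemma; it quotes it from \cite{CZZ1}, so there is no in-paper argument to compare with, and your proposal has to stand on its own. It does. Polar coordinates reduce everything to a pointwise comparison $|\nabla^j\boldsymbol{f}(\boldsymbol{x})|\sim|\mathrm{D}_r^jf(r)|$ with constants depending only on $n$. Part (i) also follows as you say, from part (ii) at orders $0,1,2$ applied to the profile $f_r$ of $\nabla f=f_r\,\boldsymbol{x}/r$.

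Two points in the execution need correcting or making precise. First, the coefficients are not \emph{exactly} the entries of $\mathrm{D}_r^jf$, already at order $2$. With $\hat x=\boldsymbol{x}/r$,
\[
\partial_i\partial_l f_k=\big(f_{rr}-3(\tfrac{f}{r})_r\big)\hat x_i\hat x_l\hat x_k+(\tfrac{f}{r})_r\big(\delta_{il}\hat x_k+\delta_{lk}\hat x_i+\delta_{ik}\hat x_l\big),\qquad |\nabla^2\boldsymbol{f}|^2=f_{rr}^2+3m\big|(\tfrac{f}{r})_r\big|^2 .
\]
What you need, and what is true, is that coefficients and entries are related by a constant invertible matrix.

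Second, the difficulty you anticipate at orders $3$ and $4$ comes down to a dimension count. Since $\boldsymbol{f}=\nabla\Phi$ with $\Phi$ radial, $\nabla^j\boldsymbol{f}$ is fully symmetric. So only three equivariant structures occur:
\begin{itemize}
\item $\hat x^{\otimes(j+1)}$;
\item $\Sigma_\delta$, the sum over all placements of one $\delta$ and $j-1$ factors $\hat x$;
\item $\Sigma_{\delta\delta}$, the sum over all placements of two $\delta$'s and $j-3$ factors $\hat x$.
\end{itemize}
These are linearly independent for $n\ge2$. But $\mathrm{D}_r^3f$ and $\mathrm{D}_r^4f$ each list four entries, so they cannot all play the role of independent coefficients. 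The resolution is that one entry is redundant:
\[
(\tfrac{f}{r})_{rr}=\tfrac{f_{rr}}{r}-\tfrac{2}{r}(\tfrac{f}{r})_r,\qquad (\tfrac{f}{r})_{rrr}=(\tfrac{f_{rr}}{r})_r-2\big(\tfrac{1}{r}(\tfrac{f}{r})_r\big)_r .
\]
The three coefficients, with respect to $(\hat x^{\otimes(j+1)},\Sigma_\delta,\Sigma_{\delta\delta})$, are unit-triangular in the remaining three entries.
\begin{itemize}
\item For $j=3$ they are $f_{rrr}-6\frac{f_{rr}}{r}+\frac{15}{r}(\frac{f}{r})_r$, $\frac{f_{rr}}{r}-\frac{3}{r}(\frac{f}{r})_r$ and $\frac{1}{r}(\frac{f}{r})_r$.
\item For $j=4$, writing $c:=(\frac{1}{r}(\frac{f}{r})_r)_r$, they are $f_{rrrr}-10(\frac{f_{rr}}{r})_r+35c$, $(\frac{f_{rr}}{r})_r-5c$ and $c$.
\end{itemize}
So your contraction/triangular-system plan closes, with constants depending only on $n$.

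For the passage to $W^{k,q}$, it is simpler to note that these identities hold a.e.\ for weak derivatives on $\Omega\setminus\{0\}$, where polar coordinates are a smooth change of variables. That avoids radial mollification up to $\partial\Omega$.
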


\section{Sobolev Embeddings for Spherically Symmetric Functions}\label{improve-sobolev}

In this appendix, we give several improved Sobolev embeddings of the type: $D^{1,p}(\mathbb{R}^n)\hookrightarrow L^{q}(\mathbb{R}^n)$ for spherically symmetric vector functions when $p\in [1,n]$. The proofs of the following two lemmas can be found in \cite[Appendix C]{CZZ1}.

\begin{lem}[\cite{CZZ1}]\label{lemma-L6}
Let $\boldsymbol{f}(\boldsymbol{x})=f(r)\frac{\boldsymbol{x}}{r}$ be any spherically symmetric vector function defined in $\mathbb{R}^n$ {\rm(}$n\geq 2${\rm)}. If $\boldsymbol{f}\in D^{1,p}(\mathbb{R}^n)$ for some $p\in [1,n)$, then $\boldsymbol{f}\in L^\frac{np}{n-p}(\mathbb{R}^n)$, and there exists a constant $C(n,p)>0$ depending only on $(n,p)$ such that 
\begin{equation}\label{ineq-L6}
\|\boldsymbol{f}\|_{L^\frac{np}{n-p}(\mathbb{R}^n)}\leq C(n,p)\|\nabla \boldsymbol{f}\|_{L^p(\mathbb{R}^n)}.
\end{equation}
\end{lem}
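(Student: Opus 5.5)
The plan is to reduce the statement to a one-dimensional weighted inequality via Lemma \ref{lemma-initial}, which turns it into a Hardy-type estimate on $I=[0,\infty)$. By Lemma \ref{lemma-initial}, $\|\nabla\boldsymbol{f}\|_{L^p}\sim |r^{\frac{m}{p}}(f_r,\frac{f}{r})|_p$ and $\|\boldsymbol{f}\|_{L^{q}}\sim |r^{\frac{m}{q}}f|_q$ with $q=\frac{np}{n-p}$. So it suffices to prove
\[
|r^{\frac{m}{q}}f|_q\leq C\,|r^{\frac{m}{p}}(f_r,\tfrac{f}{r})|_p .
\]
The key advantage of the vector structure is that the term $f/r$ is itself controlled. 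This means no decay hypothesis at infinity is needed, which is exactly where the statement improves on the scalar Sobolev inequality of Lemma \ref{ale1}.

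\textbf{Step 1: pointwise decay bound.} I will first derive a weighted $L^\infty$ bound. Set $F:=r^{\frac{n-p}{p}}f=r^{\frac{m}{p}}\cdot r^{1-\frac1p}f$ and aim for $\sup_r |F(r)|\leq C|r^{\frac mp}(f_r,\frac fr)|_p$. Since $\frac{n-p}{p}\,|F|^{p}$ behaves like $r^{n-p}|f|^p$, I would write
\[
r^{n-p}|f|^p=-\int_r^\infty \big(s^{n-p}|f|^p\big)_s\,\mathrm{d}s ,
\]
which requires $s^{n-p}|f|^p\to0$ along a sequence. That follows from $\int s^{m-p}|f|^p\,\mathrm{d}s<\infty$, because $s^{m-p}|f|^p=\frac1s\, s^{n-p}|f|^p$ is integrable. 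Differentiating produces the terms $s^{n-p-1}|f|^p$ and $s^{n-p}|f|^{p-1}|f_s|$. By H\"older both are bounded by $|r^{\frac mp}(f_r,\frac fr)|_p^p$: the weights match, since $s^{n-p-1}=s^{m}s^{-p}$ and $s^{n-p}=s^{m(1-\frac1p)}s^{-(p-1)}\cdot s^{\frac mp}$.

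\textbf{Step 2: interpolation.} Next I split $|r^{\frac mq}f|_q^q$ as
\[
|r^{\frac mq}f|_q^q=\int_0^\infty r^m|f|^{q}\,\mathrm{d}r\leq \sup_r\big(r^{\frac{n-p}{p}}|f|\big)^{q-p}\int_0^\infty r^{m-(q-p)\frac{n-p}{p}}|f|^p\,\mathrm{d}r .
\]
The exponent check is $(q-p)\frac{n-p}{p}=\frac{p^2}{n-p}\cdot\frac{n-p}{p}=p$. Hence the remaining integral is exactly $\int r^{m-p}|f|^p=|r^{\frac mp}\frac fr|_p^p$. Combining this with Step 1 gives $|r^{\frac mq}f|_q^q\leq C|r^{\frac mp}(f_r,\frac fr)|_p^{q}$, which is the desired inequality.

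\textbf{Main obstacle: justifying the boundary terms in Step 1.} The delicate part is the boundary terms at $r=\infty$ and $r=0$, given that $f$ is only in $D^{1,p}$. To handle this, I would first work with $\boldsymbol{f}\in C^\infty_{\rm c}$, where no boundary issue arises. I would then approximate by radially truncating and mollifying $\boldsymbol{f}$; both operations preserve the form $f(r)\frac{\boldsymbol x}{r}$. Convergence follows since the right side controls $|r^{\frac mp}\frac fr|_p$, so truncation errors vanish. Fatou's lemma (Lemma \ref{Fatou}) then passes the inequality to the limit. The case $p=1$ works the same way, with the H\"older steps replaced by direct integration.
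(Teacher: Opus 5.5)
Your proof is correct and complete. Two small remarks:

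\begin{itemize}
\item In Step 1 the factorization should read $r^{\frac{n-p}{p}}=r^{\frac mp}\,r^{\frac1p-1}$, not $r^{\frac mp}r^{1-\frac1p}$. The quantity you actually work with, $r^{n-p}|f|^p$, and the H\"older bookkeeping for its derivative are right.
\item The approximation paragraph is unnecessary. The pointwise identity $|\nabla\boldsymbol f|^2=f_r^2+m\,f^2/r^2$ gives $f\in W^{1,p}_{\mathrm{loc}}(0,\infty)$ directly for $\boldsymbol f\in D^{1,p}$. Hence $s^{n-p}|f|^p$ is locally absolutely continuous, with derivative in $L^1(r,\infty)$. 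Vanishing along a sequence $s_k\to\infty$ is then all you need, and you correctly get it from $\int_0^\infty s^{-1}\,s^{n-p}|f|^p\,\mathrm{d}s<\infty$. There is no boundary term at $r=0$ at all: Step 1 integrates over $[r,\infty)$, and Step 2 is just H\"older. The truncation and mollification you describe do work, since $|\boldsymbol f|/|\boldsymbol x|\le m^{-1/2}|\nabla\boldsymbol f|$ controls the cutoff error, but they buy nothing.
\end{itemize}

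The paper gives no proof here and defers to \cite[Appendix C]{CZZ1}. Remark \ref{remc1} attributes the lemma to the fact that a spherically symmetric constant vector must vanish. That is the constant-removal mechanism: on $D^{1,p}$ with $p<n$, the homogeneous Sobolev inequality fails only because of constants, and a constant $\boldsymbol c$ with $\mathcal O\boldsymbol c=\boldsymbol c$ for all $\mathcal O\in\mathrm{SO}(n)$ must be $\boldsymbol 0$.

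That route is short, but it rests on the classical theorem that every $D^{1,p}$ function differs from a constant by an $L^{\frac{np}{n-p}}$ function. It uses the symmetry only through rotation-equivariance.

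Your route is elementary and quantitative. It shows explicitly how the $f/r$ component of $\nabla\boldsymbol f$ forces the Strauss-type decay $|f(r)|\le C r^{-\frac{n-p}{p}}\|\nabla\boldsymbol f\|_{L^p}$, and then interpolates. It also reaches the endpoint. At $p=n$ the weight $r^{n-p}$ disappears, and your Step 1 gives $|f(r)|^n\le n\,|r^{\frac mn}f_r|_n\,|r^{\frac mn}\frac fr|_n^{n-1}$. That is the companion bound of Lemma \ref{Hk-Ck-vector}, which constant-removal alone cannot give, because $D^{1,n}$ functions need not be bounded modulo constants.
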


\begin{lem}[\cite{CZZ1}]\label{Hk-Ck-vector}
Let $\boldsymbol{f}(\boldsymbol{x})=f(r)\frac{\boldsymbol{x}}{r}$ be any spherically symmetric vector function defined in $\mathbb{R}^n$ $(n\geq 2)$. If $\boldsymbol{f}\in D^{1,n}(\mathbb{R}^n)$, then $\boldsymbol{f}\in C(\overline{\mathbb{R}^n})$, and there exists a uniform constant $C(n)>0$ depending only on $n$ such that
\begin{equation}\label{vectorC3}
\|\boldsymbol{f}\|_{L^\infty(\mathbb{R}^n)}\leq C(n)\|\nabla\boldsymbol{f}\|_{L^n(\mathbb{R}^n)}.
\end{equation}
\end{lem}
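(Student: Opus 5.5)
The statement to prove is Lemma \ref{Hk-Ck-vector}. I would argue in the radial variable. By Lemma \ref{lemma-initial} (ii) with $q=n$ and $j=1$, the hypothesis $\boldsymbol{f}\in D^{1,n}(\mathbb{R}^n)$ is equivalent to $r^{\frac{m}{n}}(f_r,\frac{f}{r})\in L^n(I)$, with equivalent norms. The first step is to use this to show that $f$ has a limit at infinity and at the origin, and to bound it pointwise.

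The key observation is that $|f|^n$ can be differentiated and controlled by the weighted quantity. Since $m=n-1$, I compute
\begin{equation*}
\big(|f|^n\big)_r = n|f|^{n-2}f f_r,\qquad |f|^{n-1}|f_r| = r^{m}\Big|\frac{f}{r}\Big|^{n-1}|f_r|.
\end{equation*}
The weights cancel exactly: $r^{n-1}\cdot r^{-(n-1)}=1$. H\"older with exponents $\frac{n}{n-1}$ and $n$ then gives
\begin{equation*}
\int_0^\infty |(|f|^n)_r|\,\mathrm{d}r\le n\Big|r^{\frac{m}{n}}\frac{f}{r}\Big|_n^{n-1}\big|r^{\frac{m}{n}}f_r\big|_n\le C\|\nabla\boldsymbol{f}\|_{L^n}^n .
\end{equation*}
So $|f|^n\in D^{1,1}(I)$ and is locally absolutely continuous on $(0,\infty)$. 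It follows that $|f|^n$ has limits at both $0$ and $\infty$, and its oscillation is bounded by the right-hand side.

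Both limits must be zero. If $|f(r)|\ge c>0$ near infinity, then $\int^\infty r^{m}|f/r|^n\,\mathrm{d}r\ge c^n\int^\infty r^{-1}\,\mathrm{d}r=\infty$, a contradiction. The same divergence of $\int_0 r^{-1}\,\mathrm{d}r$ excludes a nonzero limit at $0$. This is exactly the argument of Lemma \ref{calculus} (i), with $\int r^{-1}$ in place of $L^p$ integrability. It then follows that $\|f\|_{L^\infty(I)}^n\le C\|\nabla \boldsymbol{f}\|_{L^n}^n$, which is \eqref{vectorC3}, because $|\boldsymbol{f}|=|f|$.

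For continuity, $f$ is continuous on $(0,\infty)$ because $f\in W^{1,n}_{\rm loc}(0,\infty)$. At the origin, $f(r)\to0$ and $\boldsymbol{f}(\boldsymbol{0})=\boldsymbol{0}$ (Lemma \ref{rmk31}), so $\boldsymbol{f}$ is continuous there. Uniform continuity on $\overline{\mathbb{R}^n}$ follows from $\boldsymbol{f}\to\boldsymbol{0}$ at infinity combined with continuity on compact sets.

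The main obstacle is justification at the level of regularity. I need that $\boldsymbol{f}\in D^{1,n}$ gives a representative $f$ that is locally absolutely continuous in $r$, and that the chain rule for $|f|^n$ holds. I would handle this by first proving the estimate for $C_c^\infty$ radial fields and then passing to the limit by density. Density of radial smooth fields in $D^{1,n}$ would come from mollifying with a radial kernel and cutting off, where the cutoff error is controlled by the already-proved $L^\infty$ bound.
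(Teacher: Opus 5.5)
The paper does not prove this lemma itself; it cites \cite[Appendix C]{CZZ1}. Your argument therefore has to stand on its own, and its core does. The cancellation $|f|^{n-1}|f_r|=\big(r^{\frac{m}{n}}|\frac{f}{r}|\big)^{n-1}r^{\frac{m}{n}}|f_r|$, H\"older's inequality, and the identity $r^m|\frac{f}{r}|^n=r^{-1}|f|^n$ together give \eqref{vectorC3} with a constant depending only on $n$; the identity forces $|f|^n\to0$ at $0$ and at $\infty$. Continuity at the origin follows from $f(r)\to0$ as $r\to0$ once you set $\boldsymbol{f}(\boldsymbol{0})=\boldsymbol{0}$. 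Citing Lemma \ref{rmk31} there adds nothing, since a $D^{1,n}$ field has no prescribed value at a point.

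The step that fails as written is the density argument. With $\chi_R=\chi(\cdot/R)$, the cutoff error satisfies $\|\boldsymbol{f}_\epsilon\otimes\nabla\chi_R\|_{L^n}\le\|\nabla\chi\|_{L^n}\|\boldsymbol{f}_\epsilon\|_{L^\infty(B_{2R}\backslash B_R)}$, where $\boldsymbol{f}_\epsilon=\boldsymbol{f}*\varpi_\epsilon$ for a radial mollifier $\varpi_\epsilon$. So a bound on $\|\boldsymbol{f}_\epsilon\|_{L^\infty}$ only keeps this error bounded, not small; you would need $\boldsymbol{f}_\epsilon\to\boldsymbol{0}$ at infinity. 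Moreover, that bound has so far been proved only for compactly supported fields, so invoking it for $\boldsymbol{f}_\epsilon$ before the cutoff is circular.

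There are two ways to repair this.
\begin{itemize}
\item Control the cutoff by the Hardy-type term already contained in $\nabla\boldsymbol{f}$. Since $|\nabla\boldsymbol{f}|^2=f_r^2+m\frac{f^2}{r^2}\ge m\frac{|\boldsymbol{f}|^2}{|\boldsymbol{x}|^2}$, one has $\|\boldsymbol{f}_\epsilon\otimes\nabla\chi_R\|_{L^n}\le\frac{C}{R}\|\boldsymbol{f}_\epsilon\|_{L^n(B_{2R}\backslash B_R)}\le Cm^{-\frac12}\|\nabla\boldsymbol{f}_\epsilon\|_{L^n(\{|\boldsymbol{x}|\ge R\})}\to0$.
\item More simply, drop the cutoff. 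Your one-dimensional argument never uses compact support; it uses only smoothness of $f$ on $(0,\infty)$ and finiteness of $\int_0^\infty r^m(|f_r|^n+|\frac{f}{r}|^n)\,\mathrm{d}r$. Apply it to $\boldsymbol{f}_\epsilon$ and to $\boldsymbol{f}_\epsilon-\boldsymbol{f}_{\epsilon'}$. This gives $\|\boldsymbol{f}_\epsilon-\boldsymbol{f}_{\epsilon'}\|_{L^\infty}\le C\|(\nabla\boldsymbol{f})*(\varpi_\epsilon-\varpi_{\epsilon'})\|_{L^n}\to0$. Hence $\boldsymbol{f}_\epsilon$ converges uniformly to a representative of $\boldsymbol{f}$ (recall $\boldsymbol{f}_\epsilon\to\boldsymbol{f}$ in $L^1_{\mathrm{loc}}$), and this limit is continuous and vanishes at infinity. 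That yields $\boldsymbol{f}\in C(\overline{\mathbb{R}^n})$ and \eqref{vectorC3} at once.
\end{itemize}

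Finally, when you assert that $\boldsymbol{f}_\epsilon$ keeps the form $f_\epsilon(r)\frac{\boldsymbol{x}}{r}$, use that convolution with a radial kernel commutes with all of $\mathrm{O}(n)$, reflections included. For $n=2$, $\mathrm{SO}(2)$-equivariance alone does not rule out a swirl field $g(r)(-x_2,x_1)^\top/r$, so Lemma \ref{rmk31} should not be invoked for this step.
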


\begin{rk}\label{remc1}
{\rm Lemmas \ref{lemma-L6}--\ref{Hk-Ck-vector}} do not hold for general vector functions or scalar functions.

For {\rm Lemma \ref{lemma-L6}}, such examples include  
$\boldsymbol{f}=(1,\cdots\!,1)^\top$ or $f=1$. This is mainly owing to the fact that any spherically symmetric constant vector function must vanish. 

For {\rm Lemma \ref{Hk-Ck-vector}}, consider a function $f=f(z)$ defined on $[0,\infty)$ satisfying
\begin{equation*} 
f(0)=0,\qquad f(z)= |\log z|^\frac{1}{3}\quad \text{on $(0,e^{-1}]$},\qquad f(z)=(ez)^{-2}\quad  \text{on $(e^{-1},\infty)$}.
\end{equation*}
Define $g(\boldsymbol{x})\!=\!f(|\boldsymbol{x}|)$ and $\boldsymbol{h}(\boldsymbol{x})$ with $h_i(\boldsymbol{x})=f(|x_i|)$ $(1\leq i\leq n)$. 
It can be checked that $(g,\boldsymbol{h})$ admits the weak derivatives $(\partial_jg,\partial_j\boldsymbol{h})\in L^n(\mathbb{R}^n)$ $(1\leq j\leq n)$, while $(g,\boldsymbol{h})\notin L^\infty(\mathbb{R}^n)$.
\end{rk}

\section{Some Auxiliary Calculations}\label{appd}
This section is devoted to providing the detailed calculations of \eqref{6-3} and \eqref{uell}. To this end, we only need to show the following lemma.
\begin{lem}
Let $\ell=2,3,4,5$. Then the following equality holds{\rm:}
\begin{align*}
&\,\Big(\rho u_t+\rho uu_r-2a_1\delta \big(\rho^\delta u_r+\frac{m\rho^\delta u}{r}\big)_r+2a_1m(\rho^\delta)_r\frac{u}{r}\Big)\times (\rho^{-\alpha}|u|^{\ell-2}u)\\
&=\big(\frac{1}{\ell}\rho^{1-\alpha}|u|^\ell\big)_t+2(\ell-1)a_1\delta\rho^{\delta-\alpha}|u|^{\ell-2}\Big(u_r^2-m\frac{1-\delta}{\delta-\alpha}u_r\frac{u}{r}+\frac{m}{\ell}\frac{1-\alpha}{\delta-\alpha}\frac{u^2}{r^2}\Big)\notag\\
&\quad -\Big(a_1\delta\rho^{\delta-\alpha}|u|^{\ell-2}\big(2uu_r+2m\frac{\ell\delta-\ell\alpha- \ell+1}{\ell(\delta-\alpha)}\frac{u^{2}}{r}\big) -\frac{(\ell-1)\alpha+\ell+1}{\ell(\ell+1)}\rho^{1-\alpha}u|u|^\ell\Big)_r\\
&\quad -\rho^{1-\alpha}vu|u|^{\ell-2}\Big(\alpha u_r-\frac{m(1-\alpha)}{\ell}\frac{u}{r}\Big)-\frac{(\ell-1)\alpha(1-\alpha)}{2\ell(\ell+1)a_1\delta}\rho^{2-\delta-\alpha}(v-u)u|u|^{\ell}.
\end{align*}
\end{lem}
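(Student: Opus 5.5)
The identity is purely algebraic. I will verify it term by term, using only three ingredients: the mass equation $\eqref{e1.5}_1$, its weighted form \eqref{mass-alpha}, and the definition \eqref{V-expression}. The latter gives $\rho_r=\frac{\rho^{2-\delta}}{2a_1\delta}(v-u)$, so every derivative of a power of $\rho$ can be traded for $\rho^{\text{power}}(v-u)$. Throughout I write $W:=\rho^{-\alpha}|u|^{\ell-2}u$.

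\textbf{Inertial part.} First I treat $(\rho u_t+\rho uu_r)W=\rho^{1-\alpha}|u|^{\ell-2}u(u_t+uu_r)$. I write $\rho^{1-\alpha}|u|^{\ell-2}uu_t=(\frac1\ell\rho^{1-\alpha}|u|^\ell)_t-\frac1\ell(\rho^{1-\alpha})_t|u|^\ell$. Then I substitute \eqref{mass-alpha} for $(\rho^{1-\alpha})_t$. This produces $\frac1\ell(u\rho^{1-\alpha})_r|u|^\ell$, which I combine with $\rho^{1-\alpha}|u|^{\ell}u_r$ into the flux $\frac{1}{\ell+1}\big(\rho^{1-\alpha}u|u|^\ell\big)_r$ plus a remainder proportional to $(\rho^{1-\alpha})_r u|u|^\ell$. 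The remaining pieces are $-\frac{\alpha}{\ell}\rho^{1-\alpha}u_r|u|^\ell$ and $\frac{m(1-\alpha)}{\ell}\rho^{1-\alpha}\frac{u}{r}|u|^\ell$. These will be rewritten using $u=v-2a_1\psi$, i.e.\ by splitting $u=v-(v-u)$, so that one part supplies the $\rho^{1-\alpha}vu|u|^{\ell-2}(\alpha u_r-\frac{m(1-\alpha)}{\ell}\frac ur)$ term. The complementary $(v-u)$ part is then converted back into $\rho$-derivatives.

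\textbf{Viscous part.} Next I integrate by parts in $r$ on $-2a_1\delta(\rho^\delta u_r+m\rho^\delta\frac ur)_rW$, moving the derivative onto $W$. Here
$$W_r=(\ell-1)\rho^{-\alpha}|u|^{\ell-2}u_r-\alpha\rho^{-\alpha-1}\rho_r|u|^{\ell-2}u.$$
The first piece produces the quadratic form $2(\ell-1)a_1\delta\rho^{\delta-\alpha}|u|^{\ell-2}(u_r^2+m u_r\frac ur)$ together with a flux $2a_1\delta\rho^{\delta-\alpha}|u|^{\ell-2}(uu_r+m\frac{u^2}{r})$. The $\rho_r$ piece, as well as $2a_1m(\rho^\delta)_r\frac ur W=2a_1m\delta\rho^{\delta-1-\alpha}\rho_r|u|^{\ell-2}\frac{u^2}{r}$, are converted through $\rho_r\propto\rho^{2-\delta}(v-u)$ into terms of type $\rho^{1-\alpha}(v-u)|u|^{\ell-2}u(\cdot)$. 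Terms of the form $\rho^{\delta-\alpha}|u|^{\ell-2}u\,(\text{stuff})\,\rho^{-1}\rho_r$ are where I re-integrate by parts in reverse: I write $\rho^{\delta-\alpha-1}\rho_r=\frac{1}{\delta-\alpha}(\rho^{\delta-\alpha})_r$. This adjusts the flux coefficient to $2m\frac{\ell\delta-\ell\alpha-\ell+1}{\ell(\delta-\alpha)}$, and it also converts $m u_r\frac ur$ and $\frac{u^2}{r^2}$ pieces into the stated coefficients $-m\frac{1-\delta}{\delta-\alpha}$ and $\frac m\ell\frac{1-\alpha}{\delta-\alpha}$. The leftover $(\rho^{1-\alpha})_ru|u|^\ell$ from the inertial part is treated analogously, again via \eqref{V-expression}. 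It yields the last term $\frac{(\ell-1)\alpha(1-\alpha)}{2\ell(\ell+1)a_1\delta}\rho^{2-\delta-\alpha}(v-u)u|u|^\ell$ and modifies the cubic flux coefficient to $\frac{(\ell-1)\alpha+\ell+1}{\ell(\ell+1)}$.

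\textbf{Main obstacle.} The main difficulty is bookkeeping. The choice of how to split each $\rho_r$-term between "absorb into $\rho^{\delta-\alpha}$-flux via reverse integration by parts" and "express via $v-u$" is not canonical. I will fix it by matching coefficients. I introduce unknown multiples $\lambda_1,\lambda_2$ of the two flux terms and require that the $\frac{u^2}{r^2}$, $u_r\frac ur$, $\frac ur u|u|^{\ell}$ and $u_r|u|^\ell$ coefficients agree with the right-hand side. This is a small linear system. I will check that it is solvable for general $\ell$ with the displayed values, and then verify the $\ell=2$ case (giving \eqref{6-3}) as a consistency check.
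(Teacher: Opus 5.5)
Your plan is the computation the paper does in Appendix~\ref{appd}. You expand the inertial part with \eqref{mass-alpha}, move the $r$-derivative of the viscous flux onto $W$, trade $\rho_r$ for $\frac{\rho^{2-\delta}}{2a_1\delta}(v-u)$, and undo one product rule through $\rho^{\delta-\alpha-1}\rho_r=\frac{1}{\delta-\alpha}(\rho^{\delta-\alpha})_r$.

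The step that fails is your claim that this yields the displayed flux coefficient $2m\frac{\ell\delta-\ell\alpha-\ell+1}{\ell(\delta-\alpha)}$, and that your coefficient-matching system is solvable with the displayed values. Look at the viscous contribution $\big[2a_1\delta(\rho^\delta u_r+m\rho^\delta\frac ur)_r-2a_1m(\rho^\delta)_r\frac ur\big]W$. There, the $\rho_r$-part of $W_r$ paired with $m\frac ur$ and the $(\rho^\delta)_r$-term combine into $-2a_1\delta m(1-\alpha)\rho^{\delta-\alpha-1}\rho_r\frac{|u|^\ell}{r}$. Suppose a fraction $\theta$ of this is re-integrated via $(\rho^{\delta-\alpha})_r$ and the rest is written through $v-u$. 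Matching the coefficient $-m\frac{1-\delta}{\delta-\alpha}$ of $u_r\frac ur$ in the quadratic form (equivalently, the coefficient of $\rho^{1-\alpha}v\frac{|u|^\ell}{r}$) forces $\theta=\frac{\ell-1}{\ell}$. The $\frac{u^2}{r}$-part of the viscous flux is then $2m\big(1-\frac{\ell-1}{\ell}\cdot\frac{1-\alpha}{\delta-\alpha}\big)=2m\frac{\ell\delta-\alpha-\ell+1}{\ell(\delta-\alpha)}$.

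No other bookkeeping can rescue the displayed constant. Once every non-flux term on the right-hand side is prescribed, the flux is determined up to a function of $t$ alone. The displayed and correct fluxes differ by $2a_1\delta m\frac{(\ell-1)\alpha}{\ell(\delta-\alpha)}\rho^{\delta-\alpha}\frac{|u|^\ell}{r}$, whose $r$-derivative does not vanish for general $(\rho,u)$. So the identity as displayed, and \eqref{uell}, is false whenever $\alpha\neq0$. That is exactly the case of interest, $\alpha=\frac{5\delta-3}{2}$ for $\delta>\frac35$. The other displayed coefficients do come out as stated: the quadratic form, the $v$-term, the cubic flux coefficient $\frac{(\ell-1)\alpha+\ell+1}{\ell(\ell+1)}$, and $-\frac{(\ell-1)\alpha(1-\alpha)}{2\ell(\ell+1)a_1\delta}$.

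Your own $\ell=2$ check would have exposed this. \eqref{6-3} contains $m\frac{2\delta-\alpha-1}{\delta-\alpha}$, which is the corrected formula at $\ell=2$, not $m\frac{2\delta-2\alpha-1}{\delta-\alpha}$. The paper's Appendix~\ref{appd} computation likewise ends with numerator $\ell\delta-\alpha-(\ell-1)$ before declaring the desired equality, so the statement carries a typo. The typo is harmless for the application, because $\mathcal{B}_7$ enters only through $\int_0^\infty(\mathcal{B}_7)_r\,\mathrm{d}r=-\mathcal{B}_7|_{r=0}=0$, and that holds for either coefficient. Your method, with the rest of your bookkeeping unchanged, proves the identity with $\ell\delta-\alpha-\ell+1$ in place of $\ell\delta-\ell\alpha-\ell+1$. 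State that correction explicitly rather than assert the displayed constant.
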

\begin{proof}
First, it follows from \eqref{V-expression} and \eqref{mass-alpha} that
\begin{align*}
&\,(\rho u_t+\rho uu_r)\times (\rho^{-\alpha}|u|^{\ell-2}u)\\
&= \big(\frac{1}{\ell}\rho^{1-\alpha}  |u|^\ell\big)_t+\Big(\frac{1}{\ell}\rho^{1-\alpha} |u|^\ell u\Big)_r-\frac{1}{\ell}|u|^\ell\big((\rho^{1-\alpha})_t+(\rho^{1-\alpha}u)_r\big) \\
&= \big(\frac{1}{\ell}\rho^{1-\alpha}  |u|^\ell\big)_t+\Big(\frac{1}{\ell}\rho^{1-\alpha} |u|^\ell u\Big)_r-\frac{\alpha}{\ell} \rho^{1-\alpha}|u|^\ell u_r +\frac{m(1-\alpha)}{\ell} \frac{\rho^{1-\alpha}|u|^\ell u}{r}\\
&= \big(\frac{1}{\ell}\rho^{1-\alpha}  |u|^\ell\big)_t+\Big(\frac{1}{\ell}\rho^{1-\alpha} |u|^\ell u\Big)_r-\frac{\alpha}{\ell(\ell+1)} \rho^{1-\alpha}(|u|^\ell u)_r +\frac{m(1-\alpha)}{\ell} \frac{\rho^{1-\alpha}|u|^\ell u}{r}\\
&= \big(\frac{1}{\ell}\rho^{1-\alpha}  |u|^\ell\big)_t+\Big(\frac{\ell+1-\alpha}{\ell(\ell+1)}\rho^{1-\alpha} |u|^\ell u\Big)_r +\frac{\alpha(1-\alpha)}{\ell(\ell+1)} \rho^{-\alpha}\rho_r |u|^\ell u +\frac{m(1-\alpha)}{\ell} \frac{\rho^{1-\alpha}|u|^\ell u}{r}\\
&= \big(\frac{1}{\ell}\rho^{1-\alpha}  |u|^\ell\big)_t+\Big(\frac{\ell+1-\alpha}{\ell(\ell+1)}\rho^{1-\alpha} |u|^\ell u\Big)_r
+\frac{\alpha(1-\alpha)}{2\ell(\ell+1)a_1\delta} \rho^{2-\delta-\alpha}(v-u)u|u|^\ell\\
&\quad +\boxed{\frac{m(1-\alpha)}{\ell} \frac{\rho^{1-\alpha}|u|^\ell u}{r}}.
\end{align*}

Next, using \eqref{V-expression} again, we have
\begin{align*}
&\,\Big(2a_1\delta \big(\rho^\delta u_r+\frac{m\rho^\delta u}{r}\big)_r-2a_1m(\rho^\delta)_r\frac{u}{r}\Big)\times (\rho^{-\alpha}|u|^{\ell-2}u)\\
&= \Big(2a_1\delta \rho^{\delta-\alpha} |u|^{\ell-2}\big(uu_r+m \frac{u^2}{r}\big)\Big)_r-2a_1\delta \rho^\delta \big(u_r+m\frac{u}{r}\big)(\rho^{-\alpha}|u|^{\ell-2}u)_r-2a_1\delta m\rho^{\delta-\alpha-1}\rho_r\frac{|u|^\ell}{r}\\
&= \Big(2a_1\delta \rho^{\delta-\alpha} |u|^{\ell-2}\big(uu_r+m \frac{u^2}{r}\big)\Big)_r-2(\ell-1)a_1\delta \rho^{\delta-\alpha}|u|^{\ell-2}\big(u_r^2+m u_r\frac{u}{r}\big) \\
&\quad+2a_1\delta\alpha \rho^{\delta-\alpha-1}\rho_r |u|^{\ell-2}uu_r -2a_1\delta m(1-\alpha) \rho^{\delta-\alpha-1}\rho_r \frac{|u|^\ell}{r}\\
&=\Big(2a_1\delta \rho^{\delta-\alpha} |u|^{\ell-2}\big(uu_r+m \frac{u^2}{r}\big)\Big)_r-2(\ell-1)a_1\delta \rho^{\delta-\alpha}|u|^{\ell-2}\big(u_r^2+m u_r\frac{u}{r}\big) \\
&\quad+ \alpha \rho^{1-\alpha}v |u|^{\ell-2}uu_r-\alpha \rho^{1-\alpha}|u|^{\ell} u_r\\
&\quad  - \frac{2(\ell-1)}{\ell}a_1\delta m\frac{1-\alpha}{\delta-\alpha} (\rho^{\delta-\alpha})_r \frac{|u|^\ell}{r}- \frac{2}{\ell}a_1\delta m(1-\alpha)\rho^{\delta-\alpha-1}\rho_r \frac{|u|^\ell}{r}\\
&=\Big(2a_1\delta \rho^{\delta-\alpha} |u|^{\ell-2}\big(uu_r+m \frac{u^2}{r}\big)\Big)_r-2(\ell-1)a_1\delta \rho^{\delta-\alpha}|u|^{\ell-2}\big(u_r^2+m u_r\frac{u}{r}\big) \\
&\quad+ \alpha \rho^{1-\alpha}v |u|^{\ell-2} uu_r-\frac{\alpha}{\ell+1} \rho^{1-\alpha}(|u|^{\ell}u)_r\\
&\quad - \frac{2(\ell-1)}{\ell} a_1\delta m\frac{1-\alpha}{\delta-\alpha} \Big(\rho^{\delta-\alpha}\frac{|u|^\ell}{r}\Big)_r +  \frac{2(\ell-1)}{\ell} a_1\delta m\frac{1-\alpha}{\delta-\alpha} \rho^{\delta-\alpha} \big(\frac{|u|^\ell}{r}\big)_r\\
&\quad - \frac{m(1-\alpha)}{\ell}\frac{\rho^{1-\alpha}v|u|^\ell}{r}+ \frac{m(1-\alpha)}{\ell}\frac{\rho^{1-\alpha}u|u|^\ell}{r}\\
&= \Big( a_1\delta \rho^{\delta-\alpha} |u|^{\ell-2}\big(2uu_r+ 2m \frac{\ell\delta-\alpha-(\ell-1)}{\ell(\delta-\alpha)} \frac{u^2}{r}\big)-\frac{\alpha}{\ell+1}  \rho^{1-\alpha}|u|^\ell u\Big)_r\\
&\quad -2(\ell-1)a_1\delta \rho^{\delta-\alpha}|u|^{\ell-2}\big( u_r^2+m u_r\frac{u}{r}\big) \\
&\quad+ \rho^{1-\alpha}v u|u|^{\ell-2}\Big(\alpha  u_r-\frac{m(1-\alpha)}{\ell}\frac{u}{r}\Big)+\frac{m(1-\alpha)}{\ell}\frac{\rho^{1-\alpha}|u|^\ell u}{r} +\frac{\alpha(1-\alpha)}{\ell+1} \rho^{-\alpha}\rho_r|u|^\ell u \\
&\quad +\frac{2(\ell-1)}{\ell} a_1\delta m\frac{1-\alpha}{\delta-\alpha} \rho^{\delta-\alpha} |u|^{\ell-2}\big(\ell u_r\frac{u}{r}- \frac{u^2}{r^2}\big)\\
&= \Big( a_1\delta \rho^{\delta-\alpha} |u|^{\ell-2}\big(2uu_r+ 2m \frac{\ell\delta-\alpha-(\ell-1)}{\ell(\delta-\alpha)} \frac{u^2}{r}\big)-\frac{\alpha}{\ell+1}  \rho^{1-\alpha}|u|^\ell u\Big)_r\\
&\quad -2(\ell-1)a_1\delta\rho^{\delta-\alpha}|u|^{\ell-2}\Big(u_r^2-m\frac{1-\delta}{\delta-\alpha}u_r\frac{u}{r}+\frac{m}{\ell}\frac{1-\alpha}{\delta-\alpha}\frac{u^2}{r^2}\Big) +\boxed{\frac{m(1-\alpha)}{\ell}\frac{\rho^{1-\alpha}|u|^\ell u}{r}}\\
&\quad+ \rho^{1-\alpha}v u|u|^{\ell-2}\Big(\alpha u_r-\frac{m(1-\alpha)}{\ell}\frac{u}{r}\Big)+\frac{\alpha(1-\alpha)}{2(\ell+1)a_1\delta}  \rho^{2-\delta-\alpha}(v-u)u|u|^\ell.
\end{align*}

Therefore, combining the above two equalities, we arrive at the desired equality by canceling the above two framed terms.
\end{proof}

\bigskip
\noindent{\bf Acknowledgments:}  
This research is partially supported by National Key R$\&$D Program of China (No. 2022YFA1007300). The research of Gui-Qiang G. Chen was also supported in part by the UK Engineering and Physical Sciences Research
Council Award EP/V008854 and EP/V051121/1.
The research of Shengguo Zhu was also supported in part by 
the National Natural Science Foundation of China under the Grant  12471212 
and the Royal Society (UK)-Newton International 
Fellowships NF170015.

\bigskip
\noindent{\bf Conflict of Interest:} The authors declare  that they have no conflict of
interest. 
The authors also  declare that this manuscript has not been previously  published, 
and will not be submitted elsewhere before your decision.

\bigskip
\noindent{\bf Data availability:} Data sharing is not applicable to this article as no datasets were generated or analysed during the current study

\end{document}